\theoremstyle{plain}
\newtheorem{axiom}{Axiom}
\newtheorem{claim}[axiom]{Claim}
\newtheorem{theorem}{Theorem}[section]
\newtheorem*{theorem*}{Theorem}
\newtheorem*{lemma*}{Lemma}
\newtheorem{corollary}{Corollary}[section]
\newtheorem{lemma}[theorem]{Lemma}
\newtheorem{proposition}[theorem]{Proposition}
\theoremstyle{remark}
\newtheorem{assumption}[theorem]{Assumption}
\newtheorem{remark}{Remark}
\DeclareMathOperator*{\argmin}{arg\,min}
\def\ind{\mathds{1}}
\newcommand{\bb}[1]{\mathbb{#1}}
\newcommand{\mk}[1]{\mathfrak{#1}}
\newcommand{\norm}[1]{\left\lVert#1\right\rVert}
\newcommand{\bignorm}[1]{\big\lVert#1\big\rVert}
\newcommand{\Bignorm}[1]{\Big\lVert#1\Big\rVert}
\newcommand{\ti}{_{2,\infty}}
\def\mod{\mathsf{mod}}
\def\exc{\mathsf{exc}}
\newcommand{\eq}[1]{\begin{equation} #1\end{equation}}
\newcommand{\ip}[1]{\big\langle#1 \big\rangle}
\newcommand{\mc}[1]{\mathcal{#1}}
\newcommand{\longeq}[1]{\begin{equation}
    \begin{split}
        #1
    \end{split}
\end{equation}}
\newcommand\blfootnote[1]{%
  \begingroup
  \renewcommand\thefootnote{}\footnote{#1}%
  \addtocounter{footnote}{-1}%
  \endgroup
}
\def\tr{\mathrm{Tr}}
\def\deltaop{\delta_{\mathsf{op}}}
\def\deltau{\delta_{\mathsf{u}}}
\def\bias{\mathsf{bias}}
\newcommand{\M}{\mathbf M}
\def\lv{\lVert}
\def\rv{\rVert}
\newcommand{\diag}{\mathrm{\normalfont{diag}}}
\newcommand{\bo}{\boldsymbol }
\def\snr{\mathsf{SNR}}
\newcommand{\snrfull}{{\mathsf{SNR}^{\mathsf{full}}}}
\newcommand{\rbayes}{\mc R^{\mathsf{Bayes}}}
\def\xicov{\xi_{\mathsf{cov}}}
\newcommand{\mb}{\mathbf }
\def\hat{\widehat}
\def\tilde{\widetilde}
\def\t{^{\top}}
\def\sigmamin{{\sigma_{\min}^*}}
\def\cov{\mathrm{Cov}}
\def\oracle{\mathsf{oracle}}
\def\spacingset#1{\renewcommand{\baselinestretch}%
{#1}\small\normalsize} 
\title{Minimax-Optimal Spectral Clustering with Covariance Projection for High-Dimensional Anisotropic Mixtures}
\author{Chengzhu Huang \and Yuqi Gu}
\date{Department of Statistics, Columbia University
}
\begin{document}
\maketitle
\begin{abstract}
In mixture models, anisotropic noise within each cluster is widely present in real-world data. This work investigates both computationally efficient procedures and fundamental statistical limits for clustering in high-dimensional anisotropic mixtures. We propose a new clustering method, Covariance Projected Spectral Clustering (COPO), which adapts to a wide range of dependent noise structures. We first project the data onto a low-dimensional space via eigen-decomposition of a diagonal-deleted Gram matrix. Our central methodological idea is to sharpen clustering in this embedding space by a covariance-aware reassignment step, using quadratic distances induced by estimated projected covariances. Through a novel row-wise analysis of the subspace estimation step in weak-signal regimes, which is of independent interest, we establish tight performance guarantees and algorithmic upper bounds for COPO, covering both Gaussian noise with flexible covariance and general noise with local dependence. To characterize the fundamental difficulty of clustering high-dimensional anisotropic Gaussian mixtures, we further establish two distinct and complementary minimax lower bounds, each highlighting different covariance-driven barriers. Our results show that COPO attains minimax-optimal misclustering rates in Gaussian settings. Extensive simulation studies across diverse noise structures, along with a real data application, demonstrate the superior empirical performance of our method. 
\end{abstract}

\noindent
\textbf{Keywords}:
{Anisotropic noise};
{Clustering};
{Gaussian mixture model};
{High-dimensional statistics};
{Local dependence};
{Minimax lower bound};
{Spectral method};
{Universality}.

\blfootnote{Emails: \texttt{ch3786@columbia.edu, yuqi.gu@columbia.edu}. Address for correspondence: Yuqi Gu, Department of Statistics, Columbia University, New York, NY 10027, United States of America.}

\section{Introduction}

Mixture models capture the foundational clustering structure widely present in many machine learning and statistical applications. In a mixture model, consider an $n\times p$ data matrix $\mb Y \coloneqq (\mb y_1, \cdots, \mb y_n)\t$ that collects $n$ independent samples $\mb y_1,\ldots,\mb y_n\in \bb R^p$. Each $\mb y_i$ is equipped with a latent label $z_i^*\in [K]$ and comes from a distribution $\mc D_{z_i^*}$ with expectation $\bo \theta_{z_i^*}^*$. In mixture models with additive noise, we can write
\begin{equation}\label{eq: additive noise formulation}
    \mb Y = \bb E[\mb Y]  + \mb E,\quad \mb Y^* := \bb E[\mb Y]=\mb Z^* {\mb \Theta^*}^\top,
\end{equation}
where $\mb E = (\mb E_1, \cdots, \mb E_n)\t\in\bb R^{n\times p}$ denotes the mean-zero noise matrix.
The $p\times K$ matrix $\mb \Theta^* = (\bo \theta_1^*,\cdots, \bo \theta_K^*)$ collects the $K$ cluster centers $\bo\theta_k^*\in\bb R^p$. In the $n\times K$ matrix $\mb Z^*$, the $i$th row is $\mb Z^*_{i,\cdot}= \mb e_{z_i^*}\t$, where $\mb e_k$ is the $k$th canonical basis of $\bb R^K$. 

We study the clustering problem under possible high-dimensionality ($p\gtrsim n$) and nonspherical (anisotropic) noise; i.e., $\text{Cov}(\mc D_k), k\in[K]$ are not identity matrices multiplied by scalars. For a true latent label vector $\mb z^*$ and an estimated latent label vector $\mb z$, the clustering performance of $\mb z$ is measured by the Hamming distance up to a label permutation: 
$
    h(\mb z, \mb z^*) \coloneqq \min_{\pi\in \Pi_K} \frac{1}{n}\sum_{i=1}^n \ind\{z_i \neq \pi(z_i^*)\},
$
where $\Pi_K$ is the set of all permutations of $[K]$.

Most theoretical results on high-dimensional clustering
have focused on isotropic or sub-Gaussian mixture models \citep{lu2016statistical,loffler2021optimality,zhang2024leave,ndaoud2018sharp}.
In practice, however, nonspherical noise structures are ubiquitous in real-world datasets.
When both high dimensionality and anisotropy are present in the mixture model, two natural yet challenging questions emerge:
\begin{itemize}[itemsep=1pt, topsep=2pt]
    \item \textit{Can we design a clustering algorithm to effectively capture distributional heterogeneity?}
    \item \textit{What is the information-theoretic limit for clustering under high-dimensional anisotropic noise, and is it attainable by an efficient algorithm?}
\end{itemize}
These questions echo unresolved open problems highlighted in \cite{chen2024optimal}, which studied anisotropic Gaussian mixtures with a fixed or slowly growing dimension.

In this work, we address the above problems by 
(i) proposing an efficient spectrum-based clustering method, \emph{Covariance Projected Spectral Clustering} (COPO); (ii) establishing a sharp upper bound for the clustering risk of COPO in terms of a key signal-to-noise ratio $\snr$; and (iii) deriving two statistical lower bounds that characterize the fundamental clustering hardness from distinct perspectives, both coinciding with degenerate forms of $\snr$.

\subsection{Covariance-Projected Spectral Clustering}\label{sec-contribution2}
Spectral methods stand out for their simplicity and strong theoretical foundations. A representative example is \cite{zhang2024leave}, which applies K-means to the rows of $\mathbf U\mathbf \Lambda$, where $\mathbf U$ contains the top-$K$ left singular vectors of $\mathbf Y$ and $\mathbf \Lambda$ the corresponding singular values.
These methods are motivated by a simple yet powerful insight: the left singular subspace of $\mb Y^*$ encodes the underlying cluster structure, and its empirical counterpart often provides a reliable estimate, up to an orthogonal rotation. A central theoretical challenge, therefore, is to understand how these empirical quantities fluctuate around their population counterparts. With such an understanding, one can incorporate noise structures into the clustering criterion to enhance clustering accuracy.

In this work, we propose a novel and easy-to-implement clustering method, \emph{Covariance Projected Spectral Clustering} (COPO, Algorithm~\ref{algorithm: CPSC}). Starting from an initialization, COPO proceeds in two stages.
First, it estimates the left singular subspace of $\mb Y$ via eigen-decomposition of a diagonal-deleted Gram matrix, producing a
$K$-dimensional embedding of the samples. Second, it iteratively refines the clustering in this embedding space by updating cluster assignments using a covariance-aware quadratic-distance reassignment rule based on estimated projected covariances.
To justify why these low-dimensional quantities can be reliably estimated and exploited for clustering,
we conduct a fine-grained subspace analysis. 
Define $\mc H(\cdot)$ as the \emph{diagonal deletion operator}, which sets the diagonal entries of a square matrix to zero and keeps all off-diagonal entries.
Let $(\mb U^*, \bo \Lambda^*, \mb V^*)$ be the top-$K$ SVD of $\mb Y^*$. Informally, given the top-$K$ eigen subspace $\mb U\in \bb R^{n \times r}$ of $\mc H(\mb Y \mb Y\t)$, 
each row of $\mb U$ admits the following approximation: 
\begin{align}
    & \mb U_{i,\cdot}(\mb U\t \mb U^*) - \mb U_{i,\cdot}^* \approx 
    \underbrace{\mb E_{i,\cdot} \mb V^* {\bo \Lambda^*}^{-1} + \mc H(\mb E\mb E\t)_{i,\cdot} \mb U^* {\bo \Lambda^*}^{-2}}_{
    \eqqcolon \mathfrak L_i\t}
    \label{eq: linear approximation of U}
\end{align}
This form of linear approximation has appeared in earlier works such as \cite{abbe2022}; however, our theory (Theorem~\ref{thm:subspace perturbation}) extends to the challenging \emph{weak-signal regime in a row-wise manner}, which departs from previous studies and is of independent interest. 
Focusing on this low-dimensional embedding of samples offers two main benefits: it ensures statistical consistency of estimating the low-dimensional quantities and also delivers superior computational efficiency compared to traditional {EM}-type methods {in the $p$-dimensional space}.

We give an illustrative example in Figure \ref{fig: decision boundaries}, applying COPO to data generated from a high-dimensional two-component Gaussian mixture model with $p=2000$ and $n=500$. When initialized by the spectral clustering algorithm studied in \cite{abbe2022}, our method accurately captures the projected cluster shapes in the subspace of the top $K=2$ right singular vectors of $\mb Y$ and effectively reduces the clustering errors as the iterations proceed. This is done by depicting and refining the elliptical (Figure \ref{fig: ellipse}) and hyperbolic (Figure \ref{fig: hyperbola}) decision boundaries in this space. In contrast, spectral clustering has the limitation that it uses {K-Means} in the 2-dimensional space and hence splits the point cloud by an unsatisfactory linear decision boundary.

\begin{figure}[ht]
    \centering
    \begin{subfigure}[b]{\textwidth}
        \centering
        \includegraphics[width=\textwidth]{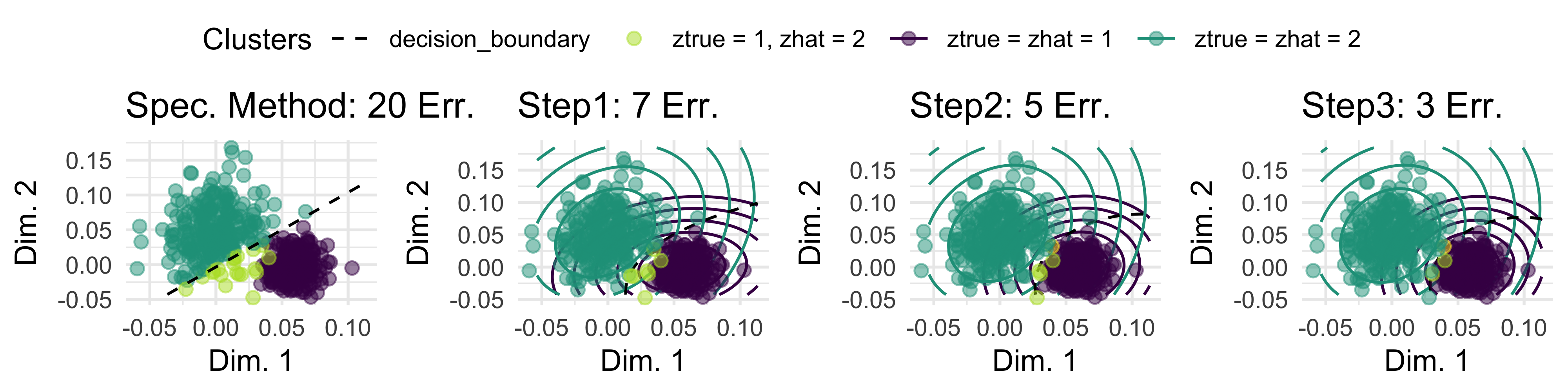}
        \caption{A case with elliptical decision boundaries of COPO}
        \label{fig: ellipse}
    \end{subfigure}
    \begin{subfigure}[b]{\textwidth}
        \centering
        \includegraphics[width=\textwidth]{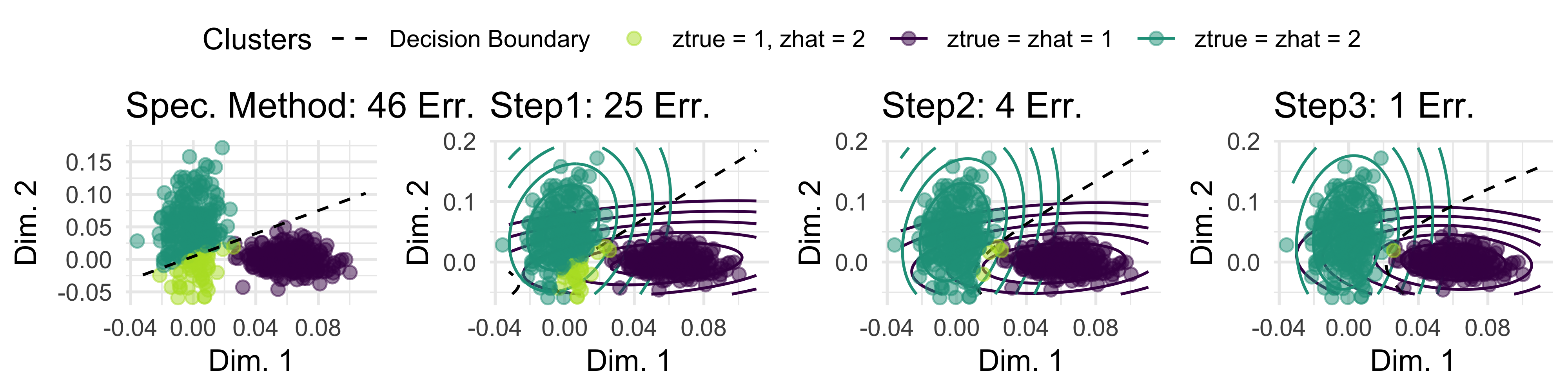}
        \caption{A case with hyperbolic decision boundaries of COPO}
        \label{fig: hyperbola}
    \end{subfigure}
    \caption{
    Comparing spectral clustering \cite{abbe2022}  and COPO in the top-2 right singular subspace of $\mb Y_{n\times p}$, with $n = 500$ and $p = 1000$. 
    From left to right: spectral clustering and the first three COPO iterations. Points are colored by label estimates (\texttt{zhat}) and ground truth (\texttt{ztrue}); ``Err.'' denotes the number of misclustered (light-green) points. Dashed lines indicate decision boundaries: linear for spectral clustering, and elliptical (Fig.~\ref{fig: ellipse}) or hyperbolic (Fig.~\ref{fig: hyperbola}) for COPO.}
    \label{fig: decision boundaries}
\end{figure}

\subsection{Theoretical Guarantees for COPO}\label{sec-contribution3} 
We establish sharp upper bounds on the misclustering error that characterize when and how the proposed COPO method succeeds in high-dimensional settings with anisotropic and dependent noise. Our analysis covers both anisotropic Gaussian mixtures and more general mixture models with locally dependent noise, and reveals a key signal-to-noise ratio (SNR) that governs clustering performance. These results demonstrate that COPO adaptively exploits covariance heterogeneity through dimension reduction and covariance-aware refinement, while remaining computationally efficient in high dimensions.

Informally, the clustering error of COPO is upper bounded by an exponential term in $\snr$, where the signal-noise-ratio $\snr$, as elaborated in Section~\ref{section: upper bound}, intrinsically represents the clustering hardness in anisotropic mixture models. 
\begin{theorem*}[Informal Upper Bound; formal version in Theorem~\ref{theorem: upper bound for algorithm (gaussian)}]
    In a wide range of noise environments, the 
    misclustering
    rate of COPO has the following upper bound given a proper initialization and a diverging $\mathsf{SNR}$:
    $
    \bb E[h(\hat{\mb z}, \mb z^*)] \leq \exp\Big(-(1 + o(1))\frac{\snr^2}{2}\Big). 
    $
\end{theorem*}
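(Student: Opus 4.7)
My plan is to decompose the proof into two major components: (i) a spectral perturbation analysis showing that the empirical projection subspace $\hat{\mathbf{V}}$ and the plug-in estimators $\hat{\mathbf{w}}_k$, $\hat{\mathbf{S}}_k$ concentrate around their population counterparts, and (ii) a one-step analysis demonstrating that, once the intermediate clustering is sufficiently accurate, the next refinement attains the exponential rate $\exp(-(1+o(1))\mathsf{SNR}^2/2)$. Iterating the second component from any proper initialization then delivers the final bound on $h(\hat{\mathbf{z}}, \mathbf{z}^*)$.

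For component (i), I would start by invoking a row-wise singular subspace perturbation result — essentially a $\|\cdot\|_{2,\infty}$ bound of the form $\|\hat{\mathbf{V}} - \mathbf{V}^* \mathbf{O}\|_{2,\infty} \lesssim \mathrm{poly}(K)/\sqrt{n}$ for some $K\times K$ orthogonal $\mathbf{O}$, of the kind established via leave-one-out in \cite{zhang2024leave}. Because the target subspace is only $K$-dimensional (with $K$ fixed), this translates, for each sample $\mathbf{y}_i$, into a negligible perturbation of $\hat{\mathbf{V}}^\top \mathbf{y}_i$ relative to the ideal projected observation $\mathbf{V}^{*\top} \mathbf{y}_i$. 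Given an intermediate clustering with error $\eta^{(t)} = o(1)$, I would then show that the plug-in estimators satisfy $\|\hat{\mathbf{w}}_k^{(t)} - \mathbf{w}_k^*\|_2 + \|\hat{\mathbf{S}}_k^{(t)} - \mathbf{S}_k^*\|_{\mathrm{op}} = o(1/\mathsf{SNR})$; this is a parametric-rate statement exploiting the low dimensionality of the projected space and standard sample-covariance concentration, up to an additive term from label noise controlled by $\eta^{(t)}\cdot \mathrm{poly}(K)$.

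For component (ii), fix a sample $\mathbf{y}_i$ with true label $z_i^* = a$. The refined COPO rule assigns $\hat{z}_i$ to minimize $(\hat{\mathbf{V}}^\top \mathbf{y}_i - \hat{\mathbf{w}}_k)^\top \hat{\mathbf{S}}_k^{-1} (\hat{\mathbf{V}}^\top \mathbf{y}_i - \hat{\mathbf{w}}_k)$ over $k$. Using the bounds from component (i), I would replace the estimated quantities by the truth at the cost of vanishing corrections in the exponent, reducing the per-sample misclassification probability to the Bayes error for the low-dimensional quadratic discriminant problem with parameters $\{\mathbf{w}_k^*, \mathbf{S}_k^*\}$. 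A Chernoff bound on the resulting quadratic form, optimized over the decision boundary described in \eqref{eq: SNR definition}, yields the sharp tail $\exp(-(1+o(1))\mathsf{SNR}^2/2)$ in the Gaussian case. For the locally dependent non-Gaussian setting of Assumption~\ref{assumption: bounded noise}, I would replace the Gaussian computation with a block-wise universality argument: decompose $\mathbf{V}^{*\top} \mathbf{E}_i$ into sums over independent local blocks and apply moment-matching or a block CLT to control the quadratic form up to sub-exponential corrections that do not enter the leading exponent. Summing over $i$ and taking a union bound finishes this step.

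The main obstacle I anticipate is obtaining the sharp constant $(1+o(1))$ in the exponent rather than a loose one, while simultaneously handling the dependence among $\hat{\mathbf{V}}$, the plug-in estimators, and each individual $\mathbf{y}_i$. A single refinement typically reduces the error only by a multiplicative factor, so I expect to need a short induction (on the order of $\log\log n$ steps) showing contraction of $\eta^{(t)}$ toward the optimal rate, with a carefully tracked remainder that never contaminates the exponent. The second significant hurdle is the non-Gaussian, locally dependent regime: the quadratic form inside the Chernoff bound must be controlled without Gaussianity, and honestly exploiting the latent block structure of Assumption~\ref{assumption: bounded noise} will be essential to match the Gaussian exponent and thereby demonstrate the universality claimed in the theorem.
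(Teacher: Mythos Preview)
Your proposal follows essentially the same architecture as the paper: leave-one-out control of the empirical subspace, plug-in consistency of the projected centers and covariances, a one-step oracle analysis giving the sharp exponent, and an iterative contraction argument. A few points of divergence are worth flagging.

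First, the paper does not work directly with the Hamming error $h(\mb z,\mb z^*)$ in the contraction step but with a weighted surrogate $l(\mb z,\mb z^*)=\sum_i \omega_{z_i,z_i^*}\ind\{z_i\neq z_i^*\}$ that encodes the Mahalanobis separations $\omega_{a,b}$. This choice is what makes the decomposition $l(\hat{\mb z}^{(s)},\mb z^*)\le \xi_{\text{oracle}}+\tfrac14 l(\hat{\mb z}^{(s-1)},\mb z^*)$ come out cleanly: the residual terms $F_i,G_i,H_i$ arising from plug-in error are naturally dominated by the weighted loss of the previous step, uniformly over all $\mb z$ with small $l(\mb z,\mb z^*)$. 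The contraction is by a fixed factor per step, so the paper runs $O(\log n)$ iterations rather than the $O(\log\log n)$ you suggest; with a fixed contraction ratio, $\log\log n$ rounds would not suffice to wash out a polynomially-large initial error.

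Second, and more substantively, your ``block CLT'' suggestion for the non-Gaussian case would not deliver the sharp constant. A central limit approximation controls deviations at scale $O(1)$ (or at best moderate deviations), whereas the misclassification event lives at scale $\mathsf{SNR}\to\infty$, i.e.\ in the large-deviation regime where CLT-type approximations lose the leading exponent. The paper instead bounds the $(2k)$-th moment of $\|(\mb V^{(-i)\top}\mb\Sigma_{z_i^*}\mb V^{(-i)})^{-1/2}\mb V^{(-i)\top}\mb E_i\|_2$ for $k\asymp \mathsf{SNR}^2\wedge\log d$ via the matrix-concentration universality of Brailovskaya--van Handel, and then applies Markov's inequality; this is what recovers the exact prefactor $1/2$ in $\exp(-(1+o(1))\mathsf{SNR}^2/2)$ under local dependence. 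Your ``moment-matching'' alternative is the right instinct, but it must be pushed to moments of this order, not just the first few.
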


An implication of the upper bound is, in the partial recovery regime, our theoretical guarantee significantly improves upon the one for (hollowed) spectral clustering in the presence of imbalanced covariances, 
 as validated by simulation studies in Section \ref{sec:simulation}. 

Interestingly, as becomes evident later, both the procedures of the algorithm and the form of $\snr$ only rely on partial covariance information, rather than the full structure exploited by likelihood-based methods --- in other words, certain structural details are inevitably discarded when operating in low-dimensional embeddings. This naturally prompts the question: does neglecting the complete covariance structure compromise clustering accuracy? Surprisingly, our minimax lower bound analysis reveals that this omission does not entail a loss of optimality, a phenomenon not previously examined in the literature. 
\subsection{Fundamental Limits and Minimax Lower Bounds}
\label{subsec: lower bound}
Complementing the achievable guarantees in Section~\ref{sec-contribution3}, we investigate the fundamental limits of clustering in high-dimensional anisotropic Gaussian mixtures.
We derive minimax lower bounds for anisotropic Gaussian mixtures that quantify the intrinsic difficulty of clustering under heterogeneous covariance structures. The lower bounds expose two distinct sources of clustering hardness. Together with our upper bounds, these results establish the minimax optimality of COPO in the Gaussian setting.

To identify the sources of clustering hardness, we first gain intuition by examining the approximation \eqref{eq: linear approximation of U} under the \emph{homogeneous} anisotropic Gaussian mixtures ($\bo \Sigma_k = \bo \Sigma,~k\in[K]$), subject to the condition $\lim_{n\rightarrow \infty}\tr(\bo \Sigma^2) / p(n) = c >0$ for some constant $c$. The fluctuation $\mk L_i$ in \eqref{eq: linear approximation of U} arises from two uncorrelated components in the linear term. This perspective allows us to partition the high-dimensional regime ($p /n \rightarrow \infty$) into two distinct regimes where one term dominates the other. 
Moreover, the key signal-noise-ratio $\snr$ degenerates to $\snr^{\sf{mod}}$ or $\snr^{\sf{exc}}$ in these two regimes, respectively. This phase transition motivates the development of two distinct minimax lower bounds, each corresponding to one of these degenerate forms.

Compared to existing lower bounds in the literature \citep{lu2016statistical,zhang2023fundamental,chen2024optimal,ndaoud2018sharp}, our results offer two key insights: \emph{First}, in the moderately high-dimensional case, the minimax lower bound reveals the impossibility of achieving the Bayesian oracle risk in the presence of both heteroskedasticity and high-dimensionality. Instead, an \emph{informational dimension-reduction phenomenon} emerges with the ``constrained'' signal-to-noise-ratio $\snr^{\mod}$.
Our proof strategy diverges from the standard proof technique of reduction to a two-point testing problem commonly used in analyses of Gaussian mixtures \cite{lu2016statistical}, stochastic block models \cite{gao2017achieving,zhang2016minimax}. \emph{Second}, in the excessively high-dimensional case, our approach generalizes the results in \cite{ndaoud2018sharp,chen2021cutoff} by incorporating $\tr(\bo \Sigma^2)$ into the lower bound, 
highlighting its role as a measure of the intrinsic hardness of estimating cluster centers.

\paragraph{Notations.}
For any positive integer $n$, denote $[n]:=\{1,\ldots,n\}$. Denote the collection of $p$-by-$p$ orthogonal matrices by $O(p)=\{\mb U\in \bb R^{p\times p}, \mb U\t \mb U =\mb I_p\}$ and the collection of $p$-by-$r$ orthonormal matrices by $O(p,r) = \{\mb U\in \bb R^{p\times r}, \mb U\t \mb U =\mb I_{r}\}$ with $r< p$. Denote the group of invertible matrices in $\bb R^{n\times n}$ by $\mathrm{GL}_n(\bb R)$. 
For any matrix $\mb{M} \in \bb R^{n \times m}$, denote its $m$-th row (as a column vector) by  $\mb{M}_k$ or $\mb M_{k,\cdot}\t$ and its $k$-th column (as a column vector) by $\mb{M}_{\cdot,k}$. Define $\mc P_{-i,\cdot}(\cdot)$(resp. $\mc P_{\cdot,-i}(\cdot)$) as the operator that sets the $i$-th row (resp. column) of a matrix to zero while leaving all other entries unchanged.
Let $\norm{\M}$ and $\norm{\M}_F$ denote its spectral norm and Frobenius norm, respectively. 
Define the $\ell\ti$ metric as $\norm{\mb M}\ti \coloneqq \max_{i\in[n]}\norm{\mb M_i}_2$. Denote by $\sigma_k(\mb{M})$ the $k$-th largest singular value of $\mb{M}$ and by $\sigma_{\min}(\mb{M})$ the smallest nonzero one. 
If $\mb M$ is square, then define its effective rank $\mathrm{rk}(\mb M)$ as $\mathrm{rk}(\mb M) \coloneqq \frac{\tr(\mb M)}{\norm{\mb M}}$. 
Denote by $\mathbb{P}_{\bo{\theta}, \mathbf{\Sigma}}$ the probability measure and by $\phi_{\bo{\theta}, \mathbf{\Sigma}}$ the probability density function of a Gaussian distribution with mean $\boldsymbol{\theta}$ and covariance matrix $\mathbf{\Sigma}$, respectively. 
For any real valued functions $f(n)$ and $g(n)$, write $f(n) \lesssim g(n)$ if $\left|f(n)\right| \leq C \left|g(n)\right|$ for some constant $C$. Similarly, write $f(n) \gtrsim g(n)$ if $\left|f(n)\right| \geq  C' \left|g(n)\right|$ for some constant $C'$. Write $f(n) \asymp g(n)$ if $f(n) \lesssim g(n) \lesssim f(n)$. 
Write $f(n) \ll g(n)$ when there exists some sufficiently small constant $c$ such that $\left|f(n)\right| \leq c\left|g(n)\right|$ for any sufficiently large $n$. Write $f(n) = o(1)g(n) = o\big(g(n)\big)$ if ${\left|f(n)\right|}/{\left|g(n)\right|} \rightarrow 0$, and write $f(n) = \omega\big(1\big) g(n)= \omega\big( g(n)\big)$ if ${\left|f(n)\right|}/{\left|g(n)\right|} \rightarrow \infty $, as $n$ goes to infinity. For brevity, we may refer to the functions $\snr(\cdot)$, $\mathsf{SNR}^{\mathsf{mod}}(\cdot)$, and $\snr^{\mathsf{exc}}$ simply as $\snr$, $\mathsf{SNR}^{\mathsf{mod}}$, and $\snr^{\mathsf{exc}}$ when applied to a tuple of parameters, if the context makes it clear.

\paragraph{Organization.} 
Section~\ref{sec: alg} develops a new singular subspace perturbation theory and introduces our COPO method. Section~\ref{section: upper bound} provides comprehensive theoretical guarantees for our clustering method universally for both Gaussian mixtures and general mixture models. 
Section~\ref{section: lower bounds} establishes the minimax lower bounds for general high-dimensional anisotropic Gaussian mixture models. 
Simulation studies and real data analysis in Sections~\ref{sec:simulation} and \ref{sec: real data analysis}  validate our theoretical findings and demonstrate our method's superior performance.
Section~\ref{sec: conclusion and discussion} concludes. Proofs of the theoretical results are included in the Supplementary Material.

\section{New Subspace Perturbation Theory and COPO}
\label{sec: alg}
Our clustering algorithm COPO consists of two main components: the estimation of left singular subspace $\mb U^*$ of the data matrix, and an iterative refinement based on  operating upon the estimated subspace. In the refinement step, we incorporate the estimation of the covariance structure $\{\mb S_k^*\}$ (defined after \eqref{eq: SNR definition} in Section~\ref{section: upper bound}) into the criterion, which is roughly a ``projection'' of the original covariance matrix. 
The theoretical foundation for this procedure is given by our new subspace perturbation theory and a delicate characterization of clustering errors throughout the iteration process.

\subsection{New Analysis of Subspace Estimation via Diagonal Deletion} 
\label{subsec: subspace perturbation}
For mixture models in \eqref{eq: additive noise formulation}, it is well known that the clustering information is encoded in the left singular subspace of the expectation of the data matrix $\mathbb E[\mathbf Y]$. Many clustering methods have been proposed based on the empirical left singular subspace estimates \citep{zhang2019spectral,zhang2024leave,han2022exact}. 
However, in high dimensions, using the ``vanilla SVD'' of the data matrix $\mb Y$ to estimate $\mb Y^*$'s left singular subspace (equivalent to performing a eigenvalue decomposition of the Gram matrix $\mb Y\mb Y^\top$) has unsatisfactory performance, because $\bb E[\mb Y\mb Y^\top] = \mb Y^*\mb Y^{*\top} + \mathrm{diag}(\bb E[\norm{\mb E_i}_2^2])_{i\in[n]} \neq \mb Y^*\mb Y^{*\top}$. In particular, the quantities $\bb E[\|\mb E_i\|_2^2]$ may vary substantially across $i=1,\ldots,n$ (heteroskedasticity) \citep{zhang2022heteroskedastic}. This often happens, especially in the context of anisotropic mixture models.

To remedy the above bias issue along the diagonal entries, we resort to the eigen-decomposition of the Gram matrix after diagonal deletion; that is, we perform the top-$K$ eigen-decomposition on $\mc H(\mb Y\mb Y\t )$, where the operator $\mc H$ zeroes out all diagonal entries of $\mb Y \mb Y\t$. This procedure provides us with a robust left singular subspace estimation against possible heteroskedasticity without introducing much bias. We develop the following new approximation theory, which delivers a sharp row-wise characterization (not uniform type) for $\mb U_{i,\cdot}\mb U\t \mb U^* - \mb U^*_{i,\cdot}$, which differs from previous results as elaborated right after the theorem.

\begin{theorem}[New Singular Subspace Perturbation Theory]\label{thm:subspace perturbation}
    Consider a matrix $\mb M = \mb M^* + \mb E\in \bb R^{n \times p}$ with $\mathrm{rank}(\mb M^*) = r$, $\bb E[\mb E] = \mb 0$, and the top-$r$ SVD of $M^*$ being $(\mb U^*, \bo \Lambda^*, \mb V^*)$ where $\bo \Lambda^* = \diag(\sigma_1^*, \cdots, \sigma_r^*)$. Define $d \coloneqq n \vee p$, $\sigma^2 \coloneqq \max_{i\in[n]} \tr(\mathrm{Cov}(\mb E_i))/p$, $\tilde\sigma^2 \coloneqq \max_{i\in[n]}\norm{\mathrm{Cov}(\mb E_i)}$, and $\bar \sigma^2 \coloneqq \max_{i\in[n]} \bignorm{{\mb V^*}\t\mathrm{Cov}(\mb E_i) \mb V^*}$. 
    Assume the noise matrix $\mb E$ has independent rows, which satisfies \textbf{either} Assumption~\ref{assumption: gaussian noise} (Gaussian noise)
    \textbf{or} Assumption~\ref{assumption: bounded noise} (general noise). 
    Define $ \deltaop \coloneqq C_{W} \big[ \tilde \sigma^2 n + \sigma \tilde \sigma\sqrt{np}   +  \bar \sigma  \sqrt{n}  \sigma_1^* + \frac{\mu_1 r }{n} {\sigma_1^*}^2 \big] $ for $C_W >0$ and assume $\deltaop \ll (\sigma_r^*)^2$ and $r \log d \mu_1 r^2 \lesssim n$.  Then for the top-$r$ left eigenvector matrix $\mb U$ of $\mc H (\mb M \mb M\t)$, $t_0 \gg \sqrt{\log \log n}$, and any fixed $c_p>0$, with probability \underline{$1 - O(e^{-t_0^2 /2} \vee d^{-c_p})$},
    \eq{
         \Bignorm{\mb U_{i,\cdot} (\mb U\t \mb U^*) - \mb U^*_{i,\cdot}- \mk L_i\t - \bias_{z_i^*}\t }_2  
        \lesssim   \sqrt{\frac{\mu_1 r}{n}}\cdot \Big[ \frac{\deltaop^2 t_0  }{(\sigma_r^*)^4} + \frac{\mu_1^{\frac12} r \sigma_1^* \bar \sigma \sqrt{\log d / n} }{(\sigma_r^*)^2} \Big].
        \label{eq: final bound in singular subspace perturbation theory}
    } 
    Moreover, $\max_{k \in[K]}\norm{\bias_k}_2 \lesssim \sqrt{\frac{\mu_1 r}{n}}\cdot  \frac{r \deltaop \sqrt{\log d / n} + \mu_1 r (\sigma_1^*)^2 / n}{(\sigma_r^*)^2}$ with probability at least $1 - O(d^{-c_p})$. 
    \label{thm: singular subspace perturbation theory}
\end{theorem}

To facilitate comparison between our new theory and existing results, we focus on the regime where $\deltaop \ll (\sigma_r^*)^2 \ll \sqrt{n} \deltaop / r$ and $\mu_1 = O(1)$, under which the bound in \eqref{eq: final bound in singular subspace perturbation theory} simplifies to $\sqrt{\frac{\mu_1 r}{n}}\cdot \frac{\deltaop^2 t_0  }{(\sigma_r^*)^4} $. 
Existing $\ell_{2,\infty}$-type result (e.g., \cite{cai2021subspace}) requires the magnitude of the approximation terms to at least exceed $\sqrt{\frac{\mu_1 r}{n}} \frac{\deltaop^2}{(\sigma_r^*)^4}\cdot \mathrm{poly}(\log d)$ with probability $1 - O(\mathrm{poly}(d^{-1}))$ in order to dominate. In contrast, 
our result reveals that, for any fixed row and whenever $e^{-\frac{t_0^2}{2}} = \omega(\mathrm{poly}(d^{-1}))$, the approximation terms dominate as long as their magnitude exceeds $\sqrt{\frac{\mu_1 r}{n}} \frac{\deltaop^2 t_0}{(\sigma_r^*)^4}$ with probability $1 - O(e^{-\frac{t_0^2}{2}})$. The improvements in our result over existing results are twofold: (i) the logarithmic factor in the approximation error is replaced by $t_0 $, which may be smaller; and (ii) the allowed exceptional probability can be much larger than $\mathrm{poly}(d^{-1})$. 
We now explain why such refinement is necessary in mixture models. Under isotropic Gaussian noise, existing $\ell\ti$ analysis of the singular subspace perturbation \cite{fan2018eigenvector,chen2021spectral,cai2021subspace,agterberg2022entrywise} characterizes the maximum row-wise  $\ell_2$ fluctuation error, by approximating the subspace fluctuation with some linear forms. However, the resulting bound fails when $\deltaop \ll (\sigma_r^*)^2 \ll \deltaop\log d$, because a uniform guarantee over all rows requires each per-row approximation failure probability $p_i$ to be exceedingly small ($\mathrm{poly}(d^{-1})$). 
Such a stringent requirement is unattainable in the regime of interest. In fact, our algorithm only requires the approximation to hold for most rows, not all of them. This allows much milder conditions on the $p_i$'s and enables us to handle this weak-signal regime. Similar ideas have appeared in \cite{abbe2022}. With our refined row-wise approximation, the per-row failure probability can be as large as $p_i = e^{-\frac{t_0^2}{2}} = \omega( \mathrm{poly}(d^{-1}))$, while the approximation error becomes smaller under weaker conditions. While the proof of Theorem~\ref{thm: singular subspace perturbation theory} starts from the expansion formula in \cite{xia2021normal}, its main contribution is a delicate induction argument that substantially extends previous approaches and carefully decouples dependencies throughout the analysis. 
To build intuition for the linear form $\mk L_i$ defined in \eqref{eq: linear approximation of U}, we note that under some regularity conditions, the first term $\mb E_i\mb V^* {\bo \Lambda^*}^{-1}$ and the second terms $\mb E_i\mc P_{-i,\cdot}(\mb E)\t \mb U^*{\bo \Lambda^*}^{-2}$
are both \emph{asymptotically normal} and they are \emph{mutually uncorrelated}.   
This implies that the rotated low-dimensional embeddings $(\mb U_{i,\cdot})_{i\in[n]}$ can be approximated as Gaussian vectors with structured covariances, even when the original $\mb Y_i$ are non-Gaussian. This insight motivates a Gaussian pseudo-likelihood refinement in the $K$-dimensional space beyond K-means clustering.

\subsection{Covariance Projected Spectral Clustering}
\label{subsection: algorithm}

The insight described in the last subsection suggests interpreting  each row of $\mb U(\mb U\t \mb U^*)$ as a $K$-dimensional Gaussian vector. This motivates us to propose an iterative refinement after the singular subspace estimation (Algorithm~\ref{algorithm: CPSC}), which we call the \textbf{Co}variance \textbf{P}r\textbf{o}jected Spectral Clustering (COPO).  The term “covariance projection” reflects the fact that the population counterpart of $\hat{\bo \Omega}_k^{(s)}$ represents a partial projection of the original covariance information.
Compared with previous methods, COPO is computationally efficient for high-dimensional data and adaptive to nonspherical and dependent noise.

\begin{algorithm}[ht]
    \caption{(Iterative) \textbf{Co}variance \textbf{P}r\textbf{o}jected Spectral Clustering (COPO)}
    \label{algorithm: CPSC}
    \KwIn{Data matrix $\mb{Y} =(\mb{y}_1,\ldots,\mb{y}_n)^\top \in \mathbb{R}^{n\times p}$, number of clusters $K$, an initial cluster estimate $\hat{\mb z}^{(0)}$}
    \KwOut{Cluster assignment vector $\hat{\mb{z}}^{(t)}\in [K]^n$}

    Perform top-$K$ eigen decomposition of $\mc H(\mb Y \mb Y\t)$ and obtain its top-$K$ eigen subspace $\mb U \in O(n,K)$.
    
       \For{$s =1,\cdots, t$}{ 
         For each $k\in [K]$,
    estimate the embedded centers 
        by $
        \hat{\mb c}_k^{(s)} = \frac{\sum_{i\in [n]} \mb U_{i} \ind\{\hat{\mb z}^{(s-1)} = k \}}{ \sum_{i\in [n]}  \ind\{\hat{\mb z}^{(s-1)} = k \}}$,
        and estimate the covariance matrices of embedded vectors by
        $
        \hat{\bo \Omega}^{(s)}_k =
        \frac{\sum_{i\in [n]} \mb U_{i} \mb U_{i}\t \ind\{\hat{\mb z}^{(s-1)} = k \}}{ \sum_{i\in [n]}  \ind\{\hat{\mb z}^{(s-1)} = k \}} - \hat{\mb c}_k^{(s)} {{}\hat{\mb c}_k^{(s)}}\t$.
        
        Update the cluster memberships via
        $\hat{z}_i^{(s)}
        = \argmin\limits_{k\in [K]} \big(\mb U_{i} - {{}\hat{\mb c}_{k}^{(s)}} \big)\t \big(\hat{\bo \Omega}_k^{(s)}\big)^{-1} \big(\mb U_{i} - \hat{\mb c}_{k}^{(s)}  \big).
        $
       }
\end{algorithm}
In words, each iteration of Algorithm~\ref{algorithm: CPSC} proceeds in two steps: it sketches the centers $\{\hat{\mb c}_k^{(s)}\}_{k\in[K]}$ of the embedded vectors in $\mathbb R^K$ and the associated covariance matrices $\{\hat{\mb \Omega}_k^{(s)}\}_{k \in[K]}$ in $\mathbb R^{K\times K}$ based on the previous clustering assignment. Next, it reassigns each data point to the cluster that minimizes the Mahalanobis distance, computed using the current estimates $\hat{\bo \Omega}_k^{(s)}$. This modified criterion essentially corresponds to the negative log-likelihood function for $K$-dimensional Gaussian distributions, with the logarithmic determinant terms omitted --- an omission justified by the fact that, in the regimes of interest, these terms have negligible impact on clustering. Conceptually, COPO can be regarded as applying a modified hard-EM algorithm for Gaussian mixtures to the rows of $\mb U$ in $\bb R^K$, a formulation that appears straightforward and intuitive. However, it essentially originates from and is theoretically enabled by the understanding of the linear approximation of $\mb U_{i,\cdot}$ in Section~\ref{subsec: subspace perturbation}. 

Before continuing, we selectively remark on related methods and their limitations in high-dimensional anisotropic mixtures, mostly concerned about the Gaussian mixtures. Regarding the spectral methods, the algorithms studied in \cite{loffler2021optimality,abbe2022,zhang2024leave} apply K-Means to rows of an estimate $\mb U$ of the top-$K$ left singular subspace of $\mb Y^*$. While they are minimax optimal under isotropic Gaussian mixtures \cite{zhang2024leave,abbe2022}, they unsurprisingly fail to adapt to anisotropic noise, as seen in Lemma~C.6 and the remark therein: our upper bound exponent $-\frac{\snr^2}{2}$ is generally smaller than $-\frac{\min_{a\neq b\in[K]} \norm{\bo \theta_a^* - \bo \theta_b^*}_2^2}{8 \max_{k\in[K]} \norm{\bo \Sigma_k}}$ in \cite{zhang2024leave}. 
On the other hand, 
     classical EM and hard-EM (adjusted Lloyd’s algorithm \cite{chen2024optimal}) iteratively estimate centers and full covariance matrices but degrade in high dimensions due to the difficulty of estimating $p\times p$ covariances. Our Algorithm~\ref{algorithm: CPSC} can be viewed as a hard-EM variant on a $K$-dimensional model that avoids full covariance estimation, making COPO feasible in high dimensions. Relatedly, \cite{cai2019chime} extends EM under homogeneous covariances via sparsity assumptions on the discriminant vector $\bo\Sigma^{-1}(\bo \theta_2^* - \bo \theta_1^*)$.

 \noindent

In summary, existing clustering methods are either confined in low-dimensional regimes, or require some specific sparsity assumptions, or deal with the spherical noise case. In contrast, our COPO method handles high-dimensional data and adapts to heterogeneous covariances without sparsity requirements. 
Besides, our algorithm presents robustness against possible ill-conditioned {and even non-invertible $p\times p$} covariance matrices $\mb \Sigma_k$.

\subsection{Related Work}
Spectral methods, rooted in early works \cite{fiedler1973algebraic,hall1970r}, are a central tool for revealing low-rank structures in statistical models. By leveraging leading eigenvectors or singular vectors, they have been widely used in clustering and network analysis \citep{zhang2024leave,loffler2021optimality,srivastava2023robust,kumar2010clustering,vempala2004spectral,kannan2009spectral,davis2021clustering,tang2024factor,rohe2011spectral,qin2013regularized,lei2020consistent,jin2018score+}, supported by recent advances in random matrix perturbation theory \citep{abbe2020entrywise,agterberg2022entrywise,yan2024inference,fan2018eigenvector,lei2019unified,cape2019two,cai2021subspace,abbe2022}.

Besides, multiple research directions have established theoretical guarantees for clustering. These include the method of moments \citep{hsu2013learning,anandkumar2012method,ge2015learning} and likelihood-based approaches such as the EM algorithm and its variants \citep{xu1996convergence,dasgupta2007probabilistic,balakrishnan2017statistical}, with Lloyd’s algorithm as a well-studied special case of hard EM \citep{lu2016statistical,chen2024optimal,giraud2019partial,gao2022iterative}.
\begin{comment}
On the other hand, 
the semi-definite programming (SDP) methods as relaxed forms of the {K-Means} problem are studied in \cite{fei2018hidden,chen2021cutoff,peng2007approximating,royer2017adaptive}. Slightly deviating from our interest, there is a series of works focusing on estimating the population parameters rather than clustering the sample data points \citep{yan2024learning,jiang2009general,zhang2024efficient}. 
\end{comment}

To understand how the unknown covariance matrices affect clustering, the works \cite{belkin2009learning,ge2015learning,chen2024optimal,wang2020efficient} focused on learning this heterogeneity. However, the statistical limits and methods for clustering in the high-dimensional regime where $p  \gg  n$, remain largely unexplored. To the best of our knowledge, the closest attempt to our discussion in this direction is \cite{davis2021clustering}, which established a statistical guarantee for an integer program with $p \log n \ll n$.

Finally, we connect our work to classification, the supervised analogue of clustering. Linear and quadratic discriminant classifiers have been extensively studied for their simplicity and interpretability \cite{cai2011direct,cai2019chime,cai2021convex,mai2012direct,bing2023optimal}. Our method can be viewed as an iterative low-dimensional quadratic discriminant procedure with unknown labels.

\section{Theoretical Guarantees for COPO}
\label{section: upper bound}
    Following the discussion in Section~\ref{sec: alg}, our theory accommodates two distinct noise settings: Gaussian noise and general noise with local dependence. 
In both settings, the misclustering error is governed by a central signal-to-noise ratio parameter, denoted by $\snr$, defined through the geometry of the embedded vectors. Specifically,
\begin{align}\label{eq: SNR definition}
&\mathsf{SNR}(\mb z^*, \{\bo \theta_k^*\}_{k\in[K]}, \{\mb \Sigma_k\}_{k\in[K]})^2
\coloneqq \min_{j_1\neq j_2\in [K]}\min_{\mb x \in \bb R^K}\big\{(\mb x - \mb w_{j_1}^*)\t {\mb S^*_{j_1}}^{-1}(\mb x- \mb w_{j_1}^*): \\ 
&\qquad  \qquad\qquad (\mb x - \mb w_{j_1}^*)\t {\mb S^*_{j_1}}^{-1}(\mb x- \mb w_{j_1}^*) = (\mb x - \mb w_{j_2}^*)\t {\mb S^*_{j_2}}^{-1}(\mb x- \mb w_{j_2}^*)\big\},
\end{align}
where $\mb w_k^*\coloneqq  {\mb V^*}\t \bo \theta_k^* = \bo \Lambda^* \mb U_i^*$ and $\mb S_k^* \coloneqq {\mb V^*}\t \bo \Sigma_k \mb V^* +  {\bo \Lambda^*}^{-1} {\mb U^*}\t \bb E\big[ \mc H(\mb E\mb E\t)_{\cdot,i}\mc H(\mb E\mb E\t)_{i,\cdot} \big] \mb U^* \bo {\Lambda^*}^{-1}$ with any $i$ with $z_i^* = k$. Throughout this paper, for convenience, we use “embedded vectors” to refer interchangeably to either the rows of $\mb U$ or the rows of $\mb U$ rescaled by singular value estimates. 
Under the approximation $\bo \Lambda^* {\mb U_{i,\cdot}^*}\t +\bo \Lambda^* \mc L_i + \bo \Lambda^* \mathsf{bias}_{z_i^*} \approx \bo \Lambda^* ({\mb U^*}\t \mb U )\mb U_{i,\cdot}\t $ for the embeddings, $\mb w_k^*$'s represent the population counterpart (center) of the rescaled rows of $\mb U$, and two summands in $\mb S_k^*$ are the covariance matrices of the two terms in $\bo \Lambda^* \mc L_i ={ \mb V^*}\t \mb E_{i,\cdot}\t + {\bo \Lambda^*}^{-1}{\mb U^*}\t \mc H(\mb E \mb E\t)_{\cdot,i}$, respectively. Accordingly, $\mb S_k^*$ can be interpreted as the limiting covariance matrix of the rescaled embedded vectors within cluster $k$. 

For reference, we summarize the notation used throughout the paper in the table below.
\begin{table*}[htbp]
\centering
\label{tab:notation-summary}
\setlength{\tabcolsep}{4pt}
\renewcommand{\arraystretch}{1.05}
\begin{tabular}{ll ll}
\toprule
Notation & Definition/Interpretation & Notation & Definition/Interpretation \\
\midrule
$\hat{\mb c}_k(\mb z)$ 
& Center estimate of $\mb U_i$ (Alg.~\ref{algorithm: CPSC})
& $\mb w_k^*$   
& Center of rescaled $\mb U_i$ \\

$\hat{\bo \Omega}_k(\mb z)$  
& Covariance estimate of $\mb U_i$ (Alg.~\ref{algorithm: CPSC})
& $\mb S_k^*$ 
& Limiting covariance of rescaled $\mb U_i$ \\
$\mc I_k(\mb z)$ & $\{i: z_i = k\} $& $n_k$ & $\big|\mc I_k(\mb z^*) \big|$ \\ 
$\beta$
& $\big(\tfrac{K\max_k n_k}{n}\big)\vee
   \big(\tfrac{n}{K\min_k n_k}\big)$
& $d$
& $n \vee p$
 \\
 $\tilde{\sigma}$ 
& $\max_{k\in[K]} \|\mathrm{Cov}(\mb E_k)\|^{1/2}$ & $\sigma$ 
& $\sqrt{\max_{i}\tr(\mathrm{Cov}(\mb E_i))/p}$\\
 $\bar{\sigma}$ 
& $\max_i 
   \|{\mb V^*}^{\!\top}\mathrm{Cov}(\mb E_i)\mb V^*\|^{1/2}$  & $\deltaop$ & defined in Theorem~\ref{thm: singular subspace perturbation theory} with $r = K$ \\ 
$\kappa$
& $\sigma_1^*/\sigma_K^*$ & $\kappa_{\mathsf{cov}}$
& $\deltaop / ( \sigma_K^*
   \sqrt{n\min_k \sigma_K(\mb S_k^*)})$ \\ 
$\omega_{a,b}$ & $ (\mb w_a^*-\mb w_b^*)^\top(\mb S_a^*)^{-1}(\mb w_a^*-\mb w_b^*)$ & $\nu$ 
& $\max_{a\neq b} \omega_{a,b}^{\frac12}/\min_{a\neq b} \omega_{a,b}^{\frac12}$ \\
$h(\mb z,\mb z^*)$
& $\tfrac{1}{n}\min_{\pi\in\Pi_K}
\sum_i \ind\{z_i\neq\pi(z_i^*)\}$
& $l(\mb z,\mb z^*)$
& $\sum_{i\in[n]} \omega_{z_i, \pi^*(z_i^*)} \ind\{z_i\neq\pi^*(z_i^*)\}$ \\
\bottomrule
\end{tabular}

\emph{Note.}
$\pi^*\coloneqq\argmin_{\pi\in\Pi_K}\sum_{i\in[n]}\ind\{z_i\neq\pi(z_i^*)\}$.
\end{table*}

We impose the following assumption on the signal strength and regularity for both the Gaussian mixture and general mixture settings. 
        \begin{assumption}\label{assumption: algorithm}
Assume that the cluster centers $\{\bo \theta_k^*\}_{k\in[K]}$ are linearly independent and the projected covariance matrices $\mb S_1^*,\ldots,\mb S_K^*$ have rank $K$, $\mathsf{SNR} = \omega( \kappa^2 \kappa_{\mathsf{cov}}^{8} K^3 \beta^3  \nu^2  \sqrt{\log \log d})$, $\sigma_r^* \gg \tilde \sigma^2 n + \sigma \tilde \sigma\sqrt{np}   +  \bar \sigma  \sqrt{n}  \sigma_1^*$, and $\min\{\frac{n}{(\log d)^{10}},  \frac{p^{\frac16} \sigma^{\frac13}}{\tilde\sigma^{\frac13} (\log d)^2} \} =  \omega( \kappa^4 \kappa_{\mathsf{cov}}^4 \beta^4 K^5)$.  Additionally, $\min_{k \in [K]} \mathrm{rk}(\bo \Sigma_k) \gtrsim (\log(p\vee n))^3 $ and $\min_{k_1, k_2 \in [K]} \mathrm{rk}(\bo \Sigma_{k_1} \bo \Sigma_{k_2}) \gtrsim (\log(p\vee n))^3 $. 
        \end{assumption}

\subsection{Guarantee for Anisotropic Gaussian Mixtures}
The assumption for Gaussian case is as follows: 
\begin{assumption}[Gaussian Noise]\label{assumption: gaussian noise}
    We assume that $\mb E_{i}\in \bb R^p$, $i\in[n]$ independently follow a multivariate Normal distribution with mean zero and covariance matrix $\mb \Sigma_{z_i^*}$.  
\end{assumption}

\begin{theorem}
        \label{theorem: upper bound for algorithm (gaussian)}
        Suppose that Assumption~\ref{assumption: gaussian noise} holds and the clustering error of the initialization satisfies $l(\hat{\mb z}^{(0)}, \mb z^*)\leq c_1 \frac{n}{\beta K	(\log d)^4}$ with probability at least $1- O(n^{-2}) \vee \exp\big(-(1+o(1)\frac{\snr^2}{2}\big)$ for some sufficiently small constant $c_1$. 
        Then given arbitrary positive $c_2$ and $\epsilon < 1$, the following holds for all $t \geq c_3\log n$ for some constant $c_3$:
            \begin{enumerate}[itemsep=1pt, topsep=2pt]
        \item If $\mathsf{SNR} < \sqrt{(2 + \epsilon)\log n}$, then
        \eq{\bb E[h(\hat{\mb z}^{(t)},\mb z^*)] \leq \exp\Big(- (1+ o(1)) \frac{\snr^2}{2}\Big).
        \label{eq: upper bound for algorithm} 
        }
        \item If $\mathsf{SNR}  \geq \sqrt{(2 + \epsilon)\log n }$ and $\nu = o(d)$, then $h(\hat{\mb z}^{(t)},\mb z^*) = 0$ with probability $1 - o(1)$. 
    \end{enumerate}
        \end{theorem}

\begin{remark}
    Our work diverges from another line of studies \cite{azizyan2013minimax,azizyan2015efficient,cai2019chime,cai2021convex} in Gaussian mixture models, which focused on the constant separation regime, i.e., $\snr \asymp 1$, under certain sparsity assumptions. In contrast, we address the growing separation regime with $\snr \rightarrow \infty$, and our approach to the upper bounds described later drastically differs from those works. Specifically, we focus directly on characterizing the actual clustering risk, which we show to exhibit an exponential decay in $\snr$, whereas previous studies centered on the gap between the actual risk and the Bayesian oracle risk—quantities typically of polynomial order, arising from errors in estimating the population parameters. 
    Interestingly, as a side note, our lower bounds in Section~\ref{subsection: minimax lower bound for anisotropic gaussian mixtures} reveal that, from a information-theoretic perspective, a degenerate form of $\snr$ is intrinsically tied to a gap between the actual risk and the Bayesian oracle risk, sharing similarities with the lower bounds in those works. 
\end{remark}

In a nutshell, we establish exponential bounds $\exp\big(-(1 + o(1)) \frac{\snr^2}{2}\big)$ for the clustering risk in the weak-signal regime and establish exact recovery guarantees for the strong-signal regime. To grasp the geometric meaning of $\snr$, recall its definition: 
\begin{align}
&\mathsf{SNR}(\mb z^*, \{\bo \theta_k^*\}_{k\in[K]}, \{\mb \Sigma_k\}_{k\in[K]})^2
\coloneqq \min_{j_1\neq j_2\in [K]}\min_{\mb x \in \bb R^K}\big\{(\mb x - \mb w_{j_1}^*)\t {\mb S^*_{j_1}}^{-1}(\mb x- \mb w_{j_1}^*): \\ 
&\qquad  \qquad\qquad (\mb x - \mb w_{j_1}^*)\t {\mb S^*_{j_1}}^{-1}(\mb x- \mb w_{j_1}^*) = (\mb x - \mb w_{j_2}^*)\t {\mb S^*_{j_2}}^{-1}(\mb x- \mb w_{j_2}^*)\big\}.
\end{align}
Note that $\mb S_k^*$ coincides with the covariance of $\bo \Lambda^* \mk L_i$, and ${\mb w_k^*}\t =  \mb U_{i,\cdot}^* \bo \Lambda^*$ for $i \in \mc I_k(\mb z^*)$. To illustrate, consider the case $K = 2$, if we apply a log-likelihood classifier (with the log terms ignored) to a two-component Gaussian mixture model with centers $\{\mb w_k^*\}_{k\in[2]}$ and covariance matrices $\{ \mb S_k^*\}_{k\in[2]}$, i.e., $\hat{z}(\mb y) = \argmin_{k\in[2]} (\mb y - \mb w_k^*)\t {\mb S_k^*}^{-1} (\mb y - \mb w_k^*)$, the decision boundary coincides exactly with the equality condition in the definition of $\snr$. Geometrically, $\snr$ represents the minimum weighted distance from the centers to this boundary. Moreover, under the condition $\snr \rightarrow \infty$, one has 
$$\bb P_{\mb y \sim \mc N(\mb w_1^*, \mb S_1^*)}\Big[\hat{z}(\mb y) \neq 1\Big] \leq \bb P_{\bo \epsilon \sim \mc N(\mb 0, \mb S_1^*)}\Big[\bignorm{{\mb S_1^*}^{-\frac12} \bo \epsilon}_2 \geq \snr\Big]= \exp\big(-(1+o(1))\frac{\snr^2}{2}\big),
$$
and the same holds for $\bb P_{\mb y \sim \mc N(\mb w_2^*, \bo \Sigma_2)}\Big[\hat{z}(\mb y) \neq 2\Big]$. In other words, any algorithm that effectively approximates the (pseudo) likelihood ratio classifier achieves the same exponential misclustering rate. This means our algorithm is optimal among all the spectral-decomposition-based algorithms \cite{abbe2022,zhang2024leave}. This heuristic is rigorously justified by our perturbation theory developed in Section~\ref{subsec: subspace perturbation}, together with a refined iterative characterization in our proofs, with mild requirements on $\snr$ and $l(\hat{\mb z}^{(0)}, \mb z^*)$.

\subsection{Guarantee for General Mixtures with Local Dependence}

Thanks to the universality result, we can accommodate arbitrary forms of dependence within each block and establish theoretical guarantees for COPO in a wide range of general mixture models. Naturally, we need to take into account both the degree of incoherence and the extent of local dependence for regularity. 
\begin{assumption}[General Noise with Local Dependence] We assume the following (unknown) block structures and regularity conditions on the noise matrix $\mathbf E$: 
    \label{assumption: bounded noise}
        \begin{enumerate}[itemsep=1pt, topsep=2pt]
            \item  There exists a partition $\{S_b\}_{b \in[l]}$ of $[p]$ such that $|S_b| \leq m$ for every $b\in[l]$ and $\{\mb E_{i,S_{b}}\}_{b\in[l]}$ are mutually independent for each $i\in[n]$. \label{item: bounded noise assumption 1}
            \item Either $|E_{i,j}| \leq B$ for all $i\in[n]$, $ j\in[p]$, or there exists a random matrix $\mb E'=(E_{i,j}') \in\mathbb{R}^{n\times p}$ obeying the same dependence structure in Assumption~\ref{assumption: bounded noise}.\ref{item: bounded noise assumption 1}, such that for any $i\in[n], j\in[p]$, it holds that $\norm{E'_{i,j}}_\infty\leq B$, $\mathbb E[E'_{i,j}] = 0$, $\big\|\mathrm{Cov}(\mb E'_{i,\cdot})\big\| \lesssim \norm{\mathrm{Cov}(\mb E_{i,\cdot})}$, and $\bb P(E_{i,j} = E'_{i,j}) \ge 1 - O(d^{-c-2})$. \label{item: bounded noise assumption 2}
            \item Define the incoherence degrees of $\mb U^*$ and $\mb V^*$ as 
            $\mu_1 \coloneqq {\norm{\mb U^*}\ti^2 n}/{K}$, and $\mu_2 \coloneqq {\norm{\mb V^*}\ti^2 p}/{K}.$
            Assume $B\Big\{ \frac{m^{\frac32}(\log d)^6}{\sigma \sqrt{n}}, \frac{m\sqrt{\mu_1 K} \log d}{\bar \sigma \sqrt{n}}, \frac{m\sqrt{\mu_2 K} \log d}{\bar \sigma \sqrt{p}}\Big\}\ll 1 $ and $ml \asymp p$. 
        \label{item: bounded noise assumption 3}
        \end{enumerate}
    \end{assumption}
We remark that Assumption~\ref{assumption: bounded noise}.\ref{item: bounded noise assumption 2} covers all the sub-Gaussian/sub-Exponential distributions with a bounded ratio between the sub-Gaussian/sub-Exponential norm and the standard deviation (e.g., distributions with constant parameters).    
Assumption~\ref{assumption: bounded noise}.\ref{item: bounded noise assumption 3} is aligned with conditions considered in \cite{cai2021subspace,yan2024inference}. Specifically, for the incoherence degrees $\mu_1$ and $\mu_2$, these allow us to overcome the irregularity in non-Gaussian cases so as to study their Gaussian-like behaviors, via the singular subspace perturbation theory.

\begin{theorem}
        \label{theorem: upper bound for algorithm (bounded)}
        Suppose that Assumption~\ref{assumption: bounded noise} holds and the clustering error of the initialization satisfies $l(\hat{\mb z}^{(0)}, \mb z^*)\leq c_1 \frac{n}{\beta K	(\log d)^4}$ with probability at least $1- O(n^{-2}) \vee \exp\big(-(1+o(1)\frac{\snr^2}{2}\big)$ for some sufficiently small constant $c_1$. 
        Then given arbitrary positive $c_2$ and $\epsilon$, the following holds for all $t \geq c_3\log n$ for some constant $c_3$: 
            \begin{enumerate}[itemsep=1pt, topsep=2pt]
        \item If $\mathsf{SNR} < \sqrt{(2 + \epsilon)\log n}$, then
        \eq{\bb E[h(\hat{\mb z}^{(t)},\mb z^*)] \leq \exp\Big(- (1+ o(1)) \frac{\snr^2}{2}\Big).\label{eq: upper bound for algorithm (bounded)} 
        }
        \item If $\mathsf{SNR}  \geq \sqrt{(2 + \epsilon)\log n }$ and $\nu = o(d)$, then $h(\hat{\mb z}^{(t)},\mb z^*) = 0$ with probability $1 - o(1)$. 
    \end{enumerate}
        \end{theorem}

We emphasize that Theorem~\ref{theorem: upper bound for algorithm (bounded)} attains the sharp constant multiplier $-1/2$ of $\snr^2$ in the exponent. This sharp rate hinges on precise control of the $2k$-moment of $\norm{\mb E_{i,\cdot}\mb A}_2$ given a matrix $\mb A \in \bb R^{p \times K}$ via the universality results of \cite{brailovskaya2022universality} (see Lemma~C.9). This allows us to precisely differentiate between the regimes of weak consistency (almost exact recovery) and strong consistency (exact recovery), i.e., whether we can obtain $\bb P[h(\hat{\mb z}^{(t)}, \mb z^*) = 0] = 1 - o(1)$. In addition, establishing consistency for center and covariance estimation under sample misspecification introduces further technical difficulties, due to the dependence across $\mk L_i$’s. Addressing this requires a careful decoupling argument.

\subsection{Initialization}
The required condition on the initialization error can be achieved by existing spectral methods.
In particular, the procedure of \cite{abbe2022}, which applies $(1+\epsilon)$-approximate $K$-means
to the rows of $\mb U$, yields the desired guarantee.

\begin{proposition}
\label{proposition: spectral initialization}
Suppose the assumptions of
Theorem~\ref{theorem: upper bound for algorithm (gaussian)}
or
Theorem~\ref{theorem: upper bound for algorithm (bounded)}
hold.
Then the clustering output $\hat{\mb z}^{(0)}$ produced by the algorithm in \cite{abbe2022}
satisfies
$
l(\hat{\mb z}^{(0)}, \mb z^*)
\;\ll\;
\frac{n}{\beta K(\log d)^4}
$
with probability at least
$1-O(n^{-2}) \,\vee\, \exp\!\bigl(-(1+o(1))\tfrac{\snr^2}{2}\bigr)$.
\end{proposition}

We emphasize that this initialization consistency does not follow directly from the expectation bound
on $\bb E[h(\hat{\mb z}^{(0)},\mb z^*)]$ established in \cite{abbe2022}.
Instead, it relies on our singular subspace perturbation theory combined with a careful decoupling argument; see details in Section~C.7.

\subsection{Implications of the Upper Bounds}\label{subsection: theoretical guarantee}
\label{subsec: implications of upper bounds}
We discuss the applicability of the conditions imposed in Theorem~\ref{theorem: upper bound for algorithm (gaussian)} and Theorem~\ref{theorem: upper bound for algorithm (bounded)}. For clarity of the following discussion on the signal strength, dependence structure, and noise pattern, we focus on the case with $\tau, K, \beta$, and $\mu_1 \vee \mu_2$ are $O(1)$.

\noindent\emph{Signal Strength.} To begin with, our theory extends to regimes with weak signal-to-noise ratios, namely when the $\mathsf{SNR}$ defined in \eqref{eq: SNR definition} grows just beyond $\sqrt{\log \log d}$. This extension is made possible by our new subspace perturbation analysis in Section~\ref{subsec: subspace perturbation}. Since exact recovery requires $\snr \geq \sqrt{(2 + \epsilon)\log n}$, our results cover the intermediate regime where a small fraction of samples may be misclustered. This is consistent with the discussion in \cite{abbe2022}, though in our case the analysis applies to a more adaptive algorithm, which necessitates additional technical effort in the proofs. 

\noindent\emph{Block Size.} We also comment on the block size under the general noise with the local dependence assumption. Assumption~\ref{assumption: bounded noise} implies that if $B$ is logarithmically greater (in terms of $d$) than  $\sigma$ {(the upper bound of the noise standard deviation)}, then the block size $m$ can scale as the order $O(p^{a})$ with $a \in (0,1)$, which corresponds to cases with severely dependent entries in the noise matrix.

\noindent\emph{Spiked Noise Cases.} Our theory allows for some spiked directions of the covariance matrices $\mb\Sigma_k$ that do not align with the subspace spanned by the cluster centers but lead to a large $\tilde\sigma$. This can be interpreted as the influence of some latent factors within the noise \cite{tang2024factor} and is commonly observed in real-world data. While a logarithmic requirement on the effective rank of $\bo \Sigma_k$ and $\bo \Sigma_{k_1}\bo \Sigma_{k_2}$ is imposed in Assumption~\ref{assumption: algorithm}--- primarily to ensure consistent  estimation of $\{\mb S_k^*\}_{k\in[K]}$---our theory remains applicable in scenarios where noise is small along most directions.

We further provide several important implications of our upper bound in comparison with the state-of-the-art analysis.  

\noindent\emph{Covering High-dimensional Regimes and Matching Sharp Thresholds in Special-case Models.} The sharp thresholds for isotropic Gaussian mixtrues have been investigated in \cite{ndaoud2018sharp,chen2021cutoff}. Taking a symmetric two-component isotropic Gaussian mixture with $\sigma =1$ and $n_1 = n_2$ for example, the sharp lower bound is given by $\exp\big(-(1 + o(1))\frac{\|\bo \theta_1^*\|_2^4}{2(\|\bo \theta_1^*\|_2^2 + p/n)} \big)$ in \cite[Theorem~2]{ndaoud2018sharp}. 
    When COPO is applied to a randomly perturbed version of this model, it achieves the same rate\footnote{
    Since the data matrix of the two-component symmetric Gaussian mixture model is rank-one, 
    we perturb it to rank two by adding a random unit vector $\mb v\in \bb R^p$, scaled by $\eta$, to all rows, i.e., we let $\mb R' = \mb R + \eta \mb 1_n \mb v\t $.
    Moreover, if $\eta = c \norm{\bo \theta_1^*}_2$ with $0 < c < 1$, one can prove by some concentration inequalities that $\frac{\|\bo \theta_1^*\|_2^4}{\|\bo \theta_1^*\|_2^2 + p/n} \Big/ \snr^2 \rightarrow 1$ by proving that the left top singular vector of $\mb R^* + \eta \mb 1_n \mb v\t $ asymptotically aligns with the direction of $(1\cdot \ind\{z_i^* = 1\} + 2 \cdot \ind\{z_i^* = 2\})_{i\in[n]}$. Hence $\exp\big(-(1+ o(1)) \snr^2/2\big) = \exp\big(-(1 + o(1))\frac{\|\bo \theta_1^*\|_2^4}{2(\|\bo \theta_1^*\|_2^2 + p/n)} \big)$. 
    }. 
    Moreover, for $p \gg n$, their lower bound indicates that almost exact recovery is possible only when $\norm{\bo \theta_1^*}_2 \gg (\frac{p}{n} )^{\frac14}$, whereas
    our method only requires $\norm{\bo \theta_1^*}_2 \gg (\frac{\log \log d \cdot p}{n} )^{\frac14}$ with an additional factor $(\log\log d)^{\frac14}$. Importantly, while their algorithms rely on specific structures in toy models, ours adapts to a much broader range of settings. 
    On the other hand, in the regime $(\frac{p}{n} )^{\frac12} \gg  \norm{\bo \theta_1^*}_2 \gg (\frac{p}{n} )^{\frac14}$,  the spectral clustering studied in \cite{loffler2021optimality,zhang2024leave} fail to achieve consistent estimation due to the insufficiency of the vanilla SVD.
    
\noindent\emph{Surpassing Homogeneous-Covariance-Focused Methods in High Dimensions.} 
The homogeneous covariance case is subsumed in our general inhomogeneous covariance case, while the former has been extensively studied in \cite{davis2021clustering,chen2024optimal}. Specifically for the cases with two centers symmetric about zero, \cite{davis2021clustering} provided an upper bound guarantee for their integer program but requires $n / p\log n \rightarrow \infty$, {i.e., not a high-dimensional scenario}. For more general $K$-component Gaussian mixtures with homogeneous covariance matrices, the hard-EM method proposed in \cite{chen2024optimal} requires $Kp = O(\sqrt{n})$, again not high-dimensional. In contrast, our method
    offers a robust solution to challenging high-dimensional mixture models.

\noindent\emph{Computational Efficiency.} We highlight the computational efficiency of the proposed method compared with the EM-based algorithm. COPO only requires performing the truncated eigen-decomposition on $\mc H(\mb Y \mb Y\t)$, which has a computation complexity of $O(npK)$. Additionally, COPO involves iterative averaging over the {projected} center space $\bb R^K$ and the {projected} covariance matrix space $\bb R^{K \times K}$ in $O(\log n)$ iterations. 

\section{Minimax Lower Bounds and Fundamental Limits}\label{section: lower bounds}
So far we have presented the recovery guarantees for the proposed COPO algorithm (Algorithm~\ref{algorithm: CPSC}). 
We next turn back to the question raised at the beginning: what is the information-theoretic limit of the clustering error? 
In the literature, this question has remained largely unexplored for cases with general covariance matrices, as it is intricate to determine which configurations of centers and covariances capture the true complexity of the problem. 
In this section, we provide two different perspectives to characterize the information-theoretic limits, which depend on different aspects of the covariance structure, respectively. 

Before proceeding, we first interpret the interplay between the separation between cluster centers and high dimensionality in the definition of $\mathsf{SNR}$ in Eq.\eqref{eq: SNR definition}; i.e., between $\sigma_K^*$ and $\sqrt{p}$. For simplicity, assume that the singular values of ${\mb V^*}\t \bo \Sigma_k \mb V^*$ for $k \in[K]$, the quantities $\min_{k_1,k_2 \in [K]}\tr(\bo \Sigma_{k_1} \bo \Sigma_{k_2})/p$ and $\max_{k_1,k_2 \in [K]}\tr(\bo \Sigma_{k_1} \bo \Sigma_{k_2})/p$, as well as the condition number $\kappa$ are all of constant orders. In this setting, the scale of $\mathsf{SNR}$ hinges on the scale of 
$\mb S_k^* = \mb S_k^{\mathsf{mod}} + \mb S_k^{\mathsf{exc}}$,
where $\mb S_k^{\mathsf{mod}} \coloneqq {\mb V^*}\t \bo \Sigma_k \mb V^* \asymp \mb I$ and $ \mb S_k^{\mathsf{exc}} \coloneqq {\bo \Lambda^*}^{-1} {\mb U^*}\t \bb E\big[ \mc H(\mb E\mb E\t)_{\cdot,i}\mc H(\mb E\mb E\t)_{i,\cdot} \big] \mb U^* {\bo \Lambda^*}^{-1} \asymp \frac{p}{{\sigma_K^*}^2} \mb I$ for any $i$ with $z_i^* = k$. 
We therefore consider two distinct regimes, where the dominant contribution to $\mb S_k^*$ comes from either $\mb S_k^{\mathsf{mod}}$ or $\mb S^{\mathsf{exc}}_k$, respectively.

\noindent\emph{Moderately high dimension ($p = o({\sigma_K^*}^2) $).   } When the dimension $p$ is not extremely large, the term $\mb S_k^{\mod}$ is the primary contributor to $\mb S_k^*$.  A straightforward calculation shows that
    \begin{align}
    &\mathsf{SNR} \approx \mathsf{SNR}^{\mathsf{mod}}, \text{  where } {\mathsf{SNR}^{\mathsf{mod}}}^2 \coloneqq \min_{j_1\neq j_2\in [K]}\min_{\mb x \in \bb R^K}\big\{(\mb x - \mb w_{j_1}^*)\t {\mb S^{\mod}_{j_1}}^{-1}(\mb x- \mb w_{j_1}^*): \\ 
&\qquad  \qquad\qquad (\mb x - \mb w_{j_1}^*)\t {\mb S^{\mod}_{j_1}}^{-1}(\mb x- \mb w_{j_1}^*) = (\mb x - \mb w_{j_2}^*)\t {\mb S^{\mod}_{j_2}}^{-1}(\mb x- \mb w_{j_2}^*)\big\}. 
\label{eq: SNR approx SNR^pn}
    \end{align}
This also corresponds to the case where $\mb U \mb U\t \mb U^* - \mb U^* \approx \mb E\mb V^*{\bo \Lambda^*}^{-1}$ holds, i.e., the fluctuation of the singular subspace estimation is primarily due to the projected noise.

\noindent\emph{Excessively high dimension ($p= \omega({\sigma_K^*}^2) = o({\sigma_K^*}^4 / n)$). } The excess growth of $p$ gives rise to the relation $\mb S_k^* \approx \mb S^{\exc}_k$. Consequently, one has 
    \begin{align}
         &\mathsf{SNR} \approx \mathsf{SNR}^{\mathsf{exc}}, \text{  where } {\mathsf{SNR}^{\mathsf{exc}}}^2 \coloneqq \min_{j_1\neq j_2\in [K]}\min_{\mb x \in \bb R^K}\big\{(\mb x - \mb w_{j_1}^*)\t {\mb S^{\exc}_{j_1}}^{-1}(\mb x- \mb w_{j_1}^*): \\ 
&\qquad  \qquad\qquad (\mb x - \mb w_{j_1}^*)\t {\mb S^{\exc}_{j_1}}^{-1}(\mb x- \mb w_{j_1}^*) = (\mb x - \mb w_{j_2}^*)\t {\mb S^{\exc}_{j_2}}^{-1}(\mb x- \mb w_{j_2}^*)\big\}. 
\label{eq: SNR approx SNR^qn}
    \end{align}
    In light of \eqref{eq: linear approximation of U}, the condition $p = o({\sigma_K^*}^4 / n)$ also implies that the difference $\mb U \mb U\t \mb U^* - \mb U^*$ is dominated by the quadratic noise term $\mc H(\mb E \mb E\t) \mb U^*{\bo \Lambda^*}^{-2}$.

In summary, in the idealized setting above, as long as the growth rates of $\sigma_K^*$ and $\sqrt{p}$ differ, the upper bound established in Theorem~\ref{theorem: upper bound for algorithm (gaussian)} and Theorem~\ref{theorem: upper bound for algorithm (bounded)} degenerates to either $\exp\big(-(1+o(1)) \frac{{\mathsf{SNR}^{\mathsf{mod}}}^2}{2}\big)$ or $\exp\big(-(1+o(1)) \frac{{\mathsf{SNR}^{\mathsf{exc}}}^2}{2}\big)$. These expressions will reappear, yet as minimax lower bounds, revealing the information-theoretic limits of anisotropic Gaussian mixtures as well as indicating the optimality of Algorithm~\ref{algorithm: CPSC}. Notably, the two quantities are tied respectively to the projection of $\bo \Sigma_k$ onto the subspace spanned by the centers and to the trace terms $\tr(\bo \Sigma_{k_1} \bo \Sigma_{k_2})$ for $k_1,k_2\in[K]$. Yet neither fully captures the information in $\bo \Sigma_k$, revealing the inherent barrier to retrieving the complete parameters in high dimensions. We summarize the results in Table~\ref{table: phase transition and lower bounds}.

\begin{table}[htbp]
\centering
\footnotesize
\setlength{\tabcolsep}{6pt}
\renewcommand{\arraystretch}{1.1}
\caption{Phase transition regimes and corresponding lower bounds}
\label{table: phase transition and lower bounds}

\begin{tabular}{llll}
\toprule
Regime 
& $\snr$ 
& Lower bound 
& $(\mb U\mb O-\mb U^*)_{i,\cdot}$ \\
\midrule
$n \ll p \ll (\sigma_K^*)^2$ \;(\emph{moderate})
& $\mathsf{SNR}^{\mathsf{mod}}$
& $\exp\!\big(-(1+o(1))\frac{(\snr^{\mathsf{mod}})^2}{2}\big)$
& $\mb E_{i,\cdot}\mb V^*{\bo\Lambda^*}^{-1}$ \\[0.3em]

$(\sigma_K^*)^2 \ll p \ll (\sigma_K^*)^4/n$ \;(\emph{excessive})
& $\mathsf{SNR}^{\mathsf{exc}}$
& $\exp\!\big(-(1+o(1))\frac{(\snr^{\mathsf{exc}})^2}{2}\big)$
& $\mc H(\mb E\mb E^\top)_{i,\cdot}\mb U^*{\bo\Lambda^*}^{-2}$ \\
\bottomrule
\end{tabular}
\end{table}

\subsection{Minimax Lower Bound Regarding $\mathsf{SNR}^{\mathsf{mod}}$ }\label{subsection: minimax lower bound for anisotropic gaussian mixtures}
As we already alluded to, the definition of $\mathsf{SNR}^{\mathsf{mod}}$ hints a subspace perspective that exclusively concerns the information contained within the span of the cluster centers $\bo\theta_1^*,\ldots,\bo\theta_K^*$. 
Next, we investigate the role of $\snr^{\mod}$ within the minimax framework. However, rather than simply selecting a least favorable subset and reducing the minimax rate to a two-point testing problem as in \cite{lu2016statistical,gao2017achieving,zhang2016minimax}, we delve into the dimension-reduction phenomenon associated with $\snr^{\mod}$ and analyze the gap between the actual risk and the Bayesian oracle risk, which is in stark contrast to the two-point testing approach.

\noindent\emph{Bayesian Oracle Risk. } An important quantity to assess clustering hardness is the Bayesian oracle risk, which assumes access to the full information of the cluster centers and covariance matrices and is related to complete distributional information, except the unknown label assignments.
For clarity, we consider two balanced Gaussian mixture components $\mc N(\bo \theta_k^*, \mb \Sigma_k)$, $k\in[2]$
with a prior $\frac{1}{2}\delta_{z_i^* = 1} + \frac{1}{2} \delta_{z_i^* = 2}$ for each sample. We examine the likelihood-ratio estimator $\tilde{\mb z}$ equipped with the oracle information {when parameters are known}:
\begin{equation}
     \tilde z(\mb y_i) =1 \cdot \ind{\big\{\phi_{\bo \theta_1^*, \bo \Sigma_1}(\mb y_i) \geq \phi_{\bo \theta_2^*, \bo \Sigma_2}(\mb y_i)\big\}} + 2 \cdot \ind{\big\{\phi_{\bo \theta_1^*, \bo \Sigma_1}(\mb y_i) > \phi_{\bo \theta_2^*, \bo \Sigma_2}(\mb y_i)\big\}}. \label{eq: likelihood ratio estimator}
\end{equation}
By Neyman-Pearson's theorem, the Bayesian oracle risk, arising from two-point testing, is
\begin{align}
     & \mc R^{\mathsf{Bayes}}(\{\bo \theta_j^*\}_{j\in[2]}, \{\mb \Sigma_j\}_{j\in[2]})
     = \frac{1}{2} \bb E_{\mb y\sim \mc N(\bo \theta_1^*, \mb \Sigma_1)}\big[\tilde z(\mb y) = 2\big] +  \frac{1}{2}\bb E_{\mb y \sim \mc N(\bo \theta_2^*, \mb \Sigma_2)}\big[\tilde z(\mb y)=  1\big].
    \label{eq: relation between bayesian oracle risk and likelihood ratio}
\end{align}
A common approach to lower bound the minimax clustering risk—developed in \cite{zhang2016minimax,gao2017achieving,gao2018community,chen2024achieving}—is to equate it to the Bayesian oracle risk:  
\eq{
    \inf_{\hat{\mb z}} \sup_{(\mb z^*, \bo \eta) \in \mb \Theta_z \times\{ (\bo \theta_1^*, \bo \theta_2^*, \mb \Sigma_1, \mb \Sigma_2)\}}\bb E[h(\hat{\mb z}, \mb z^*)] \gtrsim  \rbayes(\{\bo\theta_j^*\}_{j\in[2]}, \{\mb \Sigma_j\}_{j\in[2]}), 
}
where $\mb \Theta_z$ denotes a collection of approximately balanced clustering. 
The lower bounds in \cite{chen2024achieving} follow this strategy, which is effective when it is feasible to estimate complete distribution information so as to approximate the likelihood ratio estimator $\tilde z$.
However, in high-dimensional anisotropic settings, estimating the full $p \times p$ covariance matrices is impossible, rendering $\tilde z$ infeasible. This raises the question of whether the Bayesian oracle risk still tightly characterizes the minimax risk. In what follows, we first show that there exists such a parameter set that these two coincide. Beyond that, however, we justify that, for more natural and challenging settings where the two diverges, the quantity $\exp\big(-(1 +o(1))\frac{{\snr^{\mod}}^2}{2}\big)$ remains a valid lower bound.

With the above characterization, we will develop a preliminary understanding of the minimax misclustering rate. We define an approximately balanced cluster assignment set
\eq{
\mb \Theta_z \coloneqq \mb \Theta_z(\beta) = \left\{\mb z\in [2]^n: |\mathcal I_k(\mb z) | \in \left[n/2\beta, \beta n/2\right], k = 1,2\right\}, \label{definition: Theta_z}
}
and a parameter set of cluster centers and covariance matrices
\begin{align}
	& \tilde{\mb \Theta} \coloneqq \tilde{\mb\Theta}(n, p,  \mb S_1^{\mod}, \mb S_2^{\mod}, \mathsf{SNR}_0)= \Big\{( \bo \theta_1^*, \bo \theta_2^*, \mb \Sigma_1, \mb \Sigma_2): \mathsf{SNR}^{\mathsf{mod}}(\{\bo \theta_k^*\}, \{\mb \Sigma_k\}) =   \mathsf{SNR}_0^{\mathsf{mod}}; \\&   (\bo \theta_1^*, \bo \theta_2^*)=\mb V^* \mb R \text{ for $\mb V^*\in O(p,2)$ and $\mb R \in \mathrm{GL}_2(\bb R)$};~
\quad  {\mb V^{*\top}} \mb \Sigma_k \mb V^* = \mb S_k^{\mod}, k \in [2]\Big\}. 
\end{align}
Note that $\tilde{\mb \Theta}$ contains a group of parameters with easy-to-handle covariance matrices, for example, 
$\mb \Sigma_k = \mb V^*\mb S_k^{\mod}{\mb V^{*\top}} + \mb V_\perp^*{\mb V^*_\perp}$, for $k = 1,2.$
This structure allows the likelihood-ratio estimator to be reduced to that of a $K$-dimensional Gaussian mixture model, whose Bayesian oracle risk is tractable and takes the form $\exp\big(- (1 + o(1)){\mathsf{SNR}_0^{\mod}}^2/ 2\big)$. Hence, we are able to connect the problem to the Bayesian oracle risk and obtain the following lower bound. 
\begin{corollary}
	\label{corollary: simple gaussian lower bound}
	Consider two fixed projected covariance matrices $\mb S_1^{\mod}$, $\mb S_2^{\mod}$ and a parameter set $\mb \Theta \coloneqq \mb \Theta_z \times \tilde{\mb \Theta}$. If $\mathsf{SNR}_0^{\mod} \rightarrow\infty$, then the minimax misclustering rate over $\mb \Theta$ satisfies 
	\eq{
	\inf_{\hat{\mb z}} \sup_{(\mb z^*, {\bo \eta}) \in \mb \Theta}\bb E[h(\hat{\mb z}, \mb z^*)] \geq \exp\Big(- (1 + o(1))\frac{{\mathsf{SNR}_0^{\mod}}^2}{2}\Big).
	}
\end{corollary}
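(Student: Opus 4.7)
The strategy is to specialize the supremum over $\mb\Theta$ to a single, conveniently chosen instance inside $\tilde{\mb\Theta}$ whose covariance matrices have a block structure that is ``homogeneous in most directions,'' so that Proposition \ref{proposition: SNR' and Bayesian oracle risk 2} identifies the corresponding Bayesian oracle risk with $\exp(-(1+o(1))\mathsf{SNR}_0^2/2)$, and then to convert this Bayes-risk identity into a minimax lower bound via a standard Bayesian reduction.

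\textbf{Step 1 (choosing the hard instance).} Pick any $\mb V^*\in O(p,2)$ with orthogonal complement $\mb V^*_\perp\in O(p,p-2)$, and set
\[
\mb\Sigma_k := \mb V^* \mb S_k^* \mb V^{*\top} + \mb V^*_\perp \mb V^{*\top}_\perp,\qquad k=1,2,
\]
together with centers $(\bo\theta_1^*,\bo\theta_2^*)=\mb V^*\mb R$ calibrated so that $\mathsf{SNR}(\{\bo\theta_k^*\},\{\mb\Sigma_k\})=\mathsf{SNR}_0$. Since $\mb V^{*\top}\mb\Sigma_k\mb V^* = \mb S_k^*$, this tuple lies in $\tilde{\mb\Theta}$; its distinguishing feature is that $\mb\Sigma_1$ and $\mb\Sigma_2$ agree on all of $\mathrm{range}(\mb V^*_\perp)$ and differ only on the two-dimensional subspace $\mathrm{range}(\mb V^*)$.

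\textbf{Step 2 (invoking Proposition \ref{proposition: SNR' and Bayesian oracle risk 2}).} Enlarge $\mb V^*$ to a three-column orthonormal matrix $\tilde{\mb V}\in O(p,3)$ by appending an arbitrary unit vector from $\mathrm{range}(\mb V^*_\perp)$, and let $\tilde{\mb V}_\perp$ collect the remaining directions (so $a=3>2$). In the basis $(\mb V^*,\tilde{\mb V}_\perp)$ each $\mb\Sigma_k$ is block diagonal with blocks $\mb S_k^*$ and identity, which verifies conditions (a)--(d) of Proposition \ref{proposition: SNR' and Bayesian oracle risk 2} directly. Inequality \eqref{eq: SNRfull and SNR} then gives $\snrfull^2 \geq \mathsf{SNR}_0^2 - |\log\det\mb S_1^* - \log\det\mb S_2^*|$; since the log-determinant gap is a fixed $O(1)$ constant and $\mathsf{SNR}_0\to\infty$, we conclude $\snrfull = (1+o(1))\mathsf{SNR}_0$, and \eqref{eq: inhomo-cov rbayes} yields
\[
\rbayes(\{\bo\theta_k^*\},\{\mb\Sigma_k\}) \;=\; \exp\!\left(-(1+o(1))\tfrac{\mathsf{SNR}_0^2}{2}\right).
\]

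\textbf{Step 3 (Bayes-to-minimax reduction).} With the parameters frozen as above, lower bound $\sup_{\mb z^*\in\mb\Theta_z(\beta)}\bb E[h(\hat{\mb z},\mb z^*)]$ by the integrated risk under a uniform prior on $\{1,2\}^n$ conditioned on the approximate balance event, whose unconditional version makes the $\mb y_i$ i.i.d.\ draws from $\tfrac12\mathcal N(\bo\theta_1^*,\mb\Sigma_1)+\tfrac12\mathcal N(\bo\theta_2^*,\mb\Sigma_2)$. Under this product structure the optimal Bayes rule decouples across samples, and each coordinate contributes at least the single-sample Bayes error $\rbayes$, so a straightforward summation gives a per-sample lower bound of $\rbayes = \exp(-(1+o(1))\mathsf{SNR}_0^2/2)$. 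The main obstacle is that the Hamming distance $h$ takes a minimum over the two label permutations, which a priori could collapse the per-coordinate bound; however, in the partial-recovery regime $\rbayes\to 0$ the balance constraint in $\mb\Theta_z(\beta)$ lets one restrict to estimators whose un-permuted error is below $1/2$, on which $h$ coincides with the un-permuted Hamming distance. This step is routine and follows exactly the template used in \cite{zhang2016minimax,gao2017achieving,gao2018community}, giving the claimed $\exp(-(1+o(1))\mathsf{SNR}_0^2/2)$ lower bound.
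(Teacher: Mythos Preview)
Your proposal is correct and follows essentially the same approach as the paper: pick the block-diagonal instance $\mb\Sigma_k=\mb V^*\mb S_k^*\mb V^{*\top}+\mb V^*_\perp\mb V^{*\top}_\perp$, use Proposition~\ref{proposition: SNR' and Bayesian oracle risk 2} to identify $\rbayes$ with $\exp(-(1+o(1))\mathsf{SNR}_0^2/2)$, and then reduce minimax to Bayes via the permutation-fixing trick in the style of \cite{zhang2016minimax,gao2018community}. The only refinement worth noting is that in Step~2, inequality \eqref{eq: SNRfull and SNR} gives only $\snrfull^2\ge\mathsf{SNR}_0^2-O(1)$, which by itself does not yield $\snrfull=(1+o(1))\mathsf{SNR}_0$; the matching upper bound is immediate here because the block-diagonal structure makes the $\snrfull$ minimization reduce exactly to the two-dimensional one (the $\mb V^*_\perp$-component contributes $\|\mb z\|^2$ to both the objective and both sides of the constraint, hence the minimizer has $\mb z=0$), so $\snrfull^2$ differs from $\mathsf{SNR}_0^2$ by at most $|\log|\mb S_1^*|-\log|\mb S_2^*||=O(1)$.
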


Although the above lower bounds presents our desired form, its proof hinges on the existence of parameters satisfying that the Bayesian oracle risk coincides with $\exp\big(-(1 + o(1){\snr^{\mod}_0}^2 / 2\big)$, a condition rarely met in practice. Typically, noise in the subspace spanned by the centers is intricately correlated with that in its orthogonal complement, often resulting $\mc R^{\mathsf{Bayes}}$ being much smaller than $\exp\big(-(1 + o(1))\frac{{\mathsf{SNR}^{\mathsf{mod}}}^2}{2}\big)$.

 \paragraph*{Challenging Cases: $\frac{-\log(\mc R^{\mathsf{Bayes}})}{\mathsf{SNR}^{\mathsf{mod}}} \geq \alpha >1$.}
The above discussion raises an intriguing question: if it is known \emph{a priori} that $\mc R^{\mathsf{Bayes}} $ is much smaller than $\exp\big(-(1 + o(1)) \frac{{\mathsf{SNR}^{\mathsf{mod}}}^2}{2}\big)$, does the latter still provide a valid lower bound for the minimax rate? 
Toward this, we consider the minimax rate of the following challenging restricted parameter space:
\begin{align}
& \tilde{\mb \Theta}_{\alpha} \coloneqq \tilde{\mb\Theta}_\alpha(n, p, \tilde\sigma, \mb S_1^{\mod}, \mb S_2^{\mod}, \mathsf{SNR}_0^{\mod})= \Big\{( \bo \theta_1^*, \bo \theta_2^*, \mb \Sigma_1, \mb \Sigma_2):  \\
& \quad (\bo \theta_1^*, \bo \theta_2^*)=\mb V^* \mb R \text{ for some $\mb V^*\in O(p,2)$ and $\mb R \in \mathrm{GL}_2(\bb R)$}; ~~\max_{k\in[2]} \bignorm{\mb \Sigma_k} \leq \tilde\sigma^2  ; \\ 
& \quad  {\mb V^{*\top}} \mb \Sigma_k \mb V^* = \mb S_k^{\mod}, k \in [2];~~
\mathsf{SNR}^{\mathsf{mod}}(\{\bo \theta_k^*\}, \{\mb \Sigma_k\}) =   \mathsf{SNR}^{\mathsf{mod}}_0; \quad \frac{-\log(\rbayes) }{{\mathsf{SNR}_0^{\mathsf{mf}}}^2 /2} \geq \alpha^2\Big\},\\ 
&\mb \Theta_\alpha \coloneqq \mb \Theta_\alpha (n, p, \tilde\sigma , \mb S_1^*, \mb S_2^*, \mathsf{SNR}_0^{\mod}, \beta ) =  \mb \Theta_z  \times \tilde{\mb \Theta}_\alpha \label{eq: parameter space for two-component cases}
\end{align}
with $\mathsf{SNR}_0^{\mod} >0$, $\alpha >1$, $\beta > 0$, and $\mb S_1^{\mod},\mb S_2^{\mod} \succ 0$, where $\mb \Theta_z$ is defined in \eqref{definition: Theta_z}. 

Note that the condition $\frac{-\log(\rbayes) }{{\mathsf{SNR}_0^{\mathsf{mf}}}^2 /2} \geq \alpha^2 >1$ implies that $\mc R^{\mathsf{Bayes}} $ is much smaller than $\exp\big(-(1 + o(1)) \frac{{\mathsf{SNR}^{\mathsf{mod}}_0}^2}{2}\big)$. 
Surprisingly enough, even though what we are left with is a more challenging problem, the minimax rate is shown to be of the form $\exp(-(1 + o(1)) \frac{{\mathsf{SNR}_0^{\mod}}^2}{2})$ and thus is solely related to the information in the subspace spanned by the cluster centers. We have the following main result.

\begin{theorem}[Minimax Lower Bound for Two-component Gaussian Mixtures]\label{theorem: gaussian lower bound}
	Consider the two-component Gaussian mixture model and the parameter space $\mb \Theta_\alpha = \mb \Theta_\alpha(n,p,\tilde\sigma$, $\mb S_1^{\mod}, \mb S_2^{\mod}, \mathsf{SNR}_0^{\mod}, \beta)$ with $\alpha > 1$, $\mb S_1^{\mod}$, and $\mb S_2^{\mod}$ being fixed.
 Then given $\mathsf{SNR}_0^{\mod} \rightarrow \infty$, $\frac{\log \beta}{{\mathsf{SNR}_0^{\mod}}^2}\rightarrow 0$, and $\tilde\sigma = \max_{k\in[2]}\norm{\mb S_k^{\mod}}^{\frac12}(\mathsf{SNR}_0^{\mod})^\iota$ for some $\iota >0$, the following holds if
    ${n (\mathsf{SNR}_0^{\mod})^{4\iota}} = o(p)$: 
	\eq{
	\inf_{\hat{\mb z}}\sup_{(\mb z^*, \bo \eta) \in \mb\Theta_\alpha}\bb E[h(\hat{\mb z}, \mb z^*)] \geq \exp\Big(-(1 + o(1))\frac{{\mathsf{SNR}_0^{\mod}}^2}{2}\Big).
	}
\end{theorem}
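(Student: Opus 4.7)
The plan is to combine the pointwise reduction standard in clustering lower bounds (as in \cite{lu2016statistical,chen2024optimal}) with a Bayesian ``information-hiding'' argument tailored to the high-dimensional regime $p \gg n$. The central idea is to restrict the outer supremum to a sub-family of $\tilde{\mb\Theta}_\alpha$ in which the projected parameters $(\mb w_k^*, \mb S_k^*)$ are fixed while the off-subspace blocks of the full covariances $\mb\Sigma_k$ are randomized under a prior that renders them essentially unidentifiable from $n$ samples in $p \gg n$ dimensions. Because the learner cannot recover these blocks, no estimator can exceed the discriminative power of the low-dimensional projected model, whose testing rate is governed by $\mathsf{SNR}_0$.

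I would proceed in four steps. First, fix some $\mb V^* \in O(p,2)$ and centers $\bo\theta_k^* = \mb V^*\mb w_k^*$ so that the projected signal-to-noise ratio equals $\mathsf{SNR}_0$. Second, construct a sub-family of full covariance matrices of the form
$$\mb\Sigma_k(\mb T_k, \mb C_k) = \mb V^*\mb S_k^*\mb V^{*\top} + \mb V^*_\perp \mb T_k \mb V^{*\top}_\perp + \mb V^*\mb C_k \mb V^{*\top}_\perp + \mb V^*_\perp\mb C_k^\top\mb V^{*\top},$$
parameterized by $(\mb T_k, \mb C_k)$, ensuring every element lies in $\tilde{\mb\Theta}_\alpha$: its spectral norm is bounded by $\tilde\sigma^2$, the prescribed projected blocks $\mb S_k^*$ are attained, and $-\log(\rbayes)/(\mathsf{SNR}^2/2) \geq \alpha^2$ with $\alpha > 1$. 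The last condition forces $(\mb T_k, \mb C_k)$ to carry enough extra separability in the orthogonal directions so that $\snrfull \geq \alpha\,\mathsf{SNR}_0$; its feasibility can be verified with the help of Proposition~\ref{proposition: SNR' and Bayesian oracle risk 2}.

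Third, fix an arbitrary balanced label vector $\mb z^* \in \mb\Theta_z$. A standard leave-one-out swap argument (in the spirit of \cite{gao2017achieving,chen2024optimal}) lower bounds the expected Hamming loss by the average over $i \in [n]$ of the Bayes error of testing $z_i^* = 1$ versus $z_i^* = 2$ given $\mb Y$, under a uniform prior on $z_i^*$. Fourth, I would average this per-sample Bayes error additionally over a prior $\pi$ on $(\mb T_k, \mb C_k)$ and analyze the resulting compound likelihood ratio. Under $\pi$, the marginal log-likelihood ratio for distinguishing $z_i^* = 1$ from $z_i^* = 2$ should asymptotically reduce to the purely low-dimensional Gaussian log-likelihood ratio in $\mb V^{*\top}\mb y_i$, whose testing error obeys the Mill's-ratio tail asymptotic $\exp(-(1 + o(1))\mathsf{SNR}_0^2/2)$. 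A Neyman--Pearson style bound then delivers the claimed rate.

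The main obstacle lies in the fourth step: rigorously showing that the prior $\pi$ washes out the off-subspace signal, i.e., that the compound marginal likelihood ratio under $\pi$ is uniformly close to the projection-only one, or equivalently that the chi-square divergence between the compound $n$-sample law and its projection-only analogue is $o(1)$. This is precisely where the condition $n\mathsf{SNR}_0^{4\iota} = o(p)$ enters. Since $\tilde\sigma^2 \asymp \mathsf{SNR}_0^{2\iota}$ by the scaling of $\tilde\sigma$, the ratio $n\tilde\sigma^4 / p \asymp n\mathsf{SNR}_0^{4\iota}/p = o(1)$ bounds the Fisher information that the remaining $n-1$ samples can supply about $(\mb T_k, \mb C_k)$; any data-driven reconstruction of these off-subspace blocks is dominated by noise, so the learner cannot benefit from the extra signal hidden there. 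Aggregating the per-sample bounds via the pointwise-to-average reduction, while using $\log\beta / \mathsf{SNR}_0^2 \to 0$ to ensure the balanced constraint does not inflate constants, yields the desired lower bound $\exp(-(1 + o(1))\mathsf{SNR}_0^2/2)$.
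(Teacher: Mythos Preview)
Your first three steps track the paper's Steps~1--2, but Step~4 is where the entire argument lives, and as written it is not a proof. You propose to average over a prior $\pi$ on the off-subspace blocks $(\mb T_k,\mb C_k)$ and claim the resulting compound likelihood ratio for $z_i^*$ collapses to the projected one, invoking a chi-square bound and a Fisher-information heuristic. Neither is made concrete: you do not specify $\pi$ (which must be supported inside $\tilde{\mb\Theta}_\alpha$, so every draw must satisfy the nontrivial constraint $-\log\rbayes \geq \alpha^2\mathsf{SNR}_0^2/2$), and a chi-square bound between $n$-sample laws in $\bb R^p$ is far too coarse to recover the sharp constant $1/2$ in an exponentially small testing error. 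The compound likelihood ratio for $z_i^*$ depends on the posterior of $(\mb T_k,\mb C_k)$ given the other $n-1$ samples; showing that this posterior cannot improve classification beyond the projected model is precisely the hard part, and the scaling $n\tilde\sigma^4/p = o(1)$ is a heuristic, not an argument that establishes it.

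The paper takes a structurally different route after the per-sample reduction. It constructs a \emph{finite} packing $\{\bo\eta^{(j)}\}_{j\in[M]} \subset \tilde{\mb\Theta}_\alpha$ via Varshamov--Gilbert, where the covariances $\mb\Sigma_k^{(j)}$ differ only through a single near-orthogonal direction $\mb v^{(j)} \in \bb S^{p-3}$, and applies Proposition~\ref{proposition: lower bound simplification} to lower-bound the pairwise quantity $L_{\bo\eta^{(j_1)}}(\hat{\mb z}) + L_{\bo\eta^{(j_2)}}(\hat{\mb z})$ by an explicit integral of $\min\{\phi_{\bo\theta_1^*,\mb\Sigma_1^{(j_1)}},\phi_{\bo\theta_2^*,\mb\Sigma_2^{(j_2)}}\}$ over a region where the two likelihood-ratio decisions disagree. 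The technical core is the construction of a ``critical region'' $R^{(j_1,j_2)}$ --- a product of two small balls in a four-dimensional slice and all of $\bb R^{p-4}$ --- and the verification that (i) it lies inside the likelihood-ratio constraints and (ii) both densities sit at level $\exp(-(1+o(1))\mathsf{SNR}_0^2/2)$ there. This is where the near-orthogonality of $\mb v^{(j_1)},\mb v^{(j_2)}$ and the scale separation $\tilde\sigma \gg \bar\sigma$ are used in an essential, explicit way. A variant of Fano's lemma (Lemma~\ref{lemma: fano's method}) together with a direct KL computation then closes the argument. None of this is a Bayesian averaging or indistinguishability bound; it is a constructive, geometric lower bound on a deterministic integral. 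Your plan may be salvageable in principle, but it does not supply the missing mechanism, and the paper's mechanism is quite different from what you outline.
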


We briefly remark on the conditions in Theorem \ref{theorem: gaussian lower bound}: (i) The last condition ${n {\mathsf{SNR}_0^{\mod}}^{4\iota}} = o(p)$ enforces high-dimensionality ($n=o(p)$) of a sequence of mixture models. (ii) The condition $\tilde\sigma = \max_{k\in[2]}\norm{\mb S_k^{\mod}}{\mathsf{SNR}_0^{\mod}}^\iota$ allows for covariance matrices to exhibit larger variability in directions not aligned with $\mb V^*$ compared to those within $\mb V^*$, which is crucial in our proof. 

The lower bound in Theorem \ref{theorem: gaussian lower bound} has an exponent related to $\snr^{\mod}$. A direct consequence of the condition in $\bo \Theta_\alpha$ is that 
$$\log\Big[\inf_{\hat{\mb z}}\sup_{(\mb z^*, \bo \eta) \in \mb\Theta_\alpha }\bb E[h(\hat{\mb z}, \mb z^*)]\Big]  > \log\Big[\max_{ \bo \eta \in \tilde{\mb \Theta}_\alpha} \rbayes(\bo \eta ) \Big]. $$ 

As far as we know, this is the first result of proving the substantial discrepancy between the actual risk and the Bayesian oracle risk in general anisotropic Gaussian mixtures. This also suggests that the lower bound derived from the two-point testing argument is not tight. 
Achieving a tight lower bound therefore requires explicitly quantifying the discrepancy between the minimax rate and the Bayesian oracle risk, which constitutes the main technical challenge of our analysis.
A key insight underlying our proof is that this discrepancy in high dimensions stems from the ambiguity between parameter configurations that share identical projected covariances and cluster centers, yet differ in their full covariance structures. We construct such examples to induce smaller Bayesian oracle risks; see the paragraph \emph{Covariance Construction} in the proof of Theorem~\ref{theorem: gaussian lower bound} for details. 

For general $K$-component cases, we also establish the lower bound; see Section~\ref{subsec: K component lower bounds}.

\subsection{Minimax Lower Bound Regarding $\snr^{\mathsf{exc}}$}
In this part, we discuss another type of lower bound associated with $\snr^{\mathsf{exc}}$ that comes into play when the impact of high dimensionality exceeds that of cluster centers' signals. Our attention herein focuses on the homogeneous covariance settings. The reason is that in the inhomogeneous case, even with vanishing signals, differences in covariance structure alone can sometimes make clustering feasible and complicates the discussion\footnote{For example, imagine a two-component Gaussian mixture model with zero mean separation but distinct covariances, namely, $\mc N(\mb 0, \mb I_p)$ and $\mc N(\mb 0, 2 \mb I_p)$. When $p$ is large enough, elementary concentrations inform us that the $\ell_2$ norm of a sample concentrates around either $\sqrt{p}$ or $\sqrt{2p}$, which allows one to reliably recover $\mb z^*$ with high probability by simply checking the samples' $\ell_2$ norms.}.

Our lower bound regarding $\snr^{\exc}$ is stated as follows.
\begin{theorem}
        Consider the two-component Gaussian mixture model and the following parameter space for some sufficiently large constant $C$: 
    \begin{align}
        & \bo \Theta^{\mathsf{exc}} \coloneqq \bo \Theta^{\mathsf{exc}} (n, p, \bo \Sigma,  \snr^{\exc}_0) = \Big\{(\mb z^*, \{\bo \theta_k^*\}_{k\in[2]}, \{\bo \Sigma_k\}_{k\in[2]}): \bo \Sigma_k = \bo \Sigma;
        \\
        &
n_k(\mb z^*) \in \Big[\big(1 - C\sqrt{\frac{\log n} {n}}\big) \frac{n}{2}, \big(1 + C\sqrt{\frac{\log n} {n}}\big) \frac{n}{2}\Big]; ~
\snr^{\mathsf{exc}}(\mb z^*, \{\bo \theta_k^*\}, \{\mb \Sigma_k\}) \geq    \snr^{\mathsf{exc}}_0 \Big\}. 
\end{align}
Assume $(\mathsf{SNR}^{\mathsf{exc}}_0)^{\frac12} (n / \tr(\bo \Sigma^2))^{\frac14} \norm{\bo \Sigma}^{\frac12} \ll 1$ and $\mathsf{SNR}^{\mathsf{exc}}_0 \rightarrow \infty$. 
Then 
    \begin{align}
        & \inf_{\hat{\mb z}} \sup_{(\mb z, \{\bo \theta_k^*\}_{k\in[K]}, \{\bo \Sigma_k\}_{k\in[K]})\in \bo \Theta^{\mathsf{exc}} } \bb E\big[h(\hat{\mb z}, \mb z^*) \big] \geq \exp\Big(-(1 +o(1)) \frac{{\snr^{\mathsf{exc}}_0}^2}{2} \Big). 
    \end{align} 
    \label{thm: lower bound regarding SNReh}
\end{theorem}

From the expression of $\snr^{\exc}$, we observe that it relies solely on the traces $\tr(\bo \Sigma^2)$. This reflects the underlying principle that, when the structure is overly complex, the most effective strategy is to distill the essential information. The proof of Theorem~\ref{thm: lower bound regarding SNReh} builds on the approach used in \cite{ndaoud2018sharp, chen2021cutoff}: by introducing a Gaussian prior on the centers, the minimax problem is reduced to a Bayes risk problem. Nevertheless, accommodating the anisotropic covariance structure and selecting an appropriate prior demand additional technical care.

\section{Simulation Studies}\label{sec:simulation}
We conduct extensive simulations to evaluate COPO under a variety of settings and compare Algorithm~\ref{algorithm: CPSC} with several standard clustering methods. In all experiments, COPO is run for $\lfloor \log n \rfloor$ iterations, and performance is assessed via clustering error across different signal strengths and dimensions. We consider balanced two-component mixtures with $n_1 = n_2 = n/2$. For the simplest Gaussian setting with dense centers, comparisons with EM and spectral methods are deferred to the supplementary (Section~\ref{sec: additional experiments}); here we focus on non-Gaussian mixtures and Gaussian mixtures with sparse centers.

\subsection{Non-Gaussian Distributions}
To assess the performance for non-Gaussian data under flexible local dependent noise, we compare COPO with the {K-Means} algorithm and the spectral methods in the following four data generation settings, where we always fix $n = 200$ and vary the dimension $p$ from $100$ to $240$ and let the sizes of two clusters be equal. Spectral methods apply K-means to the embedding $\hat{\mb U} \hat{\bo \Lambda}$: in \textbf{spectral clustering}, $\hat{\mb U}$ and $\hat{\bo\Lambda}$ are the top-$K$ singular vectors and singular values of $\mb R$, respectively. In \textbf{diagonal-deleted spectral clustering}, they are the top-$K$ eigenvectors of $\mc H(\mb R\mb R^\top)$ and the square roots of the associated eigenvalues.

\begin{figure}[htbp]
    \centering
    \begin{subfigure}[b]{0.24\textwidth}
        \centering
        \includegraphics[width=\textwidth]{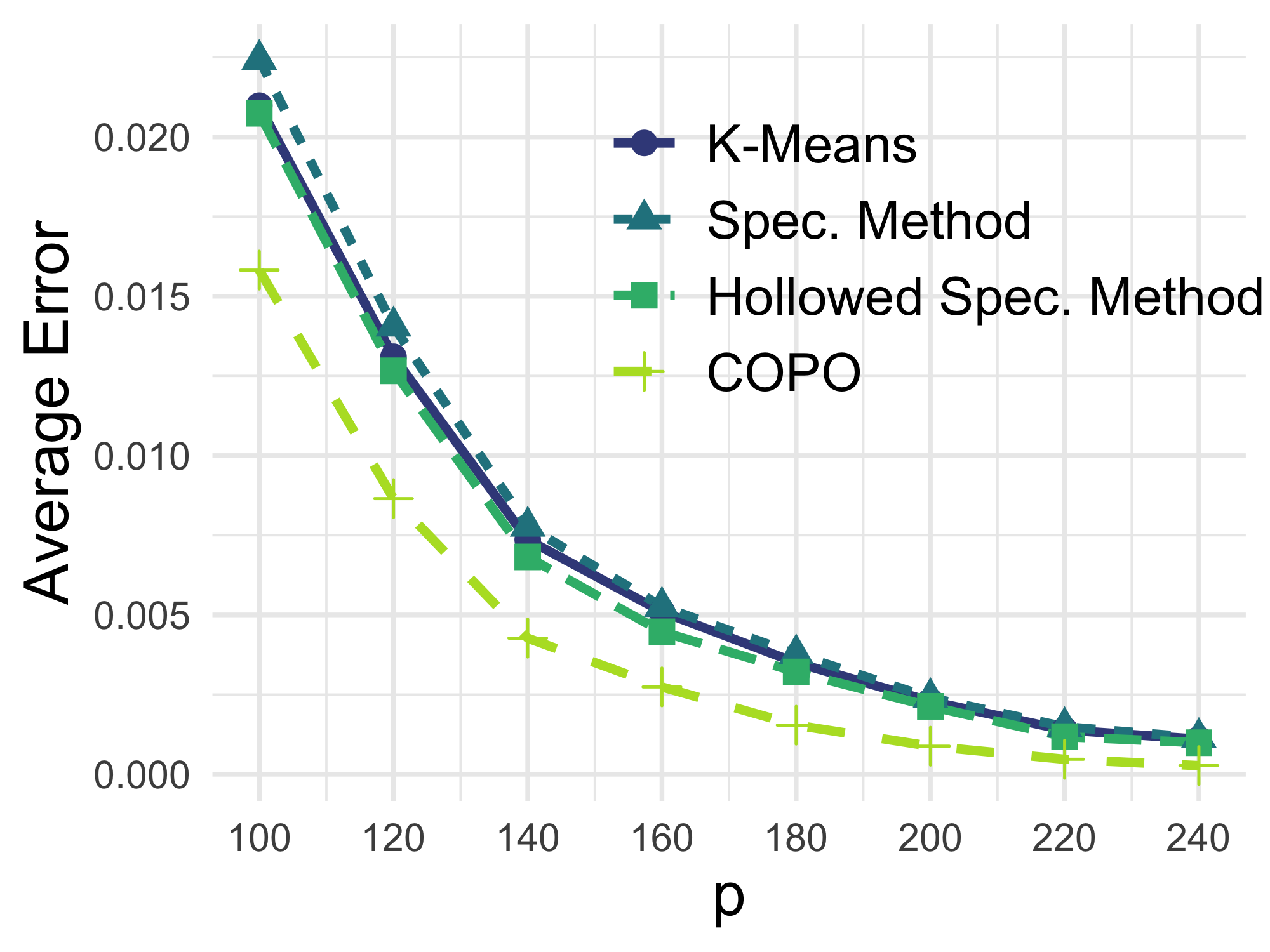}
        \caption{{\footnotesize Ising Mixtures}}
        \label{fig: ising mixture}
    \end{subfigure}
            \begin{subfigure}[b]{0.24\textwidth}
        \centering
        \includegraphics[width=\textwidth]{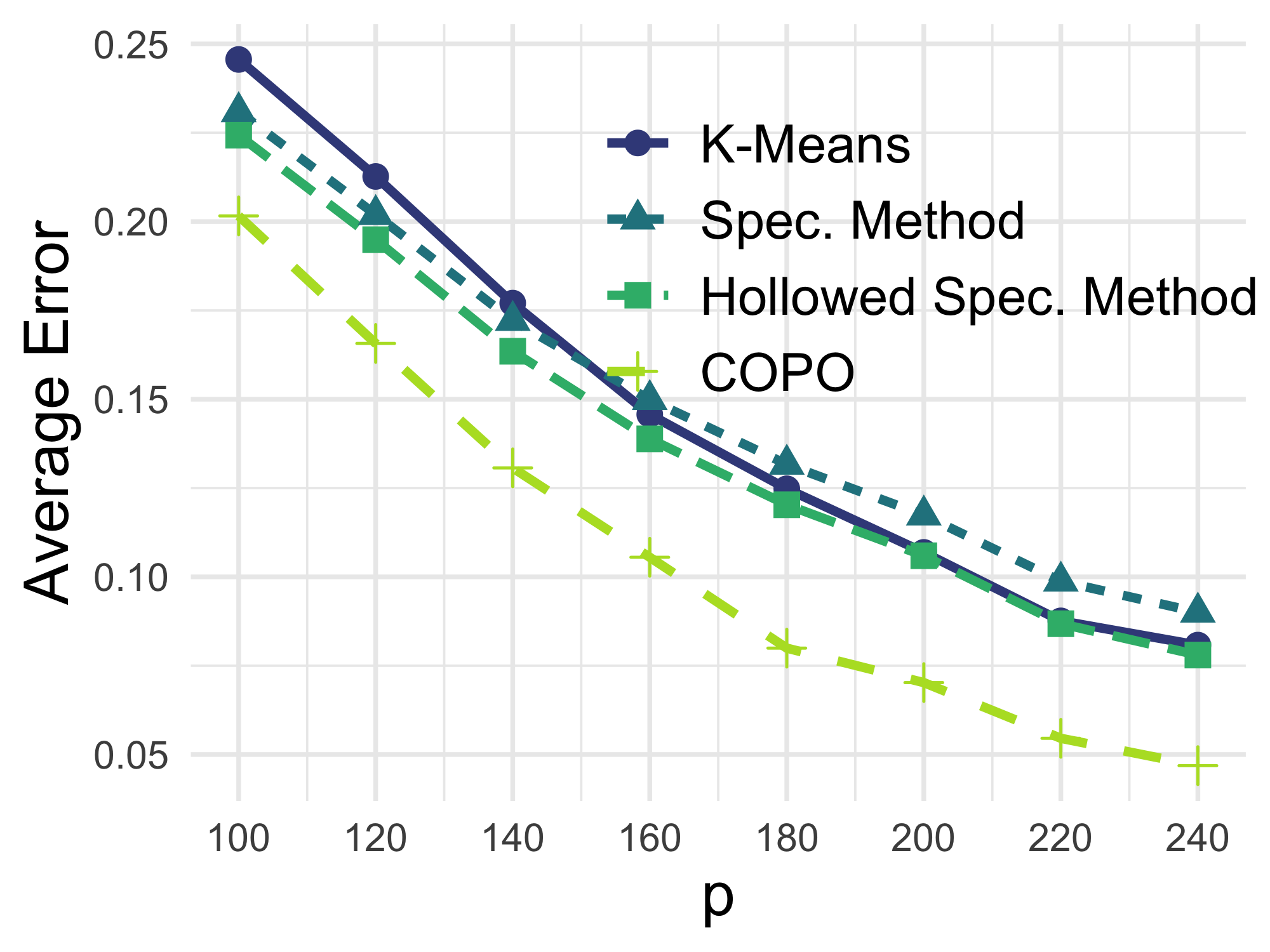}
        \caption{{\footnotesize Probit Mixtures}}
        \label{fig: dichotomous mixture}
    \end{subfigure}
    \begin{subfigure}[b]{0.24\textwidth}
        \centering
        \includegraphics[width=\textwidth]{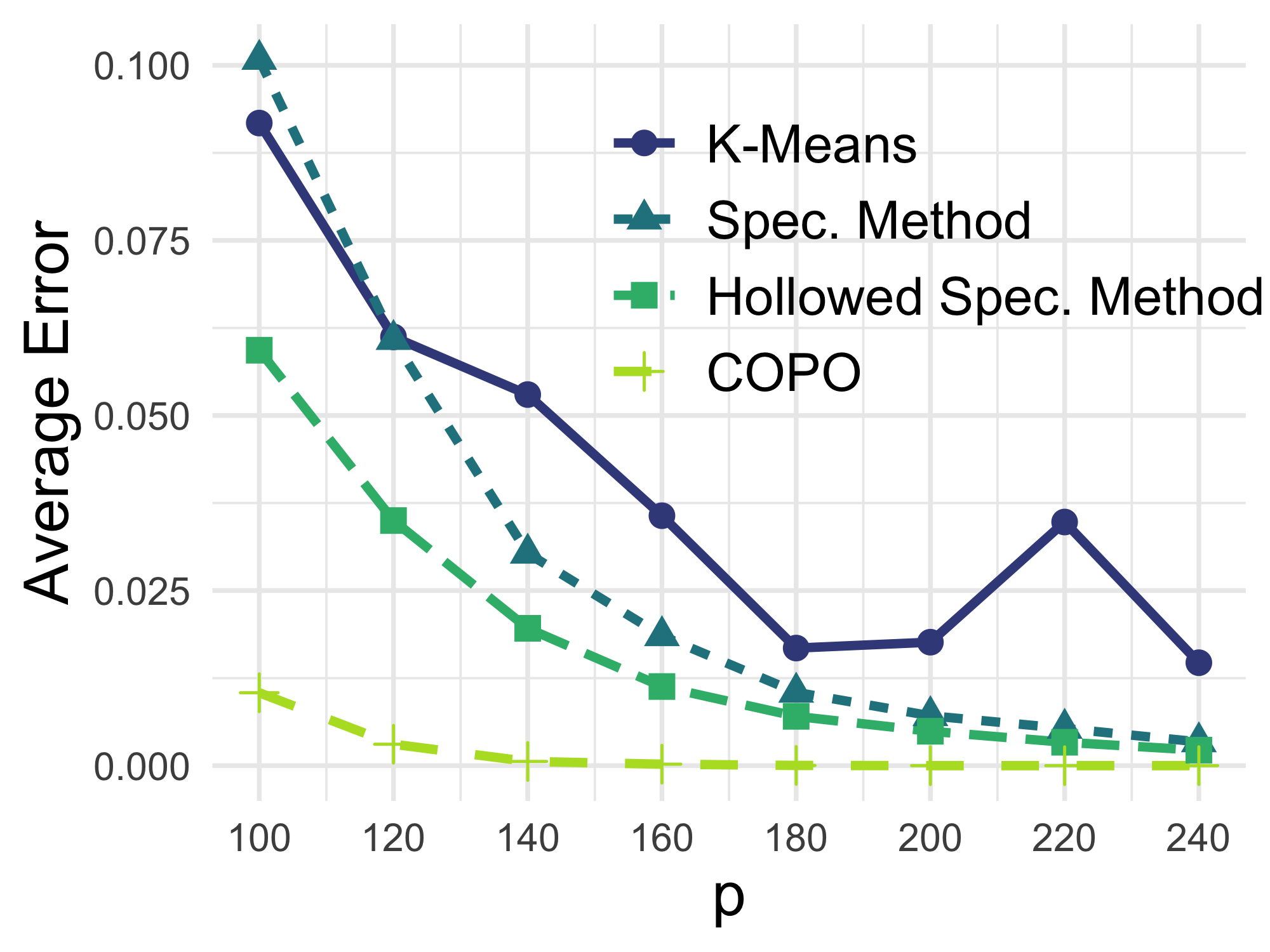}
        \caption{{\footnotesize Gamma Mixtures}}
        \label{fig: gamma mixture}
    \end{subfigure}
    \begin{subfigure}[b]{0.24\textwidth}
        \centering
        \includegraphics[width=\textwidth]{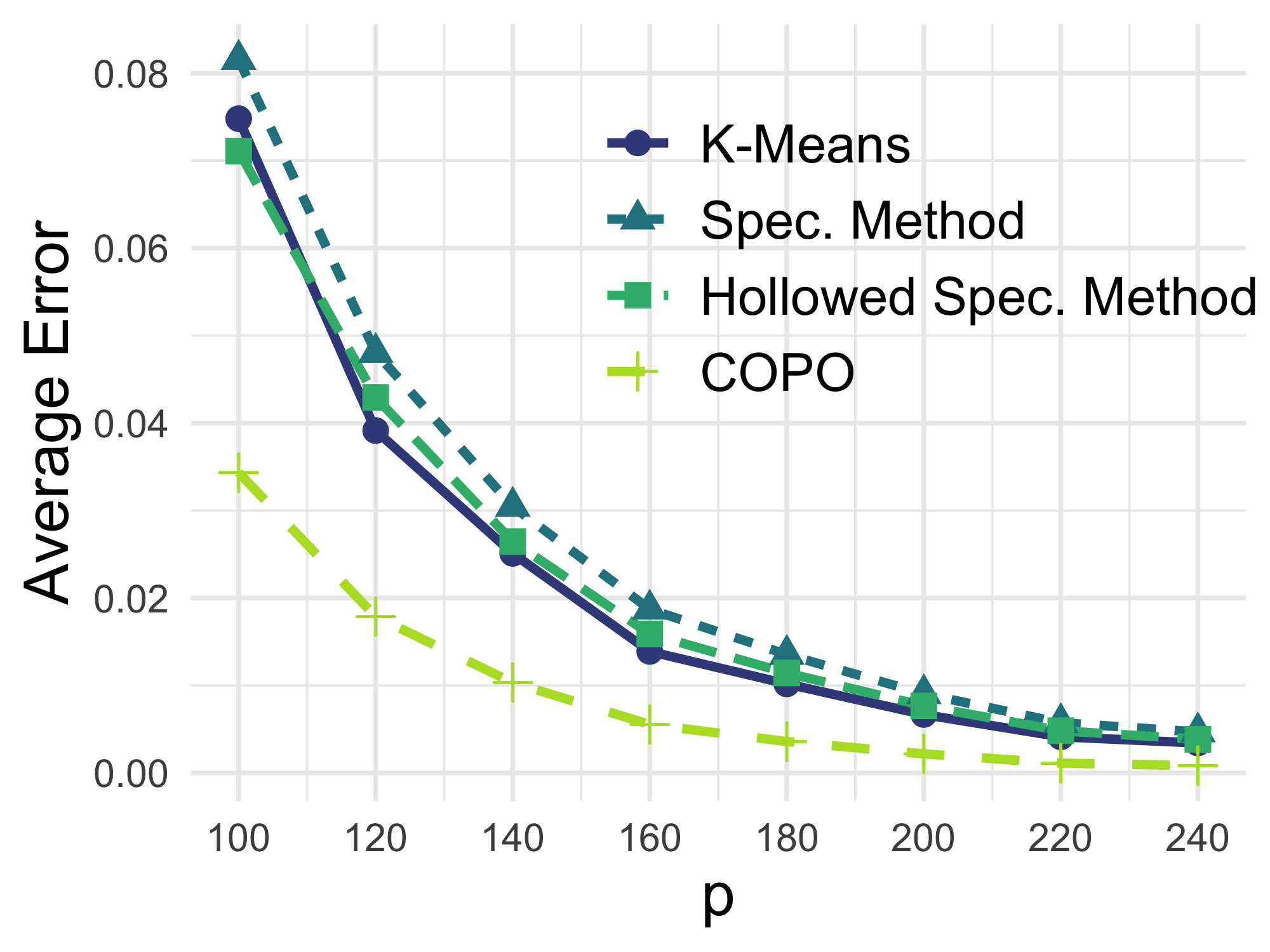}
        \caption{{\footnotesize Neg.~Binom.~Mixtures}}
        \label{fig: binomial mixture}
    \end{subfigure}
    \caption{Clustering error rates with varying dimensions for Ising mixtures, multivariate Probit mixtures, multivariate Gamma mixtures, multivariate Negative Binomial mixtures.
    }
    \label{fig: non-Gaussian noise}
\end{figure}

\emph{Mixtures of Ising Models. }
We generate multivariate binary data from the mixtures of Ising models.
For convenience, we first introduce two interaction matrices $\mb G_1, \mb G_2 \in \bb R^{4\times 4}$ with $(\mb G_1)_{i,j} = 0.1^{|i - j|} \ind_{\{i \neq j\}}$ and  $(\mb G_2)_{i,j} = 0.3^{|i - j|} \ind_{\{i \neq j\}}, i, j \in[4]$ and two thresholding vectors $\mb v_1 = (-1,-1,-1,-1)\t $, $ \mb v_2 = (-3,-3,-1,-1)\t$. For $\mb y_i\in \bb R^p$ belonging to the $k$-th component, we independently sample $(\mb y_i)_{4(l-1)+1:4l}$ from the distribution 
\eq{
\bb P\big[(\mb y_i)_{4(l-1)+1:4l} = \mb x \big] = \frac{\exp(\mb x\t \mb G_k\mb x + \mb v_k\t \mb x)}{\sum_{\mb z \in \{-1,1\}^4}\exp(\mb z\t \mb G_k\mb z + \mb v_k\t \mb z)}
}
for $x \in \{-1, 1\}^4$ and $l \in [p / 4]$. 
Figure~\ref{fig: ising mixture}  presents the clustering results.

\emph{Mixtures of Multivariate Probit Models. }
We generate data from the mixtures of multivariate probit models. The multivariate binary data have latent dependence structures across different features induced by dichotomizing an underlying Gaussian random vector. 
Define an autoregressive matrix 
$\mb A_\rho=\left(\begin{matrix}
    1 & \rho\\
    \rho& 1
\end{matrix}\right)$. In each trial, we independently generate $\rho_{k,j}$ ($k\in[2], j\in[p/2]$) and set two underlying covariance matrices to be $\tilde{\mb \Sigma}_{k} = \text{diag}(\mb A_{\rho_{k,1}},\cdots, \mb A_{\rho_{k,p / 2}})$, $k \in [2]$. Then we draw an underlying Gaussian matrix $\check{\mb Y} = (\check{\mb y}_1,\cdots, \check{\mb y}_n)\t  \in \bb R^{n\times p}$ where $\check{\mb y}_i \sim \mc N(\mb 0, \mb \Sigma_{z_i^*})$. 
The binary data matrix $\mb Y = (y_{i,j})_{i\in[n], j\in[p]}$ is constructed using thresholding vectors $\mb v_1 = (\mb 1_{p/2}, 0.1 \cdot \mb 1_{p/2})$ and $\mb v_2 = (1.5 \cdot \mb 1_{p/2}, -0.2 \cdot \mb 1_{p/2})$ by $(\mb y_{i})_j = \ind{\{(\check {\mb y}_{i})_j \geq (\mb v_{z_i^*})_j\}}$. 
Figure~\ref{fig: dichotomous mixture} presents the clustering results.

\emph{Mixtures of Multivariate Gamma Distributions. }
As mentioned earlier, COPO is also applicable to unbounded sub-Gaussian / sub-exponential data. We examine a mixture of two Multivariate Gamma distributions. Let $\mathrm{Gamma}(k, \theta)$ be a gamma distribution with shape $k$ and  scale $\theta$. For the first component, we set $k = 1,~\theta=1$ for the first $p/2$ entries and $k = 0.2,~\theta = 10$ for the last $p/2$ entries. For the second component, we set $r = 2,~\theta=1$ for the first $p/2 $ entries and $k = 1,~\theta=1$ for the last $p/2$ entries. Figure~\ref{fig: gamma mixture}  presents the results.

\emph{Mixtures of Multivariate Negative Binomial Distributions. }
We consider unbounded count data, where each entry follows a negative binomial distribution $\mathrm{NB}(r,p)$, with $r$ as the number of successes and $p$ as the success probability. For the first component, we set $r=6,~p = 0.48$ for the first $p/2$ entries and $r = 1, ~p  = 0.08$ for the last $p/2$ entries. For the second component, we let $r = 3, ~p = 0.24$ for all entries. Figure~\ref{fig: binomial mixture}  presents the results.

\medskip
In summary, Figures~\ref{fig: ising mixture}--\ref{fig: binomial mixture} demonstrate that  COPO uniformly outperforms the K-means and spectral clustering methods across various types of data. This demonstrates COPO's strong adaptivity to various mixture distributions and dependence structures.

\subsection{Comparison with Sparsity/Selection-Based Algorithms} 
Due to the intractability of full parameter recovery in high dimensions, another line of work diverges by pursuing estimation consistency under certain structural assumptions such as sparsity, to name a few, see \cite{azizyan2013minimax,cai2019chime,JinWang2016_IFPCA,WangGuNingLiu2015_HighDimEM}. Here we adopt the Influential PCA in \cite{JinWang2016_IFPCA} and the CHIME \footnote{Since CHIME requires both an initialization and a tuning parameter $\lambda_n$, we initialize the algorithm using K-Means and then determine an appropriate $\lambda_n$	by examining the support size of the estimates along a decreasing geometric sequence. } in \cite{cai2019chime} as the benchmark. Our comparison highlights two key aspects: (i) how the support size impacts estimation accuracy for COPO relative to the benchmark methods, and (ii) how increasing dimensionality affects performance when the support size is fixed as a proportion of the total dimension. The results are summarized in Table~\ref{tab:comparison-sparsity-methods}. 

\begin{table}[htbp]
\centering

\setlength{\tabcolsep}{4pt}
\renewcommand{\arraystretch}{1.05}
\caption{Misclustering error under varying sparsity and dimensionality.}
\label{tab:comparison-sparsity-methods}
\parbox{0.45\linewidth}{
\centering
\textbf{(a) Varying support size $s$} \\[0.3em]
\begin{tabular}{ccc ccc}
\toprule
$n$ & $p$ & $s$ & COPO & IF-PCA & CHIME \\
\midrule
100 & 500 & 10 & \textbf{0.145} & 0.444 & 0.148 \\
100 & 500 & 30 & 0.157 & 0.432 & \textbf{0.156} \\
100 & 500 & 50 & \textbf{0.148} & 0.431 & 0.160 \\
100 & 500 & 70 & \textbf{0.143} & 0.431 & 0.149 \\
100 & 500 & 90 & \textbf{0.147} & 0.431 & 0.161 \\
\bottomrule
\end{tabular}
}
\qquad
\parbox{0.45\linewidth}{
\centering
\textbf{(b) Varying dim. $p$ with fixed $s/p$} \\[0.3em]
\begin{tabular}{ccc ccc}
\toprule
$n$ & $p$ & $s$ & COPO & IF-PCA & CHIME \\
\midrule
100 & 200  & 10 & \textbf{0.028} & 0.399 & 0.069 \\
100 & 400  & 20 & \textbf{0.014} & 0.398 & 0.022 \\
100 & 600  & 30 & \textbf{0.009} & 0.395 & 0.019 \\
100 & 800  & 40 & \textbf{0.007} & 0.387 & 0.020 \\
100 & 1000 & 50 & \textbf{0.005} & 0.386 & 0.030 \\
\bottomrule
\end{tabular}
}

\end{table}

Specifically, we consider a two-component Gaussian mixture model with centers $\bo \theta_1^* = \frac{\alpha \sqrt{2}}{\sqrt{s}} (\mb 1_s, \mb 0_{p-s})$ and $\bo \theta_2^* = \frac{\alpha \sqrt{2}}{\sqrt{s}} (\mb 0_s, \mb 1_s, \mb 0_{p-2s})$ and a common covariance $\mb I_p$. 
In Table~\ref{tab:comparison-sparsity-methods}(a), we fix $n$ and $p$, set $\alpha = 4$, and vary the support size $s$, while keeping the singular values of $\bo \Theta^*$ constant by our construction. When $s$ is small -- favoring sparsity-based approaches -- COPO performs comparably to CHIME and significantly better than IF-PCA, which fails to achieve competitive accuracy. As $s$ increases, COPO achieves consistently lower error rates, demonstrating the benefit of not relying on sparsity assumptions. In Table~\ref{tab:comparison-sparsity-methods}(b), we fix $n$ and the ratio $s/p$ while setting $\alpha = 4(p/n)^{1/4}$, thereby isolating the effect of dimensionality. Across all regimes, COPO consistently outperforms IF-PCA and CHIME, with the performance gap widening as $p$ grows. This highlights COPO’s robustness in high dimensions.

\section{Real Data Analysis}
\label{sec: real data analysis}
We consider a high-dimensional single-cell 10x scATAC-seq dataset\footnote{The dataset is publicly available at \url{https://cellxgene.cziscience.com/collections/d36ca85c-3e8b-444c-ba3e-a645040c6185}.} \citep{lengyel2022molecular}. It includes five types of annotated cell: endothelial cell, smooth muscle cell, stromal cell, pericyte, and leukocyte, of sizes $156,~165,~15607,~2104,$ and $283$, respectively. 
The data contain the non-negative counts of gene expressions in each cell with $1604$ cells and $p = 19298$ genes in total. 
Since the numbers of samples across clusters are unbalanced, we choose the first $100$ samples for each cluster, leading to $n=500$ samples. Thus, this sample size is much smaller than the dimension of features, which brings extra challenge for clustering problem.

To study the mixture patterns and the impact of diagonal deletion, we apply two approaches: (i) vanilla SVD on the data matrix $\mathbf{Y}$, and (ii) eigen-decomposition on $\mathcal{H}(\mathbf{Y}\mathbf{Y}^\top)$. We then examine the pairwise plots of the top five singular vectors, color-coded by the ground-truth subpopulation labels (see Figures~\ref{fig: svd of single cell} and~\ref{fig: eigen-decomposition of H(YYt) of single cell}). 

Interestingly, several scatter plots—for instance, those of $X_4$ versus $X_5$—reveal notable differences between SVD and diagonal-deleted eigen-decomposition: different clusters in the latter case are more clearly separated, whereas the SVD plots contain hazardous outliers and weaker separation between clusters. This suggests that the leading eigenvectors of $\mathcal{H}(\mathbf{Y}\mathbf{Y}^\top)$ capture clustering structure more effectively, consistent with theoretical results on the benefits of diagonal deletion \cite{cai2021subspace}. This observation also empirically justifies the diagonal deletion step in the COPO algorithm.

Moreover, the pair plots in Figure~\ref{fig: eigen-decomposition of H(YYt) of single cell} exhibits clear nonspherical shapes in each cluster, which suggests the existence of distributional heterogeneity. It is worth mentioning that a recent paper \cite{lyu2024degree} tried to interpret such phenomena by introducing a degree parameter for each sample to capture the within-cluster heterogeneity. 
Nonetheless, if one considers that the degree parameter is independently sampled from a distribution and views the shape of each cluster as part of the noise, then the model setting in \cite{lyu2024degree} can be viewed as a special case of mixture models with nonspherical additive noise, which can be tackled by COPO. 

\begin{figure}[htbp]
    \centering
    \begin{subfigure}[b]{0.43\textwidth}
        \centering
        \includegraphics[width=\textwidth]{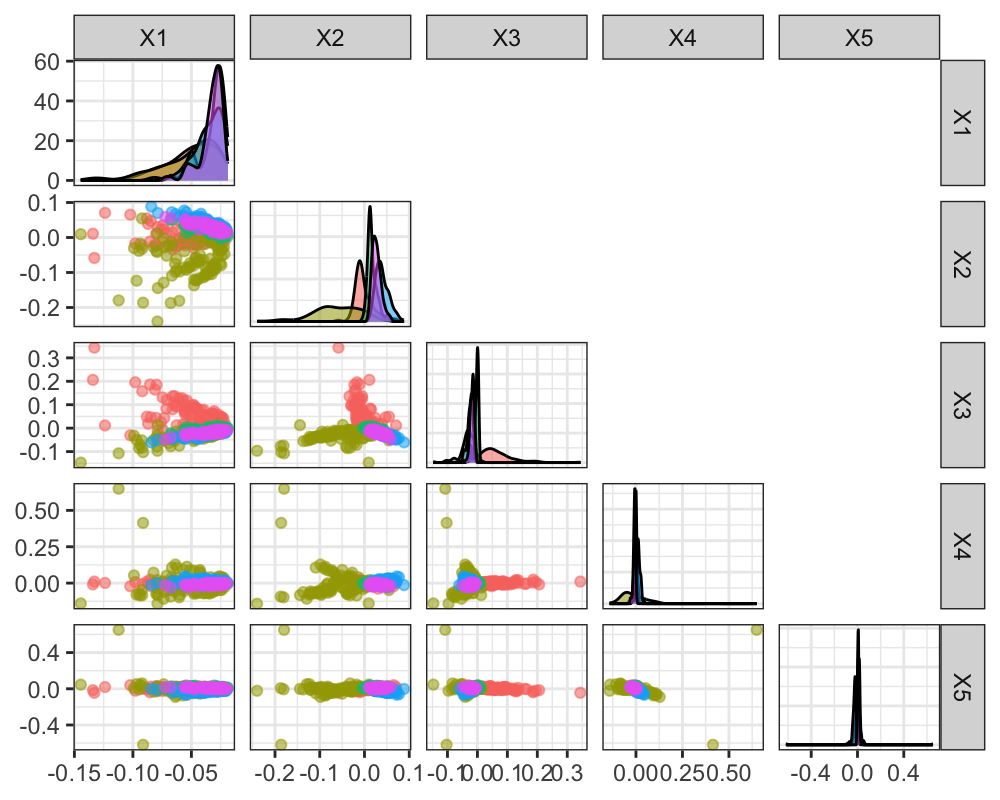}
        \caption{Truncated SVD of $\mb Y$}
        \label{fig: svd of single cell}
    \end{subfigure}\quad 
            \begin{subfigure}[b]{0.43\textwidth}
        \centering
        \includegraphics[width=\textwidth]{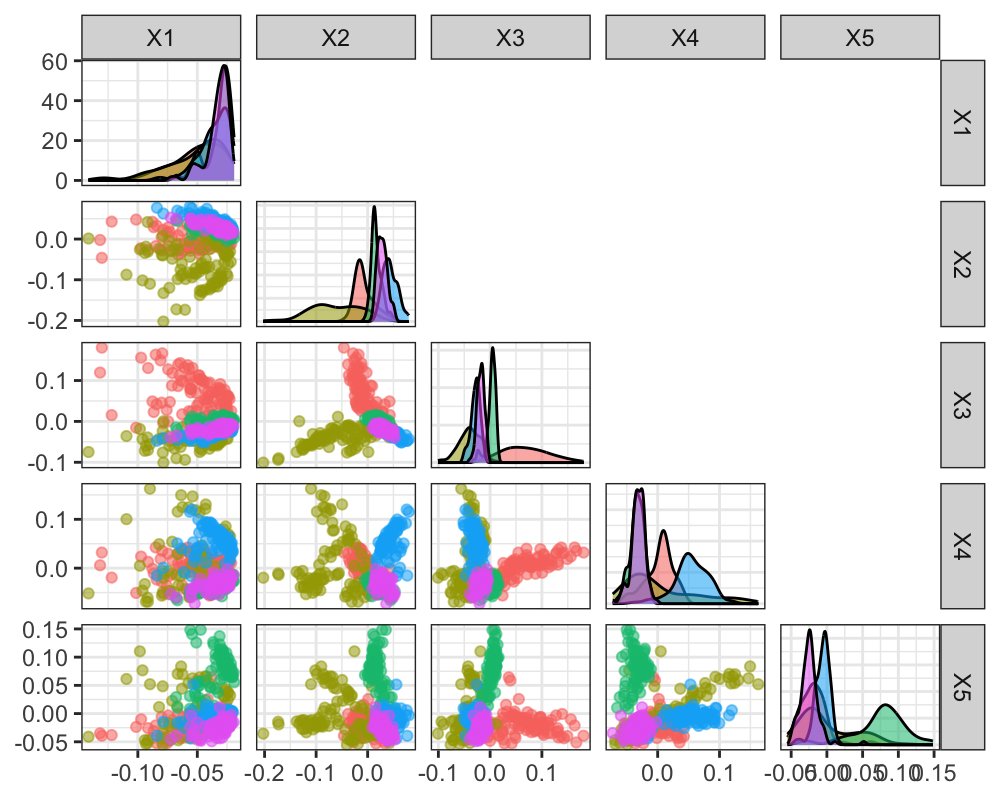}
        \caption{Leading Eigenvectors of $\mc H(\mb Y \mb Y\t)$}
        \label{fig: eigen-decomposition of H(YYt) of single cell}
    \end{subfigure}
    \label{figure: single cell pairplots}
    \caption{Pairplots of Low-Dimensional Embeddings for the Single-Cell Dataset. The left panel shows pairwise plots of the top-5 left singular vectors of $Y$, taken two columns at a time. The right panel shows pairwise plots of the top-5 eigenvectors of $\mc H(\mb Y \mb Y\t)$, also taken two columns at a time. 
}
\end{figure}

In Table~\ref{tab: pairwise comparison table for single cell data}, we compare our method with K-mean and spectral clustering \cite{zhang2024leave}, diagonal-deleted spectral clustering \cite{abbe2022}, IFPCA \cite{JinWang2016_IFPCA}, and CHIME \cite{cai2019chime}.
COPO consistently attains the lowest error rates in all comparisons, with the sole exception of the pericyte–leukocyte pair. For each pair of cell types, we plot the contours and decision boundaries from COPO, based on its cluster assignments along with the estimated centers and covariance structures. In many cases, COPO successfully captures the nonspherical shapes of the underlying clusters and adapts the decision boundaries accordingly, leading to a more faithful representation of the population heterogeneity.

\begin{table}[ht]
\centering

\setlength{\tabcolsep}{3pt}
\renewcommand{\arraystretch}{1.05}
\caption{Pairwise misclustering rates $h(\hat{\mb z},\mb z^*)$ for single-cell data}
\label{tab: pairwise comparison table for single cell data}
\begin{tabular}{lcccccc}
\toprule
Cell type pair & K-Means & Spectral & Diagonal-deleted & \textbf{COPO} & IFPCA & CHIME \\
\midrule
Pericyte vs Stromal              & 0.500 & 0.490 & 0.495 & \textbf{0.445} & 0.495 & 0.500 \\
Pericyte vs Smooth muscle        & 0.305 & 0.310 & 0.310 & \textbf{0.245} & 0.395 & 0.405 \\
Pericyte vs Endothelial          & 0.325 & 0.325 & 0.325 & \textbf{0.145} & 0.335 & 0.405 \\
Pericyte vs Leukocyte            & 0.310 & 0.310 & \textbf{0.305} & 0.315 & 0.335 & 0.405 \\
Stromal vs Smooth muscle         & 0.330 & 0.305 & 0.305 & \textbf{0.230} & 0.340 & 0.390 \\
Stromal vs Endothelial           & 0.350 & 0.315 & 0.325 & \textbf{0.170} & 0.320 & 0.455 \\
Stromal vs Leukocyte             & 0.345 & 0.310 & 0.310 & \textbf{0.260} & 0.360 & 0.375 \\
Smooth muscle vs Endothelial     & 0.370 & 0.370 & 0.370 & \textbf{0.180} & 0.500 & 0.425 \\
Smooth muscle vs Leukocyte       & 0.475 & 0.490 & 0.490 & \textbf{0.350} & 0.480 & 0.500 \\
Endothelial vs Leukocyte         & 0.410 & 0.390 & 0.395 & \textbf{0.220} & 0.495 & 0.445 \\
\midrule
All cell types                   & 0.638 & 0.636 & 0.576 & \textbf{0.438} & 0.594 & -- \\
\bottomrule
\end{tabular}

Pairwise comparison for single-cell data. Each entry reports the clustering performance for distinguishing two cell types using different methods: K-means, spectral clustering, hollowed spectral clustering, COPO, IFPCA, and CHIME.  Bold values indicate the lowest misclustering rate within each row.
CHIME is designed for two-component Gaussian mixtures and is therefore not applied to the all-cell-type setting.
\end{table}

	\begin{figure}[htbp]
	\centering
    \includegraphics[width = \linewidth]{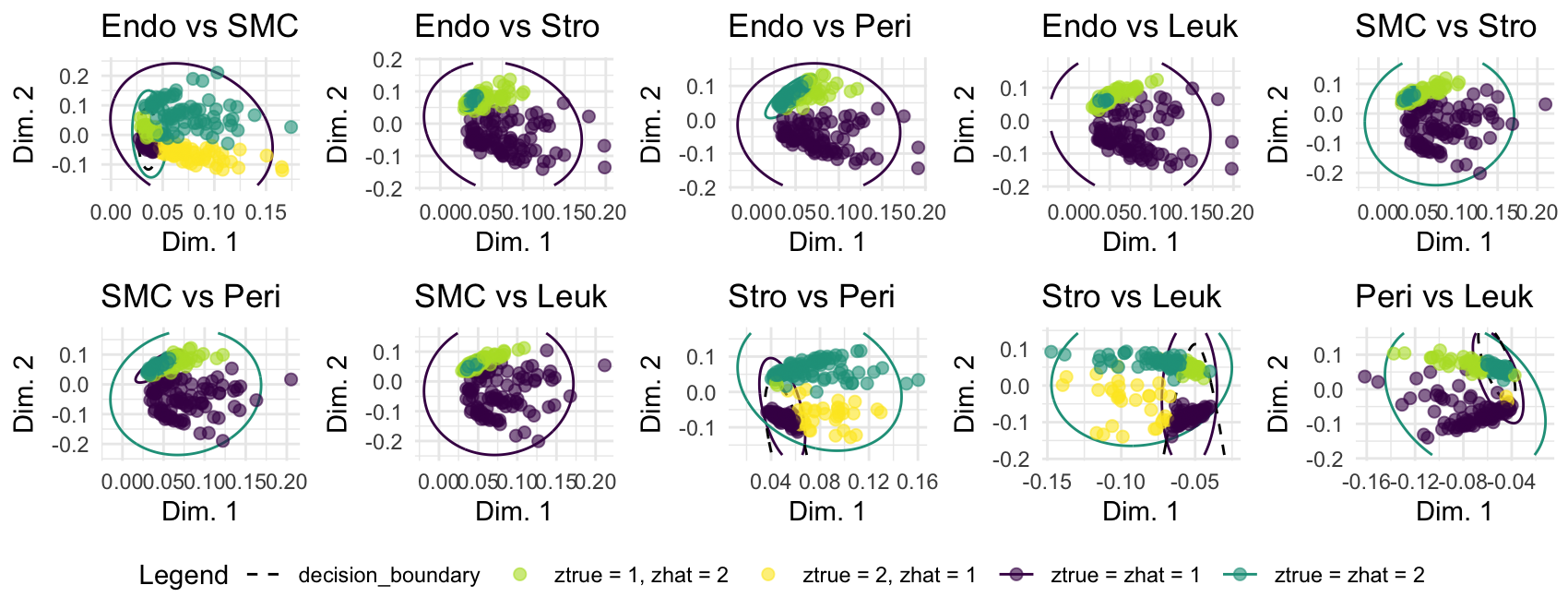}
  \caption{Single-cell data. For each cluster, we display contours defined by the Mahalanobis distance together with the decision boundaries estimated by COPO. Points are color-coded to reflect both the estimated cluster labels (\texttt{zhat}) and the ground truth (\texttt{ztrue}). }
      \label{figure: pairwise contour plot for single cell}
  \end{figure}

\section{Conclusion and Discussion} 
\label{sec: conclusion and discussion}
This paper proposes COPO, a novel and easy-to-implement clustering algorithm that adaptively learns projected covariance structures to improve clustering accuracy. 
To provide theoretical guarantees, we develop a refined subspace perturbation theory for the leading eigenvectors of $\mathcal{H}(\mathbf{R}\mathbf{R}^\top)$. This result yields universal upper bounds for COPO under a wide class of flexible noise distributions. To further probe the information-theoretic limits of anisotropic Gaussian mixture models, we derive two distinct lower bounds that highlight separate sources of clustering difficulty arising from covariance heterogeneity. Extensive numerical experiments corroborate the strong empirical performance of COPO.

There are several promising directions for future work. First, the current dependence of $\snr$ on the number of clusters $K$ and the condition number $\kappa$ does not appear to be optimal compared with spectral clustering results \cite{zhang2024leave}. An interesting question is whether the analysis can be sharpened to achieve optimal dependence. 
Additionally, the estimation of the number of clusters $K$ is an important problem in practice. A series of studies have addressed this problem in clustering and network analysis \cite{zhang2024leave,ma2021determining,jin2023optimal,lei2016goodness}. It would be interesting to explore how heteroskedasticity affects the estimation of $K$ and the performance of our algorithm when $K$ is unknown. Finally, the refined subspace perturbation theory developed in Section~\ref{subsec: subspace perturbation} may be of independent interest for analyzing weak-consistency regimes in other clustering settings, such as bipartite stochastic block models \cite{florescu2016spectral,zhou2020optimal,ndaoud2021improved} , tensor block models \cite{wang2019multiway,han2022exact}, and multi-layer networks \cite{lei2020consistent,chen2022global,agterberg2025joint,bhattacharyya2018spectral,arroyo2021inference}.

\bigskip
\spacingset{0.8}
\paragraph*{Supplementary Material. }
The Supplementary Material includes full proofs of all theoretical results, along with additional lower bounds and simulation studies.

{
\bibliographystyle{apalike} 
\bibliography{bibliography.bib}  }

\appendix
\tableofcontents

\section{Proofs of Lower Bounds}\label{sec-proof of lower bounds}
This section provides proofs of the lower bounds stated in the main text. It also presents a $K$-component version of Theorem~\ref{theorem: gaussian lower bound} along with its proof. 

\subsection{Lower Bound for K-Component Mixtures}
\label{subsec: K component lower bounds}
Beyond the presented two-component cases, we extend our minimax lower bound to the $K$-component Gaussian mixture model with general covariance matrix structures. Following the notations in the main text, we generalize the definition of $\rbayes$ to the $K$-component case by defining 
$$\rbayes(\{\bo \theta_k^*\}_{k\in[K]}, \{\mb \Sigma_k \}_{k\in[K]}) \coloneqq  \max_{a\neq b\in [K]}\rbayes(\{\bo \theta_a^*, \bo \theta_b^*\}, \{\mb \Sigma_a, \mb \Sigma_b \}).$$ 
Analogous to \eqref{eq: parameter space for two-component cases}, the parameter space is defined as:
    \begin{align}
       & \tilde{\mb \Theta}_{\alpha,K}  \coloneqq  \tilde{\mb \Theta}_{\alpha,K}(n, p, \tilde\sigma, \bar \sigma, \underline\sigma, {\mathsf{SNR}_0^{\mod}}) \coloneqq   
         \Big\{(\{\bo \theta_k^*\}_{k\in[K]}, \{\mb \Sigma_k\}_{k\in[K]}): \\ 
        &  (\bo \theta_1^*, \cdots,\bo \theta_K^*) = \mb V^* \mb R\text{ for some $\mb V^* \in O(p,K)$ and $\mb R\in \mathrm{GL}_K(\bb R)$}; ~ \max_{k\in[K]}\norm{\mb \Sigma_k} \leq \tilde\sigma^2; \\ 
        & \mathsf{SNR}^{\mod}(\{\bo \theta_k^*\}, \{\mb \Sigma_k\}) =  \mathsf{SNR}^{\mod}_0  ; \max_{k\in[K]} \norm{\mb S_k^{\mod}} \leq \bar\sigma^2; \min_{k\in[K]}\sigmamin(\mb S_k^{\mod})\geq \underline\sigma^2;  \frac{-\log(\rbayes ) }{{\mathsf{SNR}^{\mod}_0}^2 /2} \geq \alpha^2
        \Big\}, 
         \\
          &\mb \Theta_{z, K} \coloneqq \mb \Theta_{z, K}(\beta) = \left\{\mb z\in [K]^n: |\mathcal I_k(\mb z) | \in \left[\frac{n}{K\beta}, \frac{\beta n}{K}\right], k \in [K]\right\},\label{definition: Theta_z K}\\
             & \mb \Theta_{\alpha , K}    \coloneqq  \mb \Theta_{z, K} \times \tilde{\mb \Theta}_{\alpha,K}  =  \mb \Theta_{\alpha, K}(n,p, \tilde\sigma, \bar \sigma, \underline\sigma, {\mathsf{SNR}_0^{\mod}}, \beta)\label{eq: parameter space for K-component cases}.
    \end{align}

For ease of presentation, we no longer explicitly specify the forms of projected covariance matrices $\mb S_k^{\mod}$ as done in the two-component case. What remains unchanged is that we still focus on the challenging cases where $\frac{-\log(\mc R^{\mathsf{Bayes}})}{{\mathsf{SNR}^{\mod}}^2 / 2} \geq \alpha >1$ so as to illustrate the information-theoretic difficulty to achieve the Bayesian oracle risk. 
For a sequence of possibly growing numbers of components $K$, the following theorem offers a lower bound for the $K$-component Gaussian mixture model, whose proof is deferred to Section~\ref{subsec: proof of theorem: gaussian lower bound with K components}.

\begin{theorem}[Minimax Lower Bound for $K$-component Gaussian Mixtures]\label{theorem: gaussian lower bound with K components}
    Consider the $K$-component Gaussian mixture model and the parameter space $\mb \Theta_{\alpha, K}$ with $1<\alpha<\frac43$, and $\bar\sigma, \underline\sigma$ being some positive constants. Given ${\mathsf{SNR}_0^{\mod}} \rightarrow \infty$, 
    $\frac{K(\log \beta \vee 1)}{{\mathsf{SNR}_0^{\mod}}^2} \rightarrow 0$, $\tilde \sigma = \bar\sigma{\mathsf{SNR}_0^{\mod}}^{\iota}$, and $n{\mathsf{SNR}_0^{\mod}}^{4\iota} = o(p)$ for an arbitrary $\iota >0$, one has 
	\eq{
	\inf_{\hat{\mb z}}\sup_{(\mb z^*, \bo \eta ) \in \mb\Theta_{\alpha,K}}\bb E[h(\hat{\mb z}, \mb z^*)] \geq \exp\Big(-(1 + o(1))\frac{{\mathsf{SNR}_0^{\mod}}^2}{2}\Big). 
	} 
\end{theorem}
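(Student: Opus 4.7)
The plan is to reduce the $K$-component minimax lower bound to the two-component result of Theorem \ref{theorem: gaussian lower bound} by restricting to a carefully chosen sub-family of the parameter space and exploiting the fact that the constrained $\mathsf{SNR}$ in \eqref{eq: SNR definition} is a minimum over pairs. Intuitively, the statistical difficulty of a $K$-component problem is driven by the closest indistinguishable pair, so if we can arrange the remaining $K-2$ clusters to be "free" (well-separated and with known parameters), the minimax risk inherits the two-component rate.

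First, let $(j_1^*, j_2^*)$ be any pair realizing the minimum in \eqref{eq: SNR definition}. I would construct a sub-parameter space $\mathbf{\Theta}' \subset \mathbf{\Theta}_{\alpha, K}$ in which (a) the centers $\bo\theta_k^*$ and covariances $\mathbf{\Sigma}_k$ for $k \notin \{j_1^*, j_2^*\}$ are fixed with centers placed at distance $\omega(\mathsf{SNR}_0) \cdot \bar\sigma$ apart and away from $\bo\theta_{j_1^*}^*, \bo\theta_{j_2^*}^*$, (b) the labels $z_i^*$ for all $i$ with $z_i^* \notin \{j_1^*, j_2^*\}$ are fixed, and (c) only the pair $(\bo\theta_{j_1^*}^*, \bo\theta_{j_2^*}^*, \mathbf{\Sigma}_{j_1^*}, \mathbf{\Sigma}_{j_2^*})$ and the labels on $\mathcal{I}_{j_1^*}(\mb z^*) \cup \mathcal{I}_{j_2^*}(\mb z^*)$ vary. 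By the large separation in (a), the pair achieving the minimum in $\mathsf{SNR}$ remains $(j_1^*, j_2^*)$ and every other pairwise $\rbayes$ is negligible compared to $\exp(-\mathsf{SNR}_0^2/2)$, so the $K$-component $\rbayes$ is attained on the difficult pair, matching the two-component setup. The sample size of the induced two-component problem is $n' = |\mathcal{I}_{j_1^*}(\mb z^*)| + |\mathcal{I}_{j_2^*}(\mb z^*)| \asymp 2n/K$ by the approximate-balance constraint in $\mathbf{\Theta}_{z,K}$.

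Second, I would invoke Theorem \ref{theorem: gaussian lower bound} on this induced two-component sub-problem and lift the conclusion. The data coming from the fixed nuisance clusters is independent of the labels being tested, so by a data-processing / conditioning argument it contributes nothing to the Le Cam divergence and can be discarded. The hypotheses of Theorem \ref{theorem: gaussian lower bound} then follow: $n' \mathsf{SNR}_0^{4\iota} \le n \mathsf{SNR}_0^{4\iota} = o(p)$; the balance parameter $\beta$ for the two-component sub-problem is unchanged and $\log\beta / \mathsf{SNR}_0^2 \to 0$ is implied by $K(\log\beta \vee 1)/\mathsf{SNR}_0^2 \to 0$; the $\rbayes$-condition transfers because the $K$-component $\rbayes$ is defined as the pairwise maximum; and $\tilde\sigma = \bar\sigma \mathsf{SNR}_0^{\iota}$ with $\bar\sigma, \underline\sigma$ constants gives the required spectral bounds. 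The theorem then yields a lower bound $\exp(-(1+o(1))\mathsf{SNR}_0^2/2)$ on the pairwise misclustering rate measured over $n'$ samples. Since any full estimator $\hat{\mb z}$ restricts to a two-component estimator on $\mathcal{I}_{j_1^*} \cup \mathcal{I}_{j_2^*}$, the corresponding contribution to $h(\hat{\mb z}, \mb z^*)$ is at least $(n'/n)$ times this rate. The multiplicative factor $n'/n \asymp 2/K$ only changes the exponent by $\log K$, which is absorbed into $(1+o(1))$ precisely by the assumption $K/\mathsf{SNR}_0^2 \to 0$.

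The main obstacle will be engineering the nuisance clusters and covariance structure so that the high-dimensional constructions underlying Theorem \ref{theorem: gaussian lower bound} (those that realize $\snrfull/\mathsf{SNR} = \alpha$ on the difficult pair through off-subspace covariance correlation) extend consistently to all $K$ components while keeping $\sigma_{\min}(\mathbf{S}_k^*) \ge \underline\sigma^2$ and $\|\mathbf{S}_k^*\| \le \bar\sigma^2$ uniformly in $k$. I expect the constraint $1 < \alpha < 4/3$ to enter here: the two-component construction builds covariance perturbations whose "rotated" eigenvalues scale polynomially with $\alpha$, and $\alpha < 4/3$ provides precisely the slack needed to embed two such perturbations into a $K$-component tuple so that the induced projected covariances on the difficult pair still satisfy the pair-level hypothesis $\frac{-\log \rbayes}{\mathsf{SNR}^2/2} \ge \alpha^2$ and the nuisance pairs remain in the admissible spectral range. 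Once this construction is in place, the rest of the argument is essentially an application of Fano or Le Cam inherited from Theorem \ref{theorem: gaussian lower bound}.
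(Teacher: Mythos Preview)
Your high-level strategy matches the paper's: fix the labels on the $K-2$ nuisance clusters, reduce the minimax risk to a local pairwise quantity on the hard pair $(1,2)$, and reuse the two-component machinery. The paper also carries out Step~1--Step~2 (reduction to subset of $\mb\Theta_{z,K}$ and symmetrization) essentially as you describe, and the $n'/n\asymp 2/K$ prefactor is indeed absorbed by $K/\mathsf{SNR}_0^2\to 0$.

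However, the plan to invoke Theorem~\ref{theorem: gaussian lower bound} as a black box does not go through. The induced two-component sub-problem is \emph{not} an instance of the parameter space $\tilde{\mb\Theta}_\alpha$: there $\mb V^*\in O(p,2)$ and $\mb S_k^*\in\bb R^{2\times 2}$, whereas here the projected quantities live in $\bb R^K$ and the $\mathsf{SNR}_{1,2}$ constraint involves $K\times K$ matrices ${\mb V^*}\t\mb\Sigma_k\mb V^*$. The paper instead re-runs the Fano construction (Steps~3.2*--3.4*) with an explicit choice $\mb S_k^*=\underline\sigma^2\mb I_K$, shows that the extra $K-2$ directions of $\mb V^*$ are inert for the pair $(1,2)$, and thereby reduces \textsc{Condition~1*}/\textsc{Condition~2*} to the $(p-K+2)$-dimensional two-component case, reusing the \emph{proof} of Claim~\ref{claim: condition 1/2} rather than the theorem statement.

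Your diagnosis of where $\alpha<4/3$ enters is also off. It is not about embedding ``two perturbations'' into the hard pair or about the spectral range of $\mb S_k^*$. The constraint appears solely in verifying that the constructed tuple lies in $\tilde{\mb\Theta}_{\alpha,K}$, specifically the $\rbayes$ condition for \emph{mixed} pairs $(a,b)$ with $a\in\{1,2\}$, $b\ge 3$. The nuisance covariances are set via a parameter $\alpha''=\tfrac{8\alpha}{2-\tfrac32\alpha}$ (well-defined iff $\alpha<4/3$), which shrinks the nuisance projected covariance to $\tfrac{\underline\sigma^2}{\alpha''^2}\mb I_K$, thereby pushing $\mathsf{SNR}_{a,b}$ for all non-hard pairs strictly above $\mathsf{SNR}_0$ \emph{and} making $\snrfull_{1,b},\snrfull_{b,1}\ge\tfrac54\alpha\,\mathsf{SNR}_0$ so that $-\log\rbayes\ge\alpha^2\mathsf{SNR}_0^2/2$ for those pairs too (Claim~\ref{claim: SNR relations for K components}). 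Your ``place nuisance centers at distance $\omega(\mathsf{SNR}_0)\bar\sigma$'' handles nuisance--nuisance pairs but leaves the mixed-pair $\rbayes$ unverified, since $\mb\Sigma_1^{(j)}$ carries the off-subspace correlation structure from the Fano packing and interacts nontrivially with $\mb\Sigma_b$.
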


\subsection{Characterization of the Bayesian Oracle Risk}
We introduce a quantity determined by the full population parameters $\{\bo \theta_k^*\}_{k\in[K]}$ and $\{ \bo \Sigma_k\}_{k\in[K]}$: 
\longeq{
    &\snrfull(\{\bo \theta_k^*\}, \{\bo \Sigma_k\})
    \coloneqq  \min_{k_1\neq k_2 \in [K]} \min_{\mb y\in \bb R^{p}} \Big\{ (y - \bo \theta_{k_1}^*)\t \bo \Sigma_{k_1}^{-1} (\mb y - \bo \theta_{k_1}^*): (y - \bo \theta_{k_1}^*)\t \bo \Sigma_{k_1}^{-1} (\mb y - \bo \theta_{k_1}^*) + \frac{1}{2} \log |\bo \Sigma_{k_1}| \\ 
    & \qquad \qquad \qquad \qquad = (y - \bo \theta_{k_2}^*)\t \bo \Sigma_{k_2}^{-1} (\mb y - \bo \theta_{k_2}^*) + \frac{1}{2} \log |\bo \Sigma_{k_2}| \Big\} . 
    \label{eq: SNRfull definition}
}
To keep things concise, here and throughout, we may refer to the functions $\mathsf{SNR}^{\mathsf{mod}}(\cdot)$, $\mathsf{SNR}^{\mathsf{exc}}(\cdot)$, $\snr(\cdot)$, and $\snrfull(\cdot)$ simply as $\mathsf{SNR}^{\mathsf{mod}}$, $\mathsf{SNR}^{\mathsf{exc}}$, $\snr$, and $\snrfull$ when applied to a tuple of parameters, with the context making this clear. 

\subsubsection{Useful Facts of $\mathsf{SNR}^{\mathsf{mod}}$ and $\snrfull$ }
We begin with a few elementary facts about $\mathsf{SNR}^{\mathsf{mod}}$ defined in Eq.~\eqref{eq: SNR definition} and $\snrfull$ defined in Eq.~\eqref{eq: SNRfull definition} for \emph{two-component Gaussian mixtures}, in order to understand how they are related to covariance structures and Bayesian oracle risk.
When the dimension $p$ is fixed, the Bayesian oracle risk admits a clean expression (cf. \cite{chen2024optimal}): 
\eq{
   \mc R^{\mathsf{Bayes}}(\{\bo \theta_j^*\}_{j\in[2]}, \{\mb \Sigma_j\}_{j\in[2]}) = \exp\Big( -(1 + o(1) )\frac{{\snrfull}^2}{2}\Big),\label{eq: simple relation between SNRfull and Rbayes}
}
provided $\snrfull \rightarrow \infty$ and regularity conditions on $\mb \Sigma_k$ hold. 

In general high-dimensional settings, this relation may break down. Nevertheless, \eqref{eq: simple relation between SNRfull and Rbayes} remains valid in key scenarios--—specifically, when covariances are identical across all directions (homogeneous covariance case) or nearly so in most directions (inhomogeneous covariance case)---and is formalized in the following propositions. 
Consider a sequence of orthonormal matrices $\{\mb V^*\}_{n \in \bb N^+}\subset O(p,2)$ representing subspaces spanned by the cluster centers,  two fixed positive-definite projected covariance matrices $\mb S_k^{\mod}\in \bb R^{2\times 2}, k =1, 2$. Given those configuration, imagine a sequence of two-component anisotropic Gaussian mixtures with centers $\{\{\bo \theta^*_k\}_{k \in [2]}\}_{n \in \bb N^+} $ aligned with the subspace spanned by $\mb V^* \in\bb R^{p \times 2}$, covariances $\{\{\mb \Sigma_{k}\}_{k\in[2]}\}_{n\in \bb N^+}$ such that ${\mb V^*}\t \mb \Sigma_k \mb V^* = \mb S_k^{\mod}$ for $k\in[2]$, and  $\mathsf{SNR}^{\mod}\big(\{\bo \theta_k^*\}_{k\in[2]}, \{ \mb \Sigma_k\}_{k\in[2]}\big) \rightarrow \infty$ as $n$ goes to infinity. To recall, we omit the subscript $n$ for all sequences indexed by $n$ throughout the discussion. 
\begin{proposition}[Homogeneous covariance matrices]
\label{proposition: SNR' and Bayesian oracle risk 1}
    Suppose $\mb \Sigma_k,~k\in[2]$ are positive-definite and $\mb \Sigma_{1} = \mb \Sigma_{2}$ for $n\in \bb N^+$. Then  $
        \mathsf{SNR}^{\mathsf{mod}} = \bignorm{{\mb S_1^{\mod}}^{-\frac{1}{2}}(\mb w_{1}^*- \mb w_{2}^*)}_2 / 2,\quad \snrfull = \bignorm{\mb \Sigma_{1}^{-\frac{1}{2}}(\bo \theta_{1}^* - \bo \theta_{2}^*)}_2 / 2, \quad \snrfull \geq \mathsf{SNR}^{\mathsf{mod}}$, 
        where $\mb w_k^* \coloneqq {\mb V^{*\top}} \bo \theta^*_k$ denotes the projected centers. Further, it holds for the Bayesian oracle risk that
        \eq{\label{eq: bayesian oracle risk and SNR' lemma}
           \rbayes(\{\bo \theta_{j}^*\}_{j\in[2]}, \{\mb \Sigma_k\}_{k\in[2]})  = \exp\Big(-(1+ o(1))\frac{{\snrfull}^2}{2} \Big). 
        } 
        \end{proposition}

        \begin{proposition}[Covariance matrices homogeneous in most directions]
        \label{proposition: SNR' and Bayesian oracle risk 2}
        Suppose that there exists a sequence of orthogonal matrices $\big(\tilde{\mb V}, \tilde{\mb V}_{\perp} \big) \in O(p), n\in\bb N^+$ with 
        $\tilde{\mb V} \in O(p,a)$ and $\tilde{\mb V}_{\perp} \in O(p,p-a)$
        for some fixed integer $a > 2$ such that (a)
$\mb V^*$ coincides with the first two columns of $\tilde{\mb V}$ (i.e., $\mb V^* = (\tilde{\mb V})_{:,1:2})$, 
(b) ${\tilde{\mb V}_{\perp}}\t (\mb \Sigma_{1} - \mb \Sigma_{2})\tilde{\mb V}_{\perp}  = \mb 0$ (similarity of covariance matrices in most directions),
(c) $\tilde{\mb V}\t \mb \Sigma_{1}\tilde{\mb V}_{\perp} = \tilde{\mb V}\t \mb \Sigma_{2}\tilde{\mb V}_{\perp} = \mb 0$ (uncorrelatedness of noise in the directions of $\tilde{\mb V}$ and $\tilde{\mb V}_{\perp}$),
(d) the eigenvalues of $\tilde{\mb V}\t \mb \Sigma_k\tilde{\mb V}, k\in[2], n \in \bb N^+$ are lower and upper bounded by positive constants.     Then 
        it holds that $
            {\snrfull}^2 \geq {\mathsf{SNR}^{\mathsf{mod}}}^2 -  \big|\log|\tilde{\mb V}\t \mb \Sigma_{2}\tilde{\mb V}| - \log|\tilde{\mb V}\t \mb \Sigma_{1}\tilde{\mb V}|\big|, \label{eq: SNRfull and SNR}
        $
        and 
        \eq{
        \rbayes(\{\bo \theta_{k}^*\}_{k\in[2]}, \{\mb \Sigma_k\}_{k\in[2]})  = \exp\Big(-(1+ o(1))\frac{{\snrfull}^2}{2} \Big).\label{eq: inhomo-cov rbayes}
        }
\end{proposition}

\begin{remark}
    Note that we are interested in the regime where the signal strength goes to infinity compared with the noise and thus assume that $\mathsf{SNR}^{\mathsf{mod}}\rightarrow \infty$. Therefore, if we fix the matrices $\tilde{\mb V}\t \mb \Sigma_k\tilde{\mb V}, k = 1,2$, the logarithmic terms in \eqref{eq: SNRfull and SNR} are always negligible and imply ${\snrfull}^2 \geq (1 + o(1)){\mathsf{SNR}^{\mathsf{mod}}}^2$.
\end{remark}
\subsubsection{Proof of Proposition~\ref{proposition: SNR' and Bayesian oracle risk 1}}
\label{sec: proof of SNR' and Bayesian oracle risk}
The explicit forms of $\mathsf{SNR}$ and $\snrfull$
follow from their definition thanks to the homogeneous covariances. 
To apply \cite[Lemma~A.1]{chen2024optimal}  on testing error for Linear Discriminant Analysis to the Bayesian oracle risk, it suffices to verify that $\snrfull \rightarrow \infty$ as $n$ goes to infinity. By definition, we have 
\longeq{
    \snrfull(\{\mb \Sigma_k\}_{k\in[2]}, \{\bo \theta_{k}^*\}_{k\in[2]}) =&  \bignorm{\mb \Sigma_{1}^{-\frac{1}{2}}(\bo \theta_{1}^* - \bo \theta_{2}^*)}_2 / 2 \\ 
    =& \bignorm{(\bo \theta_{1}^* - \bo \theta_{2}^*)\t \mb V^*{\mb V^*}\t \mb \Sigma_{1}^{-1}\mb V^*{\mb V^*}\t (\bo \theta_{1}^* - \bo \theta_{2}^*)}_2^{\frac{1}{2}} / 2 \\ 
    = & \bignorm{(\mb w_{1}^* - \mb w_{2}^*)\t \big({\mb V^*}\t \mb \Sigma_{1}^{-1}\mb V^*\big)(\mb w^*_{1} - \mb w_2^*)}_2^{\frac{1}{2}} / 2 \\ 
    = & \bignorm{\big({\mb V^{*\top}} \mb \Sigma_{1}^{-1}\mb V^*\big)^{\frac{1}{2}}(\mb w^*_{1} - \mb w_2^*)}_2 / 2.
}
Then $\snrfull(\{\bo \theta_{k}^*\}_{k\in[2]}, \{\mb \Sigma_k\}_{k\in[2]}) \geq \mathsf{SNR}( \{\bo \theta_{k}^*\}_{k\in[2]}, \{\mb \Sigma_k\}_{k\in[2]})$ follows from the fact that ${\mb V^{*\top}} \mb \Sigma_k^{-1}\mb V^*\succeq {\mb S_k^{\mod}}^{-1}$ for $k\in[2]$. Since $\mathsf{SNR}\rightarrow \infty$, we therefore have $\snrfull\rightarrow \infty$ as $n$ goes to infinity. From the proof of \cite[Lemma A.1]{chen2024optimal} on testing error for Linear Discriminant Analysis and the fact that $\snrfull =\frac{1}{2}\bignorm{(\mb \Sigma_{1})^{-\frac{1}{2}}(\bo \theta_{1}^* - \bo \theta_{2}^*)}_2 \rightarrow \infty$, we have
\longeq{
    &\min_{\hat {\mb z}}\bb E_{z^* \sim \frac{1}{2}\delta_1 + \frac{1}{2}\delta_2,  \mb y\sim\mc N(\bo \theta_{z^*}^*, \mb \Sigma_{z^*}) }\big[\tilde z(\mb y) \neq z^*\big]  = \bb P\big[\epsilon \geq \frac{1}{2}\bignorm{(\mb \Sigma_{1})^{-\frac{1}{2}}(\bo \theta_{1}^* - \bo \theta_{2}^*)}_2 \big] \\ 
    = & \exp\Big(-(1 + o(1) )\frac{\snrfull^2}{2}\Big),
}
where $\epsilon$ is a standard Gaussian random variable. 
\subsubsection{Proof of Proposition~\ref{proposition: SNR' and Bayesian oracle risk 2}}
We point out that the quantities $\mathsf{SNR}$, $\snrfull$, and $\rbayes$ are invariant under rotations induced by orthogonal transformations. Specifically, for any orthogonal matrix $\mb R \in O(p)$, the following equalities hold:
\begin{align}
    &\mathsf{SNR}^{\mod}(\{\bo\theta^*_{k,n}\}_{k\in[2]}, \{\mb \Sigma_k\}_{k\in[2]}) = \mathsf{SNR}^{\mod}(\{\mb R\bo\theta^*_{k,n}\}_{k\in[2]}, \{\mb R\mb \Sigma_k \mb R\t\}_{k\in[2]}), \\ 
    & \snrfull(\{\bo\theta^*_{k,n}\}_{k\in[2]}, \{\mb \Sigma_k\}_{k\in[2]}) = \snrfull(\{\mb R\bo\theta^*_{k,n}\}_{k\in[2]}, \{\mb R\mb \Sigma_k \mb R\t\}_{k\in[2]}), \\ 
    & \rbayes(\{\bo\theta^*_{k,n}\}_{k\in[2]}, \{\mb \Sigma_k\}_{k\in[2]}) = \rbayes(\{\mb R\bo\theta^*_{k,n}\}_{k\in[2]}, \{\mb R\mb \Sigma_k \mb R\t\}_{k\in[2]}).
\end{align}  
Therefore, it suffices to consider the case where 
$$
\tilde {\mb V}_{n} =
\begin{pmatrix}
    \mb I_{a\times a} \\
     \mb 0_{ (p - a)\times a}
\end{pmatrix} 
= \big(\mb V^*, \check{\mb V}_n\big),
\text{ with }\mb V^* = \left(\begin{matrix}\mb I_{2\times 2 }\\ 
\mb 0_{ (p-2) \times 2}
\end{matrix}\right), \text{ and }\tilde {\mb V}_{n,\perp} =
\begin{pmatrix}
\mb 0_{a\times (p-a)}\\
\mb I_{(p-a)\times (p-a)}
\end{pmatrix}.
$$

The expression in the definition of $\snrfull$ is reduced to 
\longeq{
    &\left({\snrfull}(\{\bo \theta^*_{k}\}_{k\in[2]}, \{\mb \Sigma_k\}_{k\in[2]})\right)^2 \\
    =& \min_{i,j \in[2], i \neq j} ~\min_{\mb x \in \bb R^p}\big\{\mb x\t\tilde{\mb V}(\tilde{\mb V}\t \mb \Sigma_{i,n} \tilde{\mb V})^{-1}\tilde{\mb V}\t \mb x + \mb x\t\tilde{\mb V}_{\perp}(\tilde{\mb V}_{\perp}\t \mb \Sigma_{i,n} \tilde{\mb V}_{\perp})^{-1}\tilde{\mb V}_{\perp}\t \mb x: \\
    & \qquad \frac{1}{2}\mb x\t \tilde{\mb V}((\tilde{\mb V}\t \mb \Sigma_{j,n} \tilde{\mb V})^{-1} - (\tilde{\mb V}\t \mb \Sigma_{i,n} \tilde{\mb V})^{-1})\tilde{\mb V}\t \mb x\\ 
    & \qquad + \mb x\t \tilde{\mb V} (\tilde{\mb V}\t \mb \Sigma_{j,n} \tilde{\mb V})^{-1}\tilde{\mb V}\t  
    (\bo \theta_{i,n}^* - \bo \theta_{j,n}^*)
    \\
   &\qquad 
   + \frac{1}{2}(\bo \theta_{i,n}^* - \bo \theta_{j,n}^*)\t\tilde{\mb V}(\tilde{\mb V}\t  \mb \Sigma_{j,n}\tilde{\mb V})^{-1} \tilde{\mb V}\t (\bo \theta_{i,n}^* - \bo \theta_{j,n}^*)\\ 
   & \qquad  - \frac{1}{2}\log|\tilde{\mb V}\t \mb \Sigma_{i,n}^*\tilde{\mb V}| + \frac{1}{2}\log |\tilde{\mb V}\t \mb \Sigma_{j,n}^* \tilde{\mb V}| = 0\big\}\label{eq: SNR' reduction 0}
}
where we use the fact that $\tilde{\mb V}\t \mb \Sigma_{i,n}^{-1} \tilde{\mb V}= (\tilde{\mb V}\t \mb \Sigma_{i,n} \tilde{\mb V})^{-1}$, $\tilde{\mb V}_{\perp}\t \mb \Sigma_{i,n}^{-1} \tilde{\mb V}_{\perp}= (\tilde{\mb V}_{\perp}\t \mb \Sigma_{i,n} \tilde{\mb V}_{\perp})^{-1}$ for $i\in[2]$ since ${\tilde{\mb V}}\t \mb \Sigma_{i,n}\tilde{\mb V}_{\perp} = \mb 0$. 

Without loss of generality, we assume that $i=1$ is the minimizer of the above expression.
To facilitate the comparison with $\mathsf{SNR}$, we introduce two functions $f_1^{\mathsf{full}}$, $f_2^{\mathsf{full}}$ of $x \in \bb R^a$ and rewrite \eqref{eq: SNR' reduction 0} as taking the minimum over the $a$-dimensional, rather than the $p$-dimensional, space:
\longeq{
&\left({\snrfull}(\{\bo \theta^*_{k}\}_{k\in[2]}, \{\mb \Sigma_k\}_{k\in[2]})\right)^2 \\ 
  = & \min_{\mb x' \in \bb R^{a}}\big\{(\mb x' - \tilde{\mb V}\t \bo \theta_{1}^*)\t(\tilde{\mb V}\t \mb \Sigma_{1} \tilde{\mb V})^{-1} (\mb x' - \tilde{\mb V}\t \bo \theta_{1}^*): \\
   & \qquad \underbrace{(\mb x' - \tilde{\mb V}\t \bo \theta_{1}^*)\t(\tilde{\mb V}\t \mb \Sigma_{1} \tilde{\mb V})^{-1} (\mb x' - \tilde{\mb V}\t \bo \theta_{1}^*)}_{\eqqcolon f_1^{\textsf{full}}(\mb x') } = \\ 
   & \qquad \underbrace{(\mb x' - \tilde{\mb V}\t \bo \theta_{2}^*)\t(\tilde{\mb V}\t \mb \Sigma_{2} \tilde{\mb V})^{-1} (\mb x' - \tilde{\mb V}\t \bo \theta_{2}^*)  + \log|\tilde{\mb V}\t \mb \Sigma_{2}\tilde{\mb V}|  - \log|\tilde{\mb V}\t \mb \Sigma_{1}\tilde{\mb V}|}_{\eqqcolon f_2^{\textsf{full}}(\mb x') }\big\} \\ 
   = & \big(\snrfull(\{\tilde{\mb V}\t \bo \theta^*_{k}\}_{k\in[2]}, \{\tilde{\mb V}\t\mb \Sigma_k\tilde{\mb V}\}_{k\in[2]})\big)^2\label{eq: SNR' reduction},
}
where we employ the change of variables $\mb x' = \tilde{\mb V}^{\top}\mb x + \tilde{\mb V}\t \boldsymbol \theta_{1,n}^*$ and $\mb y = \tilde{\mb V}_{\perp}^{\top}\mb x$ for $\mb x\in \bb R^p$ in the expression of \eqref{eq: SNR' reduction 0} and the fact that the minimizer over all possible $\mb y\in \bb R^{p-a}$ is always the zero vector.

To lower bound $\snrfull$ using $\mathsf{SNR}^{\mod}$, we turn to simplify the expression of ${\mathsf{SNR}^{\mod}}^2$ in the same way: 
\begin{align}
    {\mathsf{SNR}^{\mathsf{mod}}}^2 = & \min_{\mb x \in \bb R^{2}}\big\{(\mb x - \mb w_{1}^*)\t{\mb S_1^{\mod}}^{-1} (\mb x - \mb w_{1}^*):  \\
    & \quad  \underbrace{(\mb x - \mb w_{1}^*)\t(\mb S_1^{\mod})^{-1} (\mb x - \mb w_{1}^*)}_{f_1(\mb x)} =  \underbrace{(\mb x - \mb w_{2}^*)\t(\mb S_2^{\mod})^{-1} (\mb x - \mb w_{2}^*)}_{f_2(\mb x)} \big\}, \label{eq: SNR reduction}
\end{align}
where $f_1$, $f_2$ are introduced for the comparison to $f_1^{\mathsf{full}}$, $f_2^{\mathsf{full}}$, respectively. 

Recap that $\mb S_k^{\mod} = {\mb V^{*\top}} \mb \Sigma_k \mb V^* \in\bb R^{2\times 2}$ and $\tilde{\mb V}$ consists of  the first $a$ canonical basis vectors of $\bb R^p$ as columns. By basic algebra, we have
$$\big(\tilde{\mb V}_{\perp}\t \mb \Sigma_k \tilde{\mb V}_{\perp}\big)^{-1}=\big((\mb \Sigma_k)_{1:a,1:a}\big)^{-1} = \left(\begin{matrix}
    (\mb S_k^{\mod})^{-1} + \mb B_n\t \mb D_n^{-1}\mb B_n &\quad \mb B_n\t  \\ 
    \mb B_n &\quad \mb D_n
\end{matrix} \right),$$
for some suitably defined matrices $\mb B_n$ and $\mb D_n$.
For each $k\in[2]$ and arbitrary $\mb y \in \bb R^{a} = (\mb y_1\t , \mb y_2\t )\t $ where $\mb y_1$ denotes the first two entries of $\mb y$ and $\mb y_2$ denotes the remaining entries, we have
\begin{align}
    & f_1^{\mathsf{full}}(\mb y) = (\mb y - \tilde{\mb V}\t \bo \theta_{1}^*)\t(\tilde{\mb V}\t \mb \Sigma_{1} \tilde{\mb V})^{-1} (\mb y - \tilde{\mb V}\t \bo \theta_{1}^*) \\ 
    = & (\mb y_1 - \mb w_{1}^*)\t\big(({\mb S^*_{1}})^{-1}+ \mb B_n\t \mb D_n^{-1}\mb B_n 
    \big)(\mb y_1 - \mb w_{1}^*) - 2 \mb y_2\t \mb B_n (\mb y_1 - {\mb V^*}\t \bo \theta_{1}^*) \\
    &\qquad + \mb y_2 \t \mb D_n\mb y_2 \\
    \geq & (\mb y_1 - \mb w_{1}^*)\t({\mb S^*_{1}})^{-1}(\mb y_1 - \mb w_{1}^*)=  f_1(\mb y_1), \\
    & f_2^{\mathsf{full}}(\mb y) = (\mb y - \tilde{\mb V}\t \bo \theta_{2}^*)\t(\tilde{\mb V}\t \mb \Sigma_{2} \tilde{\mb V})^{-1} (\mb y - \tilde{\mb V}\t \bo \theta_{2}^*)+ \log|\tilde{\mb V}\t \mb \Sigma_{2}\tilde{\mb V}|  - \log|\tilde{\mb V}\t \mb \Sigma_{1}\tilde{\mb V}|
    \\ 
    = &(\mb y_1 - \mb w_{2}^*)\t\big(({\mb S^*_{2}})^{-1}+ \mb B_n\t \mb D_n^{-1}\mb B_n 
    \big)(\mb y_1 - \mb w_{2}^*) - 2 \mb y_2\t \mb B_n (\mb y_1 - {\mb V^*}\t \bo \theta_{2}^*) \\
    &\qquad + \mb y_2 \t \mb D_n\mb y_2 + \log|\tilde{\mb V}\t \mb \Sigma_{2}\tilde{\mb V}|  - \log|\tilde{\mb V}\t \mb \Sigma_{1}\tilde{\mb V}| \\
    \geq & (\mb y_1 - \mb w_{2}^*)\t({\mb S^*_{2}})^{-1}(\mb y_1 - \mb w_{2}^*) + \log|\tilde{\mb V}\t \mb \Sigma_{2}\tilde{\mb V}|  - \log|\tilde{\mb V}\t \mb \Sigma_{1}\tilde{\mb V}| \\ 
    =& f_2(\mb y_1) + \log|\tilde{\mb V}\t \mb \Sigma_{2}\tilde{\mb V}|  - \log|\tilde{\mb V}\t \mb \Sigma_{1}\tilde{\mb V}|, \label{eq: f_1 / f_2 comparison}
\end{align}
where the inequalities are obtained by taking the minimization with respect to $\mb y_2$.

By \eqref{eq: SNR' reduction} and \eqref{eq: SNR reduction}, $\snrfull$ is defined by taking the minimum of $f_1^{\mathsf{full}}$ over all possible $\mb y \in \bb R^a$ with $f_1^{\mathsf{full}} = f_2^{\mathsf{full}}$, while $\mathsf{SNR}$ is defined in a similar way. 
The inequality \eqref{eq: f_1 / f_2 comparison} then leads to the conclusion that
\begin{align}
    & {\snrfull}^2 \geq {\mathsf{SNR}^{\mathsf{mod}}}^2 - \big|\log|\tilde{\mb V}\t \mb \Sigma_{2}\tilde{\mb V}| - \log|\tilde{\mb V}\t \mb \Sigma_{1}\tilde{\mb V}|\big|. 
\end{align}
This completes the proof of \eqref{eq: SNRfull and SNR}.

Now we set out to prove \eqref{eq: inhomo-cov rbayes}. 
Similar to the reduction in \eqref{eq: SNR' reduction}, the likelihood-ratio test is reduced to 
\longeq{
    & \tilde z(\mb y) = \ind\big\{\big(\mb y - \bo \theta_{1}^*\big)\t \tilde{\mb V}\big(\tilde{\mb V}\t  \mb \Sigma_{1} \tilde{\mb V} \big)^{-1} \tilde{\mb V}\t \big(\mb y - \bo \theta_{1}^*\big) + \log|\tilde{\mb V}\t \mb \Sigma_{1} \tilde{\mb V}| \leq \\ 
    & \qquad  \big(\mb y - \bo \theta_{2}^*\big)\t \tilde{\mb V} (\tilde{\mb V}\t \mb \Sigma_{2} \tilde{\mb V})^{-1} \tilde{\mb V}\t \big(\mb y - \bo \theta_{2}^*\big) + \log|\tilde{\mb V}\t \mb \Sigma_{2} \tilde{\mb V}|\big\}\\ 
    & + 2 \cdot \ind\big\{\big(\mb y - \bo \theta_{1}^*\big)\t \tilde{\mb V} (\tilde{\mb V}\t  \mb \Sigma_{1} \tilde{\mb V}\t )^{-1} \tilde{\mb V}\big(\mb y - \bo \theta_{1}^*\big) + \log|\tilde{\mb V}\t \mb \Sigma_{1} \tilde{\mb V}|\leq\\ 
    & \qquad \big(\mb y - \bo \theta_{2}^*\big)\t \tilde{\mb V}(\tilde{\mb V}\t  \mb \Sigma_{2} \tilde{\mb V}\t )^{-1} \tilde{\mb V}\big(\mb y - \bo \theta_{2}^*\big) + \log|\tilde{\mb V}\t \mb \Sigma_{2} \tilde{\mb V}|\big\}.
}
So, the Bayesian oracle risk $\rbayes(\{\bo \theta^*_{k}\}_{k\in[2]}, \{\mb \Sigma_k\}_{k\in[2]})$ and $\snrfull(\{\bo \theta^*_{k}\}_{k\in[2]}, \{\mb \Sigma_k\}_{k\in[2]})$ are equivalent to the $\rbayes$ and the $\snrfull$ of two $a$-dimensional Gaussian components $\mc N(\tilde{\mb V}\t \bo \theta_{1}^*, \tilde{\mb V}\t \mb \Sigma_{1}\tilde{\mb V})$ and $\mc N(\tilde{\mb V}\t \bo \theta_{2}^*, \tilde{\mb V}\t \mb \Sigma_{2}\tilde{\mb V})$, respectively. Recall that $a$ is a fixed integer not less than $2$. For a fixed-dimensional anisotropic Gaussian mixture model, \cite[Lemma 3.1]{chen2024optimal} implies that
\begin{align}
    & \rbayes(\{\bo \theta^*_{k}\}_{k\in[2]}, ~\{\mb \Sigma_k\}_{k\in[2]}) \\ 
    = &  \rbayes(\{\tilde{\mb V}\t \bo \theta^*_{k}\}_{k\in[2]},~ \{\tilde{\mb V}\t\mb \Sigma_k\tilde{\mb V}\}_{k\in[2]})\label{eq: equivalence between two mixtures}
    \\ 
    \stackrel{\text{\cite[Lemma 3.1]{chen2024optimal}}}{\geq} &  \exp\Big(-(1 + o(1) )\frac{{\snrfull}^2}{2}\Big).
\end{align}

On the other hand, the minimum of the weighted distances from the centers to the decision boundary in the definition of $\snrfull$ yields that 
\begin{align}
    & \bb P_{\mb y\sim \mc N(\tilde{\mb V}\t \bo \theta_{1}^*, \tilde{\mb V}\t \mb \Sigma_{1}\tilde{\mb V})}\big[\tilde z(\mb y) = 2 \big] \leq \bb P_{\bo \epsilon\sim \mc N(\mb 0, \tilde{\mb V}\t \mb \Sigma_{1}\tilde{\mb V})}\big[\norm{\bo \epsilon}_2 \geq \snrfull \big], \\ 
    & \bb P_{\mb y\sim \mc N(\tilde{\mb V}\t \bo \theta_{2}^*, \tilde{\mb V}\t \mb \Sigma_{2}\tilde{\mb V})}\big[\tilde z(\mb y) = 1 \big] \leq \bb P_{\bo \epsilon \sim \mc N(\mb 0, \tilde{\mb V}\t \mb \Sigma_{2}\tilde{\mb V})}\big[\norm{\bo \epsilon}_2 \geq \snrfull \big], \label{eq: likelihood-ratio upper bound}
\end{align}
where we recap that $\tilde z: \bb R^p \rightarrow [2]$ denotes the likelihood ratio estimator introduced in \eqref{eq: likelihood ratio estimator}.
Therefore, invoking the Hanson-Wright inequality \cite[Theorem 6.2.1]{vershynin2018high} together with \eqref{eq: equivalence between two mixtures} and \eqref{eq: likelihood-ratio upper bound} yields that 
\begin{align}
    &  \rbayes(\{\bo \theta^*_{k}\}_{k\in[2]}, \{\mb \Sigma_k\}_{k\in[2]}) \leq \exp\Big(-(1+o(1)) \frac{{\mathsf{SNR}^{\mathsf{full}}}^2}{2}\Big)
\end{align}
since $a$ is a fixed constant, $\mathsf{SNR} \rightarrow \infty$, and $\snrfull \gtrsim \mathsf{SNR}$. Therefore, we obtain the desired conclusion $\rbayes(\{\bo \theta^*_{k}\}_{k\in[2]}, \{\mb \Sigma_k\}_{k\in[2]}) = \exp\big(-(1+o(1)) \frac{{\mathsf{SNR}^{\mathsf{full}}}^2}{2}\big)$.

\subsection{Proof of Theorem~\ref{theorem: gaussian lower bound}}
\label{sec: proof of gaussian lower bound}
We now present a more general version of Theorem~\ref{theorem: gaussian lower bound} that permits flexibility in the choice of $\tilde\sigma$ and $\mb S_k^{\mod}$. In fact, Theorem~\ref{theorem: gaussian lower bound} will follow as an immediate corollary of the following one. 
\begin{theorem*}[Minimax Lower Bound for Two-component Gaussian Mixtures]\label{theorem: gaussian lower bound (general)}
	Consider the two-component Gaussian mixture model and the parameter space $\mb \Theta_\alpha = \mb \Theta_\alpha(n,p,\tilde\sigma$, $\mb S_1^{\mod}, \mb S_2^{\mod}, \mathsf{SNR}^{\mod}_0, \beta)$ with a fixed $\alpha > 1$.
 Then given $\snr^{\mod}_0 \rightarrow \infty$ and $\frac{\log \beta}{{\mathsf{SNR}^{\mod}_0}^2}\rightarrow 0$, one has 
	\eq{
	\inf_{\hat{\mb z}}\sup_{(\mb z^*, \bo \eta) \in \mb\Theta_\alpha}\bb E[h(\hat{\mb z}, \mb z^*)] \geq \exp\Big(-(1 + o(1))\frac{{\mathsf{SNR}^{\mod}_0}^2}{2}\Big), 
	}
    if $\tilde \sigma = \omega(\max_{k\in[2]}\norm{\mb S_k^{\mod}}^{
    \frac12})$, $\max_{k\in[2]}\norm{\mb S_k^{\mod}} / \min_{k\in[2]}\sigmamin(\mb S_k^{\mod})= O(1)$, $\log(\tilde\sigma^2 / \max_{k\in[2]}\norm{\mb S_k^{\mod}}) = o({\mathsf{SNR}_0^{\mod}}^2)$, and $n\tilde\sigma^{2(1+\epsilon)} = o(p \max_{k\in[2]}\norm{\mb S_k}^{1+\epsilon})$ for some constant $\epsilon > 0$.
\end{theorem*}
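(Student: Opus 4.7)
The plan is to follow the three-step road map that the authors already outlined in Section~2.2, with particular care taken to exploit the high-dimensionality assumption $n\tilde\sigma^{2(1+\epsilon)}=o(p\max_{k}\norm{\mb S_k^*}^{1+\epsilon})$. First I would reduce the minimax risk to a local per-sample discrepancy. Fixing any estimator $\hat{\mb z}$ and symmetrising it over the two label permutations so that both clusters are treated alike, I would average over a uniform prior on $\mb z^*\in\mb\Theta_z$ and condition on $\tilde{\mb Y}=(\mb y_2,\ldots,\mb y_n)\t$. Since the clusters are approximately balanced with ratio at most $\beta$, a counting argument gives the reduction \eqref{eq: minimax error decomposition 0}, so it suffices to lower bound $\sup_{\bo\eta\in\tilde{\mb\Theta}_\alpha}\mathbb E[L_{\bo\eta}(\hat{\mb z})]$ for an arbitrary $\hat{\mb z}$.

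Next, I would replace the supremum by an average over a finite subset $\{\bo\eta^{(j)}\}_{j\in[M]}\subset\tilde{\mb\Theta}_\alpha$ with a common pair of centres $(\bo\theta_1^*,\bo\theta_2^*)$ but carefully chosen covariances $(\mb\Sigma_1^{(j)},\mb\Sigma_2^{(j)})$ satisfying ${\mb V^{*\top}}\mb\Sigma_k^{(j)}\mb V^*=\mb S_k^*$. Applying Proposition~\ref{proposition: lower bound simplification} to every pair $(\bo\eta^{(j_1)},\bo\eta^{(j_2)})$ yields
\begin{align}
L_{\bo\eta^{(j_1)}}(\hat{\mb z})+L_{\bo\eta^{(j_2)}}(\hat{\mb z})\geq \int_{A_{j_1,j_2}}\min\{\phi_{\bo\theta_1^*,\mb\Sigma_1^{(j_1)}},\phi_{\bo\theta_2^*,\mb\Sigma_2^{(j_2)}}\}\,\mathrm dx+\text{(symmetric term)},
\end{align}
where $A_{j_1,j_2}$ is the region cut out by the two likelihood-ratio constraints. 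Averaging via Fano's reduction (Lemma~\ref{lemma: fano's method}), a lower bound on this integral uniform in $(j_1,j_2)$ will propagate to the minimax bound. The target is $\exp(-(1+o(1))\mathsf{SNR}_0^2/2)$, matching the Mahalanobis distance from $\bo\theta_1^*$ to the shared quadratic decision boundary as measured by $\mb S_1^*$.

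The principal obstacle, and the most delicate part of the argument, is constructing the packing subset and evaluating the integral. My plan is to design the $\mb\Sigma_k^{(j)}$ by fixing the on-diagonal projected block $\mb S_k^*$ and varying the cross-term ${\mb V^{*\top}}{\mb\Sigma_k^{(j)}}^{-1}\mb V^*_\perp$ while keeping the perpendicular block ${\mb V^{*\top}_\perp}{\mb\Sigma_k^{(j)}}^{-1}\mb V^*_\perp$ essentially fixed at a scale comparable to $\tilde\sigma^{-2}$; this is where the condition $\tilde\sigma\gg\max_{k}\norm{\mb S_k^*}^{1/2}$ enters. By writing $\mb x=(\mb x_\parallel,\mb x_\perp)$ in the $\mb V^*$-adapted coordinates and integrating out $\mb x_\perp$ over a $(p-4)$-dimensional Gaussian with covariance of order $\tilde\sigma^2$, the integrand in Proposition~\ref{proposition: lower bound simplification} reduces to a marginal density on a $4$-dimensional subspace. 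On that subspace the two sets $\{\mathrm dP_{\bo\theta_2^*,\mb\Sigma_2^{(j_1)}}/\mathrm dP_{\bo\theta_1^*,\mb\Sigma_1^{(j_1)}}\le 1/2\}$ and $\{\mathrm dP_{\bo\theta_1^*,\mb\Sigma_1^{(j_2)}}/\mathrm dP_{\bo\theta_2^*,\mb\Sigma_2^{(j_2)}}\le 1/2\}$ each cover, up to a constant, a half-space of the form $\{\mathrm{SNR}_0\}$-Mahalanobis balls; ensuring that they overlap in a region of Gaussian mass of order $\exp(-(1+o(1))\mathsf{SNR}_0^2/2)$ is where the cross-term freedom is needed so that the two likelihood-ratio events are near-complementary rather than near-identical.

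Finally I would verify that the packing so constructed indeed sits in $\tilde{\mb\Theta}_\alpha$: the spectral-norm budget $\tilde\sigma$ must accommodate the variation in the cross-terms, which is the role of $\tilde\sigma=\omega(\max_k\norm{\mb S_k^*}^{1/2})$; the high-dimensionality condition $n\tilde\sigma^{2(1+\epsilon)}=o(p\max_k\norm{\mb S_k^*}^{1+\epsilon})$ will be used to guarantee that the $(n-1)$ conditioning samples $\tilde{\mb Y}$ do not inadvertently reveal the perpendicular-direction covariance perturbations (this is essentially an information-theoretic tightness for covariance estimation in high dimensions, in the same spirit as the necessity of $n=\omega(p)$ for operator-norm covariance estimation); and the log-determinant gap $\log(\tilde\sigma^2/\max_k\norm{\mb S_k^*})=o(\mathsf{SNR}_0^2)$ ensures that the $\frac12\log|\mb\Sigma_k|$ terms appearing in the log-likelihood ratios remain negligible against the leading quadratic terms. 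Combining Proposition~\ref{proposition: lower bound simplification} with these integral lower bounds and invoking Fano's reduction will yield the claimed $\exp(-(1+o(1))\mathsf{SNR}_0^2/2)$ minimax rate, and the condition $\log\beta=o(\mathsf{SNR}_0^2)$ absorbs the loss from the balanced-cluster counting argument in the very first reduction step.
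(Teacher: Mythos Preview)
Your proposal is correct and follows essentially the same approach as the paper: the same Steps~1--2 reduction to the local quantity $L_{\bo\eta}(\hat{\mb z})$, the same Fano-type averaging over a packing $\{\bo\eta^{(j)}\}$ that shares centres but varies the cross-block ${\mb V^{*\top}}{\mb\Sigma_k^{(j)}}^{-1}\mb V_\perp^*$, the same reduction of the integral in Proposition~\ref{proposition: lower bound simplification} to a $4$-dimensional subspace, and the same use of the high-dimensionality condition to make $\mathrm{KL}/\log M\to 0$. Two points worth sharpening when you write it out: (i) the paper obtains the packing directions as nearly orthogonal unit vectors $\{\mb v^{(j)}\}\subset\bb S^{p-3}$ via the Varshamov--Gilbert bound, which is what makes $\log M\gtrsim n(\tilde\sigma/\bar\sigma)^{2(1+\epsilon)}$ and hence drives the KL/log\,$M$ ratio to zero; and (ii) the verification that the packing lies in $\tilde{\mb\Theta}_\alpha$ requires checking $-\log\rbayes\geq\alpha^2\mathsf{SNR}_0^2/2$, which the paper does by showing $\snrfull\geq 2\alpha\,\mathsf{SNR}_0$ through a direct computation with the explicit $\mb\Omega_k^{(j)}$---this is not just a spectral-norm or log-determinant check.
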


The proof consists of three main steps, detailed in Sections~\ref{subsubsec: step 1}, \ref{subsubsec: step 2}, and \ref{subsubsec: step 3}. Once these steps are established, the proof is concluded in Section~\ref{subsubsec: wrap up the proof of lower bound}.

\subsubsection{Step 1: Reduction to a Subset of \texorpdfstring{$\mb \Theta_z$}{Thetaz}}
\label{subsubsec: step 1}
The first step is to reduce the Hamming distance under all possible permutations over $[K]$ to that under a deterministic one, which is in the same spirit as the proof of Theorem~1 in \cite{gao2018community}.
For an arbitrary fixed $\mb z^{(0)} \in \mb \Theta_z$, define $\mc I_k(\mb z^{(0)}) = \{i\in[n]: z_i^{(0)} = k\}$,
then we can choose a subset $\mc B_k\subset \mc I_k(\mb z^{(0)})$ such that $|\mc B_k| =  |\mc I_k(\mb z^{(0)})| - \lfloor \frac{n}{8\beta } \rfloor$. We denote $\mc B = \mc B_1 \cup \mc B_2$. Then we define a subset $\mb Z_{\mc B}$ of $\mb \Theta_z$ which remains consistent with $\mb z^{(0)}$ at the locations of $\mc B$, i.e., $\mb Z_{\mc B} = \{\mb z \in \mb \Theta_z:~ z_i = z_i^{(0)} ~\forall i \in \mc B\}$. Therefore, for any two $\mb z^{(1)} \neq \mb z^{(2)}\in \mb Z_{\mc B}$, we have 
\eq{
 \frac{1}{n} \sum_{i=1}^n  \ind{\{z_i^{(1)} \neq  z_i^{(2)} \}} \leq \frac{n - |\mc B|}{n} \leq \frac{1}{4\beta }. 
}
However, for $\pi\in \Pi_2$ with $\pi(1) = 2$, $\pi(2) = 1$, one has 
\begin{align}
    & 
    \frac{1}{n}\sum_{i=1}^n\ind\{\pi(z_i^{(1)}) \neq \pi(z_i^{(2)})\} \geq \frac{1}{2\beta } - \frac{1}{n}\lfloor \frac{n}{8\beta}\rfloor \geq \frac{1}{4\beta},
\end{align}
which implies that 
\eq{
h(\mb z^{(1)}, \mb z^{(2)} ) = \frac{1}{n} \sum_{i=1}^n  \ind{\{z_i^{(1)} \neq  z_i^{(2)} \}}. 
}

Recall that $\mb \Theta_\alpha =\bo\Theta_z \times  \tilde{\bo\Theta}_\alpha $, where $\tilde{\mb \Theta}_\alpha$ denotes the parameter space for the continuous parameters $(\bo \theta_1^*, \bo \theta_2^*, \mb \Sigma_1, \mb \Sigma_2)$ and $\bo\Theta_z$ denotes the parameter space for the cluster label vectors. In the following, the expectation $\bb E$ and the probability measure $\bb P$ are taken with respect to the Gaussian mixture model uniquely determined by the parameter set $ (\mb z^*, \bo \theta_1^*, \bo \theta_2^*, \mb \Sigma_1, \mb \Sigma_2)$.
Setting a uniform prior on $\mb Z_{\mc B}\subset \mb \Theta_z$, we deduce that 
    \begin{align}
        &\inf_{\hat{\mb z}} \sup_{ (\mb z^*, \{\bo \theta_k^*\}_{k\in[2]}, \{\mb \Sigma_k\}_{k\in[2]}) \in \mb \Theta_\alpha} \bb E h(\hat {\mb z}, \mb z^*)  \\
    \geq &\inf_{\hat{\mb z}} \sup_{ (\mb z^*, \{\bo \theta_k^*\}_{k\in[2]}, \{\mb \Sigma_k\}_{k\in[2]}) \in \mb \Theta_\alpha} \Big[\bb E\big[ h(\hat {\mb z}, \mb z^*)\big]  -  \frac{1}{4\beta}  \big(\bb P_{\mb y\sim \mc N(\bo \theta_1^*, \mb \Sigma_1)}[\tilde z(\mb y) = 2] + \bb P_{\mb y\sim \mathcal N(\bo \theta_2^*, \mb \Sigma_2)}[\tilde z(\mb y) = 1] \big) \Big]\\ 
    = & \inf_{\hat{\mb z}} \sup_{(\bo\theta^*_1, \bo \theta^*_2, \mb \Sigma_1, \mb \Sigma_2) \in \tilde{\mb \Theta}_\alpha} \sup_{\mb z^* \in \mb \Theta_z}\Big[\bb E \big[h(\hat {\mb z}, \mb z^*) \big]  -  \frac{1}{4\beta} \big(\bb P_{\mb y\sim \mc N(\bo \theta_1^*, \mb \Sigma_1)}[\tilde z(\mb y) = 2] + \bb P_{\mb y\sim \mathcal N(\bo \theta_2^*, \mb \Sigma_2)}[\tilde z(\mb y) = 1] \big) \Big]\\
    \geq &\inf_{\hat{\mb z}} \sup_{(\bo\theta^*_1, \bo \theta^*_2, \mb \Sigma_1, \mb \Sigma_2) \in \tilde{\mb \Theta}_\alpha} \frac{1}{|\mb Z_{\mc B}|}\\ 
    & \quad \cdot \sum_{\mb z^*\in \mb Z_{\mc B}} \Big[ \frac{1}{n} \sum_{i \in \mc B^\complement} \bb P[\hat z_i \neq z_i^*] -  \frac{1}{4\beta}  \big(\bb P_{\mb y\sim \mc N(\bo \theta_1^*, \mb \Sigma_1)}[\tilde z(\mb y) = 2] + \bb P_{\mb y\sim \mathcal N(\bo \theta_2^*, \mb \Sigma_2)}[\tilde z(\mb y) = 1] \big)\Big]  \\ 
    \geq &\frac{1}{4\beta }\inf_{\hat{\mb z}} \sup_{(\bo\theta^*_1, \bo \theta^*_2, \mb \Sigma_1, \mb \Sigma_2) \in \tilde{\mb \Theta}_\alpha} \frac{1}{|\mb Z_{\mc B}|}\\ 
    & \quad \cdot \sum_{\mb z^*\in \mb Z_{\mc B}} \Big[ \frac{1}{|\mc  B^{\complement}|}\sum_{i \in \mc B^\complement} \bb P[\hat z_i \neq z_i^*]  -  \big(\bb P_{\mb y\sim \mc N(\bo \theta_1^*, \mb \Sigma_1)}[\tilde z(\mb y) = 2] + \bb P_{\mb y\sim \mathcal N(\bo \theta_2^*, \mb \Sigma_2)}[\tilde z(\mb y) = 1] \big) \Big] 
    \label{eq: reduction to subset of theta}
\end{align}
since $|\mc{B}^\complement | \leq n/(4\beta )$.

\subsubsection{Step 2: Reduction to the Local Minimax Rate} 
\label{subsubsec: step 2}
This step aims to reduce the global discrepancy appearing in \eqref{eq: reduction to subset of theta} to a local quantity, exploiting the exchangeability of the parameter space. This approach aligns with the spirit in \cite[Lemma 2.1]{zhang2016minimax} for the network stochastic block model. Note that we have fixed the permutation over different clusters in $h(\hat{\mb z}, \mb z^*)$ in Step~1, which is different from the proof in \cite[Lemma 2.1]{zhang2016minimax}. What remains to be done is to account for permutations over different rows of $\mb Y$, so as to represent the global clustering error over all samples via the misclustering probability of a single (local) sample.

Without loss of generality, we assume that $1 \notin \mc B$. Given a permutation $\pi$ on $[n]$ and an estimator $\hat{\mb z}$ based on data $\mb Y$, we define an estimator $\hat{\mb z}^\pi$ as $ \hat z^{\pi}_i(\mb Y) = (\hat{\mb z}(\mb Y^\pi))_{\pi(i)}$, $i \in[n]$, where the permuted data $\mb Y^\pi$ is defined as $\mb Y^{\pi}_{i,:} = \mb Y_{\pi^{-1}(i),:}$ for $i\in[n]$. 
Intuitively, we implement the estimator $\hat{\mb z}$ on the row-permuted data matrix $\mb Y^{\pi}$, then restore the original order of rows by applying the inverse permutation.
By introducing the above ``permuted'' version of $\hat{\mb z}$, we are able to redistribute the ``non-symmetric'' effect of $\hat{\mb z}$ across various rows while maintaining the order of the samples. For convenience, given a label vector $\mb z$ and a permutation $\pi$ over $[n]$, we also introduce a permuted label vector $\mb z_\pi$ by letting $ (\mb z_{\pi})_i= z_{\pi^{-1}(i)}$. 

Given an arbitrary $\hat{\mb z}$, the core step of the symmetrization argument lies in the randomized estimator $\hat{\mb z}^{\mathsf{sym}}$ that $\bb P[\hat{\mb z}^{\mathsf{sym}} = \hat{\mb z}^\pi| \mb Y] = {1}/({|\mc B^\complement|!})$ for each $\pi \in \Gamma_{\mc B}$,  where $\Gamma_{\mc B}$ denotes the collection of permutations on $[n] \to [n]$ that preserves indices $i\in\mc B$ but permutes those $i\in\mc B^{\complement}$. The symmetry of $\hat{\mb z}^{\mathsf{sym}}$ arises from averaging over all possible permuted estimators, canceling out any "non-symmetric" effects.

We fix arbitrary continuous parameters $(\bo \theta_1^*, \bo \theta_2^*, \mb \Sigma_1, \mb \Sigma_2) \in \tilde{\mb \Theta}_\alpha$ and denote the probability measure of $\mb Y$ corresponding to a given label $\mb z^*$ by $\bb P_{\mb z^*}$ herein. 
We make the following claim, {which will be proved at the end of this Step 2.}

\begin{claim}\label{claim: symmetrization argument}
The following holds for an arbitrary $\hat{ \mb z}$:
\eq{
    \frac{1}{|\mb Z_{\mc B}|}\sum_{\mb z^*\in \mb Z_{\mc B}} \frac{1}{|\mc  B^{\complement}|} \sum_{i \in \mc B^\complement} \bb P_{\mb z^*}[\hat z_i \neq z_i^*]  = \frac{1}{|\mb Z_{\mc B}|}\sum_{\mb z^*\in \mb Z_{\mc B}} \frac{1}{|\mc  B^{\complement}|} \sum_{i \in \mc B^\complement} \bb P_{\mb z^*}[\hat z_i^{\mathsf{sym}} \neq z_i^*]\label{eq: symmetrization argument}
}
\end{claim}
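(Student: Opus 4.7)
The plan is to unfold the definition of the symmetrised estimator $\hat{\mb z}^{\mathsf{sym}}$ and transfer the extra randomness over permutations into a relabelling of the underlying ground-truth vector $\mb z^*$. By construction of $\hat{\mb z}^{\mathsf{sym}}$, one has conditionally on $\mb Y$ that $\bb P[\hat z_i^{\mathsf{sym}} \neq z_i^*\mid \mb Y] = \frac{1}{|\mc B^\complement|!}\sum_{\pi\in\Gamma_{\mc B}} \ind\{\hat z_i^\pi(\mb Y)\neq z_i^*\}$, so taking expectation under $\bb P_{\mb z^*}$ gives
\begin{align}
\bb P_{\mb z^*}[\hat z_i^{\mathsf{sym}}\neq z_i^*] = \frac{1}{|\mc B^\complement|!}\sum_{\pi\in\Gamma_{\mc B}} \bb P_{\mb z^*}[\hat z_i^\pi(\mb Y)\neq z_i^*].
\end{align}

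Next I would use row-exchangeability of the Gaussian mixture likelihood: because the rows of $\mb Y$ are independent with $\mb y_i\sim\mc N(\bo\theta_{z_i^*}^*,\mb\Sigma_{z_i^*})$, for any $\pi\in\Gamma_{\mc B}$ the law of $\mb Y^\pi$ under $\bb P_{\mb z^*}$ coincides with the law of $\mb Y$ under $\bb P_{\mb z^*_\pi}$, where $(\mb z^*_\pi)_j = z^*_{\pi^{-1}(j)}$. Since $(\mb z^*_\pi)_{\pi(i)}= z_i^*$, this yields the key identity
\begin{align}
\bb P_{\mb z^*}[\hat z_i^\pi(\mb Y)\neq z_i^*] = \bb P_{\mb z^*}\bigl[(\hat{\mb z}(\mb Y^\pi))_{\pi(i)}\neq z_i^*\bigr] = \bb P_{\mb z^*_\pi}\bigl[\hat z_{\pi(i)}(\mb Y)\neq (\mb z^*_\pi)_{\pi(i)}\bigr].
\end{align}

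The final step is a double change of variable. Summing over $i\in\mc B^\complement$ and setting $j=\pi(i)$: since every $\pi\in\Gamma_{\mc B}$ fixes $\mc B$ pointwise it also stabilises $\mc B^\complement$, so $j$ still ranges over $\mc B^\complement$. Summing over $\mb z^*\in\mb Z_{\mc B}$ and setting $\mb z' = \mb z^*_\pi$: the map $\mb z^*\mapsto\mb z^*_\pi$ is a bijection of $\mb Z_{\mc B}$ onto itself, because permutation preserves cluster sizes (hence membership in $\mb \Theta_z$), and because $\pi^{-1}$ fixes $\mc B$ we have $(\mb z^*_\pi)_i = z_i^* = z_i^{(0)}$ for every $i\in\mc B$. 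Re-indexing collapses $\frac{1}{|\mc B^\complement|!}\sum_{\pi\in\Gamma_{\mc B}}$ into a single copy and produces exactly $\sum_{\mb z'\in\mb Z_{\mc B}}\sum_{j\in\mc B^\complement}\bb P_{\mb z'}[\hat z_j\neq z'_j]$, which is the left-hand side of \eqref{eq: symmetrization argument}.

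No inequality, concentration bound, or analytic estimate is needed; the whole argument is a pure averaging/change-of-variable manipulation, so the main obstacle is combinatorial bookkeeping. Specifically, one must be careful about the direction of the permutation so that the distributional equality $\mb Y^\pi\mid\bb P_{\mb z^*}\stackrel{d}{=}\mb Y\mid\bb P_{\mb z^*_\pi}$ is stated with the correct indexing, and one must verify that both $\mc B^\complement$ and the feasible set $\mb Z_{\mc B}$ are stabilised by $\Gamma_{\mc B}$ so that the two reindexings are genuine bijections.
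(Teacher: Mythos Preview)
Your proposal is correct and follows essentially the same route as the paper's proof: both arguments unfold $\hat{\mb z}^{\mathsf{sym}}$ into an average over $\pi\in\Gamma_{\mc B}$, invoke the distributional identity $\mb Y^\pi\mid\bb P_{\mb z^*}\stackrel{d}{=}\mb Y\mid\bb P_{\mb z^*_\pi}$, and then perform the two bijective reindexings $i\mapsto\pi(i)$ on $\mc B^\complement$ and $\mb z^*\mapsto\mb z^*_\pi$ on $\mb Z_{\mc B}$. The only cosmetic difference is ordering: the paper fixes a single $\pi$, shows the double sum is unchanged, and then averages over $\pi$; you start from the $\pi$-average and collapse it---the underlying change-of-variable manipulation is identical.
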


Invoking \eqref{eq: reduction to subset of theta} and Claim \ref{claim: symmetrization argument}, we first have: 
\begin{align}
    &\inf_{\hat{\mb z}} \sup_{(\mb z^*, \bo \theta_1^*, \bo \theta_2^*, \mb \Sigma_1, \mb \Sigma_2) \in \mb \Theta_\alpha} \Big(\bb E h(\hat {\mb z}, \mb z^*) -  \frac{1}{4\beta} \big(\bb P_{\mb y\sim \mc N(\bo \theta_1^*, \mb \Sigma_1)}[\tilde z(\mb y) = 2] + \bb P_{\mb y\sim \mathcal N(\bo \theta_2^*, \mb \Sigma_2)}[\tilde z(\mb y) = 1] \big) \Big)\\
   =& \frac{1}{4\beta } \inf_{\hat{\mb z}} \sup_{(\bo \theta_1^*, \bo \theta_2^*, \mb \Sigma_1, \mb \Sigma_2) \in \tilde{\mb \Theta}} \Big(\frac{1}{|\mb Z_{\mc B}|}\sum_{z^*\in \mb Z_{\mc B}} \frac{1}{|\mc  B^{\complement}|} \sum_{i \in \mc B^\complement} \bb P_{\mb z^*}[\hat z^{\mathsf{sym}}_i \neq z_i^*]  - \big(\bb P_{\mb y\sim \mc N(\bo \theta_1^*, \mb \Sigma_1)}[\tilde z(\mb y) = 2] + \bb P_{\mb y\sim \mathcal N(\bo \theta_2^*, \mb \Sigma_2)}[\tilde z(\mb y) = 1] \big)\Big) .
\end{align}

We then denote by $\pi^{(i)}$ the permutation on $[n] \to [n]$ that exchanges $1$ with $i$. Note that $\mb z_{\pi^{(i)}} = \mb z_{(\pi^{(i)})^{-1}}$ for every label vector $\mb z$. 
One has 
\begin{align}
 &\frac{1}{4\beta} \inf_{\hat{\mb z}} \sup_{(\bo \theta_1^*, \bo \theta_2^*, \mb \Sigma_1, \mb \Sigma_2) \in \tilde{\mb \Theta}} \Big(\frac{1}{|\mb Z_{\mc B}|}\sum_{z^*\in \mb Z_{\mc B}} \frac{1}{|\mc  B^{\complement}|} \sum_{i \in \mc B^\complement} \bb P_{\mb z^*}[\hat z^{\mathsf{sym}}_i \neq z_i^*] \\
    & \qquad - \big(\bb P_{\mb y\sim \mc N(\bo \theta_1^*, \mb \Sigma_1)}[\tilde z(\mb y) = 2] + \bb P_{\mb y\sim \mathcal N(\bo \theta_2^*, \mb \Sigma_2)}[\tilde z(\mb y) = 1] \big)\Big)  \\
    = &\frac{1}{4\beta} \inf_{\hat{\mb z}}\sup_{(\bo \theta_1^*, \bo \theta_2^*, \mb \Sigma_1, \mb \Sigma_2) \in \tilde{\mb \Theta}}  \frac{1}{|\mb Z_{\mc B}|}\sum_{z^*\in \mb Z_{\mc B}}\frac{1}{|\mc  B^{\complement}|} \sum_{i \in \mc B^\complement}\Big(\bb P_{\mb z^*}[\hat z^{\mathsf{sym}}_{i} \neq (\mb z_{\pi^{(i)}}^*)_1]& \\ 
    &\qquad  -  \big(\bb P_{\mb y\sim \mc N(\bo \theta_1^*, \mb \Sigma_1)}[\tilde z(\mb y) = 2] + \bb P_{\mb y\sim \mathcal N(\bo \theta_2^*, \mb \Sigma_2)}[\tilde z(\mb y) = 1] \big)\Big). 
\end{align}

Thanks to the symmetric property of $\hat{\mb z}^{\mathsf{sym}}$, $\bb P_{\mb z^*}[\hat z_i^{\mathsf{sym}} \neq (\mb z^*_{\pi^{(i)}})_1]$ is equivalent to the misclustering probability of the first sample under a permuted label. Formally, we derive that
\longeq{
     &\frac{1}{4\beta} \inf_{\hat{\mb z}}\sup_{(\bo \theta_1^*, \bo \theta_2^*, \mb \Sigma_1, \mb \Sigma_2) \in \tilde{\mb \Theta}}  \frac{1}{|\mb Z_{\mc B}|}\sum_{z^*\in \mb Z_{\mc B}}\frac{1}{|\mc  B^{\complement}|} \sum_{i \in \mc B^\complement}\Big(\bb P_{\mb z^*}[\hat z^{\mathsf{sym}}_{i} \neq (\mb z_{\pi^{(i)}}^*)_1]  -  \big(\bb P_{\mb y\sim \mc N(\bo \theta_1^*, \mb \Sigma_1)}[\tilde z(\mb y) = 2] + \bb P_{\mb y\sim \mathcal N(\bo \theta_2^*, \mb \Sigma_2)}[\tilde z(\mb y) = 1] \big)\Big)\\ 
    \stackrel{\text{(I)}}{=} & \frac{1}{4\beta}\inf_{\hat{\mb z}} \sup_{(\bo\theta^*_1, \bo \theta^*_2, \mb \Sigma_1, \mb \Sigma_2)\in \tilde{\mb \Theta}} 
    \Big(\bb P_{*,1, {\bo \eta}}[\hat z^{\mathsf{sym}}_1 =2 ] + \bb P_{*,2, {\bo \eta}}[\hat z^{\mathsf{sym}}_1 =1]  - \big(\bb P_{\mb y\sim \mc N(\bo \theta_1^*, \mb \Sigma_1)}[\tilde z(\mb y) = 2] + \bb P_{\mb y\sim \mathcal N(\bo \theta_2^*, \mb \Sigma_2)}[\tilde z(\mb y) = 1] 
    \big)\Big), \label{eq: symmetrization argument 2}
}
    where $\bb P_{*, k,\bo \eta}$ denotes the marginal probability measure of $\mb y$  with the uniform prior measure over $\{\mb z \in \mb Z_{\mc B}: z_1 = k\}$ for $k = 1,2$ and parameters $\bo \eta = (\bo\theta_1^*,\bo\theta_2^*, \mb \Sigma_1, \mb \Sigma_2)$. The equality (I) above holds since 
    \begin{equation}
        \bb P_{\mb z^*}[\hat z^{\mathsf{sym}}_{i} \neq (\mb z_{\pi^{(i)}}^*)_1]\stackrel{\text{by symmetry}}{=} \bb P_{\mb z^*}[(\hat {\mb z}^{\mathsf{sym}})_i^{\pi^{(i)}} \neq(\mb z_{\pi^{(i)}}^*)_1] = \bb P_{\mb z^*}[\hat z^{\mathsf{sym}}_{1}(\mb Y^{\pi^{(i)}}) \neq (\mb z_{\pi^{(i)}}^*)_1] =  \bb P_{{\mb z^*_{\pi^{(i)}}}}[\hat z^{\mathsf{sym}}_{1}(\mb Y) \neq (\mb z^*_{\pi^{(i)}})_1
         ]. 
    \end{equation}

    Conditional on $\tilde{\mb Y} \coloneqq (\mb y_2, \cdots,\mb y_n)\t $, we rewrite \eqref{eq: symmetrization argument 2} as 
    \begin{align}
        & \frac{1}{4\beta}\inf_{\hat{\mb z}} \sup_{(\bo\theta^*_1, \bo \theta^*_2, \mb \Sigma_1, \mb \Sigma_2)\in \tilde{\mb \Theta}} 
    \Big(\bb P_{*,1, {\bo \eta}}[\hat z^{\mathsf{sym}}_1 =2 ] + \bb P_{*,2, {\bo \eta}}[\hat z^{\mathsf{sym}}_1 =1] \\ 
    &\qquad - \big(\bb P_{\mb y\sim \mc N(\bo \theta_1^*, \mb \Sigma_1)}[\tilde z(\mb y) = 2] + \bb P_{\mb y\sim \mathcal N(\bo \theta_2^*, \mb \Sigma_2)}[\tilde z(\mb y) = 1] 
    \big)\Big) \\ 
    = & \frac{1}{4\beta}\inf_{\hat{\mb z}} \sup_{(\bo\theta^*_1, \bo \theta^*_2, \mb \Sigma_1, \mb \Sigma_2)\in \tilde{\mb \Theta}} 
    \bb E\Big[\Big(\bb P_{*,1, {\bo \eta}}[\hat z^{\mathsf{sym}}_1 =2 | \tilde{\mb Y}] + \bb P_{*,2, {\bo \eta}}[\hat z^{\mathsf{sym}}_1 =1 | \tilde{\mb Y}] \\ 
    &\qquad - \big(\bb P_{\mb y\sim \mc N(\bo \theta_1^*, \mb \Sigma_1)}[\tilde z(\mb y) = 2] + \bb P_{\mb y\sim \mathcal N(\bo \theta_2^*, \mb \Sigma_2)}[\tilde z(\mb y) = 1] \big)\Big)\Big]. 
    \end{align}

    Combining the above steps, we finally arrive at 
    \begin{align}
    &\inf_{\hat{\mb z}} \sup_{(\mb z^*, \bo \eta) \in \mb \Theta} \big(\bb E h(\hat {\mb z}, \mb z^*) -  \frac{1}{4\beta} \big(\bb P_{\mb y\sim \mc N(\bo \theta_1^*, \mb \Sigma_1)}[\tilde z(\mb y) = 2] + \bb P_{\mb y\sim \mathcal N(\bo \theta_2^*, \mb \Sigma_2)}[\tilde z(\mb y) = 1] \big) \big)\\
        = & \frac{1}{4\beta}\inf_{\hat{\mb z}} \sup_{(\bo\theta^*_1, \bo \theta^*_2, \mb \Sigma_1, \mb \Sigma_2)\in \tilde{\mb \Theta}} 
    \bb E\Big[\Big(\bb P_{*,1, {\bo \eta}}[\hat z^{\mathsf{sym}}_1 =2 | \tilde{\mb Y}] + \bb P_{*,2, {\bo \eta}}[\hat z^{\mathsf{sym}}_1 =1 | \tilde{\mb Y}] \\ 
    &\qquad - \big(\bb P_{\mb y\sim \mc N(\bo \theta_1^*, \mb \Sigma_1)}[\tilde z(\mb y) = 2] + \bb P_{\mb y\sim \mathcal N(\bo \theta_2^*, \mb \Sigma_2)}[\tilde z(\mb y) = 1] \big)\Big)\Big].\label{eq: symmetrization argument 3}
    \end{align}

    Now we are left with proving the correctness of Claim \ref{claim: symmetrization argument}.
    \begin{proof}[Proof of Claim \ref{claim: symmetrization argument}]
    For every arbitrary $\hat{\mb z}$ and every permutation $\pi \in \Gamma_{\mc B}$, we have
    \begin{align}
        & \frac{1}{|\mc B^\complement|}\sum_{i\in \mc B^\complement}\bb P_{\mb z^*}[\hat z^\pi_i \neq z_i^*] =  \frac{1}{|\mc B^\complement|}\sum_{i\in \mc B^\complement}\bb P_{\mb z_\pi}[\hat z(\mb Y^\pi)_i \neq (\mb z^*_\pi)_i]\\ 
         =& \int \ind{\{\hat z(\mb Y^\pi)_i \neq (\mb z^*_\pi)_i\}}\mathrm d \bb P_{\mb z^*}(\mb Y)   
         \stackrel{\text{(i)}}{=} \int \ind{\{\hat z(\mb Y^\pi)_i \neq (\mb z^*_\pi)_i\}}\mathrm d \bb P_{\mb z^*_{\pi}}(\mb Y^\pi) \\ 
          = &\frac{1}{|\mc B^\complement|}\sum_{i\in \mc B^\complement}\bb P_{\mb z^*_\pi}[\hat z_i \neq (z_\pi^*)_i],
    \end{align} 
    where (i) holds 
    since $  \bb P_{\mb z^*}(\mb Y)  =\bb P_{\mb z^*_{\pi}}(\mb Y^\pi)$. It follows that
    \begin{equation}
          \frac{1}{|\mb Z_{\mc B}|}\sum_{z^*\in \mb Z_{\mc B}} \frac{1}{|\mc  B^{\complement}|} \sum_{i \in \mc B^\complement} \bb P_{\mb z^*}[\hat z^\pi_i \neq z_i^*]  = \frac{1}{|\mb Z_{\mc B}|}\sum_{z^*\in \mb Z_{\mc B}} \frac{1}{|\mc B^\complement|}\sum_{i\in \mc B^\complement}\bb P_{\mb z^*_\pi}[\hat z_i \neq (\mb z_\pi^*)_i] 
        =    \frac{1}{|\mb Z_{\mc B}|}\sum_{z^*\in \mb Z_{\mc B}} \frac{1}{|\mc B^\complement|}\sum_{i\in \mc B^\complement}\bb P_{\mb z^*}[\hat z_i \neq z^*_i] 
    \end{equation}
    which finally leads to \eqref{eq: symmetrization argument} and proves Claim~\ref{claim: symmetrization argument}.
    \end{proof}

\subsubsection{Step 3: Fano's Method}
\label{subsubsec: step 3}
The final step is an application of Fano's method to the right-hand side of \eqref{eq: symmetrization argument 3}, where the key ingredient lies in a variant of Fano's method established in \cite{azizyan2013minimax} and the specific construction of the subset.  We recall that $\tilde{\mb \Theta}_\alpha$ is defined as
\begin{align}
& \tilde{\mb \Theta}_{\alpha} \coloneqq \tilde{\mb\Theta}_\alpha(n, p, \tilde\sigma, \mb S_1^{\mod}, \mb S_2^{\mod}, \mathsf{SNR}_0^{\mod})= \Big\{( \bo \theta_1^*, \bo \theta_2^*, \mb \Sigma_1, \mb \Sigma_2):  \\
& \quad (\bo \theta_1^*, \bo \theta_2^*)=\mb V^* \mb R \text{ for some $\mb V^*\in O(p,2)$ and $\mb R \in \mathrm{GL}_2(\bb R)$}; ~~\max_{k\in[2]} \bignorm{\mb \Sigma_k} \leq \tilde\sigma^2  ; \\ 
& \quad  {\mb V^{*\top}} \mb \Sigma_k \mb V^* = \mb S_k^{\mod}, k \in [2];~~
\mathsf{SNR}^{\mathsf{mod}}(\{\bo \theta_k^*\}, \{\mb \Sigma_k\}) =   \mathsf{SNR}^{\mathsf{mod}}_0; \quad \frac{-\log(\rbayes) }{{\mathsf{SNR}_0^{\mod}}^2 /2} \geq \alpha^2\Big\}. 
\end{align}

We first show our reduction scheme in Step~3.1, then provide a sketch of our idea in the parameter subset construction. Following the sketch, we verify the required separation condition on the delicately designed parameter subset in Steps~3.2 and~3.3, and finally confirm the KL divergence condition in Step~3.4. 

\emph{Step 3.1: Reduction Scheme via Alternative Fano's Method. }
The traditional Fano's method is not directly applicable to the current problem since the form shown in \eqref{eq: symmetrization argument 3} does not possess a semi-distance. We introduce a variant of Fano's method whose spirit is parallel to Proposition 1 in \cite{azizyan2013minimax} that generalizes the semi-distance to the case of a function of the estimator and the parameters: 
\begin{lemma}\label{lemma: fano's method}
    Let $\{\bb P_j\}_{j\in [M]}$ be a collection of probability measures on $\mc D$ with $\max_{j_1\neq j_2}\mathrm{KL}(\bb P_{j_1}, \bb P_{j_2}) \leq c_0\log M$, and  $M \geq 3$ for some sufficiently small $c_0$. Given arbitrary functions $f_j:\mc D \rightarrow \bb R, j\in[M]$ satisfying that for every $\mb x \in \mc D$, $\min_{j_1\neq j_2}f_{j_1}(\mb x)+ f_{j_2}(\mb x) \geq \gamma$, then we have $\sup_{j\in[M]}\bb E_j[f_j(\mb X)] \geq c\gamma$ for some positive constant $c$. 
\end{lemma}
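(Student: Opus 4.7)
The plan is to run Fano's method with the ``min-selector'' $\hat J(X) := \arg\min_{j \in [M]} f_j(X)$ playing the role of a would-be optimal decoder. The key pointwise observation is that the anti-triangle hypothesis $\min_{j_1 \neq j_2}[f_{j_1}(x) + f_{j_2}(x)] \geq \gamma$ forces $f_j(x) \geq \gamma/2$ for every $j \neq \hat J(x)$ and every $x \in \mc D$: combining $f_{\hat J(x)}(x) + f_j(x) \geq \gamma$ with the minimality $f_{\hat J(x)}(x) \leq f_j(x)$ immediately gives $2 f_j(x) \geq \gamma$. This is the anti-triangle analogue of the triangle-inequality step that underpins the classical Fano reduction for semi-distances, and it is the only place the special structure of $\{f_j\}$ enters.

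Given this, I would carry out two steps. First, using that each $f_j$ is non-negative in the intended application (each $L_{\bo \eta^{(j)}}(\hat{\mb z})$ is an excess Bayes risk, hence $\geq 0$ by Neyman--Pearson), one deduces
\[
    \sup_j \bb E_j[f_j(X)] \;\geq\; \frac{1}{M}\sum_{j \in [M]} \bb E_j[f_j(X)] \;\geq\; \frac{\gamma}{2}\cdot \frac{1}{M}\sum_{j \in [M]} \bb P_j[\hat J(X) \neq j].
\]
Second, I would invoke the classical Fano's inequality under the uniform prior $J \sim \mathrm{Unif}[M]$: by convexity of KL in its second argument, $I(J; X) \leq \frac{1}{M^2}\sum_{j_1, j_2}\mathrm{KL}(\bb P_{j_1}, \bb P_{j_2}) \leq c_0 \log M$, and thus $\frac{1}{M}\sum_j \bb P_j[\hat J \neq j] \geq 1 - (I(J;X) + \log 2)/\log M \geq 1 - c_0 - (\log 2)/\log M$. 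For $M \geq 3$ and any $c_0$ chosen strictly smaller than $1 - \log_3 2$, this lower bound is an absolute positive constant $c_1$, and chaining the two displays gives $\sup_j \bb E_j[f_j(X)] \geq (c_1/2)\gamma$, which is the claimed conclusion with $c = c_1/2$.

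The only real subtlety, rather than a true obstacle, is that the reduction silently uses $f_j \geq 0$ to discard the term $\bb E_j[f_j(X) \ind\{\hat J = j\}]$, which could otherwise be very negative precisely when $j$ equals the minimizer. The lemma as stated permits $f_j : \mc D \to \bb R$ in full generality, but in every invocation in the paper the $f_j$'s are excess-Bayes-risk-type quantities and the non-negativity is automatic. If strictly needed, a fully general version can be recovered by first summing the anti-triangle hypothesis over all ordered pairs to get the pointwise bound $\sum_j f_j(x) \geq M\gamma/2$, and then controlling $\frac{1}{M}\sum_j \bb E_j[f_j(X)]$ via change of measure against the mixture $\bar{\bb P} = M^{-1}\sum_j \bb P_j$; the final constant $c$ then degrades only by a bounded factor depending on $c_0$.
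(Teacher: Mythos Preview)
Your proposal is correct and follows essentially the same route as the paper: define the min-selector $\hat J(x)=\arg\min_j f_j(x)$, use the anti-triangle hypothesis to show that $j\neq \hat J(x)$ forces $f_j(x)\ge \gamma/2$, and then invoke Fano's inequality on the testing error $\frac{1}{M}\sum_j \bb P_j[\hat J\neq j]$. You are also more explicit than the paper about the role of the sign assumption $f_j\ge 0$ (needed to pass from $\bb E_j[f_j]$ to $\tfrac{\gamma}{2}\bb P_j[\hat J\neq j]$), which the paper's display uses tacitly; your remark that this is automatic in the application because each $L_{\bo\eta}$ is an excess Bayes risk is exactly right.
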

The proof of Lemma \ref{lemma: fano's method} is postponed to Section~\ref{sec: proof of fano's method}. Returning to our problem, define ${\bo \eta}^{(j)} = (\bo\theta^{*}_1, \bo\theta^{*}_2, \bo\Sigma^{(j)}_1, \bo\Sigma^{(j)}_2)$. We denote the submatrix $(\mb y_2,\cdots,\mb y_n)\t$ by $\tilde{\mb Y}\in \bb R^{(n-1)\times p}$ and the marginal distribution of $\tilde{\mb Y}$ under $\bb P_{*,1,{\bo \eta}^{(j)}}$ by $\bar{\bb P}_{*,{\bo \eta}^{(j)}}$ (this marginal distribution actually also coincides with the corresponding marginal distribution under $\bb P_{*,2,{\bo \eta}^{(j)}}$).
As summarized in Section~\ref{subsection: minimax lower bound for anisotropic gaussian mixtures}, we
let $L_{{\bo \eta}}(\hat{\mb z})$ be $\big(\bb P_{*,1, {\bo \eta}}[\hat z^{\mathsf{sym}}_1 =2 |\tilde{\mb Y }] + \bb P_{*,2, {\bo \eta}}[\hat z^{\mathsf{sym}}_1 =1|\tilde{\mb Y }] \big)- \big(\bb P_{\mb y\sim \mc N(\bo \theta_1^*, \mb \Sigma_1)}[\tilde z(\mb y) = 2] + \bb P_{\mb y\sim \mc N(\bo \theta_2^*, \mb \Sigma_2)}[\tilde z(\mb y)= 1] \big)$ for $\bo \eta = (\{\bo \theta_k^*\}_{k\in[2]}, \{\mb \Sigma_k\}_{k\in[2]})$, which depends on $\tilde{\mb Y}$ and $\hat {\mb z}$. 
To apply Lemma \ref{lemma: fano's method}, 
a carefully designed subset $\{{\bo \eta}^{(j)}\}_{j=0}^M$ in $\tilde{\mb \Theta}_\alpha$ is needed such that $L_{{\bo \eta}^{(j_1)}}(\hat{\mb z}) + L_{{\bo \eta}^{(j_2)}}(\hat{\mb z})$ reflects the discrepancy between the minimax rate and the Bayesian oracle lower bound. Moreover, the following proposition, whose proof is deferred to Section~\ref{subsec: proof of proposition: lower bound simplification}, offers us an approach to lower bounding this quantity.

\begin{proposition}\label{proposition: lower bound simplification}
     For an arbitrary pair of parameter ${\bo \eta}^{(j_1)} = (\bo \theta_1^{*}, \bo \theta_2^*, \mb \Sigma_1^{(j_1)}, \mb \Sigma_2^{(j_1)} ) $, ${\bo \eta}^{(j_2)} = (\bo \theta_1^*, \bo \theta_2^*, \mb \Sigma_1^{(j_2)}, \mb \Sigma_2^{(j_2)}) \in \tilde{\mb \Theta}_\alpha$ and any estimator $\hat{\mb z}$, we have
    \begin{align}
         L_{{\bo \eta}^{(j_1)}}( \hat{\mb z}) + L_{{\bo \eta}^{(j_2)}}( \hat{\mb z}) 
        \geq  & \int_{ \frac{\mathrm d \bb P_{\bo \theta_2^*, \mb \Sigma_2^{(j_1)} }}{\mathrm d \bb P_{\bo \theta_1^* , \mb \Sigma_1^{(j_1)} }}\leq \frac{1}{2},\frac{\mathrm d \bb P_{\bo \theta_1^*, \mb \Sigma_1^{(j_2)}}}{\mathrm d \bb P_{\bo \theta_2^*, \mb \Sigma_2^{(j_2)}}}\leq \frac{1}{2} } \min\{\phi_{\bo \theta_1^*, \mb \Sigma_1^{(j_1)}},~\phi_{\bo\theta_2^*, \mb \Sigma_2^{(j_2)}} \} \mathrm dx \\ 
        &  + \int_{ \frac{\mathrm d \bb P_{\bo \theta_1^*, \mb \Sigma_1^{(j_1)}}}{\mathrm d \bb P_{\bo \theta_2^*, \mb \Sigma_2^{(j_1)}}}\leq \frac{1}{2}, \frac{\mathrm d \bb P_{\bo \theta_2^*, \mb \Sigma_2^{(j_2)}}}{\mathrm d \bb P_{\bo \theta_1^*, \mb \Sigma_1^{(j_2)}}}\leq \frac{1}{2}} \min\{\phi_{\bo \theta_2^*, \mb \Sigma_2^{(j_1)}}, 
        ~\phi_{\bo \theta_1^*, \mb \Sigma_1^{(j_2)}} \} \mathrm dx.\label{eq: lower bound simplification 3}
    \end{align}
\end{proposition}

Let $\gamma = \exp\Big(-(1 + o(1))\frac{{\mathsf{SNR}^{\mathsf{mod}}}^2}{2} \Big)$. To lower bound $L_{{\bo \eta}^{(j_1)}}(\hat{\mb z}) + L_{{\bo \eta}^{(j_2)}}(\hat{\mb z})$, everything boils down to constructing a subset $\{{\bo \eta}^{(j)}\}_{j=0}^M \subset \tilde{\mb \Theta}_\alpha$ such that 
\begin{align}
    &\text{the RHS of \eqref{eq: lower bound simplification 3}} \geq \exp\Big(-(1 + o(1))\frac{{\mathsf{SNR}_0^{\mod}}^2}{2} \Big). \label{eq: lower bound simplification 4}
\end{align}
\bigskip

Regarding the inequality \eqref{eq: lower bound simplification 4}, it is clearly impossible to directly approximate the probability within the irregular regions $\Big\{ \frac{\phi_{\bo \theta^*_2, \mb \Sigma^{(j_1)}_2}}{\phi_{\bo \theta^*_1, \mb \Sigma^{(j_1)}_1}} \leq \frac{1}{2}, \frac{\phi_{\bo \theta^*_1, \mb \Sigma^{(j_2)}_1}}{\phi_{\bo \theta^*_2, \mb \Sigma^{(j_2)}_2}} \leq \frac{1}{2}\Big\}$ and  $\Big\{\frac{\phi_{\bo \theta_1^*, \mb \Sigma_1^{(j_1)}}}{\phi_{\bo \theta_2^*, \mb \Sigma_2^{(j_1)}}}\leq \frac{1}{2}, \frac{\phi_{\bo \theta_2^*, \mb \Sigma_2^{(j_2)}}}{\phi_{\bo \theta_1^*, \mb \Sigma_1^{(j_2)}}}\leq \frac{1}{2}\Big\}$. Instead of tackling these irregular regions directly, it is more practical to look for regions in regular shapes, satisfying that (i) they are contained within the integral region in the RHS of \eqref{eq: lower bound simplification 3}; (ii) the integral over this region is approximately equal to $\exp\left(-(1 + o(1))\frac{{\mathsf{SNR}_0^{\mod}}^2}{2} \right)$. These conditions are formalized as \textsc{Condition~1} and \textsc{Condtion~2} in Step~3.3. Before we dive into the intricate details of the construction, we would like to provide a high-level overview of the main idea and shed light on the necessities to meet the desired condition.  

\bigskip

\emph{An Illustrative Example of Dimension $3$. }
We get started from a case with $p = 3$ to develop some intuition of which region is critical in identifying the gap with $ \exp\left(-(1 + o(1))\frac{{\mathsf{SNR}_0^{\mod}}^2}{2} \right)$. Suppose that two possible Gaussian mixture models characterized by parameters $\{\bo \theta_k^*, \mb \Sigma_k^{(1)}\}_{k\in[2]}$ and  $\{\bo \theta_k^*, \mb \Sigma_k^{(2)}\}_{k\in[2]}$, where 
\begin{align}
    & \boldsymbol \theta_1^* = (x, 0, 0)\t , \quad \boldsymbol \theta_2^* = (0, x,0)\t , \\
    &\mb \Sigma_1^{(1)} = \mb \Sigma_2^{(1)} = \left(\begin{matrix}
    1& 0 & c \\ 
    0& 1 & -c \\ 
    c & -c &1 
\end{matrix}\right), \quad \mb \Sigma_1^{(2)} = \mb \Sigma_2^{(2)} = \left(\begin{matrix}
    1& 0 & -c \\ 
    0& 1 & c \\ 
    -c & c &1 
\end{matrix}\right)
\end{align}
with $0 < c < 1/  \sqrt{2}$. The decision boundaries for these two cases are depicted in Figure~\ref{fig: decision boundaries}. 

Letting the columns of $\mb V^*\in\bb R^{3\times 2}$ be the first two canonical bases of $\bb R^3$, it is immediate that $\mb S_k^{\mod} = {\mb V^*}\t\mb \Sigma_k^{(1)} \mb V^* =  {\mb V^*}\t\mb \Sigma_k^{(2)} \mb V^* = \mb I_2$ for $k\in[2]$, $\mb w_1^* = {\mb V^*}\t \bo \theta_1^*= (x, 0)\t $, and $\mb w_2^* = {\mb V^*}\t \bo \theta_2^* = (0,x)\t$. Then one has 
$$\mathsf{SNR}^{\mod}(\{\bo \theta_k^*\}_{k\in[2]}, \{\mb \Sigma_k^{(1)}\}_{k\in[2]})^2 = \mathsf{SNR}^{\mod}(\{\bo \theta_k^*\}_{k\in[2]}, \{\mb \Sigma_k^{(2)}\}_{k\in[2]})^2 = 2x^2.$$
Further, $(\frac{x}{\sqrt{2}}, \frac{x}{\sqrt{2}})\t $ is the minimizer of the function in the $\mathsf{SNR}$'s definition: 
\begin{align}
    & (\frac{x}{\sqrt{2}}, \frac{x}{\sqrt{2}})\t  = \argmin_{\mb y \in \bb R^3: \norm{{\mb S_1^{\mod}}^{-\frac12} (\mb y- \mb w_1^*)}_2 = \norm{{\mb S_2^{\mod}}^{-\frac12} (\mb y- \mb w_2^*)}_2 } \norm{{\mb S_1^{\mod}}^{-\frac12} (\mb y- \mb w_1^*)}_2.
\end{align}
Intuitively, after discarding the third entry of our observation, $(\frac{x}{\sqrt{2}}, \frac{x}{\sqrt{2}})\t$ is the location that aligns with the decision boundary of the Gaussian mixture model with the reduced dimension two and is most prone to misclustering. Specifically, the density function at $(\frac{x}{\sqrt{2}}, \frac{x}{\sqrt{2}})\t$ under $\mc N({\mb V^*}\t\bo \theta_k^*,{\mb V^*}\t \mb \Sigma_k^{(j)} \mb V^*)$ for all $j,k\in[2]$ has a magnitude of $\exp(-x^2) = \exp(-\frac{{\mathsf{SNR}^{\mathsf{mod}}}^2}{2})$ as $x\rightarrow \infty$. 

\begin{figure}
    \centering
    \begin{subfigure}[b]{0.3\textwidth}
        \centering
        \includegraphics[width=\textwidth]{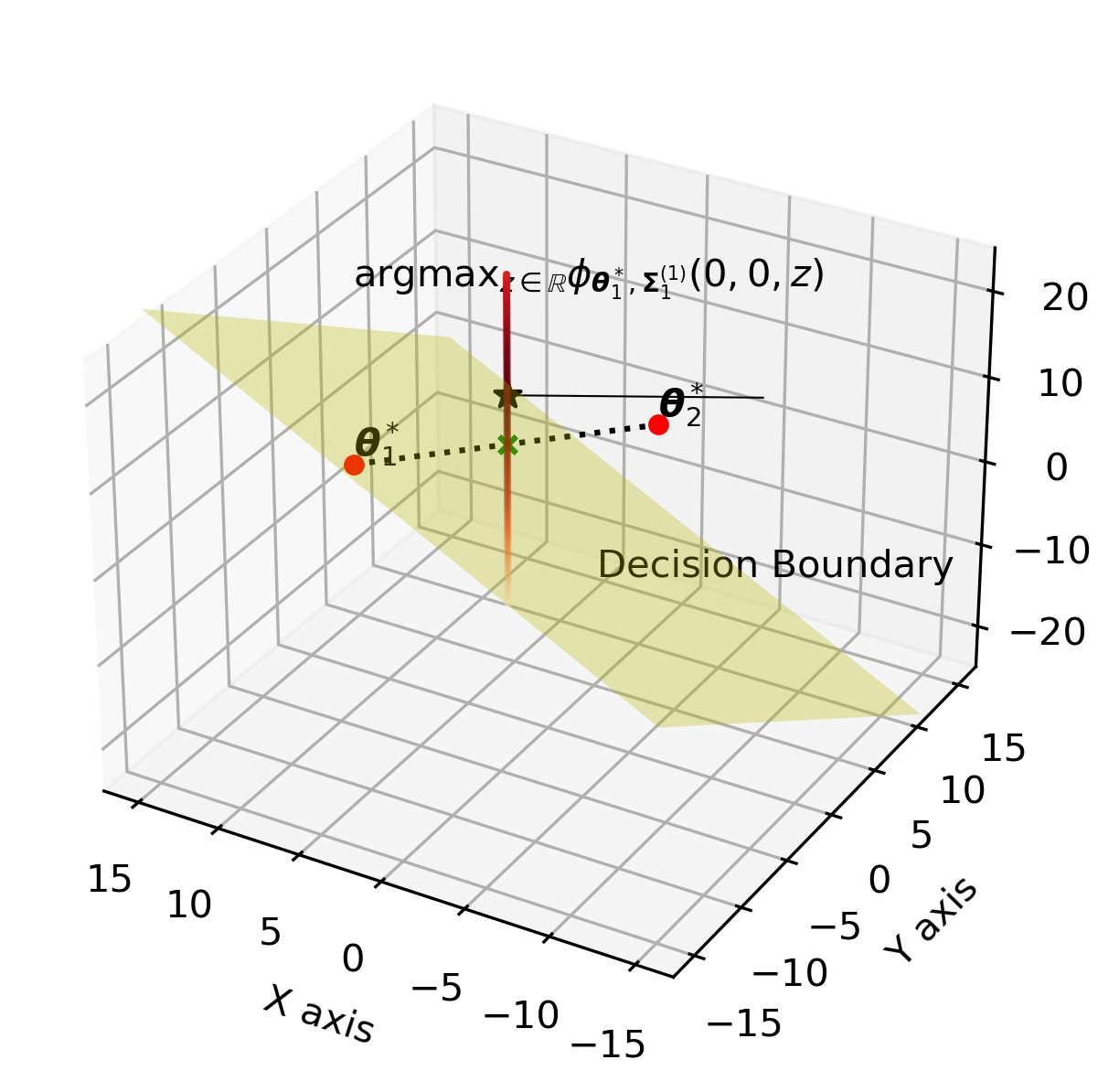}
        \caption{Case 1}
        \label{fig: boundary case 1}
    \end{subfigure}
    \begin{subfigure}[b]{0.3\textwidth} 
    \centering
    \includegraphics[width=\textwidth]{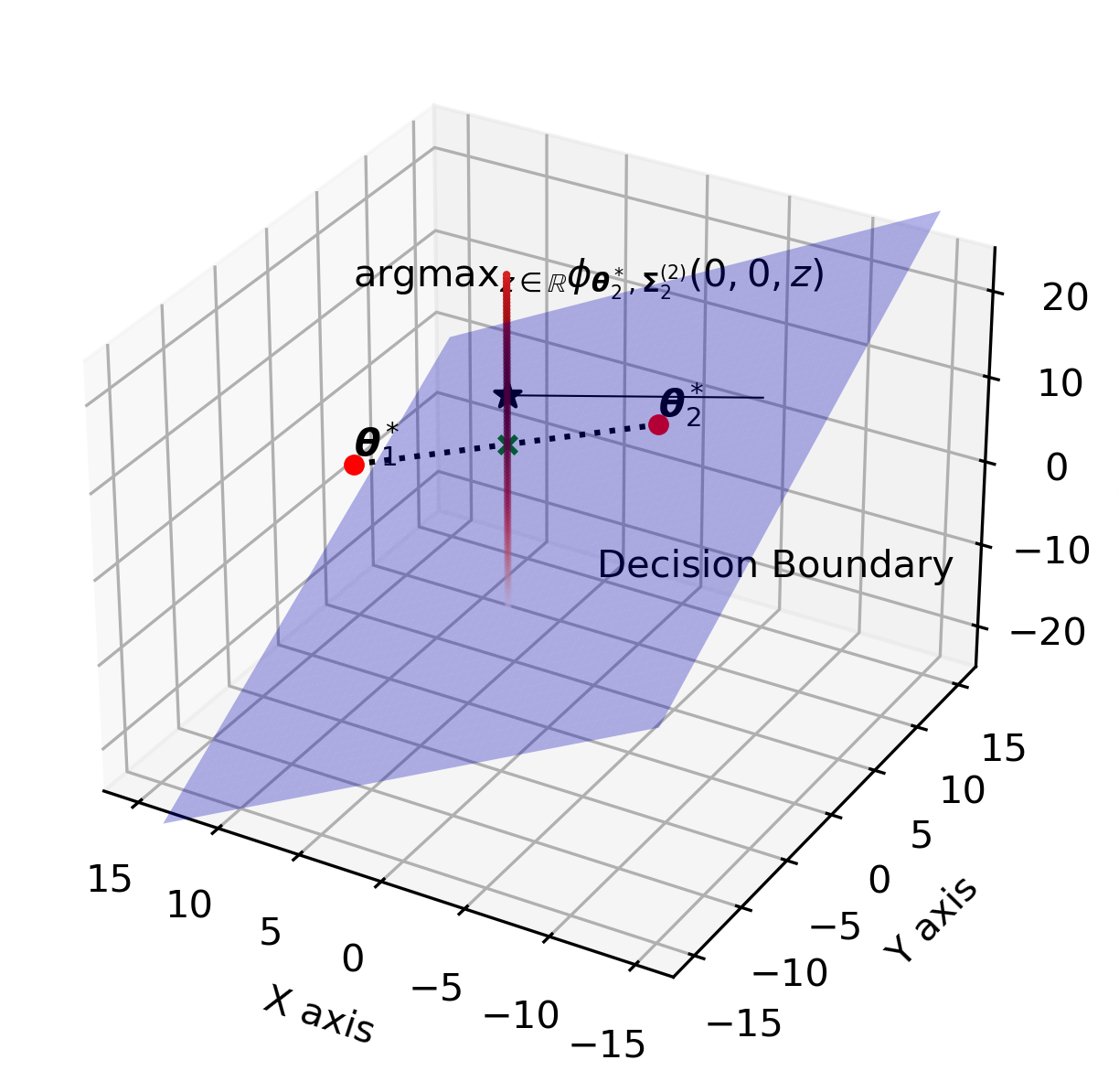}
    \caption{Case 2}
    \end{subfigure}
        \begin{subfigure}[b]{0.3\textwidth} 
    \centering
    \includegraphics[width = \textwidth]{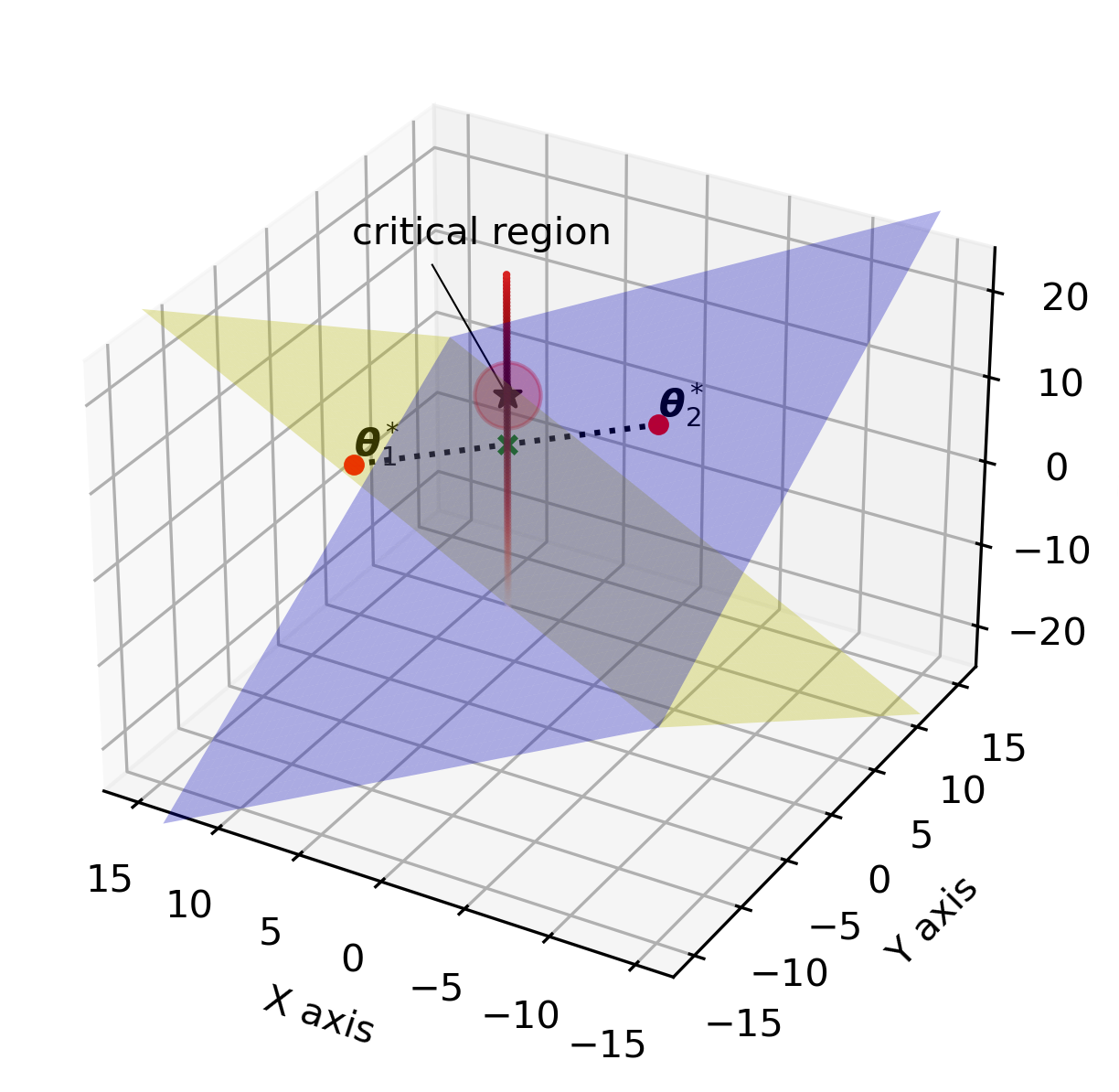}
    \caption{Critical Region}
    \label{fig: critical region}
    \end{subfigure}
    \caption{Two-Component Gaussian Mixture Example in $\bb R^3$. }
    \label{eq: decision boundaries}
\end{figure}

However, when we reversely embed $(\frac{x}{\sqrt{2}}, \frac{x}{\sqrt{2}})\t$ back into the original sample space $\bb R^3$ as $\mb V^*(\frac{x}{\sqrt{2}}, \frac{x}{\sqrt{2}})\t =(\frac{x}{\sqrt{2}}, \frac{x}{\sqrt{2}}, 0 )\t$, the density function at $(\frac{x}{\sqrt{2}}, \frac{x}{\sqrt{2}}, 0 )\t$ is written as 
$$c' \cdot \exp(- \frac{x^2}{(1 - 2c^2)}) = c' \cdot \exp(- \frac{{\mathsf{SNR}^{\mathsf{mod}}}^2}{2(1 - 2c^2)}), $$
where $c'$ is a constant related to $c$. Since we aim to identify a region where the density is at the order $\exp(-\frac{{\mathsf{SNR}^{\mathsf{mod}}}^2}{2})$, we search over the affine space perpendicular to $\mb V^*$ --specifically, along the $z$-axis -- extending from $\mb V^*(0,0)\t = (0,0,0)\t$. Basic algebra reveals that 
$$\max_{z\in \bb R}\phi_{\bo \theta_1^*, \mb \Sigma_1^{(1)}}((\frac{x}{\sqrt{2}}, \frac{x}{\sqrt{2}}, z )\t) = \max_{z\in \bb R}\phi_{\bo \theta_2^*, \mb \Sigma_2^{(2)}}((\frac{x}{\sqrt{2}}, \frac{x}{\sqrt{2}}, z )\t ) = c'\exp(-\frac{{\mathsf{SNR}^{\mathsf{mod}}}^2}{2})$$
with $z^* = cx$ being the optimizer. Note that $(x / \sqrt{2},x / \sqrt{2},z^*)$ does not align with the decision boundaries under either parameter tuple, which means each likelihood ratio estimator can confidently classify it into one cluster, as depicted in Figure~\ref{fig: critical region}. Reinterpreting the above in the context of \eqref{eq: lower bound simplification 3}, a neighborhood of $(x / \sqrt{2},x / \sqrt{2},z^*)$, the so-called critical region, will fall into the region $\Big\{ \frac{\phi_{\bo \theta^*_2, \mb \Sigma^{(1)}_2}}{\phi_{\bo \theta^*_1, \mb \Sigma^{(1)}_1}} \leq \frac{1}{2}, \frac{\phi_{\bo \theta^*_1, \mb \Sigma^{(2)}_1}}{\phi_{\bo \theta^*_2, \mb \Sigma^{(2)}_2}} \leq \frac{1}{2}\Big\}$ as $x \rightarrow \infty$; on the other hand, the quantity $\min\{\phi_{\bo \theta^*_1, \mb \Sigma_1^{(1)}} (\mb x), \phi_{\bo \theta^*_2, \mb \Sigma_2^{(2)}} (\mb x)\}$ for every $\mb x$ in the neighborhood of $(\frac{x}{\sqrt{2}}, \frac{x}{\sqrt{2}}, z^*)\t$ is of magnitude $\exp\Big(-(1 +o(1))\frac{{\mathsf{SNR}^{\mathsf{mod}}}^2}{2}\Big)$. Jointly using these two facts helps us deduce that for an arbitrary estimator $\hat{\mb z}$, 
\begin{align}
    & L_{(\{\bo\theta^*_k\}_{k\in[2]}, \{\mb \Sigma_k^{(1)} \}_{k\in[2]})}(\hat{\mb z}) + L_{(\{\bo\theta^*_k\}_{k\in[2]}, \{\mb \Sigma_k^{(2)} \}_{k\in[2]})} (\hat{\mb z})\\
    \stackrel{\text{by \eqref{eq: lower bound simplification 3}}}{\geq} & \int_{\{\frac{\phi_{\bo \theta^*_2, \mb \Sigma^{(1)}_2}}{\phi_{\bo \theta^*_1, \mb \Sigma^{(1)}_1}} \leq \frac{1}{2}, \frac{\phi_{\bo \theta^*_1, \mb \Sigma^{(2)}_1}}{\phi_{\bo \theta^*_2, \mb \Sigma^{(2)}_2}} \leq \frac{1}{2}\}} \min\{\phi_{\bo \theta^*_1, \mb \Sigma_1^{(1)}} (\mb x), \phi_{\bo \theta^*_2, \mb \Sigma_2^{(2)}} (\mb x)\} \mathrm d \mb x\\ 
    \geq &  \exp\Big(-(1 +o(1))\frac{{\mathsf{SNR}^{\mathsf{mod}}}^2}{2}\Big)
\end{align}
as $x \rightarrow \infty$, in this illustrative case. 
\bigskip

Reflecting on the above derivation in the illustrative example in $\bb R^3$, the fact that the optimizers of $\phi_{\bo \theta_1^*, \mb \Sigma_1^{(1)}}((\frac{x}{\sqrt{2}}, \frac{x}{\sqrt{2}}, z )\t )$ and $\phi_{\bo \theta_2^*, \mb \Sigma_2^{(2)}}((\frac{x}{\sqrt{2}}, \frac{x}{\sqrt{2}}, z )\t )$ coincide hinges critically on the condition ${\mb V^*}\t \mb \Sigma_1^{(1)}\mb V^*_\perp = -{\mb V^*}\t \mb \Sigma_2^{(2)}\mb V^*_\perp$, where $\mb V^*_\perp$ represents the vector $(0,0,1)\t$. However, when considering $M$ parameter tuples $\{\bo \eta^{(j)}\}_{j\in[M]}$, this condition is hard to be satisfied for each pair of parameters, even when $p >3$. 
To circumvent this issue, we shall leverage the high-dimensionality and the condition $\tilde \sigma  = \omega(  \bar \sigma)$ stated in Theorem~\ref{theorem: gaussian lower bound}. The approach is outlined as follows, continuing the discussion on $p$-dimensional Gaussian mixtures.

\emph{High-level Outline of the Parameter Construction Satisfying \eqref{eq: lower bound simplification 4}. }
Suppose that we are given two parameter tuples $\boldsymbol \eta^{(j_1)} = (\{\bo \theta_k^*\}_{k\in[2]}, \{\mb \Sigma_k^{(j_1)}\}_{k\in[2]})$ and $\boldsymbol \eta^{(j_2)} = (\{\bo \theta_k^*\}_{k\in[2]}, \{\mb \Sigma_k^{(j_2)}\}_{k\in[2]})$, whose structures will be specified as the discussion proceeds. 
We first focus on the $2$-dimensional subspace spanned by the centers and examine the minimizer in the definition of $\mathsf{SNR}$; formally,
we denote the point that reaches the minimum in the definition of $\mathsf{SNR}$ by 
\eq{
    \mb w_* \coloneqq \argmin_{\mb x \in\bb R^2}\big\{\ip{(\mb x - \mb w_1^*)\t{ \mb S_1^{\mod}}^{-1}(\mb x - \mb w_1^*)}:  \ip{(\mb x - \mb w_1^*)\t{ \mb S_1^{\mod}}^{-1}(\mb x - \mb w_1^*)} = \ip{(\mb x - \mb w_2^*)\t{ \mb S_2^{\mod}}^{-1}(\mb x - \mb w_2^*)}\big\}.\label{eq: definition of w_*}
}
We also denote its embedding in $\bb R^p$ by $\mb x_* \coloneqq \mb V^* \mb w_*$. Then the maximizer of $\phi_{\bo\theta_k^*, \mb \Sigma_k^{(j)}} (\mb x_* + \mb V^*_\perp \mb z)$ in terms of $\mb z\in \bb R^{p-2}$ for $k\in[2]$ and $j\in \{j_1, j_2\}$ is expressed as 
\begin{align}
    & \mb z^{k,(j)}_* \coloneqq  -\Big({\mb V^*_\perp}\t {\mb \Sigma_k^{(j)}}^{-1} \mb V_\perp^* \Big)^{-1} \Big({\mb V^*_\perp }\t {\mb \Sigma_k^{(j)}}^{-1} \mb V^*\Big){\mb V^*}\t \big(\mb x_* - \bo \theta_k^*\big)
\end{align}
by directly taking the first-order condition. 

We now describe the ``critical region'' in this case by examining the density functions. 
Instead of equating $\mb z_*^{1,(j_1)}$ with $\mb z_*^{2,(j_2)}$ or $\mb z_*^{2,(j_1)}$ with $\mb z_*^{1,(j_2)}$ as in the $3$-dimentional example, we shall exploit the density behavior at $\mb z_*^{1,(j_1)} + \mb z_*^{2,(j_2)}$, with the aid of some orthogonality across different parameter tuples.

Our construction proceeds as follows: on the one hand, we let ${\mb V^*_\perp }\t {\mb \Sigma_k^{(j)}}^{-1} \mb V^*$ be 
\eq{
 {\mb V^*_\perp }\t {\mb \Sigma_k^{(j)}}^{-1} \mb V^* = \text{const} \cdot \frac{1}{\underline\sigma^2 }\cdot  \mb v^{(j)} \tilde{\mb w}\t \label{eq: cov construction 1}
}
for $j \in \{j_1, j_2\}$, 
where $\mb v^{(j_1)}$, $ \mb v^{(j_2)}$ are unit vectors in $\bb R^{p-2}$, $\tilde{\mb w}$ is defined as 
\eq{\label{eq: definition of tilde w}
\tilde{\mb w}\coloneqq \frac{\mb w_2^* - \mb w_1^*}{\norm{\mb w_2^* - \mb w_1^*}_2}, 
}and $\mb v^{(j_1)}$ is ``almost orthogonal'' to $\mb v^{(j_2)}$ (see the later Step 3.2 for details); on the other hand, we let ${\mb V^*_\perp}\t {\mb \Sigma_k^{(j)}}^{-1} \mb V_\perp^*$ be \eq{
{\mb V^*_\perp}\t {\mb \Sigma_k^{(j)}}^{-1} \mb V_\perp^* = \frac{1}{\tilde\sigma^2}\big(\mb I_{p-2} - \mb v^{(j)}{\mb v^{(j)}}\t\big) +  \frac{1}{\underline\sigma^2}\cdot \mb v^{(j)} {\mb v^{(j)}}\t . \label{eq: cov construction 2}
}
Note that the value of ${\mb V^*}\t {\mb \Sigma_k^{(j)}}^{-1} \mb V^*$ has been uniquely determined by ${\mb V_\perp^*}\t {\mb \Sigma_k^{(j)}}^{-1} \mb V^*_\perp$ and ${\mb V_\perp^*}\t {\mb \Sigma_k^{(j)}}^{-1} \mb V^*$ according to the constraint ${\mb V^*}\t\mb \Sigma_k^{(j)} \mb V^*= \mb S_k^{\mod}$ for $k\in[2]$. Additionally, given $j \in \{j_1, j_2\}$, $\mb v\t \mb v^{(j)} = 0$ implies that $\mb v\t \mb \Sigma_k^{(j)} \mb v = \tilde \sigma^2$ for $k \in [2]$ by the formula of block matrix inverse. Intuitively, \(\mb \Sigma^{(j)}\) exhibits smaller variability along the directions of \(\mb V^*\) and \(\mb v^{(j)}\), while showing a larger variability, \(\tilde \sigma\), in the orthogonal directions.

Equipped with the above construction, we first notice that 
$$ 
\max_{\mb z \in \bb R^{p-2}}\phi_{\bo\theta_k^*, \mb \Sigma_k^{(j)}} (\mb x_* + \mb V^*_\perp \mb z) = \phi_{\bo\theta_k^*, \mb \Sigma_k^{(j)}} (\mb x_* + \mb V^*_\perp \mb z_*^{k,(j)}) = \frac{c}{\tilde\sigma^{p-2}}\cdot \exp(-{\mathsf{SNR}_0^{\mod}}^2 / 2)$$ for some constant $c$.  Moreover, according to the formula of block matrix inverse, $\mb z_*^{k,(j)}$ is expressed as 
\begin{equation}
    \mb z^{k,(j)}_* =   -\mathrm{const} \cdot \big(\tilde{\mb w}\t (\mb w_* - \mb w_k^*)\big) \mb v^{(j)},
\end{equation}
which aligns with the direction of $\mb v^{(j)}$. 

Moreover, it follows from the almost orthogonality ${\mb v^{(j_1)}}\t \mb v^{(j_2)} \approx 0$ that
$${\mb v^{(j_2)}}^{\top} \mb \Sigma^{(j_1)}_1 \mb v^{(j_2)}\approx{ \mb v^{(j_2)}}^{\top} \mb \Sigma^{(j_1)}_2 \mb v^{(j_2)} \approx {\mb v^{(j_1)}}^{\top} \mb \Sigma^{(j_2)}_1 \mb v^{(j_1)}\approx {\mb v^{(j_1)}}^{\top} \mb \Sigma^{(j_2)}_2 \mb v^{(j_1)} \approx \tilde\sigma^2 .  $$
Hereafter, the symbol "$\approx$" is used for intuitive illustration, with its explicit form to be clarified in the formal proof later (from Step 3.2 to Step 3.3).
Invoking the condition $\underline\sigma = o(1) \tilde\sigma$, one can tell that a translation along a direction with approximate variance $\tilde \sigma^2$ does not alter the density function much. We thus deduce that 
\begin{align}
& \phi_{\bo \theta_1^*, \mb \Sigma_1^{(j_1)}}(\mb x_* + \mb V_\perp^* \mb z_*^{1,(j_1)}) \approx \phi_{\bo \theta_1^*, \mb \Sigma_1^{(j_1)}}(\mb x_* + \mb V_\perp^* ( \mb z_*^{1,(j_1)} + \mb z_*^{2,(j_2)} )) \\ 
\approx & \phi_{\bo \theta_2^*, \mb \Sigma_2^{(j_2)}}(\mb x_* + \mb V_\perp^* ( \mb z_*^{1,(j_1)} + \mb z_*^{2,(j_2)} )) \approx \phi_{\bo \theta_2^*, \mb \Sigma_2^{(j_2)}}(\mb x_* + \mb V_\perp^* \mb z_*^{2,(j_2)})
\end{align}
according to \eqref{eq: cov construction 1} and \eqref{eq: cov construction 2}. 

Furthermore, for an orthonormal matrix $\mb V^{(j_1, j_2)}_\perp \in \bb R^{p\times( p-4)}$ whose column space is orthogonal to $\big(\mb V^*, \mb V_\perp^*(\mb v^{(j_1)}, \mb v^{(j_2)})\big)$, one has 
\begin{align}
&\phi_{\bo \theta_1^*, \mb \Sigma_1^{(j_1)}}(\mb x_* + \mb V_\perp^* \mb z_*^{1,(j_1)} + \mb V^{(j_1, j_2)}_\perp \mb z' ) 
\approx \phi_{\bo \theta_1^*, \mb \Sigma_1^{(j_1)}}(\mb x_* + \mb V_\perp^* ( \mb z_*^{1,(j_1)} + \mb z_*^{2,(j_2)} )+ \mb V^{(j_1, j_2)}_\perp \mb z' ) \\ 
\approx & \phi_{\bo \theta_2^*, \mb \Sigma_2^{(j_2)}}(\mb x_* + \mb V_\perp^* ( \mb z_*^{1,(j_1)} + \mb z_*^{2,(j_2)} )+ \mb V^{(j_1, j_2)}_\perp \mb z' ) 
\approx\phi_{\bo \theta_2^*, \mb \Sigma_2^{(j_2)}}(\mb x_* + \mb V_\perp^* \mb z_*^{2,(j_2)}+ \mb V^{(j_1, j_2)}_\perp \mb z' ) \\ 
\approx & \exp\Big(- (1 + o(1))\frac{{\mathsf{SNR}^{\mathsf{mod}}}^2}{2}\Big) \cdot \phi_{\tilde\sigma^2 \mb I_{p-4}}(\mb z'),\label{eq: pdf approximation}
\end{align}
leveraging the independence between $(\mb V^*, \mb V_\perp^*(\mb v^{(j_1)}, \mb v^{(j_2)}))$ and $\mb V_\perp^{(j_1,j_2)}$ under $\mc N(\bo \theta_k^*, \mb \Sigma_k^{(j)})$. 

Given $\mb x \in \bb R^m$ and $\rho> 0$, we let $B(\mb x, \rho)$ be $\{\mb y: \norm{\mb x - \mb y}_2 \leq \rho\}$. We also fix an orthonormal matrix $\mb V^{(j_1, j_2)} \in O(p-2, 2)$ whose column space is the one spanned by $(\mb v^{(j_1)}, \mb v^{(j_2)})$. 
Provided the above characterization of the density function, we focus on a region $R^{(j_1,j_2)}$ defined as follows: 
\begin{align}
&R^{(j_1, j_2)} \coloneqq \Big(\mb V^*,~ \mb V_\perp^*\mb V^{(j_1,j_2)}, ~\mb V^{(j_1,j_2)}_\perp \Big) \\ 
& \times \left[B\big(\mb w_*, \bar \rho_1\big)  \times B\big({\mb V^{(j_1, j_2)} }\t (\mb z_*^{1,(j_1)}+\mb z_*^{2,(j_2)}), \bar \rho_2\big) \times \bb R^{p-4}\right],
\end{align}
where $\bar \rho_1, \bar \rho_2$ are some constants representing the radius of the spherical region. 
Each point within this region is affirmatively classified into the first cluster according to the likelihood ratio estimator of $\boldsymbol \eta^{(j_1)}$ or into the second cluster according to the likelihood ratio estimator of $\bo \eta^{(j_2)}$. In other words, we can prove that 
$$
R^{(j_1,j_2)}\subseteq \Big\{ \frac{\phi_{\bo \theta^*_2, \mb \Sigma^{(j_1)}_2}}{\phi_{\bo \theta^*_1, \mb \Sigma^{(j_1)}_1}} \leq \frac{1}{2}, \ 
\frac{\phi_{\bo \theta^*_1, \mb \Sigma^{(j_2)}_1}}{\phi_{\bo \theta^*_2, \mb \Sigma^{(j_2)}_2}} \leq \frac{1}{2}\Big\}.
$$

Moreover, invoking Proposition~\ref{proposition: lower bound simplification} and the relation \eqref{eq: pdf approximation},
integrating with respect to the function 
$$\min\big\{\phi_{\bo \theta^*_1, \mb \Sigma^{(j_1)}_1}(\mb x), \ \phi_{\bo \theta^*_2, \mb \Sigma^{(j_2)}_2}(\mb x) \big\}$$
over $R^{(j_1,j_2)}$ yields the lower bound as follows: 
\begin{align}
    & L_{\bo \eta^{(j_1)}}(\hat{\mb z}) + L_{\bo \eta^{(j_2)}}(\hat{\mb z}) \geq  \pi^2 \bar \rho_1^2 \bar \rho_2^2 \exp\Big(- (1 + o(1))\frac{{\mathsf{SNR}_0^{\mod}}^2}{2}\Big) = \exp\Big(- (1 + o(1))\frac{{\mathsf{SNR}_0^{\mod}}^2}{2}\Big). 
\end{align}
holds for any estimator $\hat{\mb z}$. 

It is worth mentioning here that applying Lemma~\ref{lemma: fano's method} requires establishing a lower bound on the cardinality of $\{\bo \eta^{(j)}\}_{j=1}^M$ such that every pair in this set satisfies the above relationship. This requirement is met by leveraging the cardinality lower bound for the vectors $\mb v^{(j)}$'s involved in the construction of \eqref{eq: cov construction 1} and \eqref{eq: cov construction 2}, with the aid of high-dimensionality. 

The above construction is detailed in the following Step~3.2 and Step~3.3. Additionally, Step~3.4 addresses the control of KL divergence between two arbitrary parameters in the subset.

\bigskip

\emph{Step 3.2: Constructing the Parameter Subset. }
Here, we collectively summarize the notations used:

\begin{itemize}
    \item $\mb V^*$ is a $p$-by-$2$ orthonormal matrix representing the right singular space of $\mathbb{E}[\mb Y]$.
    \item $\mb w_k^* = {\mb V^*}^\top \bo \theta_k^*$, and $\mb S_k^{\mod} = {\mb V^*}^\top \mb \Sigma_k \mb V^*$.
    \item The minimizer in the definition of $\mathsf{SNR}^{\mod}$ is defined as:
    \begin{align*}
        \mb w_* \coloneqq \argmin_{\mb x \in \mathbb{R}^2} \Big\{ & \ip{(\mb x - \mb w_1^*)^\top { \mb S_1^{\mod}}^{-1} (\mb x - \mb w_1^*)} :  \ip{(\mb x - \mb w_1^*)^\top { \mb S_1^{\mod}}^{-1} (\mb x - \mb w_1^*)} = \ip{(\mb x - \mb w_2^*)^\top { \mb S_2^{\mod}}^{-1} (\mb x - \mb w_2^*)} \Big\}.
    \end{align*}
    \item We use $\bar \sigma$ and $\underline \sigma$ to denote the upper and lower bounds on the eigenvalues of $\mb S_k^{\mod}$, $k\in[K]$. 
\end{itemize}

\medskip 

\subparagraph*{Almost Mutually Orthogonal Vectors}
As outlined above, we first introduce a packing on a sphere $\bb S^{p-2}$ that stands for the possible correlation directions in the high-dimensional covariance matrices.  

In view of the Varshamov-Gilbert bound \cite[Lemma 4.7]{massart2007concentration}, there exists a subset $\{\tilde {\mb v}^{(j)}\}_{j=1}^M$ of $\{-1,1\}^{p-2}$ such that 
    \longeq{
    &\log M \geq \big((1 + \delta)\log(1 + \delta) + (1 - \delta)\log (1- \delta)\big) \frac{p-2 }{2}, \\ 
    &  \min\big\{\bignorm{\tilde{\mb v}^{(j_1)} + \tilde{\mb v}^{(j_2)}}_2 ,\bignorm{\tilde{\mb v}^{(j_1)} - \tilde{\mb v}^{(j_2)}}_2 \big\} \geq \sqrt{2p(1 - \delta)}\text{\quad for $j_1 \neq j_2 \in [M]$}. \label{eq: VG bound}
    }
For $\delta \in (-1, 1]$, the Taylor expansion gives that 
$$(1 + \delta)\log(1 + \delta) + (1 - \delta)\log (1- \delta) \geq \frac{\delta^2}{2}$$ 
since 
$$\big((1 + x)\log (1+ x)\big)''  = \frac{1}{1+ x}\geq  \frac{1}{2}, \text{ for }x \in (-1,1].$$ 
Letting $\delta $ be $(n^{\frac{1}{2}}\tilde\sigma^{1+ \epsilon})/({p^{\frac{1}{2}} \max_{k\in[2]}\|\mb S_k^{\mod}\|^{\frac{1+\epsilon}{2}}})$, we then have
\eq{\label{eq: log M upper bound}
     \log M \geq c n \frac{\tilde\sigma^{2(1 + \epsilon)}}{\max_{k\in [2]} \norm{\mb S_k^{\mod}}^{1 + \epsilon} }
     } for some constant $c$. At the end, we normalize $\{\tilde{\mb v}^{(j)}\}_{j=1}^M$ to be of unit norm and denote the normalized vectors by $\{\mb v^{(j)}\}_{j=1}^M\in \bb R^{p-2}$. By \eqref{eq: VG bound}, for two arbitrary $j_1\neq j_2\in[M]$ one has 
\begin{align}\label{eq: almost mutually orthogonal property}
    & |{\mb v^{(j_1)\top}} \mb v^{(j_2)}| \leq \delta \coloneqq  \frac{n^{\frac{1}{2}}\tilde\sigma^{1 + \epsilon}}{p^{\frac{1}{2}} \max_{k\in[2]}\|\mb S_k^{\mod}\|^{\frac{1}{2}(1 + \epsilon)}},
\end{align}
where the right-hand side decreases to zero in our setting. 
In other words, this subset enjoys an almost mutually orthogonal property, which plays a crucial role in constructing ${\mb V^*}\t {\mb\Sigma_k^{(j)}}^{-1} \mb V_\perp^*$. 

\medskip

\medskip
\subparagraph*{Covariance Construction}            
Equipped with the above preparation, we are ready to construct a covariance matrix subset that in a way represents the complexity of the decision problem. We start by fixing an arbitrary orthonormal matrix $\tilde {\mb V} \coloneqq (\mb V^*, \mb V^*_\perp) \in O(p)$ where $\mb V^* \in O(p,2)$ and projected centers $\mb w_1^*, \mb w^*_2$ such that 
$\mathsf{SNR}^{\mod}(\{\mb V^*\mb w_k^*\}_{k\in[2]}, \{ \mb V^*\mb S_k^{\mod}{\mb V^{*\top}} \}_{k\in[2]}) = \mathsf{SNR}^{\mod}_0$. Then 
    we define ${\bo \eta}^{(j)} =  (\bo \theta_1^{*}, \bo \theta_2^{*}, \mb \Sigma_1^{(j)}, \mb \Sigma_2^{(j)})$ as
    \longeq{
    &\bo \theta_k^{*} =  \mb V^* \mb w_k^*, \qquad \mb \Sigma_k^{(j)} = \big( \mb V^*, \mb V^*_{\perp}\big) (\mb \Omega_k^{(j)})^{-1} \big( \mb V^*, \mb V^*_{\perp}\big)\t, \label{eq: centers and covariances in the packing}
    }
    where \eq{\mb \Omega_k^{(j)} = \left( 
    \begin{matrix}
        {\mb S_k^{\mod}}^{-1} + \dfrac{{\alpha'}^2}{\underline\sigma^2} \tilde{\mb w}{\tilde{\mb w}}\t, & \dfrac{\alpha'}{\underline\sigma^2}\tilde{\mb w}{\mb v^{(j)\top}}  \\[3mm] 
        \dfrac{\alpha'}{\underline\sigma^2}\mb v^{(j)}\tilde{\mb w}\t  , & \dfrac{1}{\tilde\sigma^2}\big(\mb I_{p-2} - \mb v^{(j)}{\mb v^{(j)\top}} \big) + \dfrac{\mb v^{(j)} {\mb v^{(j)\top}} }{\underline\sigma^2} \label{eq: definition of Omega_k}
    \end{matrix}
    \right), } 
    for $k = 1,2$ with $\alpha' = 8\alpha \frac{\bar \sigma^2 }{\underline\sigma^2 }$ and $\tilde {\mb w}$ defined in \eqref{eq: definition of tilde w}. 
\medskip
    \emph{Verifying the conditions in $\tilde{\mb \Theta}_\alpha$. }
    Note that the above design ensures ${\mb V^{*\top}} \mb \Sigma^{(j)}_k \mb V^* = \mb S_k^{\mod}$ from basic linear algebra that 
    \longeq{\label{eq: inverse of block matrix}
        \begin{pmatrix}
        \mb A &  \quad \mb B \\ 
        \mb C &  \quad \mb D \end{pmatrix}^{-1} = 
        \begin{pmatrix}
            (\mb A - \mb B\mb D^{-1} \mb C)^{-1} & \quad * \\ 
            * & \quad *
        \end{pmatrix}. }
        Moreover, $\mb \Sigma_k^{(j)}$ is positive definite by the fact that $\mb y\t \mb \Omega_k^{(j)} \mb y \geq 0$ for all $\mb y \in \bb R^{p-2}$. Furthermore, for a sufficiently large $n$, the eigenvalues of $\mb \Sigma_k^{(j)}$ are upper bounded by $\tilde \sigma^2$  and the eigenvalues of $\mb \Omega_k^{(j)}$ are lower bounded by ${1}/{\tilde\sigma^2}$ since $\bar \sigma = o(1 )\tilde \sigma$. 

To verify that ${\bo \eta}^{(j)}$ is contained in $\tilde{\mb \Theta}_\alpha$ for each $j\in[M]$, we are left to show that
$ \frac{-\log(\rbayes)}{{\mathsf{SNR}^{\mod}_0}^2 /2} \geq \alpha^2 $ holds for $\bo \eta^{(j)}$. Notice that applying Proposition~\ref{proposition: SNR' and Bayesian oracle risk 2} to $\{\bo \eta^{(j)}\}_{j\in[M]}$ yields that $\log\big(\rbayes(\bo \eta^{(j)})\big) = - (1 + o(1))\frac{\snrfull(\bo \eta^{(j)})^2}{2}$. Thus it suffices for show that the $\snrfull$ of ${\bo \eta}^{(j)}$ is greater than or equal to $(1 + \delta) \alpha \mathsf{SNR}_0^{\mod}$ for all $j\in[M]$ for some $\delta>0$, which is stated in the following claim. 
\begin{claim}
    \label{claim: snrfull}
    With $\alpha' =\frac{ 12\alpha \bar\sigma^2}{\underline\sigma^2}$, we have  $\snrfull(\{\bo \theta_k^*\}_{k\in[2]}, \{\mb \Sigma_k^{(j)} \}_{k\in[2]})\geq 2\alpha \mathsf{SNR}^{\mod}_0$ for every $j \in [M]$. 
\end{claim}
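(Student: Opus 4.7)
The plan is to reduce the computation of $\snrfull(\bo \eta^{(j)})$ to an explicit three-dimensional optimization by exploiting the block structure of $\mb \Omega_k^{(j)}$, and then bound it from below directly. Writing $\mb y = \mb V^* \mb x + \mb V_\perp^* \mb z$ with $\mb z = s \mb v^{(j)} + \mb z_\perp'$ (where $\mb z_\perp' \perp \mb v^{(j)}$) and $\mb u_k \coloneqq \mb x - \mb w_k^*$, the Schur-complement identity \eqref{eq: inverse of block matrix} together with the observation $\mb D \mb v^{(j)} = \underline\sigma^{-2}\mb v^{(j)}$ (so $\mb D^{-1}\mb v^{(j)} = \underline\sigma^{2}\mb v^{(j)}$) readily gives
\begin{equation*}
(\mb y - \bo \theta_k^*)^\top {\mb \Sigma_k^{(j)}}^{-1}(\mb y - \bo \theta_k^*) = \mb u_k^\top {\mb S_k^*}^{-1} \mb u_k + \tfrac{1}{\underline\sigma^2}\bigl(\alpha' \tilde{\mb w}^\top \mb u_k + s\bigr)^2 + \tfrac{1}{\tilde\sigma^2}\|\mb z_\perp'\|_2^2,
\end{equation*}
and also $\log|\mb \Sigma_1^{(j)}| - \log|\mb \Sigma_2^{(j)}| = \log|\mb S_1^*| - \log|\mb S_2^*|$ (the $\log|\mb D|$ contributions cancel), which is independent of $j$; denote the constant appearing in the $\snrfull$ constraint by $C \coloneqq \tfrac{1}{2}\log(|\mb S_2^*|/|\mb S_1^*|)$.

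At any minimizer of $\snrfull^2$ the nonnegative term $\|\mb z_\perp'\|_2^2/\tilde\sigma^2$ vanishes, reducing the problem to an optimization over $(\mb u_1, s) \in \bb R^3$. Introducing $t \coloneqq \alpha' \tilde{\mb w}^\top \mb u_1 + s$ and noting that $\tilde{\mb w}^\top(\mb u_2 - \mb u_1) = -g$ with $g \coloneqq \|\mb w_2^* - \mb w_1^*\|_2$, the constraint $Q_1 - Q_2 = C$ becomes linear in $t$. Solving for $t$ and substituting into $Q_1 = A + t^2/\underline\sigma^2$ (with $A \coloneqq \mb u_1^\top {\mb S_1^*}^{-1} \mb u_1$ and $B \coloneqq \mb u_2^\top {\mb S_2^*}^{-1}\mb u_2$), direct algebra produces the closed form
\begin{equation*}
\snrfull^2 = \frac{{\alpha'}^2 g^2}{4\underline \sigma^2} + \min_{\mb u_1 \in \bb R^2} \left\{ \frac{A + B + C}{2} + \frac{\underline \sigma^2 (C - A + B)^2}{4 {\alpha'}^2 g^2}\right\}.
\end{equation*}

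Dropping the last nonnegative summand and using $A + B \ge 0$ yields $\snrfull^2 \geq \frac{{\alpha'}^2 g^2}{4\underline \sigma^2} + \frac{C}{2}$. The remaining quantitative step is to relate $g^2$ to $\mathsf{SNR}_0^2$: since $A$ and $B$ are strictly convex quadratics, a standard argument shows that $\min_{\mb u_1}\max(A,B)$ is attained on $\{A = B\}$, so $\mathsf{SNR}_0^2 = \min_{\mb u_1}\max(A,B)$; evaluating at $\mb u_1 = (\mb w_2^* - \mb w_1^*)/2$ gives $\max(A,B) \leq g^2/(4\underline\sigma^2)$, hence $g^2 \geq 4\underline\sigma^2 \mathsf{SNR}_0^2$.

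Combining the above with ${\alpha'}^2 \geq 144\alpha^2$ (since $\alpha' = 12\alpha \bar\sigma^2/\underline\sigma^2 \geq 12\alpha$) and $|C| \leq 2\log(\bar\sigma/\underline\sigma) = O(1)$ while $\mathsf{SNR}_0 \to \infty$, we conclude $\snrfull^2 \geq 144\alpha^2\mathsf{SNR}_0^2 + C/2 \geq 4\alpha^2\mathsf{SNR}_0^2$, proving the claim; the case $(j_1, j_2) = (2, 1)$ follows by the symmetric argument after swapping the roles of $\mb w_1^*$ and $\mb w_2^*$. The only conceptual ingredient is the $\min$-$\max$ identity yielding $g^2 \geq 4\underline\sigma^2 \mathsf{SNR}_0^2$; the rest of the argument is mechanical block-matrix computation, whose success hinges on the fact that $\mb B \mb D^{-1}\mb B^\top = \tfrac{{\alpha'}^2}{\underline\sigma^2}\tilde{\mb w}\tilde{\mb w}^\top$ cancels exactly the rank-one perturbation in $\mb A_k$, leaving ${\mb S_k^*}^{-1}$ as the Schur complement.
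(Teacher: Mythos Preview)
Your proof is correct and takes a genuinely different route from the paper's. Both arguments begin by reducing $\snrfull$ to an effectively three-dimensional problem via the block structure of $\mb\Omega_k^{(j)}$, but then diverge. The paper works with the $3\times 3$ matrices ${\mb S_k^{*,(j)}}^{-1}$ and argues by contradiction: assuming $(\mb x-\mb w_1^{*,(j)})^\top{\mb S_1^{*,(j)}}^{-1}(\mb x-\mb w_1^{*,(j)})\le 4\alpha^2\mathsf{SNR}_0^2$, it decomposes the constraint difference into several pieces and bounds each using Cauchy--Schwarz and Lemma~\ref{lemma: SNR and distance}, showing the constraint cannot be met. Your approach is more direct: you observe that after the substitution $t=\alpha'\tilde{\mb w}^\top\mb u_1+s$ the constraint is \emph{linear} in $t$, solve for $t$ explicitly, and obtain the closed form $\snrfull^2=\frac{{\alpha'}^2g^2}{4\underline\sigma^2}+\min_{\mb u_1}\bigl\{\tfrac{A+B+C}{2}+\tfrac{\underline\sigma^2(C-A+B)^2}{4{\alpha'}^2g^2}\bigr\}$, from which the bound follows by dropping nonnegative terms. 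Your min-max characterization $\mathsf{SNR}_0^2=\min_{\mb x}\max(A,B)$ evaluated at the midpoint gives the sharp relation $g\ge 2\underline\sigma\,\mathsf{SNR}_0$, slightly tighter than what the paper extracts from Lemma~\ref{lemma: SNR and distance}. One minor point: when you write $\snrfull^2\ge 144\alpha^2\mathsf{SNR}_0^2+C/2$, the $+C/2$ should be $-|C|/2$ once you take the minimum over both orderings $(j_1,j_2)\in\{(1,2),(2,1)\}$; since $|C|\le 2\log(\bar\sigma/\underline\sigma)=O(1)$ and $\mathsf{SNR}_0\to\infty$, this does not affect the conclusion.
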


\begin{proof}[Proof of Claim~\ref{claim: snrfull}]
    We start with an observation that $\snrfull(\{\bo \theta_k^*\}_{k\in[2]}, \{\mb \Sigma_k\}_{k\in[2]})$ is the same as the $\snrfull$ of the  Gaussian mixture 
    $$\mc N\Big((\mb V^*, \mb V_\perp^*\mb v^{(j)})\t \bo \theta_k^*, \quad(\mb V^*, \mb V_\perp^*\mb v^{(j)})\t\mb \Sigma_k (\mb V^*, \mb V_\perp^*\mb v^{(j)})\Big), \qquad k = \{1,2\} $$ of dimension $3$ for all $j\in[M]$. For ease of notation, we denote that 
\begin{align}
    & \mb w_k^{*,(j)} \coloneqq (\mb V^*, \mb V_\perp^*\mb v^{(j)})\t \bo \theta_k^*, \\ 
    &  \mb S_k^{\mod,(j)} \coloneqq (\mb V^*, \mb V_\perp^*\mb v^{(j)})\t\mb \Sigma_k (\mb V^*, \mb V_\perp^*\mb v^{(j)})\\
    = & \left(\begin{matrix}
       { \mb S_k^{\mod}}^{-1} + \dfrac{{\alpha'}^2}{\underline\sigma^2} \tilde{\mb w}{\tilde{\mb w}}\t, & \dfrac{\alpha'}{\underline\sigma^2}\tilde{\mb w}  \\[3mm] 
        \dfrac{\alpha'}{\underline\sigma^2}\tilde{\mb w}\t  , & \dfrac{1}{\underline\sigma^2}
    \end{matrix}\right)^{-1} \in \bb R^{3\times 3}.\label{eq: centers and covariances of equivalent Gaussian mixture}
    \end{align}
And we write the inverse of $\mb S_k^{\mod,(j)}$ as 
\begin{align}
    & (\mb S_k^{\mod,(j)})^{-1} = \left(\begin{matrix}
        {\mb S_k^{\mod}}^{-1} &~ \mb 0_{2\times 1} \\ 
         \mb 0_{1\times 2}   &~ 0
     \end{matrix}\right) + \frac{1}{\underline\sigma^2 }(\alpha'\tilde{\mb w}\t, 1)\t(\alpha'\tilde{\mb w}\t, 1).
\end{align} 
A consequence of the above decomposition is that 
\begin{align}
    &\bignorm{{\mb S_1^{\mod, (j)}}^{-1} - {\mb S_2^{\mod, (j)}}^{-1}}_2 \leq \frac{2}{\underline\sigma^2}, \label{eq: S_k^j diff} \\ 
    & \min_{k\in[2]}\sigma_{\min}({\mb S_k^{\mod,(j)}}^{-1}) \geq \frac{1}{\bar\sigma^2}.  \label{eq: lower bound on sigma min}
\end{align}

To justify the relation $\snrfull \geq 4\alpha^2 \mathsf{SNR}_0^{\mod}$ for some $\delta>0$, we turn to showing that for every $\mb x \in \bb R^3$ such that $(\mb x - \mb w_1^{*,(j)})\t {\mb S_1^{\mod, (j)}}^{-1}(\mb x - \mb w_1^{*,(j)}) \leq 4\alpha^2{\mathsf{SNR}^{\mod}_0}^2 $ the equality in the definition of $\snrfull$ is not satisfied. Firstly, we notice that $(\mb x - \mb w_1^{*,(j)})\t {\mb S_1^{\mod, (j)}}^{-1}(\mb x - \mb w_1^{*,(j)}) \leq  4 \alpha^2{\mathsf{SNR}_0^{\mod}}^2 $ implies that $\big\|\mb x - \mb w_1^{*, (j)}\big\|_2 \leq 2 \bar \sigma\alpha \mathsf{SNR}^{\mod}_0$ by \eqref{eq: lower bound on sigma min}.  
Then the expression in $\snrfull$ of the equivalent Gaussian mixture model with the means and covariance matrices defined in \eqref{eq: centers and covariances of equivalent Gaussian mixture}  gives that 
\begin{align} 
    & (\mb x - \mb w_1^{*,(j)})\t {\mb S_1^{\mod, (j)}}^{-1} (\mb x - \mb w_1^{*,(j)}) + \log|\mb S_1^{\mod, (j)}|  -  (\mb x - \mb w_2^{*,(j)})\t {\mb S_2^{\mod, (j)}}^{-1} (\mb x - \mb w_2^{*,(j)}) -\log|\mb S_2^{\mod, (j)}|  \\ 
     =&(\mb x - \mb w_1^{*,(j)})\t \big({\mb S_1^{\mod, (j)}}^{-1} - {\mb S_2^{\mod, (j)}}^{-1}\big)(\mb x - \mb w_1^{*,(j)})  + 2(\mb w_2^{*,(j)} - \mb w_1^{*,(j)})\t {\mb S_2^{\mod, (j)}}^{-1}(\mb x - \mb w_1^{*,(j)}) \\& - (\mb w_2^{*, (j)} - \mb w_1^{*, (j)})\t {\mb S_2^{\mod, (j)}}^{-1}(\mb w_2^{*, (j)} - \mb w_1^{*, (j)})  +  \log|\mb S_1^{\mod, (j)}| - \log |\mb S_2^{\mod, (j)}| \\ 
    \leq  &  (\mb x - \mb w_1^{*,(j)})\t \big({\mb S_1^{\mod, (j)}}^{-1} - {\mb S_2^{\mod, (j)}}^{-1}\big)(\mb x - \mb w_1^{*,(j)})  - 2 \bignorm{ {\mb S_2^{\mod, (j)}}^{-\frac{1}{2}} (\mb w_2^{*, (j)} - \mb w_1^{*, (j)}) }_2\\
    &\cdot  \big(- \bignorm{ {\mb S_2^{\mod, (j)}}^{-\frac{1}{2}} (\mb x - \mb w_1^{*, (j)}) }_2 + \frac{1}{2}\bignorm{ {\mb S_2^{\mod, (j)}}^{-\frac{1}{2}} (\mb w_2^{*, (j)} - \mb w_1^{*, (j)}) }_2\big)   + \log|\mb S_1^{\mod, (j)}| - \log |\mb S_2^{\mod, (j)}|. 
\label{eq: proof of the lower bound alpha SNR}
\end{align}

Looking further into the terms in \eqref{eq: proof of the lower bound alpha SNR} together with \eqref{eq: S_k^j diff} shows that 
\begin{align}
    &  (\mb x - \mb w_1^{*,(j)})\t \big({\mb S_1^{\mod, (j)}}^{-1} - {\mb S_2^{\mod, (j)}}^{-1}\big)(\mb x - \mb w_1^{*,(j)}) \leq  \frac{8\alpha^2 \bar \sigma^2{\mathsf{SNR}_0^{\mod}}^2}{\underline\sigma^2},  \label{eq: upper bound on diff of S}
    \\ 
    & \bignorm{ {\mb S_2^{\mod, (j)}}^{-\frac{1}{2}} (\mb x - \mb w_1^{*, (j)}) }_2  
    =  \big(\ip{(\mb x - \mb w_1^{*, (j)}), ({\mb S_2^{\mod, (j)}}^{-1} - {\mb S_1^{\mod,(j)}}^{-1}) (\mb x - \mb w_1^{*, (j)})}  
    +  \bignorm{ {\mb S_1^{\mod, (j)}}^{-\frac{1}{2}} (\mb x - \mb w_1^{*, (j)}) }_2^2 \big)^{\frac{1}{2}}\\ 
    \leq & \frac{2\sqrt{2} \alpha \bar \sigma\mathsf{SNR}^{\mod}_0}{\underline\sigma} + \alpha \mathsf{SNR}^{\mod}_0\leq  5 \alpha \frac{\bar \sigma}{\underline\sigma } \mathsf{SNR}^{\mod}_0, 
    \label{eq: upper bound on diff of S2}
    \\
    &  \bignorm{ {\mb S_2^{\mod, (j)}}^{-\frac{1}{2}} (\mb w_2^{*, (j)} - \mb w_1^{*, (j)}) }_2 = \ip{(\mb w_2^{*, (j)} - \mb w_1^{*, (j)}), {\mb S_2^{\mod, (j)}}^{-1}(\mb w_2^{*, (j)} - \mb w_1^{*, (j)})}^{\frac{1}{2}}\\
    = & \big(\ip{\mb w_2^* - \mb w_1^*, {\mb S_2^{\mod}}^{-1}(\mb w_2^* - \mb w_1^*)} +  \frac{{\alpha'}^2}{\underline\sigma^2}(\tilde{\mb w}\t (\mb w_2^* - \mb w_1^*))^2 \big)^{\frac{1}{2}} \\
    \geq & \left(\frac{\underline\sigma^2}{\bar \sigma^2} {\mathsf{SNR}^{\mathsf{mod}}_0}^2 + {\alpha'}^2 \frac{\underline\sigma^2}{\bar \sigma^2} {\mathsf{SNR}_0^{\mod}}^2\right)^{\frac{1}{2}}\geq \alpha'\frac{\underline\sigma }{\bar \sigma} \mathsf{SNR}_0^{\mod}, 
    \label{eq: upper bound on diff of S3}
\end{align}
where we make use of Lemma~\ref{lemma: SNR and distance} and \eqref{eq: definition of tilde w}. 

Taking the bounds \eqref{eq: upper bound on diff of S}, \eqref{eq: upper bound on diff of S2}, and \eqref{eq: upper bound on diff of S3} collectively into \eqref{eq: proof of the lower bound alpha SNR} yields that
\begin{align}
    & (\mb x - \mb w_1^{*,(j)})\t \big({\mb S_1^{\mod, (j)}}^{-1} - {\mb S_2^{\mod, (j)}}^{-1}\big)(\mb x - \mb w_1^{*,(j)}) \\
    & + 2(\mb w_2^{*,(j)} - \mb w_1^{*,(j)})\t {\mb S_2^{\mod, (j)}}^{-1}(\mb x - \mb w_1^{*,(j)}) \\& - (\mb w_2^{*, (j)} - \mb w_1^{*, (j)})\t {\mb S_2^{\mod, (j)}}^{-1}(\mb w_2^{*, (j)} - \mb w_1^{*, (j)}) + \log|\mb S_1^{\mod, (j)}| - \log |\mb S_2^{\mod, (j)}|\\ 
    \leq & \frac{8 \alpha^2 \bar \sigma^2{\mathsf{SNR}_0^{\mod}}^2}{\underline\sigma^2}  -  \alpha'\frac{\underline\sigma }{\bar \sigma } \mathsf{SNR}^{\mod}_0\left( 
        \alpha' \frac{\underline\sigma }{\bar \sigma} \mathsf{SNR}_0^{\mod} - 10  \alpha \frac{\bar \sigma}{\underline\sigma } \mathsf{SNR}_0^{\mod}
    \right) + \log|\mb S_1^{\mod, (j)}| - \log |\mb S_2^{\mod, (j)}| \\ 
    \stackrel{\alpha' = \frac{12\alpha\bar\sigma^2 }{\underline\sigma^2} }{=} &( 8 \alpha^2 - 24\alpha^2)\frac{\bar \sigma^2 {\mathsf{SNR}_0^{\mod}}^2}{\underline\sigma^2 } + \log|\mb S_1^{\mod, (j)}| - \log |\mb S_2^{\mod, (j)}|\\ 
    < &  0,
\end{align}
for every sufficiently large $n$ since $\mathsf{SNR}^{\mod}_0 \rightarrow \infty$, which leads to the conclusion.

\end{proof}

\emph{Step 3.3: Lower Bounding $L_{{\bo \eta}^{(j_1)}}(\hat{\mb z}) + L_{{\bo \eta}^{(j_2)}}(\hat{\mb z})$. }
We finally come to the most essential step of our proof. Proposition~\ref{proposition: lower bound simplification} has allowed us to reformulate a $\hat{\mb z}$-related problem into a quantity that only relies on the parameters themselves, as expressed by the RHS of \eqref{eq: lower bound simplification 3}. 
The main challenge in deriving a lower bound for our target is that we can not directly calculate the integral since the decision boundaries of $\tilde{\mb z}^{(j_1)}$ and $\tilde{\mb z}^{(j_2)}$ are both quadratic surfaces except for the special homogeneous covariance matrix case with $\bo\Sigma_1=\bo\Sigma_2$. We take a detour herein to find a critical region inside the set 
$$\left\{ \frac{\phi_{\bo \theta_2^*, \mb \Sigma_2^{(j_1)} }}{\phi_{\bo \theta_1^*, 
\mb \Sigma_1^{(j_1)} }}\leq \frac{1}{2},
\quad 
\frac{\phi_{\bo \theta_1^*,
\mb \Sigma_1^{(j_2)}}}{\phi_{\bo \theta_2^*, \mb \Sigma_2^{(j_2)}}}\leq \frac{1}{2}\right\}.$$ 

Recall that the maximizer of $\phi_{\bo\theta_k^*, \mb \Sigma_k^{(j)}} (\mb V^* \mb w_* + \mb V^*_\perp \mb z)$ in terms of $\mb z\in \bb R^{p-2}$ for $k\in[2]$ and $j \in [M]$ is written as 
\begin{align}
    & \mb z^{k,(j)}_* =  - \Big({\mb V^*_\perp}\t {\mb \Sigma_k^{(j)}}^{-1} \mb V_\perp^* \Big)^{-1} \Big({\mb V^*_\perp }\t {\mb \Sigma_k^{(j)}}^{-1} \mb V^*\Big){\mb V^*}\t \big(\mb x_* - \bo \theta_k^*\big).\label{eq: maximizer over perpendicular space}
\end{align}
Plugging \eqref{eq: cov construction 1} and \eqref{eq: cov construction 2} into \eqref{eq: maximizer over perpendicular space} yields that 
\eq{
\mb z^{k,(j)}_* =   -\alpha' \big(\tilde{\mb w}\t (\mb w_* - \mb w_k^*)\big) \mb v^{(j)}. \label{eq: expression of z_*^k(j)}
}
Given $j_1 \neq j_2 \in[M]$, we also introduced an orthonormal matrix denoted by $\mb V^{(j_1, j_2)} \in O(p-2, 2)$ whose column space is the one spanned by $(\mb v^{(j_1)}, \mb v^{(j_2)})$.

Now we let $\mb V^{(j_1, j_2)}_\perp \in O(p, p-4)$ be an orthonormal matrix perpendicular to $\big(\mb V^*, \mb V^*_\perp \mb V^{(j_1,j_2)}\big)$. Then the critical region $R^{(j_1, j_2)}$ is written as 
\begin{align}
     R^{(j_1,j_2)}
    = &\Big(\mb V^*, \mb V_\perp^*\mb V^{(j_1,j_2)}, \mb V^{(j_1,j_2)}_\perp \Big) \\ 
& \times \left[B\big(\mb w_*, \rho_1 \underline\sigma \big)  \times B\big({\mb V^{(j_1,j_2)}}\t  (\mb z_*^{1,(j_1)}+\mb z_*^{2,(j_2)}), \rho_2 \underline\sigma \big) \times \bb R^{p-4}\right] \\ 
     =  & \big\{\mb V^* (\mb w_* + \bo \triangle_1 ) + \mb V_\perp^*\mb V^{(j_1,j_2)}\big({{}\mb V^{(j_1,j_2)}}\t  (\mb z^{1,(j_1)}_* + \mb z^{2,(j_2)}_*) + \boldsymbol \triangle_2 \big) + \mb V^{(j_1, j_2)}_\perp \bo \triangle_3: \\ 
     & \qquad \norm{\bo\triangle_1 }_2 \leq \rho_1 \underline \sigma, 
     \ \bignorm{\bo\triangle_2 }_2 \leq \rho_2 \underline \sigma, 
     \ \bo \triangle_3 \in \bb R^{p-4} \big\}, \label{eq: set R}
\end{align}
where $\rho_1$ and $\rho_2$ are some fixed positive constants.

To lower bound $L_{\bo \eta^{(j_1)}}(\hat{\mb z}) + L_{\bo \eta^{(j_2)}}(\hat{\mb z})$ via integrating over $R^{(j_1,j_2)}$, the following two conditions are essential:
\begin{itemize}
    \item \textsc{Condition 1}: $$\frac{\phi_{\bo \theta_2^*, \mb \Sigma_2^{(j_1)} }(\mb x ) }{\phi_{\bo \theta_1^* , \mb \Sigma_1^{(j_1)} }(\mb x) }\leq \frac{1}{2}\text{\quad  and \quad }\frac{\phi_{\bo \theta_1^*, \mb \Sigma_1^{(j_2)}}(\mb x)}{\phi_{\bo \theta_2^*, \mb \Sigma_2^{(j_2)}}(\mb x) }\leq \frac{1}{2}.$$ 
    \item \textsc{Condition 2}: The minimum of $\phi_{\bo \theta_1^{*}, \mb \Sigma_1^{(j_1)}}(\mb x)$ and $\phi_{\bo \theta_2^{*}, \mb \Sigma_2^{(j_2)}}(\mb x)$ is lower bounded by 
    \begin{align}
       & f^{\mathsf{lower}}(\mb x) \\
 \coloneqq &\underbrace{\left[\frac{1}{(2\pi)^2 \bar\sigma^4}\exp\left(-\left(1 +  \frac{C^{\mathsf{density}}_1}{\mathsf{SNR}_0^{\mod}} + C^{\mathsf{density}}_2 \delta + C^{\mathsf{density}}_3 \frac{\bar\sigma^2}{\tilde\sigma^2 } +C_4^{\mathsf{density}} \frac{\log\big( \frac{\tilde\sigma}{\bar \sigma}\big)}{{\mathsf{SNR}_0^{\mod}}^2}\right)\frac{{\mathsf{SNR}^{\mathsf{mod}}}^2}{2}\right) \right]}_{\text{the pdf of a dim-4 Gaussian distribution}} \\ 
    & \cdot \underbrace{\left[\frac{1}{(2\pi)^{\frac{p-2}{2}}\tilde\sigma^{p-2}}\exp\Big(-{\norm{\mb x\t \mb V_\perp^{(j_1,j_2)}}_2^2}/({2\tilde\sigma^2}) \Big)\right]}_{{\text{the pdf of a dim-$(p-4)$ Gaussian distribution}}} 
    \end{align}
    for some constants $C^{\mathsf{density}}_i$, $i = 1,2,3, 4$. 
\end{itemize}
We certify \textsc{Condition~1} and \textsc{Condition~2} for each element in $R^{(j_1, j_2)}$ in the following claim. 
\begin{claim}\label{claim: condition 1/2}
    For the $\{\bo \eta^{(j)}\}_{j\in[M]}$ constructed in \eqref{eq: centers and covariances in the packing}, \textsc{Condition~1} and \textsc{Condition~2} hold for every $\mb x \in R^{(j_1,j_2)}$ and every sufficiently large $n$ with the constants $C^{\mathsf{density}}_i$, $i =1,2,3,4$ associated with $\bar\sigma / \underline\sigma, \rho_1, \rho_2, \alpha$. 
\end{claim}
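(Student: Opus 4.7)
The proof of Claim~\ref{claim: condition 1/2} will hinge on the explicit block structure of $\mb\Omega_k^{(j)}$ in \eqref{eq: definition of Omega_k}, which is adapted to the frame $(\mb V^*,\mb V_\perp^*)$. My first step is a completing-the-square identity: setting $\mb a := \mb V^{*\top}(\mb x-\bo\theta_k^*)$ and $\mb b := \mb V_\perp^{*\top}(\mb x-\bo\theta_k^*)$, direct expansion of the block form yields
\begin{align}
Q_k^{(j)}(\mb x) &:= (\mb x-\bo\theta_k^*)^\top\mb\Omega_k^{(j)}(\mb x-\bo\theta_k^*) \\
&= \mb a^\top\mb S_k^{*-1}\mb a + \frac{(\alpha'\tilde{\mb w}^\top\mb a + \mb v^{(j)\top}\mb b)^2}{\underline\sigma^2} + \frac{\|\mb b\|_2^2 - (\mb v^{(j)\top}\mb b)^2}{\tilde\sigma^2}.
\end{align}
A Schur-complement computation on the same decomposition gives $|\mb\Sigma_k^{(j)}| = |\mb S_k^*|\cdot\tilde\sigma^{2(p-3)}\underline\sigma^2$, so its dependence on $(k,j)$ reduces to the bounded factor $|\mb S_k^*|$. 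Parametrising $\mb x\in R^{(j_1,j_2)}$ through $(\bo\triangle_1,\bo\triangle_2,\bo\triangle_3)$ as in \eqref{eq: set R}, and using \eqref{eq: expression of z_*^k(j)} together with the fact that the column space of $\mb V_\perp^{(j_1,j_2)}$ is orthogonal to $\mb v^{(j_1)},\mb v^{(j_2)}$, I would reduce $\mb v^{(j_1)\top}\mb b$ to $-\alpha'\beta_1-\alpha'\beta_2\xi+\mb v^{(j_1)\top}\mb V^{(j_1,j_2)}\bo\triangle_2$, and analogously for $\mb v^{(j_2)\top}\mb b$, where $\beta_k:=\tilde{\mb w}^\top(\mb w_*-\mb w_k^*)$ and $\xi:=\mb v^{(j_1)\top}\mb v^{(j_2)}$ obeys $|\xi|\le\delta$ by \eqref{eq: almost mutually orthogonal property}. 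These identities set the stage for both conditions.

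For Condition 1, I would form the difference $Q_2^{(j_1)}(\mb x)-Q_1^{(j_1)}(\mb x)$. Since the third piece of $Q_k^{(j)}$ and the scalar $\mb v^{(j_1)\top}\mb b$ do not depend on $k$ (because $\mb V_\perp^{*\top}\bo\theta_k^*=\mb 0$ forces $\mb b$ to be $k$-invariant), the difference collapses into a \emph{center} piece $(\mb w_*-\mb w_2^*+\bo\triangle_1)^\top\mb S_2^{*-1}(\cdots)-(\mb w_*-\mb w_1^*+\bo\triangle_1)^\top\mb S_1^{*-1}(\cdots)$, which vanishes at $\bo\triangle_1=\mb 0$ by \eqref{eq: definition of w_*} and is $O(\rho_1\mathsf{SNR}_0)$ in general, plus a \emph{cross} piece $\frac{\alpha'(\beta_1-\beta_2)}{\underline\sigma^2}\bigl[\alpha'(\beta_1-\beta_2)+2(\alpha'\tilde{\mb w}^\top\bo\triangle_1-\alpha'\beta_2\xi+\mb v^{(j_1)\top}\mb V^{(j_1,j_2)}\bo\triangle_2)\bigr]$. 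Since $|\beta_1-\beta_2|=\|\mb w_2^*-\mb w_1^*\|_2\gtrsim\underline\sigma\mathsf{SNR}_0$ by Lemma~\ref{lemma: SNR and distance}, the leading term $\alpha'^2(\beta_1-\beta_2)^2/\underline\sigma^2\asymp\alpha^2(\bar\sigma/\underline\sigma)^4\mathsf{SNR}_0^2$ diverges and dwarfs the $O(\alpha'\mathsf{SNR}_0(\rho_1+\delta\bar\sigma/\underline\sigma+\rho_2))$ cross-terms and the bounded gap $\log|\mb S_1^*|-\log|\mb S_2^*|$; symmetry in $(j_1,j_2)$ then yields both inequalities in Condition 1.

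For Condition 2 I would upper-bound $Q_1^{(j_1)}(\mb x)$ and $Q_2^{(j_2)}(\mb x)$ on $R^{(j_1,j_2)}$ piece by piece. The first piece is at most $\mathsf{SNR}_0^2+2\rho_1\mathsf{SNR}_0+\rho_1^2$ by Cauchy--Schwarz. The second piece vanishes at the ``oracle'' point $\mb x_*+\mb V_\perp^*\mb z_*^{k,(j)}$ by the very definition \eqref{eq: maximizer over perpendicular space} of $\mb z_*^{k,(j)}$, so on $R^{(j_1,j_2)}$ it is bounded by $\underline\sigma^{-2}(\alpha'\underline\sigma\rho_1+\alpha'\bar\sigma\mathsf{SNR}_0\delta+\underline\sigma\rho_2)^2$, which stays $O(1)$ thanks to the hypothesis $n\tilde\sigma^{2(1+\epsilon)}=o(p\max_k\|\mb S_k^*\|^{1+\epsilon})$ (which forces $\alpha'\bar\sigma\delta/\underline\sigma=o(1)$). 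The third piece factors orthogonally as $\|\mb b\|_2^2-(\mb v^{(j)\top}\mb b)^2=\|\bo\triangle_3\|_2^2+O(\underline\sigma^2)$, so $\tilde\sigma^{-2}(\cdots)$ contributes exactly the targeted Gaussian factor $\exp(-\|\mb V_\perp^{(j_1,j_2)\top}\mb x\|_2^2/(2\tilde\sigma^2))$ up to an absorbed constant of size $O(\bar\sigma^2/\tilde\sigma^2)$. Multiplying by the normaliser $(2\pi)^{-p/2}|\mb S_k^*|^{-1/2}\tilde\sigma^{-(p-3)}\underline\sigma^{-1}$ and regrouping the $\bar\sigma,\underline\sigma,\tilde\sigma$ powers into $\bar\sigma^{-4}\tilde\sigma^{-(p-2)}$, where the $C^{\mathsf{density}}_4\log(\tilde\sigma/\bar\sigma)/\mathsf{SNR}_0^2$ term in the claim is precisely the small multiplicative slack produced by this regrouping, then delivers the targeted lower bound $f^{\mathsf{lower}}(\mb x)$.

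The hardest part will be keeping the $\alpha'$-scaled cross-terms under control: the same coefficient $\alpha'=\Theta(\alpha\bar\sigma^2/\underline\sigma^2)$ that drives the divergence needed for Condition 1 threatens to blow up the correction in Condition 2, and the delicate balance is sustained only because the almost-orthogonality estimate \eqref{eq: almost mutually orthogonal property} inherited from the Varshamov--Gilbert packing -- itself available only in the high-dimensional regime $n\tilde\sigma^{2(1+\epsilon)}=o(p\max_k\|\mb S_k^*\|^{1+\epsilon})$ -- renders every $\alpha'\delta$ cross-coupling asymptotically negligible.
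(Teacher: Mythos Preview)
Your completing-the-square identity $Q_k^{(j)}(\mb x)=\mb a^\top\mb S_k^{*-1}\mb a+\underline\sigma^{-2}(\alpha'\tilde{\mb w}^\top\mb a+\mb v^{(j)\top}\mb b)^2+\tilde\sigma^{-2}(\|\mb b\|_2^2-(\mb v^{(j)\top}\mb b)^2)$ is correct, and this organization is cleaner than the paper's term-by-term expansion into $C_1^{\mathsf{main}},C_2^{\mathsf{main}},C_1,\ldots,C_6$ (for Condition~1) and $D_1,\ldots,D_4$ (for Condition~2): the large $\pm\alpha'^2\xi^{\mathsf{align}\,2}/\underline\sigma^2$ contributions that the paper tracks explicitly and cancels at the end are absorbed automatically into your second piece. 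The strategy is sound, but two intermediate estimates in your Condition~2 analysis are wrong as stated.

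First, the claim that piece~2 ``stays $O(1)$'' requires $\mathsf{SNR}_0\delta=O(1)$, which the hypotheses of the general Theorem~\ref{theorem: gaussian lower bound} do not guarantee; they only give $\delta=o(1)$. The correct bound is $O(1+\mathsf{SNR}_0^2\delta^2)$, which is still $o(\mathsf{SNR}_0^2)$ and is absorbed by the $C_2^{\mathsf{density}}\delta$ slot (in fact your squared-$\delta$ is tighter than the paper's linear-$\delta$ bound). Second, your estimate $\|\mb b\|_2^2-(\mb v^{(j_1)\top}\mb b)^2=\|\bo\triangle_3\|_2^2+O(\underline\sigma^2)$ overlooks the contribution of $\mb z_*^{2,(j_2)}=-\alpha'\beta_2\mb v^{(j_2)}$: because $\mb v^{(j_1)},\mb v^{(j_2)}$ are nearly orthogonal, the component of $\mb z_*^{2,(j_2)}$ orthogonal to $\mb v^{(j_1)}$ has norm $\alpha'|\beta_2|\sqrt{1-\xi^2}\lesssim\alpha'\bar\sigma\mathsf{SNR}_0$, which is much larger than $\underline\sigma$. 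The correct residual is $O(\alpha'^2\bar\sigma^2\mathsf{SNR}_0^2)$, so piece~3 $=\|\bo\triangle_3\|_2^2/\tilde\sigma^2+O(\bar\sigma^2\mathsf{SNR}_0^2/\tilde\sigma^2)$, which is precisely what the $C_3^{\mathsf{density}}\bar\sigma^2/\tilde\sigma^2$ term in $f^{\mathsf{lower}}$ is designed to absorb. Once these two estimates are corrected, your argument goes through.
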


\medskip

\begin{proof}[Proof of Claim~\ref{claim: condition 1/2}]
We first verify \textsc{Condition~1}. For each $\mb y\in R^{(j_1,j_2)}$, the difference of log-likelihood functions is given by 
\begin{align}
    & (\mb y - \bo \theta_1^*)\t (\mb \Sigma_1^{(j_1)})^{-1}(\mb y - \bo \theta_1^*) - (\mb y - \bo \theta_2^*)\t (\mb \Sigma_2^{(j_1)})^{-1}(\mb y - \bo \theta_2^*) + \log|\mb \Sigma_1^{(j_1)}| - \log|\mb \Sigma_2^{(j_1)}| \\ 
    = &  (\mb y - \bo \theta_1^*)\t \mb V^*\big({\mb S_1^{\mod}}^{-1} + \frac{{\alpha'}^2}{\underline\sigma^2}\tilde{\mb w}\tilde{\mb w}\t \big){\mb V^{*\top}} (\mb y - \bo \theta_1^*)  - (\mb y - \bo \theta_2^*)\t\mb V^*\big({\mb S_2^{\mod}}^{-1} + \frac{{\alpha'}^2}{\underline\sigma^2}\tilde{\mb w}\tilde{\mb w}\t \big){\mb V^{*\top}}  (\mb y - \bo \theta_2^*) \\ 
    & \qquad -2 \frac{\alpha' }{\underline\sigma^2}(\mb y - \bo \theta_1^*)\t \mb V^* \tilde{\mb w}{\mb v^{(j_1)}}\t {\mb V_\perp^*}\t (\mb y - \bo \theta_1^*)  + 2 \frac{\alpha' }{\underline\sigma^2}(\mb y - \bo \theta_2^*)\t \mb V^* \tilde{\mb w}{\mb v^{(j_1)}}\t {\mb V_\perp^*}\t (\mb y - \bo \theta_2^*) +  \log|\mb \Sigma_1^{(j_1)}| - \log|\mb \Sigma_2^{(j_1)}|  ,\label{eq: diff of pdf 0}
\end{align}
where we employ the fact that 
$$(\mb y - \bo \theta_1^*)\t \mb V_\perp^*  (\mb \Sigma_1^{(j_1)})^{-1} {\mb V_\perp^* }\t (\mb y - \bo \theta_1^*) =  (\mb y - \bo \theta_2^*)\t \mb V_\perp^* (\mb \Sigma_2^{(j_1)})^{-1} { \mb V_\perp^* } \t (\mb y - \bo \theta_2^*). $$

Plugging $\mb y = \mb V^* ( \mb w_* + \bo \triangle_1 ) + \mb V^*_\perp \mb V^{(j_1,j_2)} \big({\mb V^{(j_1,j_2)}}\t (\mb z_*^{1,(j_1)} + \mb z_*^{2,(j_2)} ) + \bo \triangle_2\big) +  \mb V^{(j_1, j_2)}_\perp \bo \triangle_3$ into \eqref{eq: diff of pdf 0} yields that 
\begin{align}
&(\mb y - \bo \theta_1^*)\t (\mb \Sigma_1^{(j_1)})^{-1}(\mb y - \bo \theta_1^*) - (\mb y - \bo \theta_2^*)\t (\mb \Sigma_2^{(j_1)})^{-1}(\mb y - \bo \theta_2^*)  + \log|\mb \Sigma_1^{(j_1)}| - \log|\mb \Sigma_2^{(j_1)}| \\
     = &   (\mb w_* - \mb w_1^* + \bo \triangle_1)\t \big({\mb S_1^{\mod}}^{-1} + \frac{{\alpha'}^2}{\underline\sigma^2}\tilde{\mb w}\tilde{\mb w}\t \big)(\mb w_* - \mb w_1^* + \bo \triangle_1)  - (\mb w_* - \mb w_2^* + \bo \triangle_1)\t\big({\mb S_2^{\mod}}^{-1} + \frac{{\alpha'}^2}{\underline\sigma^2}\tilde{\mb w}\tilde{\mb w}\t \big)(\mb w_* - \mb w_2^* + \bo \triangle_1)
    \\ 
    & -2 \frac{\alpha' }{\underline\sigma^2}(\mb w_* - \mb w_1^* + \bo \triangle_1)\t \tilde{\mb w}{\mb v^{(j_1)}}\t(\mb z_*^{1,(j_1)} + \mb z_*^{2,(j_2)} + \mb V^{(j_1,j_2)} \bo \triangle_2  )
    \\ 
    & + 2 \frac{\alpha' }{\underline\sigma^2}(\mb w_* - \mb w_2^* + \bo \triangle_1)\t  \tilde{\mb w}{\mb v^{(j_1)}}\t (\mb z_*^{1,(j_1)} + \mb z_*^{2,(j_2)} + \mb V^{(j_1,j_2)} \bo \triangle_2  )  +  \log|\mb \Sigma_1^{(j_1)}| - \log|\mb \Sigma_2^{(j_1)}|  
     \\
    =  & 
    \underbrace{\frac{\alpha'^2}{\underline\sigma^2}\big( (\mb w_* - \mb w_1^*) \t \tilde{\mb w}\big)^2  - \frac{\alpha'^2}{\underline\sigma^2}\big( (\mb w_* - \mb w_2^*) \t \tilde{\mb w}\big)^2 }_{\eqqcolon C^\mathsf{main}_1}
    \underbrace{
     - 2 \frac{\alpha' }{\underline\sigma^2} (\mb w_* - \mb w_1^*)\t \tilde{\mb w} {\mb v^{(j_1)}}\t \mb z_*^{1,(j_1)} +  2 \frac{\alpha' }{\underline\sigma^2} (\mb w_* - \mb w_2^*)\t \tilde{\mb w} {\mb v^{(j_1)}}\t \mb z_*^{1,(j_1)}
    }_{\eqqcolon C^\mathsf{main}_2}  
    \\
    & + \underbrace{ \bo\triangle_1\t \big( {\mb S_1^{\mod}}^{-1} - {\mb S_2^{\mod}}^{-1} \big) \bo \triangle_1 + 2 \bo \triangle_1\t {\mb S_1^{\mod}}^{-1}(\mb w_* - \mb w_1^*)  - 2 \bo \triangle_1\t {\mb S_2^{\mod}}^{-1}(\mb w_* - \mb w_2^*)}_{\eqqcolon C_1} \\
    & + \underbrace{ 2\frac{{\alpha'}^2}{\underline\sigma^2} \bo \triangle_1\t \tilde{\mb w}\tilde{\mb w}\t\big(\mb w_* - \mb w_1^* \big) -  2 \frac{{\alpha'}^2}{\underline\sigma^2}\bo \triangle_1\t \tilde{\mb w}\tilde{\mb w}\t\big(\mb w_* - \bo w_2^* \big) }_{\eqqcolon C_2}\\ 
    &   
    \underbrace{
       -2 \frac{\alpha'}{\underline\sigma^2} (\mb w_* - \mb w_1^*)\t \tilde{\mb w} {\mb v^{(j_1)}}\t \mb z_*^{2,(j_2)} + 2 \frac{\alpha'}{\underline\sigma^2} (\mb w_* - \mb w_2^*)\t \tilde{\mb w} {\mb v^{(j_1)}}\t \mb z_*^{2,(j_2)} 
    }_{\eqqcolon C_3}
    \\ 
    & +  \underbrace{2 \frac{{\alpha'}}{\underline\sigma^2}(\mb w_* - \mb w_1^*)\t \tilde{\mb w} {\mb v^{(j_1)}}\t \mb V^{(j_1,j_2)} \bo \triangle_2 - 2 \frac{{\alpha'}}{\underline\sigma^2}(\mb w_* - \mb w_2^*)\t \tilde{\mb w} {\mb v^{(j_1)}}\t \mb V^{(j_1,j_2)} \bo \triangle_2}_{\eqqcolon C_4} \\
    & +\underbrace{ 2 \frac{{\alpha'}}{\underline\sigma^2}\bo \triangle_1\t \tilde{\mb w} {\mb v^{(j_1)}}\t \mb V^{(j_1,j_2)} \bo \triangle_2 - 2 \frac{{\alpha'}}{\underline\sigma^2}\bo \triangle_1\t \tilde{\mb w} {\mb v^{(j_2)}}\t \mb V^{(j_1,j_2)} \bo \triangle_2 }_{\eqqcolon C_5}  + \underbrace{\log|\mb \Sigma_1^{(j_1)}| - \log|\mb \Sigma_2^{(j_1)}|}_{C_6}, \label{eq: C1 to C6}
\end{align}
where we make use of the property inferred from the definition of $\mb w_*$ in \eqref{eq: definition of w_*} that 
\begin{align}
    & (\mb w_* - \mb w_1^*)\t  {\mb S_1^{\mod}}^{-1} {\mb V^{*\top}} (\mb x_* - \bo \theta_1^*) - (\mb x_* - \bo \theta_2^*)\t \mb V^* {\mb S_2^{\mod}}^{-1} (\mb w_* - \mb w_2^*)  = 0. 
\end{align}

To facilitate understanding, $C_1^{\mathsf{main}}$ and $C_2^{\mathsf{main}}$ capture the substantial gap between two log-likelihood functions, while $C_1$ through $C_6$ collect the remnant effects influenced by $\rho_1$, $\rho_2$, ${\mb v^{(j_1)}}\t \mb v^{(j_2)}$, and $\log|\mb S_k^{\mod}|$. 

Denote 
$$
(\mb w_* - \mb w_1^*) \t \tilde{\mb w}=:\xi^{\mathsf{align}}.
$$ 
By the definition of $\tilde{\mb w}$, it is immediate that 
\eq{(\mb w_* - \mb w_2^*) \t \tilde{\mb w} = \xi^{\mathsf{align}} - \norm{\bo \theta_2^* - \bo \theta_1^*}_2. 
\label{eq: xi align relation}
}
We then analyze these terms in \eqref{eq: C1 to C6} separately: 
\begin{enumerate}
\item Regarding the sum of the first two terms in \eqref{eq: C1 to C6}, invoking \eqref{eq: expression of z_*^k(j)} and \eqref{eq: xi align relation} gives that 
\begin{align}
    C_1^{\mathsf{main}}+ C_2^{\mathsf{main}}  = & \frac{\alpha'^2}{\underline\sigma^2} {\xi^{\mathsf{align}}}^2 - \frac{\alpha'^2}{\underline\sigma^2}  ( \norm{\bo \theta_1^* - \bo \theta_2^*}_2 - \xi^{\mathsf{align}}) ^2  -2 \frac{\alpha'^2 }{\underline\sigma^2} {{}\xi^{\mathsf{align}}}^2  + 2 \frac{\alpha'^2 }{\underline\sigma^2} \xi^{\mathsf{align}} (\xi^{\mathsf{align}} -\norm{\bo \theta_1^* - \bo \theta_2^*}_2 )  \\ 
     = & - \frac{\alpha'^2}{\underline\sigma^2} \norm{\bo \theta_2^* - \bo \theta_1^*}_2^2 \leq - \alpha'^2 {\mathsf{SNR}_0^{\mod}}^2. 
\end{align}
\item
Employing the constraint on $\bo \triangle_1$ in $R^{(j_1,j_2)}$ as well as the relation between $\mathsf{SNR}$ and $\mb w_*$ yields that 
\begin{align}
    |C_1| \leq & \frac{2}{\underline\sigma^2}\rho_1^2 + 4\frac{\rho_1}{\underline\sigma} \mathsf{SNR}_0^{\mod}. 
\end{align}
\item 
With regard to $C_2$, it immediately follows by Lemma~\ref{lemma: SNR and distance} that 
\begin{align}
    & |C_2 |= 2 \frac{\alpha'^2}{\underline\sigma^2} \norm{\bo \theta_2^* - \bo \theta_1^*}_2 |\bo \triangle_1\t \tilde{\mb w}| \leq 4 \rho_1 \frac{\alpha'^2}{\underline\sigma^2} \bar\sigma \mathsf{SNR}_0^{\mod}. 
\end{align}
\item 
The term $C_3$ is related to the inner product between $\mb v^{(j_1)}$ and $\mb v^{(j_2)}$. Apply Lemma~\ref{lemma: SNR and distance}, \eqref{eq: almost mutually orthogonal property}, and \eqref{eq: expression of z_*^k(j)} to $C_3$ yields that 
\begin{align}
    | C_3 |\leq 2\frac{\alpha'^2}{\underline\sigma^2} \norm{\bo \theta_1^* - \bo \theta_2^* }_2 \delta \leq  4 \frac{\alpha'^2}{\underline\sigma^2} \bar\sigma \mathsf{SNR}_0^{\mod},
\end{align}
since $\delta = o(1)$.
\item 
As for $C_4$, it can be bounded by Lemma~\ref{lemma: SNR and distance} that 
\eq{
    |C_4 |= 2\frac{\alpha'}{\underline\sigma^2} \norm{\bo \theta_2^* - \bo \theta_1^*}_2 \norm{\bo\triangle_2}_2   \leq 4\frac{{\alpha'}}{\underline\sigma^2}\rho_2 \bar \sigma\mathsf{SNR}_0^{\mod}.
}
\item 
What we are left is to upper bound the term $C_6$. The elementary fact that 
\eq{\label{eq: determinant of block matrix}
    \mathrm{det}\left(\begin{matrix}
    \mb A & \mb B \\ 
    \mb C & \mb D
\end{matrix}\right) = \mathrm{det}(\mb A)\mathrm{det}(\mb B\mb D^{-1} \mb C)}
given an invertible $\mb D$ yields that 
\eq{
   \big|  \log|\mb \Sigma_1^{(j_1)}| - \log|\mb \Sigma_2^{(j_1)}| \big|  = \big|\log|{\mb S_1^{\mod}}^{-1}| - \log|{\mb S_2^{\mod}}^{-1}| \big| = \big|\log|{\mb S_1^{\mod}}| - \log|{\mb S_2^{\mod}}| \big| \leq 2 \log\left(\frac{\bar\sigma}{\underline\sigma}\right). 
}
\end{enumerate}

Plugging the above bounds on $C^{\mathsf{main}}_1$, $C^{\mathsf{main}}_2$, and $C_1$ through $C_6$ into \eqref{eq: C1 to C6} gives that 
\begin{align}
    & (\mb y - \bo \theta_1^*)\t (\mb \Sigma_1^{(j_1)})^{-1}(\mb y - \bo \theta_1^*) - (\mb y - \bo \theta_2^*)\t (\mb \Sigma_2^{(j_1)})^{-1}(\mb y - \bo \theta_2^*) + \log|\mb \Sigma_1^{(j_1)}| - \log|\mb \Sigma_2^{(j_1)}|  \\ 
    \leq & - \alpha'^2 {\mathsf{SNR}_0^{\mod}}^2 + \frac{2}{\underline\sigma^2}\rho_1^2 + 4 \frac{\rho_1}{\underline\sigma} \mathsf{SNR}_0^{\mod} + 4\rho_1 \frac{\alpha'^2}{\underline \sigma^2}\bar\sigma \mathsf{SNR}_0^{\mod} + 4 \frac{\alpha'^2}{\underline\sigma^2} \bar\sigma \mathsf{SNR}_0^{\mod} + 4 \frac{\alpha'}{\underline\sigma^2} \rho_2 \bar \sigma \mathsf{SNR}_0^{\mod} + 2 \log(\frac{\bar \sigma}{\underline\sigma}) \\ 
    < & -\log 2, \label{eq: lower bound on likelihood ratio}
\end{align}
holds for every sufficiently large $n$ since $\mathsf{SNR}_0^{\mod} \rightarrow \infty$. Referring back to the definition of $R^{(j_1,j_2)}$, \eqref{eq: lower bound on likelihood ratio} has already implied that 
$$ R^{(j_1, j_2)}\subseteq \left\{ \frac{\phi_{\bo \theta_2^*, \mb \Sigma_2^{(j_1)}}}{\phi_{\bo \theta_1^*, \mb \Sigma_1^{(j_1)}}}\leq \frac{1}{2}\right\}
$$
for every sufficiently large $n$. 
Following the same argument, we can similarly verify that $$R^{(j_1, j_2)}\subseteq \left\{ \frac{\phi_{\bo \theta_1^*, \mb \Sigma_1^{(j_2)}}}{\phi_{\bo \theta_2^*, \mb \Sigma_2^{(j_2)}}} \leq \frac12  \right\}$$
for every sufficiently large $n$. To conclude, we have proved that 
$$R^{(j_1, j_2)}\subseteq \left\{\frac{\phi_{\bo \theta_2^*, \mb \Sigma_2^{(j_1)}}}{\phi_{\bo \theta_1^*, \mb \Sigma_1^{(j_1)}}}\leq \frac{1}{2},  \frac{\phi_{\bo \theta_1^*, \mb \Sigma_1^{(j_2)}}}{\phi_{\bo \theta_2^*, \mb \Sigma_2^{(j_2)}}}\leq \frac{1}{2}\right\}.$$ 

\medskip
\subparagraph*{Verifying \textsc{Condition 2}}
We now turn to investigate the minimum of two probability density functions in the region $R^{(j_1, j_2)}$. Looking into the density functions separately, the spherical region in $R^{(j_1,j_2)}$ yields that for every $$\mb y = \underbrace{\mb V^* (\mb w_* + \bo \triangle_1 ) + \mb V^*_\perp \big(\mb V^{(j_1,j_2)}({\mb V^{(j_1,j_2)}}\t (\mb z_*^{1,(j_1)} + \mb z_*^{2,(j_2)}) + \bo \triangle_2 )\big)}_{\eqqcolon \mb y^{(j_1,j_2)}_{\mathsf{key}}} + \mb V_\perp^{(j_1,j_2)}\bo \triangle_3 \in R^{(j_1, j_2)}, $$ 
it holds that
\begin{align}
    & \phi_{\bo \theta_1^*, \mb \Sigma_1^{(j_1)}}(\mb y) =   \underbrace{\frac{1}{(2\pi)^2\big|{{}\tilde{\mb V}^{(j_1,j_2)}}\t \mb \Sigma_1^{(j_1)}\tilde{\mb V}^{(j_1,j_2)}\big|^{\frac{1}{2}}} \exp\big(-\frac{1}{2}({{}\mb y_{\mathsf{key}}^{(j_1,j_2)} } - \bo \theta_1^* )\t {\mb \Sigma^{(j_1)}}^{-1}(\mb y_{\mathsf{key}}^{(j_1,j_2)} - \bo \theta_1^*)\big)}_{\eqqcolon f^{(j_1, j_2)}_{1,\mathsf{essential}}} \\ 
    &\quad \cdot \frac{1}{(2\pi)^{\frac{p-4}{2}}\tilde\sigma^{p-4}}\exp\left(- \frac{\norm{\bo \triangle_3}_2^2}{2\tilde\sigma^2}\right). \label{eq: density of theta1}
\end{align}
where $\tilde{\mb V}^{(j_1,j_2)} \coloneqq (\mb V^*,\mb V_\perp^* \mb V^{(j_1, j_2)})\in O(p,4)$ is an orthonormal matrix. 
Now we set out to analyze the function $f_{1, \mathsf{essential}}^{(j_1,j_2)}$ defined in \eqref{eq: density of theta1}. First, the normalization factor ${1}/{\big|{{}\tilde{\mb V}^{(j_1,j_2)}}\t \mb \Sigma_1^{(j_1)}\tilde{\mb V}^{(j_1,j_2)}\big|^{\frac{1}{2}}}$ can be reduced as 
\begin{align}
    & \frac{1}{\big|{{}\tilde{\mb V}^{(j_1,j_2)}}\t \mb \Sigma_1^{(j_1)}\tilde{\mb V}^{(j_1,j_2)}\big|^{\frac{1}{2}}} = \big|\big({{}\tilde{\mb V}^{(j_1,j_2)}}\t \mb \Sigma_1^{(j_1)}\tilde{\mb V}^{(j_1,j_2)}\big)^{-1} \big|^{\frac{1}{2}} \geq \frac{1}{\tilde\sigma\underline\sigma \bar \sigma^2}\label{eq: lower bound on normalization factor}
\end{align} 
by the definition \eqref{eq: centers and covariances in the packing} and the fact \eqref{eq: determinant of block matrix}. 

Second, recalling the definition of $\mb \Sigma^{(j_1)}$ in \eqref{eq: centers and covariances in the packing}, the exponent of $f_{1, \mathsf{essential}}^{(j_1,j_2)}$ is decomposed as follows: 
\begin{align}
    & -\frac{1}{2}({{}\mb y_{\mathsf{key}}^{(j_1,j_2)} } - \bo \theta_1^* )\t {\mb \Sigma^{(j_1)}}^{-1}(\mb y_{\mathsf{key}}^{(j_1,j_2)} - \bo \theta_1^*) \\ 
    = & -\frac{1}{2}(\mb w_* + \bo \triangle_1 - \mb w_1^*) \t {\mb V^{*\top}}{ \mb \Sigma_1^{(j_1)}}^{-1}\mb V^* (\mb w_* + \bo \triangle_1 - \mb w_1^*) \\ 
     &- \big(\mb w_* + \bo \triangle_1  - \mb w_1^*\big) \t {\mb V^*}\t {\mb \Sigma_1^{(j_1)}}^{-1} \mb V^*_\perp  \big(\mb z_*^{1,(j_1)} + \mb z_*^{2,(j_2)} + \mb V^{(j_1,j_2)}\bo \triangle_2 \big)\\ 
     & - \frac{1}{2} \big(\mb z_*^{1,(j_1)} + \mb z_*^{2,(j_2)} + \mb V^{(j_1,j_2)}\bo \triangle_2 \big)\t  {\mb V^*_\perp}\t{ \mb \Sigma_1^{(j_1)}}^{-1}\mb V^*_\perp  \big(\mb z_*^{1,(j_1)} + \mb z_*^{2,(j_2)} + \mb V^{(j_1,j_2)}\bo \triangle_2 \big)\\ 
    =: &  D_1 + D_2 + D_3 + D_4, \label{eq: D1 to D4}
\end{align}
where $D_i$, $ i=1,2,3,4$ are defined as follows:
\begin{align}
    & D_1 \coloneqq -\frac{1}{2}(\mb w_* + \bo \triangle_1 - \mb w_1^*) \t \big({\mb S_1^{\mod}}^{-1} + \frac{{\alpha'}^2}{\underline\sigma^2}\tilde{\mb w}{\tilde{\mb w}}\t  \big) (\mb w_* + \bo \triangle_1 - \mb w_1^*), \\ 
    & D_2 \coloneqq  - \big(\mb w_* + \bo \triangle_1  - \mb w_1^*\big) \t {\mb V^*}\t {\mb \Sigma_1^{(j_1)}}^{-1} \mb V^*_\perp  \big(\mb z_*^{1,(j_1)} + \mb z_*^{2,(j_2)} + \mb V^{(j_1,j_2)}\bo \triangle_2 \big), \\ 
    & D_3 \coloneqq  -\frac{1}{2\tilde\sigma^2 } \big(\mb z_*^{1,(j_1)}  + \mb z_*^{2,(j_2)}  +  \mb V^{(j_1,j_2)}\bo \triangle_2 \big)\t\\ 
    & \cdot \big( 
        \mb I_{p-2} - \mb v^{(j_1)}\mb v^{(j_1)\top} \big)
   \big(\mb z_*^{1,(j_1)}  + \mb z_*^{2,(j_2)}  +  \mb V^{(j_1,j_2)}\bo \triangle_2 \big) ,\\ 
   & D_4 \coloneqq - \frac{1}{2\underline \sigma^2} \big(\mb z_*^{1,(j_1)} + \mb z_*^{2,(j_2)} + \mb V^{(j_1,j_2)}\bo \triangle_2 \big)\t  \mb v^{(j_1)} {\mb v^{(j_1)}}\t 
   \big(\mb z_*^{1,(j_1)} + \mb z_*^{2,(j_2)} + \mb V^{(j_1,j_2)}\bo \triangle_2 \big). 
\end{align}

In what follows, we shall bound $D_1$ to $D_4$ separately: 
\begin{enumerate}
    \item Notice that by Lemma~\ref{lemma: SNR and distance} one has 
    \longeq{\label{eq: xi_align upper bound}
   &  |\xi^{\mathsf{align}}| \leq \max\{|\tilde{\mb w}\t (\mb w_* - \mb w_1^*)|, |\tilde{\mb w}\t (\mb w_* - \mb w_2^*)|\} \\ 
    \leq & \max\{\norm{\mb w_* - \mb w_1^*}_2, \norm{\mb w_* - \mb w_2^*}_2\} \leq \bar \sigma \mathsf{SNR}_0 . }
    By the definition of $\mathsf{SNR}$, one has for some constant $C_1>0 $ and every sufficiently large $n$ that 
    \begin{align}
     D_1  
        =&   -\frac{1}{2}{\mathsf{SNR}_0^{\mod}}^2 - \frac{{\alpha'}^2}{2\underline\sigma^2}{\xi^{\mathsf{align}}}^2 - (\mb w_*-\mb w_1^*)\t \big({\mb S_1^{\mod}}^{-1} + \frac{{\alpha'}^2}{\underline\sigma^2}\tilde{\mb w}\tilde{\mb w}\t \big)\bo \triangle_1  -  \frac{1}{2}\bo \triangle_1\t  \big({\mb S_1^{\mod}}^{-1}  + \frac{{\alpha'}^2}{\underline\sigma^2}\tilde{\mb w}\tilde{\mb w}\t \big)\bo \triangle_1 \\ 
        \geq & -\frac{1}{2}{\mathsf{SNR}_0^{\mod}}^2 - \frac{{\alpha'}^2}{2\underline\sigma^2}{\xi^{\mathsf{align}}}^2 - 2\bar \sigma \mathsf{SNR}_0\big(\frac{1}{\underline\sigma^2} + \frac{{\alpha'}^2}{\underline\sigma^2}\big) \rho_1 \underline\sigma - \frac{1}{2}\big(\frac{1}{\underline\sigma^2} + \frac{{\alpha'}^2}{\underline\sigma^2}\big)\underline\sigma^2 \rho_1^2 \\ 
        \geq  & - \frac{1}{2}(1 + \frac{C_1}{\mathsf{SNR}_0}) {\mathsf{SNR}_0^{\mod}}^2 - \frac{{\alpha'}^2}{2\underline\sigma^2}{\xi^{\mathsf{align}}}^2
    \end{align}
    by \eqref{eq: xi_align upper bound} and the fact that $\mathsf{SNR}_0 \rightarrow \infty$. 
    \item The term $D_2$ can be lower-bounded as follows 
    \begin{align}
     D_2 = & \frac{\alpha'^2  }{\underline\sigma^2}\big(\tilde{\mb w}\t (\mb w_* - \mb w_1^*)\big)^2 - \frac{\alpha'}{\underline \sigma^2}(\mb w_* - \mb w_1^*)\t \tilde{\mb w} {\mb v^{(j_1)}}\t  \mb z_*^{2,(j_2)}  - \frac{\alpha' }{\underline\sigma^2} (\mb w_* - \mb w_1^*)\t \tilde{\mb w} {\mb v^{(j_1)}}\t \mb V^{(j_1, j_2)}\bo \triangle_2  \\ 
     & \qquad - \bo \triangle_1\t \mb  V^* {\mb \Sigma_1^{(j_1)}}^{-1} \mb V_\perp^* \big(\mb z_*^{1,(j_1)} + \mb z_*^{2,(j_2)} + \mb V^{(j_1,j_2)} \bo \triangle_2 \big) 
     \\  \geq  & \frac{\alpha'^2}{\underline\sigma^2} {\xi^{\mathsf{align}}}^2 - \frac{\alpha'^2}{\underline\sigma^2}|\tilde{\mb w}\t (\mb w_* - \mb w_1^* )| |\tilde{\mb w}\t (\mb w_* - \mb w_2^* )| \delta - \frac{\alpha'}{\underline\sigma^2} \xi^{\mathsf{align}}\underline\sigma  \rho_2  \\ 
     & \qquad - \underline\sigma \rho_1 \frac{\alpha'}{\underline \sigma^2} \big(\alpha'|\tilde{\mb w}\t (\mb w_* - \mb w_1^* )|+ \alpha' |\tilde{\mb w}\t (\mb w_* - \mb w_2^* )| + \underline\sigma    \rho_2  \big).\label{eq: lower bound for D2 1}
    \end{align} 
    We deduce from \eqref{eq: lower bound for D2 1} and \eqref{eq: xi_align upper bound} that 
    \begin{align}
        & D_2 \geq \frac{\alpha'^2}{\underline\sigma^2} {\xi^{\mathsf{align}}}^2 - \frac{\alpha'^2}{\underline\sigma^2}\bar\sigma^2 {\mathsf{SNR}_0^{\mod}}^2 \delta  - \frac{\alpha'}{\underline\sigma} \bar \sigma \mathsf{SNR}_0 \rho_2 - 2 \rho_1 \frac{\alpha'^2 }{\underline\sigma} \bar\sigma \mathsf{SNR}_0 - \alpha' \rho_1 \rho_2 \\ 
        \geq & \frac{\alpha'^2}{\underline\sigma^2} {\xi^{\mathsf{align}}}^2 - \big( C_2 \delta + \frac{C_3}{\mathsf{SNR}_0}\big){\mathsf{SNR}_0^{\mod}}^2
    \end{align}
    holds for every sufficiently large $n$ with some constants $C_2, C_3 > 0$.
        \item Regarding the third term $D_3$, employing the triangle inequality yields that 
    \begin{align}
        D_3 \geq & - \frac{1}{\tilde\sigma^2} \Big[\big(\mb z_*^{1,(j_1)}  + \mb z_*^{2,(j_2)} \big)\t \big( 
        \mb I_{p-2} - \mb v^{(j_1)}\mb v^{(j_1)\top} \big)
   \big(\mb z_*^{1,(j_1)}  + \mb z_*^{2,(j_2)}  \big)  +\big(\mb V^{(j_1,j_2)}\bo \triangle_2 \big)\t  \big( 
        \mb I_{p-2} - \mb v^{(j_1)}\mb v^{(j_1)\top} \big)
   \big(  \mb V^{(j_1,j_2)}\bo \triangle_2 \big)  \Big]
    \end{align}

    Invoking the fact that $\{\|\mb z_*^{1,(j_1)}\|_2, \|\mb z_*^{2,(j_2)}\|_2\} \leq \alpha' \bar \sigma \mathsf{SNR}_0$ yields that 
    \begin{align}
        & D_3 \geq  -4\frac{\alpha'^2 \bar \sigma^2}{\tilde\sigma^2} {\mathsf{SNR}_0^{\mod}}^2 -\frac{\underline\sigma^2}{\tilde\sigma^2} \rho_2^2 \geq - C_4 \frac{\bar\sigma^2}{\tilde{\sigma}^2} {\mathsf{SNR}_0^{\mod}}^2  
    \end{align}
    holds with some constant $C_4 >0$ for every sufficiently large $n$. 

    \item Finally, the term $D_4$ is lower bounded by 
    \begin{align}
         D_4 \geq & - \frac{1}{2\underline\sigma^2} {\mb z_*^{1,(j_1)}}\t \mb v^{(j_1)}{\mb v^{(j_1)}}\t \mb z_*^{1,(j_1)} -\frac{1}{2\underline\sigma^2} {\mb z_*^{2,(j_2)}}\t \mb v^{(j_1)}{\mb v^{(j_1)}}\t \mb z_*^{2,(j_2)} \\
        & - \frac{1}{2 \underline\sigma^2}(\mb V^{(j_1,j_2)}\bo\triangle_2)\t \mb v^{(j_1)}{\mb v^{(j_1)}}\t (\mb V^{(j_1,j_2)}\bo\triangle_2) \\ 
        & - \frac{1}{\underline\sigma^2} \norm{\bo \triangle_2}_2 \norm{\mb z_*^{1,(j_1)} + \mb z_*^{1,(j_1)}}_2  - \frac{1}{\underline\sigma^2}\big|{\mb z_*^{1,(j_1)}}\t \mb v^{(j_1)}{\mb v^{(j_1)}}\t \mb z_*^{2,(j_2)}  \big|. 
    \end{align}
    Invoking \eqref{eq: xi_align upper bound} yields that 
    \begin{align}
         D_4 \geq & -\frac{\alpha'^2}{2\underline\sigma^2} {\xi^{\mathsf{align}}}^2 - \frac{\alpha'^2}{2\underline\sigma^2} \bar \sigma^2 {\mathsf{SNR}_0^{\mod}}^2 \delta^2 - \frac{1}{2}\rho_2^2 - 2\rho_2 \alpha' \frac{\bar \sigma}{\underline\sigma}\mathsf{SNR}_0 - \frac{\alpha'^2}{\underline\sigma^2} \bar\sigma^2 {\mathsf{SNR}_0^{\mod}}^2 \delta  \\ 
         \geq & -\frac{\alpha'^2}{2\underline\sigma^2} {\xi^{\mathsf{align}}}^2 - \big(C_5 \delta + \frac{C_6}{\mathsf{SNR}_0}\big){\mathsf{SNR}_0^{\mod}}^2. 
    \end{align}
\end{enumerate}
Taking these bounds collectively into \eqref{eq: D1 to D4} gives that 
\begin{align}
    & -\frac{1}{2}({{}\mb y_{\mathsf{key}}^{(j_1,j_2)} } - \bo \theta_1^* )\t {\mb \Sigma^{(j_1)}}^{-1}(\mb y_{\mathsf{key}}^{(j_1,j_2)} - \bo \theta_1^*) \\ 
    \geq &  \Big( - \frac{1}{2}(1 + \frac{C_1}{\mathsf{SNR}_0}) {\mathsf{SNR}_0^{\mod}}^2 - \frac{{\alpha'}^2}{2\underline\sigma^2}{\xi^{\mathsf{align}}}^2\Big) + \Big(\frac{\alpha'^2}{\underline\sigma^2} {\xi^{\mathsf{align}}}^2 - \big( C_2 \delta + \frac{C_3}{\mathsf{SNR}_0}\big){\mathsf{SNR}_0^{\mod}}^2\Big) \\ 
    & + \Big( -C_4 \frac{\bar\sigma^2}{\tilde{\sigma}^2} {\mathsf{SNR}_0^{\mod}}^2 \Big) + \Big(-\frac{\alpha'^2}{2\underline\sigma^2} {\xi^{\mathsf{align}}}^2 - \big(C_5 \delta + \frac{C_6}{\mathsf{SNR}_0}\big){\mathsf{SNR}_0^{\mod}}^2 \Big)    \\ 
    \geq & -\-\big(1 +  \frac{C^{\mathsf{density}}_1}{\mathsf{SNR}_0} + C^{\mathsf{density}}_2 \delta + C^{\mathsf{density}}_3  \frac{\bar\sigma^2 }{\tilde\sigma^2 } \big)\frac{{\mathsf{SNR}_0^{\mod}}^2}{2}\label{eq: lower bound on exponent}
\end{align}
for some constants $C_i^{\mathsf{density}}$, $i = 1,2,3$ depending on $C_i$, $i \in [6]$. 

We then substitude \eqref{eq: lower bound on normalization factor} and \eqref{eq: lower bound on exponent} into \eqref{eq: density of theta1} to obtain that 
\begin{align}
    &  \phi_{\bo \theta_1^*, \mb \Sigma_1^{(j_1)}}(\mb y) \geq \frac{1}{\tilde\sigma \underline\sigma \bar \sigma^2} \exp(-\frac{1}{2}(1 + o(1)) {\mathsf{SNR}_0^{\mod}}^2) \cdot \frac{1}{(2\pi)^{\frac{p-4}{2}}\tilde\sigma^{p-4}}\exp\big(- \frac{\norm{\bo \triangle_3}_2^2}{2\tilde\sigma^2}\big)\\ 
     \geq  & \frac{1}{(2\pi)^2\bar \sigma^4}\exp\Big(-\frac{1}{2} \big(1 +  \frac{C^{\mathsf{density}}_1}{\mathsf{SNR}_0} + C^{\mathsf{density}}_2 \delta + C^{\mathsf{density}}_3 \frac{\bar\sigma^2 }{\tilde\sigma^2 } +2 \frac{\log\big( \frac{\tilde\sigma}{\bar \sigma}\big)}{{\mathsf{SNR}_0^{\mod}}^2}\big){\mathsf{SNR}_0^{\mod}}^2\Big)  \cdot \frac{1}{(2\pi)^{\frac{p-4}{2}}\tilde\sigma^{p-4}}\exp\big(- \frac{\norm{\bo \triangle_3}_2^2}{2\tilde\sigma^2}\big). 
\end{align}

Following the same argument, we can also prove that
\begin{align}
     &\phi_{\bo \theta_2^*, \mb \Sigma_2^{(j_2)}}(\mb y) \geq \frac{1}{\tilde\sigma \underline\sigma \bar \sigma^2} \exp(-\frac{1}{2}(1 + o(1)) {\mathsf{SNR}_0^{\mod}}^2) \cdot \frac{1}{(2\pi)^{\frac{p-4}{2}}\tilde\sigma^{p-4}}\exp\big(- \frac{\norm{\bo \triangle_3}_2^2}{2\tilde\sigma^2}\big)\\ 
     \geq  & \frac{1}{(2\pi)^2\bar \sigma^4}\exp\Big(-\frac{1}{2} \big(1 +  \frac{C^{\mathsf{density}}_1}{\mathsf{SNR}_0} + C^{\mathsf{density}}_2 \delta + C^{\mathsf{density}}_3 \frac{\bar\sigma^2 }{\tilde\sigma^2 } +2 \frac{\log\big( \frac{\tilde\sigma}{\bar \sigma}\big)}{{\mathsf{SNR}_0^{\mod}}^2}\big){\mathsf{SNR}_0^{\mod}}^2\Big)  \cdot \frac{1}{(2\pi)^{\frac{p-4}{2}}\tilde\sigma^{p-4}}\exp\big(- \frac{\norm{\bo \triangle_3}_2^2}{2\tilde\sigma^2}\big).
\end{align}
\end{proof}

\medskip
\subparagraph*{Control $L_{{\bo \eta}^{(j_1)}}(\hat{\mb z}) + L_{{\bo \eta}^{(j_2)}}(\hat{\mb z})$}
Now we are well prepared to lower-bound the ``separation degree'' $L_{{\bo \eta}^{(j_1)}}(\hat{\mb z}) + L_{{\bo \eta}^{(j_2)}}(\hat{\mb z})$ using $\textsc{Condition 1}$ and $\textsc{Condition 2}$. To begin with, Proposition \ref{proposition: lower bound simplification} gives that 
\begin{align}
    & L_{{\bo \eta}^{(j_1)}}(\hat{\mb z}) + L_{{\bo \eta}^{(j_2)}}(\hat{\mb z})\geq  \int_{ \frac{\mathrm d \bb P_{\bo \theta_2^*, \mb \Sigma_2^{(j_1)} }}{\mathrm d \bb P_{\bo \theta_1^* , \mb \Sigma_1^{(j_1)} }}\leq \dfrac{1}{2},\frac{\mathrm d \bb P_{\bo \theta_1^*, \mb \Sigma_1^{(j_2)}}}{\mathrm d \bb P_{\bo \theta_2^*, \mb \Sigma_2^{(j_2)}}}\leq \dfrac{1}{2} } \min\{p_{\theta_1^{(j_1)}, \mb \Sigma_1^{(j_1)}},p_{\bo \theta_2^*, \mb \Sigma_2^{(j_2)}} \} \mathrm dx \\ 
    &\qquad   + \int_{ \frac{\mathrm d \bb P_{\theta_1^{(j_1)}, \mb \Sigma_1^{(j_1)}}}{\mathrm d \bb P_{\bo \theta_2^*, \mb \Sigma_2^{(j_1)}}}\leq \dfrac{1}{2}, \frac{\mathrm d \bb P_{\bo \theta_2^*, \mb \Sigma_2^{(j_2)}}}{\mathrm d \bb P_{\bo \theta_1^*, \mb \Sigma_1^{(j_2)}}}\leq \dfrac{1}{2}} \min\{p_{\bo \theta_2^*, \mb \Sigma_2^{(j_1)}},p_{\bo \theta_1^*, \mb \Sigma_1^{(j_2)}} \} \mathrm dx \label{eq: lower bound on likelihood ratio (recall)}
\end{align}
holds for an arbitary $\hat{\mb z}$. Focusing on the first term on the right-hand side of \eqref{eq: lower bound on likelihood ratio (recall)}, we shrink the integral region to $R^{(j_1,j_2)}$ and apply $\textsc{Condition 1}$ and $\textsc{Condition 2}$ to obtain that 
\begin{align}
    & L_{{\bo \eta}^{(j_1)}}(\hat{\mb z}) + L_{{\bo \eta}^{(j_2)}}(\hat{\mb z})  
    \geq  \int_{R^{(j_1,j_2)}} f^{\mathsf{lower}}(\mb y) \mathrm d \mb y  
    =  \frac{\pi^2\rho_1^2 \rho_2^2}{(2\pi)^2 \bar \sigma^4 }\exp\big( -(1 + o(1))\frac{{\mathsf{SNR}_0^{\mod}}^2}{2}\big) \\ 
    = & \exp\big( -(1 + o(1))\frac{{\mathsf{SNR}_0^{\mod}}^2}{2}\big), \label{eq: lower bound on L}
\end{align}
where we leverage the conditions that 
\begin{align}
    & \mathsf{SNR}_0 \rightarrow \infty, \qquad  \mathsf{\delta} \rightarrow 0,\qquad  \frac{\bar \sigma^2 }{\tilde\sigma^2} \rightarrow 0, \qquad  \frac{\log\big( \frac{\tilde\sigma}{\bar \sigma}\big)}{{\mathsf{SNR}_0^{\mod}}^2} \rightarrow 0. 
\end{align}

\medskip
\emph{Step 3.4: Upper Bounding the KL Divergence. }
In the sequel, we need to upper bound the KL divergence between ${\bo \eta}^{(j_1)}$ and ${\bo \eta}^{(j_2)}$.  Invoking the conditional property of KL-divergence \cite[Theorem 7.5 (c)]{polyanskiy2024information}, we know that 
\eq{
	 \mathrm{KL}\Big(\frac{1}{2}\bb P_{\theta_1^*, \mb \Sigma_1^{(j_1)}} + \frac{1}{2}\bb P_{\bo \theta_2^*, \mb \Sigma_2^{(j_1)}},
     ~~\frac{1}{2}\bb P_{\bo \theta_1^*, \mb \Sigma_1^{(j_2)}} + \frac{1}{2}\bb P_{\bo \theta_2^*, \mb \Sigma_2^{(j_2)}}\Big) 
     \leq \frac{1}{2} \mathrm{KL}\Big( \bb P_{\theta_1^*, \mb \Sigma_1^{(j_1)}}, \bb P_{\bo \theta_1^*, \mb \Sigma_1^{(j_2)}}\Big) + \frac{1}{2} \mathrm{KL}\Big( \bb P_{\bo \theta_2^*, \mb \Sigma_2^{(j_1)}}, \bb P_{\bo \theta_2^*, \mb \Sigma_2^{(j_2)}}\Big).
}

For the KL divergence of a multivariate Gaussian distribution, one has 
\begin{equation}
 \mathrm{KL}\big( \bb P_{\theta_k^{(j_1)}, \mb \Sigma_k^{(j_1)}}, \bb P_{\theta_k^{(j_2)}, \mb \Sigma_k^{(j_2)}}\big) = \frac{1}{2}\log\frac{|\mb \Sigma_k^{(j_2)}|}{|\mb \Sigma_k^{(j_1)}|} +  \frac{1}{2}\mathrm{Tr}\big({\mb \Sigma_k^{(j_2)}}^{-1}\big( \mb \Sigma_k^{(j_1)} - \mb \Sigma_k^{(j_2)}\big)\big) = \frac{1}{2}\mathrm{Tr}\big({\mb \Sigma_k^{(j_2)}}^{-1}\big( \mb \Sigma_k^{(j_1)} - \mb \Sigma_k^{(j_2)}\big)\big) \label{eq: expression of KL},
\end{equation}
where $|\mb \Sigma_k^{(j_1)}| = |\mb \Sigma_k^{(j_2)}|$ holds by the fact that $\mathrm{det}\left(\begin{matrix}
    \mb A & \mb B \\ 
    \mb C & \mb D
\end{matrix} \right) = \mathrm{det}(\mb A) \mathrm{det}(\mb D - \mb C\mb A^{-1}\mb B)$ given an invertible block $\mb A$ and arbitrary blocks $\mb B, \mb C, \mb D$ in a block matrix. Recall the orthonormal matrices $\mb V_\perp^{(j_1,j_2)}$ and $\tilde{\mb V}^{(j_1,j_2)}$ appearing in \eqref{eq: set R} and \eqref{eq: density of theta1}, respectively, and the properties that 
\begin{align}
    {\mb V_\perp^{(j_1,j_2)}}\t \mb \Sigma_k^{(j_1)}\mb V_\perp^{(j_1,j_2)} = {\mb V_\perp^{(j_1,j_2)}}\t \mb \Sigma_k^{(j_2)}\mb V_\perp^{(j_1,j_2)}, \quad 
    {{}\tilde{\mb V}^{(j_1,j_2)}}\t \mb \Sigma_k^{(j_1)} \mb V_\perp^{(j_1,j_2)} = {{}\tilde{\mb V}^{(j_1,j_2)}}\t \mb \Sigma_k^{(j_2)} \mb V_\perp^{(j_1,j_2)} = \mb 0,\label{eq: orthogonality of V}
\end{align}
we then have 
\begin{align}
    \mathrm{KL}\big( \bb P_{\theta_k^{(j_1)}, \mb \Sigma_k^{(j_1)}}, \bb P_{\theta_k^{(j_2)}, \mb \Sigma_k^{(j_2)}}\big) 
   = \frac{1}{2}\mathrm{Tr}\big(({{}\tilde{\mb V}^{(j_1,j_2)}}\t \mb \Sigma_k^{(j_2)} \tilde{\mb V}^{(j_1,j_2)} )^{-1}{{}\tilde{\mb V}^{(j_1,j_2)}}\t\big( \mb \Sigma_k^{(j_1)} - \mb \Sigma_k^{(j_2)}\big)\tilde{\mb V}^{(j_1,j_2)}\big). 
\end{align}

Invoking the fact that $\mathrm{Tr}(\mb A \mb B) \leq \mathrm{Tr}(\mb A)\norm{\mb B}$, one has 
\longeq{
    &\mathrm{Tr}\big(({{}\tilde{\mb V}^{(j_1,j_2)}}\t \mb \Sigma_k^{(j_2)} \tilde{\mb V}^{(j_1,j_2)} )^{-1}{{}\tilde{\mb V}^{(j_1,j_2)}}\t\big( \mb \Sigma_k^{(j_1)} - \mb \Sigma_k^{(j_2)}\big)\tilde{\mb V}^{(j_1,j_2)}\big) \\ 
    \leq&   \mathrm{Tr}({{}\tilde{\mb V}^{(j_1,j_2)}}\t\big( \mb \Sigma_k^{(j_1)} - \mb \Sigma_k^{(j_2)}\big)\tilde{\mb V}^{(j_1,j_2)})\norm{({{}\tilde{\mb V}^{(j_1,j_2)}}\t \mb \Sigma_k^{(j_2)} \tilde{\mb V}^{(j_1,j_2)} )^{-1}} \\ 
    \leq & 4 \bignorm{{{}\tilde{\mb V}^{(j_1,j_2)}}\t\big( \mb \Sigma_k^{(j_1)} - \mb \Sigma_k^{(j_2)}\big)\tilde{\mb V}^{(j_1,j_2)}}\norm{({{}\tilde{\mb V}^{(j_1,j_2)}}\t \mb \Sigma_k^{(j_2)} \tilde{\mb V}^{(j_1,j_2)} )^{-1}}.
}

From the fact that $\bignorm{\mb \Sigma_k^{(j)}} \leq \tilde\sigma^2, k\in[2],j\in[M]$ for every sufficiently large $n$, it is immediate that $\bignorm{{{}\tilde{\mb V}^{(j_1,j_2)}}\t\big( \mb \Sigma_k^{(j_1)} - \mb \Sigma_k^{(j_2)}\big)\tilde{\mb V}^{(j_1,j_2)}}\leq 2\tilde\sigma^2$ for every sufficiently large $n$. Regarding $\bignorm{\big({{}\tilde{\mb V}^{(j_1,j_2)}}\t \mb \Sigma_k^{(j_2)} \tilde{\mb V}^{(j_1,j_2)}\big)^{-1}}$, one has 
\begin{align}
    \bignorm{({{}\tilde{\mb V}^{(j_1,j_2)}}\t \mb \Sigma_k^{(j_2)} \tilde{\mb V}^{(j_1,j_2)})^{-1}} = \bignorm{{{}\tilde{\mb V}^{(j_1,j_2)}}\t {\mb \Sigma_k^{(j_2)}}^{-1} \tilde{\mb V}^{(j_1,j_2)}} \leq \bignorm{ {\mb \Sigma_k^{(j_2)}}^{-1}} \leq \frac{2 + \alpha'^2}{\underline\sigma^2}
\end{align}
for $k\in[2]$ and every sufficiently large $n$, 
where the first equality holds from the facts in \eqref{eq: inverse of block matrix} and \eqref{eq: orthogonality of V}. Combining these relations gives that 
\begin{align}
    &\mathrm{KL}\big( \bb P_{\theta_k^{(j_1)}, \mb \Sigma_k^{(j_1)}}, \bb P_{\theta_k^{(j_2)}, \mb \Sigma_k^{(j_2)}}\big)\leq \frac{(4 + 2{\alpha'}^2)\tilde\sigma^2}{\underline\sigma^2}, \quad \text{for }k\in[2]. \label{eq: KL divergence between two Gaussian distributions}
\end{align}

We remind that in \eqref{eq: symmetrization argument 3} the minimax rate is lower bounded by 
$$
\frac{1}{4\beta}\inf_{\hat{\mb z}}\sup_{{\bo \eta} \in \{{\bo \eta}^{(j)}\}_{j\in[M]}}L_{{\bo \eta}}(\hat{\mb z}),
$$ 
while $\hat{\mb z }$ could be viewed as a random classifier determined by $\tilde{\mb Y}=(\mb y_2,\cdots, \mb y_n)\t$. We thus consider the KL divergence of the samples $\tilde{\mb Y}$ of size $n-1$. 
Again, the conditional property of KL-divergence allows us to upper bound
the KL divergence between $\bar{\bb P}_{*, {\bo \eta}^{(j_1)}}$ and $\bar{\bb P}_{*, {\bo \eta}^{(j_2)}}$ that 
\eq{
    \mathrm{KL}(\bar{\bb P}_{*, {\bo \eta}^{(j_1)}}, \bar{\bb P}_{*, {\bo \eta}^{(j_2)}})\leq \frac{(n - 1)(4 + 2{\alpha'}^2)\tilde\sigma^2}{\underline\sigma^2}\label{eq: upper bound on KL divergence}
}
 thanks to \eqref{eq: KL divergence between two Gaussian distributions}.

\subsubsection{Putting All the Pieces Together}
\label{subsubsec: wrap up the proof of lower bound}
We now summarize the preceding building blocks to derive the final minimax rate of the problem. We view the marginal distribution of $\tilde{\mb Y}$ under $\frac{1}{2}\bb P_{*,1, {\bo \eta}^{(j)}} + \frac{1}{2}\bb P_{*,2, {\bo \eta}^{(j)}}$ and $L_{{\bo \eta}^{(j)}}(\hat{\mb z})$ as the given distribution and the functions in Lemma~\ref{lemma: fano's method}, respectively. Further, combining \eqref{eq: log M upper bound} with \eqref{eq: upper bound on KL divergence} under the assumption $\tilde\sigma = \omega( \bar\sigma)$ implies that 
\eq{
    \frac{\max_{j_1\neq j_2 \in [M]}  \mathrm{KL}(\bar{\bb P}_{*, {\bo \eta}^{(j_1)}}, \bar{\bb P}_{*, {\bo \eta}^{(j_2)}})}{\log M} 
    \leq 
    \left[{\frac{(n - 1)(4 + 2{\alpha'}^2)\tilde\sigma^2}{\underline\sigma^2}}\right] \Big/
    \left[ c n \frac{\tilde\sigma^{2(1 + \epsilon)}}{\max_{k\in [2]} \norm{\mb S_k^{\mod}}^{1 + \epsilon} }\right] 
    \rightarrow 0
}
since $\tilde \sigma / \bar\sigma \rightarrow 0$. Finally, we apply Lemma~\ref{lemma: fano's method} on \eqref{eq: symmetrization argument 3} with the ``seperation degree'' condition \eqref{eq: lower bound on L} to obtain that 
\begin{equation}
     \inf_{\hat{\mb z}} \sup_{(\mb z^*, \bo \eta) \in \mb \Theta_\alpha} \big(\bb E h(\hat {\mb z}, \mb z^*) -  \frac{1}{4\beta} \big(\bb P_{\bo \theta_1^*, \mb \Sigma_1}[\tilde z = 2] + \bb P_{\bo \theta_2^*, \mb \Sigma_2}[\tilde z = 1] \big) \big)  
    \geq  \exp\left( -(1 + o(1))\frac{{\mathsf{SNR}_0^{\mod}}^2}{2}\right)
\end{equation}
using the condition that $\frac{\log \beta}{{\mathsf{SNR}_0^{\mod}}^2}\rightarrow 0$. 

\subsubsection{Proof of Lemma~\ref{lemma: fano's method}}
    \label{sec: proof of fano's method}
    Consider a uniform prior measure on $\{{\bo \eta}^{(j)}\}_{j=0}^M$ in $\tilde{\mb \Theta}$. By a standard argument, we have 
    \begin{align}
        & \sup_{j\in[M]}\bb E_j[f_j(\mb X)] \geq \frac{1}{M}\sum_{j_0\in[M]}\bb P_{j_0}[f_{j_0}(\mb X)\geq \gamma / 2] \\ 
        \geq & \frac{1}{M}\sum_{j_0\in[M]}\bb P_{j_0}\big[f_{j_0}(\mb X) \neq \min_{j\in[M]} f_j(\mb X)\big]  
        = \frac{1}{M}\sum_{j_0\in[M]}\bb P_{j_0}\big[\hat j(\mb X) \neq j_0\big],\label{eq: multiple testing error}
    \end{align}
    where $\hat j(\mb x)\coloneqq \argmin_{j\in[M]}f_j(\mb x)$ and the second inequality follows from the fact that \eq{
    \{f_j(\mb X)< \gamma / 2\} \subseteq \{ f_{j_0}(\mb X)=  \min_{j\in[M]} f_j(\mb X)\}.} 

    By Fano's lemma \cite[Corollary 2.6]{10.5555/1522486}, the multiple testing error \eqref{eq: multiple testing error} is lower bounded as follows: 
    \eq{
        \frac{1}{M}\sum_{j_0\in[M]}\bb P_{j_0}\big[\hat j(\mb X) \neq j_0\big] \geq \frac{\log M - \log 2}{\log(M-1)} \geq  c_0 \frac{(M - 1) \log M}{M\log(M - 1)}\geq  c > 0. 
    }
    for some sufficiently small $c_0$ and $c$.

\subsection{Proof of Proposition~\ref{proposition: lower bound simplification}} 
\label{subsec: proof of proposition: lower bound simplification}
The proof idea shares a spirit similar to \citet[Theorem 5]{bing2023optimal}. 
Since the marginal distribution of $\mb y_1$ under $\bb P_{*,1, {\bo \eta}}$ is exactly $\bb P_{\bo \theta_1,\mb \Sigma_1}$, it follows by looking into the event that $\tilde{\mb z}$ is not equal to $\hat z_1$ that
    \begin{align}
        & \bb P_{*,1, {\bo \eta}}[\hat z_1 =2 ] + \bb P_{*,2, {\bo \eta}}[\hat z_1 =1] -   \bb P_{\mb y\sim \mc N(\bo \theta_1^*, \mb \Sigma_1)}[\tilde z(\mb y) = 2] - \bb P_{\mb y\sim \mc N(\bo \theta_2^*, \mb \Sigma_2)}[\tilde z(\mb y) = 1] \\ 
        = &
        \int_{\{\hat z_1 = 2\}} 1 \mathrm d\bb P_{*,1, {\bo \eta}} +\int_{\{\hat z_1 = 1\}} 1 \mathrm d\bb P_{*,2, {\bo \eta}}  -  \int_{\{\tilde z = 2\}} 1 \mathrm d\bb P_{*,1, {\bo \eta}} -\int_{\{\tilde z = 1\}} 1 \mathrm d\bb P_{*,2, {\bo \eta}} 
        \\  
        = & \int_{\{\hat z_1 = 2, \tilde z = 1\}} 1 \mathrm d(\bb P_{*,1, {\bo \eta}}-\bb P_{*,2, {\bo \eta}})  + \int_{\{\hat z_1 = 1, \tilde z = 2\}} 1 \mathrm d(\bb P_{*,2, {\bo \eta}} - \bb P_{*,1, {\bo \eta}}) \\ 
        = & \int_{\{\hat z_1 = 2, \tilde z = 1\}} \big(1 - \frac{\phi_{\bo \theta^*_2, \mb \Sigma_2}}{\phi_{\bo \theta^*_1, \mb \Sigma_1}} \big)  \mathrm d\bb P_{*,1, {\bo \eta}}  + \int_{\{\hat z_1 = 1, \tilde z = 2\}} \big(1 - \frac{\phi_{\bo \theta^*_1, \mb \Sigma_1}}{\phi_{\bo \theta^*_2, \mb \Sigma_2}} \big)\mathrm d\bb P_{*,2, {\bo \eta}}  \\ 
        \geq & \frac{1}{2}\Big(\bb P_{*,1, {\bo \eta}}\big[\hat z_1 = 2, \tilde z = 1,  \frac{\phi_{\bo \theta^*_2, \mb \Sigma_2}}{\phi_{\bo \theta^*_1, \mb \Sigma_1}}\leq \frac{1}{2}\big] + \bb P_{*,2, {\bo \eta}}\big[\hat z_1 = 1, \tilde z = 2,  \frac{\phi_{\bo \theta^*_1, \mb \Sigma_1}}{\phi_{\bo \theta^*_2, \mb \Sigma_2}}\leq \frac{1}{2} \big] 
        \Big) \\ 
         & \quad = \frac{1}{2}\Big(\bb P_{*,1, {\bo \eta}}\big[\hat z_1 = 2,  \frac{\phi_{\bo \theta^*_2, \mb \Sigma_2}}{\phi_{\bo \theta^*_1, \mb \Sigma_1}}\leq \frac{1}{2}\big] + \bb P_{*,2, {\bo \eta}}\big[\hat z_1 = 1,  \frac{\phi_{\bo \theta^*_1, \mb \Sigma_1}}{\phi_{\bo \theta^*_2, \mb \Sigma_2}}\leq \frac{1}{2} \big] 
        \Big) . 
        \label{eq: lower bound simplification 1}
    \end{align}

    Now given ${\bo \eta}^{(j_1)} = (\bo \theta_1^*, \bo \theta_2^*, \mb \Sigma_1^{(j_1)}, \mb \Sigma_2^{(j_1)}),{\bo \eta}^{(j_2)} =  (\bo \theta_1^*, \bo \theta_2^*, \mb \Sigma_1^{(j_2)}, \mb \Sigma_2^{(j_2)}) \in \mb \Theta$, we denote the likelihood ratio estimator for ${\bo \eta}^i$ by $\tilde z^i$ for $i = 1,2$. Invoking the simple fact that 
    \begin{align}
        & \big\{\hat z_1 = 2,  \frac{\phi_{\bo \theta_2^*, \mb \Sigma^{(j_1)}_2}}{\phi_{\bo \theta^*_1, \mb \Sigma^{(j_1)}_1}}\leq \frac{1}{2}\big\}\cup  \big\{\hat z_1 = 1,  \frac{\phi_{\bo \theta^*_1, \mb \Sigma^{(j_2)}_1}}{\phi_{\bo \theta^*_2, \mb \Sigma^{(j_2)}_2}}\leq \frac{1}{2} \big\} \supseteq \big\{\frac{\phi_{\bo \theta_2^*, \mb \Sigma^{(j_1)}_2}}{\phi_{\bo \theta^*_1, \mb \Sigma^{(j_1)}_1}}\leq \frac{1}{2},  \frac{\phi_{\bo \theta^*_1, \mb \Sigma^{(j_2)}_1}}{\phi_{\bo \theta^*_2, \mb \Sigma^{(j_2)}_2}}\leq \frac{1}{2}\big\}, 
    \end{align}
    for $i\neq j\in[2]$, 
    it holds that 
    \begin{align}\label{eq: inequality of misspecification probability}
    & \bb P_{*,1, {\bo \eta}^{(j_1)}}\big[\hat z_1 = 2, \frac{\phi_{\bo \theta_2^*, \mb \Sigma_2^{(j_1)} }}{\phi_{\bo \theta_1^* , \mb \Sigma_1^{(j_1)} }}\leq \frac{1}{2}\big] 
    + \bb P_{*,2, {\bo \eta}^{(j_1)}}\big[\hat z_1 = 1, \frac{\phi_{\bo \theta_1^*, \mb \Sigma_1^{(j_1)}}}{\phi_{\bo \theta_2^*, \mb \Sigma_2^{(j_1)}}}\leq \frac{1}{2}\big] \\& +  \bb P_{*,1, {\bo \eta}^{(j_2)}}\big[\hat z_1 = 2, \frac{\phi_{\bo \theta_2^*, \mb \Sigma_2^{(j_2)}}}{\phi_{\bo \theta_1^*, \mb \Sigma_1^{(j_2)}}}\leq \frac{1}{2}\big] + \bb P_{*,2, {\bo \eta}^{(j_2)}}\big[\hat z_1 = 1, \frac{\phi_{\bo \theta_1^*, \mb \Sigma_1^{(j_2)}}}{\phi_{\bo \theta_2^*, \mb \Sigma_2^{(j_2)}}}\leq \frac{1}{2}\big] \\
     \geq & \int_{ \frac{\phi_{\bo \theta_2^*, \mb \Sigma_2^{(j_1)} }}{\phi_{\bo \theta_1^* , \mb \Sigma_1^{(j_1)} }}\leq \frac{1}{2},\frac{\phi_{\bo \theta_1^*, \mb \Sigma_1^{(j_2)}}}{\phi_{\bo \theta_2^*, \mb \Sigma_2^{(j_2)}}}\leq \frac{1}{2} } \min\{\phi_{\bo \theta_1^*, \mb \Sigma_1^{(j_1)}}(\mb x),\phi_{\bo \theta_2^*, \mb \Sigma_2^{(j_2)}}(\mb x) \} \mathrm d\mb x \\ 
     &+\int_{ \frac{\phi_{\bo \theta_1^*, \mb \Sigma_1^{(j_1)}}}{\phi_{\bo \theta_2^*, \mb \Sigma_2^{(j_1)}}}\leq \frac{1}{2}, \frac{\phi_{\bo \theta_2^*, \mb \Sigma_2^{(j_2)}}}{\phi_{\bo \theta_1^*, \mb \Sigma_1^{(j_2)}}}\leq \frac{1}{2}} \min\{\phi_{\bo \theta_2^*, \mb \Sigma_2^{(j_1)}}(\mb x),\phi_{\bo \theta_1^*, \mb \Sigma_1^{(j_2)}}(\mb x) \} \mathrm d\mb x 
    \end{align}
    which leads to the coclusion combined with \eqref{eq: lower bound simplification 1}.

    \subsection{Proof of Theorem~\ref{theorem: gaussian lower bound with K components}}
\label{subsec: proof of theorem: gaussian lower bound with K components}
    We provide a general version of Theorem~\ref{theorem: gaussian lower bound with K components}, while Theorem~\ref{theorem: gaussian lower bound with K components} is an immediate conclusion of the general one. 
\begin{theorem*}[Minimax Lower Bound for $K$-component Gaussian Mixtures]\label{theorem: gaussian lower bound with K components (general)}
    Consider the $K$-component Gaussian mixture model and the parameter space $\mb \Theta_{\alpha, K}$ with $1<\alpha<\frac43$. Given $\mathsf{SNR}_0 \rightarrow \infty$, 
    $\frac{K(\log \beta \vee 1)}{{\mathsf{SNR}_0^{\mod}}^2} \rightarrow 0$, one has 
	\eq{
	\inf_{\hat{\mb z}}\sup_{(\mb z^*, \bo \eta ) \in \mb\Theta_{\alpha,K}}\bb E[h(\hat{\mb z}, \mb z^*)] \geq \exp\left(-(1 + o(1))\frac{{\mathsf{SNR}_0^{\mod}}^2}{2}\right),
	}
    if $\tilde \sigma = \omega(\bar\sigma)$, $\bar\sigma / \underline\sigma = O(1)$, $\log(\tilde\sigma^2 / \underline\sigma^2) = o({\mathsf{SNR}_0^{\mod}}^2)$, and $n\tilde\sigma^{2(1+\epsilon)} = o(p \bar\sigma^{2(1+\epsilon)})$ for some constant $\epsilon > 0$. 
\end{theorem*}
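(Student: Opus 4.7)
The plan is to reduce the $K$-component minimax lower bound to the two-component case already established in Theorem~\ref{theorem: gaussian lower bound}, by localizing the clustering hardness to a single pair of clusters while carefully absorbing the extra factor of $K$ that arises from partitioning $n$ samples into $K$ approximately balanced groups.

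First, I would identify a hardest pair: since $\mathsf{SNR}$ is a minimum over pairs $(j_1,j_2)$, I restrict to parameter configurations where the minimum is attained at, say, $(1,2)$. The remaining centers $\bo\theta_3^*,\ldots,\bo\theta_K^*$ and covariances $\mb\Sigma_3,\ldots,\mb\Sigma_K$ would be chosen so that every other pair $(k,l)$ satisfies $\mathsf{SNR}(k,l) \geq (4/3)\mathsf{SNR}_0$ with $\snrfull(k,l) = \mathsf{SNR}(k,l)$ (for instance, by taking the auxiliary covariances to act isotropically outside the span of centers and invoking Proposition~\ref{proposition: SNR' and Bayesian oracle risk 1}). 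The slack $\alpha<4/3$ guarantees $\rbayes \leq \exp(-\alpha^2\mathsf{SNR}_0^2/2)$ from these auxiliary pairs, keeping $\bo\eta$ in $\tilde{\mb\Theta}_{\alpha,K}$.

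Second, restrict the assignment space: inside $\mb\Theta_{z,K}$ fix a deterministic balanced sub-assignment of roughly $n(K-2)/K$ samples to clusters $3,\ldots,K$ and only permute the remaining $\approx 2n/K$ samples between clusters $1$ and $2$. The wide separation between the auxiliary clusters and clusters $\{1,2\}$ means that any reasonable estimator correctly recovers the auxiliary labels with overwhelming probability, so the effective clustering difficulty concentrates on the two-component subproblem. Third, apply the symmetrization argument of Steps~1--2 of the proof of Theorem~\ref{theorem: gaussian lower bound} to those $\approx 2n/K$ samples, which reduces the global minimax risk to
\begin{equation}
\inf_{\hat{\mb z}} \sup_{(\mb z^*,\bo\eta)\in\mb\Theta_{\alpha,K}} \bb E\big[h(\hat{\mb z},\mb z^*)\big] \gtrsim \frac{1}{K\beta}\inf_{\hat{\mb z}}\sup_{\bo\eta\in\tilde{\mb\Theta}_{\alpha,K}}\bb E\big[L_{\bo\eta}(\hat{\mb z})\big].
\end{equation}

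Fourth, I would build an $\epsilon$-packing $\{\bo\eta^{(j)}\}_{j\in[M]}$ by varying only $(\mb\Sigma_1^{(j)},\mb\Sigma_2^{(j)})$ exactly as in \eqref{eq: centers and covariances in the packing}, now using almost-mutually orthogonal unit vectors $\{\mb v^{(j)}\}\subseteq\bb R^{p-K}$ lying in the orthogonal complement of the full $K$-dimensional center subspace. The Varshamov--Gilbert bound still gives $\log M \gtrsim n\tilde\sigma^{2(1+\epsilon)}/\bar\sigma^{2(1+\epsilon)}$, while the pairwise KL divergence is now $O(n\tilde\sigma^2/(K\underline\sigma^2))$ because only the $\approx 2n/K$ samples in clusters $1,2$ contribute. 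The critical-region analysis of Claim~\ref{claim: condition 1/2} carries over verbatim inside the two-dimensional subspace spanned by $\bo\theta_1^*,\bo\theta_2^*$, yielding a separation degree of $\exp(-(1+o(1))\mathsf{SNR}_0^2/2)$ between any two packing elements, so Lemma~\ref{lemma: fano's method} concludes.

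The principal obstacle is absorbing the prefactor $1/(K\beta)$ into the exponent: $\log(K\beta)$ must be negligible against $\mathsf{SNR}_0^2/2$, which is exactly the hypothesis $K(\log\beta\vee 1)/\mathsf{SNR}_0^2\to 0$. A secondary subtlety is ensuring $\bo\eta^{(j)}\in\tilde{\mb\Theta}_{\alpha,K}$ simultaneously for every $j\in[M]$: because $\rbayes$ is a \emph{maximum} over pairs, the auxiliary clusters $3,\ldots,K$ must be fixed once and designed so that they keep dominating $\rbayes$ under every perturbation of $(\mb\Sigma_1^{(j)},\mb\Sigma_2^{(j)})$ in the packing. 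The restriction $\alpha<4/3$ is precisely what creates the gap needed to accomplish this---any larger $\alpha$ would force the auxiliary pairs to violate the $\snrfull\geq\alpha\mathsf{SNR}$ requirement unless one also heteroskedasticized them, which would contaminate the two-component localization above.
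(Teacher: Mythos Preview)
Your overall strategy matches the paper's: localize to the pair $(1,2)$, fix all labels in clusters $3,\ldots,K$, symmetrize over the $\approx 2n/K$ remaining samples to get the $1/(K\beta)$ prefactor, vary only $(\mb\Sigma_1^{(j)},\mb\Sigma_2^{(j)})$ along almost-orthogonal directions $\mb v^{(j)}\in\bb R^{p-K}$, and close with Fano. The absorption of $\log(K\beta)$ into the exponent via $K(\log\beta\vee 1)/\mathsf{SNR}_0^2\to 0$ is also exactly what the paper does.

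There is, however, a real gap in your treatment of the auxiliary clusters. Your plan is to make $\mb\Sigma_3,\ldots,\mb\Sigma_K$ isotropic outside the center span and invoke Proposition~\ref{proposition: SNR' and Bayesian oracle risk 1} to get $\snrfull(k,l)=\mathsf{SNR}(k,l)$. That proposition requires \emph{homogeneous} covariances within the pair, so it only handles pairs $(k,l)$ with $k,l\geq 3$. For a mixed pair such as $(1,k)$ with $k\geq 3$, you have $\mb\Sigma_1^{(j)}$ carrying the anisotropic perturbation in the $\mb v^{(j)}$ direction while $\mb\Sigma_k$ is isotropic there; the covariances differ, and neither Proposition~\ref{proposition: SNR' and Bayesian oracle risk 1} nor your sketch verifies the $\rbayes$ constraint for such pairs. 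The paper does not rely on either proposition for mixed pairs: it gives the auxiliary precision matrices an explicit scaling $\mb\Omega_k^{(j)}=\mathrm{diag}\big(\tfrac{{\alpha''}^2}{\underline\sigma^2}\mb I_K,\tfrac{1}{\tilde\sigma^2}\mb I_{p-K}\big)$ with $\alpha''=\tfrac{8\alpha}{2-\frac{3}{2}\alpha}$, then carries out a direct geometric computation (Claim~\ref{claim: SNR relations for K components}) bounding $\snrfull_{1,k}$, $\snrfull_{k,1}$, and $\rbayes$ for the mixed pairs by reducing to a $(K{+}1)$-dimensional problem. This is where the constraint $\alpha<\tfrac{4}{3}$ actually enters---it is precisely what makes $\alpha''>0$ and lets that computation close---rather than the qualitative ``gap for auxiliary pairs to dominate'' you describe. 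Your secondary-subtlety paragraph correctly identifies the issue but does not resolve it; without a concrete construction and verification for the mixed pairs, the membership $\bo\eta^{(j)}\in\tilde{\mb\Theta}_{\alpha,K}$ is unproven.
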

    
    The basic idea of the proof is to focus on the most hard-to-distinguish pair of clusters among the $K$ clusters. Reducing the problem into distinguishing these two components, the remaining parts follow a similar route in the proof of Theorem~\ref{theorem: gaussian lower bound}. One subtle thing to note is that the treatment to lower bound the probability in a subregion is different from the proof of Theorem~\ref{theorem: gaussian lower bound}.

    To begin with, we fix an aribtrary $\mb z^{(0)} \in \mb \Theta_{z, K}$ and choose a subset $\mc B_m \subset \mc I_m(\mb z^{(0)})$ such that $|\mc B_m| = n_m^* - \lfloor \frac{n}{8\beta K}\rfloor$ for $m = 1,2$. With $\mc B \coloneqq \cup_{m=1}^2 \mc B_m\cup \big( \cup_{i = 3}^K \mc I_i(\mb z^{(0)})\big)$, we define $\mb Z_{\mc B} = \{\mb z\in \mb\Theta_{z, K}: z_i = z_i^* , \forall i \in \mc B\}$.
    For notational simplicity, denote $ \bb P_{\mb y\sim \mc N(\bo \theta_1^*, \mb \Sigma_1)}[\tilde z(\mb y) = 2] =: \bb P_{\bo \theta_1^*, \mb \Sigma_1}[\tilde z = 2]$ and denote $\bb P_{\bo \theta_2^*, \mb \Sigma_2}[\tilde z = 1]$ similarly.
    Following the procedure in \eqref{eq: reduction to subset of theta}, we have 
    \longeq{
        &\inf_{\hat{\mb z}}\sup_{(\mb z^*, \{\bo \theta_k^*\}_{k\in[K]}, \{\mb \Sigma_k\}_{k\in[K]} )\in \mb\Theta_{\alpha, K}}\bb Eh(\hat{\mb z}, \mb z^*)\\
         \geq &\frac{1}{4\beta K} \inf_{\hat{\mb z}} \sup_{(\{\bo \theta_k^*\}, \{\mb \Sigma_k\}) \in \tilde{\mb \Theta}_{0, K}} \frac{1}{|\mb Z_{\mc B}|}\sum_{\mb z^* \in \mb Z_{\mc B}}\Big(\frac{1}{|\mc B^\complement|}\sum_{\mb z^* \in \mb Z_{\mc B}} \big(\frac{1}{\mc B^\complement} \sum_{i\in\mc B^\complement} \bb P[\hat z_i \neq z_i^{(0)}]   - (\bb P_{\bo\theta_1^*, \mb \Sigma_1}[\tilde z = 1] + \bb P_{\bo \theta_2^*, \mb \Sigma_2}[\tilde z = 1])\big) \Big).
    }
    For now, the minimax lower bound has been reduced to a form only related to the first two clusters; that is, we are supposed to focus on the cases where this pair is the hardest pair to be distinguished. Provided the assignment subset $\mb Z_{\mc B}$, the symmetrization argument in Step 2 of the proof of Theorem~\ref{theorem: gaussian lower bound} can be applied to the above expression again. Hence, we have 
    \begin{align}
        &\inf_{\hat{\mb z}}\sup_{(\mb z^*, \{\bo \theta_k^*\}_{k\in[K]}, \{\mb \Sigma_k\}_{k\in[K]} )\in \mb\Theta_{\alpha, K}}\bb Eh(\hat{\mb z}, \mb z^*)\\ 
        \geq & \frac{1}{4 \beta K}\inf_{\hat{\mb z}} \sup_{{\bo \eta}\in \tilde{\mb \Theta}_{\alpha,K}} 
        \Big[\bb P_{*,1, {\bo \eta}}[\hat z^{\mathsf{sym}}_1 =2 ] + \bb P_{*,2, {\bo \eta}}[\hat z^{\mathsf{sym}}_1 =1]  - \big(\bb P_{\bo \theta_1^*, \mb \Sigma_1}[\tilde z = 2] + \bb P_{\bo \theta_2^*, \mb \Sigma_2}[\tilde z = 1] \big)\Big] \\ 
        = & \frac{1}{4 \beta K}\inf_{\hat{\mb z}} \sup_{{\bo \eta}\in \tilde{\mb \Theta}_{\alpha, K}} \bb E
        \Big[\bb P_{*,1, {\bo \eta}}[\hat z^{\mathsf{sym}}_1 =2|\tilde{\mb Y}] + \bb P_{*,2, {\bo \eta}}[\hat z^{\mathsf{sym}}_1 =1 |\tilde{\mb Y}]   - \big(\bb P_{\bo \theta_1^*, \mb \Sigma_1}[\tilde z = 2] + \bb P_{\bo \theta_2^*, \mb \Sigma_2}[\tilde z = 1] \big)\Big],  \label{eq: corollary lower bound simplification}
    \end{align}
    where $\tilde{\mb Y} = (\mb y_2, \cdots,\mb y_n)\t$ and we analogously denote by $\bb P_{*, i, {\bo \eta}}$ the marginal probability measure with a uniform prior over $\{\mb z\in \mc Z_{\mc B}: z_1 = i\}$ for $i = 1,2$. Analogous to the previous definition in Theorem~\ref{theorem: gaussian lower bound}, we define the function $L_{{\bo \eta}}(\hat{\mb z})$ as 
    \begin{align}
        & L_{{\bo \eta}}(\hat{\mb z}) \coloneqq \bb P_{*,1, {\bo \eta}}[\hat z^{\mathsf{sym}}_1 =2|\tilde{\mb Y}] + \bb P_{*,2, {\bo \eta}}[\hat z^{\mathsf{sym}}_1 =1 |\tilde{\mb Y}]    - \big(\bb P_{\bo \theta_1^*, \mb \Sigma_1}[\tilde z = 2] + \bb P_{\bo \theta_2^*, \mb \Sigma_2}[\tilde z = 1] \big)
    \end{align}

    In order to apply the reduction scheme in Step 3.1 in the proof of Theorem \eqref{theorem: gaussian lower bound} so as to lower bound the supremum of the  expectation on the right-hand side of \eqref{eq: corollary lower bound simplification}, the core components, which respectively correspond to Step 3.2, Step 3.3, and Step 3.4 in the proof of Theorem~\ref{theorem: gaussian lower bound}, are concisely listed as follows: 
    \begin{itemize}
        \item \emph{Step 3.2*.} We shall present a parameter subset that represents the hardness of this clustering task. 
        \item \emph{Step 3.3*.} Provided a well-designed parameter subset $\{{\bo \eta}^{(j)}\}_{j\in[M]}$ of $\tilde{\mb \Theta}_{\alpha,K}$ in \eqref{eq: parameter space for K-component cases}, we show that for an arbitrary estimator $\hat{\mb z}$ and $j_1\neq j_2\in[M]$, one has 
        \begin{align}
            & L_{{\bo \eta}^{(j_1)}}(\hat{\mb z}) + L_{{\bo \eta}^{(j_2)}}(\hat{\mb z}) \geq   \exp\left(-(1 + o(1))\frac{{\mathsf{SNR}_0^{\mod}}^2}{2}\right).\label{eq: lower bound for two components out of K components 0}
        \end{align}
        The main technique toward \eqref{eq: lower bound for two components out of K components 0} lies in Proposition~\ref{proposition: lower bound simplification} in combination with a region similar to \eqref{eq: set R}. 
        \item \emph{Step 3.4*.} Lastly, we will prove that the KL-divergence between two arbitrary components in the parameter subset is appropriately controlled as Step 3.4 in the previous proof. 
    \end{itemize}

    The following parts are devoted to presenting the details of these steps. 
    \emph{Step 3.2*. }
    We start by constructing a parameter subset, in which each component shares the same centers and covariance matrices except for the first two components, and the proposed signal-to-noise-ratio between the first two components achieves $\mathsf{SNR}_0$. Without loss of generality, we let $\mb S_k^{\mod} = \underline\sigma^2 \mb I_K$. 
    
    Since $p - K \geq \frac{p}{2}$ for every sufficiently large $n$, we can always obtain a packing on $\bb S^{p - K-1}$ by appending zeroes to a packing on $\bb S^{\frac{p}{2}}$ for every sufficiently large $n$. 
    Following the same way as in the proof of Theorem~\ref{theorem: gaussian lower bound} (especially \eqref{eq: VG bound} and \eqref{eq: log M upper bound}), an almost-orthogonal packing on $\bb S^{p-K -1 }$ is given as $\{\mb v^{(j)}\}$ for $j\in[M]$,
    which satisfies that (i):
         \eq{
            |{\mb v^{(j_1)}}\t \mb v^{(j_2)}| \leq \frac{n^\frac{1}{2}\tilde\sigma^{1+\epsilon}}{p^{\frac{1}{2}} \underline\sigma^{1 + \epsilon}} \quad\text{for } j_1\neq j_2\in[M],
          }
         and (ii):
    \eq{
        \log M \geq c n (\tilde\sigma/\underline\sigma)^{2(1+\epsilon)}
    \label{eq: log M lower bound with K components}
    } 
    for some constant $c>0$. 

    Fixing an arbitrary $p\times p$ orthonormal matrix $(\mb V^*, \mb V^*_\perp) \in O(p)$, the parameter subset is defined as 
    $\{{\bo \eta}^{(j)}\}_{j\in[M]} = \big\{(\{\bo \theta_k^{*}\}_{k\in[K]}, \{\mb \Sigma_k^{(j)}\}_{k\in[K]})\big\}_{j\in[M]}$ where 
    \begin{align}
        & \bo \theta_k^{*} \coloneqq \sqrt{2}\underline\sigma{\mathsf{SNR}^{\mod}_0}^{\frac{1}{2}}\mb V^* \mb e_k, \quad \mb \Sigma_k^{(j)} \coloneqq  (\mb V^*, \mb V^*_\perp)(\mb \Omega_k^{(j)})^{-1}(\mb V^*, \mb V^*_\perp)\t , 
    \end{align}
    with \begin{align}
        & \mb \Omega^{(j)}_k \coloneqq \left( \begin{matrix}
             \frac{1}{\underline\sigma^2}\mb I_{K} +  \frac{1}{\underline\sigma^2}\alpha'^2 \mb w_0 \mb w_0\t &  \frac{1}{\underline\sigma^2}\alpha'\mb w_0 {\mb v^{(j)}}\t\\ 
            \frac{1}{\underline\sigma^2}\alpha'\mb v^{(j)}\mb w_0\t & \frac{1}{\tilde\sigma^2}\big(\mb I_{p-K} - \mb v^{(j)}{\mb v^{(j)}}\t  \big) +  \frac{1}{\underline\sigma^2}\mb v^{(j)}{\mb v^{(j)}}\t
        \end{matrix}\right), \quad \text{ for $k = 1,2$}, \\ 
        &  \mb \Omega^{(j)}_k = \left(
            \begin{matrix}
                 \frac{1}{\underline\sigma^2}{\alpha''}^2\mb I_{K} & \mb 0 \\ 
                \mb 0 & \frac{1}{\tilde\sigma^2}\mb I_{p-K}
            \end{matrix}
        \right)    
        , \quad \text{ for $k = 3, \cdots, K$}. \label{eq: trivial omega}
    \end{align}
    Here $\alpha' \coloneqq  12\alpha, \alpha''\coloneqq \frac{8\alpha}{2 - \frac{3}{2}\alpha}$ are quantities related to $\alpha$, the vector $\mb e_k$ denotes the $k$-th canonical basis vector in $\bb R^K$ for $k\in[K]$, and $\mb w_0 = \frac{1}{\sqrt{2}}(-1, 1, 0, \cdots, 0) \in \bb R^K$. It is not hard to verify that ${\mb V^{*\top}} \mb \Sigma_k^{(j)}\mb V^* = \mb I_K$. 
    
    We are left with verifying that $\{{\bo \eta}^{(j)}\}\in \tilde{\mb \Theta}_{\alpha, K}$. 
    Before proceeding, we write 
    \begin{align}
        & \mathsf{SNR}^{\mod}(\{\bo \theta_k^*\}_{k\in[K]}, \{\mb \Sigma_k\}_{k\in[K]}) = \min_{a\neq b\in[K]}\mathsf{SNR}^{\mod}_{a,b}(\{\bo \theta_a^*, \bo \theta_b^*\}, \{\mb \Sigma_a, \mb \Sigma_b\}), \\ 
        & \snrfull(\{\bo \theta_k^*\}_{k\in[K]}, \{\mb \Sigma_k\}_{k\in[K]})= \min_{a\neq b\in[K]}\mathsf{SNR}^{\mathsf{full}}_{a,b}(\{\bo \theta_a^*, \bo \theta_b^*\}, \{\mb \Sigma_a, \mb \Sigma_b\}),\label{eq: pairwise definition of SNR's}
    \end{align}
    where the functions $\mathsf{SNR}_{a,b}$ and ${\snrfull}_{a,b}$ are naturally defined as 
    \begin{align}
        & \mathsf{SNR}^{\mod}_{a,b}(\{\bo \theta_a^*, \bo \theta_b^*\}, \{\mb \Sigma_a, \mb \Sigma_b\}) \coloneqq \min_{\mb x \in \bb R^2}\big\{\mb x\t {\mb S^*_a}^{-1}\mb x:\  \mb x\t\big({\mb S_b^*}^{-1} - {\mb S_a^*}^{-1}\big) \mb x \\ 
        & \qquad - 2 \mb x\t {\mb S^*_b}^{-1}\big( \mb w^*_b - \mb w^*_a\big)  +\big( \mb w^*_b - \mb w^*_a\big)\t {\mb S_b^*}^{-1}\big( \mb w^*_b - \mb w^*_a\big) = 0\big\} , 
        \\ 
        & \mathsf{SNR}^{\mathsf{full}}_{a,b}(\{\bo \theta_a^*, \bo \theta_b^*\}, \{\mb \Sigma_a, \mb \Sigma_b\})  
        \coloneqq  \min_{\mb x \in \bb R^p}\big\{\mb x\t \mb \Sigma_a^{-1}\mb x: \ \frac{1}{2}\mb x\t(\mb \Sigma_b^{-1} - \mb \Sigma_a^{-1})\mb x + \mb x\t \mb \Sigma_b^{-1}(\bo\theta^*_a - \bo \theta^*_b)
        \\
       &\qquad 
       + \frac{1}{2}(\bo \theta_a^* - \bo \theta_b^*)\t \mb \Sigma_b^{-1}(\bo \theta_a^* - \bo \theta_b^*) - \frac{1}{2}\log|\mb \Sigma_a^*| + \frac{1}{2}\log |\mb \Sigma_b^*| = 0\big\}.
    \end{align}

    We claim the following fact, whose proof is postponed to the end of the whole proof of this corollary. 
    \begin{claim} \label{claim: SNR relations for K components}
        Given the parameter subset $\{{\bo \eta}^{(j)}\}_{j\in[M]}$ defined above, it holds for every sufficiently large $n$ that
    \longeq{
        &     \mathsf{SNR}^{\mod}_{a,b}(\{\bo \theta_a^{*},\bo \theta_b^{*}\}, \{\mb \Sigma_a, \mb \Sigma_b\}) =\mathsf{SNR}_0^{\mod},\quad  \text{if $a =1$, $b= 2$ or $a= 2$, $b=1$},
        \\ 
        & \mathsf{SNR}_{a,b}^{\mod}(\{\bo \theta_a^{*},\bo \theta_b^{*}\}, \{\mb \Sigma_a, \mb \Sigma_b\})
            > \mathsf{SNR}_0^{\mod}, \quad  \text{otherwise}, \\ 
        &\text{and } -\log(\rbayes)  \geq \alpha^2 {\mathsf{SNR}_0^{\mod}}^2  .  \label{eq: claims in K components lower bound}
    }
\end{claim}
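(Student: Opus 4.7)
The plan is to compute each pairwise $\mathsf{SNR}_{a,b}$ separately by exploiting the block structure of the $\mb \Omega_k^{(j)}$'s, using the pairwise reformulation \eqref{eq: pairwise definition of SNR's}. Throughout, denote $c \coloneqq \sqrt{2}\underline\sigma\mathsf{SNR}_0^{1/2}$ for brevity, so that $\mb w_k^* = c\mb e_k$.

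\textbf{Pair $(1,2)$.} First I would observe that by construction $\mb \Omega_1^{(j)} = \mb \Omega_2^{(j)}$, hence $\mb \Sigma_1^{(j)} = \mb \Sigma_2^{(j)}$ and in particular $\mb S_1^* = \mb S_2^* = \underline\sigma^2 \mb I_K$. The homogeneous-covariance formula from Proposition~\ref{proposition: SNR' and Bayesian oracle risk 1} then gives $\mathsf{SNR}_{1,2} = \tfrac12\|\underline\sigma^{-1}(\mb w_1^*-\mb w_2^*)\|_2$, which a direct calculation reduces to $\mathsf{SNR}_0$. Moreover, using the full-dimensional covariance structure of $\mb \Sigma_k^{(j)}$ (which has the ``$\alpha' \mb w_0 \tilde{\mb w}^\top$'' coupling term in $\mb V^*$ and $\mb V^*_\perp$), the same kind of computation done in Claim~\ref{claim: snrfull} applies verbatim to this pair; with the prescribed $\alpha'=12\alpha$ this yields $\snrfull_{1,2} \ge 2\alpha\, \mathsf{SNR}_0$.

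\textbf{Pairs with $a,b\ge 3$.} For such pairs \eqref{eq: trivial omega} shows that $\mb S_a^* = \mb S_b^* = (\underline\sigma/\alpha'')^2 \mb I_K$, so again Proposition~\ref{proposition: SNR' and Bayesian oracle risk 1} applies and a one-line calculation gives $\mathsf{SNR}_{a,b} = \snrfull_{a,b} = \alpha''\cdot \mathsf{SNR}_0$. Since the choice $\alpha'' = 8\alpha/(2-3\alpha/2)$ with $\alpha\in(1,4/3)$ satisfies $\alpha'' > 1$ (indeed $\alpha''>4\alpha$), we immediately obtain $\mathsf{SNR}_{a,b} > \mathsf{SNR}_0$.

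\textbf{Mixed pairs $(a,b)$ with $a\in\{1,2\}$, $b\ge 3$.} These are the most delicate since the two projected covariances are scalar multiples of the identity with different scales: $\mb S_a^* = \underline\sigma^2 \mb I_K$ and $\mb S_b^* = (\underline\sigma/\alpha'')^2 \mb I_K$. The definition of $\mathsf{SNR}_{a,b}$ then becomes
\begin{align*}
\mathsf{SNR}_{a,b}^2 = \min_{\mb x\in\mathbb R^K}\Big\{\underline\sigma^{-2}\|\mb x-\mb w_a^*\|_2^2 \;:\; \|\mb x-\mb w_a^*\|_2^2 = \alpha''^2\|\mb x-\mb w_b^*\|_2^2\Big\}.
\end{align*}
The constraint set is a sphere (since $\alpha''\ne 1$), and completing the square shows its center is $\frac{\alpha''^2\mb w_b^* - \mb w_a^*}{\alpha''^2-1}$ with radius proportional to $\alpha''/(\alpha''^2-1)$. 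Minimizing the distance to $\mb w_a^*$ over this sphere is an elementary geometric problem; its solution yields $\mathsf{SNR}_{a,b} = \tfrac{2\alpha''}{\alpha''+1}\mathsf{SNR}_0$, which exceeds $\mathsf{SNR}_0$ precisely because $\alpha''>1$. The expected obstacle lies in carefully verifying this minimum and also in controlling $\snrfull_{a,b}$ (which involves the log-determinant adjustment and the full $p$-dimensional covariances) to ensure it also beats the $\sqrt{2}\alpha\mathsf{SNR}_0$ threshold needed below; this will follow from the same argument as in Claim~\ref{claim: snrfull} because the contribution of the ``$\mb v^{(j)}$'' directions to $\snrfull_{a,b}^2$ can only be non-negative.

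\textbf{Controlling $\rbayes$.} Finally, since $\rbayes = \max_{a\neq b}\rbayes_{a,b}$ by definition, it suffices to upper bound $\rbayes_{a,b}$ for every pair. For each pair, the covariance structure falls into one of the two regimes of Propositions~\ref{proposition: SNR' and Bayesian oracle risk 1} and \ref{proposition: SNR' and Bayesian oracle risk 2} (the ``homogeneous in most directions'' setup holds because outside the span of $\mb V^*$ and $\mb v^{(j)}$ the covariances equal $\tilde\sigma^2\mb I$). Applying these propositions yields $\rbayes_{a,b} = \exp\big(-(1+o(1))\snrfull_{a,b}^2/2\big)$ as $\snrfull_{a,b}\to\infty$. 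Combining the pairwise bounds on $\snrfull_{a,b}$ established in the previous paragraphs --- $\snrfull_{1,2}^2\ge 4\alpha^2\mathsf{SNR}_0^2$, $\snrfull_{a,b}^2\ge \alpha''^2\mathsf{SNR}_0^2\gg 2\alpha^2\mathsf{SNR}_0^2$ for $a,b\ge 3$, and a parallel bound for the mixed pairs --- gives $\snrfull_{a,b}^2/2 \ge (1+o(1))\alpha^2\mathsf{SNR}_0^2$ uniformly in $(a,b)$, which translates into the desired bound $-\log\rbayes \ge \alpha^2\mathsf{SNR}_0^2$ for every sufficiently large $n$.
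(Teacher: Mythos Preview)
Your computation of the pairwise $\mathsf{SNR}_{a,b}$ values is correct and matches the paper's result; the sphere argument for mixed pairs yielding $\frac{2\alpha''}{\alpha''+1}\mathsf{SNR}_0$ is exactly what the paper states (it simply asserts the formula without the geometric derivation you supply).

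The gap is in the $\rbayes$ step for mixed pairs $(a,b)$ with $a\in\{1,2\}$, $b\ge 3$. You invoke Proposition~\ref{proposition: SNR' and Bayesian oracle risk 2} on the grounds that the two covariances agree outside the span of $\mb V^*$ and $\mb v^{(j)}$, but that proposition requires $\tilde{\mb V}_n$ to have a \emph{fixed} dimension $a$; here $a=K+1$, and the theorem being proved allows $K$ to grow (subject only to $K=o(\mathsf{SNR}_0^2)$). So Proposition~\ref{proposition: SNR' and Bayesian oracle risk 2} does not apply as stated. Relatedly, your assertion that ``the same argument as in Claim~\ref{claim: snrfull}'' yields the $\snrfull_{a,b}$ bound for mixed pairs is not justified: Claim~\ref{claim: snrfull} treats a pair with identical covariances, whereas the mixed pair $(1,3)$ involves two genuinely different covariance matrices whose log-determinants differ by a term of order $K\log\alpha''$, which must be controlled separately.

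The paper handles both issues directly. For $\snrfull_{3,b}$ and $\snrfull_{b,3}$ it argues by contradiction --- showing that no point with sufficiently small Mahalanobis distance to one center can lie on the quadratic decision surface --- while explicitly bounding the log-determinant gap by $2(K{+}1)\log\alpha'' = o(\mathsf{SNR}_0^2)$. For $\rbayes$ in the mixed case, rather than invoking Proposition~\ref{proposition: SNR' and Bayesian oracle risk 2}, it uses the elementary bound $\rbayes_{1,3}\le \bb P\big[\|\bo\epsilon\|_2\ge \min\{\snrfull_{1,3},\snrfull_{3,1}\}\big]$ for a $(K{+}1)$-dimensional standard Gaussian $\bo\epsilon$, and then applies the Hanson--Wright inequality together with $K=o(\mathsf{SNR}_0^2)$ to recover the exponential rate.
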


    In light of \eqref{eq: pairwise definition of SNR's} and \eqref{eq: claims in K components lower bound}, for each ${\bo \eta}^{(j)}$ with $j \in[M]$, we have  $$\mathsf{SNR}^{\mod}(\{\bo \theta^*_k\}_{k\in[K]},\{\mb \Sigma_k\}_{k\in[K]}) = \mathsf{SNR}_0^{\mod}$$ and 
    $$\snrfull(\{\bo \theta^*_k\}_{k\in[K]},\{\mb \Sigma_k\}_{k\in[K]}) \geq \alpha \mathsf{SNR}_0^{\mod},$$
    and ${\bo \eta}^{(j)}$ is therefore contained in $\tilde{\mb \Theta}_{\alpha, K}$. 

    \emph{Step 3.3*. } 
    As sketched above, applying Proposition~\ref{proposition: lower bound simplification} to $L_{{\bo \eta}^{(j_1)}} + L_{{\bo \eta}^{(j_2)}}$ for $j_1\neq j_2\in[M]$
    gives that 
    \longeq{
        & L_{{\bo \eta}^{(j_1)}} + L_{{\bo \eta}^{(j_2)}} \geq  \int_{ \frac{\mathrm d \bb P_{\bo\theta_2^*, \mb \Sigma_2^{(j_1)} }}{\mathrm d \bb P_{\bo\theta_1^* , \mb \Sigma_1^{(j_1)} }}\leq \frac{1}{2},\frac{\mathrm d \bb P_{\bo\theta_1^*, \mb \Sigma_1^{(j_2)}}}{\mathrm d \bb P_{\bo\theta_2^*, \mb \Sigma_2^{(j_2)}}}\leq \frac{1}{2} } \min\{p_{\bo\theta_1^*, \mb \Sigma_1^{(j_1)}},p_{\bo\theta_2^*, \mb \Sigma_2^{(j_2)}} \} \mathrm dx \\ 
        & \qquad  + \int_{ \frac{\mathrm d \bb P_{\bo\theta_1^*, \mb \Sigma_1^{(j_1)}}}{\mathrm d \bb P_{\bo\theta_2^*, \mb \Sigma_2^{(j_1)}}}\leq \frac{1}{2}, \frac{\mathrm d \bb P_{\bo\theta_2^*, \mb \Sigma_2^{(j_2)}}}{\mathrm d \bb P_{\bo\theta_1^*, \mb \Sigma_1^{(j_2)}}}\leq \frac{1}{2}} \min\{p_{\bo\theta_2^*, \mb \Sigma_2^{(j_1)}},p_{\bo\theta_1^*, \mb \Sigma_1^{(j_2)}} \} \mathrm dx.
        \label{eq: lower bound for two components out of K components}
    }
    
    Next, we shall parse the inequality \eqref{eq: lower bound for two components out of K components} by considering the following region: 
    \begin{align}
        & R^{(j_1,j_2)}_K \\ 
        \coloneqq  & \big\{ 
        \mb V^*(\mb w_* + \bo \triangle_1) 
        + \mb V_\perp^* \mb V^{(j_1,j_2)} \big({\mb V^{(j_1,j_2)}}\t (\mb z_*^{1,(j_1)} + \mb z_*^{2,(j_2)}) + \bo \triangle_2 \big) + \mb V_\perp^{(j_1,j_2)}\bo \triangle_3, \\ 
        & \quad \norm{ \mc P_{1:2}(\bo \triangle_1)}_2 \leq \rho_1 \underline\sigma , \norm{\bo\triangle_2}_2 \leq \rho_2\underline\sigma, \bo \triangle_3 \in \bb R^{p-K-2}
        \big\}, 
    \end{align}
    where $\mc P_{1:2}(\mb x)$ denotes the first two entries of a vector $\mb x$ and $\rho_1,\rho_2$ are some positive constants, $\mb V^{(j_1,j_2)}\in O(p-K, 2)$ denotes an orthonormal matrix whose column space aligns with the one of $(\mb v^{(j_1)}, \mb v^{(j_2)})\in \bb R^{(p-K)\times 2}$, and $\mb V_\perp^{(j_1,j_2)}\in O(p, p-K - 2)$ denotes an orthonomal matrix perpendicular to $\big(\mb V^*, \mb V^*_\perp\mb V^{(j_1,j_2)}\big)$. Provided the region $R^{(j_1,j_2)}_K$, the following conditions serve as analogs of \textsc{Condition 1} and \textsc{Condition 2} in the proof of Theorem~\ref{theorem: gaussian lower bound}. 
    \begin{itemize}
        \item \textsc{Condition 1*}: $$\frac{\phi_{\bo \theta_2^*, \mb \Sigma_2^{(j_1)} }}{\phi_{\bo \theta_1^* , \mb \Sigma_1^{(j_1)} }}\leq \frac{1}{2}
        \text{~ and~ }
        \frac{\phi_{\bo \theta_1^*, \mb \Sigma_1^{(j_2)}}}{\phi_{\bo \theta_2^*, \mb \Sigma_2^{(j_2)}}}\leq \frac{1}{2}.$$ 
        \item \textsc{Condition 2*}: the minimum of $\phi_{\bo \theta_1^*, \mb \Sigma_1^{(j_1)}}$ and $\phi_{\bo \theta_2^*, \mb \Sigma_2^{(j_2)}}$ is lower bounded by 
        \longeq{
            &  \min \{\phi_{\bo \theta_1^*, \mb \Sigma_1^{(j_1)}}\big( \mb x\big), \phi_{\bo \theta_2^*, \mb \Sigma_2^{(j_2)}}\big(\mb x\big)\}\\ 
        \geq & \frac{1}{(2\pi)^2\underline \sigma^4}\exp\Big(-\frac{1}{2} \big(1 +  \frac{C^{\mathsf{density}}_1}{\mathsf{SNR}_0^{\mod}} + C^{\mathsf{density}}_2 \delta + C^{\mathsf{density}}_3 \frac{\underline\sigma^2 }{\tilde\sigma^2 } +C^{\mathsf{density}}_4 \frac{\log\big(\tilde{\sigma} / \underline\sigma\big)}{{\mathsf{SNR}_0^{\mod}}^2}\big){\mathsf{SNR}_0^{\mod}}^2\Big) \\ 
     & \cdot \frac{1}{(2\pi)^{\frac{p-K-2}{2}}\tilde\sigma^{p-K-2}}\exp\Big(- \frac{\big\|{\mb V^{(j_1,j_2)}_\perp}\t \mb x\Big\|_2^2}{2\tilde\sigma^2}\Big) \cdot \frac{1}{(2\pi)^{\frac{K-2}{2}} \underline \sigma^{K-2}} \exp(- \frac{\bignorm{{\mb V_{-2}^*}\t \mb x}_2^2}{2\underline\sigma^2})
        }
        for some constants $C^{\mathsf{density}}_i> 0 $, $i \in [4]$. 
    \end{itemize}

    We aim to verify the conditions above for every $\mb x \in R^{(j_1,j_2)}_K$. We denote by $\mb V^*_{2}$ and $\mb V^*_{-2}$ the first two columns and the last $K-2$ columns of $\mb V^*$, respectively.

    \medskip
    \subparagraph*{Verifying \textsc{Condition 1*}} 

    An observation is that the weight in the subspace spanned by the last $K-2$ columns of $\mb V^*$ (denoted by $\mb V^*_{-2}$) does not contribute to the likelihood ratio; to be specific, from the construction of the subset of the covariance matrices one can infer that 
    \begin{align}
        \frac{\mathrm d \bb P_{\bo \theta_1^*, \mb \Sigma_1^{(j)}}(\mb x_1 + \mb V^*_{-2}\mb x_2 )}{\mathrm d \bb P_{\bo \theta_2^*, \mb \Sigma_2^{(j)}}(\mb x_1 + \mb V^*_{-2} \mb x_2)} = \frac{\mathrm d \bb P_{\bo \theta_1^*, \mb \Sigma_1^{(j)}}(\mb x_1  )}{\mathrm d \bb P_{\bo \theta_2^*, \mb \Sigma_2^{(j)}}(\mb x_1 )} 
    \end{align} for every $\mb x_1 \in \bb R^{p}$ and $\mb x_2 \in \bb R^{K-2}$. Therefore, for an $\mb x \in R^{(j_1,j_2)}_K$, \textsc{Condition 1*} is equivalent to $\big(\mb V^*_2, \mb V^*_\perp\big)\t \mb x$ satisfying the likelihood ratio conditions
    \longeq{& 
    \frac{\phi_{\theta_2^{*, \text{new}}, \mb \Sigma_2^{(j_1), \text{new}} }}{\phi_{\theta_1^{*, \text{new}} , \mb \Sigma_1^{(j_1), \text{new}} }}\leq \frac{1}{2},\quad \frac{\phi_{\theta_1^{*, \text{new}}, \mb \Sigma_1^{(j_2), \text{new}}}}{\phi_{\theta_2^{*, \text{new}}, \mb \Sigma_2^{(j_2), \text{new}}}}\leq \frac{1}{2}
    \label{eq: likelihood ratio conditions in Condition 1*}
    }
    with new centers $\bo \theta_1^{*,\text{new}} \coloneqq \sqrt{2} \mathsf{SNR}^{\frac{1}{2}} \cdot (1,0, 0,\cdots,0)\t \in \bb R^{p-K+ 2}$ and $\bo \theta_2^{*,\text{new}} \coloneqq \sqrt{2} \mathsf{SNR}^{\frac{1}{2}} \cdot (0,1, 0,\cdots,0)\t \in \bb R^{p - K+ 2}$, and new covariance matrices $\mb \Sigma_1^{(j_1),\text{new}}$ and $\mb \Sigma_2^{(j_2),\text{new}}$ are given by
    \begin{align}
        & \mb \Sigma_1^{(j),\text{new}} = \mb \Sigma_2^{(j),\text{new}} \coloneqq  {\mb \Omega^{(j),\text{new}}}^{-1}, \\
        & \mb \Omega^{(j),\text{new}} \coloneqq \left(
            \begin{matrix}
                \frac{1}{\underline\sigma^2}\mb I_2 + \frac{\alpha'^2}{\underline\sigma^2} \hat {\mb w}_0\hat{\mb w}_0\t & \frac{\alpha'}{\underline\sigma^2} \hat {\mb w}_0{\mb v^{(j)}}\t \\ 
                 \frac{\alpha'}{\underline\sigma^2}\mb v^{(j)}{{}\hat{\mb w}_0}\t  & \frac{1}{\tilde\sigma^2}\big(\mb I_{p-K} - \mb v^{(j)}{\mb v^{(j)}}\t  \big)+ \frac{\mb v^{(j)}{\mb v^{(j)}}\t}{\underline\sigma^2} 
            \end{matrix}
        \right)\label{eq: new covariance matrix}
    \end{align}
    for $j \in \{ j_1, j_2\}$, where $\hat{\mb w}_0 \coloneqq \frac{1}{\sqrt{2}}(-1,1)\t$. 

    On the other hand, regarding the region $R^{(j_1,j_2)}_K$, its projected version $\hat R^{(j_1,j_2)}_K\coloneqq \{\mb V_2^*{\mb V^*_2}\t\mb x + \mb V_\perp^*{\mb V_\perp^*}\t \mb x: \mb x \in \bb R^p\}\subset \bb R^{p-K+2}$ is of the form of $R^{(j_1,j_2)}_K$ (two-component cases) considered in \eqref{eq: set R}.

    Treating the new centers, new covariance matrices, and the projected region $\hat R^{(j_1,j_2)}_K$ as the corresponding ones of two-component Gaussian mixtures, \eqref{eq: likelihood ratio conditions in Condition 1*} have been verified by the proof of \textsc{Condition 1} in Section~\ref{sec: proof of gaussian lower bound}. 

    \medskip
    \subparagraph*{Verifying \textsc{Condition 2*}}
    Similar to the proof of \textsc{Condition 1*}, we would like to reuse the proof of \textsc{Condition 2} by reducing it to the two-component case. Toward this, we make note that, for a random vector $\mb x$ obeying $\mc N(\bo \theta_1^{*}, \mb \Sigma_1^{(j_1)})$ or $\mc N(\bo \theta_2^{*}, \mb \Sigma_2^{(j_2)})$,  the decomposition $\mb x = \big(\mb V^*_2, \mb V_\perp^*\big)\big(\mb V^*_2, \mb V_\perp^*\big)\t \mb x + \mb V_{-2}^*{\mb V_{-2}^*}\t \mb x$ satisfies that $\big(\mb V^*_2, \mb V_\perp^*\big)\t \mb x$ is independent of ${\mb V_{-2}^*}\t \mb x$. For the former one $\big(\mb V^*_2, \mb V_\perp^*\big)\t \mb x$, the proof of Claim~\ref{claim: condition 1/2} gives the minimum of two probability density functions regarding $\big(\mb V^*_2, \mb V_\perp^*\big)\t \mb x$ as 
    \longeq{
        &\min \{\phi_{\bo \theta_1^{*,(j_1),\text{new}}, \mb \Sigma_1^{(j_1), \text{new}}}\big(\big(\mb V^*_2, \mb V_\perp^*\big)\t \mb x\big), \phi_{\bo \theta_2^{*,(j_2),\text{new}}, \mb \Sigma_2^{(j_2), \text{new}}}\big(\big(\mb V^*_2, \mb V_\perp^*\big)\t \mb x\big)\} \\ 
        \geq & \frac{1}{(2\pi)^2\underline \sigma^4}\exp\Big(-\frac{1}{2} \big(1 +  \frac{C^{\mathsf{density}}_1}{\mathsf{SNR}_0^{\mod}} + C^{\mathsf{density}}_2 \delta + C^{\mathsf{density}}_3 \frac{\underline\sigma^2 }{\tilde\sigma^2 } +C^{\mathsf{density}}_4 \frac{\log\big( \frac{\tilde\sigma}{\underline \sigma}\big)}{{\mathsf{SNR}_0^{\mod}}^2}\big){\mathsf{SNR}_0^{\mod}}^2\Big) \\ 
     & \cdot \frac{1}{(2\pi)^{\frac{p-K-2}{2}}\tilde\sigma^{p-K - 2}}\exp\Big(- \frac{\big\|{\mb V^{(j_1,j_2)}_\perp}\t \mb x\Big\|_2^2}{2\tilde\sigma^2}\Big)
    }
    for some constants $C^{\mathsf{density}}_i$, $i= 1,2,3,4$ that are determined by $\bar \sigma/ \underline\sigma, \rho_1, \rho_2, \alpha$. 
    Here, $\mb x_\perp^{(j_1,j_2)}$ denotes the vector ${\mb V_\perp^{(j_1,j_2)}}\t   \mb x$. On the other hand, we can see that ${\mb V_{-2}^*}\t \mb X $ is a standard multivariate Gaussian vector from the fact that ${\mb V_{-2}^*}\t \mb \Sigma^{(j)}_k \mb V_{-2}^* = \mb I_{p-K}$ for $k = 1,2$ and $j \in[M]$. In this way, the minimum of the two (full) density function regarding $\mb x$ is given by 
    \begin{align}
        & \min \{\phi_{\bo \theta_1^*, \mb \Sigma_1^{(j_1)}}\big( \mb x\big), \phi_{\bo \theta_2^*, \mb \Sigma_2^{(j_2)}}\big(\mb x\big)\}\\ 
        \geq & \frac{1}{(2\pi)^2\underline \sigma^4}\exp\Big(-\frac{1}{2} \big(1 +  \frac{C^{\mathsf{density}}_1}{\mathsf{SNR}_0} + C^{\mathsf{density}}_2 \delta + C^{\mathsf{density}}_3 \frac{\underline\sigma^2 }{\tilde\sigma^2 } +C^{\mathsf{density}}_4 \frac{\log\big( \frac{\tilde\sigma}{\underline \sigma}\big)}{{\mathsf{SNR}_0^{\mod}}^2}\big){\mathsf{SNR}_0^{\mod}}^2\Big) \\ 
     & \cdot \frac{1}{(2\pi)^{\frac{p-K-2}{2}}\tilde\sigma^{p-K-2}}\exp\Big(- \frac{\big\|{\mb V^{(j_1,j_2)}_\perp}\t \mb x\Big\|_2^2}{2\tilde\sigma^2}\Big) \cdot \frac{1}{(2\pi)^{\frac{K-2}{2}} \underline \sigma^{K-2}} \exp(- \frac{\bignorm{{\mb V_{-2}^*}\t \mb x}_2^2}{2\underline\sigma^2}).
    \end{align}
    We have thus verified \textsc{Condition 2*}.

    \medskip

    By \textsc{Condition 1*} and \textsc{Condition 2*}, the right-hand side of \eqref{eq: lower bound for two components out of K components} is lower bounded by 
    \begin{align}
        & \int_{R^{(j_1,j_2)}_K} \frac{1}{(2\pi)^2\underline\sigma^4 }\exp(-(1 + o(1))\frac{{\mathsf{SNR}^{\mathsf{mod}}}^2}{2}) \cdot \frac{1}{(2\pi)^{\frac{p-2}{2}}\tilde\sigma^{p-2}}\exp\big(-\frac{\bignorm{{\mb V_\perp^{(j_1,j_2)}}\t {\mb V_\perp^*}\t  \mb x}_2^2}{2\tilde\sigma^2} \big)\\ 
        & \cdot \frac{1}{(2\pi)^{\frac{K-2}{2}}\underline\sigma^{K-2}} \exp(- \frac{\bignorm{{\mb V_{-2}^*}\t \mb x}_2^2}{2\underline\sigma^2}) \mathrm d\mb x\\ 
        = & \frac{\pi^2 \rho_1^2 \rho_2^2}{(2\pi)^2}\exp(-(1 + o(1))\frac{{\mathsf{SNR}_0^{\mod}}^2}{2}) =  \exp(-(1 + o(1))\frac{{\mathsf{SNR}_0^{\mod}}^2}{2}),
    \end{align}
    by marginalizing out the variables ${\mb V_\perp^{(j_1,j_2)}}\t {\mb V_\perp^*}\t  \mb x$ and ${\mb V_{-2}^*}\t \mb x$. 

    \medskip
    \emph{Step 3.4*. }
    This step is almost parallel to Step 3.4 in the proof of Theorem~\ref{theorem: gaussian lower bound}. Firstly, 
    
    We note that the distributions of the components other than the first two components are the same across different $j\in[M]$. The KL-divergence between $\bar{\bb P}_{*,(j_1)}$ and $\bar{\bb P}_{*,(j_2)}$ thus turns out to be upper bounded by 
    \longeq{
        &\mathrm{KL}(\bar{\bb P}_{*,(j_1)}, \bar{\bb P}_{*,(j_2)}) \\ 
        \leq & n\cdot \mathrm{KL}\big(\frac{1}{2}
        \bb P_{\theta_1^{*,(j_1)}, \mb \Sigma_1^{(j_1)}} + \frac{1}{2}\bb P_{\theta_2^{*,(j_1)}, \mb \Sigma_2^{(j_1)}}, \frac{1}{2}\bb P_{\theta_1^{*,(j_2)}, \mb \Sigma_1^{(j_2)}} + \frac{1}{2}\bb P_{\theta_2^{*,(j_2)}, \mb \Sigma_2^{(j_2)}}\big)\\ 
        \leq & \frac{n}{2}\cdot \big(\mathrm{KL}( \bb P_{\theta_1^{*,(j_1)}, \mb \Sigma_1^{(j_1)}}, \bb P_{\theta_1^{*,(j_2)}, \mb \Sigma_1^{(j_2)}}) + \mathrm{KL}( \bb P_{\theta_2^{*,(j_1)}, \mb \Sigma_2^{(j_1)}}, \bb P_{\theta_2^{*,(j_2)}, \mb \Sigma_2^{(j_2)}}) \big), 
        \label{eq: upper bound for KL divergence in K components 1}
    }
    by applying the conditional property of KL divergence on each assignment. 

    Noticing that $|\mb \Sigma_k^{(j_1)}| = |\mb \Sigma_k^{(j_2)}|$ for $k = 1,2$ and $j_1,j_2\in[M]$, we invoke \eqref{eq: expression of KL} to obtain that 
    \begin{align}
        & \mathrm{KL}( \bb P_{\theta_k^{*,(j_1)}, \mb \Sigma_1^{(j_1)}}, \bb P_{\theta_k^{*,(j_2)}, \mb \Sigma_1^{(j_2)}})\leq \frac{1}{2} \mathrm{Tr}\big({\mb \Sigma_k^{(j_1)}}^{-1}\big(\mb \Sigma_k^{(j_1)} - \mb \Sigma_k^{(j_2)} \big) \big) \label{eq: upper bound for KL divergence in K components 2}
    \end{align}
    for $k=1,2$. The upper bound on the right-hand side can be accomplished in the same way as in Step 3.4 in the proof of Theorem~\ref{theorem: gaussian lower bound}. We omit the details for conciseness and give the conclusion that 
    \begin{align}
        & \frac{1}{2} \mathrm{Tr}\big({\mb \Sigma_k^{(j_1)}}^{-1}\big(\mb \Sigma_k^{(j_1)} - \mb \Sigma_k^{(j_2)} \big) \big) \leq C\frac{\tilde\sigma^2}{\underline\sigma^2},
        \label{eq: upper bound for KL divergence in K components 3}
    \end{align} 
    for $k\in[2]$, where $C$ is a constant related to $\alpha$. 

    Putting \eqref{eq: upper bound for KL divergence in K components 1}, \eqref{eq: upper bound for KL divergence in K components 2}, \eqref{eq: upper bound for KL divergence in K components 3} together, we conclude that 
    \begin{align}
        & \mathrm{KL}( \bb P_{\bo \theta_k^{*}, \mb \Sigma_1^{(j_1)}}, \bb P_{\bo \theta_k^{*}, \mb \Sigma_1^{(j_2)}})\leq n \cdot \frac{1}{2} \mathrm{Tr}\big({\mb \Sigma_k^{(j_1)}}^{-1}\big(\mb \Sigma_k^{(j_1)} - \mb \Sigma_k^{(j_2)} \big) \big) \leq \frac{ C n \tilde\sigma^2}{2\underline\sigma^2}. \label{eq: upper bound for KL divergence in K components 4}
    \end{align}

    \emph{Putting All Pieces Together. }
    Again, invoking \eqref{eq: log M lower bound with K components} and \eqref{eq: upper bound for KL divergence in K components 4}, we control the ratio between the KL divergence and $\log M$ by 
    \eq{
    \frac{\max_{j_1\neq j_2 \in[M]}\mathrm{KL}(\bar{\bb P}_{*,{\bo \eta}^{(j_1)}}, \bar{\bb P}_{*,{\bo \eta}^{(j_2)}}
    )}{\log M} 
    \leq \frac{\frac{ C n \tilde\sigma^2}{2\underline\sigma^2}}{c n \frac{\tilde\sigma^{2(1+\epsilon)}}{\bar \sigma^{2(1+ \epsilon) }} } \rightarrow 0. 
    }
    since $\tilde\sigma \rightarrow \infty$. To arrive at our final conclusion, we invoke \eqref{eq: corollary lower bound simplification} together with Lemma~\ref{lemma: fano's method} to obtain that 
    \eq{
        \inf_{\hat{\mb z}}\sup_{\bo \theta \in \mb \Theta} \bb Eh(\hat{\mb z}, \mb z^*) \geq \frac{1}{4\beta K}\exp\Big(-(1 + o(1) \frac{{\mathsf{SNR}_0^{\mod}}^2}{2}\Big)  =  \exp\big(-(1 + o(1))\frac{{\mathsf{SNR}_0^{\mod}}^2}{2} \big)
    }
    since $\log\beta\vee  K = o({\mathsf{SNR}_0^{\mod}}^2)$.

    \emph{Proof of Claim \ref{claim: SNR relations for K components}. }
    For the first part, we note that ${\mb V^{*\top}} \mb \Sigma_k^{(j)}\mb V^* = \mb I_K$ for every $k\in[2]$ and $j\in[M]$. Then the condition that 
    $$\mathsf{SNR}^{\mod}_{1,2}(\{\bo \theta_k^{*}\}_{k\in[K]}, \{\mb \Sigma_k^{(j)}\}_{k\in[K]}) = \mathsf{SNR}^{\mod}_{2,1}(\{\bo \theta_k^{*}\}_{k\in[K]}, \{\mb \Sigma_k^{(j)}\}_{k\in[K]})= \mathsf{SNR}^{\mod}_0$$ follows from the definition of $\mathsf{SNR}_{j_1,j_2}$. 

    Moreover, it is obvious that 
    $$
    {\mb V^{*\top}} \mb \Sigma_k \mb V^* = \frac{\underline\sigma^2}{{\alpha''}^2 }\mb I_K,\quad \text{for } k = 3, \cdots, K.
    $$
    A direct calculation based on the definition of $\mathsf{SNR}^{\mod}_{a,b}$ gives that 
    \begin{align}
        & \mathsf{SNR}^{\mod}_{a,b}(\{\bo \theta_k^{*}\}_{k\in[K]}, \{\mb \Sigma_k^{(j)}\}_{k\in[K]}) \\ 
        = & 
        \begin{cases}
            \dfrac{2\alpha''}{1 + \alpha''}\mathsf{SNR}^{\mod}_0 & \text{ if $a \in \{1,2\}$ and $b \in \{3,\cdots, K\}$ or $b \in \{1,2\}$ and $a \in \{3,\cdots, K\}$},\\[3mm]
                \alpha'' \mathsf{SNR}^{\mod}_0&  \text{ if $a,b\in \{3, \cdots, K\}$},
        \end{cases}
    \end{align}
    and thus proves the second part.

    In what follows, we shall verify the condition that 
    \eq{
    \label{eq: verify rbayes condition for k component case}
    -\log(\rbayes) = -\min_{a\neq b \in [K]}\log(\rbayes(\{\bo \theta_a^*, \bo \theta_b^*\}, \{\mb \Sigma_a^{(j)}, \mb \Sigma_b^{(j)}\})) \geq \alpha \mathsf{SNR}_0^{\mod}.
    }
    To this end, we separately analyze the cases where $a = 1, b = 2$ (or equivalently $a = 2, b = 1$) and where $a \in \{3,\cdots, K\}$ ($b \in \{3,\cdots, K\}$).

    To begin with, we invoke Proposition~\ref{proposition: SNR' and Bayesian oracle risk 1} to relate $\rbayes$ and $\snrfull$ for the first case $a = 1, b=2$: 
    \eq{ 
    \label{eq: rbayes relation for a = 1, b = 2}
        \rbayes(\{\bo \theta_1^*, \bo \theta_2^*\}, \{\mb \Sigma_1, \mb \Sigma_2\}) = \exp\Big(- (1 + o(1)) \frac{{\mathsf{SNR}^{\mathsf{full}}_{1,2}}^2 }{2}\Big). 
    }

    Then we leverage the intermediate result from the proof of Theorem~\ref{theorem: gaussian lower bound}, in light of a reduction argument. Precisely, we notice that  
    \begin{align}
        & {\mathsf{SNR}^{\mathsf{full}}_{1,2}}(\{\bo \theta_k^{*}\}_{k\in[K]}, \{\mb \Sigma_k^{(j)}\}_{k\in[K]}) = {\mathsf{SNR}^{\mathsf{full}}_{2,1}}(\{\bo \theta_k^{*}\}_{k\in[K]}, \{\mb \Sigma_k^{(j)}\}_{k\in[K]}) \\ 
        =& \snrfull(\{\bo \theta_1^{*,\text{new}},\bo \theta_2^{*,\text{new}}\}, \{\mb \Sigma_1^{(j),\text{new}}, \mb \Sigma_2^{(j),\text{new}}\}), 
    \end{align}
    where the second equality holds by an observation that the minimizer $\mb x$ in the definition of $\snr^{\mathsf{full}}_{1,2}(\{\bo \theta_k^{*}\}_{k\in[K]}, \{\mb \Sigma_k^{(j)}\}_{k\in[K]})$ must satisfy ${\mb V_{-2}^*}\t \mb x =\mb 0$ since 
    \eq{\mb x\t {\mb \Sigma_k^{(j)}}^{-1} \mb x \geq \big((\mb I_p - \mb V_{-2}^*{\mb V_{-2}^*}\t )\mb x\big)\t {\mb \Sigma_k^{(j)}}^{-1}\big((\mb I_p - \mb V_{-2}^*{\mb V_{-2}^*}\t )\mb x\big)}
    holds for every $\mb x \in \bb R^{p}$. 
    Moreover, we can see that the forms of the new centers and covariance matrices are the same as the ones considered in the proof of Theorem \ref{theorem: gaussian lower bound} (\eqref{eq: centers and covariances in the packing} and \eqref{eq: definition of Omega_k}). Therefore, the derivation in the part ``Verifying the conditions in $\tilde{\mb \Theta}_\alpha$'' also implies that \eq{
        \snrfull(\{\bo \theta_1^{*,\text{new}},\bo \theta_2^{*,\text{new}}\}, \{\mb \Sigma_1^{(j),\text{new}}, \mb \Sigma_2^{(j),\text{new}}\})\geq 2\alpha \mathsf{SNR}_0^{\mod}.
        } 
    by invoking $\alpha' = 12\alpha$. 
    This together with \eqref{eq: rbayes relation for a = 1, b = 2} in turn verifies the condition that 
    $$
    -\log(\rbayes(\{\bo \theta_1^*, \bo \theta_2^*\}, \{\mb \Sigma_1, \mb \Sigma_2\})) \geq \alpha^2  \frac{{\mathsf{SNR}_0^{\mod}}^2}{2}
    $$
    for every sufficiently large $n$. 

    What remains to be solved is the cases involving at least one component with the covariance matrix \eqref{eq: trivial omega}. We start by discussing the relation between $\snr^{\mathsf{full}}_{a,b}$ and $\mathsf{SNR}_0^{\mod}$. We first assume that $a$ is equal to $3$ and $b$ is arbitrary in $[K]$. We shall verify that $\snr^{\mathsf{full}}_{3,b}(\{\bo \theta_k^{*}\}_{k\in[K]}, \{\mb \Sigma_k^{(j)}\}_{k\in[K]}) \geq \frac{3}{2} \alpha \mathsf{SNR}^{\mod}_0$ by showing that every $\mb x$ that is too close to $\bo \theta_3^*$
    is not able to satisfy the equation in the definition of $\snr^{\mathsf{full}}_{3,b}(\{\bo \theta_k^{*}\}_{k\in[K]}, \{\mb \Sigma_k^{(j)}\}_{k\in[K]})$. 
    ; to be more specific, we aim to show that 
    \begin{align}
        & (\mb x - \bo \theta_3^*)\t {\mb \Sigma_3^{(j)}}^{-1}(\mb x - \bo \theta_3^*) + \log|\mb \Sigma_3^{(j)}| - \log|\mb \Sigma_b^{(j)}|> \frac{9}{4}\alpha^2 {\mathsf{SNR}_0^{\mod}}^2\label{eq: contradicting condition for 3,b}
    \end{align}
    holds for every $\mb x$ with 
    \eq{
        (\mb x - \bo \theta_b^*)\t {\mb \Sigma_b^{(j)}}^{-1}(\mb x - \bo \theta_b^*) < \frac94\alpha^2{\mathsf{SNR}_0^{\mod}}^2.\label{eq: condition for 3,b}
    }
    By substituting $\alpha'' = \frac{8\alpha}{2 - \frac{3}{2}\alpha}\geq 8\alpha$ (recall that $1<\alpha<\frac{4}{3}$), we first note that \eqref{eq: condition for 3,b} implies that \begin{align}
        &\bignorm{{\mb V^{*\top}} \big(\mb x -\bo \theta_b^*\big)}_2 <\frac{3\alpha}{2\alpha''}\mathsf{SNR}_0^{\mod} \leq  \frac{1}{4}\mathsf{SNR}_0^{\mod}.\label{eq: upper bound for x - theta_3}
    \end{align}

    Now we discuss the following two cases: 
    \begin{itemize}
        \item{$b = 4, \cdots, K$.} In this case, it is straightforward to see that $\log|\mb \Sigma_b^{(j)}| = \log|\mb \Sigma_3^{(j)}|$. Moreover, the triangle inequality implies that $\bignorm{{\mb V^{*\top}}\big(\mb x - \bo \theta_3^*\big)}_2 \geq 2\mathsf{SNR}_0^{\mod} - \bignorm{{\mb V^{*\top}}\big(\mb x - \bo \theta_b^*\big)}_2 \geq \frac{7}{4}\mathsf{SNR}_0^{\mod}$. Then we conclude that  
        \eq{
            (\mb x - \bo \theta_3^*)\t {\mb \Sigma_3^{(j)}}^{-1}(\mb x - \bo \theta_3^*) + \log|\mb \Sigma_3^{(j)}| - \log|\mb \Sigma_b^{(j)}| \geq \frac{49}{16}{\alpha''}^2 {\mathsf{SNR}_0^{\mod}}^2 > \frac{9}{4}\alpha^2 {\mathsf{SNR}_0^{\mod}}^2.
        }
        and thus 
        \eq{\label{eq: snrfull 3 b}
        \snr^{\mathsf{full}}_{3,b} = \snr^{\mathsf{full}}_{b,3} \geq \frac{3}{2}\alpha\mathsf{SNR}_0. }
        \item{$b= 1,2$. } Without loss of generality, we let $b$ be $1$. Noticing that ${\mb \Sigma_1^{(j)}}^{-1} \succ \mb V^* {\mb V^{*\top}}$, \eqref{eq: condition for 3,b} implies that
        \eq{
            \bignorm{{\mb V^{*\top}}(\mb x - \bo \theta_1^{*, (j)})}_2 < \frac{3}{2}\alpha \mathsf{SNR}_0^{\mod},
        }
        which in turn yields that 
        \eq{
             \bignorm{{\mb V^{*\top}}(\mb x - \bo \theta_3^{*, (j)})}_2 \geq (2 - \frac{3}{2}\alpha)\mathsf{SNR}_0^{\mod}. 
        }

        By the definition of $\alpha''$, we then have 
        \eq{
            (\mb x - \bo \theta_3^*)\t {\mb \Sigma_3^{(j)}}^{-1}(\mb x - \bo \theta_3^*) \geq {\alpha''}^2(2-\alpha)^2 {\mathsf{SNR}_0^{\mod}}^2= 64\alpha^2 {\mathsf{SNR}_0^{\mod}}^2.\label{eq: lower bound for quadratic term with b = 1}
        }

        On the other hand, we compute the difference of the log determinants as follows:  
        \longeq{
            &\log|\mb \Sigma_3^{(j)}| - \log|\mb \Sigma_1^{(j)}| = - \log|\mb \Omega_3^{(j)}|+ \log|\mb \Omega_1^{(j)}| \\ 
            =&  - \log|(\mb V^*, \mb V^*_\perp\mb v^{(j)})\t \mb \Omega_3^{(j)}(\mb V^*, \mb V^*_\perp \mb v^{(j)})|+ \log|(\mb V^*, \mb V^*_\perp\mb v^{(j)})\t \mb \Omega_1^{(j)}(\mb V^*, \mb V^*_\perp \mb v^{(j)})| \\ 
            \geq&  -(K+1)\log\Big(\frac{\bignorm{(\mb V^*, \mb V^*_\perp \mb v^{(j)})\t \mb \Omega_3^{(j)}(\mb V^*, \mb V^*_\perp \mb v^{(j)})}}{\sigma_{\min}\big((\mb V^*, \mb V^*_\perp \mb v^{(j)})\t \mb \Omega_1^{(j)}(\mb V^*, \mb V^*_\perp\mb v^{(j)})\big)}\Big) \\ 
            \geq &- 2(K+1)\log(\alpha'')\\ 
            \geq &-\alpha^2 {\mathsf{SNR}_0^{\mod}}^2,
            \label{eq: lower bound for log det}
        }
        where the last inequality holds for every sufficiently large $n$, given $K  =o({\mathsf{SNR}_0^{\mod}}^2 )$. 
        Combining \eqref{eq: lower bound for quadratic term with b = 1} with \eqref{eq: lower bound for log det}, we verify the inequality \eqref{eq: contradicting condition for 3,b} and conclude that $\mathsf{SNR}^{\mathsf{full}}_{3,1} \geq \frac{3}{2} \alpha \mathsf{SNR}_0^{\mod}$ for every sufficiently large $n$. Further, it is obvious from the definition that ${\mathsf{SNR}^{\mathsf{full}}_{1,3}}^2 \geq {\mathsf{SNR}^{\mathsf{full}}_{1,3}}^2 - \big|\log|\mb \Sigma_3^{(j)}| - \log|\mb \Sigma_1^{(j)}|\big|  $. From the derivation of \eqref{eq: lower bound for log det}, we also have $\snr^{\mathsf{full}}_{1,3} \geq \frac{5}{4}\alpha \mathsf{SNR}_0^{\mod}$ for every sufficiently large $n$. 
    \end{itemize}

    Given the above characterization, we verify the condition $-\log(\rbayes(\{\bo \theta_a^*, \bo \theta_b^*\}, \{\mb \Sigma_a,\mb \Sigma_b\}))\geq \alpha^2 \frac{{\mathsf{SNR}_0^{\mod}}^2}{2}$ for $a$ or $b$ in $\{3, \cdots, K\}$ and every sufficiently large $n$ in the following:
    \begin{itemize}
        \item If $a\neq b\in\{3, \cdots, K\}$, combining \eqref{eq: snrfull 3 b} with Proposition~\ref{proposition: SNR' and Bayesian oracle risk 1} directly leads to the conclusion. 
        \item If $a \in \{1,2\}, b\in \{3, \cdots, K\}$, we look into the form of $\rbayes$ and have
        \begin{align}
        &\rbayes(\{\bo \theta_1^*, \bo \theta_3^*\}, \{\mb \Sigma_1, \mb \Sigma_3\}) \leq \bb P\Big[\norm{\bo \epsilon}_2 \geq  \min\{\snr^{\mathsf{full}}_{1,3}, \snr^{\mathsf{full}}_{3,1}\} \Big]\\ 
        \leq & \exp\Big(- (1+o(1))\frac{\min\{{\snr^{\mathsf{full}}_{1,3}}^2, {\snr^{\mathsf{full}}_{3,1}}^2\}}{2}\Big),
        \end{align}
        where $\epsilon$ is a $(K+1)$-dimensional standard Gaussian vector. Here the last inequality holds because of the Hanson-Wright inequality together with the condition that $K = o({\mathsf{SNR}_0^{\mod}}^2) $. Recall that we have proved $\min\{\snr^{\mathsf{full}}_{1,3}, \snr^{\mathsf{full}}_{3,1}\} \geq \frac{5}{4}\mathsf{SNR}_0$ holds for every sufficiently large $n$. Consequently, $-\log(\rbayes(\{\bo \theta_1^*, \bo \theta_3^*\}, \{\mb \Sigma_1,\mb \Sigma_3\}))\geq \alpha^2 \frac{{\mathsf{SNR}_0^{\mod}}^2}{2}$ holds for every sufficiently large $n$. 
    \end{itemize}

    \subsection{Proof of Corollary~\ref{corollary: simple gaussian lower bound}}

    Following the notations and the reduction scheme in \emph{Step 1} of Theorem~\ref{theorem: gaussian lower bound}'s proof and assuming $1 \neq \mc B$ without loss of generality, we can similarly obtain a relation bridging the minimax risk with the Bayesian risk $\rbayes$: 
    \begin{align}
        & \inf_{\hat{\mb z}} \sup_{(\mb z, \bo\eta)\in \mb \Theta} \bb E h(\hat {\mb z}, \mb z^*)  
    =  \inf_{\hat{\mb z}} \sup_{(\bo\theta^*_1, \bo \theta^*_2, \mb \Sigma_1, \mb \Sigma_2) \in \tilde{\mb \Theta}_\alpha} \sup_{\mb z^* \in \mb \Theta_z}\bb E h(\hat {\mb z}, \mb z^*) \\
    \geq &\inf_{\hat{\mb z}} \sup_{(\bo\theta^*_1, \bo \theta^*_2, \mb \Sigma_1, \mb \Sigma_2) \in \tilde{\mb \Theta}_\alpha} \frac{1}{|\mb Z_{\mc B}|} \sum_{\mb z^*\in \mb Z_{\mc B}} \Big( \frac{1}{n} \sum_{i \in \mc B^\complement} \bb P[\hat z_i \neq z_i^*] \Big)  \\
    \geq &\frac{1}{4\beta }\inf_{\hat{\mb z}} \sup_{(\bo\theta^*_1, \bo \theta^*_2, \mb \Sigma_1, \mb \Sigma_2) \in \tilde{\mb \Theta}_\alpha} \frac{1}{|\mb Z_{\mc B}|} \sum_{\mb z^*\in \mb Z_{\mc B}} \Big( \frac{1}{|\mc  B^{\complement}|}\sum_{i \in \mc B^\complement} \bb P[\hat z_i \neq z_i^*] \Big)  \\
    \geq &\frac{1}{4\beta } \sup_{(\bo\theta^*_1, \bo \theta^*_2, \mb \Sigma_1, \mb \Sigma_2) \in \tilde{\mb \Theta}_\alpha} \rbayes(\{\bo \theta_j^*\}_{j\in[2]}, \{ \mb \Sigma_j\}_{j\in[2]} ).\label{eq: simple lower bound simplification}
    \end{align}

    Then we focus on a group of easy-to-handle parameters $(\bo \theta_1^*, \bo \theta_2^*, \bar{ \mb \Sigma}_{1},\bar{\mb \Sigma}_2)$ to invoke Proposition~\ref{proposition: SNR' and Bayesian oracle risk 1}; it is easy to verify that $\tilde{\mb V} = \mb V_\perp^*$ satisfies the hypothesis in the second part of Proposition~\ref{proposition: SNR' and Bayesian oracle risk 1}. Combining \eqref{eq: simple lower bound simplification} with Proposition~\ref{proposition: SNR' and Bayesian oracle risk 1}, we therefore have 
    \begin{align}
        &  \inf_{\hat{\mb z}} \sup_{\theta \in \mb \Theta} \bb E h(\hat {\mb z}, \mb z^*) \geq  \frac{1}{4\beta }\rbayes (\{\bo \theta_j^*\}_{j\in[2]}, \{\bar{ \mb \Sigma}_{j}\}_{j\in[2]}) = \exp(-(1 + o(1))\frac{{\mathsf{SNR}_0^{\mod}}^2}{2}), 
    \end{align}
    since $\mathsf{SNR}^{\mod}_0\rightarrow \infty$ and $\frac{\log \beta}{{\mathsf{SNR}_0^{\mod}}^2} \rightarrow 0 $.

\subsection{Proof of Theorem~\ref{thm: lower bound regarding SNReh}}
\label{subsec: proof of thm lower bound regarding SNReh}
The spirit of the proof is aligned with the ones in \cite{ndaoud2018sharp,chen2021cutoff}, yet the concrete treatment differs from those due to the additional consideration of anisotropic covariance structures. To begin with, we introduce a uniform measure over $A_0\coloneqq \big\{\mb z^*: z_i^* \in \{1,2\},~ \forall 1 \leq i \leq \lceil \frac{n}{3}\rceil ; z_i^* = 1,~\forall~\lceil \frac{n}{3}\rceil +1 \leq  i \leq 2\lceil \frac{n}{3} \rceil ;~z_i^* = 2,~\forall~ 2 \lceil  \frac{n}{3}\rceil + 1\leq  i \leq n\big\}$, denoted by $\mc P_{\mb z}$. 
Moreover, we consider a measure 
$\mc P_{\bo \theta}$ of the centers $\{\bo \theta_k^*\}_{k\in[K]}$ such that $\bo \theta_1^*$ and $\bo \theta_2^*$ are independently drawn from $\mc N(0, \kappa_n^2\bo \Sigma^2)$. 
Building upon these priors, we
denote by $\pi_{\mc P_{\mb z} \otimes \mc P_{\bo \theta}| A}$ a conditional measure of $\mc P_{\mb z} \otimes \mc P_{\bo \theta}$ on the set 
\longeq{ 
& A \coloneqq \Big\{(\mb z^*, \{\bo \theta_k^*\}_{k\in[K]}): n_k(\mb z^*) \in \big[\big(1 - C\sqrt{\log n/ n}\big) \frac{n}{2}, \big(1 + C\sqrt{\log n /n }\big) \frac{n}{2}\big],~k=1,2; \\ 
& \qquad \qquad \mathsf{SNR}^{\mathsf{exc}}(\mb z^*, \{\bo \theta_k^*\}_{k\in[2]}, \{\bo \Sigma_k\}_{k\in[2]}) \geq \mathsf{SNR}^{\mathsf{exc}}_0\Big\}
\label{eq: event A}
} with some constant $C>0$, that is, $\bb P_{\pi_{\mc P_{\mb z} \otimes \mc P_{\bo \theta}| A}}[B] = \frac{\bb P_{\mc P_{\mb z} \otimes \mc P_{\bo \theta}}[A \cap B]}{ \bb P_{\mc P_{\mb z} \otimes \mc P_{\bo \theta}}[A]}$. Placing this prior on $(\mb z^*, \{\bo \theta^*_k\}_{k\in[2]})$, a straightforward calculation yields that 
\begin{align}
    & \inf_{\hat{\mb z}} \sup_{(\mb z^*, \{\bo \theta_k^*\}_{k\in[2]}, \{\bo \Sigma_k\}_{k\in[2]}) \in \bo \Theta^{\mathsf{exc}}} \bb E[h(\hat{\mb z},\mb z^*)] \geq \inf_{\hat{\mb z}} \bb E_{(\mb z^*, \{\bo \theta^*_k\}_{k\in[2]}) \sim \pi_{\mc P_{\mb z} \otimes \mc P_{\bo \theta}| A}}[h(\hat{\mb z}, \mb z^*)] \\ 
    \geq & \inf_{\hat{\mb z}} \bb E_{\mc P_{\mb z}\otimes \mc P_{\bo \theta}}\Big[h(\hat{\mb z}, \mb z^*)\cdot \ind\{(\mb z^*,\{\bo \theta^*_k\}_{k\in[2]})  \in A \} \Big] / \bb P_{\mc P_{\mb z}\otimes \mc P_{\bo \theta}}[A] \\ 
    \stackrel{(1)}{\geq} & \inf_{\hat{\mb z}} \bb E_{\mc P_{\mb z}\otimes \mc P_{\bo \theta}}\Big[\big(\frac{1}{n}\sum_{1\leq i \leq \lceil\frac{2n}{3K}\rceil} \ind\{\hat z_i \neq z_i^* \} \big)\cdot \ind\{(\mb z^*,\{\bo \theta^*_k\}_{k\in[2]})  \in A \} \Big] / \bb P_{\mc P_{\mb z}\otimes \mc P_{\bo \theta}}[A] \\ 
    \geq &  \frac{1}{\bb P_{\mc P_{\mb z}\otimes \mc P_{\bo \theta}}[A]} \inf_{\hat{\mb z}} \bb E_{\mc P_{\mb z}\otimes \mc P_{\bo \theta}}\Big[\frac{1}{n}\sum_{1\leq i \leq \lceil\frac{2n}{3K}\rceil} \ind\{\hat z_i \neq z_i^* \}  \Big]  -  \frac{\bb P_{\mc P_{\mb z}\otimes \mc P_{\bo \theta}}[A^{\complement} ]}{\bb P_{\mc P_{\mb z}\otimes \mc P_{\bo \theta}}[A]} \\ 
    \geq & \frac{1}{\bb P_{\mc P_{\mb z}\otimes \mc P_{\bo \theta}}[A]}  \cdot  \frac{1}{n}\cdot \sum_{1\leq i \leq \lceil\frac{n}{3K}\rceil} \inf_{\hat z_i}\bb E_{\mc P_{\mb z}\otimes \mc P_{\bo \theta}}\big[ \ind\{\hat z_i \neq z_i^* \}  \big]  -  \frac{\bb P_{\mc P_{\mb z}\otimes \mc P_{\bo \theta}}[A^{\complement} ]}{\bb P_{\mc P_{\mb z}\otimes \mc P_{\bo \theta}}[A]}  \\ 
    \geq &  \frac{1}{\bb P_{\mc P_{\mb z}\otimes \mc P_{\bo \theta}}[A]} \cdot  \frac{\lceil\frac{n}{3K}\rceil}{n} \cdot  \inf_{\hat z_1}\bb E_{\mc P_{\mb z}\otimes \mc P_{\bo \theta}}\big[ \ind\{\hat z_1 \neq z_1^* \}  \big]  -  \frac{\bb P_{\mc P_{\mb z}\otimes \mc P_{\bo \theta}}[A^{\complement} ]}{\bb P_{\mc P_{\mb z}\otimes \mc P_{\bo \theta}}[A]}, 
    \label{eq: lower bounding the minimax rate}
\end{align}
where the inequality (1) holds since we are able to fix the optimal permutation by always assigning the $\lceil \frac{n}{3}\rceil + 1$-th to $\lceil \frac{2n}{3} \rceil$-th through the first cluster and the remaining samples (from $\lceil \frac{2n}{3} \rceil + 1$-th to $n$-th) to the second cluster. 

To proceed, we have the following claim that lower bounds $\bb P_{\mc P_{\mb z} \otimes \mc P_{\bo \theta}}\big[A\big]$, whose proof is deferred to the end of this part.  
\begin{claim}
    \label{claim: lower bound on P[A]}
    When choosing $\kappa_n = \sqrt{2\mathsf{SNR}_0^{\mathsf{exc}}} \big(n\tr(\bo\Sigma^2)\big)^{-\frac14} \big(1 + C_\kappa \frac{\sqrt{\log n}\norm{\bo \Sigma}}{\sqrt{\tr(\bo \Sigma^2)}}\big)$ for some sufficiently large constant $C_\kappa$, one has for some constant $C_A$ that 
    $$\bb P_{\mc P_{\mb z} \otimes \mc P_{\bo \theta}}\big[A\big] \geq 1 - n^{-C_A}. $$
\end{claim} 
We note in passing that the choice of $\kappa_n$ together with the conditions in Theorem~\ref{thm: lower bound regarding SNReh} implies that $\kappa_n \ll \frac{1}{\sqrt{n}\norm{\bo \Sigma}^{\frac12}}$, which will be used later. 

As a consequence of Claim~\ref{claim: lower bound on P[A]}, \eqref{eq: lower bounding the minimax rate} is further written as 
\begin{align}
    & \inf_{\hat{\mb z}} \sup_{(\mb z^*, \{\bo \theta_k^*\}_{k\in[2]}, \{\bo \Sigma_k\}_{k\in[2]}) \in \bo \Theta^{\mathsf{exc}}} \bb E[h(\hat{\mb z},\mb z^*)] \geq \frac{1}{3K}\cdot \inf_{\hat z_1}\bb E_{\mc P_{\mb z}\otimes \mc P_{\bo \theta}}\big[ \ind\{\hat z_1 \neq z_1^* \}  \big]  - 2n^{-C_A}. 
\end{align}

In line with the approach in \cite{ndaoud2018sharp}, we analyze $\inf_{\hat z_1}\bb E_{\mc P_{\mb z}\otimes \mc P_{\bo \theta}}\big[ \ind\{\hat z_1 \neq z_1^* \}  \big] $ via the likelihood obtained by marginalizing out the centers $\bo \theta_1^*$ and $\bo \theta_2^*$. Specifically, given an assignment $\mb z$, the marginal likelihood of $\mb Y$ is written as 
\begin{align}
     & p_{\mb z}(\mb Y)  \propto \int \exp\bigg\{-\frac{1}{2}\sum_{i\in[n]} (\mb y_i - \bo \theta_{z_i} )\t \bo \Sigma^{-1} (\mb y_i - \bo \theta_{z_i} ) - \frac{{\bo \theta_1^*}\t {\bo \Sigma}^{-2} \bo \theta_1^* + {\bo \theta_2^*}\t {\bo \Sigma}^{-2} \bo \theta_2^*}{2 \kappa_n^2 } \bigg\} \mathrm d \bo \theta_1^* \mathrm d\bo \theta_2^*  \\ 
    \propto & \exp\bigg\{-\frac{1}{2} \sum_{i\in[n]}\mb y_i\t \bo \Sigma^{-1} \mb y_i + \frac{1}{2} \big(\sum_{i: z_i = 1} \mb y_i\big)\t \bo \Sigma^{-1}\Big(n_1(\mb z)\bo \Sigma^{-1} + \kappa_n^{-2} \bo \Sigma^{-2} \Big)^{-1} \bo \Sigma^{-1} \big(\sum_{i: z_i = 1} \mb y_i\big)\\ 
    & \qquad \qquad+ \frac{1}{2} \big(\sum_{i: z_i = 2} \mb y_i\big)\t \bo \Sigma^{-1}\Big(n_2(\mb z)\bo \Sigma^{-1} + \kappa_n^{-2} \bo \Sigma^{-2} \Big)^{-1} \bo \Sigma^{-1} \big(\sum_{i: z_i = 2} \mb y_i\big)  \bigg\}. 
\end{align}

Now fix an assignment $\mb z^0 \in \{1,2\}^{n-1}$ for the second through the $n$-th samples. We define $\mb z^1 \coloneqq (1, \mb z^0)$ and $\mb z^2 \coloneqq (2, \mb z^0)$. With these definitions, the log-likelihood difference can be decomposed as
\begin{align}
    & \log p_{\mb z^1}(\mb Y) - \log p_{z^2}(\mb Y) = \alpha_1 + \alpha_2 + \alpha_3, 
\end{align}
where $\alpha_i,~i =1,2,3$ are defined below (we use $\otimes$ to denote the Kronecker product): 
\begin{align}
    \alpha_1 \coloneqq &  \mb y_1\t \bo \Sigma^{-1}\big(\bo \Sigma^{-1} + \frac{2}{n}\kappa_n^{-2}\bo\Sigma^{-2} \big)^{-1}\bo \Sigma^{-1} \Big(\frac{\sum_{i\geq 2: z^1_i = 1} \mb y_i}{n/2} - \frac{\sum_{i\geq 2: z^1_i = 2} \mb y_i}{n/2}\Big), \\
     \alpha_2 \coloneqq &\Bigg\{ \frac{1}{2} \Big(\sum_{i: z^1_i = 1} \mb y_i\Big)\t \bo \Sigma^{-1}\bigg[\Big(\frac{n_1(\mb z^1)}{n/2}\bo \Sigma^{-1} + \frac{2}{n}\kappa_n^{-2} \bo \Sigma^{-2} \Big)^{-1} -\Big(\bo \Sigma^{-1} + \frac{2}{n}\kappa_n^{-2} \bo \Sigma^{-2} \Big)^{-1}\bigg] \bo \Sigma^{-1}\Big( \frac{\sum_{i: z^1_i = 1} \mb y_i}{n/2}\Big) \Bigg\} \\ 
     - & \Bigg\{ \frac{1}{2} \Big(\sum_{i: z^2_i = 1} \mb y_i\Big)\t \bo \Sigma^{-1}\bigg[\Big(\frac{n_1(\mb z^2)}{n/2}\bo \Sigma^{-1} + \frac{2}{n}\kappa_n^{-2} \bo \Sigma^{-2} \Big)^{-1} -\Big(\bo \Sigma^{-1} + \frac{2}{n}\kappa_n^{-2} \bo \Sigma^{-2} \Big)^{-1}\bigg] \bo \Sigma^{-1}\Big( \frac{\sum_{i: z^2_i = 1} \mb y_i}{n/2}\Big) \Bigg\} \\ 
     + &\Bigg\{ \frac{1}{2} \Big(\sum_{i: z^1_i = 2} \mb y_i\Big)\t \bo \Sigma^{-1}\bigg[\Big(\frac{n_2(\mb z^1)}{n/2}\bo \Sigma^{-1} + \frac{2}{n}\kappa_n^{-2} \bo \Sigma^{-2} \Big)^{-1} -\Big(\bo \Sigma^{-1} + \frac{2}{n}\kappa_n^{-2} \bo \Sigma^{-2} \Big)^{-1}\bigg] \bo \Sigma^{-1}\Big( \frac{\sum_{i: z^1_i = 2} \mb y_i}{n/2}\Big) \Bigg\} \\ 
     - &\Bigg\{ \frac{1}{2} \Big(\sum_{i: z^2_i = 2} \mb y_i\Big)\t \bo \Sigma^{-1}\bigg[\Big(\frac{n_2(\mb z^2)}{n/2}\bo \Sigma^{-1} + \frac{2}{n}\kappa_n^{-2} \bo \Sigma^{-2} \Big)^{-1} -\Big(\bo \Sigma^{-1} + \frac{2}{n}\kappa_n^{-2} \bo \Sigma^{-2} \Big)^{-1}\bigg] \bo \Sigma^{-1}\Big( \frac{\sum_{i: z^2_i = 2} \mb y_i}{n/2}\Big) \Bigg\}, \\ 
    \alpha_3 \coloneqq &  \log\left|\mb \Sigma^{-1} \otimes \mb I_{n_1(\bo z^1)} + \Big(\bo \Sigma^{-1}\Big(n_1(\mb z^1)\bo \Sigma^{-1} + \kappa_n^{-2} \bo \Sigma^{-2} \Big)^{-1} \bo \Sigma^{-1} \Big) \otimes (\mb 1_{n_1(\mb z^1)} \mb 1_{n_1(\mb z^1)}\t) \right| \\
    & + \log\left|\mb \Sigma^{-1} \otimes \mb I_{n_2(\bo z^1)} + \Big(\bo \Sigma^{-1}\Big(n_2(\mb z^1)\bo \Sigma^{-1} + \kappa_n^{-2} \bo \Sigma^{-2} \Big)^{-1} \bo \Sigma^{-1} \Big) \otimes (\mb 1_{n_2(\mb z^1)} \mb 1_{n_2(\mb z^1)}\t) \right| \\
    & - \log\left|\mb \Sigma^{-1} \otimes \mb I_{n_1(\bo z^1)} + \Big(\bo \Sigma^{-1}\Big(n_1(\mb z^1)\bo \Sigma^{-1} + \kappa_n^{-2} \bo \Sigma^{-2} \Big)^{-1} \bo \Sigma^{-1} \Big) \otimes (\mb 1_{n_1(\mb z^2)} \mb 1_{n_1(\mb z^2)}\t) \right| \\
    &-   \log\left|\mb \Sigma^{-1} \otimes \mb I_{n_2(\bo z^2)} + \Big(\bo \Sigma^{-1}\Big(n_2(\mb z^2)\bo \Sigma^{-1} + \kappa_n^{-2} \bo \Sigma^{-2} \Big)^{-1} \bo \Sigma^{-1} \Big) \otimes (\mb 1_{n_2(\mb z^2)} \mb 1_{n_2(\mb z^2)}\t) \right|. 
    \label{eq: decomposition of log-difference}
\end{align} 

In the analysis that follows, we show that the value of $\alpha_1$ primarily determines the sign of the log-likelihood difference $\log p_{\mb z^1}(\mb Y) - \log p_{\mb z^2}(\mb Y)$ for any fixed $\mb z^1$ that satisfies the conditions in the event $A$. A useful observation is that the conditions $(\snr^{\exc}_0)^{\frac12} (n / \tr(\bo \Sigma^2)^{\frac14} \norm{\bo \Sigma}^{\frac12} \ll 1$ and $\snr^{\exc}_0 \rightarrow \infty$ together implies $\sqrt{n}\norm{\bo \Sigma} = o ( \tr(\bo \Sigma^2)^{\frac12})$, a relation will be repeatedly invoked.

We begin by controlling the magnitude of $\alpha_2$. 
\bigskip

\emph{Upper Bound for $\alpha_2$. }
Suppose that $\mb Y = (\mb y_1, \cdots, \mb y_n)\t \in \bb R^{n \times p}$ obeys the distribution under the assignment $\mb z^1$. To upper bound the first two terms in $\alpha_2$, observe that
\begin{align}
    & \Bigg| \frac{1}{2} \Big(\sum_{i: z^1_i = 1} \mb y_i\Big)\t \bo \Sigma^{-1}\bigg[\Big(\frac{n_1(\mb z^1)}{n/2}\bo \Sigma^{-1} + \frac{2}{n}\kappa_n^{-2} \bo \Sigma^{-2} \Big)^{-1} -\Big(\bo \Sigma^{-1} + \frac{2}{n}\kappa_n^{-2} \bo \Sigma^{-2} \Big)^{-1}\bigg] \bo \Sigma^{-1}\Big( \frac{\sum_{i: z^1_i = 1} \mb y_i}{n/2}\Big)  \\ 
    & - \frac{1}{2} \Big(\sum_{i: z^2_i = 1} \mb y_i\Big)\t \bo \Sigma^{-1}\bigg[\Big(\frac{n_1(\mb z^2)}{n/2}\bo \Sigma^{-1} + \frac{2}{n}\kappa_n^{-2} \bo \Sigma^{-2} \Big)^{-1} -\Big(\bo \Sigma^{-1} + \frac{2}{n}\kappa_n^{-2} \bo \Sigma^{-2} \Big)^{-1}\bigg] \bo \Sigma^{-1}\Big( \frac{\sum_{i: z^2_i = 1} \mb y_i}{n/2}\Big)  \\ 
    & -  \frac{1}{2} \tr\bigg(\bo \Sigma^{-\frac12} \bigg[\Big(\frac{n_1(\mb z^1)}{n/2}\bo \Sigma^{-1} + \frac{2}{n}\kappa_n^{-2} \bo \Sigma^{-2} \Big)^{-1} - \Big(\frac{n_1(\mb z^2)}{n/2}\bo \Sigma^{-1} + \frac{2}{n}\kappa_n^{-2} \bo \Sigma^{-2} \Big)^{-1}\bigg]\bo \Sigma^{-\frac12} \bigg) 
    \Bigg|  \\ 
    \leq & \gamma_1 + \gamma_2 + \gamma_3 + \gamma_4, 
    \label{eq: decomposition of first two terms in alpha2}
\end{align}
where we define the quantities in the last line as follows:
\begin{align}
    & \gamma_1 \coloneqq \bigg| \frac{1}{2} \Big(\sum_{i: z^1_i = 1} \mb E_i\Big)\t \bo \Sigma^{-1}\bigg[\Big(\frac{n_1(\mb z^1)}{n/2}\bo \Sigma^{-1} + \frac{2}{n}\kappa_n^{-2} \bo \Sigma^{-2} \Big)^{-1} -\Big(\frac{n_1(\mb z^2)}{n/2}\bo \Sigma^{-1} + \frac{2}{n}\kappa_n^{-2} \bo \Sigma^{-2} \Big)^{-1}\bigg] \\ 
    &  \cdot  \bo \Sigma^{-1}\Big( \frac{\sum_{i: z^1_i = 1} \mb E_i}{n/2}\Big)  - \frac{1}{2} \tr\bigg(\bo \Sigma^{-\frac12} \bigg[\Big(\frac{n_1(\mb z^1)}{n/2}\bo \Sigma^{-1} + \frac{2}{n}\kappa_n^{-2} \bo \Sigma^{-2} \Big)^{-1} - \Big(\frac{n_1(\mb z^2)}{n/2}\bo \Sigma^{-1} + \frac{2}{n}\kappa_n^{-2} \bo \Sigma^{-2} \Big)^{-1}\bigg]\bo \Sigma^{-\frac12} \bigg)  \bigg| , \\ 
    & \gamma_2 \coloneqq  \bigg| \frac{n_1(\mb z^1)}{2} {\bo \theta_1^*}\t  \bo \Sigma^{-1}\bigg[\Big(\frac{n_1(\mb z^1)}{n/2}\bo \Sigma^{-1} + \frac{2}{n}\kappa_n^{-2} \bo \Sigma^{-2} \Big)^{-1} -\Big(\frac{n_1(\mb z^2)}{n/2}\bo \Sigma^{-1} + \frac{2}{n}\kappa_n^{-2} \bo \Sigma^{-2} \Big)^{-1}\bigg] \bo \Sigma^{-1}\bo \theta_1^* \bigg| ,\\ 
    & \gamma_3 \coloneqq \bigg| \Big(\sum_{i: z^1_i = 1} \mb E_i\Big)\t \bo \Sigma^{-1}\bigg[\Big(\frac{n_1(\mb z^1)}{n/2}\bo \Sigma^{-1} + \frac{2}{n}\kappa_n^{-2} \bo \Sigma^{-2} \Big)^{-1} -\Big(\frac{n_1(\mb z^2)}{n/2}\bo \Sigma^{-1} + \frac{2}{n}\kappa_n^{-2} \bo \Sigma^{-2} \Big)^{-1}\bigg] \bo \Sigma^{-1}\bo \theta_1^* \bigg|  \\ 
    & \gamma_4 \coloneqq \bigg| \frac{1}{2} \Big(\sum_{i: z^1_i = 1} \mb y_i\Big)\t \bo \Sigma^{-1}\bigg[\Big(\frac{n_1(\mb z^2)}{n/2}\bo \Sigma^{-1} + \frac{2}{n}\kappa_n^{-2} \bo \Sigma^{-2} \Big)^{-1} -\Big(\bo \Sigma^{-1} + \frac{2}{n}\kappa_n^{-2} \bo \Sigma^{-2} \Big)^{-1}\bigg] \bo \Sigma^{-1}\Big( \frac{\sum_{i: z^1_i = 1} \mb y_i}{n/2}\Big) \\
    & - \frac{1}{2} \Big(\sum_{i: z^2_i = 1} \mb y_i\Big)\t \bo \Sigma^{-1}\bigg[\Big(\frac{n_1(\mb z^2)}{n/2}\bo \Sigma^{-1} + \frac{2}{n}\kappa_n^{-2} \bo \Sigma^{-2} \Big)^{-1} -\Big(\bo \Sigma^{-1} + \frac{2}{n}\kappa_n^{-2} \bo \Sigma^{-2} \Big)^{-1}\bigg] \bo \Sigma^{-1}\Big( \frac{\sum_{i: z^2_i = 1} \mb y_i}{n/2}\Big)  
    \bigg|. 
\end{align}

Before proceeding to handle $\gamma_i$'s, we first apply the Hanson-Wright inequality \citep[Theorem 2.1]{hsu2012tail} to derive the following controls on the involved random vectors: 
\begin{align}
    & \norm{\mb E_1}_2 \vee \Bignorm{\sum_{i: z_i^1 = 1} \mb E_{i} / \sqrt{n}} \vee \Bignorm{\sum_{i: z_i^2 = 1} \mb E_{i} / \sqrt{n}} \\ 
    \lesssim & \sqrt{\tr(\bo \Sigma) + 2 \sqrt{\tr(\bo \Sigma^2)\cdot c\log n} + 2 c \norm{\bo \Sigma} \log n} \lesssim \sqrt{\tr(\bo \Sigma) +  \sqrt{\tr(\bo \Sigma^2)\cdot \log n} },  \label{eq: bound on random vector 1}
    \\ 
    & \kappa_n \bignorm{\sum_{i: z^1_i = 1}\bo \Sigma^{\frac12} \mb E_i / \sqrt{n}} \vee \norm{\bo \theta_1^*}_2 \lesssim \kappa_n \sqrt{\tr(\bo \Sigma^2) + 2 \sqrt{\tr(\bo \Sigma^4)\cdot c\log n} + 2 c \norm{\bo \Sigma^2} \log n} \lesssim \kappa_n \sqrt{\tr(\bo \Sigma^2)}, 
     \label{eq: bound on random vector 2}
    \\ 
    & \norm{\bo \Sigma^{\frac12} \mb y_1}_2 \vee \Big(\Bignorm{\sum_{i: z_i^2 = 1}\bo \Sigma^{\frac12}  \mb y_i} / \sqrt{n}\Big)  
    \leq  \sqrt{n}\norm{\bo \Sigma^{\frac12} \bo \theta_1^*}_2 + \norm{\bo \Sigma^{\frac12} \mb E_{1}}_2 \vee \Big(\Bignorm{\sum_{i: z_i^2 = 1} \bo \Sigma^{\frac12} \mb E_{i} / \sqrt{n}} \Big) \\ 
    \lesssim  & \kappa_n \sqrt{\tr(\bo \Sigma^3) + \sqrt{\tr(\bo \Sigma^6) \log n} + \norm{\bo \Sigma}^3 \log n}  +\sqrt{\tr(\bo \Sigma^2)}  
    \lesssim  \sqrt{\tr(\bo \Sigma^2)} 
     \label{eq: bound on random vector 3}
\end{align}
with probability at least $1 - O(n^{-c})$ for some constant $c$, invoking the conditions $\norm{\bo \Sigma} \log n \ll \sqrt{\tr(\bo \Sigma^2)}$ and $\kappa_n \ll \frac{1}{\sqrt{n}\norm{\bo \Sigma}^{\frac12}}$ and the fact $\tr(\mb A \mb B) \leq \tr(\mb A) \norm{\mb B}$ for any non-negative definite matrix $\mb A$. 

Moreover, we control the spectral norm using the relation $\mb A^{-1} - \mb B^{-1} = \mb A^{-1} (\mb B - \mb A) \mb B^{-1}$: 
\begin{align}
    & \norm{\bo \Sigma^{-a} \bigg[\Big(\frac{n_1(\mb z^1)}{n/2}\bo \Sigma^{-1} + \frac{2}{n}\kappa_n^{-2} \bo \Sigma^{-2} \Big)^{-1} - \Big(\frac{n_1(\mb z^2)}{n/2}\bo \Sigma^{-1} + \frac{2}{n}\kappa_n^{-2} \bo \Sigma^{-2} \Big)^{-1}\bigg] \bo \Sigma^{-b}}  \\ 
    \leq & \norm{\bo \Sigma^{-a} \Big(\frac{n_1(\mb z^1)}{n/2}\bo \Sigma^{-1} + \frac{2}{n}\kappa_n^{-2} \bo \Sigma^{-2} \Big)^{-1} \Big( \frac{2}{n} \bo \Sigma^{-1} \Big) \Big(\frac{n_1(\mb z^2)}{n/2}\bo \Sigma^{-1} + \frac{2}{n}\kappa_n^{-2} \bo \Sigma^{-2} \Big)^{-1} \bo \Sigma^{-b}} \\ 
    \leq & \frac{2}{n} \norm{\bo \Sigma^{-a} \Big(\frac{n_1(\mb z^1)}{n/2}\bo \Sigma^{-1} + \frac{2}{n}\kappa_n^{-2} \bo \Sigma^{-2} \Big)^{-1}  \bo \Sigma^{-\frac12} } \norm{\bo \Sigma^{-b} \Big(\frac{n_1(\mb z^2)}{n/2}\bo \Sigma^{-1} + \frac{2}{n}\kappa_n^{-2} \bo \Sigma^{-2} \Big)^{-1}  \bo \Sigma^{-\frac12} }  \\ 
    \leq &    \frac{n}{2}\kappa_n^4 \norm{\bo \Sigma}^{3-a-b} ,  \\ 
    & \tr\bigg(\bo \Sigma^{-a} \Big[\Big(\frac{n_1(\mb z^1)}{n/2}\bo \Sigma^{-1} + \frac{2}{n}\kappa_n^{-2} \bo \Sigma^{-2} \Big)^{-1} - \Big(\frac{n_1(\mb z^2)}{n/2}\bo \Sigma^{-1} + \frac{2}{n}\kappa_n^{-2} \bo \Sigma^{-2} \Big)^{-1}\Big] \bo \Sigma^{-b}\bigg)  \\
    \leq &    \frac{n}{2}\kappa_n^4 \tr(\bo \Sigma)^{3-a-b} , \qquad \qquad \text{for any $a,~b \leq \frac32$. }  \label{eq: bound on covariance difference 1}
\end{align}

Similarly, one has 
\begin{align}
    &  \norm{\bo \Sigma^{-1}\bigg[\Big(\frac{n_1(\mb z^1)}{n/2}\bo \Sigma^{-1} + \frac{2}{n}\kappa_n^{-2} \bo \Sigma^{-2} \Big)^{-1} -\Big(\bo \Sigma^{-1}+ \frac{2}{n}\kappa_n^{-2} \bo \Sigma^{-2} \Big)^{-1}\bigg] \bo \Sigma^{-1} } \leq \frac{n}{2}\big|n_1(\mb z^1) - \frac{n}{2}\big|\kappa_n^4 \norm{\bo \Sigma}. 
\end{align}

As a consequence, we invoke \cite[Theorem~6.2.1]{vershynin2018high} to upper bound $\gamma_1$:  
\begin{align}
    & \gamma_1 \lesssim 
    \sqrt{\tr(\bo \Sigma^2) \cdot  \log n} \cdot n \kappa_n^4 \norm{\bo \Sigma}
    + \frac{1}{|n_1(\mb z^1)|^{\frac12}}\cdot n\kappa_n^4 \norm{\bo \Sigma}^2 \cdot \log n \lesssim \sqrt{\tr(\bo \Sigma^2)  \log n} \cdot n \kappa_n^4 \norm{\bo \Sigma}
\end{align}
with probability at least $1 - O(n^{-c})$, given $n_1(\mb z^1) \in \big[\big(1 - C\sqrt{\log n/ n}\big) \frac{n}{2}, \big(1 + C\sqrt{\log n /n }\big) \frac{n}{2}\big]$ and $\sqrt{n} \norm{\bo \Sigma} = o(\tr(\bo \Sigma^2)^{\frac12})$. 

For $\gamma_2$ and $\gamma_3$, we invoke \eqref{eq: bound on random vector 2} together with \eqref{eq: bound on covariance difference 1} and to derive that
\begin{align}
    & \gamma_2 \lesssim n \kappa_n^4 \norm{\bo \Sigma} \cdot n \kappa_n^2 \tr(\bo \Sigma^2) \lesssim \big(n\kappa_n^2 \norm{\bo \Sigma}\big) \cdot \big(n\kappa_n^4 \tr(\bo \Sigma^2)\big), \\ 
    & \gamma_3  \lesssim  \sqrt{n} \cdot \sqrt{\tr(\bo \Sigma^2)} \cdot n \kappa_n^4 \norm{\bo \Sigma}^{\frac12} \cdot \kappa_n \sqrt{\tr(\bo \Sigma^2)}  
    \lesssim  \big(n \kappa_n \norm{\bo \Sigma}^{\frac12}\big) \cdot \big(n\kappa_n^4 \tr(\bo \Sigma^2)\big)
\end{align}
with probability at least $1 - O(n^{-c})$. 

In terms of $\gamma_4$, employing \eqref{eq: bound on random vector 3} yields that with probability at least $1 - O(n^{-c})$
\begin{align}
    & \gamma_4 \lesssim n^{\frac12} \kappa_n^4 \tr(\bo \Sigma^2). 
\end{align}

Plugging these inequalities into \eqref{eq: decomposition of first two terms in alpha2} yields that with probability at least $1 - 3 n^{-c}$
\begin{align}
    & \text{LHS of \eqref{eq: decomposition of first two terms in alpha2}} \lesssim \Big[\frac{(\log n)^{\frac12} \norm{\bo \Sigma}}{\sqrt{\tr(\bo \Sigma^2)}} + n\kappa_n^2 \norm{\bo \Sigma} + n\kappa_n \norm{\bo \Sigma}^{\frac12} + n^{-\frac12} \Big] n \kappa_n^4 \tr(\bo \Sigma^2). 
    \label{eq: decomposition of last two terms in alpha1 final}
\end{align}

Besides, a similar argument leads to
\begin{align}
    & \Bigg| \frac{1}{2} \Big(\sum_{i: z^1_i = 2} \mb y_i\Big)\t \bo \Sigma^{-1}\bigg[\Big(\frac{n_2(\mb z^1)}{n/2}\bo \Sigma^{-1} + \frac{2}{n}\kappa_n^{-2} \bo \Sigma^{-2} \Big)^{-1} -\Big(\bo \Sigma^{-1} + \frac{2}{n}\kappa_n^{-2} \bo \Sigma^{-2} \Big)^{-1}\bigg] \bo \Sigma^{-1}\Big( \frac{\sum_{i: z^1_i = 2} \mb y_i}{n/2}\Big)  \\ 
     &-  \frac{1}{2} \Big(\sum_{i: z^2_i = 2} \mb y_i\Big)\t \bo \Sigma^{-1}\bigg[\Big(\frac{n_2(\mb z^2)}{n/2}\bo \Sigma^{-1} + \frac{2}{n}\kappa_n^{-2} \bo \Sigma^{-2} \Big)^{-1} -\Big(\bo \Sigma^{-1} + \frac{2}{n}\kappa_n^{-2} \bo \Sigma^{-2} \Big)^{-1}\bigg] \bo \Sigma^{-1}\Big( \frac{\sum_{i: z^2_i = 2} \mb y_i}{n/2}\Big)\\ 
    & -  \frac{1}{2} \tr\bigg(\bo \Sigma^{-\frac12} \bigg[\Big(\frac{n_2(\mb z^1)}{n/2}\bo \Sigma^{-1} + \frac{2}{n}\kappa_n^{-2} \bo \Sigma^{-2} \Big)^{-1} - \Big(\frac{n_2(\mb z^2)}{n/2}\bo \Sigma^{-1} + \frac{2}{n}\kappa_n^{-2} \bo \Sigma^{-2} \Big)^{-1}\bigg]\bo \Sigma^{-\frac12} \bigg)  \Bigg|\\ 
     \lesssim & \Big[\frac{\sqrt{p \log n} \norm{\bo \Sigma}^2}{\tr(\bo \Sigma^2)} + n\kappa_n^2 \norm{\bo \Sigma} + n\kappa_n \norm{\bo \Sigma}^{\frac12} + n^{-\frac12}\Big] n \kappa_n^4 \tr(\bo \Sigma^2) 
     \label{eq: decomposition of last two terms in alpha2 final}
\end{align}
with probability at least $1- O(n^{-c})$.

As the final step, we control the difference between the deterministic terms appearing on the left-hand sides of \eqref{eq: decomposition of first two terms in alpha2} and \eqref{eq: decomposition of last two terms in alpha2 final}. To this end, we use the relation: 
$$\mb A^{-1}  - \mb B^{-1}= \mb C^{-1} (\mb B - \mb A) \mb C^{-1} + F(\mb A, \mb B, \mb C), $$
where $F(\mb A, \mb B, \mb C) \coloneqq \mb A^{-1} (\mb C - \mb A) \mb C^{-1} (\mb B - \mb A) \mb C^{-1} + \mb A^{-1} (\mb B - \mb A) \mb C^{-1} (\mb C - \mb B) \mb B^{-1}$. 
We choose \(\mathbf C = \tfrac{4}{n}\kappa_n^{-2}\boldsymbol\Sigma^{-1}\), which yields 
\begin{align}
    &   \bigg|  \tr\bigg(\bo \Sigma^{-\frac12} \bigg[\Big(\frac{n_2(\mb z^1)}{n/2}\bo \Sigma^{-1} + \frac{2}{n}\kappa_n^{-2} \bo \Sigma^{-2} \Big)^{-1}- \Big(\frac{n_2(\mb z^2)}{n/2}\bo \Sigma^{-1} + \frac{2}{n}\kappa_n^{-2} \bo \Sigma^{-2} \Big)^{-1}\bigg]\bo \Sigma^{-\frac12} \bigg)  & \\ 
    & \qquad +  \tr\bigg(\bo \Sigma^{-\frac12} \bigg[\Big(\frac{n_1(\mb z^1)}{n/2}\bo \Sigma^{-1} + \frac{2}{n}\kappa_n^{-2} \bo \Sigma^{-2} \Big)^{-1}- \Big(\frac{n_1(\mb z^2)}{n/2}\bo \Sigma^{-1} + \frac{2}{n}\kappa_n^{-2} \bo \Sigma^{-2} \Big)^{-1}\bigg]\bo \Sigma^{-\frac12} \bigg) \bigg|  \\ 
    \leq & \Bigg|  \tr\bigg(\bo \Sigma^2 \Big[\underbrace{\Big(\frac{n_2(\mb z^1)}{n/2} \bo \Sigma^2 + \frac{2}{n}\kappa_n^{-2} \bo \Sigma\Big)^{-1}}_{\eqqcolon \mb A_1^{-1}} - \underbrace{\Big(\frac{n_2(\mb z^2)}{n/2} \bo \Sigma^2 + \frac{2}{n}\kappa_n^{-2} \bo \Sigma \Big)^{-1}}_{\eqqcolon \mb B_1^{-1}} \\ 
    & \qquad + \underbrace{\Big(\frac{n_1(\mb z^1)}{n/2}\bo \Sigma^2 + \frac{2}{n}\kappa_n^{-2} \bo \Sigma  \Big)^{-1}}_{\eqqcolon \mb A_2^{-1}} - \underbrace{\Big(\frac{n_1(\mb z^2)}{n/2} \bo \Sigma^2 + \frac{2}{n}\kappa_n^{-2} \bo \Sigma \Big)^{-1}}_{\eqqcolon \mb B_2^{-1}} \Big] \bigg) \Bigg|  \\ 
    \leq & |\tr(\bo \Sigma^2 f(\mb A_1, \mb B_1, \mb C)| + |\tr(\bo \Sigma^2 f(\mb A_2, \mb B_2, \mb C)| \\ 
    \lesssim & \big(n^{\frac12} \kappa_n^2 \log n  \norm{\bo \Sigma} \big) \cdot \big(n \kappa^4\tr(\bo \Sigma^2)\big), 
    \label{eq: decomposition of deterministic terms in alpha2 final}
\end{align}
where we used the fact that 
\[
\mathbf B_1 - \mathbf A_1 + \mathbf B_2 - \mathbf A_2 = \mathbf 0,
\qquad 
\text{since } 
n_1(\mathbf z^1) + n_2(\mathbf z^1)
=
n_1(\mathbf z^2) + n_2(\mathbf z^2)
= n,
\]
and 
\[
\big|\tr(\boldsymbol\Sigma^2\, F(\mathbf A_1, \mathbf B_1, \mathbf C))\big|
\;\vee\;
\big|\tr(\boldsymbol\Sigma^2\, F(\mathbf A_2, \mathbf B_2, \mathbf C))\big|
\;\le\;
C_F n^{3}\kappa_n^{6}
\left( \frac{\sqrt{n\log n}}{n} \right) \frac{1}{n}
\tr(\boldsymbol\Sigma^2 )\,\|\boldsymbol\Sigma\|,
\]
for some constant \(C_F\) determined by the conditions on \(\mathbf z^1\) and \(\mathbf z^2\) (cf.\ \eqref{eq: event A}).

With the bounds \eqref{eq: decomposition of last two terms in alpha1 final}, \eqref{eq: decomposition of last two terms in alpha2 final}, and \eqref{eq: decomposition of deterministic terms in alpha2 final} in place, we thereby conclude that there exists some constant $C_3$ such that
\begin{align}
    & E_{1}
     \coloneqq \bigg\{|\alpha_2| \leq C_3 \Big[\frac{(\log n)^{\frac12} \norm{\bo \Sigma}}{\sqrt{\tr(\bo \Sigma^2)}} + n\kappa_n^2 \norm{\bo \Sigma} + n\kappa_n \norm{\bo \Sigma}^{\frac12} + n^{-\frac12}  \Big] \cdot \big( n\kappa_n^4 \tr(\bo \Sigma^2) \big) \bigg\} 
\end{align}
holds with probability at least $1 - C_3 n^{-c}$. 

\bigskip

\emph{Upper Bound for $\alpha_3$. }
To begin with, we simplify the form of $\alpha_3$ as follows: 
\begin{align}
    & \alpha_3 =  \log\left|\mb \Sigma^{-1} \otimes \mb I_{n_1(\bo z^1)} + \Big(\bo \Sigma^{-1}\Big(n_1(\mb z^1)\bo \Sigma^{-1} + \kappa_n^{-2} \bo \Sigma^{-2} \Big)^{-1} \bo \Sigma^{-1} \Big) \otimes (\mb 1_{n_1(\mb z^1)} \mb 1_{n_1(\mb z^1)}\t) \right| \\
    & + \log\left|\mb \Sigma^{-1} \otimes \mb I_{n_2(\bo z^1)} + \Big(\bo \Sigma^{-1}\Big(n_2(\mb z^1)\bo \Sigma^{-1} + \kappa_n^{-2} \bo \Sigma^{-2} \Big)^{-1} \bo \Sigma^{-1} \Big) \otimes (\mb 1_{n_2(\mb z^1)} \mb 1_{n_2(\mb z^1)}\t) \right| \\
    & - \log\left|\mb \Sigma^{-1} \otimes \mb I_{n_1(\bo z^1)} + \Big(\bo \Sigma^{-1}\Big(n_1(\mb z^1)\bo \Sigma^{-1} + \kappa_n^{-2} \bo \Sigma^{-2} \Big)^{-1} \bo \Sigma^{-1} \Big) \otimes (\mb 1_{n_1(\mb z^2)} \mb 1_{n_1(\mb z^2)}\t) \right| \\
    & - \log\left|\mb \Sigma^{-1} \otimes \mb I_{n_2(\bo z^2)} + \Big(\bo \Sigma^{-1}\Big(n_2(\mb z^2)\bo \Sigma^{-1} + \kappa_n^{-2} \bo \Sigma^{-2} \Big)^{-1} \bo \Sigma^{-1} \Big) \otimes (\mb 1_{n_2(\mb z^2)} \mb 1_{n_2(\mb z^2)}\t) \right| \\ 
    & + \log|\bo \Sigma \otimes \mb I_{n_1(\mb z^1)} |+ \log|\bo \Sigma \otimes \mb I_{n_2(\mb z^1)}|  - \log|\bo \Sigma \otimes \mb I_{n_1(\mb z^2)}| - \log|\bo \Sigma \otimes \mb I_{n_2(\mb z^2)}| \\ 
    = & \log\left|\mb I_{pn_1(\bo z^1)} + \Big(\big(n_1(\mb z^1)\bo \Sigma^{-1} + \kappa_n^{-2} \bo \Sigma^{-2} \Big)^{-1} \bo \Sigma^{-1} \Big) \otimes (\mb 1_{n_1(\mb z^1)} \mb 1_{n_1(\mb z^1)}\t) \right| \\ 
    & +  \log\left|\mb I_{pn_2(\bo z^1)} + \Big(\big(n_2(\mb z^1)\bo \Sigma^{-1} + \kappa_n^{-2} \bo \Sigma^{-2} \big)^{-1} \bo \Sigma^{-1} \Big) \otimes (\mb 1_{n_2(\mb z^1)} \mb 1_{n_2(\mb z^1)}\t) \right|  \\ 
    & -  \log\left|\mb I_{pn_1(\bo z^2)} + \Big( \big(n_1(\mb z^2)\bo \Sigma^{-1} + \kappa_n^{-2} \bo \Sigma^{-2} \big)^{-1} \bo \Sigma^{-1} \Big) \otimes (\mb 1_{n_1(\mb z^2)} \mb 1_{n_1(\mb z^2)}\t) \right| \\ 
    & - \log\left|\mb I_{pn_2(\bo z^2)} + \Big(\big(n_2(\mb z^2)\bo \Sigma^{-1} + \kappa_n^{-2} \bo \Sigma^{-2} \big)^{-1} \bo \Sigma^{-1} \Big) \otimes (\mb 1_{n_2(\mb z^2)} \mb 1_{n_2(\mb z^2)}\t) \right|. 
    \label{eq: simplification of alpha3}
\end{align}

Notice that, for each $i\in[p]$, the $i$-th largest eigenvalue with $i \in [p]$ of $\mb I _{pn_{k_1}(\mb z^{k_2})} +  \Big(\big(n_1(\mb z^{k_2})\bo \Sigma^{-1} + \kappa_n^{-2} \bo \Sigma^{-2} \Big)^{-1} \bo \Sigma^{-1} \Big) \otimes (\mb 1_{n_1(\mb z^{k_2})} \mb 1_{n_1(\mb z^{k_2})}\t) $ is expressed as $$
1 + n_{k_1}(\mb z^{k_2}) \cdot \lambda_i\Big(\big(n_{k_1}(\mb z^{k_2}) + \kappa_n^{-2} \bo \Sigma^{-1} \big)^{-1}  \Big) =  \frac{2 n_{k_1}(\mb z^{k_2}) + \frac{1}{\kappa_n^2 \lambda_i(\bo \Sigma)}}{n_{k_1}(\mb z^{k_2}) + \frac{1}{\kappa_n^2 \lambda_i(\bo \Sigma)}}.$$ And the rest of the eigenvalues all equal one. 
This allows us to focus on the ratios among the eigenvalues. For convenience, we define that $f(\mb x) \coloneqq \log(\frac{2x + 1  }{x+1})$ and $z_{i,k_1,k_2} \coloneqq n_{k_1}(\mb z^{k_2})\kappa_n^2 \lambda_i(\bo \Sigma) $. Then, we
rewrite $\alpha_3$ as 
\begin{align}
    & \alpha_3 = \log\Big(\prod_{i\in[p]} \frac{2 n_1(\mb z^1)\kappa_n^2 \lambda_i(\bo \Sigma) + 1 }{n_1(\mb z^1)\kappa_n^2 \lambda_i(\bo \Sigma) + 1} \cdot \frac{2 n_2(\mb z^1)\kappa_n^2 \lambda_i(\bo \Sigma) + 1}{n_2(\mb z^1)\kappa_n^2 \lambda_i(\bo \Sigma) + 1} \cdot \frac{ n_1(\mb z^2)\kappa_n^2 \lambda_i(\bo \Sigma) + 1}{2n_1(\mb z^2)\kappa_n^2 \lambda_i(\bo \Sigma) + 1} \cdot \frac{ n_1(\mb z^1)\kappa_n^2 \lambda_i(\bo \Sigma)+ 1}{2n_1(\mb z^1)\kappa_n^2 \lambda_i(\bo \Sigma) + 1}\Big) \\ &    =  \sum_{i\in[p]}\bigg(
    f(z_{i,1,1}) +  f(z_{i,2,1}) - f(z_{i,1,2}) -  f(z_{i,2,2})
    \bigg). 
\end{align}

Next, we notice that the second-order derivative of the function $f$ is bounded for $x \geq 0$. We thereby apply the mean-value theorem and derive that for some constant $C_f$, 
\begin{align}
     |\alpha_3| \leq&  \sum_{i\in[p]} \Big| 
    f'(z_{i,1,*})(z_{i,1,1} - z_{i,1,2}) + f'(z_{i,2,*})(z_{i,2,1} - z_{i,2,2})
    \Big| \\ 
    \leq & \sum_{i\in[p]} \Big[ \big| f'(z_{i,1,*})\big| \big|z_{i,1,1} - z_{i,1,2} + z_{i,2,1} - z_{i,2,2}\big| + \big| 
    f'(z_{i,1,*}) - f'(z_{i,2,*})\big|\big|z_{i,1,1} - z_{i,1,2}\big| \Big] \\ 
    \leq &  \sum_{i\in[p]} \big| 
    f'(z_{i,1,*}) - f'(z_{i,2,*})\big|\big|z_{i,1,1} - z_{i,1,2}\big| 
    \leq  C_f \sum_{i\in[p]} \kappa_n^4 \lambda_i(\bo \Sigma)^2 
    \leq C_f  \kappa_n^4 \tr(\bo \Sigma^2), 
\end{align} 
where $z_{i,1,*}$ and $z_{i,2,*}$ are given by the mean-value theorem and $C_f$ is some constant upper bounding the second derivative of $f$ for $x \geq 0$.

\bigskip

\emph{Analysis of $\alpha_1$. }
Now we are left with characterizing the behavior of $\alpha_1$. To begin with, we invoke the Gaussianity of the noise and derive that 
\begin{align}
    & \alpha_1  = {\bo \theta_1^*}\t \bo \Sigma^{-1} \big(\bo \Sigma^{-1} + \frac{2}{n}\kappa_n^{-2}\bo\Sigma^{-2} \big)^{-1}\bo \Sigma^{-1} \bo \theta_1^* + \mb E_1\t  \bo \Sigma^{-1} \big(\bo \Sigma^{-1} + \frac{2}{n}\kappa_n^{-2}\bo\Sigma^{-2} \big)^{-1}\bo \Sigma^{-1}\big( \mb E' + (\bo \theta_1^* + \bo \theta_2^*)\big) \\ 
    & - {\bo \theta_1^*}\t \bo \Sigma^{-1} \big(\bo \Sigma^{-1} + \frac{2}{n}\kappa_n^{-2}\bo\Sigma^{-2} \big)^{-1}\bo \Sigma^{-1}\bo \theta_2^* + {\bo \theta_1^*}\t \bo \Sigma^{-1} \big(\bo \Sigma^{-1} + \frac{2}{n}\kappa_n^{-2}\bo\Sigma^{-2} \big)^{-1}\bo \Sigma^{-1}\mb E'  ,
    \label{eq: decomposition of alpha1}
\end{align}
where $\mb E'$ obeys the centered Gaussian distribution with covariance $\frac{4}{n}\bo \Sigma$. In the sequel, we control each term on the right-hand side of \eqref{eq: decomposition of alpha1}. 

First of all, employing the Hanson-Wright inequality gives that 
\begin{align}
    &  {\bo \theta_1^*}\t \bo \Sigma^{-1} \big(\bo \Sigma^{-1} + \frac{2}{n}\kappa_n^{-2}\bo\Sigma^{-2} \big)^{-1}\bo \Sigma^{-1} \bo \theta_1^* \leq \frac{n}{2}\kappa_n^2 \norm{\bo \theta_1^*}_2^2 \leq \frac{n}{2}\kappa_n^4 \Big[\tr(\bo \Sigma^2) + 2 \sqrt{\tr(\bo \Sigma^4)\cdot c\log n} + 2 c \norm{\bo \Sigma^2} \log n\Big]
    \label{eq: E2 eq1}
\end{align}
with probability at least $1 - n^{-c}$. 

We further note that the second term $ \mb E_1\t  \bo \Sigma^{-1} \big(\bo \Sigma^{-1} + \frac{2}{n}\kappa_n^{-2}\bo\Sigma^{-2} \big)^{-1}\bo \Sigma^{-1}\big( \mb E' + (\bo \theta_1^* + \bo \theta_2^*)\big)$ is distributionally identical to $\epsilon_1 \norm{\bo \Sigma^{-\frac12} \big(\bo \Sigma^{-1} + \frac{2}{n}\kappa_n^{-2}\bo\Sigma^{-2} \big)^{-1} \big( \frac{2}{\sqrt{n}}\bo\Sigma^{-\frac12} \bo \epsilon_2 + \kappa_n \bo \epsilon_3 \big)}_2$ with $\epsilon_1 \sim \mc N(0,1)$ and $\bo \epsilon_2, \bo \epsilon_3 \sim \mc N(0,\mb I_p)$. Since $\epsilon_2$ is independent of $\epsilon_3$, then with probability $\frac{1}{2}$ the following holds: 
\begin{align}
    & \norm{\bo \Sigma^{-\frac12} \big(\bo \Sigma^{-1} + \frac{2}{n}\kappa_n^{-2}\bo\Sigma^{-2} \big)^{-1} \big( \frac{2}{\sqrt{n}}\bo\Sigma^{-\frac12} \bo \epsilon_2 + \kappa_n \bo \epsilon_3 \big)}_2^2 \\ 
    =  & \norm{\bo \Sigma^{-\frac12} \big(\bo \Sigma^{-1} + \frac{2}{n}\kappa_n^{-2}\bo\Sigma^{-2} \big)^{-1}  \frac{2}{\sqrt{n}}\bo\Sigma^{-\frac12} \bo \epsilon_2 }_2^2  \\ 
    & + \norm{\bo \Sigma^{-\frac12} \big(\bo \Sigma^{-1} + \frac{2}{n}\kappa_n^{-2}\bo\Sigma^{-2} \big)^{-1}\kappa_n \bo \epsilon_3}_2^2 \\ 
    & +\frac{2}{\sqrt{n}} \bo \epsilon_2 \t \bo\Sigma^{-\frac12}\big(\bo \Sigma^{-1} + \frac{2}{n}\kappa_n^{-2}\bo\Sigma^{-2} \big)^{-1} \bo \Sigma^{-1} \big(\bo \Sigma^{-1} + \frac{2}{n}\kappa_n^{-2}\bo\Sigma^{-2} \big)^{-1}\kappa_n \bo \epsilon_3 \\ 
    \geq &  \norm{\bo \Sigma^{-\frac12} \big(\bo \Sigma^{-1} + \frac{2}{n}\kappa_n^{-2}\bo\Sigma^{-2} \big)^{-1}  \frac{2}{\sqrt{n}}\bo\Sigma^{-\frac12} \bo \epsilon_2 }_2^2  \\ 
    & + \norm{\bo \Sigma^{-\frac12} \big(\bo \Sigma^{-1} + \frac{2}{n}\kappa_n^{-2}\bo\Sigma^{-2} \big)^{-1}\kappa_n \bo \epsilon_3}_2^2 \\ 
    \geq & \frac{4}{n}\norm{\bo \Sigma^{-\frac12} \big(\bo \Sigma^{-1} + \frac{2}{n}\kappa_n^{-2}\bo\Sigma^{-2} \big)^{-1}  \bo\Sigma^{-\frac12} \bo \epsilon_2 }_2^2
\end{align}
Moreover, applying \cite[Theorem~6.2.1]{vershynin2018high} yields, with probability at least $1 - C_4 n^{-c}$ for some constant $C_4$,  
\begin{align}
    & \norm{\bo \Sigma^{-\frac12} \big(\bo \Sigma^{-1} + \frac{2}{n}\kappa_n^{-2}\bo\Sigma^{-2} \big)^{-1}\bo \Sigma^{-\frac12} \bo \epsilon_2}_2 \\ 
    \geq & \norm{\bo \Sigma^{-\frac12}\big(\bo \Sigma^{-1} + \frac{2}{n}\kappa_n^{-2}\bo\Sigma^{-2} \big)^{-1} \bo \Sigma^{-\frac12}}_F - \Big(  C_4 n\kappa_n^2 \sqrt{\mathrm{Tr}(\bo \Sigma^2) \log n}+   C n\kappa_n^2 \norm{\bo \Sigma^2} \Big)^{\frac{1}{2}}  \\ 
    \geq & \frac{n}{2}\kappa_n^2 \tr(\bo \Sigma^2)^{\frac12} -\norm{\bo \Sigma^{-\frac12}\big(\bo \Sigma^{-1} + \frac{2}{n}\kappa_n^{-2}\bo\Sigma^{-2} \big)^{-1} \bo \Sigma^{-\frac12} - \frac{n}{2}\kappa_n^2 \bo \Sigma}_F  -\Big(  C_4 n\kappa_n^2 \sqrt{\mathrm{Tr}(\bo \Sigma^2) \log n}+  C n\kappa_n^2 \norm{\bo \Sigma^2} \Big)^{\frac{1}{2}} \\ 
    \geq &  \frac{n}{2}\kappa_n^2 \tr(\bo \Sigma^2)^{\frac12} - \frac{n^2}{4} \kappa_n^4 \mathrm{Tr}(\bo \Sigma^4)^{\frac12}  -\Big(  C_4 n\kappa_n^2 \sqrt{\mathrm{Tr}(\bo \Sigma^2) \log n}+   C_4 n\kappa_n^2 \norm{\bo \Sigma^2} \Big)^{\frac{1}{2}}   ,
\end{align}
 which leads to 
\begin{align}
    & \norm{\bo \Sigma^{-\frac12} \big(\bo \Sigma^{-1} + \frac{2}{n}\kappa_n^{-2}\bo\Sigma^{-2} \big)^{-1} \big( \frac{2}{\sqrt{n}}\bo\Sigma^{-\frac12} \bo \epsilon_2 + \kappa_n \bo \epsilon_3 \big)}_2 \\ 
    \geq & \sqrt{n}\kappa_n^2 \tr(\bo \Sigma^2)^{\frac12} - \frac{1}{2} n^{\frac32}\kappa_n^4 \mathrm{Tr}(\bo \Sigma^4)^{\frac12}  -2\Big(  C_4 \kappa_n^2 \sqrt{\mathrm{Tr}(\bo \Sigma^2) \log n}+   C_4 \kappa_n^2 \norm{\bo \Sigma^2} \Big)^{\frac{1}{2}}  
     \label{eq: E2 eq2} 
\end{align}
with probability at least $\frac{1}{2} - C_4 n^{-c}$. 

For the rest of the terms in \eqref{eq: decomposition of alpha1}, arguments parallel to those for the second term yield the bounds as follows: 
\begin{align}
    &  \left|{\bo \theta_1^*}\t \bo \Sigma^{-1} \big(\bo \Sigma^{-1} + \frac{2}{n}\kappa_n^{-2}\bo\Sigma^{-2} \big)^{-1}\bo \Sigma^{-1}\bo \theta_2^* \right| \leq C_5 n\kappa_n^4 \big[\tr(\bo \Sigma^4)^{\frac12} + \tr(\bo \Sigma^8)^{\frac14}(\log n)^{\frac14} + \norm{\bo \Sigma}^4 (\log n)^{\frac12} \big]\sqrt{\log n},  \label{eq: E2 eq3}
    \\ 
    & \left|{\bo \theta_1^*}\t \bo \Sigma^{-1} \big(\bo \Sigma^{-1} + \frac{2}{n}\kappa_n^{-2}\bo\Sigma^{-2} \big)^{-1}\bo \Sigma^{-1}\mb E' \right| \leq C_5 n^{\frac12}\kappa_n^3 \big[\tr(\bo \Sigma^3)^{\frac12} + \tr(\bo \Sigma^6)(\log n)^{\frac14} + \norm{\bo \Sigma}^3(\log n)^\frac12 \big] \sqrt{\log n}, \label{eq: E2 eq4},
\end{align}
each holding with probability at least $1 - C_5 n^{-c}$ for some constant $C_5$. For convenience, define the event:
\begin{align}
    & E_2 \coloneqq \left\{\eqref{eq: E2 eq1},~\eqref{eq: E2 eq2},~\eqref{eq: E2 eq3},~\eqref{eq: E2 eq4}\text{~hold} \right\},
\end{align}
which occurs with probability exceeding $\frac{1}{2} - C_6 n^{-c}$ for some constant $C_6$.

\bigskip

Equipped with the above building blocks, we are finally ready to lower bound the probability of the event $\big\{\log p_{\mb z^1}(\mb Y) - \log p_{z^2}(\mb Y) \leq 0, \mb z_1^* = 1\big\}$. Specifically, one has 
\begin{align}
    & \bb P_{\mc P_{\mb z}\otimes \mc P_{\bo \theta}}\Big[\log p_{\mb z^1}(\mb Y) - \log p_{\mb z^2}(\mb Y) \leq 0, \mb z_1^* = 1 \Big] \\ 
    \geq & \frac{1}{2}\Big[\big(1-\frac{1}{2}- (C_3+C_6)n^{-c}\big)\bb P_{\epsilon \sim \mc N(0,1)}\big[ \epsilon \geq \xi\big]  - (C_3+C_6)n^{-c}\Big],
\end{align}
where $\xi$ is defined as
\begin{align}
    &\xi \coloneqq  \frac{1}{\sqrt{n}\kappa_n^2 \tr(\bo \Sigma^2)^{\frac12} - \frac{1}{2} n^{\frac32} \kappa_n^4 \mathrm{Tr}(\bo \Sigma^4)^{\frac12}  -2 \Big(  C_4 \kappa_n^2 \sqrt{\mathrm{Tr}(\bo \Sigma^2) \log n}+   C_4 \kappa_n^2 \norm{\bo \Sigma^2} \Big)^{\frac{1}{2}}} \\ 
    & \cdot \bigg\{\frac{n}{2}\kappa_n^4 \Big[\tr(\bo \Sigma^2) + 2 \sqrt{\tr(\bo \Sigma^4)\cdot c\log n} + 2 c \norm{\bo \Sigma^2} \log n\Big] \\ 
    & \qquad + C_5 n\kappa_n^4 \big[\tr(\bo \Sigma^4)^{\frac12} + \tr(\bo \Sigma^8)^{\frac14}(\log n)^{\frac14} + \norm{\bo \Sigma}^4 (\log n)^{\frac12} \big]\sqrt{\log n}\\  
    & \qquad + C_5 n^{\frac12}\kappa_n^3 \big[\tr(\bo \Sigma^3)^{\frac12} + \tr(\bo \Sigma^6)(\log n)^{\frac14} + \norm{\bo \Sigma}^3(\log n)^\frac12 \big] \sqrt{\log n} \\ 
    & \qquad + C_3 \Big[\frac{(\log n)^{\frac12} \norm{\bo \Sigma}}{\sqrt{\tr(\bo \Sigma^2)}} + n\kappa_n^2 \norm{\bo \Sigma} + n\kappa_n \norm{\bo \Sigma}^{\frac12} + n^{-\frac12}  \Big] \cdot \big( n\kappa_n^4 \tr(\bo \Sigma^2) \big)  + C_f  \kappa_n^4 \tr(\bo \Sigma^2) \bigg\} \\ 
    = & (1 + o(1)) \mathsf{SNR}_0^{\mathsf{exc}} .
\end{align}
The last equality arises from the facts that 
\begin{align}
    & n \tr(\bo \Sigma^2) \kappa_n^4  / 4   = {\snr_0^{\exc}}^2 \Big(1 + C_\kappa \sqrt{\log n}\norm{\bo \Sigma} / \sqrt{\tr(\bo \Sigma^2)}\Big)^4 = (1 + o(1)) {\snr^{\exc}_0}^2 ,\\ 
    & n^{\frac32} \kappa_n^4 \tr(\bo \Sigma^4)^{\frac12} \leq n^{\frac32} \kappa_n^4 \Big(\tr(\bo \Sigma^2) \Big)^{\frac12} \norm{\bo \Sigma} \ll  n \kappa_n^4 \tr(\bo \Sigma^2), \\
    & \Big(\kappa_n \big( \tr(\bo \Sigma^2) \log n\big)^{\frac14}\Big) \vee \Big(\kappa_n \norm{\bo \Sigma}\Big) \ll n^{\frac12} \kappa_n^2 \tr(\bo \Sigma^2)^{\frac12}, \\ 
    & n^{\frac{2a - 4}{a}} \tr(\bo \Sigma^{a})^{\frac{2}{a}} \leq n^{\frac{2a - 4}{a}}\tr(\bo \Sigma^{2})^{\frac{2}{a}} \norm{\bo \Sigma}^{\frac{2a -4}{a}} = o(\tr(\bo \Sigma^2)), \quad  \text{for any $a > 2$},
\end{align}
where we used the condition $\sqrt{n} \norm{\bo \Sigma} = o( \sqrt{\tr(\bo \Sigma^2)})$.

Invoking the tail probability of the standard normal distribution together with \eqref{eq: lower bounding the minimax rate} at the beginning concludes the proof: 
\begin{align}
    & \inf_{\hat{\mb z}} \sup_{(\mb z^*, \{\bo \theta_k^*\}_{k\in[2]}, \{\bo \Sigma_k\}_{k\in[2]}) \in \bo \Theta^{\mathsf{exc}}} \bb E[h(\hat{\mb z},\mb z^*)] \geq \frac{1}{2}\bb P_{\mc P_{\mb z}\otimes \mc P_{\bo \theta}}\Big[\log p_{\mb z^1}(\mb Y) - \log p_{z^2}(\mb Y) \leq 0, \mb z_1^* = 1 \Big] - n^{-c} \\ 
    \geq & \exp\Big(-\big(1 + o(1)\big)\frac{{\mathsf{SNR}^{\mathsf{exc}}}^2}{2} \Big). 
\end{align}

\begin{proof}[Proof of Claim~\ref{claim: lower bound on P[A]}]
    Applying Hoeffding's inequality with some constant $C_2$ gives that the event 
\begin{align}
    & A_1 \coloneqq \Big\{\Big|\sum_{i\in[\lceil\frac{n}{3}\rceil]}\ind\{z_i^* = 1\} - \lceil\frac{n}{3}\rceil/2\Big|  \vee \Big|\sum_{i\in[\lceil\frac{n}{3}\rceil]}\ind\{z_i^* = 2\} - \lceil\frac{n}{3}\rceil/2\Big| \leq C_2 \sqrt{n \log n} \Big\}
    \label{eq: definition of A1}
\end{align}
holds with probability at least $1 - O(n^{-C_2})$ under $\mc P_{\mb z}$. Next, we shall study the low-rank structure of the matrix $\mb Z^*{\bo \Theta^*}\t$. Consider a variant of $\bo \Theta^*$ after orthogonalization: 
\begin{align}
    & \tilde{\bo \Theta}^* = \left(\kappa_n \sqrt{\tr(\bo \Sigma^2)} \mb v_1, \kappa_n \sqrt{\tr(\bo \Sigma^2)} \mb v_2\right), ~ \text{where $ \mb v_1 = \frac{\bo \theta_1^*}{\norm{\bo \theta_1^*}_2}, \mb v_2 = \frac{\bo \theta_2^* (\mb I_p - \mb v_1 \mb v_1\t ) }{\norm{\bo \theta_2^* (\mb I_p - \mb v_1 \mb v_1\t )}_2}$}. 
\end{align}
Elementary algebra gives that the left top-$2$ singular vectors of $\mb Z^*{{}\tilde{\bo \Theta}^*}\t$ can be written as $\tilde{\mb U}^* \coloneqq \mb Z^*\diag(\frac{1}{\sqrt{n_k(\mb z^*)}})_{k\in[2]}$, with the corresponding eigenvalues being $\sqrt{n_k(\mb z^*)} \norm{\tilde{\bo \Theta}^*_{\cdot, k}}_2$ for $k \in[K]$. 
Denote the left top-$2$ singular vectors of $\mb Z^*{\bo \Theta^*}\t$ by $\mb U^*$ and the diagonal matrix of the eigenvalues of $\mb Z^*{\bo \Theta^*}\t$ by $\bo \Lambda^*$. 
We note that the identity $\mb U^* = \tilde{\mb U}^* {{}\tilde{\mb U}^*}\t \mb U^*$ holds, since $\mb U^*$ and $ \tilde{\mb U}^*$ span the same subspace (the column space of $\mb Z^*$). 

Moreover, by the Hanson-Wright inequality, one has 
\begin{align}
    & \max_{k\in[2]}\Big| \kappa_n^2 \tr(\bo \Sigma^2)  - \norm{\bo \theta_k^*}_2^2\Big| \leq \frac{1}{2}C_6 \kappa_n^2 \sqrt{\tr(\bo \Sigma^4)\log n} + \frac{1}{2} C_6 \kappa_n^2\norm{\bo \Sigma^2} \log n \leq C_6 \kappa_n^2 \sqrt{\tr(\bo \Sigma^4)\log n} , \\ 
    & \Big| {\bo \theta_2^*}\t \bo \theta_1^* \Big| = \Big| {\bo \theta_2^*}\t \bo \Sigma^{-1}   \frac{\bo \Sigma \bo \theta_1^*}{\norm{\bo \Sigma \bo \theta_1^*}_2} \Big|\cdot \norm{\bo \Sigma \bo \theta_1^*}_2 \leq C_6\kappa_n^2 \sqrt{\tr(\bo \Sigma^4)\log n}
\end{align}
for some constant $C_6$ with probability at least $1 - O(n^{-c})$, provided that $\log n \ll p$. By the definitions of $\mb v_1$ and $\mb v_2$, these inequalities lead to 
\begin{align} 
     &\frac{1}{C_5}\kappa_n \sqrt{n \tr(\bo \Sigma^2)}\leq \sigma_{2}(\mb Z^*{\bo \Theta^*}\t) \leq \norm{\mb Z^*{\bo \Theta^*}\t} \leq C_7 \kappa_n \sqrt{n\tr(\bo \Sigma^2)},\label{eq: concentration on Theta and tilde Theta 1}
     \\
     &\norm{\bo \Theta - \tilde{\bo \Theta}^*}\leq  \big|\norm{\bo \theta_1^*}_2 - \kappa_n \sqrt{\tr(\bo \Sigma^2)}\big|+ 2\bignorm{{\bo \theta_2^*}\t \mb v_1}_2 + \Big|\norm{\bo \theta_2^*} - \kappa_n \sqrt{\tr(\bo \Sigma^2)} \Big|\leq C_7 \kappa_n \sqrt{\log n} \norm{\bo\Sigma},\label{eq: concentration on Theta and tilde Theta 2}
     \\ 
    &\norm{\tilde{\bo \Lambda}^* - \bo \Lambda^*}\leq  \norm{\mb Z^*(\bo \Theta^* - \tilde{\bo \Theta}^*)\t } \leq C_7 \kappa_n \sqrt{n  \log n} \norm{\bo \Sigma}
    \label{eq: concentration on Theta and tilde Theta 3}
\end{align}
for some constant $C_7$ with probability at least $1 - O(n^{-c})$ conditional on the event $ A_1$ defined in \eqref{eq: definition of A1}. Provided these concentrations above, we define an event $A_2$ with probability exceeding $1- O(d^{-c})$ as 
\begin{equation}
    A_2 \coloneqq \{\text{ \eqref{eq: concentration on Theta and tilde Theta 1}, \eqref{eq: concentration on Theta and tilde Theta 2}, and \eqref{eq: concentration on Theta and tilde Theta 3} hold}\}. 
\end{equation}

Now we turn to controlling the magnitude of $\mathsf{SNR}^{\exc}$. Recall the revelant quantities $\mb w_k^* =  {\mb V^*}\t \bo \theta_k^*$ and $\mb S_k^{\exc} \coloneqq {\bo \Lambda^*}^{-1}{\mb U^*}\t \bb E\big[\mc H(\mb E \mb E\t)_{\cdot, i} \mc H(\mb E \mb E\t)_{i,\cdot}\big] \mb U^* {\bo \Lambda^*}^{-1}$ for $k\in[2]$ and any $i \in [n]$ with $z_i^* = k$ in the definition of $\snr^{\exc}$. 
One useful observation from the fact $\mb S^{\exc}_k  \preceq \tr(\bo \Sigma^2) \cdot \bo {\Lambda^*}^{-2} \eqqcolon  \mb S^*$ for $k \in[2]$ is that 
\begin{align}
    & \mathsf{SNR}^{\exc}(\mb z^*, \{ \bo \theta_k^*\}_{k\in[2]}, \{\bo \Sigma_k\}_{k\in[2]})^2 \geq  \frac{1}{4}\norm{(\mb w_1^*- \mb w_2^*){\mb S^*}^{-\frac12}}_2^2\\ 
\geq & \Big[1 - \frac{ \norm{{{}\tilde{\mb S}^*}^{-1} - {\mb S^*}^{-1}}}{ \sigma_{2}({{}\tilde{\mb S}^*}^{-1})} \Big] \cdot\frac{1}{4}\Bignorm{(\mb w_1^*- \mb w_2^*){{}\tilde{\mb S}^*}^{-\frac12}}_2^2 \\ 
\geq & \Big[1 - \frac{\sigma_1(\tilde{\mb S}^*)\norm{\tilde{\mb S}^* - \mb S^*}}{ \sigma_{2}(\tilde{\mb S}^*) \sigma_{2}(\mb S^*)} \Big] \cdot\frac{1}{4}\Bignorm{(\mb w_1^*- \mb w_2^*){{}\tilde{\mb S}^*}^{-\frac12}}_2^2
\label{eq: SNReh alternative}
\end{align}
where $
    \tilde{\mb S}^* \coloneqq \tr(\bo \Sigma^2) \bo \Lambda^* ({{}\tilde{\mb U}^*}\t \mb U^*)\t {{}\tilde{\bo \Lambda}^*}^{-4}  ({{}\tilde{\mb U}^*}\t \mb U^*) \bo \Lambda^* $. 
 Here $\tilde{\bo \Lambda}^*$ represents the diagonal matrix of the eigenvalues of $\mb Z{{}\tilde{\bo \Theta}^*}\t$. By the definitions of $\tilde{\mb S}^*$ and $\mb S^*$, one has 
\begin{align}
    & \norm{\tilde{\mb S}^* - \mb S^*} \leq 2  \tr(\bo \Sigma^2) \Bignorm{ ({{}\tilde{\mb U}^*}\t \mb U^*) \t {{}\tilde{\bo \Lambda}^*}^{-2} ({{}\tilde{\mb U}^*}\t \mb U^*) - {\bo \Lambda^*}^{-2}  } \Bignorm{{\bo \Lambda^*}^{-1}} \bignorm{\bo \Lambda^* }  \\ 
    & + \tr(\bo \Sigma^2) \Bignorm{ ({{}\tilde{\mb U}^*}\t \mb U^*) \t {{}\tilde{\bo \Lambda}^*}^{-2} ({{}\tilde{\mb U}^*}\t \mb U^*) - {\bo \Lambda^*}^{-2}  }^2 \norm{\bo \Lambda^*}^2. 
    \label{eq: bound tilde Dk - Dk}
\end{align}
Regarding the term $\Bignorm{ ({{}\tilde{\mb U}^*}\t \mb U^*) \t {{}\tilde{\bo \Lambda}^*}^{-2} ({{}\tilde{\mb U}^*}\t \mb U^*) - {\bo \Lambda^*}^{-2}}$ in \eqref{eq: bound tilde Dk - Dk}, we break it down as follows:  
\begin{equation}
     \Bignorm{ ({{}\tilde{\mb U}^*}\t \mb U^*) \t {{}\tilde{\bo \Lambda}^*}^{-2} ({{}\tilde{\mb U}^*}\t \mb U^*) - {\bo \Lambda^*}^{-2}  }  
    \leq    \frac{\Bignorm{ ({{}\tilde{\mb U}^*}\t \mb U^*)\t  {{}\tilde{\bo \Lambda}^*}^{2} ({{}\tilde{\mb U}^*}\t \mb U^*) -  {\bo \Lambda^*}^{2}  }}{\sigma_2({{}\tilde{\bo \Lambda}^*})^2  \sigma_2(\bo\Lambda^*)^2}.
    \label{eq: tilde U tilde Lambda sgn - U Lambda}
\end{equation}

Further, for $\bignorm{ ({{}\tilde{\mb U}^*}\t \mb U^*)\t  {{}\tilde{\bo \Lambda}^*}^{2} ({{}\tilde{\mb U}^*}\t \mb U^*) -  {\bo \Lambda^*}^{2}  }$ one has the following:
\begin{align}
    & \Bignorm{ ({{}\tilde{\mb U}^*}\t \mb U^*)\t  {{}\tilde{\bo \Lambda}^*}^{2} ({{}\tilde{\mb U}^*}\t \mb U^*) -  {\bo \Lambda^*}^{2}  } 
    \leq   \Bignorm{\mb Z^*{{}\tilde{\bo \Theta}^*}\t \tilde{\bo \Theta}^* {\mb Z^*}\t - \mb Z^*{\bo \Theta^*}\t \bo \Theta^*{\mb Z^*}\t } \\ 
    \leq & C_8  \kappa_n^2 n \sqrt{\tr(\bo \Sigma^2) \log n} \norm{\bo \Sigma}
    \label{eq: sgnt tilde Lambda sgn - Lambda}
\end{align}
with some constants $C_8$, invoking \eqref{eq: concentration on Theta and tilde Theta 1} and \eqref{eq: concentration on Theta and tilde Theta 3}. 

Therefore, plugging \eqref{eq: tilde U tilde Lambda sgn - U Lambda}, and \eqref{eq: sgnt tilde Lambda sgn - Lambda} into \eqref{eq: bound tilde Dk - Dk} yields that for some constant $C_9$,
\begin{align}
    & \norm{\tilde{\mb S}^* - \mb S^*} \leq C_9  \frac{ \sqrt{\log n} \norm{\bo \Sigma}}{\kappa_n^2 n \tr(\bo \Sigma^2)^{\frac12} }. 
\end{align}
This also leads to:
\begin{equation}
    \frac{\sigma_1(\tilde{\mb S}^*)\norm{\tilde{\mb S}^* - \mb S^*}}{ \sigma_{2}(\tilde{\mb S}^*) \sigma_{2}(\mb S^*)} \leq 
     C_{10} \frac{ \sqrt{\log n} \norm{\bo \Sigma}}{ \tr(\bo \Sigma^2)^{\frac12}}\leq C_{10} \frac{\sqrt{\log n}\norm{\bo \Sigma}}{\sqrt{\tr(\bo \Sigma^2)}}
     \label{eq: upper bound on factor 1 in snr lower bound}
\end{equation}
for some constant $C_{10}$. 

For the alternative to $\mathsf{SNR}^{\mathsf{exc}}$ appearing in \eqref{eq: SNReh alternative}, it is further simplified as follows: 
\begin{align}
    & \Bignorm{(\mb w_1^*- \mb w_2^*){{}\tilde{\mb S}^*}^{-\frac12}}_2^2 \\ 
    =&  \tr(\bo \Sigma^2)^{-1} \big(\frac{1}{\sqrt{n_1(\mb z^*)}} , - \frac{1}{\sqrt{n_2(\mb z^*)}} \big)\t \diag\big(\kappa_n^4 (n_1(\mb z^*)\tr(\bo \Sigma^2))^2, \kappa_n^4 (n_2(\mb z^*)\tr(\bo \Sigma^2))^2\big)
    \big(\frac{1}{\sqrt{n_1(\mb z^*)}} , - \frac{1}{\sqrt{n_2(\mb z^*)}} \big) \\ 
    = & \kappa_n^4  n \tr(\bo \Sigma^2). 
    \label{eq: upper bound on factor 2 in snr lower bound}
\end{align}

Finally, we choose $\kappa_n = \sqrt{2\mathsf{SNR}_0^{\mathsf{exc}}} \big(n\tr(\bo\Sigma^2)\big)^{-\frac14} \Big[(1 + C_\kappa \frac{\sqrt{\log n}\norm{\bo \Sigma}}{\sqrt{\tr(\bo \Sigma^2)}})\Big]$
with some sufficiently large constant $C_\kappa$. 
Taking \eqref{eq: upper bound on factor 1 in snr lower bound}, \eqref{eq: upper bound on factor 2 in snr lower bound}, and \eqref{eq: SNReh alternative} collectively implies that 
\begin{align}
    &  \mathsf{SNR}^{\exc}(\mb z^*, \{ \bo \theta_k^*\}_{k\in[2]}, \{\bo \Sigma_k\}_{k\in[2]})^2 \geq {\mathsf{SNR}^{\mathsf{exc}}_0}^2
\end{align} 
under the event $ A_1 \cap A_2 \subseteq A$ with probability exceeding $1 -O(n^{-c})$, where we used the condition that $\sqrt{\log n}\norm{\bo \Sigma} = o( \sqrt{\tr(\bo \Sigma^2)})$. 
\end{proof}

\section{Linear Approximation for Singular Subspace Estimation}
\label{sec: proof of linear approximation for singular subspace estimation}
This section develops the proof of our row-wise singular subspace perturbation theory (Theorem~\ref{thm: singular subspace perturbation theory}), which accommodates the weak-signal regime. As elaborated in Section~\ref{subsec: subspace perturbation}, although row-wise subspace perturbation theory has been extensively developed in a variety of settings, most existing results are derived by taking a maximum of $\ell_2$ fluctuation norms over all rows. Such an argument is prone to yield an extra $\sqrt{\log d}$ factor in the residual term's upper bound --- an effect that becomes non-negligible relative to the linear term in the weak-consistency regime --- along with a polynomial probability tail. 

An notable exception is the $\ell_p$ theory developed in \cite{abbe2022}, which used a refined analysis to avoid logarithmic factors under weak signals. In contrast to our result, their analysis is tailored to sub-Gaussian noise, whereas our approximation applies to (high-probability) bounded noise, which requires additional techniques to handle block dependencies.

Recall that we consider a general rank-$r$ low-rank matrix $\mb M^*\in \bb R^{n\times p}$ and its noisy version $\mb M = \mb M^* + \mb E \in \bb R^{n \times p}$ and perform the eigen decomposition on $\mc H(\mb M\mb M\t)$ to obtain $\mb U \in \bb R^{n \times r}$ as an estimate for the top-$K$ left singular vectors of $\mb M^*$. We let $d \coloneqq n \vee p$.

Before proceeding, we introduce a noise matrix $\mb W$ and variants of $\mb W$ that decouple the dependency when needed:   
\begin{align}
    & \mb W = \mc H\big(\mb E\mb E\t + \mb E {\mb M^*}\t + \mb M^* \mb E\t\big) -\diag(\mb M^*{\mb M^*}\t), 
\\
    & \mb W^{(i)} \coloneqq \mc H(\mc P_{-i,\cdot}(\mb E) \mc P_{-i,\cdot}(\mb E)\t + \mc P_{-i,\cdot}(\mb E){\mb M^*}\t + \mb M^* \mc P_{-i,\cdot}(\mb E)\t  ) - \diag(\mb M^*{\mb M^*}\t), 
        \label{eq: leave-one-row-out W}
    \\ 
    & \mb W^{(-c)} \coloneqq \mc H(\mc P_{\cdot,-S_{c}}(\mb E) \mc P_{\cdot,-S_{c}}(\mb E)\t + \mc P_{\cdot,-S_{c}}(\mb E){\mb M^*}\t + \mb M^* \mc P_{\cdot,-S_{c}}(\mb E)\t  ) - \diag(\mb M^*{\mb M^*}\t) 
    \label{eq: leave-one-block-out W}
\end{align}
for $i\in[n], ~ c\in [l]$. Here,   \(\mathcal P_{-i,\cdot}(\cdot)\) zeroes out the \(i\)-th row of its matrix argument,   \(\mathcal P_{\cdot,-j}(\cdot)\) zeroes out the \(j\)-th column, and  \(\mathcal P_{-S,\cdot}(\cdot)\) and \(\mathcal P_{\cdot,-S}(\cdot)\) zero out all rows or columns indexed by a set \(S\), respectively.

At a high level, our strategy is as follows: We first develop a decomposition for $ \bignorm{(\mb U\mb U\t \mb U^* - \mb U^*)_{i,\cdot} - \mb W_{i,\cdot} \mb U^* {\bo \Lambda^*}^{-2}}_2$, which is closely connected to the ultimate target $\bignorm{\mb U_{i,\cdot} (\mb U\t \mb U^*) - \mb U^*_{i,\cdot}\big(\mb I - (\mb M^*{\mb M^*}\t)_{i,i} {\bo \Lambda^*}^{-2}\big) - \mk L_i }_2$. This decomposition expresses the error as a partial sum of products involving the noise matrix and deterministic matrices, as formalized in Theorem~\ref{thm: linear proximity for singular subspace perturbation}. Next, in Lemmas~\ref{lemma: bound for quantities in expansion (Gaussian case)} and \ref{lemma: bound for quantities in expansion (bounded case)}, we separately bound these product terms under Gaussian noise and under bounded noise. Finally, we assemble all components to obtain the final bounds in \eqref{eq: final decomposition for linear approximation of singular subspace perturbation 1} and \eqref{eq: final decomposition for linear approximation of singular subspace perturbation 2}.

\begin{theorem}
    Instate the assumptions in Theorem~\ref{thm: singular subspace perturbation theory}. Then, with probability at least $1 - O(d^{-c_p})$, one has 
    \begin{align}
        &     \norm{(\mb U\mb U\t \mb U^* - \mb U^*)_{i,\cdot} - \big((\mb I - \mb U^*{\mb U^*}\t)\mb W \mb U^* {\bo \Lambda^*}^{-2} \big)_{i,\cdot}}_2 \\ 
        \lesssim &   \norm{\mb U^*}\ti \cdot \deltaop^2 / (\sigma_r^*)^4  +  \Big[ \sigma_1^*\norm{\mb E_{i,\cdot} \mb V^*}_2 + \norm{\mb E_{i,\cdot} \mc P_{-i,\cdot}(\mb E)\t \mb U^*}_2 \Big] \cdot \deltaop / (\sigma_r^*)^4 \\ 
    + &  \sum_{h=0}^{\log n -1}  \norm{\mb E_{i, \cdot}\mc P_{-i,\cdot}(\mb E)\t \mc P^0 \big(\mc P^0 \mb W^{(i)} \mc P^0 \big)^h \mb W^{(i)} \mb U^*}_2 \cdot (\sigma_r^*)^{-2(h+2)} . 
    \end{align} 
    \label{thm: linear proximity for singular subspace perturbation}
\end{theorem}
We make note of the RHS of the bound presented in Theorem~\ref{thm: linear proximity for singular subspace perturbation}: in the Gassuain case, a direct application of the Hanson-Wright inequality, conditional on $\mb E_{-i,\cdot}$, already provides the desired concentration as stutied in \cite{agterberg2022entrywise} (see Lemma~\ref{lemma: bound for quantities in expansion (Gaussian case)}). On the other hand, under bounded noise, which was not treated in \cite{abbe2022,agterberg2022entrywise}, one must carefully handle the terms $\bignorm{\mc P_{-i,\cdot}(\mb E)\t \mc P^0 \big(\mc P^0 \mb W^{(i)} \mc P^0 \big)^h \mb W^{(i)} \mb U^*}\ti$ arising from the matrix Bernstein inequality. To address this, we employ a delicate inductive argument together with the leave-one-block-out version $\mb W^{(-c)}$ defined 
as developed in the proof of Lemma~\ref{lemma: bound for quantities in expansion (bounded case)}. Compared to the leave-two-out strategy adopted in \cite{cai2021subspace,yan2024inference}, our leave-one-block-out approach is more concise and has a weaker requirement on signal strength.

To interpret the terms involved in Theorem~\ref{thm: linear proximity for singular subspace perturbation}, we have the following linear approximation error guarantees for Gaussian noise and bounded noise with local dependence, respectively, with the proofs deferred to Section~\ref{subsec: proof of bounds for quantities in expansion}. 

\begin{lemma}
    Instate the Gaussian noise assumption in Theorem~\ref{thm: singular subspace perturbation theory}. Then we have for any $k \geq 0$ and some constant $C_0$ that 
    \begin{align}
        & \norm{\mb E_{i,\cdot} \mc P_{-i,\cdot}(\mb E)\t \mc P^0 (\mc P^0 \mb W^{(i)} \mc P^0 )^k \mc P^0 \mb W^{(i)} \mb U^*}_2 \leq C_0 \sqrt{\frac{\mu_1 r }{n}} \deltaop^{t+2} (\sqrt{\log d} \wedge t_0)
    \end{align}
    with probabiltiy at least $1 -  O(e^{-t_0^2 / 2} \vee d^{-c_p -1 })$. 
    \label{lemma: bound for quantities in expansion (Gaussian case)}
\end{lemma}
\begin{lemma}
    Instate the assumption for bounded noise with local dependence in Theorem~\ref{thm: singular subspace perturbation theory}. Then we have simultaneuously for every $k$ with $0 \leq k \leq k' \leq d$ that 
    \begin{align}
        & \norm{\mb E_{i,\cdot} \mc P_{-i,\cdot}(\mb E)\t \mc P^0 (\mc P^0 \mb W^{(i)} \mc P^0 )^k \mc P^0 \mb W^{(i)} \mb U^*}_2 \leq  C_0 \sqrt{\frac{\mu_1 r }{n}} \big[ C_0\deltaop\big]^{t+2} (\sqrt{\log d} \wedge t_0 +  \sqrt{\log k'})
    \end{align}
    with probabiltiy at least $1 -   O(e^{-t_0^2 / 2} \vee d^{-c_p - 1})$. 
    \label{lemma: bound for quantities in expansion (bounded case)}
\end{lemma}

As a consequence of the above lemmas, we have the proof of Theorem~\ref{thm: singular subspace perturbation theory} below: 
\begin{proof}[Proof of Theorem~\ref{thm: singular subspace perturbation theory}]
    Substituting the bounds from Lemma~\ref{lemma: bound for quantities in expansion (Gaussian case)} or Lemma~\ref{lemma: bound for quantities in expansion (bounded case)}, together with the estimates for $\norm{\mb E_{i,\cdot} \mb V^*}_2$ and $\norm{\mb E_{i,\cdot} \mc P_{-i,\cdot}(\mb E)\t \mb U^*}_2$ in Lemma~\ref{lemma: noise matrix concentrations using the universality (Gaussian)} or Lemma~\ref{lemma: noise matrix concentrations using the universality (bounded)} (stated lalter), into the decomposition in Theorem~\ref{thm: linear proximity for singular subspace perturbation}, we obtain that  
    \begin{align}
        & \norm{(\mb U\mb U\t \mb U^* - \mb U^*)_{i,\cdot} - \big((\mb I - \mb U^*{\mb U^*}\t)\mb W \mb U^* {\bo \Lambda^*}^{-2} \big)_{i,\cdot}}_2 \lesssim \sqrt{\frac{\mu_1 r}{n}} \frac{\deltaop^2}{(\sigma_r^*)^4} + \frac{\deltaop}{(\sigma_r^*)^4} \Big[\bar \sigma \sqrt{r} \sigma_1^* + \sigma \tilde\sigma \sqrt{pr} \Big]t_0\\ 
        & \qquad + \sum_{h =0}^{\log n-1} C_0 \sqrt{\frac{\mu_1 r }{n}} \big[ C_0\deltaop\big]^{t+2} ( t_0 + \sqrt{\log \log n}) \cdot (\sigma_r^*)^{-2(h+2)} \\ 
        \lesssim & \sqrt{\frac{\mu_1 r}{n}}\cdot \frac{\deltaop^2 (t_0 + \sqrt{\log \log n})}{(\sigma_r^*)^4}
    \end{align}
    holds with probability at least $1 - O(e^{-t^2_0/2}\vee d^{-c_p})$ since $\deltaop \ll (\sigma_r^*)^2$. Finally, we define the bias term as  
    \eq{
     \bias_{z_i^*}\t \coloneqq - \mb U^*_{i,\cdot} {\mb U^*}\t \big(\mc H(\mb E\mb E\t) + \mb E {\mb M^*}\t  \big) \mb U^*{\bo \Lambda^*}^{-2} - (\mb I - \mb U^*{\mb U^*}\t )_{i,\cdot} \diag(\mb M^*{\mb M^*}\t) \mb U^* {\bo \Lambda^*}^{-2},\text{~for $i\in[n]$}. 
     \label{eq: definition of bias}
    }
    Then, one has from the definition of $\mb W$ that 
    \begin{align}
        & \bignorm{\mb U_{i,\cdot} (\mb U\t \mb U^*) - \mb U^*_{i,\cdot} - \mk L_i\t - \bias_{z_i^*}\t  }_2   \\ 
        \leq & \bignorm{(\mb U\mb U\t \mb U^* - \mb U^*)_{i,\cdot} - \big((\mb I - \mb U^*{\mb U^*}\t)\mb W \mb U^* {\bo \Lambda^*}^{-2} \big)_{i,\cdot}}_2 + 2\norm{(\mb I - \mb U^*{\mb U^*}\t )_{i,\cdot} \diag(\mb E{\mb M^*}\t) \mb U^*{\bo \Lambda^*}^{-2}}_2 \\ 
        \leq &  \sqrt{\frac{\mu_1 r}{n}}\cdot \Big[ \frac{\deltaop^2 (t_0 + \sqrt{\log \log n})}{(\sigma_r^*)^4} +  \frac{\mu_1^{\frac12} r \sigma_1^* \bar \sigma \sqrt{ \log d / n} }{(\sigma_K^*)^2 } \Big] \lesssim  \sqrt{\frac{\mu_1 r}{n}}\cdot \Big[ \frac{\deltaop^2 t_0}{(\sigma_r^*)^4} +  \frac{\mu_1^{\frac12} r \sigma_1^* \bar \sigma \sqrt{ \log d / n} }{(\sigma_K^*)^2 } \Big],
        \label{eq: final decomposition for linear approximation of singular subspace perturbation 1} 
        \\ 
         & \hspace{-1cm} \norm{\bias_{z_i^*}}_2    
        \leq   \norm{\mb U^*}\ti \bignorm{{\mb U^*}\t \big(\mc H(\mb E\mb E\t) + \mb E {\mb M^*}\t - \diag(\mb M^*{\mb M^*}\t) \big) \mb U^*{\bo \Lambda^*}^{-2}} + \bignorm{\text{diag}(\mb M^*{\mb M^*}\t) \mb U^*{\bo \Lambda^*}^{-2}}\ti\\ 
        \lesssim & \sqrt{\frac{\mu_1 r}{n}}\cdot  \frac{r \tilde\sigma \sigma \sqrt{p \log d} + \sigma_1^* \bar \sigma r \sqrt{\log d} + \mu_1 r (\sigma_1^*)^2 / n}{(\sigma_r^*)^2}  \lesssim \sqrt{\frac{\mu_1 r}{n}}\cdot  \frac{r \deltaop \sqrt{\log d / n} + \mu_1 r (\sigma_1^*)^2 / n}{(\sigma_r^*)^2}
        \label{eq: final decomposition for linear approximation of singular subspace perturbation 2}, 
    \end{align}
    holds with probability at least $1 - O(e^{-t^2_0/2}\vee d^{-c_p})$ for $t \gg \sqrt{\log \log n}$. Here, we used the following facts by Lemmas~\ref{lemma: noise matrix concentrations using the universality (Gaussian)}~and~\ref{lemma: noise matrix concentrations using the universality (bounded)} that, with probability at least $1- O(d^{-c_p})$, 
    \begin{align}
    & |\mb E_{i,\cdot} \mb M_{i,\cdot}\t | \leq \norm{\mb E_{i,\cdot} \mb V^*}_2 \sigma_1^* \norm{\mb U^*}\ti \lesssim \bar \sigma \sqrt{r \log d} \sigma_1^* \sqrt{\frac{\mu_1 r}{n}},  \\ 
    & \norm{{\mb U^*}\t \mb E \mb V^*} \leq r^{\frac12} \max_{j \in[r]}\bignorm{\frac{1}{\sqrt{n_k}}\sum_{i\in \mc I_k(\mb z^*)}  \mb E_{i,\cdot} \mb V^*}_2 \lesssim r \bar \sigma \sqrt{\log n}, \\ 
    &  \bignorm{{\mb U^*}\t \mc H(\mb E \mb E\t) \mb U^*} \lesssim r \tilde\sigma \sigma \sqrt{p \log d}. 
    \end{align}
\end{proof}

\subsection{Proof of Theorem~\ref{thm: linear proximity for singular subspace perturbation}}

In what follows, we shall present the necessary components for proving Theorem~\ref{thm: linear proximity for singular subspace perturbation}. 
We start with the control on the spectral norm of $\mb W$ and its variants, with its proof deferred to Section~\ref{subsubsec: proof of W concentration}. Its proof for bounded cases involves a combination of two universality results in \cite{brailovskaya2022universality}, which sharpens \cite[Lemma~1]{cai2021subspace} by removing logarithmic factors --- a refinement that is crucial for handling the weak-signal regime.
\begin{lemma}
    Instate the assumptions in Theorem~\ref{thm: singular subspace perturbation theory}. Under Gaussian noise one has 
    \begin{equation}
        \max\Big\{ \bignorm{\mb W}, ~\max_{i\in[n]} \bignorm{\mb W^{(i)}} \Big\}  \leq  C_W \Big( \tilde \sigma^2 n + \sigma \tilde \sigma\sqrt{np}   +  \bar \sigma  \sqrt{n}  \sigma_1^* + \frac{\mu_1 r }{n} {\sigma_1^*}^2 \Big) \eqqcolon \deltaop 
    \end{equation}
    for some constant $C_W$ with probability at least $1-O(d^{-c_p - 2})$ with some constant $c_p >0$. Moreover,
    under bounded noise, one has that 
\begin{align}
    & \max\Big\{ \bignorm{\mb W}, ~\max_{i\in[n]} \bignorm{\mb W^{(i)}}, ~\max_{c\in[l]} \bignorm{\mb W^{(-c)}} 
    \Big\}  
    \leq \deltaop , \qquad {\max_{c \in [l]} \norm{\mb W^{(-c)} \mb U^*}\ti \leq C_H \sqrt{\frac{\mu_1 r}{n}} \deltaop \sqrt{\log d}}
\end{align}
for some sufficiently large constant $C_W$ with probability at least $1-O(d^{-c_p - 2})$. 
\label{lemma: concentration for W and its variants}
\end{lemma}

An important tool in our analysis is the expansion formula developed in \cite{xia2021normal}, as stated below. 
\begin{lemma}[Theorem~1 in \cite{xia2021normal}]
\label{lemma: expansion from xia2021normal}
Given $\norm{\mb W} \ll (\sigma_r^*)^2$, then it holds that
    $$
         \mb U \mb U\t - \mb U^* {\mb U^*}\t  = \sum_{k\geq 1} \mc S_k,
    $$
    where the terms $\mc S_k$, $k \geq 1$ are defined as 
    $$
        \mc S_k =  \mc S_{\mb R^*{\mb R^*}\t,k }(\mb W)\coloneqq \sum_{\mb s: s_1 + \cdots + s_{k+1} = k} (-1)^{1 + \tau(\mb s)} \mc P^{-s_1} \mb W \mc P^{-s_2} \mb W\cdots \mc P^{-s_p} \mb W \mc P^{-s_{p+1}}
    $$
    with $\mc P^0 = \mb U^*_{\perp}{\mb U^*_\perp}\t$ and $\mc P^{-k} = \mb U^*{\bo \Lambda^*}^{-2k} {\mb U^*}\t$. 
\end{lemma}

To justify the first-order approximation provided by Lemma~\ref{lemma: expansion from xia2021normal}, it is sufficient to show that the higher-order terms are negligible. The following lemma, aligned the spirit of Lemma~6 in \cite{agterberg2022entrywise}, establishes a sharp upper bound that holds in the weak-signal regime. Its proof is postponed to Section~\ref{subsubsec: proof of lemma: upper bound for SkU}. 
\begin{lemma}
    Instate the assumptions in Theorem~\ref{thm: singular subspace perturbation theory}.  Then given $c_p >0$, one has for any $i \in [n]$ that 
    \begin{align}
        & \norm{\big[ \mc S_k\big]_{i,:} \mb U^*}_2 \leq   C_0  \bigg\{ \Big[4C_0 \deltaop\Big]^{k-1}  \Big[ \sigma_1^*\norm{\mb E_{i,\cdot} \mb V^*}_2 + \norm{\mb E_{i,\cdot} \mc P_{-i,\cdot}(\mb E)\t \mb U^*}_2 \Big]  (\sigma_r^*)^{-2k} \\ 
    + &  \sum_{h=0}^{k-2}  \Big[4C_0  \deltaop\Big]^{k-h-2}  \norm{\mb E_{i, \cdot}\mc P_{-i,\cdot}(\mb E)\t \mc P^0 \big(\mc P^0 \mb W^{(i)} \mc P^0 \big)^h \mc P^0 \mb W^{(i)} \mb U^*}_2 (\sigma_r^*)^{-2k}  +  \norm{\mb U^*}\ti  \cdot \Big[4C_0\deltaop / (\sigma_r^*)^2\Big]^{k} \bigg\} 
    \end{align}
    holds with probabability at least $1- O(d^{-c_p-1})$ for some constant $C_0$. 
        \label{lemma: upper bound for SkU}
\end{lemma}

Building upon Lemma~\ref{lemma: expansion from xia2021normal} and Lemma~\ref{lemma: upper bound for SkU}, we demonstrate Theorem~\ref{thm: linear proximity for singular subspace perturbation} as follows. 
\begin{proof}[Proof of Theorem~\ref{thm: linear proximity for singular subspace perturbation}]
    First, one has $\deltaop / (\sigma_r^*)^2 \leq \frac{1}{e}$ by assumption. Then, applying Lemma~\ref{lemma: upper bound for SkU} then yields the following upper bound on the partial sum: 
    \begin{align}
            & \sum_{h=2}^{\log n+1}\norm{(\mc S_k)_{i,\cdot} \mb U^*}_2 \leq  \frac{C_0}{1 - 1/e} \cdot \bigg\{ \Big[4C_0 \deltaop\Big] \cdot \Big[ \sigma_1^*\norm{\mb E_{i,\cdot} \mb V^*}_2 + \norm{\mb E_{i,\cdot} \mc P_{-i,\cdot}(\mb E)\t \mb U^*}_2 \Big] \cdot (\sigma_r^*)^{-4} \\ 
        + &  \sum_{h=0}^{t-2}  \norm{\mb E_{i, \cdot}\mc P_{-i,\cdot}(\mb E)\t \mc P^0 \big(\mc P^0 \mb W^{(i)} \mc P^0 \big)^h \mc P^0 \mb W^{(i)} \mb U^*}_2 \cdot (\sigma_r^*)^{-2(h+2)}   +  \norm{\mb U^*}\ti \cdot \Big[4C_0\deltaop / (\sigma_r^*)^2\Big]^{2} \bigg\},
    \end{align}
    with probability at least $1 - O(d^{-c_p-1} \log d) = 1 - O(d^{-c_p})$. 
    Next, consider the tail sum. We have
    \begin{align}
         & \sum_{h = \log n + 2 }^{+\infty}\norm{(\mc S_h)_{i,\cdot} \mb U^*}_2 \leq \sum_{h = \log n + 2 }^{+\infty}\norm{\mc S_h \mb U^*} \leq \sum_{k=\log n +2}^{+\infty} \norm{\mb W^h} \cdot (\sigma_r^*)^{-2h} \\ 
         \leq &  \sum_{h =\log n+2}^{+\infty} e^{-\log n} \norm{\mb W}^{h - \log n} \cdot (\sigma_k^*)^{-2( h - \log n )} \leq \frac{1}{n} \Big[\deltaop / (\sigma_r^*)^2\Big]^{2} \leq \norm{\mb U^*}\ti \Big[\deltaop / (\sigma_r^*)^2\Big]^{2}. 
    \end{align}
    
    Combining the above bounds with Lemma~\ref{lemma: expansion from xia2021normal} yields with probability at least $1- O(d^{-c_p})$ that
    \begin{align}
        & \norm{(\mb U\mb U\t \mb U^* - \mb U^*)_{i,\cdot} - \mb W \mb U^* {\bo \Lambda^*}^{-2}}_2 \leq \sum_{h=2}^{\log n+1}\norm{(\mc S_h)_{i,\cdot} \mb U^*}_2 + \sum_{h = \log n + 2 }^{+\infty}\norm{(\mc S_h)_{i,\cdot} \mb U^*}_2
        \\ 
        \lesssim & \norm{\mb U^*}\ti \cdot \deltaop^2 / (\sigma_r^*)^4  +  \Big[ \sigma_1^*\norm{\mb E_{i,\cdot} \mb V^*}_2 + \norm{\mb E_{i,\cdot} \mc P_{-i,\cdot}(\mb E)\t \mb U^*}_2 \Big] \cdot \deltaop / (\sigma_r^*)^4 \\ 
    + &  \sum_{h=0}^{\log n-1}  \norm{\mb E_{i, \cdot}\mc P_{-i,\cdot}(\mb E)\t \mc P^0 \big(\mc P^0 \mb W^{(i)} \mc P^0 \big)^h \mb W^{(i)} \mb U^*}_2 \cdot (\sigma_r^*)^{-2(h+2)}  . 
    \end{align}
\end{proof}

\subsubsection{Proof of Lemma~\ref{lemma: concentration for W and its variants}}
\label{subsubsec: proof of W concentration}
    For the Gaussian case, it comes from the Hanson-Wright inequality for an arbitrary $\mb v\in \bb S^{n-1}$:  
    \longeq{
        & \big|\mb v\t \mb E \mb E\t \mb v - \bb E[\mb v\t \mb E \mb E\t \mb v]\big| \lesssim \bignorm{\sum_{i\in[n]} v_i^2 \bo \Sigma_{z_i^*}}_F \sqrt{n + \log d} +  \bignorm{\sum_{i\in[n]} v_i^2 \bo \Sigma_{z_i^*}} \big( n + \log d \big) 
         \lesssim  \tilde\sigma \sigma \sqrt{np} + \tilde\sigma^2 n 
    }
    with probability at least $1 - O(d^{-c_p-2} 9^{-n})$. Let $\mc N$ be a $\frac14$-net of the $n$-sphere with $|\mc N| \leq 9^n$. Hence, one has $\norm{\mb E \mb E\t - \bb E[\mb E \mb E\t]} \leq 2 \max_{v\in \mc N}\big|\mb v\t \mb E \mb E\t \mb v - \bb E[\mb v\t \mb E \mb E\t \mb v]\big|$, which leads to 
    \begin{equation}
        \norm{\mb E \mb E\t - \bb E[\mb E \mb E\t]}  \lesssim \tilde\sigma \sigma \sqrt{np} + \tilde\sigma^2 n 
    \end{equation}
    with probability at least $1 - O(d^{-c_p-2})$. Since $\norm{\diag(\mb E \mb E\t) - \diag(\bb E[\mb E \mb E\t]) } \leq \norm{\mb E\mb E\t - \bb E[\mb E \mb E\t]}$, one obtains
    \begin{equation}
        \bignorm{\mc H(\mb E\mb E\t)}  \leq \bignorm{\mb E\mb E\t - \bb E[\mb E \mb E\t]} + \bignorm{\diag(\mb E \mb E\t) - \diag(\bb E[\mb E \mb E\t]) } \lesssim \tilde\sigma \sigma \sqrt{np} + \tilde\sigma^2 n
    \end{equation}
    with probability at least $1 - O(d^{-c_p-2})$. 

    Moreover, one has 
    \eq{
    \bignorm{\mc H(\mb E{\mb M^*}\t)} \leq \bignorm{\mb E {\mb M^*}\t} + \bignorm{\diag(\mb E {\mb M^*}\t)} \leq 2\bignorm{\mb E {\mb M^*}\t} \leq 2 \max_{\mb v\in \mc N} \bignorm{\mb v\t \mb E{\mb M^*}\t}_2. 
    \label{eq: H(EM*t) relation}
    }
    To upper bound $\norm{\mb v\t \mb E\mb M^*}_2$ with an arbitrary $\mb v$, an application of the Hanson-Wright inequality yields that 
    \longeq{
        & \norm{\mb v\t \mb E\mb M^*}_2^2 \leq (\sigma_1^*)^2 \norm{\mb v\t \mb E \mb V^*}_2^2 \\ 
        \lesssim &  \sigma_1^* \bignorm{\sum_{i\in[n]}v_i^2 ({\mb V^*}\t \bo \Sigma_{z_i^*} \mb V^*)}_F \sqrt{ (n + \log d)} + \sigma_1^* \bignorm{\sum_{i\in[n]}v_i^2 ({\mb V^*}\t \bo \Sigma_{z_i^*} \mb V^*)} (n + \log d ) \\ 
         \lesssim &  (\sigma_1^*)^2 \sqrt{Kn} \bar\sigma^2 +( \sigma_1^*)^2 \bar \sigma^2 n \lesssim  (\sigma_1^*)^2 \bar \sigma^2 n 
    }
    with probability at least $1- O(d^{-c_p-2}9^{-n})$. Taking a union bound for the RHS of \eqref{eq: H(EM*t) relation} yields that 
    \begin{align}
        & \bignorm{\mc H(\mb E{\mb M^*}\t)} \lesssim \sigma_1^* \bar \sigma \sqrt{n}
    \end{align}
    with probability at least $1- O(d^{-c_p-2})$.

    These bounds taken collectively with the fact that $\bignorm{\diag(\mb M^* {\mb M^*}\t)} \leq \mu_1^2 K / n$ yields that 
    \begin{align}
        & \norm{\mb W} \leq \bignorm{\mc H(\mb E {\mb M^*}\t + \mb M^* \mb E\t + \mb E \mb E\t )} + \bignorm{\diag(\mb M^* {\mb M^*}\t)} \leq 
        C_W \Big(\tilde\sigma \sqrt{np} + \tilde\sigma^2 n  + \bar \sigma \sqrt{n} \sigma_1^*  +   \frac{\mu_1 r }{n} {\sigma_1^*}^2 \Big)
    \end{align}
    holds for some constant $C_W$ with probability at least $1-O(d^{-c_p-2})$.

    For the bounded noise with local dependence, we separately parse each component in $\mb W$ hereafter. 
    \begin{enumerate}
        \item For the quadratic noise term $\mb E \mb E\t - \bb E[\mb E\mb E\t]$, our approach splits into two regimes depending on whether $mn (\log d)^6 \gtrsim p$ or $mn (\log d)^6 \lesssim p$, respectively. 
        
        \emph{Low-dimensional regime. }
        For the relatively low dimensional regime with $mn (\log d)^6 \gtrsim p$, we resort to the universality results (Theorem~2.5 in \cite{brailovskaya2022universality}) on a linerized version of $\mb E\mb E - \bb E[\mb E \mb E\t ]$ so as to relate it to its Gaussian analog, whose concentration is resolved by the result in \cite{amini2021concentration}. 

        The linearization argument, following \cite[Section~3.3]{bandeira2023matrix}, seeks to tie the quadratic form in interest to the squared singular values of a matrix associated with $\mb E$. To be specific, consider 
        \begin{align}
            & \check{\mb E} = \left(\begin{matrix}
                \mb 0 & \mb E & (\norm{\bb E[\mb E \mb E\t ]} \mb I_n - \bb E[\mb E \mb E\t ] )^{\frac12} \\ 
                \mb E\t & \mb 0 & \mb 0 \\ 
                (\norm{\bb E[\mb E \mb E\t ]} \mb I_n - \bb E[\mb E \mb E\t ] )^{\frac12} & \mb 0 & \mb 0
            \end{matrix}\right),
        \end{align}
        and let $\check{\mb G}$ be its Gaussian counterpart, namely, a matrix with Gaussian entries having the same means and covariance structure. 

        Simple algebra gives that the eigenvalues of $\tilde{\mb E}$ coincides with those of $\mb E\mb E\t - \bb E[\mb E\mb E\t] + \norm{\bb E[\mb E\mb E\t]} \mb I_n$, each with multiplicity $2$. Since $|a^{\frac12} - b^{\frac12}|(a^{\frac12} + b^{\frac12}) = |a -b|$ for $a, b\geq 0$, the relation
        $$ \mathrm d_H(\mathrm{sp}(\check{\mb E}), \mathrm{sp}(\check{\mb G})) \leq \epsilon,
        $$
        where $\mathrm{sp}(\mb X)$ denotes the set of the eigenvalues of a matrix $\mb X$ and $\mathrm d_H(A, B) = \inf\{\epsilon >0: A \subseteq B + [-\epsilon, \epsilon]\text{~and~} B \subseteq A + [-\epsilon, \epsilon]\}$, 
        implies that 
        \begin{align}
             & \Big|\lambda_{\max}\big(\mb E\mb E\t - \bb E[\mb E\mb E\t] + \norm{\bb E[\mb E\mb E\t]} \mb I_n \big) -  \lambda_{\max}\big(\mb G\mb G\t - \bb E[\mb E\mb E\t] + \norm{\bb E[\mb E\mb E\t]} \mb I_n \big) \Big|\\ 
             \vee & \Big|\lambda_{\min}\big(\mb E\mb E\t - \bb E[\mb E\mb E\t] + \norm{\bb E[\mb E\mb E\t]} \mb I_n \big) -  \lambda_{\min}\big(\mb G\mb G\t - \bb E[\mb E\mb E\t] + \norm{\bb E[\mb E\mb E\t]} \mb I_n \big) \Big|\\ 
             \leq & \big(\norm{\mb E} + \norm{\mb G} + 2 \norm{\bb E[\mb E\mb E\t]}^{\frac12}   \big)\epsilon. 
        \end{align}
        
        To apply Theorem~2.5 in \cite{brailovskaya2022universality}, we need to control the quantities $\sigma_*(\check{\mb E})$, $v(\check{\mb E})$, and $R(\check{\mb E})$ defined therein. Notice that $\sigma_*(\check{\mb E}) = \sigma_*(\mb E)$, $v(\check{\mb E}) = v(\mb E)$, $R(\check{\mb E}) = R(\mb E)$. Then we plug the controls \eqref{eq: bound on v(E)}, \eqref{eq: bound on sigma_*(E)}, and \eqref{eq: bound on R(E)} into \cite[Theorem~2.5]{brailovskaya2022universality} to yield that 
        \begin{align}
            & \big|\lambda_{\max}(\check{\mb E})- \lambda_{\max}(\check{\mb G})\big| \\ 
             \lesssim & \big(\tilde\sigma \sqrt{\log d} + m^{\frac16}B^{\frac13}\big(p^{\frac13}\sigma^{\frac23} + n^{\frac13}  \tilde\sigma^{\frac23} \big)(\log d)^{\frac23} + \sqrt{m}B \log d \big)\\ 
             \lesssim & m^{\frac16}B^{\frac13}\big(p^{\frac13}\sigma^{\frac23}+ n^{\frac13}  \tilde\sigma^{\frac23} \big)(\log d)^{\frac23}
        \end{align}
        with probability at least $1- O(d^{-c_p-2})$, 
        which leads to 
        \begin{align}
            &  \bigg|\lambda_{\max}\Big(\mb E\mb E\t - \bb E[\mb E\mb E\t] + \norm{\bb E[\mb E\mb E\t]} \mb I_n \Big) -  \lambda_{\max}\Big(\mb G\mb G\t - \bb E[\mb E\mb E\t] + \norm{\bb E[\mb E\mb E\t]} \mb I_n \Big) \bigg| \\ 
             \lesssim & \big(\tilde \sigma \sqrt{n} + \sigma \sqrt{p} \big)\Big( m^{\frac16}B^{\frac13}\big(p^{\frac13}\sigma^{\frac23}+ n^{\frac13}  \tilde\sigma^{\frac23} \big)(\log d)^{\frac23}\Big) \\ 
             \lesssim &  \tilde \sigma^2 n + \sigma \tilde \sigma\sqrt{np} 
        \end{align}
        with probability at least $1- O(d^{-c_p-2})$. Here we used the facts that 
        \begin{align}
            & m^{\frac16} B^{\frac13} p^{\frac13} \sigma^{\frac23} (\log d)^{\frac23} \lesssim m^{\frac16} B^{\frac13} (mn(\log d)^6)^{\frac13} \sigma^{\frac23} (\log d)^{\frac23} = m^{\frac12}B^{\frac13} n^{\frac13} (\log d)^{\frac83} \sigma^{\frac23} \lesssim \tilde \sigma \sqrt{n}, \\ 
            & m^{\frac16} B^{\frac13} (\log d)^{\frac23} \lesssim n^{\frac16} \tilde \sigma^{\frac13}
        \end{align}
        from the conditions $mn(\log d)^6 \lesssim p$ and $m^3 B^2 (\log d)^{16} \lesssim \sigma^2 n$. 

        \bigskip

        \emph{High-dimensional regime. }
        To proceed, as a complement to the above discussion, we examine the behaviour of $\mb E \mb E\t - \bb E[\mb E\mb E\t]$ under the condition $mn (\log d)^6 \lesssim p$, interpreting it as the sum of independent components $\mb E_{\cdot, S_c} \mb E_{\cdot, S_c}\t - \bb E[\mb E_{\cdot, S_c} \mb E_{\cdot, S_c}\t ], ~c\in[l]$. 
        Specifically, invoking \cite[Theorem~2.8]{brailovskaya2022universality} together with \cite[Theorem~2.7 and Lemma~2.5]{bandeira2023matrix} yields that 
        \begin{align}
            & \norm{\mb E\mb E\t  - \bb E[\mb E \mb E\t]} \leq e \bb E[\mathrm{Tr}(\mb E \mb E\t - \bb E[\mb E \mb E\t])^{2t}]^{\frac{1}{2t}} \lesssim \sigma(\mb E \mb E\t - \bb E[\mb E \mb E\t]) \\ 
            & \qquad \qquad + v(\mb E \mb E\t - \bb E[\mb E \mb E\t])^{\frac12} \sigma(\mb E \mb E\t - \bb E[\mb E \mb E\t])^{\frac12} t^{\frac34} + R_{2t}(\mb E \mb E\t - \bb E[\mb E \mb E\t])t^2 
            \label{eq: application of universality to EEt}
        \end{align}
        with probability at least $1 - O(d^{-c_p-2})$, where we let $t = \lceil C_t \log d\rceil$ with some sufficiently large constant $C_t$. 

         Notice that $\Big(\bb E\big[ \mb E\mb E\t \mb E\mb E\t\big]\Big)_{i,j} = 0$ for $i \neq j \in[n]$ and $\Big(\bb E\big[ \mb E\mb E\t \mb E\mb E\t\big]\Big)_{i,i} = \bb E[\norm{\mb E_{i,\cdot}}_2^2] \Big( \sum_{i' \neq i} \bb E[\norm{\mb E_{i',\cdot}}_2^2] \Big) + \bb E[\norm{\mb E_{i,\cdot}}_2^4] \leq np \sigma^2 \tilde\sigma^2 + 2 p \sigma^2 \tilde \sigma^2 $. Here we used the Bernstein inequality on $\norm{\mb E_{i,\cdot}}_2^2 = \sum_{c \in[l]} \norm{\mb E_{i,S_c}}_2^2$ so as to upper bound its first and second moments invoking the condition that $B \sqrt{m \log d} \ll \sigma \sqrt{p}$, which leads to 
        \eq{
        \bb E[\norm{\mb E_{i,\cdot}}_2^4] \leq 2 p \sigma^2 \tilde\sigma^2 . 
        \label{eq: upper bound for fourth moment of Ei}
        }
        As a consequence, we have
        \begin{equation}
            \sigma(\mb E \mb E\t - \bb E[\mb E \mb E\t])^2 \lesssim np\sigma^2 \tilde\sigma^2. 
            \label{eq: upper bound for sigma term}
        \end{equation}

                Moreover, for the covariance of the vectorization of $\mb E \mb E\t - \bb E[\mb E \mb E\t] $, we begin with examining the covariance between two entries in $\text{vec}(\mb E \mb E\t - \bb E[\mb E\mb E\t])$ to derive that
        \begin{align}
            & \text{cov}\big( \big(\mb E \mb E\t  - \bb E[ \mb E \mb E\t] \big)_{i,j}, \big(\mb E \mb E\t - \bb E[ \mb E \mb E\t] \big)_{i',j'}\big) \\ 
            = & \begin{cases}
                \tr\big(\text{Cov}(\mb E_i)\text{Cov}(\mb E_j)\big), & \text{ if }i\neq j,~ i = i',~j=j'\text{ or }i = j', j = i', \\ 
                \bb E[\norm{\mb E_i}_2^4] - \bb E[\norm{\mb E_i}_2^2]^2, & \text{ if } i=j=i'=j', \\
                0, & \text{ otherwise. }
            \end{cases}
        \end{align} 
        This allows us to derive that 
        \eq{        
        v\big( \mb E \mb E\t - \bb E[\mb E] \bb E[\mb E]\t \big)^2 \leq 4 p\sigma^2 \tilde \sigma^2
        \label{eq: upper bound for variance of EEt}
         } by exploiting the block-diagonal structure and \eqref{eq: upper bound for fourth moment of Ei}. 

         To finish up, one has for $R_{2t}(\mb E\mb E\t - \bb E[\mb E\mb E\t])$ that 
         \begin{align}
             & R_{2t}(\mb E\mb E\t - \bb E[\mb E\mb E\t]) \leq l^{\frac{1}{2t}} \max_{c \in[l]} \bb E[\mathrm{Tr}\big(|\mb E_{\cdot,S_c} \mb E_{\cdot,S_c}\t - \bb E[\mb E_{\cdot,S_c} \mb E_{\cdot,S_c}\t] |^{2t} \big)]^{\frac{1}{2t}} \\ 
             \leq & (l / n)^{\frac{1}{2t}} \max_{c \in[l]} \bb E\big[\mathrm{Tr}\big(|\mb E_{\cdot,S_c} \mb E_{\cdot,S_c}\t - \bb E[\mb E_{\cdot,S_c} \mb E_{\cdot,S_c}\t] |^{2t} \big)\big]^{\frac{1}{2t}} \\ 
             \leq &  (ml / n)^{\frac{1}{2t}} \max_{c \in[l]} \bb E\big[\norm{\mb E_{\cdot,S_c}\t \mb E_{\cdot,S_c}}^{2t} \big]^{\frac{1}{2t}} + l^{\frac{1}{2t}} \tilde\sigma^2. 
             \label{eq: upper bound for R2t}
         \end{align}
         To handle $\bb E\big[\norm{\mb E_{\cdot,S_c}\t \mb E_{\cdot,S_c}}^{2t} \big]^{\frac{1}{2t}}$, we view $\mb E_{\cdot,S_c}\t \mb E_{\cdot,S_c}$ as the sum of independent matrices $\sum_{i\in[n]}\mb E_{i,S_c}\t \mb E_{i,S_c}$ and invoke the tail probability control from the Bernstein inequality \citep[Theorem~6.1.1]{tropp2015introduction} to derive that 
         \begin{align}
             & \bb E\big[\norm{\mb E_{\cdot,S_c}\t \mb E_{\cdot,S_c}}^{2t} \big] = \int_{x \geq 0} 4t x^{4t-1} \bb P\big[\norm{ \mb E_{\cdot,S_c}} \geq x \big] \mathrm d x  \\ 
             \leq & \int_{x \geq 0} 4t x^{4t-1} (m+n) \Big[\exp\Big(-\frac{x^2 / 2}{2 (n \tilde\sigma^2 + m \sigma^2 ) } \Big) + \exp\Big(-\frac{x/2}{2 m B^2 /3} \Big) \Big] \mathrm d x \\ 
             \lesssim  &  (4n\tilde\sigma^2 + 4 m\sigma^2 )^{2t} (m+n)\Gamma(2t+1)  + (4 mB^2 / 3)^{4t} (m+n)\Gamma(4t+1),  
         \end{align}
        whcih gives that 
        \begin{equation}
            \bb E\big[\norm{\mb E_{\cdot,S_c}\t \mb E_{\cdot,S_c}}^{2t} \big]^{\frac{1}{2t}}  \lesssim n  \tilde \sigma^2  \log d+ mB^2(\log d)^2. 
            \label{eq: 4t moment of E_cdot, S_c}
        \end{equation}
        Plugging \eqref{eq: 4t moment of E_cdot, S_c} into \eqref{eq: upper bound for R2t} gives that 
        \begin{equation}
            R_{2t}(\mb E\mb E\t - \bb E[\mb E\mb E\t]) \lesssim n  \tilde \sigma^2  \log d+ mB^2(\log d)^2. 
            \label{eq: final upper bound for R2t}
        \end{equation}

        Plugging \eqref{eq: upper bound for sigma term}, \eqref{eq: upper bound for variance of EEt}, and \eqref{eq: final upper bound for R2t} into \eqref{eq: application of universality to EEt} gives that 
        \begin{align}
            & \norm{\mb E\mb E\t  - \bb E[\mb E \mb E\t]} \lesssim \sqrt{np} \sigma \tilde \sigma + \sqrt{p}n^{\frac14} \sigma \tilde\sigma (\log d)^{\frac34} + n\tilde\sigma^2 (\log d)^3 + mB^2 (\log d)^4 \lesssim \sqrt{np} \sigma \tilde \sigma,
        \end{align}
        where in the last inequality we used the conditions $(\log d)^3 \lesssim n$, $m n (\log d)^6 \lesssim p$, and $m^3 B^2 (\log d)^{12} \lesssim \sigma^2 n$.

        \item For the diagonal part $\diag(\mb E\mb E\t) - \bb E[\mb E \mb E\t]$, one has for each of its diagonal entry that 
        \begin{align}
            & \Big|\norm{\mb E_i}_2^2 - \bb E[\norm{\mb E_i}_2^2 ] \Big| \lesssim \sqrt{mB^2 p \sigma^2 \log d} + mB^2 \log d \lesssim \sqrt{np} \sigma^2 
        \end{align}
        with probability at least $1 - O(d^{-c-3})$ invoking the Bernstein inequality along with the block dependence structure. 

        \item Regarding the term $\mc H(\mb E{\mb M^*}\t )$, one  has 
        \begin{align}
        &\bignorm{\mc H(\mb E{\mb M^*}\t )} \leq \bignorm{\mb E {\mb M^*}\t} + \bignorm{\mathrm{diag}(\mb E{\mb M^*}\t )} \leq  2\bignorm{\mb E {\mb M^*}\t} \leq 2 \sigma_1^* \norm{\mb E \mb V^*}  \lesssim \sigma_1^*\sqrt{n}\bar \sigma
        \end{align}
        by Lemma~\ref{lemma: noise matrix concentrations using the universality (bounded)} with probability at least $1- O(d^{-c_p-2})$. 
    \end{enumerate}

    In the end, taking these bounds collectively gives that 
    \begin{align}
        & \norm{\mb W} \leq \norm{\mb E \mb E\t - \bb E[\mb E\mb E\t]} + \norm{\mathrm{diag}(\mb E\mb E\t) - \bb E[\mb E \mb E\t]} + 2\bignorm{\mc H(\mb E {\mb M^*}\t)} + \bignorm{\mathrm{diag}(\mb M^*{\mb M^*}\t)}  \\ 
        \leq & C_W \Big( \tilde \sigma^2 n + \sigma \tilde \sigma\sqrt{np}   +  \bar \sigma \sqrt{n}  \sigma_1^* + \frac{\mu_1 r }{n} {\sigma_1^*}^2 \Big)
    \end{align}
    with probability at least $1 - O(d^{-c_p-2})$ for some constant $C_W$.
    
    \bigskip
    
    As for the leave-one-out version $\mb W^{(i)}$ with an arbitrary $i\in[n]$, we notice 
    \begin{align}
    \mb W^{(i)} =&  \mc H(\mc P_{-i,\cdot}(\mb E) \mc P_{-i,\cdot}(\mb E)\t + \mc P_{-i,\cdot}(\mb E) \mc P_{-i,\cdot}(\mb M^*)\t +  \mc P_{-i,\cdot}(\mb M^*) \mc P_{-i,\cdot}(\mb E)\t  )  \\ 
    & +  \mc P_{-i,\cdot}(\mb E) \mc P_{i,\cdot}(\mb M^*)\t +  \mc P_{i,\cdot}(\mb M^*) \mc P_{-i,\cdot}(\mb E)\t - \diag(\mb M^*{\mb M^*}\t), 
    \end{align}
    where the first term $\mc H(\mc P_{-i,\cdot}(\mb E) \mc P_{-i,\cdot}(\mb E)\t + \mc P_{-i,\cdot}(\mb E) \mc P_{-i,\cdot}(\mb M^*)\t +  \mc P_{-i,\cdot}(\mb M^*) \mc P_{-i,\cdot}(\mb E)\t  )$
    can be bounded similarly by viewing it as the preceding noise $\mb W$ with $n-1$ samples, and the second and third terms are both bounded by $\norm{\mb E{\mb M^*}\t}$. Consequently, under Gaussian or bounded noise, one has 
    $$
         \bignorm{\mb W^{(i)}} \leq  C_W \Big( \tilde \sigma^2 n + \sigma \tilde \sigma\sqrt{np}   +  \bar \sigma \sqrt{n}  \sigma_1^* + \frac{\mu_1 r }{n} {\sigma_1^*}^2 \Big)
    $$
    with probability at least $1 - O(d^{-c_p-2})$ for some sufficiently large constant $C_W$.

    Moreover, the term $\mc H(\mc P_{\cdot,-S_{c}}(\mb E) \mc P_{\cdot,-S_{c}}(\mb E)\t + \mc P_{\cdot,-S_{c}}(\mb E){\mb M^*}\t + \mb M^* \mc P_{\cdot,-S_{c}}(\mb E)\t  )$ in $\mb W^{(-c)}$ appearing in $\mb W^{(-c)}$ can be viewed as a noise matrix $\mb W$ with an effective dimension $p' = p-|S_c|$. Consequently, with probability at least $1 - O(d^{-c_p-2})$, we have
    \begin{align}
        & \norm{\mb W^{(-c)}} \leq  C_W \Big( \tilde \sigma^2 n + \sigma \tilde \sigma\sqrt{np}   +  \bar \sigma \sqrt{n}  \sigma_1^* + \frac{\mu_1 r }{n} {\sigma_1^*}^2 \Big)
    \end{align}
    with probability at least $1 - O(d^{-c_p-2})$ for some sufficiently large constant $C_W$. Likewise, for the quantity $\max_{c\in[l]}\norm{\mb W^{(-c)}\mb U^*}_{2,\infty}$, Lemma~\ref{lemma: noise matrix concentrations using the universality (bounded)} implies that, with probability exceeding $1 - O(d^{-c_p-2})$,
    \begin{align}
        & \max_{c \in [l]} \bignorm{\mb W^{(-c)} \mb U^*}\ti \lesssim \sqrt{\frac{\mu_1 r}{n}} \deltaop \sqrt{\log d}. 
    \end{align}

\subsubsection{Proof of Lemma~\ref{lemma: upper bound for SkU}}
\label{subsubsec: proof of lemma: upper bound for SkU}
First, one has from the definitions in Lemma~\ref{lemma: expansion from xia2021normal} that
\begin{align}
    & \mc S_k \mb U^* = \sum_{\mb s: s_1 + \cdots + s_{k+1} = k } (-1)^{1+ \tau(\mb s)} \mc P^{-s_1} \mb W \mc P^{-s_2} \mb W \cdots \mc P^{-s_p} \mb W \mc P^{-s_{p+1}} \mb U^*. 
\end{align}
This motivates us to focus on the terms that begin with $(\mc P^0 \mb W \mc P^0)^t \mc P^0 \mb W \mb U^*$. For convenience, we introduce the following event:
\begin{align}
    & \mc E_{W} \coloneqq \Big\{ \max\big\{ \norm{\mb W},~\max_{i\in[n]} \bignorm{\mb W^{(i)}},~ \norm{\mc H(\mb E \mb E\t)}, ~ \bignorm{\mc H(\mb E{\mb M^*}\t)} \}   \big\}\leq \deltaop \Big\}, \\ 
    & \mc E_{H} \coloneqq \mc E_{W} \cap \Big\{ \norm{\mb E\mb V^*} \leq C_H \bar \sigma \sqrt{n},~ \max_{i\in[n]}\big| \mb E_{i,\cdot}{\mb M^*_{i,\cdot}}\t \big| \leq C_H \bar \sigma \sqrt{r \log d } \sigma_1^* \sqrt{\frac{\mu_1 r}{n}} \Big\},
    \label{eq: definition of E_H}
\end{align}
where the exceptional probability is controlled by $\bb P\big[\mc E_H^{\complement} \big] \leq 1 - O(d^{- c_p - 1})$ given a sufficiently large constant $C_H$ due to Lemmas~\ref{lemma: concentration for W and its variants},~\ref{lemma: noise matrix concentrations using the universality (Gaussian)},~and~\ref{lemma: noise matrix concentrations using the universality (bounded)}.

We prove the conclusion by induction, whose statements are as follows: under the event $\mc E_H$, we have for $t \geq 0$ that
\begin{align}
    &\textsc{Hypothesis $H_{t,1}$:\qquad } \norm{\big((\mc P^0 \mb W^{(i)} \mc P^0)^t \mc P^0 \mb W^{(i)} \mb U^*\big)_{i,\cdot}}_2 \leq C_0  \sqrt{\frac{\mu_1 r}{n}}  \Big[C_0\deltaop\Big]^{t+1};\label{eq: hypothesis 1 for subspace perturbation}
    \\ 
    &
    \textsc{Hypothesis $H_{t,2}$:\qquad } \\ 
    & 
    \quad \norm{\big((\mc P^0 \mb W \mc P^0)^t \mc P^0 \mb W \mb U^*\big)_{i,\cdot}}_2 \leq C_0  \Big[C_0 \deltaop\Big]^{t-1}  \Big[ \sigma_1^*\norm{\mb E_{i,\cdot} \mb V^*}_2 + \norm{\mb E_{i,\cdot} \mc P_{-i,\cdot}(\mb E)\t \mb U^*}_2 \Big] \\ 
    &\quad  +  \sum_{h=0}^{t-2} C_0  \Big[C_0 \deltaop\Big]^{t-h-2}  \norm{\mb E_{i, \cdot}\mc P_{-i,\cdot}(\mb E)\t \mc P^0 \big(\mc P^0 \mb W^{(i)} \mc P^0 \big)^h \mb W^{(i)} \mb U^*}_2  + C_0  \norm{\mb U_{i,\cdot}^*}_2  \Big[C_0\deltaop\Big]^{t} \eqqcolon \xi_t
    \label{eq: hypothesis 2 for subspace perturbation}
\end{align}
for some constant $C_0$. Hereinafter, the summation $\sum_{i = a}^b a_i$ with $b < a$ simply equals zero. The first inductive hypothesis will serve as an auxiliary tool in the proof of the latter one. 

\paragraph{Base case. }
For the base case $t= 0$, it naturally holds that 
\begin{align}
    & \norm{(\mc P^0 \mb W^{(i)} \mb U^*)_{i,\cdot}} \leq \norm{(\mb W^{(i)} \mb U^*)_{i,\cdot}}_2 + \norm{\mb U^*}\ti \bignorm{\mb W^{(i)}} \\ 
    \leq&  \norm{\mb M^*_{i,\cdot} \mc P_{-i,\cdot}(\mb E)\t \mb U^*}_2 + \big|(\mb M^*{\mb M^*}\t)_{i,i}\big| \norm{\mb U^*}\ti + \norm{\mb U^*}\ti \bignorm{\mb W^{(i)}}  \\ 
    \leq & \sqrt{\frac{\mu_1 r}{n}}\sigma_1^* \norm{\mc P_{-i,\cdot}(\mb E) \mb V^*} + \sigma_1^* \Big(\frac{\mu_1 r}{n}\Big)^{\frac32} + \sqrt{\frac{\mu_1 r}{n}} \deltaop \leq  C_0 \sqrt{\frac{\mu_1 r}{n}} \deltaop, \\ 
    & \norm{(\mc P^0 \mb W \mb U^*)_{i,\cdot}} \leq \norm{(\mb W \mb U^*)_{i,\cdot}}_2 + \norm{\mb U^*}\ti \bignorm{\mb W^{(i)}}\\ 
    \leq & \norm{\mb E_{i,\cdot} \mc P_{-i,\cdot}(\mb E)\t \mb U^*}_2 + \norm{\mb E_{i,\cdot} \mb V^*}_2 \sigma_1^*+ \norm{\mb M^*_{i,\cdot} \mc P_{-i,\cdot}(\mb E)\t \mb U^*}_2 \\ 
    & \qquad + \big|(\mb M^*{\mb M^*}\t)_{i,i}\big| \norm{\mb U^*}\ti + \norm{\mb U^*}\ti \bignorm{\mb W^{(i)}} \\ 
    \leq & \norm{\mb E_{i,\cdot} \mc P_{-i,\cdot}(\mb E)\t \mb U^*}_2 + \norm{\mb E_{i,\cdot} \mb V^*}_2 \sigma_1^* + C_0 \sqrt{\frac{\mu_1 r}{n}} \deltaop
\end{align}
under the event $\mc E_H$ for some sufficiently large constant $C_0$, where we invoke the relation 
\begin{equation}
    \norm{\mb M^*_{i,\cdot} \mc P_{-i,\cdot}(\mb E)\t \mb U^*}_2 \leq \norm{\mb M^*_{i,\cdot} \mc P_{-i,\cdot}(\mb E)\t }_2 \leq \sigma_1^*\norm{\mc P_{-i,\cdot}(\mb E) \mb V^*}\norm{\mb U^*_{i,\cdot}}_2 \leq \sigma_1^* \norm{\mb E \mb V^*}\norm{\mb U^*}\ti \leq \deltaop \sqrt{\frac{\mu_1 r}{n}}. 
    \label{eq: M_i,cdot Et relation}
\end{equation} 

\paragraph{Proof of the first inductive hypothesis. }
Then we move on to the cases with $t \geq 1$. By definition, it follows from the inductive hypotheses for $t' \leq t$ that 
\begin{align}
    & \norm{\big((\mc P^0 \mb W^{(i)} \mc P^0)^t \mc P^0 \mb W^{(i)} \mb U^*\big)_{i,\cdot}}_2 
    \\ 
    \leq & \norm{\mb M^*_{i,\cdot} \mc P_{-i,\cdot}(\mb E)\t }_2 \bignorm{\mb W^{(i)}}^{t} + \big|(\mb M^*{\mb M^*}\t)_{i,i}\big| \norm{\big((\mc P^0 \mb W^{(i)} \mc P^0)^{t-1} \mb W^{(i)} \mb U^*\big)_{i,\cdot}}_2  + \norm{\mb U^*}\ti \bignorm{\mb W^{(-i)}}^{t+1}  \\ 
    \leq & C_0  \sqrt{\frac{\mu_1 r}{n}} \big[C_0 \deltaop \big]^{t+1}
\end{align}
under $\mc E_H$, where we invoke \eqref{eq: M_i,cdot Et relation} again. 

\paragraph{Proof of the second inductive hypothesis. }
Similar to the approach of \cite{agterberg2022entrywise}, for each $t \geq 1$ one obtains the telescoping decomposition: 
\begin{align}
    & \norm{\big((\mc P^0 \mb W \mc P^0)^t \mb W \mb U^*\big)_{i,\cdot}}_2 \leq 
    \underbrace{\norm{(\mc P^0 \mb W \mc P^0)_{i,\cdot} \big(\mc P^0 
    \mb W^{(i)} \mc P^0 \big)^{t-1} \mc P^0 \mb W^{(i)} \mb U^*}_2}_{\gamma_1}
    \\
    & \qquad \qquad  + \sum_{l=0}^{t-2}\underbrace{  \norm{(\mc P^0 \mb W \mc P^0)_{i,\cdot}  \big(\mc P^0 \mb W^{(i)} \mc P^0\big)^l \mc P^0 \big(\mb W^{(i)} - \mb W \big)  \mc P^0 \big(\mc P^0 \mb W \mc P^0\big)^{t-l - 2} \mc P^0 \mb W \mb U^*}_2}_{\gamma_{2,l}} \\ 
    &\qquad \qquad  + \underbrace{\norm{ (\mc P^0 \mb W \mc P^0)_{i,\cdot}  \big(\mc P^0 \mb W^{(i)} \mc P^0\big)^{t-1}\mc P^0 \big(\mb W - \mb W^{(i)} \big)\mb U^*}_2}_{\gamma_3}. 
    \label{eq: telescoping decomposition}
\end{align}

Before continuing, we first notice by definition that
\begin{align}
    & \mb W - \mb W^{(i)} = \mc P_{i,\cdot}(\mb E){\mc P_{-i,\cdot}(\mb E)}\t +  \mc P_{-i,\cdot}(\mb E){\mc P_{i,\cdot}(\mb E)}\t + \mc P_{i,\cdot}(\mb E) {\mc P_{-i,\cdot}(\mb M^*)}\t + \mc P_{-i,\cdot}(\mb M^*){\mc P_{i,\cdot}(\mb E)}\t. 
    \label{eq: decomposition of W - W-i}
\end{align}
Besides, a useful observation from the relation $\mc P^0 \mb M^* = \mb 0$ is that 
\begin{align}
    & \mc P^0 \mb W \mc P^0 = \mc P^0 \mc H(\mb E \mb E\t) \mc P^0 - 2\mc P^0 \diag\big(\mb E{\mb M^*}\t \big) \mc P^0 - \mc P^0 \diag\big(\mb M^*{\mb M^*}\t \big) \mc P^0, 
    \label{eq: identity of P0 W P0}
    \\ 
    &\mc P^0 \big( \mb W - \mb W^{(i)}\big) \mc P^0 =  \mc P^0 \big[ \mc P_{i,\cdot}(\mb E){\mc P_{-i,\cdot}(\mb E)}\t +  \mc P_{-i,\cdot}(\mb E){\mc P_{i,\cdot}(\mb E)}\t \big] \mc P^0  - 2 \mc P^0  \mc P_{i,\cdot}(\mb E) {\mc P_{i,\cdot}(\mb M^*)}\t \mc P^0 .
    \label{eq: identity of P0 (W - Wi) P0}
\end{align}

\bigskip

\emph{Bounding $\gamma_1$. }
By the triangle inequality and \eqref{eq: identity of P0 W P0}, the first term $\gamma_1$ in \eqref{eq: telescoping decomposition} is rewritten as 
\begin{align}
    & \norm{(\mc P^0 \mb W \mc P^0)_{i,\cdot} \big(\mc P^0 
    \mb W^{(i)} \mc P^0 \big)^{t-1} \mc P^0 \mb W^{(i)} \mb U^*}_2 \\ 
    =  &\norm{\big(\mc P^0 \big(\mc H(\mb E \mb E\t)  -\diag( \mb E{\mb M^*}\t + \mb M^*\mb E\t - \mb M^*{\mb M^*}\t ) \big) \mc P^0\big)_{i,\cdot} \big(\mc P^0 
    \mb W^{(i)} \mc P^0 \big)^{t-1} \mc P^0 \mb W^{(i)} \mb U^*}_2  \\ 
    \leq & 
      \norm{\mb E_{i,\cdot} \mc P_{-i,\cdot}(\mb E)\t \mc P^0 \big(\mc P^0 
    \mb W^{(i)} \mc P^0 \big)^{t-1} \mc P^0 \mb W^{(i)} \mb U^*}_2 +  \norm{\mb U^*}\ti  \norm{\mc H(\mb E \mb E \t)} \norm{\big(\mc P^0 
    \mb W^{(i)} \mc P^0 \big)^{t-1} \mc P^0 \mb W^{(i)} \mb U^*}_2
    \\ 
    & \qquad \qquad +\underbrace{ \norm{ \Big[\mc P^0 \diag( \mb E{\mb M^*}\t + \mb M^*\mb E\t - \mb M^*{\mb M^*}\t ) \mc P^0  \big(\mc P^0 
    \mb W^{(i)} \mc P^0 \big)^{t-1} \mc P^0 \mb W^{(i)} \mb U^*\Big]_{i,\cdot}}_2}_{\gamma_{1}'} 
    \label{eq: decomposition of gamma 1}
 \end{align}

To upper bound $\gamma_1'$ in \eqref{eq: decomposition of gamma 1}, we reason as follows: we first note that 
\begin{align}
    & \big|\big(\mb E{\mb M^*}\t + \mb M^*\mb E\t - \mb M^*{\mb M^*}\t \big)_{i',i'} \big| \leq  2 \big|\mb E_{i',\cdot}{\mb M^*_{i',\cdot}}\t \big| + \norm{\mb M^*_{i',\cdot}}_2^2 \\ 
    \leq &  2C_H \bar\sigma \sqrt{r \log d} \sigma_1^* \sqrt{\frac{\mu_1 r}{n}} + (\sigma_1^*)^2 \frac{\mu_1 r}{n}
    \label{eq: diagonal residual control}
\end{align}
for every $i' \in[n]$ under the event $\mc E_H$. 
Invoking the inductive hypotheses $H_{t',1}$ with $0\leq t' \leq t-1$, it then turns out to be 
\begin{align}
    &  \norm{ \mc P^0 \diag( \mb E{\mb M^*}\t + \mb M^*\mb E\t - \mb M^*{\mb M^*}\t ) \mc P^0  \big(\mc P^0 
    \mb W^{(i)} \mc P^0 \big)^{t-1} \mc P^0 \mb W^{(i)} \mb U^*} \\ 
    \leq &  \big|\big(\mb E{\mb M^*}\t + \mb M^*\mb E\t - \mb M^*{\mb M^*}\t \big)_{i,i} \big| \bignorm{ \big(\big(\mc P^0 
    \mb W^{(i)} \mc P^0 \big)^{t-1} \mc P^0 \mb W^{(i)} \mb U^*\big)_{i,\cdot} }_2 \\ 
    & + \big|\big(\mb E{\mb M^*}\t + \mb M^*\mb E\t - \mb M^*{\mb M^*}\t \big)_{i,i} \big| \norm{\mb U^*}\ti \bignorm{    \big(\mc P^0 
    \mb W^{(i)} \mc P^0 \big)^{t-1} \mc P^0 \mb W^{(i)} \mb U^*}  \\ 
    & + \norm{\mb U^*}\ti \bignorm{\diag( \mb E{\mb M^*}\t + \mb M^*\mb E\t - \mb M^*{\mb M^*}\t ) \mc P^0  \big(\mc P^0 
    \mb W^{(i)} \mc P^0 \big)^{t-1} \mc P^0 \mb W^{(i)} \mb U^*} \\ 
    \stackrel{\text{by \eqref{eq: diagonal residual control}}}{\leq}  & C_1\Big( \bar\sigma \sqrt{r \log d} \sigma_1^* \sqrt{\frac{\mu_1 r}{n}} + (\sigma_1^*)^2 \frac{\mu_1 r}{n}\Big) \Big(C_0\sqrt{\frac{\mu_1 r}{n}} \big[C_0\deltaop\big]^t + \sqrt{\frac{\mu_1 r}{n}} \deltaop^t \Big)\\ 
    & \qquad + C_1 \sqrt{\frac{\mu_1 r}{n}} \Big( \bar\sigma \sqrt{r \log d} \sigma_1^* \sqrt{\frac{\mu_1 r}{n}} + (\sigma_1^*)^2 \frac{\mu_1 r}{n}\Big) \deltaop^t 
    \\ 
    \leq &  C_2 \sqrt{\frac{\mu_1 r}{n}} \deltaop^{t+1} 
    \label{eq: bound for gamma1' 1}
\end{align}
under $\mc E_H$ for some constants $C_1$ and $C_2$, where we used the condition $r \log d \mu_1 r^2 \lesssim n$
to derive $ \bar\sigma \sqrt{r \log d} \sigma_1^* \sqrt{\frac{\mu_1 r}{n}} + (\sigma_1^*)^2 \frac{\mu_1 r}{n} \lesssim \deltaop$.

Plugging \eqref{eq: bound for gamma1' 1} into \eqref{eq: decomposition of gamma 1} gives that 
\begin{equation}
     \norm{(\mc P^0 \mb W \mc P^0)_{i,\cdot} \big(\mc P^0 
    \mb W^{(i)} \mc P^0 \big)^{t-1} \mc P^0 \mb W^{(i)} \mb U^*}_2 \leq \norm{\mb E_{i,\cdot} \mc P_{-i,\cdot}(\mb E)\t \mc P^0 \big(\mc P^0 
    \mb W^{(i)} \mc P^0 \big)^{t-1} \mc P^0 \mb W^{(i)} \mb U^*}_2  + C_3 \sqrt{\frac{\mu_1 r}{n}} [C_0\deltaop]^{t+1} 
    \label{eq: bound for gamma1}
\end{equation}
holds under $\mc E_H$ for some constant $C_3$ . 

\bigskip

\emph{Bounding $\gamma_{2,l}$. }
It follows from the triangle inequality and \eqref{eq: identity of P0 (W - Wi) P0} that
\begin{align}
    & \gamma_{2,l}  =  \norm{(\mc P^0 \mb W \mc P^0)_{i,\cdot}  \big(\mc P^0 \mb W^{(i)} \mc P^0\big)^l \mc P^0 \big(\mb W^{(i)} - \mb W \big)  \mc P^0 \big(\mc P^0 \mb W \mc P^0\big)^{t-l - 2}\mb W \mb U^*}_2 \\ 
    \leq & \norm{\mc P^0 \mb W \mc P^0 \big(\mc P^0 \mb W^{(i)} \mc P^0\big)^l \mc P^0} \norm{\mb E_{i,\cdot} \mc P_{-i,\cdot}(\mb E)\t \mc P^0 \big(\mc P^0 \mb W \mc P^0\big)^{t-l - 2} \mc P^0 \mb W \mb U^*}_2 \\ 
    & + \norm{\mc P^0 \mb W \mc P^0  \big(\mc P^0 \mb W^{(i)} \mc P^0\big)^l \mc P^0} \norm{\mc H(\mb E \mb E\t) } \norm{\big( \big(\mc P^0 \mb W \mc P^0\big)^{t-l - 2} \mc P^0 \mb W \mb U^*\big)_{i,\cdot}}_2 \\
    & + 2 \big|\mb E_{i,\cdot} {\mb M^*_{i,\cdot}}\t\big| \norm{\mc P^0 \mb W \mc P^0  \big(\mc P^0 \mb W^{(i)} \mc P^0\big)^l \mc P^0} \norm{( \big(\mc P^0 \mb W \mc P^0\big)^{t-l - 2} \mc P^0 \mb W \mb U^*\big)_{i,\cdot}}_2. 
    \label{eq: upper bound for gamma2t}
\end{align}
for every $t \in [k - 1]$. 

Notice that $\mb W_{i,\cdot} = \mb E_{i,\cdot} \mc P_{-i,\cdot}(\mb E)\t +  \mb E_{i,\cdot} \mc P_{-i,\cdot}(\mb M^*)\t + \mb M_{i,\cdot}^* \mc P_{-i,\cdot}(\mb E)\t - \norm{\mb M^*_{i,\cdot}}_2^2 \mb e_i\t $, where $\mb e_i$ denotes the $i$-th canonical basis in $\bb R^n$. This decomposition allows us to handle the term $\norm{\mb E_{i,\cdot} \mc P_{-i,\cdot}(\mb E)\t \mc P^0 \big(\mc P^0 \mb W \mc P^0\big)^{t-l - 2} \mc P^0 \mb W \mb U^*}_2 $ using the inductive hypotheses $H_{t',2}$ for $0\leq t' \leq t-1$ with $\xi_{t'}$ defined in \eqref{eq: hypothesis 2 for subspace perturbation}: it holds for some constant $C_4$ that
\begin{align}
    & \norm{\mb E_{i,\cdot} \mc P_{-i,\cdot}(\mb E)\t \mc P^0 \big(\mc P^0 \mb W \mc P^0\big)^{t-l - 2} \mc P^0 \mb W \mb U^*}_2  \\ 
    \leq & \norm{\mb W_{i,\cdot} \mc P^0\big(\big(\mc P^0 \mb W \mc P^0\big)^{t-l - 2} \mc P^0 \mb W \mb U^* \big)_{i,\cdot}}_2 + \Big[\norm{\mb E_{i,\cdot} \mc P_{-i,\cdot}(\mb M^*)\t}_2  \\
    & \qquad + \norm{\mb M_{i,\cdot}^* \mc P_{-i,\cdot}(\mb E)\t}_2 \Big] \norm{\big(\mc P^0 \mb W \mc P^0\big)^{t-l - 2} \mc P^0 \mb W \mb U^*} \\ 
    &  + \norm{\mb M^*_{i,\cdot}}_2^2 \norm{\big(\big(\mc P^0 \mb W \mc P^0\big)^{t-l - 2} \mc P^0 \mb W \mb U^* \big)_{i,\cdot}}_2 \\ 
    \leq & \norm{\big(\big(\mc P^0 \mb W \mc P^0\big)^{t-l - 1} \mc P^0 \mb W \mb U^* \big)_{i,\cdot}}_2 + \norm{\mb U^*}\ti \norm{\mb W}^{t-l} \\ 
    & + \Big[ \norm{\mb E_{i,\cdot}\mb V^*}_2 \sigma_1^*   + \norm{\mb U^*}\ti \sigma_1^* \norm{\mc P_{-i,\cdot}(\mb E) \mb V^*} \Big] \norm{\big(\mc P^0 \mb W \mc P^0\big)^{t-l - 2} \mc P^0 \mb W \mb U^*} \\ 
    & + (\sigma_1^*)^2 \frac{\mu_1 r}{n} \norm{\big(\big(\mc P^0 \mb W \mc P^0\big)^{t-l - 2} \mc P^0 \mb W \mb U^* \big)_{i,\cdot}}_2 \\ 
    \leq & \xi_{t-l-1} + \sqrt{\frac{\mu_1 r}{n}} \deltaop^{t-l} + C_4\Big[\norm{\mb E_{i,\cdot} \mb V^*}_2 \sigma_1^*  + \sqrt{\frac{\mu_1 r}{n}} \sigma_1^* \bar \sigma \sqrt{n} \Big] \deltaop^{t-l-1} + \deltaop \xi_{t-l-2} \leq 2\xi_{t-l-1} 
    \label{eq: upper bound for gamma2t 1}
\end{align}
for some constant $C_4$. Here we note that, provided a sufficiently large $C_0$, the last inequality arises from the definition of $\xi_t$. 

Returning to the upper bound for $\gamma_{2,t}$, substituting the conditions in $\mc E_H$ and \eqref{eq: upper bound for gamma2t 1} into \eqref{eq: upper bound for gamma2t} gives the following. 
\begin{align}
    \gamma_{2,l} \leq 2 \deltaop^{l+1} \xi_{t- l -1} + \deltaop^{l+2} \xi_{t - l -2} + 2 C_1 \bar\sigma \sqrt{r \log d} \sigma_1^* \sqrt{\frac{\mu_1 r}{n}} \deltaop^{l+1} \xi_{t-l-2} \leq 4 \deltaop^{l+1} \xi_{t- l -1}
\end{align}
holds under the event $\mc E_H$.  

\bigskip

\emph{Bounding $\gamma_3$. }
Next, with respect to $\gamma_3$, employing \eqref{eq: decomposition of W - W-i} together with the conditions in $\mc E_H$ gives that 
\begin{align}
    & \gamma_3 \leq \norm{ \mc P^0 \mb W \mc P^0 \big(\mc P^0 \mb W^{(i)} \mc P^0\big)^{t-1}} \\ 
    & \cdot \Big[\norm{ \mc P_{i,\cdot}(\mb E) \mc P_{-i,\cdot}(\mb E)\t \mb U^*} + 
    \norm{\mc H(\mb E\mb E\t)} \norm{\mb U^*}\ti  + \norm{\mb E_{i,\cdot} {\mb M^*}\t \mb U^*} + 2|\mb E_{i,\cdot}{ \mb M^*_{i,\cdot} }\t|  \Big] \\ 
    \leq & \deltaop^t \cdot \Big[ \norm{ \mb E_{i,\cdot} \mc P_{-i,\cdot}(\mb E)\t \mb U^*}_2  + \deltaop \sqrt{\frac{\mu_1 r}{n}} + \norm{\mb E_{i,\cdot} \mb V^*}_2 \sigma_1^* + 2 C_H \bar \sigma \sqrt{r \log d } \sigma_1^* \sqrt{\frac{\mu_1 r}{n}}\Big]\\ 
    \leq & \deltaop^t \cdot \Big[ \norm{ \mb E_{i,\cdot} \mc P_{-i,\cdot}(\mb E)\t \mb U^*}_2  + \norm{\mb E_{i,\cdot} \mb V^*}_2 \sigma_1^* + C_4\deltaop \sqrt{\frac{\mu_1 r}{n}}\Big]
    \label{eq: upper bound for gamma3}
\end{align}
holds under the event $\mc E_H$ for some constant $C_4$, where we used the condition $r \log d \lesssim n$ and the facts that $\mc P^0 \mb M^* \mc P_{i,\cdot}(\mb E)\t = \mb 0$ and $\mb M^* = \mc P_{i,\cdot}(\mb M^*) + \mc P_{-i,\cdot}(\mb M^*)$. 

\bigskip

\emph{Taking the bounds together. }
Plugging these bounds collectively into \eqref{eq: telescoping decomposition} reveals that 
\begin{align}
    & \norm{\big((\mc P^0 \mb W \mc P^0)^t \mb W \mb U^*\big)_{i,\cdot}}_2 \leq \norm{\mb E_{i,\cdot} \mc P_{-i,\cdot}(\mb E)\t \mc P^0 \big(\mc P^0 
    \mb W^{(i)} \mc P^0 \big)^{t-1} \mc P^0 \mb W^{(i)} \mb U^*}_2  + C_3 \sqrt{\frac{\mu_1 r}{n}} [C_0\deltaop]^{t+1}  \\ 
    & \qquad +  \sum_{j\in[t]} 4 \deltaop^{l+1} \xi_{t- l -1} + \deltaop^t \cdot \Big[ \norm{ \mb E_{i,\cdot} \mc P_{-i,\cdot}(\mb E)\t \mb U^*}_2  + \norm{ \mb E_{i,\cdot} \mb V^*}_2  \sigma_1^*\Big] + C_4 \sqrt{\frac{\mu_1 r}{n}} \deltaop^{t+1} \leq \xi_{t}
    \label{eq: upper bound on Ei E-it (W)t WU}
\end{align}
given a sufficiently large $C_0$. 

\bigskip

Notice that there are at most $4^k$ terms in each $\mc S_k$. Hence, \eqref{eq: hypothesis 2 for subspace perturbation} leads to the following control on $\norm{(\mc S_k)_{i,\cdot} \mb U^*}_2$: 
    \begin{align}
        & \bignorm{(\mc S_k)_{i,\cdot} \mb U^*}_2 \leq 4^k \max\Big\{ \max_{0 \leq t \leq k-1} \xi_{t} \deltaop^{k-t-1} , \norm{\mb U^*}\ti \deltaop^{k} \Big\} \\ 
        \leq & C_0 \cdot \bigg\{ \Big[4C_0 \deltaop\Big]^{k-1} \cdot \Big[ \sigma_1^*\norm{\mb E_{i,\cdot} \mb V^*}_2 + \norm{\mb E_{i,\cdot} \mc P_{-i,\cdot}(\mb E)\t \mb U^*}_2 \Big] \cdot (\sigma_r^*)^{-2k} \\ 
    + &  \sum_{h=0}^{t-2} \cdot \Big[4C_0  \deltaop\Big]^{t-h-2} \cdot \norm{\mb E_{i, \cdot}\mc P_{-i,\cdot}(\mb E)\t \mc P^0 \big(\mc P^0 \mb W^{(i)} \mc P^0 \big)^h \mb W^{(i)} \mb U^*}_2 (\sigma_r^*)^{-2k}  + \cdot \norm{\mb U_{i,\cdot}^*}_2 \cdot \Big[4C_0\deltaop / (\sigma_r^*)^2\Big]^{k} \bigg\}, 
    \end{align}
    where we make use of the fact that $\norm{\mc P^{-s_1} \mb W \mc P^{-s_2} \mb W \cdots \mc P^{-s_k} \mb W \mc P^{-s_{k+1}} \mb U^*} \leq \norm{
    \mb U^*}\ti \Big[\deltaop / (\sigma_r^*)^2\Big]^k$ for every $\mb s$ with $s_1 \geq 1$ under $\mc E_H$.

\qed 

\subsection{Proofs of Lemmas~\ref{lemma: bound for quantities in expansion (Gaussian case)} and~\ref{lemma: bound for quantities in expansion (bounded case)}}

\label{subsec: proof of bounds for quantities in expansion}

\begin{proof}[Proof of Lemma~\ref{lemma: bound for quantities in expansion (Gaussian case)}]
    To deal with the Gaussian case, it suffices to condition on $\mc P_{-i,\cdot}(\mb E)$ and apply the Hanson-Wright inequality to derive that 
    \begin{align}
        & \norm{\mb E_{i,\cdot} \mc P_{-i,\cdot}(\mb E)\t \mc P^0 (\mc P^0 \mb W^{(i)} \mc P^0 )^k \mc P^0 \mb W^{(i)} \mb U^*}_2^2 - \norm{\bo \Sigma^{\frac12} \mc P_{-i,\cdot}(\mb E)\t \mc P^0 (\mc P^0 \mb W^{(i)} \mc P^0 )^k \mc P^0 \mb W^{(i)} \mb U^*}_F^2 \\ 
        \lesssim & \norm{{\mb U^*}\t  \mb W^{(i)} \mc P^0 (\mc P^0 \mb W^{(i)} \mc P^0 )^k \mc P^0 \mc P_{-i,\cdot}(\mb E) \bo \Sigma \mc P_{-i,\cdot}(\mb E)\t \mc P^0 (\mc P^0 \mb W^{(i)} \mc P^0 )^k \mc P^0 \mb W^{(i)} \mb U^*}_F (\sqrt{\log d} \wedge t_0) \\ 
        &  + \norm{{\mb U^*}\t  \mb W^{(i)} \mc P^0 (\mc P^0 \mb W^{(i)} \mc P^0 )^k \mc P^0 \mc P_{-i,\cdot}(\mb E) \bo \Sigma \mc P_{-i,\cdot}(\mb E)\t \mc P^0 (\mc P^0 \mb W^{(i)} \mc P^0 )^k \mc P^0 \mb W^{(i)} \mb U^*}(\log d \wedge t_0^2)
    \end{align}
    with probability at least $1-O(d^{-c_p-1}\vee e^{-t_0^2 / 2})$ conditional on $\mc P_{-i,\cdot}(\mb E)$. Notice that 
    \begin{align}
        & \norm{\bo \Sigma^{\frac12} \mc P_{-i,\cdot}(\mb E)\t \mc P^0 (\mc P^0 \mb W^{(i)} \mc P^0 )^k \mc P^0 \mb W^{(i)} \mb U^*}_F^2 \lesssim r  \tilde\sigma^2\deltaop^{2k+2}\big(\sigma^2 p + \tilde\sigma^2 n \big) \\ 
        & \norm{{\mb U^*}\t  \mb W^{(i)} \mc P^0 (\mc P^0 \mb W^{(i)} \mc P^0 )^k \mc P^0 \mc P_{-i,\cdot}(\mb E) \bo \Sigma \mc P_{-i,\cdot}(\mb E)\t \mc P^0 (\mc P^0 \mb W^{(i)} \mc P^0 )^k \mc P^0 \mb W^{(i)} \mb U^*}_F \\ 
        \lesssim & \sqrt{r}\norm{{\mb U^*}\t  \mb W^{(i)} \mc P^0 (\mc P^0 \mb W^{(i)} \mc P^0 )^k \mc P^0 \mc P_{-i,\cdot}(\mb E) \bo \Sigma \mc P_{-i,\cdot}(\mb E)\t \mc P^0 (\mc P^0 \mb W^{(i)} \mc P^0 )^k \mc P^0 \mb W^{(i)} \mb U^*},  \\ 
        & \norm{{\mb U^*}\t  \mb W^{(i)} \mc P^0 (\mc P^0 \mb W^{(i)} \mc P^0 )^k \mc P^0 \mc P_{-i,\cdot}(\mb E) \bo \Sigma \mc P_{-i,\cdot}(\mb E)\t \mc P^0 (\mc P^0 \mb W^{(i)} \mc P^0 )^k \mc P^0 \mb W^{(i)} \mb U^*} \\ 
        \lesssim & \tilde\sigma^2\deltaop^{2k+2}\big(\sigma^2 p + \tilde\sigma^2 n \big) 
    \end{align}
    with probability at least $1- O(d^{-c_p-1})$ by Lemma~\ref{lemma: concentration for W and its variants} and Lemma~\ref{lemma: noise matrix concentrations using the universality (Gaussian)}. This leads to the conclusion that 
    \begin{equation}
         \norm{\mb E_{i,\cdot} \mc P_{-i,\cdot}(\mb E)\t \mc P^0 (\mc P^0 \mb W^{(i)} \mc P^0 )^k \mc P^0 \mb W^{(i)} \mb U^*}_2  \lesssim \tilde\sigma \deltaop^{k+1} (\sigma \sqrt{p} + \tilde\sigma \sqrt{n}) \sqrt{r}(t_0 \wedge \sqrt{\log d}) \leq \sqrt{\frac{\mu_1 r }{n}} \deltaop^{t+2} (t_0 \wedge \sqrt{\log d})
    \end{equation}
    holds with probability at least $1 - O(d^{-c_p-1} \vee e^{-t_0^2 / 2})$. 
\end{proof}

\begin{proof}[Proof of Lemma~\ref{lemma: bound for quantities in expansion (bounded case)}]
        To address dependence within a block, we have recourse to a leave-one-block-out approach in combination with an inductive argument. We first introduce the following events that characterize the behavior of the involved quantities: 
        \begin{align}
        & \mc E_{H_{-1}^{(i)}} \coloneqq \Big\{\norm{\mb E_{i,\cdot} \mb V^*}_2 \leq C_H \bar \sigma\sqrt{r}(t_0 \wedge \sqrt{\log d}); \qquad \norm{\mb E_{i,\cdot}\mc P_{-i,\cdot}(\mb E)\t \mb U^*}_2 \leq C_H \tilde\sigma \sigma  \sqrt{pr}(t_0 \wedge \sqrt{\log d}) \big]
        \Big\} ; \\
            & \mc E_{H_t^{(i)}} \coloneqq \bigg\{ 
           \norm{\mb E_{i,\cdot} \mc P_{-i,\cdot}(\mb E)\t \mc P^0 (\mc P^0 \mb W^{(i)} \mc P^0 )^{t} \mc P^0 \mb W^{(i)} \mb U^*}_2 \\ 
           & \leq C_H \tilde \sigma \norm{\mc P_{-i,\cdot}(\mb E)\t \mc P^0 (\mc P^0 \mb W^{(i)} \mc P^0 )^{t} \mc P^0 \mb W^{(i)} \mb U^*}_2  \sqrt{r} (t_0 \wedge \sqrt{\log d}) \\ 
            & \qquad  + C_H mB \norm{\mc P_{-i,\cdot}(\mb E)\t \mc P^0 (\mc P^0 \mb W^{(i)} \mc P^0 )^{t} \mc P^0 \mb W^{(i)} \mb U^*}\ti   (t_0 \wedge \sqrt{\log d})^2   
            \bigg\}; \\ 
            & \mc E_{H_{-1}} \coloneqq \bigg\{\norm{\mb E \mb V^*}\ti \leq C_H\bar \sigma \sqrt{r \log d};~  \norm{\mc H(\mb E\mb E\t) \mb U^*}\ti \leq C_H \tilde\sigma \norm{\mc P_{-i,\cdot}(\mb E)\t \mb U^*} \sqrt{r\log d} \\ 
        & \qquad + C_H mB\norm{\mc P_{-i,\cdot}(\mb E)\t \mb U^*}\ti \log d \bigg\} ;
            \\ 
            & \mc E_{H_t} \coloneqq \bigg\{ \norm{\mb E_{i,\cdot} \mc P_{-i,\cdot}(\mb E)\t \mc P^0 (\mc P^0 \mb W^{(i)} \mc P^0 )^{t} \mc P^0 \mb W^{(i)} \mb U^*}_2 \\ 
           & \leq C_H \tilde \sigma \norm{\mc P_{-i,\cdot}(\mb E)\t \mc P^0 (\mc P^0 \mb W^{(i)} \mc P^0 )^{t} \mc P^0 \mb W^{(i)} \mb U^*}_2 \sqrt{\log d} \\ 
            & \qquad  + C_H \sqrt{m}B \max_{c\in[l]}\norm{\mc P_{-i,S_c}(\mb E)\t \mc P^0 (\mc P^0 \mb W^{(i)} \mc P^0 )^{t} \mc P^0 \mb W^{(i)} \mb U^*} \log d ~\text{for}~i\in[n];\\ 
            & \norm{\mb E_{\cdot, S_c}\t \mc P^0  (\mc P^0 \mb W^{(-c)} \mc P^0 )^{t} \mc P^0 \mb W^{(-c)} \mb U^*} \leq C_H \sqrt{m}\tilde \sigma \norm{ (\mc P^0 \mb W^{(-c)} \mc P^0 )^{t} \mc P^0 \mb W^{(-c)} \mb U^*}\sqrt{r \log d} \\ 
            & \qquad + C_H \sqrt{m}B\norm{ (\mc P^0 \mb W^{(-c)} \mc P^0 )^{t } \mc P^0 \mb W^{(-c)} \mb U^*}\ti \log d  ~\text{for}~j\in[p]
            \bigg\}; \\ 
            & \mc E_E \coloneqq \bigg\{ 
            \norm{\mb E } \leq C_H (\tilde\sigma \sqrt{n} + \sigma \sqrt{p}),~
            \max_{c \in[l]}\norm{\mb E_{\cdot,S_c}} \leq C_H \tilde\sigma \sqrt{n}, ~ \max_{c\in[l]} \norm{\mb E_{\cdot,S_c}\t \mb U^*} \leq C_H \tilde \sigma \sqrt{mr \log d}, \\ 
            & \qquad \max_{c \in [l]} \bignorm{\mb W^{(-c)} \mb U^*}\ti \leq C_H \sqrt{\frac{\mu_1 r}{n}} \deltaop \sqrt{\log d}, ~ \max_{i\in[n]}\norm{\mb E_{i,\cdot} \mb V^*} \leq C_H \bar \sigma \sqrt{r \log d}
            \bigg\}
        \end{align}
        According to the matrix Bernstein inequality \citep[Theorem~6.1.1]{tropp2015introduction} together with Lemma~\ref{lemma: concentration for W and its variants} and Lemma~\ref{lemma: noise matrix concentrations using the universality (bounded)}, we can prove with an appropriately chosen $C_H$ that 
        \eq{
        \max_{t\geq -1}\bb P\big[\mc E_{H_t^{(i)}}^{\complement}\big] \leq O\Big(d^{-c_p-2} \vee e^{-t_0^2/2}\Big) \text{\quad and \quad } \bb P[\mc E_E^\complement] \vee \max_{t \geq -1}\bb P[{\mc E_{H_t}}^{\complement}] \leq O(d^{-c_p-2}) .
        \label{eq: exceptional probability used in induction}
        }

        We state the inductive hypotheses as follows: for each $t \geq 0$ and $i\in[n]$, 
        \begin{align}
         & \textsc{Hypothesis $H_{t}$: }\norm{(\mc P^0 \mb W \mc P^0 )^{t} \mc P^0 \mb W \mb U^*}\ti \leq C_0' \sqrt{\frac{\mu_1 r}{n}}\big[ C_0' \deltaop\big]^{t+1} \sqrt{\log d} \\ 
         & \qquad \qquad \qquad \qquad \qquad \qquad \qquad \qquad \qquad \qquad \text{~under~}\big(\cap_{k=-1}^{t}\mc E_{H_{k}}\big) \cap \mc E_W \cap \mc E_E; \label{eq: inductive hypothesis 1} \\ 
            & \textsc{Hypothesis $H_{t}'$: } \max_{c \in[l]} \norm{ \mb E_{\cdot, S_c}\t \mc P^0  (\mc P^0 \mb W \mc P^0 )^{t} \mc P^0 \mb W \mb U^*} \leq C_0'  \big[ C_0' \deltaop\big]^{t+1} \Big[ \tilde \sigma  \sqrt{m r\log d} +  \frac{\tilde \sigma^2 }{\bar \sigma } \sqrt{mn}\sqrt{\frac{ \mu_2 r}{p}}\Big]   \\
            & \qquad \qquad \qquad \qquad \qquad \qquad \qquad \qquad \qquad \qquad \text{~under~}\big(\cap_{k=-1}^{t}\mc E_{H_{k}}\big) \cap \mc E_W \cap \mc E_E; 
            \label{eq: inductive hypothesis 2}
            \\ 
            & \textsc{Hypothesis $H_{t}^{(i)}$: } \norm{\mb E_{i,\cdot} \mc P_{-i,\cdot}(\mb E)\t \mc P^0 (\mc P^0 \mb W^{(i)} \mc P^0 )^t \mc P^0 \mb W^{(i)} \mb U^*} \leq  \frac{1}{3}C_0' \sqrt{\frac{\mu_1 r}{n}}\big[ C_0' \deltaop\big]^{t+2}(t_0 \wedge \sqrt{\log d}), \\ 
            &   \norm{\mb E_{i,\cdot} \mc P_{-i,\cdot}(\mb E)\t \mc P^0 (\mc P^0 \mb W \mc P^0 )^t \mc P^0 \mb W \mb U^*}_2 \leq  \frac{2}{3}C_0' \sqrt{\frac{\mu_1 r}{n}}\big[ C_0' \deltaop\big]^{t+2}(t_0 \wedge \sqrt{\log d}),  \\ 
            & \text{\qquad and \quad } \norm{ \big((\mc P^0 \mb W \mc P^0 )^{t+1} \mc P^0 \mb W \mb U^*\big)_{i,\cdot}}_2 \leq C_0' \sqrt{\frac{\mu_1 r}{n}}\big[ C_0' \deltaop\big]^{t+2}(t_0 \wedge \sqrt{\log d}),  \\ 
            &  \qquad \qquad \qquad \qquad \qquad \qquad \qquad \qquad \qquad  \text{~under~}\big(\cap_{k=-1}^{t}\mc E_{H_{k}^{(i)}}\big) \cap \big(\cap_{k=-1}^{t}\mc E_{H_{k}}\big) \cap \mc E_W \cap \mc E_E, \label{eq: inductive hypothesis 3}
            \end{align}
    where the event $\mc E_W$ was previously defined in \eqref{eq: definition of E_H}. 
The desired conclusion follows immediately from \textsc{Hypothesis} $H_t^{(i)}$ together with the exceptional probability bound in \eqref{eq: exceptional probability used in induction}.
Thus, it remains only to establish the inductive hypotheses.
        
        \paragraph{Base Case. }
        \emph{Verifying $H_0$. }
        We start by verifying hypothesis $H_0$ in \eqref{eq: inductive hypothesis 1} in the base case $t=0$. It is straightforward that under $\mc E_{H_{-1}} \cap \mc E_W \cap \mc E_E$, 
        \begin{equation}
             \norm{\mc P^0 \mb W \mb U^*}\ti \leq \norm{\mb U^*}\ti \norm{\mb W \mb U^*} + \norm{\mb W\mb U^*}\ti \leq \sqrt{\frac{\mu_1 r}{n}} \deltaop + C_H \sqrt{\frac{\mu_1 r}{n}} \deltaop \sqrt{\log d} \leq C_0' \sqrt{\frac{\mu_1 r}{n}} \deltaop \sqrt{\log d}.  
            \label{eq: proof of inductive hypothesis 1 base case}
        \end{equation}
        for some constant $C_0'$.

        \bigskip

        \emph{Verifying $H_0'$. }
        Then we move on to the second hypothesis $H_0'$ in \eqref{eq: inductive hypothesis 2} by evaluating $\norm{\mb E_{\cdot, S_c}\t \mc P^0 \mb W \mb U^*}$ for each block $S_c$. For $c \in[l]$, replacing $\mb W$ with $\mb W^{(-c)}$ gives that 
        \begin{align}
            & \norm{\mb E_{\cdot, S_c}\t \mc P^0 \mb W \mb U^*}_2 \leq \norm{\mb E_{\cdot, S_c}\t \mc P^0 \mb W^{(-c)} \mb U^*}_2 + \norm{\mb E_{\cdot, S_c}\t \mc P^0 \big( \mb W - \mb W^{(-c)} \big)\mb U^*}_2  \\ 
            \leq & C_H \sqrt{m}\tilde \sigma \norm{\mc P^0 \mb W^{(-c)} \mb U^*} \sqrt{r \log d} + \sqrt{m}B \norm{\mc P^0 \mb W^{(-c)} \mb U^*}\ti \\ 
            & + \norm{\mb E_{\cdot, S_c}} \norm{\mc P^0 \mc H(\mb E_{\cdot, S_{c}} \mb E_{\cdot, S_{c}}\t + \mb M^*_{\cdot, S_{c}} \mb E_{\cdot, S_{c}}\t + \mb E_{\cdot, S_{c}} {\mb M^*_{\cdot, S_{c}}}\t  ) \mb U^* }, 
            \label{eq: matrix bernstein for H_0'}
        \end{align}
        where we used the conditions in $\mc E_{H_0}$. Regarding the quantities on the RHS of \eqref{eq: matrix bernstein for H_0'}, we invoke the conditions in $\mc E_H \cap \mc E_E$ to yield that 
        \begin{align}
            & \norm{\mc P^0 \mb W^{(-c)} \mb U^*}\leq \norm{\mb W^{(-c)}} \leq \deltaop,\quad \norm{\mb E_{\cdot, S_c}}_2 \leq C_H \tilde \sigma \sqrt{n}, \\ 
            & \norm{\mc P^0 \mb W^{(-c)} \mb U^*}\ti\leq \norm{\mb W^{(-c)} \mb U^*}\ti + \norm{\mb U^*}\ti \norm{\mb W^{(-c)} } \leq 2 C_H \sqrt{\frac{\mu_1 r}{n}} \deltaop \sqrt{\log d},  \\ 
            & \norm{\mc P^0 \mc H(\mb E_{\cdot, S_{c}} \mb E_{\cdot, S_{c}}\t + \mb M^*_{\cdot, S_{c}} \mb E_{\cdot, S_{c}}\t + \mb E_{\cdot, S_{c}} {\mb M^*_{\cdot, S_{c}}}\t  ) \mb U^* } \leq \norm{\mb E_{\cdot, S_{c}}} \norm{\mb E_{\cdot, S_{c}}\t \mb U^*} +2 \sqrt{m} \norm{\mb E_{\cdot, S_c}} \bignorm{{\mb M^*}\t }\ti \\ 
            & \qquad \qquad + \Bignorm{\diag(\mb E_{\cdot, S_{c}} \mb E_{\cdot, S_{c}}\t)} \norm{\mb U^*}\ti + 2\Bignorm{\diag(\mb M^*_{\cdot, S_{c}} \mb E_{\cdot, S_{c}}\t)} \norm{\mb U^*}\ti \\ 
            & \qquad \leq C_H^2 \tilde \sigma^2 \sqrt{nmr\log d} + 2 C_H \tilde \sigma \sqrt{mn} \sigma_1^* \sqrt{\frac{\mu_2 r }{p}} + mB^2 \sqrt{\frac{\mu_1 r}{n}} + 2 \frac{\mu_1 r}{n} \sigma_1^* \sqrt{m}B \\
            & \qquad \leq   C_1' \big(\tilde\sigma^2 \sqrt{nmr \log d}  + \tilde \sigma \sqrt{mn} \sigma_1^* \sqrt{\frac{\mu_2 r }{p}}  + \bar \sigma \sigma_1^*  \sqrt{r}\big)
        \end{align}
        for some constant $C_1'$, where the last line results from $mB^2 \sqrt{\mu_1 r / n} \leq (\sqrt{m\mu_1}B)^2 \sqrt{r/n} \lesssim \bar \sigma^2 n \sqrt{r / n}$ and $\frac{\mu_1 r}{n} \sigma_1^*\sqrt{m}B \leq \sqrt{\mu_1 r / n} \sigma_1^* \sqrt{m}B \lesssim \sigma_1^* \bar \sigma \sqrt{r}$, given the condition that $\sqrt{m\mu_1}B \lesssim \sqrt{n}\bar \sigma$. 

        Plugging these bounds into \eqref{eq: matrix bernstein for H_0'} yields that 
        \begin{align}
            & \norm{\mb E_{\cdot, S_c}\t \mc P^0 \mb W \mb U^*} \leq C_H \sigma \deltaop \sqrt{mr\log d} + \sqrt{m}B \big(2 C_H \sqrt{\frac{\mu_1 r}{n}} \deltaop \sqrt{\log d}\big) \\ 
            & \qquad + (C_H \tilde \sigma \sqrt{n}) C_1' \Big[ \tilde\sigma^2 \sqrt{mnr \log d}  + \tilde \sigma \sqrt{mn} \sigma_1^* \sqrt{\frac{\mu_2 r }{p}}  + \bar \sigma \sigma_1^*  \sqrt{r} \Big]  \\ 
            \leq  & C_0' \deltaop  \Big[\tilde \sigma  \sqrt{m r\log d} +  \frac{\tilde\sigma^2 }{\bar \sigma} \sqrt{mn}\sqrt{\frac{ \mu_2 r}{p}}\Big]
        \end{align}
         under $\mc E_{H_0} \cap \mc E_W \cap \mc E_E$, provided a sufficiently large $C_0'$. Here we use the facts that $\sigma \sigma_1^* \sqrt{n} = \frac{\sigma}{\bar \sigma} \bar\sigma \sigma_1^* \sqrt{n} \leq \frac{\sigma}{\bar \sigma} \deltaop$ and $\sqrt{m}B \sqrt{\mu_1 r/n}\sqrt{\log d} \lesssim \sigma $. 
         Since the above arguments hold for every $c \in[l]$ under $\mc E_{H_0} \cap \mc E_W \cap \mc E_E$, thus $H_0'$ has been proved. 

        \bigskip

        \emph{Verifying $H_0^{(i)}$. }
        Next, we shall verify the correctness of $H_{0}^{(i)}$. 
        Under the event $\mc E_{H_0^{(i)}}$, one has 
        \begin{align}
            & \norm{\mb E_{i,\cdot} \mc P_{-i,\cdot}(\mb E)\t \mc P^0 \mb W^{(i)} \mb U^*} \leq C_H \tilde\sigma \norm{ \mc P_{-i,\cdot}(\mb E)\t \mc P^0 \mb W^{(i)} \mb U^*} \sqrt{r}(t_0 \wedge \sqrt{\log d})\\ 
            & \qquad \qquad \qquad + C_H \sqrt{m}B \max_{c\in[l]}\norm{ \mc P_{-i,S_c}(\mb E)\t \mc P^0 \mb W^{(i)} \mb U^*} (t_0 \wedge \sqrt{\log d})^2.
            \label{eq: bernstein inequality for the base case}
        \end{align} 
        Under $\mc E_W \cap \mc E_E$, the first term of the RHS above is bounded by 
        \begin{align}
            & \norm{ \mc P_{-i,\cdot}(\mb E)\t \mc P^0 \mb W^{(i)} \mb U^*} \leq C_H(\tilde\sigma \sqrt{n} + \sigma\sqrt{p}) \deltaop. 
            \label{eq: term 1 in the bernstein inequality for the base case}
        \end{align}
        
        Regarding $\max_{c\in[l]}\norm{ \mc P_{-i,S_c}(\mb E)\t \mc P^0 \mb W^{(i)} \mb U^*}$ in the second term of the RHS above, $\mb W^{(i)}$ can be replaced by the alternative in \eqref{eq: inductive hypothesis 1} to gives that 
        \begin{align} 
        & \max_{c\in[l]}\norm{ \mc P_{-i,S_c}(\mb E)\t \mc P^0 \mb W^{(i)} \mb U^*} \leq \max_{c\in[l]}\norm{\mb E_{\cdot, S_c}\t \mc P^0 \mb W \mb U^*} + \max_{c\in[l]}\norm{\mc P_{i,S_c}(\mb E)\t \mc P^0 \mb W^{(i)} \mb U^*} \\ 
        & + \max_{c\in[l]}\norm{\mb E_{\cdot, S_c}\t \mc P^0 (\mb W - \mb W^{(i)}) \mb U^*} 
            \label{eq: leave-two-out replacement for the base case}
        \end{align}
        with $j \in S_{c(j)}$. 
        For the first term in the RHS of \eqref{eq: leave-two-out replacement for the base case}, the preceding hypothesis $H_0'$ gives that
        \begin{align}
            & \max_{c\in[l]}\norm{\mb E_{\cdot, S_c}\t \mc P^0 \mb W \mb U^*}\leq C_0' \deltaop \Big[\tilde \sigma  \sqrt{m r\log d} +  \frac{\tilde\sigma^2}{\bar \sigma} \sqrt{mn}\sqrt{\frac{ \mu_2 r}{p}}\Big]
            \label{eq: term 1 for leave-two-out matrix bernstein (base case)}
        \end{align}
        under the event $\mc E_{H_{-1}} \cap \mc E_{H_0} \cap \mc E_W \cap \mc E_E$.

        Moreover, the second term in \eqref{eq: leave-two-out replacement for the base case} is controlled by 
        \begin{align}
            & \max_{c\in[l]}\norm{\mc P_{i,S_c}(\mb E)\t \mc P^0 \mb W^{(i)} \mb U^*}  \leq \sqrt{m}B \Big[\norm{\mb U^*}\ti \bignorm{\mb W^{(i)}} + \bignorm{ \mb W^{(i)} \mb U^*}\ti\Big] \leq C_2' \sqrt{m}B \sqrt{\frac{\mu_1 r}{n}} \deltaop \sqrt{\log d}
            \label{eq: term 2 for leave-two-out matrix bernstein (base case)}
        \end{align}
        for some constant $C_2'$ under $\mc E_W \cap \mc E_{E}$. 
        
        With respect to the thrid term in \eqref{eq: leave-two-out replacement for the base case}, we recall the decomposition in \eqref{eq: decomposition of W - W-i} and have under $\mc E_W \cap \mc E_E \cap \mc E_{H_{-1}^{(i)}}$ that
        \begin{align} 
            &\norm{\mc P^0\big( \mb W - \mb W^{(i)}\big)  \mb U^*}  \\ 
            \leq  & \norm{\mb E_{i,\cdot}\mc P_{-i,\cdot}(\mb E)\t \mb U^*} + \norm{\mc P_{-i,\cdot}(\mb E) \mb E_{i,\cdot}\t }_2  \norm{\mb U^*}\ti + \bignorm{\mb E_{i,\cdot} {\mb M^*}\t }_2 + 2 \big|\mb E_{i,\cdot}{\mb M^*_{i,\cdot}}\t\big|   \\ 
            \leq & \Big[(C_H \tilde\sigma \sigma \sqrt{p r}(t_0 \wedge \sqrt{\log d} ) + \deltaop \sqrt{\frac{\mu_1 r}{n}} + 3 C_H\bar \sigma \sqrt{r} (t_0 \wedge \sqrt{\log d} ) \sigma_1^* \Big] \\ 
            \leq&  C_3' \deltaop \sqrt{\frac{\mu_1 r}{n}}(t_0 \wedge \sqrt{\log d} ),
            \label{eq: upper bound for P0W - Wi U*}
            \\
            & \max_{c\in[l]}\norm{\mb E_{\cdot, S_c}\t \mc P^0 (\mb W - \mb W^{(i)}) \mb U^*} \leq 
            \max_{c\in[l]} \norm{\mb E_{\cdot, S_c} }  \norm{\mc P^0\big( \mb W - \mb W^{(i)}\big)  \mb U^*}
            \\ 
            \leq &  (C_H \tilde \sigma \sqrt{n})(C_3' \deltaop \sqrt{\frac{\mu_1 r}{n}})(t_0 \wedge \sqrt{\log d} )
            \label{eq: term 3 for leave-two-out matrix bernstein (base case)}
        \end{align}
        for some constant $C_3'$, where we used the fact that $\norm{\mc P_{-i,\cdot}(\mb E) \mb E_{i,\cdot}\t }_2 \leq \norm{\mc H(\mb E\mb E\t)} \leq \deltaop $ under $\mc E_W$. 

        Plugging \eqref{eq: term 1 for leave-two-out matrix bernstein (base case)}, \eqref{eq: term 2 for leave-two-out matrix bernstein (base case)}, and \eqref{eq: term 3 for leave-two-out matrix bernstein (base case)} into \eqref{eq: leave-two-out replacement for the base case} yields that
        \begin{align}
            & \norm{ \mc P_{-i,S_c}(\mb E)\t \mc P^0 \mb W^{(i)} \mb U^*}_2 \leq C_0' \deltaop \Big[ \tilde \sigma  \sqrt{m r\log d} +  \frac{\tilde\sigma^2}{\bar \sigma}\sqrt{mn}\sqrt{\frac{ \mu_2 r}{p}}\Big]  + C_2' \sqrt{m} B \sqrt{\frac{\mu_1 r}{n}} \deltaop \sqrt{\log d} \\ 
            & \qquad  + (C_H \tilde \sigma \sqrt{n})(C_3' \deltaop \sqrt{\frac{\mu_1 r}{n}}) (t_0 \wedge \sqrt{\log d} )
            \leq C_4' \deltaop  \Big[ \tilde \sigma  \sqrt{\mu_1 m r\log d} + \frac{\tilde\sigma^2}{\bar \sigma} \sqrt{mn} \sqrt{\frac{\mu_2 r}{p}}\Big]
            \label{eq: final bound in the leave-two-out for the base case}
        \end{align}
        holds under $\big(\mc E_{H_{-1}^{(i)}} \cap \mc E_{H_0^{(i)}}\big) \cap \big(\mc E_{H_{-1}} \cap \mc E_{H_0} \big) \cap \mc E_W \cap \mc E_E$ for some constant $C_4'$, where the last inequality holds since $ B\sqrt{\mu_1 r /n} \lesssim \sigma $. 
        Then, provided a sufficiently large constant $C_H$, substituting \eqref{eq: term 1 in the bernstein inequality for the base case} and \eqref{eq: final bound in the leave-two-out for the base case} into \eqref{eq: bernstein inequality for the base case} yields that 
        \begin{align}
           & \norm{\mb E_{i,\cdot} \mc P_{-i,\cdot}(\mb E)\t \mc P^0 \mb W^{(i)} \mb U^*} \leq C_H \tilde\sigma (\tilde\sigma \sqrt{n} + \sigma\sqrt{p}) \deltaop \sqrt{r} (t_0 \wedge \sqrt{\log d}) \\ 
           & \qquad \qquad + \sqrt{m}B\bigg\{  C_4' \deltaop  \Big[ \tilde \sigma  \sqrt{\mu_1 m r\log d} + \frac{\tilde\sigma^2}{\bar \sigma} \sqrt{mn} \sqrt{\frac{\mu_2 r}{p}}\Big]\bigg\} (t_0 \wedge \sqrt{\log d})^2  \\ 
           \leq & \frac13 C_0' \sqrt{\frac{\mu_1 r}{n}} \deltaop^2 (t_0 \wedge \sqrt{\log d})
           \label{eq: first hypothesis for base case}
        \end{align}
        holds under $\mc E_{H_0} \cap \mc E_W \cap \mc E_E$ for some constant $C_5'$ under $\big(\mc E_{H_{-1}^{(i)}} \cap \mc E_{H_0^{(i)}}\big) \cap \big(\mc E_{H_{-1}} \cap \mc E_{H_0} \big) \cap \mc E_W \cap \mc E_E$, where we made use of the facts that 
        \begin{align}
            & m  \sqrt{\mu_1 }\log d B \lesssim \sigma \sqrt{n} \lesssim \tilde\sigma \sqrt{n}, \quad  \frac{\tilde \sigma }{\bar \sigma} m B \log d \sqrt{\mu_2  / p} \lesssim \tilde\sigma. 
        \end{align}

        Toward the second inequality in $H_t^{(i)}$, applying the triangle inequality implies that 
        \begin{align}
            & \norm{\mb E_{i,\cdot} \mc P_{-i,\cdot}(\mb E)\t \mc P^0 \mb W\mb U^*} \leq \norm{\mb E_{i,\cdot} \mc P_{-i,\cdot}(\mb E)\t \mc P^0 \mb W^{(i)}  \mb U^*} \\ 
            & \qquad + \norm{\mb E_{i,\cdot} \mc P_{-i,\cdot}(\mb E)\t \mc P^0 \mc H(\mb E_{i,\cdot }\mb E_{-i,\cdot }\t + \mb E_{-i,\cdot } \mb E_{i,\cdot }\t +\mb E_{i,\cdot} {\mb M^*}\t + \mb M^*\mb E_{i,\cdot}\t )  \mb U^*}\\ 
            \leq &  \norm{\mb E_{i,\cdot} \mc P_{-i,\cdot}(\mb E)\t \mc P^0 \mb W^{(i)}  \mb U^*} + \norm{\mc H(\mb E\mb E)} \Big[\norm{\mb E_{i,\cdot }\mb E_{-i,\cdot }\t  \mb U^*} + \norm{\mb E}\norm{\mb E_{i,\cdot}}_2 \norm{\mb U^*}\ti \\ 
            & \qquad  + 2\norm{\mb U^*}\ti^2 \sigma_1^* \norm{\mb E_{i,\cdot} \mb V^*}_2 + \sigma_1^* \norm{\mb E_{i,\cdot} \mb V^*}_2  \Big] \\ 
            \leq & \frac13 C_0' \sqrt{\frac{\mu_1 r}{n}} \deltaop^2 (t_0 \wedge \sqrt{\log d})  + \deltaop \Big[C_H \tilde\sigma \sigma \sqrt{pr}(t_0 \wedge \sqrt{\log d}) \\ 
        & \qquad+  \big(C_H (\tilde \sigma \sqrt{n} + \sigma\sqrt{p})\big)(C_H\sigma \sqrt{n})\sqrt{\frac{\mu_1 r}{n}}  + 2 \frac{\mu_1 r}{n} \sigma_1^* (C_H \bar \sigma \sqrt{n}) + \sigma_1^* (C_H \bar \sigma \sqrt{n}) \Big] \\ 
            \leq & \frac{2}{3}C_0' \sqrt{\frac{\mu_1 r}{n}} \deltaop^2 (t_0 \wedge \sqrt{\log d})
              \label{eq: second hypothesis for base case}
        \end{align}
        for some constant $C_6'$ under the event $\big(\mc E_{H_{-1}^{(i)}} \cap \mc E_{H_0^{(i)}}\big) \cap \big(\mc E_{H_{-1}} \cap \mc E_{H_0} \big) \cap \mc E_W \cap \mc E_E$. 

        For the last inequality, provided a sufficiently large $C_0'$, one has under $\big(\mc E_{H_{-1}^{(i)}} \cap \mc E_{H_0^{(i)}}\big) \cap \big(\mc E_{H_{-1}} \cap \mc E_{H_0} \big) \cap \mc E_W \cap \mc E_E$ that 
        \begin{align}
            & \norm{\big( \mc P^0 \mb W \mc P^0 \mb W \mb U^*\big)_{i,\cdot}}_2 \leq \norm{\mb E_{i,\cdot} \mc P_{-i,\cdot}(\mb E)\t \mc P^0 \mb W\mb U^*}  \\ 
            & \qquad  + \big(2|\mb E_{i,\cdot}{\mb M^*_{i,\cdot}}\t | + \bignorm{\diag(\mb M^*{\mb M^*}\t)} \big) \norm{(\mc P^0 \mb W \mb U^*)_{i,\cdot} }_2 + \norm{\mb U^*}\ti \norm{\mb W}^2  \\ 
            \leq &  \frac{2}{3}C_0' \sqrt{\frac{\mu_1 r}{n}} \deltaop^2 (t_0 \wedge \sqrt{\log d}) \\ 
            & + \big(2C_H \bar \sigma \sqrt{r \log d }\sigma_1^* \sqrt{\mu_1 r /n } + \frac{\mu_1 r}{n} {\sigma_1^*}^2\big) \cdot \Big[(C_H \tilde\sigma \sigma \sqrt{p r}(t_0 \wedge \sqrt{\log d}) + 3C_H \sigma_1^* \bar \sigma \sqrt{r}(t_0 \wedge \sqrt{\log d}) \\ 
            & \qquad + \frac{\mu_1 r}{n} {\sigma_1^*}^2 \sqrt{\frac{\mu_1 r}{n}} + 2 C_H\bar \sigma \sqrt{r} (t_0 \wedge \sqrt{\log d} ) \sigma_1^*  \Big] + \sqrt{\frac{\mu_1 r}{n}} \deltaop^2 \\ 
            \leq & C_0' \sqrt{\frac{\mu_1 r}{n}} \deltaop^2 (t_0 \wedge \sqrt{\log d}),
        \end{align}
        according to \eqref{eq: second hypothesis for base case}, the conditions under the specificed event, and the fact that 
        \longeq{
        \norm{(\mc P^0 \mb W \mb U^*)_{i,\cdot} }_2 \leq  &\norm{\mb E_{i,\cdot} \mc P_{-i,\cdot}(\mb E)\t \mb U^*}_2 + \norm{\mb E_{i,\cdot} \mb V^*}_2 \sigma_1^* + 2|\mb E_{i,\cdot}{\mb M^*_{i,\cdot}}\t | \\ 
        & \qquad + \bignorm{\diag(\mb M^*{\mb M^*}\t)}\norm{\mb U^*}\ti + \norm{\mb U^*}\ti\norm{\mb W} \\ 
        \leq & \norm{\mb E_{i,\cdot} \mc P_{-i,\cdot}(\mb E)\t \mb U^*}_2 + 3\norm{\mb E_{i,\cdot} \mb V^*}_2 \sigma_1^* + \bignorm{\diag(\mb M^*{\mb M^*}\t)}\norm{\mb U^*}\ti + \norm{\mb U^*}\ti\norm{\mb W}. 
        }
        
        \bigskip

        \paragraph{Induction Step with $t \geq 1$. }
        Now we move on to the case with $t \geq 1$ assuming that the hypothesis holds for $ k= 0, \cdots, t - 1$. 

        \emph{Verifying $H_t$ in \eqref{eq: inductive hypothesis 1}. }
        Invoking the fact that $\mc P^0 = \mb I - \mb U^*{\mb U^*}\t$ along with \eqref{eq: identity of P0 W P0}, one has 
        \begin{align}
            & \norm{(\mc P^0 \mb W \mc P^0 )^{t} \mc P^0 \mb W \mb U^*}\ti \leq \norm{\mc H(\mb E\mb E\t ) \mc P^0 (\mc P^0 \mb W \mc P^0 )^{t-1} \mc P^0 \mb W \mb U^* }\ti \\ 
            & \quad + 2 \max_{i \in[n]}\big|\mb E_{i,\cdot}\mb M^*_{i,\cdot} \big| \norm{(\mc P^0 \mb W \mc P^0 )^{t-1} \mc P^0 \mb W \mb U^* }\ti  
            + \norm{\mb U^*}\ti  \norm{\mb W}^{t+1}
            \\ 
            \leq & C_0' \sqrt{\frac{\mu_1 r}{n}} \deltaop^{t+1} \sqrt{\log d}
        \end{align}
        under $\big(\cap_{k=-1}^{t}\mc E_{H_{t}}\big) \cap \mc E_W \cap \mc E_E$. 

           \emph{Verifying $H_t'$ in \eqref{eq: inductive hypothesis 3}. }
         For $\norm{ \mb E_{\cdot, S_c}\t \mc P^0  (\mc P^0 \mb W \mc P^0 )^{t} \mc P^0 \mb W \mb U^*}_2$ with $c \in[l]$, we employ the telescoping decomposition to derive that 
        \begin{align}
            & \norm{\mb E_{\cdot, S_c}\t \mc P^0  (\mc P^0 \mb W \mc P^0 )^{t } \mc P^0 \mb W \mb U^*} \leq \underbrace{\norm{\mb E_{\cdot, S_c}\t \mc P^0  (\mc P^0 \mb W^{(-c)} \mc P^0 )^{t } \mc P^0 \mb W^{(-c)} \mb U^*}}_{\alpha_{1}} \\ 
            & \qquad \qquad  + \underbrace{\sum_{h=0}^{t -1} \norm{ \mb E_{\cdot, S_c}\t \mc P^0  (\mc P^0 \mb W^{(-c)} \mc P^0 )^h \mc P^0 (\mb W^{(-c)} - \mb W )\mc P^0  (\mc P^0 \mb W \mc P^0 )^{t - h -1} \mc P^0 \mb W \mb U^*} }_{\alpha_{2}}\\ 
            & \qquad \qquad +\underbrace{ \norm{\mb E_{\cdot, S_c}\t   \mc P^0  (\mc P^0 \mb W^{(-c)} \mc P^0 )^{t } \mc P^0 (\mb W^{(-c)} - \mb W)\mb U^*}}_{\alpha_{3}}.
            \label{eq: decomposition regarding alpha123 in leave-two-out}
        \end{align}

        We shall focus on $\alpha_1$ first. The conditions in $\mc E_{H_t}$ implies that 
        \begin{align}
            & \alpha_1 \leq C_H \sqrt{m} \tilde \sigma \norm{ (\mc P^0 \mb W^{(-c)} \mc P^0 )^{t } \mc P^0 \mb W^{(-c)} \mb U^*}\sqrt{r \log d} + C_H \sqrt{m}B\norm{ (\mc P^0 \mb W^{(-c)} \mc P^0 )^{t } \mc P^0 \mb W^{(-c)} \mb U^*}\ti \log d. 
            \label{eq: matrix Bernstein in leave-two-out}
        \end{align} 
        With respect to $\norm{ (\mc P^0 \mb W^{(-c)} \mc P^0 )^{t } \mc P^0 \mb W^{(-c)} \mb U^*}\ti$ in \eqref{eq: matrix Bernstein in leave-two-out}, a telescoping decomposition again gives that 
        \begin{align}
            & \norm{ (\mc P^0 \mb W^{(-c)} \mc P^0 )^{t } \mc P^0 \mb W^{(-c)} \mb U^*}\ti \leq \norm{ (\mc P^0 \mb W \mc P^0 )^{t } \mc P^0 \mb W \mb U^*}\ti \\ 
            & \qquad \qquad  + \sum_{h=0}^{t -1} \norm{  (\mc P^0 \mb W^{(-c)} \mc P^0 )^h \mc P^0 (\mb W^{(-c)} - \mb W )\mc P^0  (\mc P^0 \mb W \mc P^0 )^{t - h -1} \mc P^0 \mb W \mb U^*}  \\ 
            & \qquad \qquad + \norm{    (\mc P^0 \mb W^{(-c)} \mc P^0 )^{t } \mc P^0 (\mb W^{(-c)} - \mb W)\mb U^*}. 
            \label{eq: decomposition of the upper bound term matrix Bernstein in leave-two-out}
        \end{align}

        We notice the relation that 
        $$\mb W - \mb W^{(-c)} = \mc H\big(\mc P_{:,S_{c(j)}}(\mb E)\mc P_{:,S_{c(j)}}(\mb E)\t  + \mc P_{:,S_{c(j)}}(\mb E) {\mb M^*_{:, S_{c(j)}}}\t +  \mb M^*_{:, S_{c(j)}} \mc P_{:,S_{c(j)}}(\mb E)\t \big).$$ 
        It consequently allows us to control the summands appearing in \eqref{eq: decomposition of the upper bound term matrix Bernstein in leave-two-out} (as well as $\alpha_2$'s summands in \eqref{eq: decomposition regarding alpha123 in leave-two-out}) as follows: 
        \begin{align}
            & \norm{ (\mc P^0 \mb W^{(-c)} \mc P^0 )^h \mc P^0 (\mb W^{(-c)} - \mb W )\mc P^0  (\mc P^0 \mb W \mc P^0 )^{t - h -1} \mc P^0 \mb W \mb U^*} \\ 
            \leq & \norm{\mb W^{(-c)}}^h  \cdot \Big[  \norm{\mb E_{\cdot, S_c}}_2\norm{\mb E_{\cdot, S_c}\t \mc P^0  (\mc P^0 \mb W \mc P^0 )^{t - h -1} \mc P^0 \mb W \mb U^*}_2\\ 
            & \qquad \qquad \qquad +  \max_{i'\in[n]} \big( 2 \big| \mb E_{i',S_{c(j)}}{\mb M^*_{i',S_{c(j)}}}\t \big| + \big| \mb E_{i',S_{c(j)}}\mb E_{i',S_{c(j)}}\t \big|\big)\norm{(\mc P^0 \mb W \mc P^0 )^{t - h -1} \mc P^0 \mb W \mb U^*}\ti   \Big] \\ 
            \leq  & \deltaop^h \Big[  (C_H \tilde \sigma\sqrt{n}) \Big( C_0' \big[ C_0' \deltaop\big]^{t-h} \Big[ \tilde \sigma  \sqrt{mr\log d} + \frac{\tilde\sigma^2}{\bar \sigma}  \sqrt{m n} \sqrt{\frac{\mu_2 r}{p}}\Big] \Big) \\ 
            & \qquad + 2 m B \sigma_1^* \sqrt{\frac{\mu_1 \mu_2 r^2 }{np}} \deltaop^{t - h} + mB^2 \big[C_0' \sqrt{\frac{\mu_1 r}{n}} (C_0' \deltaop)^{t-h} \sqrt{\log d} \big]
            \Big] \\ 
            \leq &  C_7' \Big[\big[ C_0' \deltaop\big]^{t}  \tilde\sigma^2 \sqrt{mnr \log d}+ \big[ C_0' \deltaop\big]^t \frac{\tilde \sigma}{\bar \sigma} \sqrt{m} (\tilde \sigma^2 n) \sqrt{\mu_2 r/ p} +\sigma\sqrt{mr} \sigma_1^* \Big], 
            \label{eq: summand control in leave-two-out 1}
            \\ 
            & \norm{ \mb E_{\cdot, S_c}\t \mc P^0  (\mc P^0 \mb W^{(-c)} \mc P^0 )^h \mc P^0 (\mb W^{(-c)} - \mb W )\mc P^0  (\mc P^0 \mb W \mc P^0 )^{t - h -1} \mc P^0 \mb W \mb U^*}_2 \\ 
            \leq & \norm{\mb E_{\cdot, S_c}}_2 \norm{ (\mc P^0 \mb W^{(-c)} \mc P^0 )^h \mc P^0 (\mb W^{(-c)} - \mb W )\mc P^0  (\mc P^0 \mb W \mc P^0 )^{t - h -1} \mc P^0 \mb W \mb U^*} \\
            \leq  &   C_7' \tilde \sigma \sqrt{n} \Big[\deltaop^h \big[ C_0' \deltaop\big]^{t-h}  \tilde\sigma^2  \sqrt{mnr \log d}+ \deltaop^h \big[ C_0' \deltaop\big]^{t-h} \frac{\tilde \sigma}{\bar \sigma} \sqrt{m} (\tilde \sigma^2 n) \sqrt{\mu_2 r/ p} +\sigma\sqrt{mr} \sigma_1^* \Big]
            \label{eq: summand control in alpha 2}
        \end{align}
        hold for some constant $C_7'$ under $\big(\cap_{k=-1}^{t}\mc E_{H_{t}}\big) \cap \mc E_W \cap \mc E_E$. In the last inequality of \eqref{eq: summand control in leave-two-out 1}, we used the facts that 
        \begin{align}
            &   m B \sigma_1^* \sqrt{\frac{\mu_1 r}{n}} \leq \sigma\sqrt{mr} \sigma_1^*, 
            \label{eq: condition 1 in summand control of alpha 2}
            \\ 
            & mB^2 \sqrt{\mu_1 r / n} \sqrt{\log d} \lesssim \sigma^2 \sqrt{nr\log d}. \label{eq: condition 2 in summand control of alpha 2}
        \end{align}
        
        Additionally, we control the last term on the RHS of \eqref{eq: decomposition of the upper bound term matrix Bernstein in leave-two-out} (as well as $\alpha_3$ in \eqref{eq: matrix Bernstein in leave-two-out}) to derive that 
        \begin{align}
            & \norm{ (\mc P^0 \mb W^{(-c)} \mc P^0 )^{t} \mc P^0 (\mb W^{(-c)} - \mb W)\mb U^*} \\ 
            \leq & \norm{\mb W^{(-c)}}^{t } \Big[\norm{\mb E_{\cdot, S_c}}_2\big( \norm{\mb E_{\cdot, S_c}\t \mb U^*}_2 + \sqrt{m} \sigma_1^* \sqrt{\frac{\mu_2 r}{p}}\big) \\ 
            & \qquad \qquad + \max_{i'\in[n]} \big( 2 \big| \mb E_{i',S_{c(j)}}{\mb M^*_{i',S_{c(j)}}}\t \big| + \big| \mb E_{i',S_{c(j)}}\mb E_{i',S_{c(j)}}\t \big|\big) \norm{\mb U^*}\ti\Big]  \\ 
            \leq & \deltaop^{t } \Big[ (C_H \tilde \sigma  \sqrt{n}) \sigma (C_H \tilde\sigma \sqrt{mr\log d} + \sqrt{m}\sigma_1^* \sqrt{\frac{\mu_2 r}{p}}) + 2 m B \sigma_1^* \sqrt{\frac{\mu_1 \mu_2 r^2 }{np}}  + mB^2 \sqrt{\frac{\mu_1 r}{n}} \Big] \\ 
            \leq &  C_8' \deltaop^{t }\Big[  \tilde\sigma^2 \sqrt{m nr \log d} + \sigma_1^* \tilde \sigma \sqrt{mn} \sqrt{\mu_2 r /p}\Big],
            \label{eq: summand control in leave-two-out 2}
            \\ 
            & \alpha_3 \leq \norm{\mb E_{\cdot, S_c}}_2 \norm{\mc P^0  (\mc P^0 \mb W^{(-c)} \mc P^0 )^{t} \mc P^0 (\mb W^{(-c)} - \mb W)\mb U^*}  \leq C_8' \tilde \sigma \sqrt{n}\deltaop^{t }\Big[  \tilde\sigma^2 \sqrt{m nr \log d} + \sigma_1^* \tilde \sigma \sqrt{mn} \sqrt{\mu_2 r /p}\Big]
            \label{eq: alpha 3 control}
        \end{align}
        hold for some constant $C_8'$ under $\big(\cap_{k=-1}^{t}\mc E_{H_{t}}\big) \cap \mc E_W \cap \mc E_E$, where we used the facts \eqref{eq: condition 1 in summand control of alpha 2} and \eqref{eq: condition 2 in summand control of alpha 2} in the last line of \eqref{eq: summand control in leave-two-out 2}

        Taking \eqref{eq: inductive hypothesis 1}, \eqref{eq: matrix Bernstein in leave-two-out}, \eqref{eq: summand control in leave-two-out 1}, and \eqref{eq: summand control in leave-two-out 2} collectively yields that
        \begin{align}
            & \alpha_1  
            \leq  C_H \deltaop^{t+1} \tilde \sigma \sqrt{mr \log d}  + C_H \sqrt{m} B \bigg\{ \sum_{h=0}^{t-1}\Big[ C_7' \big(\deltaop^h \big[ C_0' \deltaop\big]^{t-h} \tilde \sigma^2 \sqrt{mnr \log d} \\ 
            & \qquad  + \deltaop^h \big[ C_0' \deltaop\big]^{t-h} \frac{\tilde \sigma}{\bar \sigma} \sqrt{m} (\tilde \sigma^2 n) \sqrt{\frac{\mu_2 r}{p}} + \sigma \sqrt{mr }\sigma_1^*\big)\Big]  + C_8'\deltaop^{t }\Big[  \tilde\sigma^2 \sqrt{m nr \log d} + \sigma_1^* \tilde \sigma \sqrt{mn} \sqrt{\mu_2 r /p}\Big]  \bigg\} \log d \\ 
            & \quad \leq  C_9' \big[ C_0' \deltaop\big]^{t+1} \tilde \sigma \sqrt{m r \log d}
            \label{eq: alpha 1 control}
        \end{align}
        for some constant $C_9'$ under $\big(\cap_{k=-1}^{t}\mc E_{H_{t}}\big) \cap \mc E_W \cap \mc E_E$ invoking the conditions that $\frac{m\sqrt{\mu_1 }B \log d}{\bar \sigma \sqrt{n}} \vee \frac{m\sqrt{ \mu_2 }B \log d}{\bar \sigma \sqrt{p}} \ll 1$. In the end, plugging \eqref{eq: summand control in alpha 2}, \eqref{eq: alpha 3 control}, and \eqref{eq: alpha 1 control} into \eqref{eq: decomposition regarding alpha123 in leave-two-out} leads us to the conclusion that 
        \begin{align}
            & \max_{c\in[l]} \norm{\mb E_{\cdot, S_c}\t \mc P^0  (\mc P^0 \mb W \mc P^0 )^{t } \mc P^0 \mb W \mb U^*}  \leq  C_9' \big[ C_0' \deltaop\big]^{t+1} \tilde \sigma \sqrt{m r \log d}\\ 
            & \qquad \qquad +  C_7' \tilde\sigma \sqrt{n} \Big[\big[ C_0' \deltaop\big]^{t}  \tilde\sigma^2 \sqrt{mnr \log d}+ \big[ C_0' \deltaop\big]^t \frac{\tilde \sigma}{\bar \sigma} \sqrt{m} (\tilde \sigma^2 n) \sqrt{\mu_2 r/ p} +\sigma\sqrt{mr} \sigma_1^* \Big]\\ 
            & \qquad \qquad + C_8' \tilde \sigma \sqrt{n}\deltaop^{t }\Big[  \tilde\sigma^2 \sqrt{m nr \log d} + \sigma_1^* \tilde \sigma \sqrt{mn} \sqrt{\mu_2 r /p}\Big]
            \\ 
            & \qquad \leq C_0'  \big[ C_0' \deltaop\big]^{t+1} \Big[ \tilde \sigma  \sqrt{m r\log d} +  \frac{\tilde \sigma^2}{\bar \sigma } \sqrt{mn}\sqrt{\frac{ \mu_2 r}{p}}\Big] 
        \end{align}
        holds, provided a sufficiently large $C_0'$.

        \emph{Verifying $H_{t}^{(i)}$ in \eqref{eq: inductive hypothesis 3}. }
        Under the event $\mc E_{H^{(i)}_{t}}$, one has 
        \begin{align}
            & \norm{\mb E_{i,\cdot} \mc P_{-i,\cdot}(\mb E)\t \mc P^0 (\mc P^0 \mb W^{(i)} \mc P^0 )^{t} \mc P^0 \mb W^{(i)} \mb U^*}_2\\ 
             \leq & C_H \tilde \sigma \norm{\mc P_{-i,\cdot}(\mb E)\t \mc P^0 (\mc P^0 \mb W^{(i)} \mc P^0 )^{t} \mc P^0 \mb W^{(i)} \mb U^*}\sqrt{r}(t_0 \wedge \sqrt{\log d}) \\ 
            &   + C_H \sqrt{m}B \max_{c\in[l]}\norm{\mc P_{-i,S_c}(\mb E)\t \mc P^0 (\mc P^0 \mb W^{(i)} \mc P^0 )^{t} \mc P^0 \mb W^{(i)} \mb U^*}(t_0^2 \wedge \log d). 
            \label{eq: matrix bernstein for H_t^(i)}
        \end{align}

        First of all, we deduce from the conditions of $\mc E_{W} \cap \mc E_E$ that 
        \begin{align}
            & \norm{\mc P_{-i,\cdot}(\mb E)\t \mc P^0 (\mc P^0 \mb W^{(i)} \mc P^0 )^{t} \mc P^0 \mb W^{(i)} \mb U^*} \leq (C_H \tilde\sigma \sqrt{n} + \sigma \sqrt{p}) \deltaop^{t+1}.
            \label{eq: first term in matrix bernstein for H_t^(i)}
        \end{align}

        In order to control the second term on the RHS of \eqref{eq: matrix bernstein for H_t^(i)}, we employ a telescoping decomposition that 
        \begin{align}
            & \norm{\mc P_{-i,S_c}(\mb E)\t \mc P^0 (\mc P^0 \mb W^{(i)} \mc P^0 )^{t} \mc P^0 \mb W^{(i)} \mb U^*}  \\  
            \leq & \norm{\mc P_{-i,S_c}(\mb E)\t \mc P^0 (\mc P^0 \mb W \mc P^0 )^{t} \mc P^0 \mb W \mb U^*}  \\
            &  + \sum_{l = 0}^{t-1}\norm{\mc P_{-i,S_c}(\mb E)} \norm{ (\mc P^0 \mb W^{(i)}\mc P^0 )^{l} } \norm{\mc P^0 \big( \mb W^{(i)} - \mb W \big) \mc P^0  (\mc P^0 \mb W \mc P^0 )^{t -1 - l} \mc P^0 \mb W \mb U^*} \\ 
            &  +\norm{\mc P_{-i,S_c}(\mb E)}  \norm{ (\mc P^0 \mb W^{(i)}\mc P^0 )^{t} } \norm{\mc P^0 \big( \mb W^{(i)} - \mb W \big) \mb U^*}. 
        \end{align}

        Recall the identity \eqref{eq: identity of P0 (W - Wi) P0} associated with the difference $\mc P^0\mb W - \mb W^{(i)} \mc P^0$. 
        We therefore seek to control $$\norm{\mc P^0 \big( \mb W^{(i)} - \mb W \big) \mc P^0  (\mc P^0 \mb W \mc P^0 )^{t -1 - l} \mc P^0 \mb W \mb U^*} $$ by incorporating the preceding hypotheses with the decomposition below: 
        \begin{align}
            & \norm{\mc P^0 \big( \mb W^{(i)} - \mb W \big) \mc P^0  (\mc P^0 \mb W \mc P^0 )^{t -1 - l} \mc P^0 \mb W \mb U^*} \leq  \underbrace{\norm{ \mb E_{i,\cdot } \mc P_{-i,\cdot }(\mb E)\t \mc P^0  (\mc P^0 \mb W \mc P^0 )^{t -1 - l} \mc P^0 \mb W \mb U^*}}_{\gamma_1} \\
               +& \underbrace{ \big[\norm{\mc P_{-i,\cdot}(\mb E)\mb E_{i,\cdot}\t }_2 + 2\big| \mb E_{i,\cdot} {\mb M^*_{i,\cdot}}\t\big|\big] \norm{ \big((\mc P^0 \mb W \mc P^0 )^{t -1 - l} \mc P^0 \mb W \mb U^*\big)_{i,\cdot}}_2 
               }_{\gamma_2} 
            \label{eq: definition of gamma 12}
        \end{align}

        According to the inductive hypothesis $H_{t-1-l}^{(i)}$, the quantity $\gamma_1$ can be directly bounded by 
        \begin{align}
            & \gamma_1 \leq C_0' \sqrt{\frac{\mu_1 r}{n}}\big[ C_0' \deltaop\big]^{t+1 - l} (t_0 \wedge \sqrt{\log d})
            \label{eq: gamma 1 in leave-two-out}
        \end{align}
        under $\big(\cap_{k=0}^{t+1 - l}\mc E_{H_{k}^{(i)}}\big) \cap \big(\cap_{k=0}^{t+1 -l }\mc E_{H_{t}}\big) \cap \mc E_W \cap \mc E_E$.

        For $\gamma_2$, 
        leveraging the conditions under $\mc E_W \cap \mc E_E$ together with the inductive hypothesis $H^{(i)}_{t-2-l}$ gives that 
        \begin{align}
            & \gamma_2 \leq \Big[\deltaop + 2 C_H \bar \sigma \sigma_1^*\sqrt{r \log d}\sqrt{\mu_1 r /n} \Big] \cdot \Big[C_0' \sqrt{\mu_1 r/n} [C_0 \deltaop]^{t-l} (t_0 \wedge \sqrt{\log d}) \Big]  \\ 
            & \qquad \leq C_{10}' \sqrt{\mu_1 r/n} [C_0 \deltaop]^{t-l+1} (t_0 \wedge \sqrt{\log d})
            \label{eq: gamma 2 in leave-two-out}
        \end{align}
        holds for some constant $C_{10}'$ under the event $\big(\cap_{k=0}^{t+1 - l}\mc E_{H_{k}^{(i)}}\big) \cap \big(\cup_{k=0}^{t-1-l} \mc E_{H_k}\big)\cap \mc E_W \cap \mc E_E$, invoking $\sqrt{\frac{\mu_1 r}{n}} \leq 1$ and $\sqrt{r \log d} \lesssim \sqrt{n}$. 

        Putting \eqref{eq: gamma 1 in leave-two-out} and \eqref{eq: gamma 2 in leave-two-out} into \eqref{eq: definition of gamma 12} yields that 
        \begin{align}
            &  \norm{\mc P^0 \big( \mb W^{(i)} - \mb W \big) \mc P^0  (\mc P^0 \mb W \mc P^0 )^{t -1 - l} \mc P^0 \mb W \mb U^*} \leq C_{11}' \sqrt{\mu_1 r/n} [C_0 \deltaop]^{t-l+1}( t_0 \wedge \sqrt{\log d}) 
            \label{eq: upper bound for intermediate term in telescoping decomposition of leave-one-out}
        \end{align}
        for some constant $C_{11}'$. 

        Next, substituting the bound in the inductive hypothesis $H_t'$, \eqref{eq: upper bound for P0W - Wi U*}, and \eqref{eq: upper bound for intermediate term in telescoping decomposition of leave-one-out} together with the conditions in $\big(\cap_{k=0}^{t+1 - l}\mc E_{H_{k}^{(i)}}\big) \cap \big(\cup_{k=0}^{t-1-l} \mc E_{H_k}\big) \cap \mc E_W\cap \mc E_E$ implies that 
        \begin{align}
            & \norm{\mc P_{-i,S_c}(\mb E)\t \mc P^0 (\mc P^0 \mb W^{(i)} \mc P^0 )^{t} \mc P^0 \mb W^{(i)} \mb U^*} \leq C_0'  \big[ C_0' \deltaop\big]^{t+1} \Big[ \tilde \sigma  \sqrt{m r\log d} +  \frac{\tilde\sigma^2 }{\bar \sigma } \sqrt{mn}\sqrt{\frac{ \mu_2 r}{p}}\Big] \\ 
            & \qquad  + \sum_{l=0}^{t-1} (C_H\tilde\sigma \sqrt{n} ) \deltaop^l \Big[C_{11}' \sqrt{\mu_1 r/n} [C_0' \deltaop]^{t-l+1}(t_0 \wedge  \sqrt{\log d} )\Big] \\ 
            & \qquad  + (C_H\tilde\sigma \sqrt{n} ) \deltaop^t \big( C_3' \deltaop \sqrt{\frac{\mu_1 r}{n}}(t_0 \wedge \sqrt{\log d} ) \big) \leq C_{12}'\big[ C_0' \deltaop\big]^{t+1} \Big[ \tilde \sigma  \sqrt{\mu_1 m r\log d} +  \frac{\tilde\sigma^2 }{\bar \sigma } \sqrt{mn}\sqrt{\frac{ \mu_2 r}{p}}\Big] 
            \label{eq: final upper bound for the second term in matrix bernstein for H_t^(i)}
        \end{align} 
        for some constant $C_{12}'$ under the joint event $(\cap_{k=-1}^{t-1-l} \mc E_{H_k^{(i)}}) \cap (\cap_{k=-1}^{t-1-l} \mc E_{H_k})\cap \mc E_W \cap \mc E_E$. 

        Incorporating \eqref{eq: first term in matrix bernstein for H_t^(i)} with \eqref{eq: final upper bound for the second term in matrix bernstein for H_t^(i)} gives that 
        \begin{align}
            & \norm{\mb E_{i,\cdot} \mc P_{-i,\cdot}(\mb E)\t \mc P^0 (\mc P^0 \mb W^{(i)} \mc P^0 )^{t} \mc P^0 \mb W^{(i)} \mb U^*}_2\\ 
             \leq & C_H \tilde \sigma \Big[(C_H \tilde\sigma \sqrt{n}+ \sigma \sqrt{p}) \deltaop^{t+1} \Big]\sqrt{r}(t_0 \wedge \sqrt{\log d}) \\ 
            &   + C_H \sqrt{m}B \Big[C_{12}'\big[ C_0' \deltaop\big]^{t+1} \Big( \tilde \sigma  \sqrt{\mu_1 m r\log d} +  \frac{\tilde \sigma^2 }{\bar \sigma } \sqrt{mn}\sqrt{\frac{ \mu_2 r}{p}}\Big) \Big](t_0^2 \wedge \log d) \\ 
            \leq & \frac{C_0'}{3} \sqrt{\frac{\mu_1 r}{n}}[C_0'\deltaop]^{t+2} (t_0 \wedge \sqrt{\log d}) 
            \label{eq: final upper bound for the first statement of H_t^(i)}
        \end{align}
        for some sufficiently large $C_0'$ under the event $(\cap_{k=-1}^{t-1-l} \mc E_{H_k^{(i)}}) \cap (\cap_{k=-1}^{t-1-l} \mc E_{H_k})\cap \mc E_W \cap \mc E_E$, where we used the conditions $m\sqrt{\mu_1} \log d B \lesssim \sigma \sqrt{n}$ and $ \frac{\tilde \sigma }{\bar \sigma} m B \log d \sqrt{\mu_2  / p} \lesssim  \tilde\sigma$. 

        We are left with verifying the second statement in the hypothesis $H_t^{(i)}$. We resort to the telescoping decomposition again to deduce that 
        \begin{align}
            & \norm{\mb E_{i,\cdot} \mc P_{-i,\cdot}(\mb E)\t \mc P^0 (\mc P^0 \mb W \mc P^0 )^t \mc P^0 \mb W \mb U^*}_2  \\ 
            \leq & \norm{\mb E_{i,\cdot} \mc P_{-i,\cdot}(\mb E)\t \mc P^0 (\mc P^0 \mb W^{(i)} \mc P^0 )^{t} \mc P^0 \mb W^{(i)} \mb U^*}_2 \\ 
            & \qquad + \sum_{l = 0}^{t-1}\norm{\mb E_{i,\cdot} \mc P_{-i,S_c}(\mb E)} \norm{ (\mc P^0 \mb W^{(i)}\mc P^0 )^{l} } \norm{\mc P^0 \big( \mb W^{(i)} - \mb W \big) \mc P^0  (\mc P^0 \mb W \mc P^0 )^{t -1 - l} \mc P^0 \mb W \mb U^*} \\ 
            & \qquad   +\norm{\mb E_{i,\cdot} \mc P_{-i,S_c}(\mb E)}  \norm{ (\mc P^0 \mb W^{(i)}\mc P^0 )^{t} } \norm{\mc P^0 \big( \mb W^{(i)} - \mb W \big) \mb U^*}. 
            \label{eq: telescoping decomposition for the second statement of H_t^(i)}
        \end{align}

        Similar to the derivation of \eqref{eq: final upper bound for the second term in matrix bernstein for H_t^(i)}, we plug \eqref{eq: final upper bound for the first statement of H_t^(i)}, \eqref{eq: upper bound for intermediate term in telescoping decomposition of leave-one-out}, \eqref{eq: upper bound for P0W - Wi U*}, and the conditions in $\mc E_W$ collectively into \eqref{eq: telescoping decomposition for the second statement of H_t^(i)} to arrive at the conclusion that 
        \begin{align}
            & \norm{\mb E_{i,\cdot} \mc P_{-i,\cdot}(\mb E)\t \mc P^0 (\mc P^0 \mb W \mc P^0 )^t \mc P^0 \mb W \mb U^*}_2 \leq \frac{C_0}{3} \sqrt{\frac{\mu_1 r}{n}}[C_0'\deltaop]^{t+2} (t_0 \wedge \sqrt{\log d}) \\ 
            & \qquad + \sum_{l=0}^{t-1}  \deltaop^{l+1} \Big[C_{11}' \sqrt{\mu_1 r/n} [C_0 \deltaop]^{t-l+1}( t_0 \wedge \sqrt{\log d}) \Big] + \deltaop^{t+1} \big( C_3' \deltaop \sqrt{\frac{\mu_1 r}{n}}(t_0 \wedge \sqrt{\log d}) \big) \\ 
            \leq & \frac{2}{3}C_0' \sqrt{\frac{\mu_1 r}{n}}\big[ C_0' \deltaop\big]^{t+2}(t_0 \wedge \sqrt{\log d}),
        \end{align}
        holds under $(\cap_{k=-1}^{t} \mc E_{H_k^{(i)}}) \cap (\cap_{k=-1}^{t} \mc E_{H_k})\cap \mc E_W \cap \mc E_E$, given a sufficiently large $C_0'$. 

        Finally, we notice that 
                \begin{align}
            & \norm{ \big((\mc P^0 \mb W \mc P^0 )^{t } \mc P^0 \mb W \mb U^*\big)_{i,\cdot}}_2 \leq \norm{\mb E_{i,\cdot}\mc P_{-i,\cdot}(\mb E)\t (\mc P^0 \mb W \mc P^0 )^{t -1 } \mc P^0 \mb W \mb U^*}  \\ 
            & \qquad  + \big( 2|\mb E_{i,\cdot} {\mb M^*_{i,\cdot}}\t| + \bignorm{\diag(\mb M^*{\mb M^*}\t)}  \big)  \norm{\big((\mc P^0 \mb W \mc P^0 )^{t -1 } \mc P^0 \mb W \mb U^*\big)_{i,\cdot}} + 
             \norm{\mb U^*}\ti \norm{\mb W}^{t+1}
             \\ 
            \leq & \frac{2}{3}C_0' \sqrt{\frac{\mu_1 r}{n}}\big[ C_0' \deltaop\big]^{t+1} (t_0 \wedge \sqrt{\log d}) \\ 
            & + \Big[2 C_H \bar \sigma \sigma_1^* \sqrt{\mu_1 r /n} + \frac{\mu_1 r}{n }{\sigma_1^*}^2  \Big] \Big[C_0' \sqrt{\mu_1 r/n} [C_0 \deltaop]^{t} (t_0 \wedge \sqrt{\log d})  \Big] + \sqrt{\frac{\mu_1 r}{n}} \deltaop^{t+1} \\ 
            \leq & C_0' \sqrt{\frac{\mu_1 r}{n}}\big[ C_0' \deltaop\big]^{t +1 } (t_0 \wedge \sqrt{\log d}). 
        \end{align}
        under $(\cap_{k=-1}^{t} \mc E_{H_k^{(i)}}) \cap (\cap_{k=-1}^{t} \mc E_{H_k})\cap \mc E_W \cap \mc E_E$, provided a sufficiently large $C_0'$.

\end{proof}

\section{Proof of the Upper Bounds for COPO}\label{sec-proof of upper bounds}

\label{sec: proof of upper bound for algorithm}
In this section, we will present the proofs of Theorem~\ref{theorem: upper bound for algorithm (gaussian)} and Theorem~\ref{theorem: upper bound for algorithm (bounded)} collectively, by establishing the convergence rate for the iterates in Algorithm~\ref{algorithm: CPSC}. 
For clarity, we summarize below the shorthand notations and the key quantities that are used throughout: For each $k \in [K]$, we denote by $\mathbf c_k^*$ the common value of the rows ${\mathbf U^*_{i,\cdot}}^\top$ corresponding to all the indices $i$ such that $z_i^* = k$, since all rows within the same cluster take identical values in $\mathbf U^*$. Also recall that $\mb w_k^* \coloneqq \bo \Lambda^* \mb c_k^* = {\mb V^*}\t \bo \theta_k^*$. Then we define $\tilde{\mb c}_k^* \coloneqq \mb c_k^* + \bias_k$ and $\tilde{\mb w}_k^* \coloneqq \bo \Lambda^* \tilde{\mb c}_k^*$, where the bias term $\bias$ is defined in \eqref{eq: definition of bias}. 
We further introduce: 
\begin{align}
    & \bar \sigma_{\mathsf{cov}} \coloneqq \max_{k\in[K]} \norm{\mb S_k^*},~ \underline\sigma_{\mathsf{cov}} \coloneqq \min_{k\in[K]} \sigma_K^*(\mb S_k^*). 
\end{align}

Since we aim to establish the same upper bound for the clustering error under two different settings (Gaussian mixtures with general dependence and general mixtures with local dependence), in what follows the arguments will be presented in a unified manner, and we will specify the difference between the two settings when necessary. Moreover, we will treat the second case in Assumption~\ref{assumption: bounded noise}.\ref{item: bounded noise assumption 2} as the first case (bounded r.v.'s) since $\bb P[\mb E \neq \mb E']$ does not alter the exceptional probability in all of the following arguments. 

\subsection{Key Steps of Error Control}
Recap that to quantify the clustering error combined with the adjusted distances, we recall the variant of $h(\mb z, \mb z^*)$ as
\eq{
 l(\mb z, \mb z^*) \coloneqq \sum_{i \in [n]} \omega_{z_i,\pi(z_i^*)} \ind\{z_i\neq \pi^*(z_i^*)\}, \text{ where }\pi
^*\coloneqq \argmin_{\pi \in \Pi_K} \sum_{i\in[n]}\ind\{z_i \neq \pi(z_i^*)\}}
Note that its form is similar to the ones in \cite{chen2024optimal,gao2022iterative}, but a slight difference lies in its weighting of the misspecification error by the adjusted distances related to the groundtruth projected covariance matrices.  
Our goal is to show, with high probability, that the clustering error contracts across iterations of the algorithm: 
\begin{equation}\label{eq: proof sketch - iterative decay}
    l(\hat{\mathbf z}^{(s)}, \mathbf z^*) 
     \le 
    \xi_{\mathsf{oracle}}(\delta) + \xi_{\mathsf{approx}}(\delta)
     +  \frac{1}{4}\,
    l(\hat{\mathbf z}^{(s-1)}, \mathbf z^*),
\end{equation}
where $\xi_{\mathsf{oracle}}(\delta)$ and $\xi_{\mathsf{approx}}(\delta)$ will be defined later.

\paragraph*{Step 1: Error Decomposition via a One-Step Analysis. } 
To begin with, a simple calculation tells us that a geometric decay of the alternative sequence $\{l(\hat{\mb z}^{(t)}, \mb z^*)\}_{t=0}^{T}$ can exhibit a geometric decay through the relation \eqref{eq: proof sketch - iterative decay}. However, each step of the sequence of cluster labels $\{\hat{\mb z}^{(t)}\}$ is interdependent. 
To address this dependency, we employ a one-step analysis to justify that there exists a high-probability event under which the decay relation \eq{
l(\hat{\mb z}, \mb z^*) \leq \xi_{\oracle}(\delta) + \xi_{\mathsf{approx}}(\delta) + \frac{1}{4}l(\mb z, \mb z^*)\label{eq: decay relation 2}
}
uniformly holds for all possible $\mb z$ with a small enough $l(\mb z, \mb z^*)$; here, the updated estimate $\hat{\mb z}$ is computed based on the last-step estimate $\mb z$ following Algorithm~\ref{algorithm: CPSC}'s mechanism.

 With this idea in mind, we set out to look at a break-down form of $l(\hat{\mb z}, \mb z^*)$ given the last-step estimate $\mb z$ with $l(\mb z, \mb z^*) \leq C\frac{\beta K(\log d)^4}{n}$ for some sufficiently small constant $C$. 
Without loss of generality, we assume that $ \argmin_{\pi \in \mathsf{Perm}[K]}\sum_{i\in[n]} \ind{\{z_i^* \neq  \pi( z_i )\}} = \mathbf{id} = \pi^*$. Then
\begin{align}
      & l(\hat{\mb z}, \mb z^*)
     \leq \sum_{i\in[n]}\sum_{k\in[K]\backslash \{z_i^*\}} \omega_{k,z_i^*} \ind{\{\hat z_i =  k\}}\\ 
	\leq  & \sum_{i\in[n]}\sum_{k\in[K]\backslash \{z_i^*\}} \omega_{k,z_i^*}   \ind{\big\{ \ip{\big(\mb U_i - \hat{\mb c}_k(\mb z)\big), \hat{\bo \Omega}_k(\mb z)^{-1} \big(\mb U_i - \hat{\mb c}_k(\mb z)\big)}  \leq \ip{\big(\mb U_i - \hat{\mb c}_{z_i^*}(\mb z)\big), \hat{\bo \Omega}_{z_i^*}(\mb z)^{-1} \big(\mb U_i - \hat{\mb c}_{z_i^*}(\mb z)\big)}
	\big\}}.\label{eq: upper bound on l(hat z, z^*)}
\end{align}

Define that $\mb O \coloneqq \mb L \mb R\t$, where the SVD of $\mb U\t \mb U^*$ is written as $\mb U\t \mb U^* = \mb L \mathrm{diag}(\sigma_i(\mb U\t \mb U^*)) \mb R\t$. For the misspecifying event for the $i$-th sample
$$\ind{
\left\{
\ip{\big(\mb U_i - \hat{\mb c}_k(\mb z)\big), \hat{\bo \Omega}_k(\mb z)^{-1} \big(\mb U_i - \hat{\mb c}_k(\mb z)\big)} 
\leq
\ip{\big(\mb U_i - \hat{\mb c}_{z_i^*}(\mb z)\big), \hat{\bo \Omega}_{z_i^*}(\mb z)^{-1} \big(\mb U_i - \hat{\mb c}_{z_i^*}(\mb z)\big)}
	\right\}
    }
    $$ in \eqref{eq: upper bound on l(hat z, z^*)}, we take a difference between the left hand side and the right-hand side and decompose it as follows:   
\begin{align}
    & \ip{\big(\mb U_i - \hat{\mb c}_k(\mb z)\big), \hat{\bo \Omega}_k(\mb z)^{-1} \big(\mb U_i - \hat{\mb c}_k(\mb z)\big)} - 
\ip{\big(\mb U_i - \hat{\mb c}_{z_i^*}(\mb z)\big), \hat{\bo \Omega}_{z_i^*}(\mb z)^{-1} \big(\mb U_i - \hat{\mb c}_{z_i^*}(\mb z)\big)}\\ 
    =&\ip{\bo \Lambda^* \mb O\t \big(\mb U_i - \hat{\mb c}_k(\mb z)\big), \big( \bo \Lambda^* \mb O\t \hat{\bo \Omega}_k(\mb z) \mb O \bo \Lambda^* \big)^{-1}\bo \Lambda^* \mb O\t \big(\mb U_i - \hat{\mb c}_k(\mb z)\big)} 
\\
& -  \ip{ \bo \Lambda^*\mb O\t  \big(\mb U_i - \hat{\mb c}_{z_i^*}(\mb z)\big),\big(\bo \Lambda^* \mb O\t  \hat{\bo \Omega}_{z_i^*}(\mb z) \mb O\bo \Lambda^*\big)^{-1} \bo \Lambda^* \mb O\t \big(\mb U_i - \hat{\mb c}_{z_i^*}(\mb z)\big)}\\ 
= &  \underbrace{\zeta_{\oracle, i}(k)}_{\text{the oracle error} } + \underbrace{\zeta_{\mathsf{approx, i}}(k,\mb z)}_{\text{linear approximation error}} -\underbrace{ \big( F_i(k, \mb z) + G_i(k,\mb z) + H_i(k, \mb z) \big) }_{\text{the misspecification effect of $\mb z$}} \label{eq: upper bound on l(hat z, z^*) 2}
\end{align}
holds for every $k\neq z_i^*, k\in[K]$, where $\zeta_{\oracle, i}(k)$, $\zeta_{\mathsf{approx}, i}(k, \mb z)$, $F_i(k, \mb z)$, $G_i(k,\mb z)$, and $H_i(k,\mb z)$ are defined as 
\begin{align} 
& \zeta_{\oracle,i}(k) \coloneqq \ip{\bo \Lambda^*\mathfrak L_i,  (\bo \Lambda^* \mb O\t \hat{\bo \Omega}_k(\mb z^*) \mb O \bo \Lambda^*)^{-1} \bo \Lambda^* \mb O\t \big( \hat {\mb c}_{z_i^*}(\mb z^*) -  \hat{\mb c}_k(\mb z^*) \big)} \\ 
& \qquad + \frac{1}{2}\ip{\bo \Lambda^*\mathfrak L_i, \big[(\bo \Lambda^* \mb O\t \hat{\bo\Omega}_k(\mb z^*) \mb O \bo \Lambda^*)^{-1} - (\bo \Lambda^* \mb O\t \hat{\bo\Omega}_{z_i^*}(\mb z^*) \mb O \bo \Lambda^*)^{-1}\big]\bo \Lambda^* \mathfrak L_i}  \\ 
&\qquad + \frac{1}{2}\ip{\bo \Lambda^*\big(\tilde{\mb c}_{z_i^*}^*- \mb O\t\hat{\mb c}_k(\mb z^*) \big),  (\bo \Lambda^* \mb O\t \hat{\bo \Omega}_k(\mb z^*) \mb O \bo \Lambda^*)^{-1} \bo \Lambda^*\big(\tilde{\mb c}_{z_i^*}^*- \mb O\t\hat{\mb c}_k(\mb z^*) \big)} \\ 
& \qquad - \frac{1}{2}\ip{\bo \Lambda^*\big(\tilde{\mb c}_{z_i^*}^*-\mb O\t \hat{\mb c}_{z_i^*}(\mb z^*) \big), (\bo \Lambda^* \mb O\t \hat{\bo \Omega}_{z_i^*}(\mb z^*) \mb O \bo \Lambda^*)^{-1} \bo \Lambda^*\big(\tilde{\mb c}_{z_i^*}^*-\mb O\t \hat{\mb c}_{z_i^*}(\mb z^*) \big)}, \label{eq: zeta_orackle,i(k)}
\\
& \zeta_{\mathsf{approx},i}(k,\mb z) \coloneqq \frac12 \ip{\bo \Lambda^* \mb O\t \big(\mb U_i - \hat{\mb c}_k(\mb z)\big), \big( \bo \Lambda^* \mb O\t \hat{\bo \Omega}_k(\mb z) \mb O \bo \Lambda^* \big)^{-1}\bo \Lambda^* \mb O\t \big(\mb U_i - \hat{\mb c}_k(\mb z)\big)} 
\\
& \qquad -\frac12 \ip{ \bo \Lambda^*\mb O\t  \big(\mb U_i - \hat{\mb c}_{z_i^*}(\mb z)\big),\big(\bo \Lambda^* \mb O\t  \hat{\bo \Omega}_{z_i^*}(\mb z) \mb O\bo \Lambda^*\big)^{-1}  \bo \Lambda^* \mb O\t \big(\mb U_i - \hat{\mb c}_{z_i^*}(\mb z)\big)} \\ 
& \qquad -\frac12 \ip{\bo \Lambda^* \big(\mathfrak L_i + (\tilde{\mb c}_{z_i^*}^* - \mb O\t  \hat{\mb c}_k(\mb z))\big), \big( \bo \Lambda^* \mb O\t \hat{\bo \Omega}_k(\mb z) \mb O \bo \Lambda^* \big)^{-1} \bo \Lambda^* \big( \mathfrak L_i + (\tilde{\mb c}_{z_i^*}^* - \mb O\t  \hat{\mb c}_k(\mb z) )\big)} 
\\
& \qquad + \frac12\ip{\bo \Lambda^* \big(\mathfrak L_i + (\tilde{\mb c}_{z_i^*}^* -  \mb O\t  \hat{\mb c}_{z_i^*}(\mb z))\big),\big(\bo \Lambda^* \mb O\t  \hat{\bo \Omega}_{z_i^*}(\mb z) \mb O\bo \Lambda^*\big)^{-1} \bo \Lambda^* \big( \mathfrak L_i + (\tilde{\mb c}_{z_i^*}^* -  \mb O\t  \hat{\mb c}_{z_i^*}(\mb z))\big)},
\label{eq: zeta_approx,i(k)}
\\ 
        & F_i(k,\mb z)
     \coloneqq -\ip{
        \bo \Lambda^* \mathfrak L_i , (\bo \Lambda^* \mb O\t \hat{\bo \Omega}_k(\mb z) \mb O \bo \Lambda^*)^{-1} \bo \Lambda^* \mb O\t \big(\hat{\mb c}_k(\mb z^*) - \hat{\mb c}_k(\mb z)\big)} \\ 
        & \qquad + \ip{
            \bo \Lambda^* \mathfrak L_i , (\bo \Lambda^* \mb O\t \hat{\bo \Omega}_{z_i^*}(\mb z) \mb O \bo \Lambda^*)^{-1} \bo \Lambda^*\big( \hat{\mb c}_{z_i^*}(\mb z^*) - \hat{\mb c}_{z_i^*}(\mb z) \big)
    } 
    \\
     &\qquad  - \ip{
       \bo \Lambda^* \mathfrak L_i,  \big[(\bo \Lambda^* \mb O\t \hat{\bo\Omega}_k(\mb z) \mb O \bo \Lambda^*)^{-1} - (\bo \Lambda^* \mb O\t \hat{\bo\Omega}_k(\mb z^*) \mb O \bo \Lambda^*)^{-1}\big] \bo \Lambda^*\big(\tilde{\mb c}_{z_i^*}^*- \mb O\t\hat{\mb c}_k(\mb z^*) \big)}  \\ 
       & \qquad + \ip{ \bo \Lambda^* \mathfrak L_i 
        ,\big[(\bo \Lambda^* \mb O\t \hat{\bo\Omega}_{z_i^*}(\mb z) \mb O \bo \Lambda^*)^{-1} - (\bo \Lambda^* \mb O\t \hat{\bo\Omega}_{z_i^*}(\mb z^*) \mb O \bo \Lambda^*)^{-1}\big] \bo \Lambda^*\big(\tilde{\mb c}_{z_i^*}^*-\mb O\t \hat{\mb c}_{z_i^*}(\mb z^*) \big) },\label{eq: F term}
        \\         
     &
     G_i(k,\mb z) \coloneqq 
     -\frac{1}{2}\ip{\bo \Lambda^* \mathfrak L_i, \big( (\bo \Lambda^* \mb O\t\hat{\bo \Omega}_k(\mb z) \mb O \bo \Lambda^*)^{-1} - (\bo \Lambda^* \mb O\t\hat{\bo \Omega}_k(\mb z^*) \mb O \bo \Lambda^*)^{-1}\big) \bo \Lambda^*\mathfrak L_i} \\ 
     & \qquad + \frac{1}{2}\ip{\bo \Lambda^* \mathfrak L_i, \big((\bo \Lambda^* \mb O\t \hat{\bo \Omega}_{z_i^*}(\mb z) \mb O \bo \Lambda^*)^{-1} -(\bo \Lambda^* \mb O\t \hat{\bo \Omega}_{z_i^*}(\mb z^*) \mb O \bo \Lambda^*)^{-1}\big) \bo \Lambda^* \mathfrak L_i} \label{eq: G term},
    \\ 
    & H_i(k,\mb z) \coloneqq -
    \frac{1}{2}\ip{\bo \Lambda^*\big(\tilde{\mb c}_{z_i^*}^*- \mb O\t \hat{\mb c}_{z_i^*}(\mb z) \big), (\bo \Lambda^* \mb O\t \hat{\bo \Omega}_{z_i^*}(\mb z) \mb O \bo \Lambda^*)^{-1} \bo \Lambda^*\big(\tilde{\mb c}_{z_i^*}^*- \mb O\t \hat{\mb c}_{z_i^*}(\mb z) \big) } \\ 
    & \qquad + \frac{1}{2} \ip{\bo \Lambda^*\big(\tilde{\mb c}_{z_i^*}^*- \mb O\t \hat{\mb c}_{z_i^*}(\mb z^*) \big), (\bo \Lambda^* \mb O\t \hat{\bo \Omega}_{z_i^*}(\mb z^*) \mb O \bo \Lambda^*)^{-1} \bo \Lambda^*\big(\tilde{\mb c}_{z_i^*}^*- \mb O\t \hat{\mb c}_{z_i^*}(\mb z^*) \big) }
    \\
     & \qquad + \frac{1}{2}\ip{\bo \Lambda^*\big(\tilde{\mb c}_{z_i^*}^*- \hat{\mb c}_k(\mb z) \big), (\bo \Lambda^* \mb O\t \hat{\bo \Omega}_k(\mb z) \mb O \bo \Lambda^*)^{-1} \bo \Lambda^*\big(\tilde{\mb c}_{z_i^*}^*- \hat{\mb c}_k(\mb z) \big) } \\ 
    & \qquad - \ip{\bo \Lambda^*\big(\tilde{\mb c}_{z_i^*}^*- \hat{\mb c}_k(\mb z^*) \big), (\bo \Lambda^* \mb O\t \hat{\bo \Omega}_k(\mb z^*) \mb O \bo \Lambda^*)^{-1} \bo \Lambda^*\big(\tilde{\mb c}_{z_i^*}^*- \hat{\mb c}_k(\mb z^*) \big) }. \label{eq: H term}
\end{align}
The quantity $\zeta_{\mathsf{oracle},i}(k)$ characterizes the intrinsic difficulty of clustering under oracle label information. The term $\zeta_{\mathsf{approx},i}(k,\mb z)$ measures the approximation error incurred by replacing $\mb O\t \mb U_i - \tilde{\mb c}^*_{z_i^*}$ with $\mk L_i$. The remaining components $F_i(k,\mb z)$, $G_i(k,\mb z)$, and $H_i(k,\mb z)$ reflect the coupled misspecification effects carried over from the previous iteration: $F_i$ and $G_i$ correspond to the linear and quadratic interactions with the $i$-th noise vector, while $H_i$ captures the error in estimating the centers. Recall that $\mathfrak L_i$ represents the linear approximation ${\mb V^*}\t \mb E_i + {\bo \Lambda^*}^{-1}{\mb U^*}\t \mc H_{-i,\cdot}(\mb E) \mb E_i$ of $\big(\mb U\mb U\t  \mb U^* - \mb U^*\big)_{i,\cdot}\t $ for $i \in[n]$.

With the decomposition \eqref{eq: upper bound on l(hat z, z^*) 2} in place, we are able to separately parse the one-step clustering barrier: 
\begin{align}
& l(\hat{\mb z}, \mb z^*)\leq  \sum_{i\in[n]} \sum_{k\in[K] \backslash\{z_i^*\}} \omega_{k, z_i^*}\ind{\big\{\zeta_{\oracle, i}(k) \leq \frac{\delta}{2}\omega_{k, z_i^*}
	 \big\}}  + \sum_{i\in[n]} \sum_{k\in[K] \backslash\{z_i^*\}} \omega_{k, z_i^*} \ind{\big\{
	 \zeta_{\mathsf{approx},i}(k, \mb z) \geq \frac{\delta}{8}\omega_{k, z_i^*}
	  \big\} } \\ 
	  &+\sum_{i\in[n]} \max_{k\in[K] \backslash\{z_i^*\}} \omega_{k, z_i^*}\Big[  \ind{\big\{
	 F_i(k, \mb z) \geq \frac{\delta}{8}\omega_{k, z_i^*}
	  \big\} }  + \ind{\big\{
	 G_i(k, \mb z) \geq \frac{\delta}{8}\omega_{k, z_i^*}
	  \big\} }   + \ind{\big\{
	 H_i(k, \mb z) \geq \frac{\delta}{8}\omega_{k, z_i^*}
	  \big\} } \Big].\label{eq: upper bound on l(hat z, z^*) 3}
\end{align}
For further simplicity, we write the first two terms on the right-hand side of \eqref{eq: upper bound on l(hat z, z^*) 3} as
\begin{align}
    &\xi_{\oracle}(\delta) \coloneqq \sum_{i\in[n]} \sum_{k\in[K] \backslash\{z_i^*\}}\omega_{k, z_i^*}\ind{\big\{\zeta_{\oracle, i}(k) \leq \frac{\delta}{2}\omega_{k, z_i^*}
	 \big\}}, \\ 
     & \xi_{\mathsf{approx}}(\delta) \coloneqq \sum_{i\in[n]} \sum_{k\in[K] \backslash\{z_i^*\}} \omega_{k, z_i^*} \ind{\big\{
	 \zeta_{\mathsf{approx},i}(k,\mb z) \geq \frac{\delta}{8}\omega_{k, z_i^*}
	  \big\} },
\end{align}
where $\zeta_{\oracle,i}(k)$ and $\zeta_{\mathsf{approx},i}(k,\mb z)$ were defined in \eqref{eq: zeta_orackle,i(k)} and \eqref{eq: zeta_approx,i(k)}, respectively. 
In what follows, we justify each component separately: 
\begin{enumerate} 
    \item The expectations of the first and second terms, related to $\zeta_{\oracle,i}(k)$ and $\zeta_{\mathsf{approx},i}(k,\mb z)$, respectively, are of the order $\exp\big(-(1 + o(1))\frac{{\mathsf{SNR}}^2}{2}\big)$; this will then be the dominant magnitude in the upper bounds.
    \item The remaining terms are controlled by $l(\mb z,\mb z^*)$ uniformly over all possible $\mb z$ with high probability. This control finally leads to the second term on the RHS of \eqref{eq: decay relation 2}. 
\end{enumerate}

\paragraph*{Step 2: Error Analysis Given True $\mb z^*$. } 

We have the following lemma, whose proof is presented in Section~\ref{sec: proof of lemma for tilde zeta oracle}.
\begin{lemma}
\label{lemma: Bound for the first three terms in zeta_oracle}
Instate either the assumptions in Theorem~\ref{theorem: upper bound for algorithm (gaussian)} or the assumptions in Theorem~\ref{theorem: upper bound for algorithm (bounded)}. Then it holds for an arbitrary vanishing sequence $\delta$ with $\kappa_{\mathsf{cov}}^4 \delta = o(1)$ and some constant $c$ that 
$$
     \bb P\big[\zeta_{\oracle, i}(k) \leq  \delta \omega_{k,z_i^*} \big] \lesssim \exp\Big(-(1 - \tilde \delta) \frac{\snr^2}{2}\Big)  \vee O(d^{-c}),
$$
where $\tilde\delta$ is also a vanishing sequence as $n \to \infty$.
\end{lemma}

We now control $\xi_{\oracle}$. For any vanishing sequence $\delta \ge 0$ satisfying $\kappa_{\mathsf{cov}}^4 \delta = o(1)$,
\begin{align}
    & \bb E\big[\xi_{\oracle} \big]\lesssim  n (K-1) \max_{i \in [n], k\in[K]\backslash \{z_i^*\}}\bb E\Big[ \omega_{k, z_i^*} \ind{\big\{\zeta_{\oracle, i}(k) \leq \frac{\delta}{2} \omega_{k,z_i^*} \}}\Big]
    \\ 
    \lesssim & n K\nu^2\underline \omega \max_{i \in [n], k\in[K]\backslash \{z_i^*\}} \bb P\Big[\zeta_{\oracle, i}(k) \leq \frac{\delta}{2}\omega_{k,z_i^*} \Big]
    \lesssim   n K\nu^2 \underline \omega \Big[ \exp(-(1 - \tilde\delta) \frac{{\mathsf{SNR}}^2}{2}) + O(d^{-c})\Big] \label{eq: upper bound for expectation of xi_oracle},
\end{align}
where in the penultimate inequality we used the fact that $\omega_{a,b} \lesssim \nu^2 \underline \omega $ for every $a \neq b \in [K]$. 

Moreover, the following lemma bounds the expectation of $\xi_{\mathsf{approx}}$, thereby justifying the accuracy of the linear approximation $\mk L_i$. The proof is provided in Section~\ref{subsubsec: proof of xi_approx,i}. 
\begin{lemma}
    Instate either the assumptions of Theorem~\ref{theorem: upper bound for algorithm (gaussian)} or those of Theorem~\ref{theorem: upper bound for algorithm (bounded)}. Then it hold for $\delta = C\frac{\beta^{\frac12} \kappa K \nu \kappa_{\mathsf{cov}}^2}{\underline\omega^{\frac12}}$ with some constant $C$ that 
    \begin{align}
        &  \bb E\Big[ \xi_{\mathsf{approx}}\big(\delta \big) \Big]  \lesssim nK \nu \underline\omega \Big[ \exp(-(1 + o(1))\frac{\snr^2}{2}) \vee O(d^{-c})\Big]. 
    \end{align}
    \label{lemma: Bound for xi_approx,i}
\end{lemma}

\paragraph*{Step 3: Error Analysis Regarding $\mb z$. } 
In order to decouple the interdependence between $\{\hat{\mb z}^{(t)}\}_{t=1}^{T}$ and $\hat{\mb z}^{(0)}$, we adopt a one-step analysis on the alternative clustering error quantities $ l(\mb z, \mb z^*)$, given any last-step estimate $\mb z$ whose alternative misspecification error falls in an appropriate range.

\begin{lemma}\label{lemma: Bound for F_i}
	Instate either the assumptions of Theorem~\ref{theorem: upper bound for algorithm (gaussian)} or those of Theorem~\ref{theorem: upper bound for algorithm (bounded)}. With probability at least $1 -  O(d^{-c}) \vee e^{- (1 + o(1)) \frac{\snr^2}{2}}$ it holds for some sufficiently small $c_1$ that 
	\longeq{
&\max_{\mb z: l(\mb z, \mb z^*) \leq c_1\frac{n}{\beta K(\log d)^4}}\frac{\sum_{i\in[n]}\max_{b\in[K]\backslash \{z_i^*\}} \frac{F_i( b, \mb z)^2}{\omega_{z_i^*,b } }}{l(\mb z, \mb z^*)}\lesssim \frac{1}{\underline\omega}\nu \kappa^4 \kappa_{\mathrm{cov}}^8 \beta^6 K^6 = o(\frac{1}{\kappa_{\mathsf{cov}}^4}) .
	}
\end{lemma}
 \begin{lemma}
 \label{lemma: Bound for G_i}
    Instate either the assumptions of Theorem~\ref{theorem: upper bound for algorithm (gaussian)} or those of Theorem~\ref{theorem: upper bound for algorithm (bounded)}. With probability at least $1 - O(d^{-c}) \vee e^{- (1 + o(1)) \frac{\snr^2}{2}}$ it holds for some sufficiently small $c_1$ that 
    \longeq{
    & \max_{\mb z:l(\mb z, \mb z^*) \leq c_1\frac{n}{\beta K(\log d)^4}}\frac{\sum_{i\in [n]} \max_{b\in[K], b\neq z_i^*} \frac{G_i(b, \mb z)^4}{\omega_{z_i^*, b}^3}}{l(\mb z,\mb z^*)} \lesssim \frac{\kappa^8 \kappa_{\mathsf{cov}}^{16} \beta^{11} K^{13} }{\underline \omega^5}  = o(\frac{1}{\kappa_{\mathsf{cov}}^4}) . 
    }
\end{lemma}

\begin{lemma}\label{lemma: Bound for H_i}
	Instate either the assumptions of Theorem~\ref{theorem: upper bound for algorithm (gaussian)} or those of Theorem~\ref{theorem: upper bound for algorithm (bounded)}. With probability at least $1 - O(d^{-c}) \vee e^{- (1 + o(1)) \frac{\snr^2}{2}}$ it holds for some sufficiently small $c_1$  that 
	\longeq{
	&\max_{\mb z: l(\mb z, \mb z^*) \leq c_1\frac{n}{\beta K(\log d)^4}} \max_{i\in[n]} \max_{b\in[K], b\neq z_i^*}H_i(b, \mb z)\lesssim  \frac{\kappa \kappa_{\mathrm{cov}}^2 \beta^2 K^2  }{\sqrt{\underline\omega} } + \frac{\kappa^2 \kappa_{\mathsf{cov}}^4 \beta^4 K^4 }{\underline\omega^{\frac32}} + \nu \kappa_{\mathrm{cov}}^2 \beta^{\frac12} K^{\frac12}  + \nu^2 \kappa_{\mathrm{cov}}^4 \kappa^2\beta^{\frac52} K^2 \underline \omega^{\frac12}   = o( \frac{\underline \omega}{\kappa_{\mathsf{cov}}^4}).
	}
\end{lemma}
The proofs of Lemmas~\ref{lemma: Bound for F_i},~\ref{lemma: Bound for G_i}, and~\ref{lemma: Bound for H_i} is postponed to Section~\ref{subsection: Misspecification Effect Analysis}, while their direct implication is that the third term in \eqref{eq: upper bound on l(hat z, z^*) 3} could be separately bounded as follows for some vanishing sequence $\delta$ satisfying $\kappa_{\mathsf{cov}}^4 \delta = o(1)$ and every large enough $n$: 
\begin{align}
    &  \sum_{i\in[n]} \max_{k\in[K] \backslash\{z_i^*\}} \omega_{k,z_i^*} \ind{\big\{
    F_i(k, \mb z) \geq \frac{\delta}{8} \omega_{k,z_i^*} \big\}}\leq \frac{1}{8}l(\mb z, \mb z^*), \\
    &\sum_{i\in[n]} \max_{k\in[K] \backslash\{z_i^*\}} \omega_{k,z_i^*} \ind{\big\{
        G_i(k, \mb z) \geq \frac{\delta}{8}\omega_{k,z_i^*}
         \big\} } \leq \frac{1}{8}l(\mb z, \mb z^*),\\
         & \sum_{i\in[n]} \max_{k\in[K] \backslash\{z_i^*\}} \omega_{k,z_i^*}  \ind{\big\{
        H_i(k, \mb z) \geq \frac{\delta}{8}\omega_{k,z_i^*}
         \big\} } = 0
\end{align}
simultaneously hold with probability at least  $ 1- O(d^{-c}) \vee e^{- (1 + o(1)) \frac{\snr^2}{2}}$. As a consequence, \eqref{eq: upper bound on l(hat z, z^*) 3} turns out to be 
\begin{align}
    & l(\hat{\mb z}, \mb z^*) = \xi_{\mathsf{oracle}}(\delta) + \frac{1}{4}l(\mb z,\mb z^*)
\end{align}
with probability at least  $1 - O(d^{-c}) \vee e^{- (1 + o(1)) \frac{\snr^2}{2}}$ for all $\mb z$ with $l(\mb z, \mb z^*) \leq c_1 \frac{n}{\beta K (\log d)^4}$. This one-step analysis serves as the groundwork for the upcoming analysis of geometric decay.

\medskip
\paragraph*{Step 4: Iterative Error Decay.}
Finally, armed with the upper bound \eqref{eq: upper bound for expectation of xi_oracle} on $\bb E\big[\xi_{\mathsf{oracle}}\big]$, along with Lemmas~\ref{lemma: Bound for xi_approx,i},~\ref{lemma: Bound for F_i},~\ref{lemma: Bound for G_i},~and~\ref{lemma: Bound for H_i}, we are ready to establish the iterative error decay of the alternative sequence $\{l(\hat{\mb z}^{(t)}, \mb z^*)\}_{t=0}^{T}$ via the one-step relation   \eqref{eq: proof sketch - iterative decay}. We let $\delta$ in \eqref{eq: upper bound on l(hat z, z^*) 3} be 
\eq{
\max\left\{\text{$\delta$ in Lemma~\ref{lemma: Bound for xi_approx,i}}, \text{upp. bounds in Lemmas~\ref{lemma: Bound for F_i},~\ref{lemma: Bound for G_i}}, \frac{\text{upp. bound in Lemma~\ref{lemma: Bound for H_i}}}{\underline \omega} \right\}
}
multiplied by some sufficiently large constant. 
Moreover, we define the following event:
\begin{align}
&\mc F_{\mathsf{good}} \coloneqq \Big\{ \text{inequalities in Lemmas~\ref{lemma: Bound for F_i},~\ref{lemma: Bound for G_i},~and~\ref{lemma: Bound for H_i} hold}, ~\text{and}~l(\hat{\mb z}^{(0)}, \mb z^*) \leq \frac{c_1 n}{\beta K(\log d)^4} \Big\},\\ 
& \mc F_{\mathsf{oracle}} \coloneqq \Big\{  \text{$\xi_{\mathsf{oracle}} \vee  \xi_{\mathsf{approx}} < \frac{1}{4}\frac{c_1 n}{\beta K(\log d)^4}$ holds } \Big\}. 
\end{align}
Note that 
\eq{
\bb P\big[\mc F_{\mathsf{good}}^{\complement}\big] \leq  O(d^{-c}) + O(n^{-2}) = O(n^{-2})
}
by Lemmas~\ref{lemma: Bound for F_i},~\ref{lemma: Bound for G_i},~and~\ref{lemma: Bound for H_i}. 

Then, we employ an induction argument under the event $\mc F_{\mathsf{good}} \cap \mc F_{\mathsf{oracle}}$. Invoking
\eqref{eq: upper bound on l(hat z, z^*) 3} yields:  
\begin{align}
    & l(\hat{\mb z}^{(1)}, \mb z^*) \leq \xi_{\oracle} + \xi_{\mathsf{approx}} + \frac{1}{4} l(\hat{{\mb z}}^{(0)}, \mb z^*) \leq \frac{n}{\beta K(\log d)^4}.
\end{align}
For each $k \in \bb N^+$, given the hypothesis that $l(\hat{\mb z}^{(k)}, \mb z^*) \leq \frac{n}{\beta K(\log d)^4}$, a similar argument gives: 
\begin{align}
    & l(\hat{\mb z}^{(k+1)}, \mb z^*) \leq \xi_{\oracle} + \xi_{\mathsf{approx}} + \frac{1}{4} l(\hat{{\mb z}}^{(k)}, \mb z^*) \leq \frac{n}{\beta K(\log d)^4}.
\end{align}

Therefore, by induction, we have 
$$l(\hat{\mb z}^{(t+1 )}, \mb z^*) \leq \xi_{\oracle}+ \xi_{\mathsf{approx}} + \frac{1}{4}l(\hat{\mb z}^{(t)}, \mb z^*)$$
for all $t\in \bb N$. 
We let $T = c_T\lceil \log n \rceil$ with some constant $c_T>0$ and apply the above relationship to derive that 
\begin{align}
    & l(\hat{\mb z}^{(t)}, \mb z^*) \leq\frac{4}{3}\xi_{\oracle} + \frac{4}{3}\xi_{\mathsf{approx}}+ \underbrace{4^{-c_T\lceil \log n \rceil} \cdot \frac{cn}{\beta K (\log d)^4}}_{< n^{-5}} \label{eq: l(hat z, z*) control under Fgood and Foracle}
\end{align}
holds for every $t \geq T$ under the event $\mc F_{\mathsf{good}}$. 

Moreover, invoking the relation that $h(\mb z, \mb z^*) \leq  \frac{1}{n\underline \omega }l(\mb z, \mb z^*)$ for every $\mb z$ together with \eqref{eq: l(hat z, z*) control under Fgood and Foracle} yields the desired upper bound on the expectation of $h(\hat{\mb z}^{(t)}, \mb z^*)$ for every $t \geq T$: 
\begin{align}
    & \bb E\big[ h(\hat{\mb z}^{(t)}, \mb z^*)\big] \\
    \leq & \bb E\big[\ind\{\mc F_{\mathsf{good}} \cap \mc F_{\mathsf{oracle}}\} \cdot \frac{1}{n\underline \omega}  \cdot \big(\frac{4}{3}\xi_{\mathsf{oracle}}+ \frac{4}{3}\xi_{\mathsf{approx}} + n^{-5}  \big) \big] + \bb E\big[\ind\{\mc F_{\mathsf{oracle}}^\complement\} \big]+ \bb E\big[\ind\{\mc F_{\mathsf{good}}^\complement\} \big]
    \\
    \lesssim & \frac{1}{n} \bb E\big[\ind\{\mc F_{\mathsf{good}} \cap \mc F_{\mathsf{oracle}}\}   (\xi_{\mathsf{oracle}} +  \xi_{\mathsf{approx}}) + n^{-5} \big]\\ 
    & +  \bb E\big[\ind\{\mc F_{\mathsf{oracle}}^\complement\} \big(\frac{\beta K(\log d)^4}{n }\big) \big(\frac{1}{2} \frac{cn}{\beta K (\log d)^4}\big)\big]+ \bb E\big[\ind\{\mc F_{\mathsf{good}}^\complement\} \big]\\ 
    \lesssim &  \frac{1}{n}\bb E\big[\ind\{\mc F_{\mathsf{good}} \cap \mc F_{\mathsf{oracle}}\}   (\xi_{\mathsf{oracle}} +  \xi_{\mathsf{approx}})   \big]  
     +  \frac{\beta K(\log d)^4}{n}\bb E\big[\ind\{\mc F_{\mathsf{oracle}}^\complement\}  \frac{cn}{\beta K (\log d)^4} \big]+ \bb E\big[\ind\{\mc F_{\mathsf{good}}^\complement\} \big] + n^{-5}\\ 
    \lesssim & \frac{\beta K(\log d)^4}{n}\bb E\big[ \xi_{\mathsf{oracle}} +  \xi_{\mathsf{approx}} \big] + O(n^{-2}) + e^{-(1+o(1)) \frac{\snr^2}{2}} \\
    \leq &\beta K^2\nu^2 \underline \omega (\log d)^4\Big[ \exp\big(-(1 + o(1)) \frac{{\mathsf{SNR}}^2}{2}\big) + O(d^{-c})\Big]+  O(n^{-2}) + e^{-(1+o(1)) \frac{\snr^2}{2}} \\ 
    \lesssim & \kappa_{\mathsf{cov}}^4 \mathsf{SNR}^4 (\log d)^4 \exp\big(-(1 + o(1)) \frac{{\mathsf{SNR}}^2}{2}\big) + O(\kappa_{\mathsf{cov}}^4\mathsf{SNR}^4\frac{(\log d)^4 }{d^{c}}) + O(n^{-2}) + e^{-(1+o(1)) \frac{\snr^2}{2}} \\ 
    \lesssim & \exp\big(- (1+ o(1)) \frac{{\mathsf{SNR}}^2}{2} \big) +   O(\kappa_{\mathsf{cov}}^4\mathsf{SNR}^4 n^{-5}) + O(n^{-2}) ,\label{eq: upper bound on misclustering expection}
\end{align} 
where we used \eqref{eq: upper bound for expectation of xi_oracle} and Lemma~\ref{lemma: Bound for xi_approx,i} in the third-to-last inequality.  
The penultimate inequality follows from $n\le d$, 
$\beta\nu^2 K^2 = o(\underline\omega)$, 
$\underline\omega \lesssim \kappa_{\mathsf{cov}}^2 \mathsf{SNR}^2$,  
and the last inequality uses 
$\mathsf{SNR} = \omega(\sqrt{\log\log d})$ and 
$\mathsf{SNR} = \omega(\kappa_{\mathsf{cov}}^4)$. 

To arrive at the conclusions, we analyze the misclustering rate under the following two regimes of $\mathsf{SNR}$: 
\begin{enumerate} 
    \item First, if $\mathsf{SNR} \leq  \sqrt{(2 + \epsilon)\log n}$, then \eqref{eq: upper bound on misclustering expection} yields that 
    \begin{equation}
        \bb E\big[ h(\hat{\mb z}^{(t)}, \mb z^*)] \leq \exp\big(- (1+ o(1)) \frac{{\mathsf{SNR}}^2}{2} \big)
    \end{equation}
    for every $t \geq T$, 
    where we use the fact that $ O(\kappa_{\mathsf{cov}}^4\mathsf{SNR}^4 n^{-5}) = \exp\big(- (1+ o(1)) \frac{{\mathsf{SNR}}^2}{2} \big)$, since  $n^{-5} \leq \exp\big(- \frac{10}{2+\epsilon}\cdot \frac{{\mathsf{SNR}}^2}{2} \big) \leq \exp\big(- \frac{10}{3}\cdot \frac{{\mathsf{SNR}}^2}{2} \big)$ and $\omega(\kappa_{\mathsf{cov}}^4 \mathsf{SNR}^4)   = \mathsf{SNR}^5  = o\big( \exp\big(\frac{7{\mathsf{SNR}}^2}{3}\big)\big)$. 
    \item Second, if $\mathsf{SNR} \geq \sqrt{(2 + \epsilon)\log n}$ for some $\epsilon > 0$, then it follows from \eqref{eq: l(hat z, z*) control under Fgood and Foracle} that for every $t \geq T$
    \begin{align}
        & \bb P\big[\hat{\mb z}^{(t)} \neq \mb z^* \big] \leq \bb P\big[\mc F_{\mathsf{good}}^\complement\big] + \bb P\big[\mc F_{\mathsf{good}} \cap \{h(\hat{\mb z}^{(t)}, \mb z^*) \geq \frac{1}{n} \} \big]\\ 
        \stackrel{\text{(1)}}{\leq } &\bb P\big[\mc F_{\mathsf{good}}^\complement\big] + \bb P\big[\mc F_{\mathsf{good}} \cap \big(\{\xi_{\mathsf{oracle}} \geq \frac{1}{4}\underline \omega  \} \cup  \{\xi_{\mathsf{approx}} \geq \frac{1}{4}\underline \omega  \}\big)\big]  \\
        \lesssim &  O(n^{-2}) + e^{-(1+o(1))\frac{\snr^2}{2}} + \frac{4}{\underline \omega} \big(\bb E[\xi_{\mathsf{oracle}}]+\bb E[\xi_{\mathsf{approx}}] \big) \\
         \stackrel{\text{by \eqref{eq: upper bound for expectation of xi_oracle}}}{\lesssim } &O(n^{-2}) + nK\nu^2 \Big[\exp\big(- (1 + o(1))\frac{{\mathsf{SNR}}^2}{2} \big) + O(d^{-c}) \Big]  
         \stackrel{\text{(2)}}{=}  o(1),
    \end{align}
    where (1) holds by \eqref{eq: l(hat z, z*) control under Fgood and Foracle} and the fact 
    $$ \mc F_{\mathsf{good}} \cap \big( \{\xi_{\mathsf{oracle}} \geq  \frac{1}{4}\underline \omega \} \cup  \{\xi_{\mathsf{approx}} \geq  \frac{1}{4}\underline \omega \}\big) \stackrel{n^{-5} \leq \frac{1}{2} \underline \omega}{\supseteq} \big(\mc F_{\mathsf{good}} \cap \{l(\hat{\mb z}^{(t)}, \mb z^*)\geq \underline \omega  \}\big) \supseteq \big(\mc F_{\mathsf{good}} \cap \{h(\hat{\mb z}^{(t)}, \mb z^*)\geq \frac{1}{n}  \}\big)$$
    for every $t \geq T$ and every sufficiently large $n$, and (2) holds since $nK\nu^2 \exp\big(-(1 + o(1))\frac{{\mathsf{SNR}}^2}{2} \big) \lesssim \exp\big(-(1 + o(1))\frac{{\mathsf{SNR}}^2}{2} \big) = o(1)$ and $O(\frac{nK\nu^2}{d^{c}}) = O(\frac{\nu^2}{d^{c-2}}) = o(1)$ by the assumption $\nu = o(d)$. 
\end{enumerate}

\subsection{Proof of the Lemmas in Section~\ref{sec: proof of upper bound for algorithm}}
Before we embark on the proofs, we first digress to present some instrumental lemmas. 

\subsubsection{Some Bounds on \texorpdfstring{$\mathsf{SNR}$}{SNR}}
We define the signal-to-noise ratio between two different clusters $a$ and $b$ by 
\begin{align}
    & \mathsf{SNR}_{a,b} \coloneqq \min_{\mb x\in \mc B_{a,b}} \norm{\mb x}_2, 
\end{align}
where we define that 
\begin{align*}
    \mc B_{a,b} = \Big\{
    & \mb x\in \bb R^K:  \mb x\t\big(\mb I - {\mb S_{a}^*}^{\frac{1}{2}}{\mb S_{b}^*}^{-1} {\mb S_{a}^*}^{\frac{1}{2}}\big)\mb x + \\
    & 2\mb x\t{\mb S_{a}^*}^{\frac{1}{2}}{\mb S_{b}^*}^{-1}\big({\mb w}_{b}^* - {\mb w}_{a}^*\big) -  \big(\mb w_{b}^* - {\mb w}_{a}^*\big)\t \mb V^*{\mb S_{b}^*}^{-1}{\mb V^{*\top}}\big({\mb w}_{b}^* - {\mb w}_{a}^*\big) = 0 \Big\}.
\end{align*} 
We introduce a lemma that relates $\mathsf{SNR}_{a,b}$ with the distance between ${\mb w}_a^*$ and ${\mb w}_b^*$.

\begin{lemma}
Assume that there exist constants $\lambda_{\min}, \lambda_{\max} >0$ such that $\lambda_{\min} \leq \lambda_K(\mb S^*_a) \leq \lambda_1(\mb S^*_a) \leq \lambda_{\max}$ for any $a\in[K]$. Then
\begin{align}
\frac{-\sqrt{\lambda_{\max}} + \sqrt{\lambda_{\max} + \frac{\lambda_{\min}(\lambda_{\min} + \lambda_{\max})}{\lambda_{\max}}}}{\lambda_{\min} + \lambda_{\max}} \bignorm{\mb w_a^* - \mb w_b^*} \leq \mathsf{SNR}_{a,b} \leq \lambda_{\min}^{-\frac{1}{2}} \bignorm{{\mb w}_a^* - {\mb w}_b^*}.
\end{align}
Moreover, with $\tau\coloneqq {\lambda_{\max}^{\frac12}}/{\lambda_{\min}^{\frac12}} \geq 1$ and $\triangle \coloneqq \min_{k_1\neq k_2\in[K]} \norm{\bo \theta_{k_1}^* - \bo \theta_{k_2}^*}_2$, we have
\begin{align}
    &\lambda_{\min}^{-\frac{1}{2}}\tau^{-1}\bignorm{{\mb w}_a^* - {\mb w}_b^*}_2\lesssim \mathsf{SNR}_{a,b} \leq \lambda_{\min}^{-\frac{1}{2}}\bignorm{{\mb w}_a^* - {\mb w}_b^*}_2,~  \frac{1}{2}\tau^{-1} \underline \omega^{\frac12} \leq \frac{\triangle}{2\bar\sigma_{\mathsf{cov}}} \leq  \mathsf{SNR} \leq  \underline \omega^{\frac12}. 
\end{align}
\label{lemma: SNR and distance}
\end{lemma}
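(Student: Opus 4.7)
The plan is to reinterpret $\mathsf{SNR}_{a,b}$ geometrically as a Mahalanobis distance to a quadratic surface, and then read off the two bounds by choosing an explicit test point on one side and applying an elementary norm comparison on the other. Making the change of variables $\mb x = {\mb S_a^*}^{-1/2}(\mb z - \mb w_a^*)$ with $\mb w_k^* = {\mb V^{*\top}}\bo\theta_k^*$, the defining equation of $\mc B_{a,b}$ rearranges to $(\mb z - \mb w_a^*)\t{\mb S_a^*}^{-1}(\mb z - \mb w_a^*) = (\mb z - \mb w_b^*)\t{\mb S_b^*}^{-1}(\mb z - \mb w_b^*)$, so $\mathsf{SNR}_{a,b}$ equals the Mahalanobis distance (in the $\mb S_a^*$ inner product) from $\mb w_a^*$ to the quadratic decision surface restricted to $\mathrm{col}(\mb V^*)$. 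Because $\bo\theta_a^* - \bo\theta_b^* \in \mathrm{col}(\mb V^*)$, the displacement $\mb u \coloneqq \mb w_b^* - \mb w_a^*$ satisfies $\|\mb u\|_2 = \|\bo\theta_a^* - \bo\theta_b^*\|_2$, a fact I will use repeatedly.

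For the upper bound $\mathsf{SNR}_{a,b} \leq \lambda_{\min}^{-1/2}\|\bo\theta_a^* - \bo\theta_b^*\|_2$, I would plug the affine family $\mb z(s) = \mb w_a^* + s\mb u$ with $s \in [0,1]$ into the decision-surface equation; this reduces to the scalar quadratic $s^2\omega_{a,b} = (1-s)^2\omega_{b,a}$ with $\omega_{k_1,k_2} \coloneqq \mb u\t{\mb S_{k_1}^*}^{-1}\mb u$, whose relevant root gives a feasible $\mb z^\star$ realising $(\mb z^\star - \mb w_a^*)\t{\mb S_a^*}^{-1}(\mb z^\star - \mb w_a^*) = \omega_{a,b}\omega_{b,a}/(\sqrt{\omega_{a,b}}+\sqrt{\omega_{b,a}})^2 \leq \min(\omega_{a,b}, \omega_{b,a})$. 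The eigenvalue bound $\omega_{a,b} \leq \lambda_{\min}^{-1}\|\mb u\|^2$ then closes the upper bound, and as a byproduct I get $\mathsf{SNR}_{a,b}^2 \leq \omega_{a,b}$, which I will reuse for the final chain.

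For the lower bound, the clean route is the ordinary triangle inequality in the ambient space. If $\mb z^\star$ attains the minimum, then $\|\mb z^\star - \mb w_k^*\|_{{\mb S_k^*}^{-1}} = \mathsf{SNR}_{a,b}$ for both $k \in \{a, b\}$, and $\mb S_k^* \preceq \lambda_{\max}\mb I$ yields $\|\mb z^\star - \mb w_k^*\|_2 \leq \sqrt{\lambda_{\max}}\,\mathsf{SNR}_{a,b}$; applying the triangle inequality to $\mb w_a^* - \mb w_b^*$ then delivers $\|\bo\theta_a^* - \bo\theta_b^*\|_2 \leq 2\sqrt{\lambda_{\max}}\,\mathsf{SNR}_{a,b}$, which is exactly the second display with $\tau = \sqrt{\lambda_{\max}/\lambda_{\min}}$. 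To match the more elaborate explicit constant in the first display, I would instead treat the constraint as a quadratic inequality in $\|\mb x\|$: writing it as $\|\mb x\|^2 - \mb x\t\mb A\mb x + 2\mb x\t\mb b - c = 0$ with $\mb A = {\mb S_a^*}^{1/2}{\mb S_b^*}^{-1}{\mb S_a^*}^{1/2}$, $\mb b = {\mb S_a^*}^{1/2}{\mb S_b^*}^{-1}\mb u$, $c = \mb u\t{\mb S_b^*}^{-1}\mb u$, combining $\mb x\t\mb A\mb x \geq (\lambda_{\min}/\lambda_{\max})\|\mb x\|^2$ with Cauchy--Schwarz, and then lower-bounding the positive root of the resulting scalar quadratic using $\|\mb b\|^2 \leq (\lambda_{\max}/\lambda_{\min})c$ together with $c \geq \lambda_{\max}^{-1}\|\mb u\|^2$. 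The main obstacle here will be purely bookkeeping: tracking which side of each eigenvalue bound to saturate so that the algebra reproduces precisely the stated constant.

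The last chain then assembles from pieces already in hand. The upper bound $\mathsf{SNR} \leq \underline\omega^{1/2}$ comes from minimising $\mathsf{SNR}_{a,b}^2 \leq \omega_{a,b}$ over $a \neq b$; the bound $\mathsf{SNR} \geq \triangle/(2\bar\sigma)$ comes from applying the triangle-inequality lower bound to the pair attaining $\triangle$ together with $\bar\sigma \geq \sqrt{\lambda_{\max}}$; and the leftmost inequality $\underline\omega^{1/2}/(2\tau) \leq \triangle/(2\bar\sigma)$ is an algebraic consequence of $\underline\omega \leq \lambda_{\min}^{-1}\triangle^2$, which itself follows by applying the eigenvalue bound for $\omega_{a,b}$ to the $\triangle$-attaining pair and noting that $\underline\omega$ is a minimum over all pairs.
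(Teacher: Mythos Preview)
Your approach is correct and largely parallels the paper's: both obtain the upper bound by exhibiting a test point on the segment between the projected centers, both reduce the explicit lower bound to a scalar quadratic in $\|\mb x\|$ via Cauchy--Schwarz, and both assemble the final chain from these pieces in the same way.

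Two differences are worth noting. First, for the simplified lower bound $\mathsf{SNR}_{a,b} \gtrsim \lambda_{\min}^{-1/2}\tau^{-1}\|\bo\theta_a^*-\bo\theta_b^*\|_2$, your triangle-inequality argument (at the minimizer both Euclidean distances are at most $\sqrt{\lambda_{\max}}\,\mathsf{SNR}_{a,b}$) is more direct than what the paper does: the paper first derives the explicit constant, rewrites it as $\lambda_{\min}^{-1/2}$ times a function of $\tau$, then Taylor-expands that function to extract the scaling, in fact obtaining the slightly sharper $\tau^{-1/2}$. Your route gives the stated $\tau^{-1}$ in one line. Second, for the explicit constant in the first display, the paper uses the cruder bound $\|\mb I - \mb A\| \leq 1 + \lambda_{\max}/\lambda_{\min}$ together with $\|\mb b\| \leq (\sqrt{\lambda_{\max}}/\lambda_{\min})\|\mb u\|$ and $c \geq \lambda_{\max}^{-1}\|\mb u\|^2$, which produces exactly the quadratic $(1+\lambda_{\max}/\lambda_{\min})t^2 + 2(\sqrt{\lambda_{\max}}/\lambda_{\min})\|\mb u\|t - \|\mb u\|^2/\lambda_{\max}$ whose positive root is the stated constant. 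Your proposed bound $\mb x^\top\mb A\mb x \geq (\lambda_{\min}/\lambda_{\max})\|\mb x\|^2$ gives a different leading coefficient and hence a different (still valid) constant; if you want to reproduce the paper's formula verbatim, you will need to swap in the paper's specific eigenvalue bounds rather than the tighter one-sided estimate you describe.
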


\begin{remark}
We compare the exponent $-\snr^2/2$ in the upper bounds of 
Theorems~\ref{theorem: upper bound for algorithm (gaussian)} and 
\ref{theorem: upper bound for algorithm (bounded)} with existing results in 
\cite{abbe2022,zhang2024leave}.

First, consider the regime studied in \cite{zhang2024leave}, where
\begin{equation}
    \triangle \gg \sqrt{\beta}\, K\,(1+\sqrt{p/n})\,\tilde\sigma
    \qquad \text{(Theorem~3.1 in \cite{zhang2024leave})}.
\end{equation}
In this case, $\bar\sigma_{\mathsf{cov}}\approx \bar\sigma \le \tilde\sigma$. 
By Lemma~\ref{lemma: SNR and distance}, it follows that
\begin{align}
    \log\!\big(\text{upper bound in \cite[Theorem~3.1]{zhang2024leave}}\big)
    \asymp -\frac{\triangle^2}{8\tilde\sigma^2}
    \ge -\frac{\triangle^2}{8\bar\sigma^2}
    \gtrsim -\frac{\snr^2}{2}.
\end{align}

Moreover, \cite[Theorem~3.1]{abbe2022} establishes an upper bound of the form
\[
\exp\!\left(
-c\min\!\left\{
\frac{\triangle^2}{\tilde\sigma^2},
\frac{\triangle^2}{(p/n)\sigma^4/\triangle^2}
\right\}
\right).
\]
Since $\bar\sigma_{\mathsf{cov}}^2 \lesssim 
\max\{\tilde\sigma^2,(p/n)\sigma^4/\triangle^2\}$, we obtain
\[
-\min\!\left\{
\frac{\triangle^2}{\tilde\sigma^2},
\frac{\triangle^2}{(p/n)\sigma^4/\triangle^2}
\right\}
\gtrsim
-\frac{\triangle^2}{8\bar\sigma_{\mathsf{cov}}^2}
\ge
-\frac{\snr^2}{2}.
\]
\end{remark}

Besides, the condition $\mathsf{SNR} = \omega( \kappa^2 \kappa_{\mathsf{cov}}^{8} K^3 \beta^3  \nu^2  \sqrt{\log \log d})$ in Assumption~\ref{assumption: algorithm} implies the following consequence, which will be repeatedly invoked later:
\begin{align}
    & \underline \omega = \omega(\kappa^4 \kappa_{\mathsf{cov}}^{16} K^6 \beta^6  \nu^4  \log \log d). 
    \label{eq: alg condition}
\end{align}

\subsubsection{Concentrations on Noise Matrices} The following part comprises the concentration results for some linear forms of the noise matrix $\mb E$, under the Gaussian case (Lemma~\ref{lemma: noise matrix concentrations using the universality (Gaussian)}) and the bounded noise case (Lemma~\ref{lemma: noise matrix concentrations using the universality (bounded)}), respectively, as well as an upper bound (Lemma~\ref{lemma: 2k moment of At Ei}) on the moments of $\norm{\mb A\t \mb E_i}_2$ for a deterministic matrix $\mb A$. The proofs are postponed to Section~\ref{subsubsec: proof of noise matrix concentration}. 

\begin{lemma}\label{lemma: noise matrix concentrations using the universality (Gaussian)}
    Suppose that the noise matrix $\mb E$ satisfies the assumptions for the Gaussian case in Theorem~\ref{theorem: upper bound for algorithm (gaussian)}. Then with probability at least $1 - O(d^{-c-2})$, we have
    \begin{align}
        & \norm{\mb E} \lesssim \sigma \sqrt{p} + \tilde \sigma \sqrt{n}, \qquad \norm{\mb E_{\mc I_k(\mb z^*), \cdot}} \lesssim \sigma \sqrt{p} + \tilde\sigma \sqrt{n_k}, \qquad  \norm{\mb E_{i}}_2 \lesssim \sigma \sqrt{p},\qquad \norm{\mb E \mb V^*} \lesssim \bar \sigma \sqrt{n},\\
        &   \Bignorm{\sum_{i\in[n], z_i^* = k}\mb E_i\t \mb V^*}_2 \lesssim  \bar \sigma \sqrt{n_k K\log d},~\norm{\mb E \mb V^*}\ti \lesssim \bar\sigma \sqrt{K \log d},\\ 
        &  \norm{\mc H(\mb E \mb E\t) \mb U^*}\ti \lesssim \tilde \sigma \sigma \sqrt{p K \log d},~\norm{{\mb U^*}^\top \mc H(\mb E \mb E^\top)\mb U^*}
 \lesssim  K\tilde\sigma \sigma \sqrt{p\log d},  \\ 
        & \max_{i \in[n], k\in[K]} \Bignorm{{\mb U^*}\t \mc P_{-i,\cdot}(\mb E) \bo \Sigma_k \mc P_{-i,\cdot}(\mb E)\t \mb U^* - \sum_{i'\in[n]\backslash\{i\}} \tr(\bo \Sigma_{z_{i'}^*} \bo \Sigma_{z_i^*}){\mb U^*_{i',\cdot}}\t \mb U_{i',\cdot}^*}  
        \lesssim  K\sqrt{p}\sigma\tilde\sigma^3 \log d
        \label{eq:simplified spectral norm of sum EiV* (gaussian)}
    \end{align}
    with probability at least $1 - O(d^{-c-2})$. 
    \end{lemma}

  \begin{lemma}[Bounded Noise Matrix Concentrations]\label{lemma: noise matrix concentrations using the universality (bounded)}
Suppose the noise matrix $\mb E \in \bb R^{n\times p}$ obeys Assumption~\ref{assumption: bounded noise}.\ref{item: bounded noise assumption 1} and \ref{assumption: bounded noise}.\ref{item: bounded noise assumption 2}. Then we have
    \begin{align}
        & \norm{\mb E} \lesssim \sigma \sqrt{p} + \tilde \sigma \sqrt{n},  \qquad \bignorm{\mb E_{\mc I_k(\mb z^*), \cdot}}\lesssim \sigma \sqrt{p} + \tilde \sigma \sqrt{n_k}, \qquad \norm{\mb E_{i,\cdot}}_2 \lesssim \sigma\sqrt{p}, \qquad \norm{\mb E \mb V^*} \lesssim \tilde \sigma \sqrt{n}, \\  
        &     \Bignorm{\sum_{i\in[n], z_i^* = k}\mb E_i\t \mb V^*}_2 \lesssim \bar \sigma \sqrt{n_k K\log d},~ \norm{\mb E \mb V^*}\ti \lesssim \bar\sigma\sqrt{K \log d},~\norm{\mc H(\mb E \mb E\t) \mb U^*}\ti \lesssim \tilde \sigma \sigma \sqrt{p K \log d}, \\ 
        &  \max_{c\in[l]} \norm{\mb E_{\cdot, S_c} \t \mb U^*} \lesssim \sigma \sqrt{m K \log d}, ~ \norm{{\mb U^*}^\top \mc H(\mb E \mb E^\top)\mb U^*}
 \lesssim  K\tilde\sigma \sigma \sqrt{p\log d}, 
        \\
        & \max_{i \in[n], k\in[K]} \Bignorm{{\mb U^*}\t \mc P_{-i,\cdot}(\mb E) \bo \Sigma_k \mc P_{-i,\cdot}(\mb E)\t \mb U^* - \sum_{i'\in[n]\backslash\{i\}} \tr(\bo \Sigma_{z_{i'}^*} \bo \Sigma_{z_i^*}){\mb U^*_{i',\cdot}}\t \mb U_{i',\cdot}^*}  
        \lesssim  K\sqrt{p}\sigma\tilde\sigma^3\sqrt{\log d}
    \end{align}
    with probability at least $1 - O(d^{-c-2})$. 
\end{lemma}

\begin{lemma}
\label{lemma: 2k moment of At Ei}
    Instate Assumption~\ref{assumption: bounded noise} for the bounded noise cases. Then for every deterministic matrix $\mb A \in \bb R^{p \times K}$, one has 
    \begin{align}
        &  \bb E\big[\bignorm{\mb A\t \mb E_i}_2^{2k} \big]^{\frac{1}{2k}}\leq  \bb E\big[\bignorm{\mb A\t \mb G_i}_2^{2k} \big]^{\frac{1}{2k}} + C \big(\tr(\mb A\t \bo \Sigma_{z_i^*} \mb A)/K \big)^{\frac{1}{2k}} \big(\sqrt{m}B \max_{c\in[l]}\norm{\mb A_{S_c,\cdot}}\big)^{\frac{k-1}{k}} k^2,  \label{eq: 2k moment of At Ei}
    \end{align}
    where $\mb G_i$ denotes the Gaussian analog of $\mb E_i$ equipped with the same mean and covariance. 
\end{lemma}

\subsubsection{Additional Notations and Facts}
For notational simplicity, define 
\begin{align}
& \deltau \coloneqq \sqrt{\frac{\mu_1 K}{n}}\cdot \Big[ \frac{\deltaop^2 \snr  }{(\sigma_K^*)^4} + \frac{\mu_1^{\frac12} K \sigma_1^* \bar \sigma \sqrt{\log d / n} }{(\sigma_K^*)^2} \Big],  
\label{eq: definition of delta_u}
\\ 
& \deltau' \coloneqq \sqrt{\frac{\mu_1 K}{n}}\cdot \Big[ \frac{\deltaop^2 \sqrt{\log d}  }{(\sigma_K^*)^4} + \frac{\mu_1^{\frac12} K \sigma_1^* \bar \sigma \sqrt{\log d / n} }{(\sigma_K^*)^2} \Big],  
\label{eq: definition of delta_u'}
\end{align}
which correspond to the bounds in Theorem~\ref{thm: singular subspace perturbation theory} with $t_0=\mathsf{SNR}$ and $t_0=\sqrt{c\log d}$, respectively, taking $r=K$ in both cases. 
 The following facts will be used in the subsequent proof:
\begin{align}
    &\delta_u \lesssim \sqrt{\mu_1 }K \frac{\deltaop}{\sqrt{n} (\sigma_K^*)^2} \Big(\sqrt{\frac{\log d}{n}} +  \frac{\deltaop \snr}{(\sigma_K^*)^2} \Big),  
    \label{eq: alternative upper bound for delta_u}
    \\   
    &  \underline\sigma_{\mathsf{cov}}^2 \leq \bar \sigma_{\mathsf{cov}}^2 \lesssim  \frac{\deltaop^2}{n(\sigma_K^*)^2} \lesssim \kappa_{\mathsf{cov}}^2\underline \sigma_{\mathsf{cov}}^2, \label{eq: fact A} \\ 
    & n \snr^2 \leq n \underline \omega \lesssim \frac{(\sigma_1(\mb Z^*))^2 (\sigma_K(\bo \Theta^*))^2 }{ (\underline \sigma_{\mathsf{cov}})^2} \cdot \frac{K}{\beta} \lesssim \frac{K(\sigma_1^*)^2}{(\underline\sigma_{\mathsf{cov}})^2 } , 
    \label{eq: fact B}
            \\ 
            & \frac{\deltaop \snr}{(\sigma_K^*)^2} \lesssim  \frac{\deltaop^2}{n(\sigma_K^*)^2} \cdot \frac{n \snr^2 }{ \deltaop} \cdot \frac{1}{\snr}  \lesssim \bar \sigma_{\mathsf{cov}}^2 \cdot \frac{K(\sigma_1^*)^2}{\deltaop (\underline\sigma_{\mathsf{cov}})^2 } \cdot \frac{1}{\snr} \lesssim \frac{K \kappa^2 \kappa_{\mathsf{cov}}^3 }{\underline\omega^{\frac12}}  = o( 1) , \label{eq: fact C}\\ 
            & \mu_1 \leq \beta \label{eq: fact D}, \\ 
            & \delta_u \lesssim \sqrt{\mu_1 }K \frac{\deltaop \sqrt{\log d}}{\sqrt{n} (\sigma_K^*)^2} , \label{eq: alternative upper bound for delta_u'} \\ 
            & \underbrace{\sqrt{\frac{\beta K}{n}} \frac{K \deltaop \sqrt{\log d/n} + \beta K (\sigma_1^*)^2 / n}{(\sigma_K^*)^2} }_{\text{the bound for the bias terms in Theorem~\ref{thm: singular subspace perturbation theory}}} \lesssim \frac{ \underline \sigma_{\mathsf{cov}} \underline \omega^{\frac12}}{\kappa_{\mathsf{cov}}^5 \sigma_1^*} \Big[\frac{\kappa_{\mathsf{cov}}^5\kappa \beta^{\frac12} K^{\frac32}\deltaop \sqrt{\log d } /n }{\sigma_K^* \underline \sigma_{\mathsf{cov}} \underline \omega^{\frac12}} + \frac{ \sigma_1^* \beta^{\frac32} K^{\frac32} \kappa^2 \kappa_{\mathsf{cov}}^5 }{ \underline \sigma_{\mathsf{cov}} n^{\frac32} \underline \omega^{\frac12}} \Big] \\ 
            \lesssim & \frac{ \underline \sigma_{\mathsf{cov}} \underline \omega^{\frac12}}{\kappa_{\mathsf{cov}}^5 \sigma_1^*} \Big[\frac{\kappa_{\mathsf{cov}}^6\kappa \beta^{\frac12} K^{\frac32} \sqrt{\log d/n }}{\underline \omega^{\frac12}} + \frac{ \beta^{\frac12} \nu  \beta^{\frac32} K\kappa^3 \kappa_{\mathsf{cov}}^6 }{  n } \Big] = o(\frac{ \underline \sigma_{\mathsf{cov}} \underline \omega^{\frac12}}{\kappa_{\mathsf{cov}}^5 \sigma_1^*}) .  \label{eq: bound for the bias term}
\end{align}

The following corollary is a straightforward consequence of Theorem~\ref{thm: singular subspace perturbation theory}, which will be used repeatedly.
\begin{corollary}
\label{corollary: singular subspace perturbation theory in clustering problem}
    Instate either the assumptions in Theorem~\ref{theorem: upper bound for algorithm (gaussian)} or the assumptions in Theorem~\ref{theorem: upper bound for algorithm (bounded)}. Then for the top-$K$ left eigenvector matrix $\mb U$ of $\mc H(\mb M \mb M\t)$, with probability at least $1- O(e^{-\frac{\snr^2}{2}} \vee d^{-c})$ it holds: 
    \begin{align}
        & \norm{\mb U_{i,\cdot} \mb O  - \mb U^*_{i,\cdot} - \mk L_i - \mathsf{bias}_{z_i^*}\t }_2 \lesssim \deltau.
    \end{align} 
    Moreover, $\max_{k \in[K]}\norm{\bias_k}_2 \lesssim \sqrt{\frac{\mu_1 K}{n}}\cdot  \frac{K \deltaop \sqrt{\log d / n} + \mu_1 K (\sigma_1^*)^2 / n}{(\sigma_r^*)^2}$ with probability at least $1 - O(d^{-c})$. 
\end{corollary}
\begin{proof}[Proof of Corollary~\ref{corollary: singular subspace perturbation theory in clustering problem}]
    Invoking Theorem~\ref{thm: singular subspace perturbation theory} together with \eqref{eq: alternative upper bound for delta_u}, \eqref{eq: fact A}, and Lemmas~\ref{lemma: noise matrix concentrations using the universality (Gaussian)}-\ref{lemma: noise matrix concentrations using the universality (bounded)} yields that, with probability at least $1- O(e^{-\frac{\snr^2}{2}} \vee d^{-c})$,
    \begin{align}
        & \norm{\mb U_{i,\cdot}}_2  \lesssim \sqrt{\frac{\mu_1 K }{n}} \snr. 
    \end{align}

    Moreover, applying \cite[Lemma~2.5]{chen2021spectral} together with Wedin's theorem implies that 
    \begin{align}
        & \norm{\mb O - \mb U\t \mb U^*}\lesssim \frac{\deltaop^2}{(\sigma_K^*)^4}
    \end{align}
    with probability exceeding $1 - O(d^{-c})$. 

    As a result, one has with probability exceeding $1 - O(e^{-\frac{\snr^2}{2}} \vee d^{-c})$ that 
    \begin{align}
        & \norm{\mb U_{i,\cdot} \mb O  - \mb U^*_{i,\cdot} - \mk L_i - \mathsf{bias}_{z_i^*}\t }_2 \leq \norm{\mb U_{i,\cdot}}_2 \norm{\mb O - \mb U\t \mb U^*} +  \norm{\mb U_{i,\cdot} \mb U\t \mb U^* - \mb U^*_{i,\cdot} - \mk L_i - \mathsf{bias}_{z_i^*}\t }_2\lesssim \deltau.
    \end{align}

    The upper bound for the bias term directly follows from Theorem~\ref{thm: singular subspace perturbation theory}. 
\end{proof}

Lastly, we define: 
\begin{align}
        &\tilde{\omega}_{a,b} \coloneqq  \ip{ \tilde{\mb w}_a^* - \tilde{\mb w}_b^*, {\mb S_a^*}^{-1}( \tilde{\mb w}_a^* - \tilde{\mb w}_b^*)}. 
\end{align}
The distinction between the “tilde” quantities and the previously defined $\omega_{a,b}$ lies in the presence of bias. Nevertheless, by invoking Corollary~\ref{corollary: singular subspace perturbation theory in clustering problem}, we can still establish a tight proximity relationship: 
\begin{lemma}
\label{lemma: relation between tilde omega and omega}
    Instate either the assumptions in Theorem~\ref{theorem: upper bound for algorithm (gaussian)} or the assumptions in Theorem~\ref{theorem: upper bound for algorithm (bounded)}. Then with probability at least $1- O(d^{-c})$, $c_{\omega}\leq \max_{a\neq b\in[K]} \frac{\tilde \omega_{a,b}}{\omega_{a,b}} \leq  C_\omega$ for some constants $c_{\omega}$ and $C_{\omega}$. 
\end{lemma}
\begin{proof}[Proof of Lemma~\ref{lemma: relation between tilde omega and omega}]
The desired inequality follows directly from the following: 
\begin{align}
    & \frac{\tilde \omega_{a,b}}{\underline\omega} \leq \frac{2 \max_{k\in[K]} \norm{\mathsf{bias}_k}_2 \sigma_1^* \bar \omega^{\frac12}}{\underline \sigma_{\mathsf{cov}}\underline \omega} + \frac{\max_{k\in[K]} \norm{\mathsf{bias}_k}_2^2  (\sigma_1^*)^2 }{(\underline \sigma_{\mathsf{cov}})^2 \underline \omega } \\ 
    \stackrel{\text{by \eqref{eq: fact A}}}{\leq} & \frac{2 \max_{k\in[K]} \norm{\mathsf{bias}_k}_2 \sigma_1^* \bar \omega^{\frac12} \sqrt{n} \sigma_K^* \kappa_{\mathsf{cov}}}{ \deltaop \underline \omega} + \frac{\max_{k\in[K]} \norm{\mathsf{bias}_k}_2^2  (\sigma_1^*)^2 n (\sigma_K^*)^2 \kappa_{\mathsf{cov}}^2}{\deltaop^2 \underline \omega } \\
    \stackrel{\text{by Corollary~\ref{corollary: singular subspace perturbation theory in clustering problem} and \eqref{eq: bound for the bias term}}}{= } & o(1)
\end{align}
holds with probability at least $1- O(d^{-c})$ by Assumption~\ref{assumption: algorithm}. 
    
\end{proof}
\subsubsection{Center / Covariance Estimation Characterization}
The following lemma provides upper bounds on the fluctuations of the embedding centers given the true assignment $\mb z^*$, and given the estimated assignment $\mb z$, whose proofs are postponed to Section~\ref{subsec: proof of lemma misspecified center estimation}. 
\begin{lemma}
    \label{lemma: center estimation error}
        Instate either the assumptions in Theorem~\ref{theorem: upper bound for algorithm (gaussian)} or the assumptions in Theorem~\ref{theorem: upper bound for algorithm (bounded)}. Then it uniformly holds for all possible $\mb z$ that 
    \begin{align}
        & \norm{\bo \Lambda^*\mb O\t \hat{\mb c}_k(\mb z^*) -  \tilde{\mb w}_k^*}_2 \lesssim \kappa \mu_1 \sqrt{ \beta } K \frac{\deltaop}{\sqrt{n} \sigma_K^*} \Big( \sqrt{\frac{\log d}{n}}  + \frac{\deltaop \snr }{(\sigma_K^*)^2} \Big), 
        \label{eq: center estimation without misspecification}
        \\ 
        & \norm{\bo \Lambda^*\mb O\t \big(\hat{\mb c}_k(\mb z^*) - \hat{\mb c}_k(\mb z) \big)}_2 
         \lesssim  \beta K \sqrt{\frac{l(\mb z, \mb z^*)}{n\underline\omega}} \frac{\deltaop}{\sqrt{n} \sigma_K^*} 
        \label{eq: center estimation with misspecification}
    \end{align}
    with probability at least $1 - O(d^{-c}) \vee e^{- (1 + o(1)) \frac{\snr^2}{2}}$. 
    \end{lemma}

In order to obtain a uniform control on the fluctuations of the projected covariance matrix with misspecification, we present the following lemma, whose proof is presented in Section~\ref{sec: proof of projected covariance estimation error}.
\begin{lemma}
    \label{lemma: projected covariance matrix estimation error}
    Instate either the assumptions in Theorem~\ref{theorem: upper bound for algorithm (gaussian)} or the assumptions in Theorem~\ref{theorem: upper bound for algorithm (bounded)}. Then it uniformly holds for every $a \in [K]$ and every $\mb z$ with $l(\mb z, \mb z^*)\leq c\frac{n}{\beta K(\log d)^4}$ that
    \begin{align}
    & \norm{\bo \Lambda^*\mb O\t \big(\hat{\bo \Omega}_a(\mb z) - \hat{\bo \Omega}_a(\mb z^*)\big)\mb O\bo \Lambda^*} \lesssim   \underbrace{\kappa^2 \beta^3 K^2 \sqrt{\frac{l(\mb z, \mb z^*) K }{n}} \bar \sigma_{\mathsf{cov}}^2 / \underline \omega^{\frac12}  }_{\xicov} = o(\underline\sigma_{\mathsf{cov}}^2)\label{eq: definition of xi_cov}, \\ 
    &  \norm{{\bo \Lambda^*}^{-1}\mb O\t \big(\hat{\bo \Omega}_a^{-1}(\mb z) - \hat{\bo \Omega}_a^{-1}(\mb z^*)\big)\mb O{\bo \Lambda^*}^{-1}} 
            \lesssim   \frac{\xicov}{\underline \sigma_{\mathrm{cov}}^4} = o( \frac{1}{\underline\sigma_{\mathrm{cov}}^2})
    \end{align}
    with probability at least $1 - O(d^{-c}) \vee e^{- (1 + o(1)) \frac{\snr^2}{2}}$.
           \end{lemma}

To characterize the projected covariance matrix $\hat{\mb S}_k(\mb z^*)$ and its inverse given the ground truth $\mb z^*$, we present the following lemma, with its proof provided in Section~\ref{sec: proof of lemma inverse of projected covariance matrix estimation error 2}.
The main difficulty lies in establishing a concentration inequality for $\sum_{i: z_i^* = k}\bo \Lambda^* \mk L_i \mk L_i\t \bo \Lambda^*  / n_k - \mb S_k^*$, which arises for two reasons: (i) the expression involves controlling terms associated with the form $\mc H(\mb E \mb E\t) \mb E$, and $\mc H(\mb E \mb E\t) \mc H(\mb E \mb E\t)$; (ii) for locally dependent noise, the standard results of the literature do not apply.  
For the first challenge, we decompose the relevant terms into sums of U-statistics and then apply the decoupling technique from \cite{de1995decoupling} (cf.~Lemma~\ref{lemma: concentration on second part of covariance estimation}). 
To address the dependency issue, we resort to the universality result in \cite{brailovskaya2022universality}, which allows us to obtain more refined concentration inequalities (cf.~Lemma~\ref{lemma: S-universality}~and~Lemma~\ref{lemma: concentration on second part of covariance estimation}) for the projected covariance matrix.
\begin{lemma}
\label{lemma: projected covariance matrix estimation error with true z}
    Instate either the assumptions in Theorem~\ref{theorem: upper bound for algorithm (gaussian)} or the assumptions in Theorem~\ref{theorem: upper bound for algorithm (bounded)}. Then it holds with probability at least $1 - O(d^{-c})$ that 
    \begin{align}
        &\Bignorm{\bo \Lambda^* \mb O\t \hat{\bo \Omega}_k(\mb z^*)\mb O \bo \Lambda^*- \mb S_k^*} 
        \lesssim  \underbrace{\kappa^2 \beta^2 K^{\frac52} \frac{\deltaop^2}{n(\sigma_K^*)^2} \Big( \frac{\log d}{\sqrt{n}} +  \omega^{-\frac12} + \big( \frac{\tilde\sigma}{\sqrt{p} \sigma}\big)^{\frac16} \log d \Big) }_{\eqqcolon \delta_{\mathsf{cov}}} = o(\underline\sigma_{\mathsf{cov}}^2), \\
        &\Bignorm{\big(\bo \Lambda^* \mb O\t \hat{\bo \Omega}_k(\mb z^*)\mb O \bo \Lambda^*\big)^{-1} - {\mb S_k^*}^{-1}}  
        \lesssim  \frac{\delta_{\mathsf{cov}}}{\underline \sigma_{\mathsf{cov}}^4 }  = o(\frac{1}{\underline\sigma_{\mathsf{cov}}^2})\label{eq: inverse of projected covariance matrix estimation error 2}. 
    \end{align}
\end{lemma}

\subsubsection{Proof of Lemma~\ref{lemma: Bound for the first three terms in zeta_oracle}}
\label{sec: proof of lemma for tilde zeta oracle}
Recall the definition of $ \zeta_{\oracle, i}(k)$ in \eqref{eq: zeta_orackle,i(k)} that 
\longeq{
    & \zeta_{\oracle, i}(k) \coloneqq \ip{\bo \Lambda^*\mathfrak L_i,  (\bo \Lambda^* \mb O\t \hat{\bo \Omega}_k(\mb z^*) \mb O \bo \Lambda^*)^{-1} \bo \Lambda^* \mb O\t \big( \hat {\mb c}_{z_i^*}(\mb z^*) -  \hat{\mb c}_k(\mb z^*) \big)} \\ 
& \qquad + \frac{1}{2}\ip{\bo \Lambda^*\mathfrak L_i, \big[(\bo \Lambda^* \mb O\t \hat{\bo\Omega}_k(\mb z^*) \mb O \bo \Lambda^*)^{-1} - (\bo \Lambda^* \mb O\t \hat{\bo\Omega}_{z_i^*}(\mb z^*) \mb O \bo \Lambda^*)^{-1}\big]\bo \Lambda^* \mathfrak L_i}  \\ 
&\qquad + \frac{1}{2}\ip{\bo \Lambda^*\big(\tilde{\mb c}_{z_i^*}^*- \mb O\t\hat{\mb c}_k(\mb z^*) \big),  (\bo \Lambda^* \mb O\t \hat{\bo \Omega}_k(\mb z^*) \mb O \bo \Lambda^*)^{-1} \bo \Lambda^*\big(\tilde{\mb c}_{z_i^*}^*- \mb O\t\hat{\mb c}_k(\mb z^*) \big)} \\ 
& \qquad - \frac{1}{2}\ip{\bo \Lambda^*\big(\tilde{\mb c}_{z_i^*}^*-\mb O\t \hat{\mb c}_{z_i^*}(\mb z^*) \big), (\bo \Lambda^* \mb O\t \hat{\bo \Omega}_{z_i^*}(\mb z^*) \mb O \bo \Lambda^*)^{-1} \bo \Lambda^*\big(\tilde{\mb c}_{z_i^*}^*-\mb O\t \hat{\mb c}_{z_i^*}(\mb z^*) \big)}
}

For the inverse of the empirical quantity $\bo \Lambda^* \mb O\t \hat{\bo \Omega}_k(\mb z^*) \mb O \bo \Lambda^*$ given the oracle labels, we recognize it as a perturbed version of $\mb S_k^*$ up to a rotation and thus we can apply Lemmas~\ref{lemma: center estimation error},~\ref{lemma: projected covariance matrix estimation error},~and~\ref{lemma: projected covariance matrix estimation error with true z} to obtain that 
\begin{align}
    & \bignorm{\big(\bo \Lambda^* \mb O\t \hat{\bo \Omega}_k(\mb z^*) \mb O \bo \Lambda^*\big)^{-1} \bo \Lambda^* \mb O\t \big(\hat{\mb c}_{z_i^*}(\mb z^*) - \hat{\mb c}_k(\mb z^*)\big) - {\mb S_k^*}^{-1} \bo \Lambda^*  \big(\mb c_{z_i^*}^* - \mb c_{k}^* \big)}_2 \\ 
    \leq & \bignorm{\big(\bo \Lambda^* \mb O\t \hat{\bo \Omega}_k(\mb z^*) \mb O \bo \Lambda^*\big)^{-1} \bo \Lambda^* \mb O\t \big(\hat{\mb c}_{z_i^*}(\mb z^*) - \hat{\mb c}_k(\mb z^*)\big) - {\mb S_k^*}^{-1} \bo \Lambda^*  \big(\tilde{\mb c}_{z_i^*}^* - \tilde{\mb c}_{k}^* \big)}_2 \\ 
    & + \norm{{\mb S_k^*}^{-1} \bo \Lambda^*  \Big[\big(\tilde{\mb c}_{z_i^*}^* - \tilde{\mb c}_{z_i^*}^* \big) - \big(\mb c_{k}^* - \mb c_{k}^* \big)\Big]} \\ 
    \lesssim &\bignorm{\big(\bo \Lambda^* \mb O\t \hat{\bo \Omega}_k(\mb z^*) \mb O \bo \Lambda^*\big)^{-1}}\Big[\bignorm{ \bo \Lambda^* \big( \mb O\t\hat{\mb c}_{z_i^*}(\mb z^*) - \tilde{\mb c}^*_{z_i^*}\big)}_2 + \bignorm{\bo \Lambda^*\big(\mb O\t\hat{\mb c}_k(\mb z^*) - \tilde{\mb c}^*_{k}\big)} \Big] \\
    & \quad + \bignorm{\big(\bo \Lambda^* \mb O\t \hat{\bo \Omega}_k(\mb z^*) \mb O \bo \Lambda^*\big)^{-1} - {\mb S_k^*}^{-1} } \bignorm{\bo \Lambda^* \big( \tilde{\mb c}_{z_i^*}^* - \tilde{\mb c}_{k}^*\big)}_2 +  \max_{k'\in[K]}\norm{{\mb S_{k'}^*}^{-1} \bo \Lambda^*  \big(\tilde{\mb c}_{k'}^* - \mb c_{k'}^* \big)} \\
    \lesssim  & \frac{1}{\underline\sigma_{\mathrm{cov}}^2} \kappa \sqrt{\mu_1 \beta } K \frac{\deltaop}{\sqrt{n}\sigma_K^*}\Big[ \sqrt{\frac{\log d}{n}} + \frac{\deltaop}{(\sigma_K^*)^2} \snr \Big] + \frac{\delta_{\mathsf{cov}}}{\underline\sigma_{\mathrm{cov}}^4}  \nu \bar \sigma_{\mathrm{cov}} \underline\omega^{\frac12} + \frac{\sigma_1^*}{\underline \sigma_{\mathsf{cov}}^2}  \sqrt{\frac{\beta K}{n}}  \frac{K \deltaop \sqrt{\log d /n } + \beta K (\sigma_1^*)^2 / n}{(\sigma_K^*)^2}
    \label{eq: decomposition the first three term in zeta_oracle}
\end{align}
holds with probability at least $1 - O(d^{-c})$. It then follows from \eqref{eq: fact A}, \eqref{eq: fact C}, and \eqref{eq: bound for the bias term} that with probability at least $1 - O(d^{-c})$ 
\begin{align}
    & \bignorm{(\mb S^*_{z_i^*})^{\frac12}\big(\bo \Lambda^* \mb O\t \hat{\bo \Omega}_k(\mb z^*) \mb O \bo \Lambda^*\big)^{-1} \bo \Lambda^* \mb O\t \big(\hat{\mb c}_{z_i^*}(\mb z^*) - \hat{\mb c}_k(\mb z^*)\big) - (\mb S^*_{z_i^*})^{\frac12}{\mb S_k^*}^{-1} \bo \Lambda^*  \big(\tilde{\mb c}_{z_i^*}^* - \tilde{\mb c}_{z_i^*}^* \big)}_2  \\
    \lesssim& \frac{\kappa_{\mathrm{cov}}}{\underline\sigma_{\mathrm{cov}}} \kappa \sqrt{\mu_1 \beta } K \frac{\deltaop}{\sqrt{n}\sigma_K^*}\Big[ \sqrt{\frac{\log d}{n}} + \frac{\deltaop}{(\sigma_K^*)^2} \snr \Big] + \frac{\kappa_{\mathrm{cov}}^2\delta_{\mathsf{cov}}}{\underline\sigma_{\mathrm{cov}}^2}  \nu \underline\omega^{\frac12}  \\
    & + \frac{\kappa_{\mathsf{cov}} \sigma_1^*}{\underline \sigma_{\mathsf{cov}}}  \sqrt{\frac{\beta K}{n}}  \frac{K \deltaop \sqrt{\log d /n } + \beta K (\sigma_1^*)^2 / n}{(\sigma_K^*)^2} \\ 
    \eqqcolon & \delta_{\mathsf{linear}}  =  o(\frac{\underline\omega^{\frac12}}{\kappa_{\mathsf{cov}}^4}). 
    \label{eq: linear term in the perturbed SNR 1}
\end{align}

On the other hand, by \eqref{eq: inverse of projected covariance matrix estimation error 2} we have 
\begin{align}
    & \max_{b \in [K]} \bignorm{\big(\bo \Lambda^* \mb O\t \hat{\bo \Omega}_k(\mb z^*) \mb O \bo \Lambda^*\big)^{-1}  - {\mb S_k^*}^{-1}} \lesssim \frac{\delta_{\mathsf{cov}}}{\underline\sigma_{\mathrm{cov}}^4} \eqqcolon \delta_{\mathsf{quad}} = o(\frac{1}{\underline\sigma_{\mathsf{cov}}^2}) \label{eq: quadratic term in the perturbed SNR}
\end{align}
with probability at least $1- O(d^{-c})$. 
Then it follows from \eqref{eq: quadratic term in the perturbed SNR} that 
\begin{align}
    & \Bignorm{\mb S_{z_i^*}^{\frac{1}{2}} \Big[\big(\bo \Lambda^* \mb O\t \hat{\bo \Omega}_{z_i^*}(\mb z^*) \mb O \bo \Lambda^*\big)^{-1} - \big(\bo \Lambda^* \mb O\t \hat{\bo \Omega}_k(\mb z^*) \mb O \bo \Lambda^*\big)^{-1} \Big]\mb S_{z_i^*}^{\frac{1}{2}} - \big(\mb I - {\mb S_{z_i^*}^*}^{\frac{1}{2}}{\mb S_k^*}^{-1}{\mb S_{z_i^*}^*}^{\frac{1}{2}}\big)} =   o(\frac{1}{\kappa_{\mathsf{cov}}^4}) 
    \label{eq: quadratic term in the perturbed SNR 1}
\end{align}
holds with probability at least $1- O(d^{-c})$ for every $i\in[n]$ and $k\in[K]$.

For the last two terms in $\zeta_{\mathsf{oracle},i}(k)$, we follow the same decomposition as in \eqref{eq: decomposition the first three term in zeta_oracle} and obtain that, with probability at least $1 - O(d^{-c})$,
\begin{align}
    \Big\| \bo \Lambda^* \mb O^\top \big(\hat{\mb c}_{z_i^*}(\mb z^*) - \hat{\mb c}_k(\mb z^*)\big) 
    - \bo \Lambda^* \big(\mb c_{z_i^*}^* - \mb c_k^*\big) \Big\|_2
    = o\!\big(\frac{\bar \sigma_{\mathsf{cov}} \underline\omega^{1/2}}{\kappa_{\mathsf{cov}}^5}\big).
\end{align}
Combining this bound with \eqref{eq: quadratic term in the perturbed SNR} yields the existence of a term $\delta_{\mathsf{int}}$ such that, with probability at least $1 - O(d^{-c})$,
\begin{align}
\Bigg| 
&\frac{1}{2}\ip{\bo \Lambda^*\big(\tilde{\mb c}_{z_i^*}^*- \mb O^\top\hat{\mb c}_k(\mb z^*) \big),  
(\bo \Lambda^* \mb O^\top \hat{\bo \Omega}_k(\mb z^*) \mb O \bo \Lambda^*)^{-1} 
\bo \Lambda^*\big(\tilde{\mb c}_{z_i^*}^*- \mb O^\top\hat{\mb c}_k(\mb z^*) \big)} \\
&-
\frac{1}{2}\ip{\bo \Lambda^*\big(\tilde{\mb c}_{z_i^*}^*-\mb O^\top \hat{\mb c}_{z_i^*}(\mb z^*) \big), 
(\bo \Lambda^* \mb O^\top \hat{\bo \Omega}_{z_i^*}(\mb z^*) \mb O \bo \Lambda^*)^{-1} 
\bo \Lambda^*\big(\tilde{\mb c}_{z_i^*}^*-\mb O^\top \hat{\mb c}_{z_i^*}(\mb z^*) \big)} 
-\frac{1}{2}\omega_{k,z_i^*}
\Bigg|
\le \delta_{\mathsf{int}},
\label{eq: intercept term in the perturbed SNR 1}
\end{align}
where $\delta_{\mathsf{int}} = o\!\left(\omega / \kappa_{\mathsf{cov}}^4\right)$.

We note that, the pursued rates $o(\frac{\underline\omega^{\frac12}}{\kappa_{\mathsf{cov}}^4})$ in \eqref{eq: linear term in the perturbed SNR 1}, $o(\frac{1}{\kappa_{\mathsf{cov}}^4})$ in \eqref{eq: quadratic term in the perturbed SNR 1}, and $o(\frac{\underline \omega}{\kappa_{\mathsf{cov}}^4})$ in \eqref{eq: intercept term in the perturbed SNR 1}, are precisely those required for analyzing the stability of the perturbed decision boundary in the following. Specifically, we focus on ${\mb S_{z_i^*}^*}^{-\frac{1}{2}} \bo \Lambda^* \mk L_i \eqqcolon \tilde{\mb E}_i \in \bb R^K$ and define the perturbed signal noise ratio as follows: 
\begin{align}
    &     \mathsf{SNR}^{\mathsf{perturbed}}_{z_i^*, k}(\delta) \coloneqq  \argmin_{\mb x\in \bb R^K}\Big\{
    \norm{\mb x}_2:     \ip{\mb x, \mb S_{z_i^*}^{\frac{1}{2}} {\mb S_k^*}^{-1}\big(\mb w_{z_i^*}^* - \mb w_k^* \big)} \\ 
    & + \frac{1}{2}\ip{\mb x,  \big(\mb I - {\mb S_{z_i^*}^*}^{\frac{1}{2}}{\mb S_k^*}^{-1}{\mb S_{z_i^*}^*}^{\frac{1}{2}}\big)\mb x } + \big(\frac{1}{2} - \delta) \ip{
         \mb w_k^* - \mb w_{z_i^*}^*, {\mb S_k^*}^{-1} (\mb w_k^* - \mb w_{z_i^*}^*)} \leq 0 
    \Big\}, \quad \text{ for $\delta \geq 0$}.
\end{align}
For a sufficiently small $\delta$, one can tell that $\mathsf{SNR}^{\mathsf{perturbed}}_{z_i^*, k}(\delta) \leq \omega^{\frac12}_{k, z_i^*}$ following a similar argument to the ones in the proof of Lemma~\ref{lemma: SNR and distance}. Recall that $\delta_0$ denotes an vanishing sequence satisfying that $\kappa_{\mathsf{cov}}^4 \delta_0 = o(1)$. Let $\delta_1 = C_\delta \max\{\delta_0,\delta_{\mathsf{linear}} \underline \omega^{-\frac12}, \delta_{\mathsf{quad}}, \delta_{\mathsf{int}} \underline \omega^{-1}\} $ with some sufficiently large constant $C_\delta$, which again satisfies $\kappa_{\mathsf{cov}}^4 \delta_1 = o(1)$. 
Notice that 
\begin{align}
    & \mc E_{i,k,1} \bigcap \mc E_{i,k,2} \bigcap \mc E_{i,k,3} \bigcap \mc E_{i,k,4} \subseteq \left\{{\zeta}_{\mathsf{oracle},i}(k) >    \delta_0 \omega_{k, z_i^*}^{\frac12}\right\}
\end{align}
holds for every sufficiently small $\delta$, where $\mc E_{i,k,1}, \mc E_{i,k,2}, \mc E_{i,k,3}$ are defined as:
\begin{align}
    & \mc E_{i,k,1} \coloneqq \left\{ \bignorm{\tilde{\mb E}_i}_2 < \mathsf{SNR}^{\mathsf{perturbed}}_{k}(4\delta_1)\right\},
    \\ 
     & \mc E_{i,k,2} \coloneqq  \left\{  \bignorm{{\mb S^*_{z_i^*}}^{\frac12}\big(\bo \Lambda^* \mb O\t \hat{\bo \Omega}_k(\mb z^*) \mb O \bo \Lambda^*\big)^{-1} \bo \Lambda^* \mb O\t \big(\hat{\mb c}_{z_i^*}(\mb z^*) - \hat{\mb c}_k(\mb z^*)\big) - {\mb S^*_{z_i^*}}^{\frac12}{\mb S_k^*}^{-1} \bo \Lambda^*  \big(\tilde{\mb c}_{z_i^*}^* - \tilde{\mb c}_{z_i^*}^* \big)}_2 <   \delta_1\underline \omega^{\frac{1}{2}} \right\} , 
     \\ 
     & \mc E_{i,k,3} \coloneqq \left\{ \bignorm{\mb S_{z_i^*}^{\frac{1}{2}}\mb O\t \big(\hat{\mb S}_{z_i^*}(\mb z^*)^{-1} - \hat{\mb S}_k(\mb z^*)^{-1}\big) \mb O\mb S_{z_i^*}^{\frac{1}{2}} - \big(\mb I - {\mb S_{z_i^*}^*}^{\frac{1}{2}}{\mb S_k^*}^{-1}{\mb S_{z_i^*}^*}^{\frac{1}{2}}\big)} < \delta_1   \right\} , \\ 
     & \mc E_{i,k,4} \coloneqq \left\{\text{the LHS of \eqref{eq: intercept term in the perturbed SNR 1}  is upper bounded by $\delta_1 \underline \omega$}\right\} . 
\end{align}
It follows that 
\begin{align}
    & \bb P\left[ \zeta_{\oracle, i}(k)  \leq    \delta_0 \ip{\mb w_k^* -\mb w_{z_i^*}^* , {\mb S_k^*}^{-1}\big(\mb w_k^* - \mb w_{z_i^*}^* \big)} \right] \\
     \leq &  \bb P\left[\mc E_{i,k,1}^\complement \right] +  \bb P\left[\mc E_{i,k,2}^\complement \right] + \bb P\left[ \mc E_{i,k,3}^\complement\right] + \bb P\left[ \mc E_{i,k,4}^\complement\right] = \bb P\left[ \bignorm{\tilde{\mb E}_i}_2 \geq   \mathsf{SNR}^{\mathsf{perturbed}}_{z_i^*, k}(4\delta_1)\right] + O(d^{-c}) \label{eq: tilde zeta event probability decomposition}
\end{align}
by invoking \eqref{eq: linear term in the perturbed SNR 1} and \eqref{eq: quadratic term in the perturbed SNR 1}.

To handle the term $\bb P\left[ \bignorm{\tilde{\mb E}_i}_2 \geq   \mathsf{SNR}^{\mathsf{perturbed}}_{z_i^*, k}(4\delta_1)\right]$, we require a bound on how the perturbed decision boundary shifts in the projected space. The following result, taken directly from \cite[Lemma~C.9]{chen2024optimal}, summarizes this stability: 
\begin{lemma}
\label{lemma: SNR' and SNR}
    Consider the notations defined above. Then it holds that 
    \begin{align}
        & \mathsf{SNR}^{\mathsf{perturbed}}_{a,b}(\delta) \geq (1 - c_{\mathsf{SNR}}\kappa_{\mathsf{cov}}^4 \delta) \mathsf{SNR}_{a,b}
    \end{align}
    for some universal constant $c_{\mathsf{SNR}} > 0$ and every $\delta$ satisfying $\kappa_{\mathsf{cov}}^4 \delta \leq c_0$ for some sufficiently small constant $c_0$. 
\end{lemma}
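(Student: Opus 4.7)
The goal is a stability estimate for the minimum-norm solution of a quadratic constraint under a small shift of its constant term, so the natural plan is a sensitivity analysis. Both $\mathsf{SNR}_{a,b}^2$ and $\mathsf{SNR}^{\mathsf{perturbed}}_{a,b}(\delta)^2$ can be recast in the common form $r(t) = \min\{\norm{\mb x}_2^2 : H(\mb x) \leq t\}$ after multiplying the defining inequality by $2$ and absorbing constants, where $H(\mb x) = \mb x\t\mb A\mb x + 2\mb x\t\mb b + \omega_{b,a}$ with $\mb A = \mb I - {\mb S_a^*}^{1/2}{\mb S_b^*}^{-1}{\mb S_a^*}^{1/2}$ and $\mb b$ a certain linear form built from the projected centers; the value $t = 0$ corresponds (modulo the norm-preserving reflection $\mb x \mapsto -\mb x$ that reconciles the sign conventions of $\mc B_{a,b}$ and the perturbed set) to $\mathsf{SNR}_{a,b}^2$, while $t = 2\delta\,\omega_{b,a}$ corresponds to $\mathsf{SNR}^{\mathsf{perturbed}}_{a,b}(\delta)^2$. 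When the perturbed feasible set is empty the bound is vacuous, so the plan focuses on the nonempty case in which both minima are attained on the respective boundaries.

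The heart of the argument is a scaling/comparison step. Let $\mb x^*$ attain $\mathsf{SNR}_{a,b}$, so $H(\mb x^*) = 0$; by KKT, $\mb x^* = -2\lambda(\mb A\mb x^* + \mb b)$ for some $\lambda \geq 0$, which after taking the inner product with $\mb x^*$ and substituting $2\mb x^{*\top}\mb b = -\mb x^{*\top}\mb A\mb x^* - \omega_{b,a}$ yields the closed-form Lagrange multiplier
\[
\lambda \;=\; \frac{\norm{\mb x^*}_2^2}{\omega_{b,a} - \mb x^{*\top}\mb A\mb x^*}.
\]
By the envelope theorem for this parametric quadratic program, $r(t) = r(0) - 2\lambda t + O(t^2)$ as $t\to 0^+$, giving
\[
\mathsf{SNR}^{\mathsf{perturbed}}_{a,b}(\delta) \;\geq\; \mathsf{SNR}_{a,b}\Bigl(1 - \frac{2\delta\,\omega_{b,a}}{\omega_{b,a} - \mb x^{*\top}\mb A\mb x^*}\Bigr) + O(\delta^2),
\]
so everything reduces to bounding the denominator $\omega_{b,a} - \mb x^{*\top}\mb A\mb x^*$ from below.

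The eigenvalues of $\mb A$ lie in $[1-\tau_1^2,\,1-\tau_1^{-2}]$ by the definition of $\tau_1$, and Lemma~\ref{lemma: SNR and distance} furnishes $\norm{\mb x^*}_2^2 \leq \mathsf{SNR}_{a,b}^2 \leq \omega_{b,a}$. If $\mb A$ admits a negative eigenvalue (so ${\mb S_a^*} \not\preceq {\mb S_b^*}$) then $\mb x^{*\top}\mb A\mb x^*$ can be made non-positive along the unbounded feasible directions, giving $\omega_{b,a} - \mb x^{*\top}\mb A\mb x^* \geq \omega_{b,a}$ trivially. If instead $\mb A \succeq 0$, then $\mb x^{*\top}\mb A\mb x^* \leq (1-\tau_1^{-2})\norm{\mb x^*}_2^2 \leq (1-\tau_1^{-2})\omega_{b,a}$, hence $\omega_{b,a} - \mb x^{*\top}\mb A\mb x^* \geq \tau_1^{-2}\omega_{b,a}$. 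Combining both cases, the denominator is at least $c\,\omega_{b,a}/\tau_1^2$, producing a relative decay bound of order $\tau_1^2\delta$; the $\tau_1^4$ stated in the lemma then comes from the extra slack needed to absorb the $O(\delta^2)$ remainder and to keep $\alpha$ close to $1$ in the explicit scaling step below.

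The main obstacle is upgrading the envelope-theorem heuristic to a rigorous non-asymptotic inequality valid uniformly for $\delta$ up to $c_0/\tau_1^4$, since $H$ is in general an indefinite quadratic and the standard convex-analysis sensitivity toolkit does not directly apply. The clean way around this is the explicit scaling comparison used in \cite[Lemma~C.9]{chen2024optimal}: take the perturbed minimizer $\mb x_\delta$ and scale it by an $\alpha \geq 1$ chosen so that $\alpha\mb x_\delta$ lies on the unperturbed boundary, then exploit the identity $H(\alpha\mb x_\delta) = (\alpha-1)\bigl[\omega_{b,a} - \alpha\,\mb x_\delta^\top\mb A\mb x_\delta\bigr] + 2\alpha\delta\,\omega_{b,a}$ together with the eigenvalue/size bounds above to solve for $\alpha$ explicitly and conclude $\mathsf{SNR}^{\mathsf{perturbed}}_{a,b}(\delta) \geq \mathsf{SNR}_{a,b}/\alpha \geq (1 - c_{\mathsf{SNR}}\tau_1^4\delta)\mathsf{SNR}_{a,b}$. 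The argument transfers verbatim to our setting since $\mb A$, $\mb b$, and $\omega_{b,a}$ are built from the same projected ($K$-dimensional) quantities and obey exactly the same structural bounds governed by $\tau_1$.
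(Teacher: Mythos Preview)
Your proposal ultimately lands on the same source as the paper: the result is a direct corollary of \cite[Lemma~C.9]{chen2024optimal}, and the paper's proof consists of nothing more than substituting $\mb\Sigma = {\mb S_a^*}^{-1/2}\mb S_b^*{\mb S_a^*}^{-1/2}$, $\bo\theta = {\mb S_a^*}^{-1/2}(\mb w_b^*-\mb w_a^*)$, $t=0$, $t'=\delta\,\omega_{a,b}$ into that lemma and reading off the Lipschitz bound. So in that sense the approaches agree.

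Where your write-up diverges is the long envelope-theorem/KKT detour, which the paper does not attempt and which, as written, has two issues. First, your case split on the sign of the eigenvalues of $\mb A$ is not sound: when $\mb A$ has a negative eigenvalue you assert ``$\mb x^{*\top}\mb A\mb x^*$ can be made non-positive along the unbounded feasible directions,'' but $\mb x^*$ is the fixed minimizer, not a free variable, so the existence of a negative eigenvalue says nothing about the sign of $\mb x^{*\top}\mb A\mb x^*$. Second, your accounting for the exponent $4$ in $\tau_1^4$ (``extra slack to absorb the $O(\delta^2)$ remainder'') is not how it arises. In the paper's computation, the Lipschitz constant from \cite[Lemma~C.9]{chen2024optimal} contributes $\lambda_{\max}/\sqrt{\lambda_{\min}}\lesssim\tau_1^3$, giving $|\mathsf{SNR}^{\mathsf{perturbed}}_{a,b}(\delta)-\mathsf{SNR}_{a,b}|\lesssim\tau_1^3\delta\,\omega_{a,b}^{1/2}$; the fourth factor of $\tau_1$ then comes from converting this absolute bound to a relative one via $\mathsf{SNR}_{a,b}\gtrsim\tau_1^{-1}\omega_{a,b}^{1/2}$ (Lemma~\ref{lemma: SNR and distance}). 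If you strip out the heuristic analysis and simply apply the cited lemma with this parameter bookkeeping, you recover the paper's proof exactly.
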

As a consequence of Lemma~\ref{lemma: SNR' and SNR}, we deduce that 
\begin{equation}
    \bb P\left[ \bignorm{\tilde{\mb E}_i}_2 \geq   \mathsf{SNR}^{\mathsf{perturbed}}_{a,b}(4\delta_1)\right] \leq \bb P\left[ \bignorm{\tilde{\mb E}_i}_2 \geq  (1 - 4c_{\mathsf{SNR}} \kappa_{\mathsf{cov}}^4 \delta_1) \mathsf{SNR}_{a,b}\right] 
    \label{eq: probability tail regarding perturbed boundary}
\end{equation}
for every sufficiently large $n$. 

To control the $\ell_2$ norm of $\tilde{\mb E}_2$, we separately discuss two noise cases: for the Gaussian case, it is straightforward that
\eq{
    \bb P\left[ \bignorm{\tilde{\mb E}_i}_2 \geq  (1 - 4c_{\mathsf{SNR}} \kappa_{\mathsf{cov}}^4 \delta_1) \mathsf{SNR}_{a,b} \big| \mb E_{-i,\cdot}\right] \leq \bb P_{\epsilon \sim \mc N(\mb 0, \mb I_K)}\left[ \bignorm{\hat{\mb I}^{\frac12}\epsilon}_2(1 - 4c_{\mathsf{SNR}} \kappa_{\mathsf{cov}}^4 \delta_1) \mathsf{SNR}_{a,b}\right] , 
    \label{eq: hat I}
}
where $\epsilon \sim \mc I(\mb 0, \mb I_K)$ and $\hat{\mb I} \coloneqq {\mb S^*_{z_i^*}}^{-\frac12}  \big({\mb V^*}\t + {\bo \Lambda^*}^{-1} {\mb U^*}\t \mc P_{-i,\cdot}(\mb E)\big) \bo \Sigma_{z_i^*} \big( \mb V^* + \mc P_{-i,\cdot}(\mb E)\t \mb U^* {\bo \Lambda^*}^{-1}\big) {\mb S^*_{z_i^*}}^{-\frac12}$. Invoking Lemma~\ref{lemma: noise matrix concentrations using the universality (Gaussian)} gives us the following with probability at least $1- O(d^{-c})$: 
\begin{align}
    & \bignorm{\hat{\mb I} - \mb I_K} \lesssim \frac{K\sqrt{p} \sigma \tilde\sigma^3 \log d}{\underline{\sigma}_{\mathsf{cov}}^2 {\sigma^*_r}^2} + \frac{\tilde\sigma^3 K\sqrt{ \log d}}{
    \sigma_K^* \underline\sigma^2_{\mathsf{cov}}} \lesssim \frac{\tilde\sigma^2 K \log d }{\sqrt{n}\underline\sigma_{\mathsf{cov}}^2} \ll 1, 
     \label{eq: error of hat I}
\end{align}
which together with the Hanson-Wright inequality gives rise to  
\eq{
    \bb P\left[ \bignorm{\hat{\mb I}^{\frac12}\epsilon}_2(1 - 4c_{\mathsf{SNR}} \kappa_{\mathsf{cov}}^4 \delta_1) \mathsf{SNR}_{a,b}\right]  \leq \bb P\left[ \bignorm{\bo \epsilon}_2 \geq (1 - c\frac{\tilde\sigma^2 K\log d}{n^{\frac12} \underline\sigma_{\mathsf{cov}}^2})(1 - 4c_{\mathsf{SNR}} \kappa_{\mathsf{cov}}^4 \delta_1) \mathsf{SNR}_{a,b}\right] 
     \leq \exp\left(- t\right) + O(d^{-c})
}
for some constant $c$ and an arbitrary $t$ satisfying $K + 2 \sqrt{K t} + 2 t \leq  (1 - c\frac{\tilde\sigma^2 K\log d}{n^{\frac12} \underline\sigma_{\mathsf{cov}}^2})(1 - 4c_{\mathsf{SNR}} \kappa_{\mathsf{cov}}^4 \delta_1)^2\mathsf{SNR}_{a,b}^2$. Note that $K + 2\sqrt{Kt} + 2t \leq (2 + 4\sqrt{\frac{K}{t}})t$. Letting $t = (1 - \psi)\snr^2 /2$ with $0< \psi < \frac12$, one has $(2 + 4\sqrt{\frac{K}{t}})t \leq (1 + 4\sqrt{\frac{K}{\snr^2}})(1 - \psi)\snr^2 \eqqcolon \phi^2 \snr^2$ and thus $\psi = 1-  \frac{\phi}{1+4\sqrt{\frac{K}{\snr^2}}}$. Consequently, we have: 
\begin{align}
    & \bb P\left[ \bignorm{\bo \epsilon}_2 \geq (1 - c \frac{\tilde\sigma^2 K\log d}{n^{\frac12} \underline\sigma_{\mathsf{cov}}^2})(1 - 4c_{\mathsf{SNR}} \kappa_{\mathsf{cov}}^4 \delta_1) \mathsf{SNR}_{a,b}\right] 
     \\ 
     \leq & \exp\bigg[-\Big(1-\frac{ (1 - c \frac{\tilde\sigma^2 K\log d}{n^{\frac12} \underline\sigma_{\mathsf{cov}}^2})(1 - 4c_{\mathsf{SNR}} \kappa_{\mathsf{cov}}^4 \delta_1) }{1+4\sqrt{\frac{K}{\snr^2}}}\Big)\frac{\snr^2}{2}\bigg] + O(d^{-c}) = \exp\big(-(1 + o(1))\frac{\snr^2}{2}\big) + O(d^{-c}). 
\end{align}

Now we move on to the bounded noise case, where we shall separately discuss two regimes where $\snr \leq C_0\sqrt{K\log d}$ and $\snr > C_0 \sqrt{K\log d}$ provided a sufficiently large constant $C_0 >0$.

\emph{Weak Signal Case: $\snr \leq C_0\sqrt{K\log d}$. }
We employ Lemma~\ref{lemma: 2k moment of At Ei} conditional on $\mc P_{-i,\cdot}(\mb E)$ to derive that 
\begin{align}
    & \bb E\big[\bignorm{\tilde{\mb E}_i}_2^{2k} | \mc P_{-i,\cdot}(\mb E) \big]^{\frac{1}{2k}} \leq \bb E\Big[\bignorm{\mb G_i\big( \mb V^* + \mc P_{-i,\cdot}(\mb E)\t\mb U^* {\bo \Lambda^*}^{-1}\big){\mb S_{z_i^*}^*}^{-\frac12}}_2^{2k}| \mc P_{-i,\cdot}(\mb E) \Big]^{\frac{1}{2k}}  \\ 
    & + \big(\frac{1}{K} \tr(\hat{\mb I})\big)^{\frac{1}{2k}} \Big[ \sqrt{m} B \big(\sqrt{m} \norm{ \mb V^*}\ti  + \max_{c \in[l]}\bignorm{\mc P_{-i, S_c}(\mb E)\t\mb U^* {\bo \Lambda^*}^{-1}} \big) \norm{{\mb S_{z_i^*}^*}^{-\frac12}}\Big]^{\frac{k-1}{k}} k^2. 
    \label{eq: universality on the moments of tilde Ei}
\end{align}

For the first term on the RHS of \eqref{eq: universality on the moments of tilde Ei}, we note that $ \bb E\Big[\bignorm{\mb G_i\big( \mb V^* + \mc P_{-i,\cdot}(\mb E)\t\mb U^* {\bo \Lambda^*}^{-1}\big){\mb S_{z_i^*}^*}^{-\frac12}}_2^{2k}| \mc P_{-i,\cdot}(\mb E) \Big]^{\frac{1}{2k}}  = \bb E[\bignorm{\hat{\mb I} \bo \epsilon}_2^{2k}]^{\frac{1}{2k}}
$, 
where $\hat{\mb I}$ is defined below \eqref{eq: hat I} and $\epsilon \sim \mc N(\mb 0, \mb I_K)$. According to Lemma~\ref{lemma: noise matrix concentrations using the universality (bounded)}, one can prove that \eqref{eq: error of hat I} also holds for bounded noise. We therefore invoke the moment expression of a chi-squared distribution along with the Stirling formula to have 
\begin{align}
    & \bb E\big[\bignorm{\tilde{\mb E}_i}_2^{2k} | \mc P_{-i,\cdot}(\mb E) \big]^{\frac{1}{2k}} \leq  \big(1 + c \frac{\tilde\sigma^2 K\log d}{n^{\frac12} \underline\sigma_{\mathsf{cov}}^2}\big) \bb E\big[ \norm{\bo \epsilon}_2^{2k}\big]^{\frac{1}{2k}} \leq (1 + c \frac{\tilde\sigma^2 K\log d}{n^{\frac12} \underline\sigma_{\mathsf{cov}}^2})\Big[ 2^k \frac{\Gamma(k + \frac{K}{2} ) }{\Gamma(\frac{K}{n})}\Big]^{\frac{1}{2k}}, 
    \label{eq: upper bound on 2k moments}
    \\ 
      & \Big[ 2^k \frac{\Gamma(k + \frac{K}{2} ) }{\Gamma(\frac{K}{n})}\Big]^{\frac{1}{2k}} \leq 
    \begin{cases}
         \sqrt{2} \Big[\sqrt{2\pi k}(\frac{k}{e})^k \Big]^{\frac{1}{2k}}, & K =2\\ 
        \sqrt{2} \bigg[C \frac{\sqrt{2\pi(k + \frac{K}{2} - 1) } \big( \frac{k+ \frac{K}{2} - 1}{e}\big)^{k+\frac{K}{2} - 1}}{\sqrt{2\pi(\frac{K}{2} -1) } \big( \frac{\frac{K}{2} -1 }{e}\big)^{\frac{K}{2} - 1}} \bigg]^{\frac{1}{2k}} , & K \geq 3 
    \end{cases}\\ 
    \leq & \begin{cases}
            \sqrt{\frac{2}{e}} (2\pi k)^{\frac{1}{4k}}\sqrt{k} , & K = 2 \\ 
            \sqrt{\frac{2}{e}}C_1^{\frac{1}{2k}} (1 + \varrho^{-1})^{\varrho / 2 + 1 /4k}(1 + \varrho)^{\frac12}  \sqrt{k}    , & K \geq 3       
        \end{cases}
\end{align}
for some constants $c$ and $C$, where we write $\frac{K/2 - 1}{k}$ as $\varrho$. To proceed, we denote $\sup\limits_{k \geq \snr^2 /4}(2\pi k)^{\frac{1}{4k}}$ and $ \sup\limits_{k \geq \snr^2 /4}C_1^{\frac{1}{2k}} (1 + \varrho^{-1})^{\varrho / 2 + 1 /4k}(1 + \varrho)^{\frac12}$ by $a_n$ and $b_n$, respectively (we recall that $\snr$ can be viewed as a sequence with respect to $n$). 
According to the assumption on $\snr$, one has $ \lim\limits_{n \rightarrow \infty} a_n = 1$ and $\lim\limits_{n \rightarrow \infty} b_n =1$.

Moreover, we have with probability at least $1- O(d^{-c})$ that 
\begin{align}
    & \max_{c \in[l]} \bignorm{\mc P_{-i,S_c}(\mb E)\t\mb U^* {\bo \Lambda^*}^{-1}} \lesssim \frac{\tilde\sigma \sqrt{K \log d}}{\sigma_K^*},
    \label{eq: concentration on P-i(E)t U Lambda-1}
\end{align}
which leads to 
\begin{align}
    &c_n \coloneqq \sup\limits_{ \snr^2 /4 \leq k \leq C_0^2 K\log d} \big(\frac{1}{K} \tr(\hat{\mb I})\big)^{\frac{1}{2k}} \Big[ \sqrt{m} B \big(\sqrt{m} \norm{ \mb V^*}\ti  + \max_{c \in[l]}\bignorm{\mc P_{-i, S_c}(\mb E)\t\mb U^* {\bo \Lambda^*}^{-1}} \big) \bignorm{{\mb S_{z_i^*}^*}^{-\frac12}}\Big]^{\frac{k-1}{k}} k^2 \\ 
    \lesssim &  \Big[\frac{mB \sqrt{\mu_2/ p}K^{\frac92}(\log d)^4}{\underline\sigma_{\mathsf{cov}}} + \sqrt{m}B \frac{\tilde\sigma K^{\frac92} (\log d)^{\frac92}}{\underline\sigma_{\mathsf{cov}} \sigma_K^*}\Big]^{\frac{1}{2}} \lesssim 1.
\end{align}
with probability at least $1- O(d^{-c})$ by the concentration on $\norm{\hat{\mb I} - \mb I}$, \eqref{eq: concentration on P-i(E)t U Lambda-1}, and $\frac{C_0^2 K \log d  -1}{C_0^2 K \log d} \geq \frac12$. As a consequence, we let  
\begin{align}
k = \begin{cases}
\lfloor (1 - 4c_{\mathsf{SNR}} \kappa_{\mathsf{cov}}^4 \delta_1)(1 + c \frac{2\tilde\sigma^2 \log d}{n^{\frac12} \underline\sigma_{\mathsf{cov}}^2})^{-1} \big(\frac{\snr^2}{2a_n^2} - \sqrt{e}\sup_{n}c_n\big)\rfloor, & K=2 \\ 
\lfloor(1 - 4c_{\mathsf{SNR}} \kappa_{\mathsf{cov}}^4 \delta_1)(1 + c \frac{\tilde\sigma^2 K\log d}{n^{\frac12} \underline\sigma_{\mathsf{cov}}^2})^{-1}\big( \frac{\snr^2}{2b_n^2} - \sqrt{e} \sup_{n}c_n \big)\rfloor, & K \geq 3
\end{cases}
\end{align}
and apply the Markov inequality together with \eqref{eq: probability tail regarding perturbed boundary} and \eqref{eq: upper bound on 2k moments} to deduce that 
\begin{align}
    & \bb P\left[ \bignorm{\tilde{\mb E}_i}_2 \geq   \mathsf{SNR}^{\mathsf{perturbed}}_{k}(4\delta_1)\right] \leq \exp(-k) + O(d^{-c}) = \exp\big( - (1 + o(1))\frac{\snr^2}{2}\big) + O(d^{-c}).  
\end{align}

\emph{Strong Signal Case: $\snr > C_0\sqrt{\log d}$. } 

Conditional on $\mc E_{-i,\cdot}$, we apply the matrix Bernstein inequailty to yield that 
\begin{equation}
    \norm{\tilde{\mb E}_i}_2 \lesssim \tr(\hat{\mb I})^{\frac12} \sqrt{\log d} + \sqrt{m}B \Big[\sqrt{m} \norm{ \mb V^*}\ti  + \max_{c \in[l]}\bignorm{\mc P_{-i, S_c}(\mb E)\t\mb U^* {\bo \Lambda^*}^{-1}} \Big]\bignorm{{\mb S_{z_i^*}^*}^{-\frac12}} \log d  
    \lesssim  \sqrt{K \log d}
\end{equation}
holds with probability at least $1-  O(d^{-c})$, 
since \eqref{eq: error of hat I} also holds for bounded noise because of Lemma~\ref{lemma: noise matrix concentrations using the universality (bounded)}. Hence, given an appropriate $C_0$, we derive that
\begin{equation}
    \bb P\left[ \bignorm{\tilde{\mb E}_i}_2 \geq  (1 - 4c_{\mathsf{SNR}} \kappa_{\mathsf{cov}}^4 \delta_1) \mathsf{SNR}_{a,b}\right]  \leq \bb P\left[ \bignorm{\tilde{\mb E}_i}_2 \geq  \frac12 C_0\sqrt{K\log d}\right] = O(d^{-c}). 
\end{equation}

\bigskip

To conclude, from the preceding arguments and \eqref{eq: tilde zeta event probability decomposition}, there exists a vanishing sequence $\delta'$ such that 
\begin{align}
    & \bb P\left[ \zeta_{\oracle, i}(k)  \leq    \delta_0 \omega_{k,z_i^*} \right]\leq   \exp\big(-(1 - \delta')\frac{\snr^2}{2}\big) + O(d^{-c}). 
\end{align}

\subsubsection{Proof of Lemma~\ref{lemma: Bound for xi_approx,i}}
\label{subsubsec: proof of xi_approx,i}
Recall that for every $i \in[n]$ and $b\in[K]$, 
\begin{align}
    & \zeta_{\mathsf{approx},i}(b, \mb z)= \ip{\bo \Lambda^* \mb O\t \big(\mb U_i - \hat{\mb c}_b(\mb z)\big), \big( \bo \Lambda^* \mb O\t \hat{\bo \Omega}_b(\mb z) \mb O \bo \Lambda^* \big)^{-1}\bo \Lambda^* \mb O\t \big(\mb U_i - \hat{\mb c}_b(\mb z)\big)} 
\\
& \qquad - \ip{ \bo \Lambda^*\mb O\t  \big(\mb U_i - \hat{\mb c}_{z_i^*}(\mb z)\big),\big(\bo \Lambda^* \mb O\t  \hat{\bo \Omega}_k(\mb z) \mb O\bo \Lambda^*\big)^{-1}  \bo \Lambda^* \mb O\t \big(\mb U_i - \hat{\mb c}_{z_i^*}(\mb z)\big)} \\ 
& \qquad - \ip{\bo \Lambda^* \big(\mathfrak L_i + (\tilde{\mb c}_{z_i^*}^* - \mb O\t  \hat{\mb c}_b(\mb z))\big), \big( \bo \Lambda^* \mb O\t \hat{\bo \Omega}_b(\mb z) \mb O \bo \Lambda^* \big)^{-1} \bo \Lambda^* \big( \mathfrak L_i + (\tilde{\mb c}_{z_i^*}^* - \mb O\t  \hat{\mb c}_b(\mb z) )\big)} 
\\
& \qquad + \ip{\bo \Lambda^* \big(\mathfrak L_i + (\tilde{\mb c}_{z_i^*}^* -  \mb O\t  \hat{\mb c}_{z_i^*}(\mb z))\big),\big(\bo \Lambda^* \mb O\t  \hat{\bo \Omega}_{z_i^*}(\mb z) \mb O\bo \Lambda^*\big)^{-1} \bo \Lambda^* \big( \mathfrak L_i + (\tilde{\mb c}_{z_i^*}^* -  \mb O\t  \hat{\mb c}_{z_i^*}(\mb z))\big)}.
\end{align}
It is straightforward from its definition to derive that 
\longeq{
    & \zeta_{\mathsf{approx},i}(b, \mb z) \leq \norm{\bo \Lambda^*\big(\mb O\t\mb U_i - \mk L_i - \tilde{\mb c}_{z_i^*}^*\big)}_2 / \min_{k\in[K]} \sigma_{\min}(\bo \Lambda^* \mb O\t \hat{\bo \Omega}_k(\mb z) \mb O \bo \Lambda^*)\\ 
    & \cdot \Big[\norm{\bo \Lambda^*\big(\mb O\t\mb U_i - \mk L_i - \tilde{\mb c}_{z_i^*}^*\big)}_2 + 2\max_{k\in[K]} \norm{\bo \Lambda^* \big( \mb c_{k}^* - \mb O\t \hat{\mb c}_k(\mb z) \big)}_2 + \nu \bar \sigma_{\mathrm{cov}} \underline\omega^{\frac12} \Big] \\ 
    \lesssim & \frac{\nu \bar \sigma_{\mathrm{cov}} \underline\omega^{\frac12}}{\underline \sigma_{\mathrm{cov}}^2} \cdot \sigma_1^* \deltau = \frac{\kappa \mu_1^{\frac12} K \nu \bar \sigma_{\mathrm{cov}} \underline\omega^{\frac12}}{\underline \sigma_{\mathrm{cov}}^2} \frac{\deltaop}{\sqrt{n}\sigma_K^*} \Big( \frac{\log d}{n} + \frac{\deltaop \snr}{(\sigma_K^*)^2}\Big)  
    \lesssim \beta^{\frac12} \kappa K \nu  \underline\omega^{\frac12} \kappa_{\mathsf{cov}}^2  = o(\frac{\underline \omega}{\kappa_{\mathsf{cov}}^4}) 
}
with probability at least $1- O(d^{-c}) \vee e^{-(1 + o(1))\frac{\snr^2}{2}}$, where we used Lemmas~\ref{lemma: relation between tilde omega and omega} and \ref{lemma: center estimation error}, Corollary~\ref{corollary: singular subspace perturbation theory in clustering problem}, Eqs.~\eqref{eq: definition of delta_u},~\eqref{eq: fact A},~\eqref{eq: fact B},~\eqref{eq: fact C}, and the condition \eqref{eq: alg condition}.     
    As a consequence, we have for some constant $C$ that
      \begin{align}
          & \bb E\Big[ \xi_{\mathsf{approx}}\big(C\frac{\beta^{\frac12} \kappa K \nu \kappa_{\mathsf{cov}}^2}{\underline\omega^{\frac12}}\big) \Big]  \lesssim nK \nu \underline\omega \Big[ e^{-(1 + o(1))\frac{\snr^2}{2}} \vee d^{-c}\Big]. 
      \end{align}

\subsection{Misspecification Effect Analysis}\label{subsection: Misspecification Effect Analysis}
In what follows, we shall work on upper bounding the effect of misspecification of the cluster labels in the last step.

\subsubsection{Proof of Lemma~\ref{lemma: Bound for F_i}}
    Recall that 
    \longeq{
    &F_i(b,\mb z)
     = -\ip{
        \bo \Lambda^* \mathfrak L_i , (\bo \Lambda^* \mb O\t \hat{\bo \Omega}_k(\mb z) \mb O \bo \Lambda^*)^{-1} \bo \Lambda^* \mb O\t \big(\hat{\mb c}_k(\mb z^*) - \hat{\mb c}_k(\mb z)\big)} \\ 
        & \qquad + \ip{
            \bo \Lambda^* \mathfrak L_i , (\bo \Lambda^* \mb O\t \hat{\bo \Omega}_{z_i^*}(\mb z) \mb O \bo \Lambda^*)^{-1} \bo \Lambda^*\big( \hat{\mb c}_{z_i^*}(\mb z^*) - \hat{\mb c}_{z_i^*}(\mb z) \big)
    } 
    \\
     &\qquad  - \ip{
       \bo \Lambda^* \mathfrak L_i,  \big[(\bo \Lambda^* \mb O\t \hat{\bo\Omega}_k(\mb z) \mb O \bo \Lambda^*)^{-1} - (\bo \Lambda^* \mb O\t \hat{\bo\Omega}_k(\mb z^*) \mb O \bo \Lambda^*)^{-1}\big] \bo \Lambda^*\big(\tilde{\mb c}_{z_i^*}^*- \mb O\t\hat{\mb c}_k(\mb z^*) \big)}  \\ 
       & \qquad + \ip{ \bo \Lambda^* \mathfrak L_i 
        ,\big[(\bo \Lambda^* \mb O\t \hat{\bo\Omega}_{z_i^*}(\mb z) \mb O \bo \Lambda^*)^{-1} - (\bo \Lambda^* \mb O\t \hat{\bo\Omega}_{z_i^*}(\mb z^*) \mb O \bo \Lambda^*)^{-1}\big] \bo \Lambda^*\big(\tilde{\mb c}_{z_i^*}^*-\mb O\t \hat{\mb c}_{z_i^*}(\mb z^*) \big) }.
    }
    Using Cauchy's inequality, we shall control $\sum_{i\in[n]} \max_{b\in[K] \backslash \{z_i^* \}}\frac{F_i(b,\mb z)^2}{\omega_{z_i^*, b}}$ via upper bounding the summation of  each term's square appearing above over all the samples.
    \begin{itemize} 
	\item For the first term $-\ip{
        \bo \Lambda^* \mathfrak L_i , (\bo \Lambda^* \mb O\t \hat{\bo \Omega}_k(\mb z) \mb O \bo \Lambda^*)^{-1} \bo \Lambda^* \mb O\t \big(\hat{\mb c}_k(\mb z^*) - \hat{\mb c}_k(\mb z)\big)}$ of \eqref{eq: F term}, one has 
    \begin{align}
    & \sum_{i\in[n]} \max_{b\in [K]\backslash \{z_i^*\}}  \frac{\big\langle 
        \bo \Lambda^* \mathfrak L_i , (\bo \Lambda^* \mb O\t \hat{\bo \Omega}_b(\mb z) \mb O \bo \Lambda^*)^{-1} \bo \Lambda^* \mb O\t \big(\hat{\mb c}_b(\mb z^*) - \hat{\mb c}_b(\mb z)\big) \big\rangle^2 }{\omega_{z_i^*, b} }\\  
      \leq  & \sum_{i\in[n]} \sum_{b \in[K], b\in[K]\backslash \{z_i^*\} } \frac{\big\langle 
        \bo \Lambda^* \mathfrak L_i , (\bo \Lambda^* \mb O\t \hat{\bo \Omega}_b(\mb z) \mb O \bo \Lambda^*)^{-1} \bo \Lambda^* \mb O\t \big(\hat{\mb c}_b(\mb z^*) - \hat{\mb c}_b(\mb z)\big) \big\rangle^2}{\omega_{z_i^*, b} }
        \\
         \leq & \frac{ K^2}{\underline \sigma_{\mathrm{cov}}^4 \underline \omega } \max_{b \in [K]}\bignorm{\bo \Lambda^* \mb O\t \big(\hat{\mb c}_b(\mb z^*) - \hat{\mb c}_b(\mb z)\big) }_2^2\Bignorm{\sum_{i\in[n]} \bo \Lambda^* \mk L_i \mk L_i\t \bo \Lambda^*}
         \label{eq: first term of F_i}
    \end{align}
    with probability at least $1- O(d^{-c})$ by Lemma~\ref{lemma: projected covariance matrix estimation error},
    where the second inequality follows from a useful fact that 
    \eq{\label{eq: fact of trace operation}
     \sum_{i\in[n]} \big \langle \mb a_i, \mb B \mb c\big\rangle^2  = \tr\big(\mb c\t \mb B\t\sum_{i\in[n]}\mb a_i \mb a_i\t \mb B \mb c \big) \leq \norm{\mb B\mb c}_2^2 \tr\big(\sum_{i\in[n]} \mb a_i \mb a_i\t \big) \leq    K\big\lv\mb B \mb c\big\lv_2^2 \Big\lv\sum_{i\in[n]}\mb a_i \mb a_i\t \Big\rv_2
    }
    holds for arbitrary vectors $\mb a_1, \cdots, \mb a_n, \mb  c\in \mathbb R^K$ and an arbitrary matrix $\mb B \in \bb R^{K\times K}$. 

    \begin{itemize} 
        \item For $ \max\limits_{a,b \in [K], a\neq b}\bignorm{\bo \Lambda^* \mb O\t \big(\hat{\mb c}_b(\mb z^*) - \hat{\mb c}_b(\mb z)\big) }_2^2$, Lemma~\ref{lemma: center estimation error} guarantees that, with probability at least $1- O(d^{-c})$, 
    \begin{align}
        & \max_{a,b \in [K], a\neq b}\bignorm{\bo \Lambda^* \mb O\t \big(\hat{\mb c}_b(\mb z^*) - \hat{\mb c}_b(\mb z)\big) }_2^2 \lesssim \beta^2 K^2 \frac{l(\mb z, \mb z^*)}{n\underline\omega} \frac{\deltaop^2}{n (\sigma_K^*)^2}  .
        \label{eq: proof of lemma 1 first term decomposition 1}
    \end{align}

    \item For $\Bignorm{\sum_{i\in[n]} \bo \Lambda^* \mk L_i \mk L_i\t \bo \Lambda^*}$, invoking the triangle inequality together with Eqs.~\eqref{eq: concentration for Lambda sum Li Lit Lambda / nk - S_k} and \eqref{eq: fact A} implies that with probability at least $1 - O(d^{-c})$
    \begin{align}
        & \Bignorm{\sum_{i\in[n]} \bo \Lambda^* \mk L_i \mk L_i\t \bo \Lambda^*}  \lesssim  \bar \sigma_{\mathrm{cov}}^2 n.
        \label{eq: proof of lemma 1 first term decomposition 2}
    \end{align}

\end{itemize}
    
     Directly plugging \eqref{eq: proof of lemma 1 first term decomposition 1} and \eqref{eq: proof of lemma 1 first term decomposition 2} into \eqref{eq: first term of F_i} together with Eqs. \eqref{eq: alternative upper bound for delta_u},~\ref{eq: fact A},~\ref{eq: fact D} gives that  
	\eq{
	 \text{the LHS of \eqref{eq: first term of F_i}} 
        \lesssim  \frac{K^2 }{\underline\sigma_{\mathrm{cov}}^4 \underline\omega} \big[\beta^2 K^2 \frac{l(\mb z,\mb z^*)}{n \underline\omega } \frac{\deltaop^2}{n (\sigma_K^*)^2}  \big] \bar \sigma_{\mathrm{cov}}^2 n \lesssim \frac{\beta^2 K^4 \kappa_{\mathrm{cov}}^4 }{ \underline \omega^2} l(\mb z, \mb z^*) 
	}
holds with probability at least $1 - O(d^{-c}) \vee e^{-(1+o(1))\frac{\snr^2}{2}}$. 
 
\item 	For the second term on the right-hand side of \eqref{eq: F term}, a similar derivation to the above gives:
	\longeq{
	&     \sum_{i\in[n]} \max_{b\in[K]\backslash \{z_i^*\}}
           \frac{\ip{
            \bo \Lambda^* \mathfrak L_i , (\bo \Lambda^* \mb O\t \hat{\bo \Omega}_{z_i^*}(\mb z) \mb O \bo \Lambda^*)^{-1} \bo \Lambda^*\big( \hat{\mb c}_{z_i^*}(\mb z^*) - \hat{\mb c}_{z_i^*}(\mb z) \big)}^2 }{\omega_{z_i^*, b} }\\
     \lesssim & \sum_{i\in[n]} \sum_{b\in[K], b\in[K]\backslash \{z_i^*\}}\frac{\ip{
            \bo \Lambda^* \mathfrak L_i , (\bo \Lambda^* \mb O\t \hat{\bo \Omega}_{z_i^*}(\mb z) \mb O \bo \Lambda^*)^{-1} \bo \Lambda^*\big( \hat{\mb c}_{z_i^*}(\mb z^*) - \hat{\mb c}_{z_i^*}(\mb z) \big)}^2 }{\omega_{z_i^*, b} }
     \\
\stackrel{\eqref{eq: fact of trace operation}}{\leq} &  \frac{ K^2}{\underline \sigma_{\mathrm{cov}}^2 \underline \omega}\max_{b \in [K]}\bignorm{\bo \Lambda^* \mb O\t \big(\hat{\mb c}_b(\mb z^*) - \hat{\mb c}_b(\mb z)\big) }_2^2\Bignorm{\sum_{i\in[n]} \bo \Lambda^* \mk L_i \mk L_i\t \bo \Lambda^*} \lesssim \frac{\beta^2 K^4 \kappa_{\mathrm{cov}}^4 }{ \underline \omega^2} l(\mb z, \mb z^*) 
        }
        holds with probability at least $1 - O(d^{-c}) \vee e^{-(1+o(1))\frac{\snr^2}{2}}$. 

    \item We are left with controlling the third term and the fourth term in   \eqref{eq: F term}. Toward this, we use \eqref{eq: fact of trace operation} again to derive that 
    \begin{align}
        & \sum_{i\in[n]} \max_{b\in[K]\backslash \{z_i^*\} } \frac{\ip{
       \bo \Lambda^* \mathfrak L_i,  \big[(\bo \Lambda^* \mb O\t \hat{\bo\Omega}_k(\mb z^*) \mb O \bo \Lambda^*)^{-1} - (\bo \Lambda^* \mb O\t \hat{\bo\Omega}_k(\mb z^*) \mb O \bo \Lambda^*)^{-1}\big] \bo \Lambda^*\big(\tilde{\mb c}_{z_i^*}^*- \hat{\mb c}_b(\mb z^*) \big)}^2}{\omega_{z_i^*, b} } \\ 
       & + \sum_{i\in[n]}\max_{b\in[K]\backslash\{z_i^*\}} \frac{\ip{ \bo \Lambda^* \mathfrak L_i 
        ,\big[(\bo \Lambda^* \mb O\t \hat{\bo\Omega}_{z_i^*}(\mb z) \mb O \bo \Lambda^*)^{-1} - (\bo \Lambda^* \mb O\t \hat{\bo\Omega}_{z_i^*}(\mb z^*) \mb O \bo \Lambda^*)^{-1}\big] \bo \Lambda^*\big(\tilde{\mb c}_{z_i^*}^*- \mb O\t \hat{\mb c}_{z_i^*}(\mb z^*) \big) }^2}{\omega_{z_i^*, b}} \\ 
       \lesssim & \frac{ K}{\underline \omega}\underbrace{\max_{k\in[K]}\norm{(\bo \Lambda^* \mb O\t \hat{\bo \Omega}_k(\mb z) \mb O \bo \Lambda^*)^{-1} - (\bo \Lambda^* \mb O\t \hat{\bo \Omega}_k(\mb z^*) \mb O \bo \Lambda^*)^{-1}}^2}_{\text{controlled by Lemma~\ref{lemma: projected covariance matrix estimation error}}} \\ 
       & \qquad\cdot \Big( \underbrace{\max_{k\in[K]}\norm{\bo \Lambda^*\big(\tilde{\mb c}_{z_i^*}^* -  \mb O\t \hat{\mb c}_k(\mb z^*)\big)}_2^2}_{\text{controlled by Lemma~\ref{lemma: center estimation error}}} + \underbrace{\bar \sigma_{\mathrm{cov}}^2  \nu \underline \omega}_{\text{by Lemma~\ref{lemma: relation between tilde omega and omega}}}   \Big)  \cdot \underbrace{\Bignorm{\sum_{i\in[n]} \bo \Lambda^* \mk L_i \mk L_i\t \bo \Lambda^*} }_{\text{controlled by \eqref{eq: proof of lemma 1 first term decomposition 2}}}. 
    \end{align}

    Substitution of the upper bounds in Lemma~\ref{lemma: center estimation error},  Lemma~\ref{lemma: projected covariance matrix estimation error}, and \eqref{eq: proof of lemma 1 first term decomposition 2} into the above gives that 
 \begin{align}
     & \sum_{i\in[n]} \max_{b\in[K]\backslash \{z_i^*\} } \frac{\ip{
       \bo \Lambda^* \mathfrak L_i,  \big[(\bo \Lambda^* \mb O\t \hat{\bo\Omega}_k(\mb z^*) \mb O \bo \Lambda^*)^{-1} - (\bo \Lambda^* \mb O\t \hat{\bo\Omega}_k(\mb z^*) \mb O \bo \Lambda^*)^{-1}\big] \bo \Lambda^*\big(\tilde{\mb c}_{z_i^*}^*- \hat{\mb c}_b(\mb z^*) \big)}^2}{\omega_{z_i^*, b} } \\ 
        +& \sum_{i\in[n]}\max_{b\in[K]\backslash\{z_i^*\}} \frac{\ip{ \bo \Lambda^* \mathfrak L_i 
        ,\big[(\bo \Lambda^* \mb O\t \hat{\bo\Omega}_{z_i^*}(\mb z) \mb O \bo \Lambda^*)^{-1} - (\bo \Lambda^* \mb O\t \hat{\bo\Omega}_{z_i^*}(\mb z^*) \mb O \bo \Lambda^*)^{-1}\big] \bo \Lambda^*\big(\tilde{\mb c}_{z_i^*}^*- \mb O\t \hat{\mb c}_{z_i^*}(\mb z^*) \big) }^2}{\omega_{z_i^*, b}}  \\ 
       \lesssim & \frac{K}{\underline \omega}\Big[ \frac{1}{\underline\sigma_{\mathrm{cov}}^8} \kappa^4 \beta^6 K^4 \frac{l(\mb z, \mb z^*) K }{n} \bar \sigma_{\mathsf{cov}}^4 / \underline \omega\Big](\bar \sigma_{\mathsf{cov}}^2 \nu \underline \omega)( \bar \sigma_{\mathrm{cov}}^2 n ) \\ 
       \lesssim & \frac{1}{\underline\omega}\nu \kappa^4 \kappa_{\mathrm{cov}}^8 \beta^6 K^6 l(\mb z, \mb z^*) 
 \end{align}
with probability at least $1 - O(d^{-c}) \vee e^{-(1+o(1))\frac{\snr^2}{2}}$. 
    \end{itemize}

	In the end, collecting the above upper bounds together leads to the conclusion that 
 \longeq{
& \sum_{i\in[n]}\max_{b\in[K]\backslash \{z_i^*\}} \frac{F_i( b, \mb z)^2}{\omega_{z_i^*,b }^2 } \lesssim  \frac{1}{\underline\omega}\nu \kappa^4 \kappa_{\mathrm{cov}}^8 \beta^6 K^6 l(\mb z, \mb z^*)  = o(\frac{1}{\kappa_{\mathsf{cov}}^4})l(\mb z,\mb z^*)
 }
uniformly holds for all qualified $\mb z$ with probability at least $1 - O(d^{-c}) \vee e^{-(1+o(1))\frac{\snr^2}{2}}$, where we use \eqref{eq: alg condition} to derive the last equality.  
\subsubsection{Proof of Lemma~\ref{lemma: Bound for G_i}} 
We recap that $G_i(b,\mb z)$ is defined as 
\longeq{
& G_i(k,\mb z) =
     -\frac{1}{2}\ip{\bo \Lambda^* \mathfrak L_i, \big( (\bo \Lambda^* \mb O\t\hat{\bo \Omega}_k(\mb z) \mb O \bo \Lambda^*)^{-1} - (\bo \Lambda^* \mb O\t\hat{\bo \Omega}_k(\mb z^*) \mb O \bo \Lambda^*)^{-1}\big) \bo \Lambda^*\mathfrak L_i} \\ 
     & \qquad + \frac{1}{2}\ip{\bo \Lambda^* \mathfrak L_i, \big((\bo \Lambda^* \mb O\t \hat{\bo \Omega}_{z_i^*}(\mb z) \mb O \bo \Lambda^*)^{-1} -(\bo \Lambda^* \mb O\t \hat{\bo \Omega}_{z_i^*}(\mb z^*) \mb O \bo \Lambda^*)^{-1}\big) \bo \Lambda^* \mathfrak L_i}.
}
    We consider the upper bounds for the summation over the fourth moment of each term of $G_i(b,\mb z)$ separately. 
    \begin{itemize} 
      \item 
      Regarding the first term, by Lemma~\ref{lemma: noise matrix concentrations using the universality (Gaussian)} (or Lemma~\ref{lemma: noise matrix concentrations using the universality (bounded)}) and Lemma~\ref{lemma: projected covariance matrix estimation error}, we have 
      \begin{align}
        &  \sum_{i\in[n]} \max_{b\in[K], b \neq z_i^*}  \frac{\ip{\bo \Lambda^* \mathfrak L_i, \big( (\bo \Lambda^* \mb O\t\hat{\bo \Omega}_k(\mb z) \mb O \bo \Lambda^*)^{-1} - (\bo \Lambda^* \mb O\t\hat{\bo \Omega}_k(\mb z^*) \mb O \bo \Lambda^*)^{-1}\big) \bo \Lambda^*\mathfrak L_i}^4 }{\omega_{z_i^*, b}^3 } \\ 
        \lesssim & \frac{1 }{\underline \omega^3}\max_{b\in[K]} \bignorm{(\bo \Lambda^* \mb O\t\hat{\bo \Omega}_b(\mb z) \mb O \bo \Lambda^*)^{-1} - (\bo \Lambda^* \mb O\t\hat{\bo \Omega}_b(\mb z^*) \mb O \bo \Lambda^*)^{-1}}^4 \sum_{i\in[n]}\bignorm{\bo \Lambda^* \mathfrak L_i}_2^8 \\ 
        \lesssim &\frac{1}{\underline \sigma_{\mathrm{cov}}^{16}\underline \omega^3} \Big[\kappa^8 \beta^{12} K^{10} \frac{l(\mb z, \mb z^*)^2}{n^2} \bar \sigma_{\mathsf{cov}}^8 / \underline \omega^{2}   \Big] \cdot 
        \Big[ \frac{K^4 (\log d)^4}{n^3 (\sigma_K^*)^8}\deltaop^8 \Big]   \\ 
        \lesssim & \frac{\kappa^8 \kappa_{\mathsf{cov}}^{16} \beta^{11} K^{13} }{\underline \omega^5}l(\mb z, \mb z^*),
      \end{align}
      where we used the assumption $\frac{l(\mb z, \mb z^*) \beta K  (\log d)^4}{n} = o( 1)$ and \eqref{eq: fact A}.
      \item For the second term, a similar argument yields that
      \begin{align}
        &  \sum_{i\in[n]} \max_{b\in[K], b \neq z_i^*} \frac{\frac{1}{2}\ip{\bo \Lambda^* \mathfrak L_i, \big((\bo \Lambda^* \mb O\t \hat{\bo \Omega}_{z_i^*}(\mb z) \mb O \bo \Lambda^*)^{-1} -(\bo \Lambda^* \mb O\t \hat{\bo \Omega}_{z_i^*}(\mb z^*) \mb O \bo \Lambda^*)^{-1}\big) \bo \Lambda^* \mathfrak L_i}^4}{\omega_{z_i^*, b}^3}\\ 
        \lesssim &  \frac{1}{\underline \omega^3}\max_{b\in[K]} \bignorm{(\bo \Lambda^* \mb O\t\hat{\bo \Omega}_b(\mb z) \mb O \bo \Lambda^*)^{-1} - (\bo \Lambda^* \mb O\t\hat{\bo \Omega}_b(\mb z^*) \mb O \bo \Lambda^*)^{-1}}^4 \sum_{i\in[n]}\bignorm{\bo \Lambda^* \mathfrak L_i}_2^8\\ 
        \lesssim & \frac{\kappa^8 \kappa_{\mathsf{cov}}^{16} \beta^{11} K^{13} }{\underline \omega^5}l(\mb z, \mb z^*) 
      \end{align}
holds with probability at least $1 - O(d^{-c}) \vee e^{-(1+o(1))\frac{\snr^2}{2}}$. 
    \end{itemize}

    Taking the upper bounds collectively yields that 
    \longeq{
    & \sum_{i\in [n]} \max_{b \in[K] \backslash \{z_i^* \}} \frac{G_i(k, \mb z)^4}{\omega_{z_i^*, k}^3}\lesssim \frac{\kappa^8 \kappa_{\mathsf{cov}}^{16} \beta^{11} K^{13} }{\underline \omega^5}l(\mb z, \mb z^*)  = o(\frac{1}{\kappa_{\mathsf{cov}}^4}) l(\mb z,\mb z^*)
    }
    holds uniformly for all eligible $\mb z$ with probability at least $1 - O(d^{-c}) \vee e^{-(1+o(1))\frac{\snr^2}{2}}$ because of \eqref{eq: alg condition}. 
    
\subsubsection{Proof of Lemma~\ref{lemma: Bound for H_i}}

In what follows, we jointly parse the first two terms and the last two terms, respectively. 
    \begin{align}
      &H_i( b, \mb z) =  \alpha_1 + \alpha_2,\label{eq: proof of lemma 3 decomposition 1}
\end{align}
where $\alpha_1, \alpha_2$ are defined as 
\begin{align}
     \alpha_1 \coloneqq& -
    \frac{1}{2}\ip{\bo \Lambda^*\big(\tilde{\mb c}_{z_i^*}^*- \mb O\t \hat{\mb c}_{z_i^*}(\mb z) \big), (\bo \Lambda^* \mb O\t \hat{\bo \Omega}_{z_i^*}(\mb z) \mb O \bo \Lambda^*)^{-1} \bo \Lambda^*\big(\tilde{\mb c}_{z_i^*}^*- \mb O\t \hat{\mb c}_{z_i^*}(\mb z) \big) } \\ 
    & \qquad + \frac{1}{2} \ip{\bo \Lambda^*\big(\tilde{\mb c}_{z_i^*}^*- \mb O\t \hat{\mb c}_{z_i^*}(\mb z^*) \big), (\bo \Lambda^* \mb O\t \hat{\bo \Omega}_{z_i^*}(\mb z^*) \mb O \bo \Lambda^*)^{-1} \bo \Lambda^*\big(\tilde{\mb c}_{z_i^*}^*- \mb O\t \hat{\mb c}_{z_i^*}(\mb z^*) \big) },  \\ 
     \alpha_2 \coloneqq& \frac{1}{2}\ip{\bo \Lambda^*\big(\tilde{\mb c}_{z_i^*}^*- \hat{\mb c}_k(\mb z) \big), (\bo \Lambda^* \mb O\t \hat{\bo \Omega}_k(\mb z) \mb O \bo \Lambda^*)^{-1} \bo \Lambda^*\big(\tilde{\mb c}_{z_i^*}^*- \hat{\mb c}_k(\mb z) \big) } \\ 
    & \qquad - \ip{\bo \Lambda^*\big(\tilde{\mb c}_{z_i^*}^*- \hat{\mb c}_k(\mb z^*) \big), (\bo \Lambda^* \mb O\t \hat{\bo \Omega}_k(\mb z^*) \mb O \bo \Lambda^*)^{-1} \bo \Lambda^*\big(\tilde{\mb c}_{z_i^*}^*- \hat{\mb c}_k(\mb z^*) \big) }. 
\end{align}
We jointly parse the first two terms and the last two terms, respectively. 
\begin{itemize} 
    \item For the term $\alpha_1$ defined in \eqref{eq: proof of lemma 3 decomposition 1}, we  further break it down as follows
    \begin{align}
        &  \alpha_1 = \alpha_{1,1} + \alpha_{1,2}, \label{eq: H term decomposition 0}
        \end{align}
        where $\alpha_{1,1}$ and $\alpha_{1,2}$ are defined as
        \begin{align}
            \alpha_{1,1} \coloneqq &-
    \frac{1}{2}\ip{\bo \Lambda^*\big(\tilde{\mb c}_{z_i^*}^*- \mb O\t \hat{\mb c}_{z_i^*}(\mb z) \big), (\bo \Lambda^* \mb O\t \hat{\bo \Omega}_{z_i^*}(\mb z) \mb O \bo \Lambda^*)^{-1} \bo \Lambda^*\big(\tilde{\mb c}_{z_i^*}^*- \mb O\t \hat{\mb c}_{z_i^*}(\mb z) \big) } \\ 
    & \qquad + \frac{1}{2} \ip{\bo \Lambda^*\big(\tilde{\mb c}_{z_i^*}^*- \mb O\t \hat{\mb c}_{z_i^*}(\mb z^*) \big), (\bo \Lambda^* \mb O\t \hat{\bo \Omega}_{z_i^*}(\mb z) \mb O \bo \Lambda^*)^{-1} \bo \Lambda^*\big(\tilde{\mb c}_{z_i^*}^*- \mb O\t \hat{\mb c}_{z_i^*}(\mb z^*) \big) },\\
            \alpha_{1,2} \coloneqq & -
    \frac{1}{2}\ip{\bo \Lambda^*\big(\tilde{\mb c}_{z_i^*}^*- \mb O\t \hat{\mb c}_{z_i^*}(\mb z^*) \big), (\bo \Lambda^* \mb O\t \hat{\bo \Omega}_{z_i^*}(\mb z) \mb O \bo \Lambda^*)^{-1} \bo \Lambda^*\big(\tilde{\mb c}_{z_i^*}^*- \mb O\t \hat{\mb c}_{z_i^*}(\mb z^*) \big) } \\ 
    & \qquad + \frac{1}{2} \ip{\bo \Lambda^*\big(\tilde{\mb c}_{z_i^*}^*- \mb O\t \hat{\mb c}_{z_i^*}(\mb z^*) \big), (\bo \Lambda^* \mb O\t \hat{\bo \Omega}_{z_i^*}(\mb z^*) \mb O \bo \Lambda^*)^{-1} \bo \Lambda^*\big(\tilde{\mb c}_{z_i^*}^*- \mb O\t \hat{\mb c}_{z_i^*}(\mb z^*) \big) }.
        \end{align}

To upper bound $\alpha_{1,1}$, we make the observation that
\begin{align}
& \alpha_{1,1} 
    \leq  \norm{(\bo \Lambda^* \mb O\t \hat{\bo \Omega}_{z_i^*}(\mb z) \mb O \bo \Lambda^*)^{-1}}\bignorm{ \bo \Lambda^* \mb O\t \big(\hat{\mb c}_{z_i^*}(\mb z^*) - \hat{\mb c}_{z_i^*}(\mb z) \big)}_2 
    \\ 
    \cdot & \Big[\frac{1}{2}\bignorm{ \bo \Lambda^* \mb O\t \big(\hat{\mb c}_{z_i^*}(\mb z^*) - \hat{\mb c}_{z_i^*}(\mb z) \big)}_2  + \bignorm{\bo \Lambda^* \mb O\t \big(\mb O\t \hat{\mb c}_{z_i^*}(\mb z^*) - \tilde{\mb c}_{z_i^*}^* \big)}_2 \Big].\label{eq: H term decomposition 1}
\end{align}
Substitution of the results from Lemma~\ref{lemma: center estimation error} into \eqref{eq: H term decomposition 1} gives that 
\begin{align}
 \alpha_{1,1}\lesssim&  \frac{1}{\underline \sigma_{\mathrm{cov}}^2} \Big[\beta K\sqrt{\frac{l(\mb z,\mb z^*)}{n\underline\omega}} \frac{\deltaop}{\sqrt{n}\sigma_K^*} \Big] \cdot  \Big\{\beta K\sqrt{\frac{l(\mb z,\mb z^*)}{n\underline\omega}} \frac{\deltaop}{\sqrt{n}\sigma_K^*}+  \kappa \mu_1 \sqrt{ \beta } K \frac{\deltaop}{\sqrt{n} \sigma_K^*} \Big( \sqrt{\frac{\log d}{n}}  + \frac{\deltaop \snr }{(\sigma_K^*)^2} \Big) \Big\} 
        \\  
         \lesssim & \frac{\kappa \kappa_{\mathrm{cov}}^2 \beta^2 K^2  }{\sqrt{\underline\omega} }
        \label{eq: proof of lemma 3 decomposition 2}
    \end{align}
    uniformly holds with probability at least $1 - O(e^{-(1 + o(1))\frac{\snr^2}{2}} \vee d^{-c})$, provided \eqref{eq: fact A}, \eqref{eq: fact C}, \eqref{eq: fact D}, and 
    \begin{align}
        & \frac{l(\mb z, \mb z^*)\beta K}{n } = o(1), \qquad \frac{\beta K^2 \log d}{n }\lesssim 1 .
    \end{align}
    
  Regarding $\alpha_{1,2}$ in \eqref{eq: H term decomposition 0}, it is obvious by Lemmas~\ref{lemma: center estimation error}~and~\ref{lemma: projected covariance matrix estimation error} that
\begin{align}
    \alpha_{1,2} 
    \leq & \frac{1}{2}\bignorm{(\bo \Lambda^* \mb O\t \hat{\bo \Omega}_{z_i^*}(\mb z^*) \mb O \bo \Lambda^*)^{-1} - (\bo \Lambda^* \mb O\t \hat{\bo \Omega}_{z_i^*}(\mb z) \mb O \bo \Lambda^*)^{-1}} \bignorm{\bo \Lambda^* \big(\tilde{\mb c}_{z_i^*}^* - \mb O\t \hat{\mb c}_{z_i^*}(\mb z^*)\big)}_2^2 \\ 
    \lesssim & \frac{1}{\underline\sigma_{\mathrm{cov}}^4} \Big[\kappa^2 \beta^3 K^2 \sqrt{\frac{l(\mb z, \mb z^*) K }{n}} \bar \sigma_{\mathsf{cov}}^2 / \underline \omega^{\frac12} \Big]
    \Big[\beta^2 K^2 \frac{l(\mb z, \mb z^*)}{n\underline\omega} \frac{\deltaop^2}{n (\sigma_K^*)^2}  \Big]  \\ 
    \lesssim &  \frac{\kappa^2 \kappa_{\mathsf{cov}}^4 \beta^4 K^4 }{\underline\omega^{\frac32}}
    \label{eq: H term decomposition 2}
\end{align}
with probability at least $1 - O(e^{-(1 + o(1))\frac{\snr^2}{2}} \vee d^{-c})$.

Taking \eqref{eq: proof of lemma 3 decomposition 2} and \eqref{eq: H term decomposition 2} together into \eqref{eq: H term decomposition 0} yields that 
\begin{align}
    & \alpha_1 \lesssim \frac{\kappa \kappa_{\mathrm{cov}}^2 \beta^2 K^2  }{\sqrt{\underline\omega} } + \frac{\kappa^2 \kappa_{\mathsf{cov}}^4 \beta^4 K^4 }{\underline\omega^{\frac32}}
\end{align}
uniformly holds with probability at least $1 - O(e^{-(1 + o(1))\frac{\snr^2}{2}} \vee d^{-c})$. 

    \item 
    Now it remains to upper bound the third term and the fourth term. A similar decomposition to \eqref{eq: H term decomposition 0} gives that 
    \begin{align}
     \alpha_2 = & \alpha_{2,1} + \alpha_{2,2},
    \end{align}
    where $\alpha_{2,1}$ and $\alpha_{2,2}$ are defined as 
    \begin{align}
        \alpha_{2,1} \coloneqq & \frac{1}{2}\ip{\bo \Lambda^*\big(\tilde{\mb c}_{z_i^*}^*- \mb O\t \hat{\mb c}_k(\mb z) \big), (\bo \Lambda^* \mb O\t \hat{\bo \Omega}_k(\mb z) \mb O \bo \Lambda^*)^{-1} \bo \Lambda^*\big(\tilde{\mb c}_{z_i^*}^*- \mb O\t \hat{\mb c}_k(\mb z) \big) } \\ 
    & \qquad - \ip{\bo \Lambda^*\big(\tilde{\mb c}_{z_i^*}^*- \mb O\t \hat{\mb c}_k(\mb z^*) \big), (\bo \Lambda^* \mb O\t \hat{\bo \Omega}_k(\mb z) \mb O \bo \Lambda^*)^{-1} \bo \Lambda^*\big(\tilde{\mb c}_{z_i^*}^*- \mb O\t \hat{\mb c}_k(\mb z^*) \big) } \\ 
         \alpha_{2,2} \coloneqq & \frac{1}{2}\ip{\bo \Lambda^*\big(\tilde{\mb c}_{z_i^*}^*- \mb O\t \hat{\mb c}_k(\mb z^*) \big), (\bo \Lambda^* \mb O\t \hat{\bo \Omega}_k(\mb z) \mb O \bo \Lambda^*)^{-1} \bo \Lambda^*\big(\tilde{\mb c}_{z_i^*}^*- \mb O\t \hat{\mb c}_k(\mb z^*) \big) } \\ 
    & \qquad - \ip{\bo \Lambda^*\big(\tilde{\mb c}_{z_i^*}^*- \mb O\t \hat{\mb c}_k(\mb z^*) \big), (\bo \Lambda^* \mb O\t \hat{\bo \Omega}_k(\mb z^*) \mb O \bo \Lambda^*)^{-1} \bo \Lambda^*\big(\tilde{\mb c}_{z_i^*}^*- \mb O\t \hat{\mb c}_k(\mb z^*) \big) }
    \end{align}

    For $\alpha_{2,1}$, it can be bounded as follows with probability at least $1 - O(e^{-(1 + o(1))\frac{\snr^2}{2}} \vee d^{-c})$.  
    \begin{align}
         \alpha_{2,1} 
         \lesssim  &  \frac{1}{\underline\sigma_{\mathrm{cov}}^2 } \bignorm{\bo \Lambda^* \mb O\t\big(\hat{\mb c}_k(\mb z) -   \hat{\mb c}_k(\mb z^*) \big)}_2 \Big[\bignorm{\bo \Lambda^* \mb O\t\big(\hat{\mb c}_k(\mb z) -   \hat{\mb c}_k(\mb z^*) \big)}_2 +\bignorm{\bo \Lambda^* \big(\tilde{\mb c}_{z_i^*}^*- \mb O\t\hat{\mb c}_k(\mb z^*) \big)}_2\\ 
         & \qquad + \bignorm{\bo \Lambda^* \big(\tilde{\mb c}_{z_i^*}^*-   \tilde{\mb c}_{z_i^*}^*\big)}_2\Big] \\
         \lesssim& \frac{1}{\underline\sigma_{\mathrm{cov}}^2} \beta K\sqrt{\frac{l(\mb z,\mb z^*)}{n\underline\omega}} \frac{\deltaop}{\sqrt{n}\sigma_K^*} \nu \bar \sigma_{\mathrm{cov}}\underline\omega^{\frac12}  
         \lesssim  \frac{\nu \kappa_{\mathrm{cov}} \beta^{\frac12} K^{\frac12} \deltaop}{\underline\sigma_{\mathrm{cov}} \sqrt{n}\sigma_K^*} \lesssim \nu \kappa_{\mathrm{cov}}^2 \beta^{\frac12} K^{\frac12}  ,  \label{eq: alpha 2 1 decomposition}
    \end{align}
    where we used an argument similar to \eqref{eq: H term decomposition 1} together with \eqref{eq: fact A} and Lemma~\ref{lemma: relation between tilde omega and omega}.

    Similarly, we could upper bound $\alpha_{2,2}$ using Lemmas~\ref{lemma: center estimation error}~and~\ref{lemma: projected covariance matrix estimation error}, that
    \begin{align}
        \alpha_{2,2} =&  \frac{1}{2}\ip{\bo \Lambda^*\big(\tilde{\mb c}_{z_i^*}^*- \mb O\t \hat{\mb c}_k(\mb z^*) \big), (\bo \Lambda^* \mb O\t \hat{\bo \Omega}_k(\mb z) \mb O \bo \Lambda^*)^{-1} \bo \Lambda^*\big(\tilde{\mb c}_{z_i^*}^*- \mb O\t \hat{\mb c}_k(\mb z^*) \big) } \\ 
    & \qquad - \ip{\bo \Lambda^*\big(\tilde{\mb c}_{z_i^*}^*- \mb O\t \hat{\mb c}_k(\mb z^*) \big), (\bo \Lambda^* \mb O\t \hat{\bo \Omega}_k(\mb z^*) \mb O \bo \Lambda^*)^{-1} \bo \Lambda^*\big(\tilde{\mb c}_{z_i^*}^*- \mb O\t \hat{\mb c}_k(\mb z^*) \big) } \\ 
        \lesssim & \bignorm{\bo \Lambda^*\big(\tilde{\mb c}_{z_i^*}^*- \mb O\t \hat{\mb c}_k(\mb z^*) \big)}_2^2 \bignorm{(\bo \Lambda^* \mb O\t \hat{\bo \Omega}_k(\mb z) \mb O \bo \Lambda^*)^{-1} - (\bo \Lambda^* \mb O\t \hat{\bo \Omega}_k(\mb z^*) \mb O \bo \Lambda^*)^{-1}} \\ 
        \lesssim & \frac{1}{\underline \sigma_{\mathrm{cov}}^4} \Big[\kappa^2 \beta^3 K^2 \sqrt{\frac{l(\mb z, \mb z^*) K }{n}} \bar \sigma_{\mathsf{cov}}^2 / \underline \omega^{\frac12} \Big]\nu^2 \bar \sigma_{\mathrm{cov}}^2 \underline \omega  \\ 
        \lesssim & \nu^2 \kappa_{\mathrm{cov}}^4 \kappa^2\beta^{\frac52} K^2 \omega^{\frac12} 
    \end{align}
    holds with probability at least $1 - O(e^{-(1 + o(1))\frac{\snr^2}{2}} \vee d^{-c})$.  
    \end{itemize}

    Combining these pieces together leads to the conclusion that 
    \begin{align}
        &\max_{i\in[n]} \max_{b\in[K], b\neq z_i^*}H_i(z_i^*, b, \mb z) \lesssim \frac{\kappa \kappa_{\mathrm{cov}}^2 \beta^2 K^2  }{\sqrt{\underline\omega} } + \frac{\kappa^2 \kappa_{\mathsf{cov}}^4 \beta^4 K^4 }{\underline\omega^{\frac32}} + \nu \kappa_{\mathrm{cov}}^2 \beta^{\frac12} K^{\frac12}  + \nu^2 \kappa_{\mathrm{cov}}^4 \kappa^2\beta^{\frac52} K^2 \omega^{\frac12}  = o(\frac{\underline\omega}{\kappa_{\mathsf{cov}}^4})
    \end{align}
    uniformly holds for all qualified $\mb z$ with probability at least $1 - O(e^{-(1 + o(1))\frac{\snr^2}{2}} \vee d^{-c})$, where we use the condition \eqref{eq: alg condition}.

\subsection{Proofs of Auxiliary Lemmas in the Iterative Charaterization}
In this section, we present the proofs for the concentration results (Lemmas~\ref{lemma: center estimation error}~and~\ref{lemma: projected covariance matrix estimation error}). These results are instrumental in establishing the covariance and center estimations consistency, which in turn support proving Theorems~\ref{theorem: upper bound for algorithm (gaussian)}~and~\ref{theorem: upper bound for algorithm (bounded)}.

\subsubsection{Proof of Lemma~\ref{lemma: center estimation error}}
\label{subsec: proof of lemma misspecified center estimation}
    We obtain from Lemmas~\ref{lemma: noise matrix concentrations using the universality (Gaussian)}~and~\ref{lemma: noise matrix concentrations using the universality (bounded)} together with Corollary~\ref{corollary: singular subspace perturbation theory in clustering problem} that 
    \begin{align}
        & \Bignorm{\sum_{i\in \mc I_k(\mb z^*)}\bo \Lambda^* \mk L_i}_2 / n_k \leq \frac{1}{n_k}\norm{\mb 1_{n_k}\t \mb E_{\mc I_k(\mb z^*), \cdot} \mb V^*}_2 + \frac{1}{n_k}\norm{\mb 1_{n_k}\t \mc H(\mb E\mb E\t)_{\mc I_k(\mb z^*), \cdot} \mb U^*{\bo \Lambda^*}^{-1}} \\ 
         \lesssim & (n_k)^{-\frac12} \bar \sigma \sqrt{K\log d} + (n_k)^{-\frac12} (\sigma_K^*)^{-1} \tilde\sigma \sigma \sqrt{pK\log d} , 
        \label{eq: sum of linear terms in center estimation proof}
        \\ 
        & \norm{\bo \Lambda^*\mb O\t \hat{\mb c}_k(\mb z^*) - \tilde{\mb w}_k^*}_2 \lesssim \frac{1}{n_k}\norm{\mb 1_{n_k}\t \mb E_{\mc I_k(\mb z^*), \cdot} \mb V^*}_2 + \frac{1}{n_k}\norm{\mb 1_{n_k}\t \mc H(\mb E\mb E\t)_{\mc I_k(\mb z^*), \cdot} \mb U^*{\bo \Lambda^*}^{-1}} \\ 
        & \qquad + \frac{1}{n_k} \sum_{i\in \mc I_k(\mb z^*)} \norm{\big(\mb U_{i,\cdot} \mb O - \tilde{\mb c}_k^* - \mk L_i\t \big) \bo \Lambda^*}_2 
        \\ 
        \lesssim & (n_k)^{-\frac12} \bar \sigma \sqrt{K\log d} + (n_k)^{-\frac12} (\sigma_K^*)^{-1} \tilde\sigma \sigma \sqrt{pK\log d} + \sigma_1^* \deltau
        \\ 
        \lesssim &  \kappa \mu_1 \sqrt{ \beta } K \frac{\deltaop}{\sqrt{n} \sigma_K^*} \Big( \sqrt{\frac{\log d}{n}}  + \frac{\deltaop \snr }{(\sigma_K^*)^2} \Big)
    \end{align}
    with probability at least $1 - O\big(d^{-c} \vee \exp(-(1 + o(1))\frac{\snr^2}{2})\big)$. 

    For the second part, for each $k\in[K]$, we begin by decomposing it into five terms which are controlled in the following: 
    \begin{align}
        & \norm{\bo \Lambda^* \mb O\t \big(\hat{\mb c}_k(\mb z) - \hat{\mb c}_k(\mb z^*) \big)}_2 \leq \Bignorm{\mb{V^*}^{\top} \big(\sum_{i\in \mc I_k(\mb z)}\mb{E}_i/ n_k(\mb z)- \sum_{i\in \mc I_k(\mb z^*)} \mb{E}_i / n_k \big)}_2 \\ 
        & \qquad + \Bignorm{{\bo \Lambda^*}^{-1}{\mb U^*}\t \big( \sum_{i \in \mc I_k(\mb z)}\mc H(\mb E\mb E\t)_{\cdot, i} / n_k(\mb z) -  \sum_{i \in \mc I_k(\mb z^*)}\mc H(\mb E\mb E\t)_{\cdot, i} / n_k\big)}_2 \\ 
        & \qquad + \Bignorm{\sum_{i\in\mc I_k(\mb z^*)}\bo \Lambda^*  \big(\mb O\t \mb U_i - \mk L_i - \mb c_{z_i^*}\big) / n_k - \sum_{i\in\mc I_k(\mb z)} \bo \Lambda^*  \big(\mb O\t \mb U_i - \mk L_i - \mb c_{z_i^*}\big) / n_k(\mb z) }_2
        \\ 
        & \qquad + \Bignorm{\sum_{i\in \mc I_k(\mb z)}\tilde{\mb w}_k / n_k(\mb z) - \tilde{\mb w}_k^* }_2 .
        \label{eq: decomposition of center estimation error with misspecification}
     \end{align}

    \begin{itemize} 
        \item For the first term $\Bignorm{\mb{V^*}^{\top} \big(\sum_{i\in \mc I_k(\mb z)}\mb{E}_i/ n_k(\mb z)- \sum_{i\in \mc I_k(\mb z^*)} \mb{E}_i / n_k \big)}_2$, we first further decompose it into two terms: 
            \begin{align}
                &\Bignorm{\mb{V^*}^{\top} \big(\sum_{i\in \mc I_k(\mb z)}\mb{E}_i/ n_k(\mb z)- \sum_{i\in \mc I_k(\mb z^*)} \mb{E}_i / n_k \big)}_2 \\ 
                \leq & 
            \Bignorm{\mb{V^*}^{\top} \big(\sum_{i\in \mc I_k(\mb z)}\mb{E}_i- \sum_{i\in \mc I_k(\mb z^*)} \mb{E}_i \big)}_2 / n_k(\mb z) +  \Bignorm{\mb{V^*}^{\top} \big(\sum_{i\in \mc I_k(\mb z^*)} \mb{E}_i / n_k(\mb z) - \sum_{i\in \mc I_k(\mb z^*)} \mb{E}_i / n_k \big)}_2. \label{eq: first term in difference between thetakz and thetakz* decomposition} 
            \end{align}

            We bound the above two terms separately, where we have: 
            \begin{align}
            &\Bignorm{\mb{V^*}^{\top} \big(\sum_{i\in \mc I_k(\mb z)}\mb{E}_i/ n_k(\mb z)- \sum_{i\in \mc I_k(\mb z^*)} \mb{E}_i / n_k(\mb z) \big)}_2 
            \overset{\text{(i)}}{\lesssim} \frac{\beta K}{n}\cdot\big(\bar \sigma  \sqrt{n } \big) \cdot \sqrt{nh(\mb z, \mb z^*)} 
             \overset{\text{(ii)}}{\lesssim}  \frac{\bar \sigma \beta K}{\sqrt{n \underline \omega }}\sqrt{l(\mb z,\mb z^*)}, 
             \label{eq: first term of center difference}
             \\   
             & \Big\lv{\mb V^{*\top}} \Big(\frac{\sum_{i\in[n]}\ind{\{z_i^*=k\}}\mb{E}_i}{\sum_{i\in[n]}\ind{\{z_i=k\}}} - \frac{\sum_{i\in[n]}\ind{\{z^*_i = k\}}\mb E_i}{\sum_{i\in[n]}\ind{\{z_i^*=k\}} }\Big)\Big\rv_2  
             \lesssim  \frac{nh(\mb z,\mb z^*)}{n_k^2}\cdot \big(\bar \sigma \sqrt{n_k}\big) = \frac{\beta^{\frac32}K^{\frac32}l(\mb z,\mb z^*)}{n^{\frac32}\underline \omega}\label{eq: second term of center difference}
            \end{align}
            uniformly hold with probability at least $1 - O(d^{-c})$ for all $\mb z^*$ satisfying the condition that 
            where (i) arises since 
            $\sum_{i\in[n]}\ind{\{z_i=k\}} \geq n_k -n h(\mb z, \mb z^*) \gtrsim  \frac{n}{K\beta}$
             given $\omega(nh(\mb z, \mb z^*)) = l(\mb z, \mb z^*) \lesssim \frac{n}{\beta K}$  and $
            \Big\lv{\mb V^{*\top}} \sum_{i\in[n]}\big(\ind{\{z_i =k, z_i^* \neq k\}} +\ind{\{z_i \neq k, z_i^* = k\}}\big)\mb E_i\Big\rv_2\leq  \sqrt{nh(\mb z, \mb z^*) }\norm{{\mb V^{*\top}} \mb E} \lesssim \big(\bar \sigma \sqrt{n}\big) \cdot  \sqrt{nh(\mb z, \mb z^*)}
            $, 
            and (ii) holds since $nh(\mb z, \mb z^*) \leq \frac{l(\mb z, \mb z^*)}{\underline \omega}$. 

            Taking \eqref{eq: first term of center difference} and \eqref{eq: second term of center difference} collectively into \eqref{eq: first term in difference between thetakz and thetakz* decomposition} implies that 
            \begin{align}
                & \Bignorm{\mb{V^*}^{\top} \big(\sum_{i\in \mc I_k(\mb z)}\mb{E}_i/ n_k(\mb z)- \sum_{i\in \mc I_k(\mb z^*)} \mb{E}_i / n_k \big)}_2 
                \lesssim   \frac{\bar \sigma \beta K}{\sqrt{n \underline \omega }}\sqrt{l(\mb z,\mb z^*)} + \frac{\beta^{\frac32}K^{\frac32}l(\mb z,\mb z^*)}{n^{\frac32}\underline \omega} \lesssim \frac{\bar \sigma \beta K}{\sqrt{n \underline \omega }}\sqrt{l(\mb z,\mb z^*)}
            \end{align}
            holds with probability at least $1- O(d^{-c})$, 
            where the last inequality follows from the fact that $\frac{\beta^{\frac32}K^{\frac32}l(\mb z,\mb z^*)}{n^{\frac32}\underline \omega} \lesssim \frac{\beta^{\frac32}K^{\frac32}l(\mb z,\mb z^*)}{n^{\frac32}\sqrt{\underline \omega}} \lesssim \frac{\bar \sigma \beta K}{\sqrt{n \underline \omega }}\sqrt{l(\mb z,\mb z^*)}$ noticing $l(\mb z, \mb z^*) = o( \frac{n}{\beta K})$.

            \item Regarding the second term $ \Bignorm{\bo \Lambda^*{\mb U^*}\t \big( \sum_{i \in \mc I_k(\mb z)}\mc H(\mb E\mb E\t)_{\cdot, i} / n_k(\mb z) -  \sum_{i \in \mc I_k(\mb z^*)}\mc H(\mb E\mb E\t)_{\cdot, i} / n_k\big)}_2$, we deduce from similar argument to the one for the first term together with the concentrations for $\norm{{\bo \Lambda^*}^{-1} {\mb U^*}\t \mc H(\mb E \mb E\t)_{\cdot, \mc I_k(\mb z^*)} \mb 1_{n_k}}$ and $\norm{{\bo \Lambda^*}^{-1} {\mb U^*}\t \mc H(\mb E \mb E\t) }$ that 
            \begin{align}
                & \Bignorm{{\bo \Lambda^*}^{-1}{\mb U^*}\t \big( \sum_{i \in \mc I_k(\mb z)}\mc H(\mb E\mb E\t)_{\cdot, i} / n_k(\mb z) -  \sum_{i \in \mc I_k(\mb z^*)}\mc H(\mb E\mb E\t)_{\cdot, i} / n_k\big)}_2 \\ 
                \lesssim & \frac{1}{n_k(\mb z)}\Bignorm{{\bo \Lambda^*}^{-1}{\mb U^*}\t \big( \sum_{i \in \mc I_k(\mb z)}\mc H(\mb E\mb E\t)_{\cdot, i} -  \sum_{i \in \mc I_k(\mb z^*)}\mc H(\mb E\mb E\t)_{\cdot, i} \big)}_2 \\ 
                & \qquad + \Bignorm{{\bo \Lambda^*}^{-1}{\mb U^*}\t \big( \frac{1}{n_k(\mb z)}\sum_{i \in \mc I_k(\mb z^*)}\mc H(\mb E\mb E\t)_{\cdot, i}  - \frac{1}{n_k} \sum_{i \in \mc I_k(\mb z^*)}\mc H(\mb E\mb E\t)_{\cdot, i} \big)}_2 \\ 
                \lesssim & \frac{\beta K}{n\sigma_K^*} \big(\tilde\sigma^2 n + \tilde\sigma \sigma \sqrt{pn} \big) \sqrt{nh(\mb z^*, \mb z)} + \frac{nh(\mb z, \mb z^*)}{n_k^2} \frac{1}{\sqrt{n_k} \sigma_K^*} \tilde\sigma \sqrt{p K\log d}   \\ 
                \lesssim &   \frac{\beta K}{\underline 
                \omega^{\frac12} \sigma_K^*} \big(\tilde\sigma^2  + \tilde\sigma \sigma \sqrt{p/n} \big) \sqrt{nl(\mb z^*, \mb z)}
            \end{align}
            with probability at least $1- O(d^{-c})$. 

            \item For the third term in \eqref{eq: decomposition of center estimation error with misspecification}, it follows from applying Corollary~\ref{corollary: singular subspace perturbation theory in clustering problem} to all rows together with \eqref{eq: alternative upper bound for delta_u'} that with probability at least $1 - O(d^{-c})$ that 
            \begin{align}
                  & \Bignorm{\sum_{i\in\mc I_k(\mb z^*)}\bo \Lambda^*  \big(\mb O\t \mb U_i - \mk L_i - \mb c_{z_i^*}\big) / n_k - \sum_{i\in\mc I_k(\mb z)} \bo \Lambda^*  \big(\mb O\t \mb U_i - \mk L_i - \mb c_{z_i^*}\big) / n_k(\mb z) }_2 \\ 
                  \leq &  \Bignorm{\sum_{i\in\mc I_k(\mb z^*)}\bo \Lambda^*  \big(\mb O\t \mb U_i - \mk L_i - \mb c_{z_i^*}\big) / n_k(\mb z) - \sum_{i\in\mc I_k(\mb z)} \bo \Lambda^*  \big(\mb O\t \mb U_i - \mk L_i - \mb c_{z_i^*}\big) / n_k(\mb z) }_2 \\ 
                  & + \Bignorm{\sum_{i\in\mc I_k(\mb z^*)}\bo \Lambda^*  \big(\mb O\t \mb U_i - \mk L_i - \mb c_{z_i^*}\big)} \Big( \frac{1}{n_k(\mb z)} -\frac{1}{n_k} \Big)
                    \\ 
                    \lesssim & \frac{\beta K l(\mb z, \mb z^*)}{n \underline\omega} \mu_1^{\frac12} K \frac{\deltaop \sqrt{\log d}}{\sqrt{n}(\sigma_K^*)} \lesssim \beta K\sqrt{\frac{ l(\mb z, \mb z^*)}{n \underline\omega} } \frac{\deltaop}{\sqrt{n}(\sigma_K^*)}. 
            \end{align}

            \item
            For the deterministic part $\Bignorm{\sum_{i\in \mc I_k(\mb z)}\tilde{\mb w}_k / n_k(\mb z) - \tilde{\mb w}_k^* }_2 $, applying the H\"{o}lder's inequality yields that 
            \longeq{
            &\Bignorm{\sum_{i\in \mc I_k(\mb z)}\tilde{\mb w}_k / n_k(\mb z) - \tilde{\mb w}_k^* }_2 
            \stackrel{\text{by Lemma~\ref{lemma: relation between tilde omega and omega}}}{\lesssim} \frac{1}{n_k(\mb z)} \Bignorm{\sum_{i\in \mc I_k(\mb z)}\big(\mb w_{z_i^*}^* - \mb w_k^*\big) }_2 \\ 
            \leq & \frac{\bar \sigma_{\mathsf{cov}}\beta K}{n} \Bignorm{{\mb S_k^*}^{-\frac12}\sum_{i\in[n]}\ind{\{z_i=k\}}\big(\mb w_{z_i^*}^* - \mb w_k^*\big) }_2
            \\
            \stackrel{(\text{i})}{\leq}& \frac{\bar \sigma_{\mathsf{cov}}\beta K}{n}\Big(\sum_{i\in[n]}\ind{\{z_i^* =k, z_i \neq k\}}\Big)^{\frac{1}{2}}  \Big( \sum_{i\in[n]}\ind{\{z_i^* \neq k, z_i = k\}}\ip{\big(\mb w^*_{z_i} - \mb w^*_{z_i^*}\big), {\mb S_k^*}^{-1}\big(\mb w^*_{z_i} - \mb w^*_{z_i^*}\big)}\Big)^{\frac{1}{2}}\\ 
            \stackrel{\text{(ii)}}{\lesssim} & \frac{\bar\sigma_{\mathsf{cov}} \beta K}{n\underline \omega^{\frac{1}{2}}} \big( \sum_{i\in[n]}\ind{\{z_i^* =k, z_i \neq k\}}\ip{\big(\mb w^*_{z_i} - \mb w^*_{z_i^*}\big), {\mb S_k^*}^{-1} \big(\mb w^*_{z_i} - \mb w^*_{z_i^*}\big)}\big)
            \lesssim  \frac{\bar \sigma_{\mathsf{cov}} \beta K}{n \underline \omega^{\frac{1}{2}}}l(\mb z, \mb z^*),
            \label{eq: second term of center difference deterministic}
            }
            where (i) holds by the H\"{o}lder's inequality and (ii) holds since $\underline \omega \leq \ip{\big(\mb w^*_{z_i} - \mb w^*_{z_i^*}\big), {\mb S_k^*}^{-1} \big(\mb w^*_{z_i} - \mb w^*_{z_i^*}\big)}$ for $z_i^* =k, z_i \neq k$ by definition. 
    \end{itemize}

    Combining these pieces together with \eqref{eq: decomposition of center estimation error with misspecification} gives  with probability exceeding $1 - O(e^{-(1 + o(1))\frac{\snr^2}{2}} \vee d^{-c})$ that 
    \longeq{
         \norm{\bo \Lambda^* \mb O\t \big(\hat{\mb c}_k(\mb z) - \hat{\mb c}_k(\mb z^*) \big)}_2 \lesssim &  \frac{\beta K\sqrt{l(\mb z,\mb z^*)} }{\sqrt{ \underline \omega }}\Big(\bar \sigma n^{-\frac12}  + (\tilde\sigma^2 + \tilde\sigma \sigma \sqrt{p/n}) (\sigma_K^*)^{-1} \Big) + \frac{\bar \sigma_{\mathsf{cov}} \beta K}{n \underline \omega^{\frac{1}{2}}}l(\mb z, \mb z^*)\\ 
         \lesssim & \beta K \sqrt{\frac{l(\mb z, \mb z^*)}{n\underline\omega}} \frac{\deltaop}{\sqrt{n} \sigma_K^*} .
    }

        \subsubsection{Proof of Lemma~\ref{lemma: projected covariance matrix estimation error}}
        \label{sec: proof of projected covariance estimation error}
               
        Invoking the definition of $\hat{\mb S}_a(\mb z)$ and $\hat{\mb S}_a(\mb z^*)$, it first follows by the triangle inequality that 
               \longeq{\label{eq: covariance matrix decomposition 2}
                &\norm{\bo \Lambda^*\mb O\t \big(\hat{\bo \Omega}_a(\mb z) - \hat{\bo \Omega}_a(\mb z^*)\big)\mb O\bo \Lambda^*} \\ 
                = & \Big\| \sum_{i\in \mc I_a(\mb z)} \bo \Lambda^* \mb O\t\big(\mb U_i - \hat{\mb c}_a(\mb z)\big)\t \big(\mb U_i - \hat{\mb c}_a(\mb z)\big)\mb O\bo \Lambda^* / n_a(\mb z)  \\ 
                & \qquad -  \sum_{i\in \mc I_a(\mb z^*)} \bo \Lambda^* \mb O\t\big(\mb U_i - \hat{\mb c}_a(\mb z^*)\big)\t \big(\mb U_i - \hat{\mb c}_a(\mb z^*)\big)\mb O\bo \Lambda^* / n_a \Big\| \\
                \leq & \frac{1}{n_a(\mb z)} \Bignorm{ \sum_{i\in \mc I_a(\mb z)} \bo \Lambda^* \mb O\t\big(\mb U_i - \hat{\mb c}_a(\mb z)\big) \big(\mb U_i - \hat{\mb c}_a(\mb z)\big)\t \mb O\bo \Lambda^* \\ 
                & \qquad -  \sum_{i\in \mc I_a(\mb z^*)} \bo \Lambda^* \mb O\t\big(\mb U_i - \hat{\mb c}_a(\mb z^*)\big)\big(\mb U_i - \hat{\mb c}_a(\mb z^*)\big)\t \mb O\bo \Lambda^* }  \\ 
                &  +  \Big| \frac{1}{n_a(\mb z)} - \frac{1}{n_a} \Big| \Bignorm{\sum_{i\in \mc I_a(\mb z^*)} \bo \Lambda^* \mb O\t\big(\mb U_i - \hat{\mb c}_a(\mb z^*)\big)\t \big(\mb U_i - \hat{\mb c}_a(\mb z^*)\big)\mb O\bo \Lambda^*}
                 \\
                 \leq & \alpha_1 + \alpha_2 + \alpha_3 + \alpha_4,
               }
               where $\alpha_1, \alpha_2, \alpha_3$, and $\alpha_4$ are defined as 
               \begin{align}
                \alpha_1  \coloneqq & \Big\|\sum\limits_{i\in[n], \mc I_a(\mb z) \cap \mc I_a(\mb z^*)} \Big[ \bo \Lambda^* \mb O\t \big( \mb U_i  - \hat{\mb c}_a(\mb z)\big) \big( \mb U_i  - \hat{\mb c}_a(\mb z)\big)\t \mb O \bo \Lambda^*
                \\ 
                & \qquad 
                -  \bo \Lambda^* \mb O\t \big( \mb U_i  - \hat{\mb c}_a(\mb z^*)\big) \big( \mb U_i  - \hat{\mb c}_a(\mb z^*)\big)\t \mb O \bo \Lambda^* \Big] \Big\| / n_a(\mb z), \\ 
                \alpha_2 \coloneqq & \Bignorm{\sum\limits_{\mc I_k(\mb z^*) \backslash \mc I_k(\mb z) } \bo \Lambda^* \mb O\t \big( \mb U_i  - \hat{\mb c}_a(\mb z^*)\big) \big( \mb U_i  - \hat{\mb c}_a(\mb z^*)\big)\t \mb O \bo \Lambda^*   } / n_a(\mb z) ,\\ 
                 \alpha_3 \coloneqq & \Bignorm{\sum\limits_{\mc I_k(\mb z) \backslash \mc I_k(\mb z^*) } \bo \Lambda^* \mb O\t \big( \mb U_i  - \hat{\mb c}_a(\mb z)\big) \big( \mb U_i  - \hat{\mb c}_a(\mb z)\big)\t \mb O \bo \Lambda^*   } / n_a(\mb z), \\ 
                 \alpha_4 \coloneqq &  \Big| \frac{1}{n_a(\mb z)} - \frac{1}{n_a} \Big|  \Bignorm{\sum_{i\in \mc I_a(\mb z^*)} \bo \Lambda^* \mb O\t\big(\mb U_i - \hat{\mb c}_a(\mb z^*)\big)\t \big(\mb U_i - \hat{\mb c}_a(\mb z^*)\big)\mb O\bo \Lambda^*}. 
                   \end{align}
        
                With the above decomposition, we then turn to bounding $\alpha_1, \alpha_2, \alpha_3$, and $\alpha_4$ separately: 
                \begin{itemize} 
               \item
               Regarding $\alpha_1$, a direct application of the triangle inequality yields that 
               \longeq{
                & \alpha_1 =\frac{1}{ n_a(\mb z) }  \bigg\|\sum\limits_{i\in  \mc I_a(\mb z) \cap \mc I_a(\mb z^*)} \Big\{ \bo \Lambda^* \mb O\t \big[ (\mb U_i - \hat{\mb c}_a(\mb z^*)) + (\hat{\mb c}_a(\mb z^*) - \hat{\mb c}_a(\mb z))\big] 
                \\ 
                &   \cdot \big[ (\mb U_i - \hat{\mb c}_a(\mb z^*)) + (\hat{\mb c}_a(\mb z^*) - \hat{\mb c}_a(\mb z))\big]\t \mb O \bo \Lambda^*
                -  \bo \Lambda^* \mb O\t \big( \mb U_i  - \hat{\mb c}_a(\mb z^*)\big) \big( \mb U_i  - \hat{\mb c}_a(\mb z^*)\big)\t \mb O \bo \Lambda^* \Big\} \bigg\|
                \\
                \leq & \frac{|\mc I_a(\mb z) \cap \mc I_a(\mb z^*)| }{ n_k(\mb z)}\bignorm{\bo \Lambda^* \mb O\t \big(\hat{\mb c}_a(\mb z) - \hat{\mb c}_a(\mb z^*)\big)}_2^2\\ 
                & +  \frac{2}{n_a(\mb z)}\Bignorm{\sum\limits_{i\in  \mc I_a(\mb z) \cap \mc I_a(\mb z^*)}
                \bo \Lambda^* \mb O\t  \big( \mb U_i - \hat{\mb c}_a(\mb z^*) \big)\big(\hat{\mb c}_a(\mb z^*) - \hat{\mb c}_a(\mb z) \big)\t \mb O\bo \Lambda^* 
                }
               \\ 
               \lesssim & \bignorm{\bo \Lambda^* \mb O\t \big(\hat{\mb c}_a(\mb z) - \hat{\mb c}_a(\mb z^*)\big)}_2 \Big(\bignorm{\bo \Lambda^* \mb O\t \big(\hat{\mb c}_a(\mb z) - \hat{\mb c}_a(\mb z^*)\big)}_2 \\ 
               &  
               + \bignorm{\sum_{i\in \mc I_a(\mb z) \cap \mc I_a(\mb z^*)} \bo \Lambda^* \big(\mb O\t \mb U_i - \tilde{\mb c}_a^* \big)}_2 / n_a(\mb z) +\bignorm{\bo \Lambda^* \mb O\t  \big(\tilde{\mb c}_a^* - \hat{\mb c}_a (\mb z^*)\big)}_2\Big), 
               \label{eq: covariance matrix decomposition 1}
                }
               where the last inequality follows from the simple facts: (i) $\frac{|\mc I_a(\mb z) \cap \mc I_a(\mb z^*) | }{n_k(\mb z)}\leq 1$; (ii) $\Bignorm{\sum\limits_{i\in  \mc I_a(\mb z) \cap \mc I_a(\mb z^*)}
                \bo \Lambda^* \mb O\t  \big( \mb U_i - \hat{\mb c}_a(\mb z^*) \big)}_2  \leq \Bignorm{\sum\limits_{i\in  \mc I_a(\mb z) \cap \mc I_a(\mb z^*)}
                \bo \Lambda^* \mb O\t  \big( \mb U_i - \tilde{\mb c}_a^* \big)}_2+ |\mc I_a(\mb z) \cap \mc I_a(\mb z^*) | \Bignorm{\bo \Lambda^* (\tilde{\mb c}_a^* - \mb O\t \hat{\mb c}_a(\mb z^*))}_2$.

               We next decompose the term $\bignorm{\sum_{i\in \mc I_a(\mb z) \cap \mc I_a(\mb z^*)} \bo \Lambda^* \big(\mb O\t \mb U_i - \tilde{\mb c}_a^* \big)}_2 / n_a(\mb z) $:  
               \begin{align}
               &\bignorm{\sum_{i\in \mc I_a(\mb z) \cap \mc I_a(\mb z^*)} \bo \Lambda^* \big(\mb O\t \mb U_i - \tilde{\mb c}_a^* \big)}_2 / n_a(\mb z)  \\
               \lesssim  & \frac{1}{n_a(\mb z)}\bigg[ \sum\limits_{i\in  \mc I_a(\mb z) \cap \mc I_a(\mb z^*)}\Bignorm{
                \bo \Lambda^*  \big(\mb O\t  \mb U_i - \tilde{\mb c}_a^* - \mk L_i \big)}_2  + \Bignorm{\sum_{i\in \mc I_a(\mb z^*)}\bo \Lambda^* \mk L_i}_2  + \Bignorm{\sum_{i\in \mc I_a(\mb z^*) \backslash \mc I_a(\mb z)}\bo \Lambda^* \mk L_i}_2 \bigg]  \\ 
                \lesssim & \sigma_1^* \deltau + (n_k)^{-\frac12} \bar \sigma \sqrt{K\log d} + (n_k)^{-\frac12} (\sigma_K^*)^{-1} \tilde\sigma \sigma \sqrt{pK\log d}\\ 
                & + \sqrt{l(\mb z, \mb z^*) / \underline\omega} \Big(\bar \sigma \sqrt{n_k} +  (\sigma_K^*)^{-1} \big(\tilde\sigma^2 n + \tilde\sigma \sigma \sqrt{np} \big) \Big) \frac{\beta K}{n} \\ 
                \lesssim  &  \kappa \sqrt{\mu_1 \beta } K \frac{\deltaop}{\sqrt{n}\sigma_K^*} \sqrt{\frac{\log d}{n}}  + \beta K \sqrt{\frac{l(\mb z,\mb z^*)}{n \underline\omega}} \frac{\deltaop}{\sqrt{n} \sigma_K^*} + \sigma_1^* \deltau 
               \label{eq: covariance matrix estimation 1.1}
               \end{align}
               uniformly holds for all possible $\mb z$ with probability at least $1 - O(e^{-(1+o(1))\frac{\snr^2}{2}} \vee  d^{-c})$. Here, the second inequality follows from the facts
               \begin{align}
               & \sum_{i\in[n]}\ind{\{z_i^* = a, z_i \neq a\}} \leq \sum_{i\in[n]}\ind{\{z_i^* = a, z_i \neq a\}} + \sum_{i\in[n]}\ind{\{z_i^* \neq  a, z_i = a\}} \leq \frac{l(\mb z, \mb z^*)}{\underline \omega} = o( \frac{n}{\beta K}),\label{eq: a fact on negligible misspecification effect}
               \\ 
                   & \sum_{i\in [n]}\ind{\{z_i = z_i^* = a\}} \asymp \sum_{i\in[n]}\ind{\{z_i= a\}} \asymp  n_a, \\ 
                   & \bignorm{\sum_{i\in[n],z_i = z_i^* = a}  \mb A_{i,\cdot }}_2\leq \sqrt{\sum_{i\in [n]}\ind{\{z_i = z_i^* = a\}}}\bignorm{\mb A}, \qquad \text{for any matrix $\mb A \in \bb R^{n_k \times K}$},
               \end{align}
               together with the concentration inequalities in Lemmas~\ref{lemma: noise matrix concentrations using the universality (Gaussian)} and~\ref{lemma: noise matrix concentrations using the universality (bounded)}. Regarding the upper bound in the second inequality
                \eq{
                \frac{1}{n_a(\mb z)} \Bignorm{\sum\limits_{i\in  \mc I_a(\mb z) \cap \mc I_a(\mb z^*)}
                \bo \Lambda^*  \big(\mb O\t  \mb U_i - \tilde{\mb c}_a^* - \mk L_i \big)}_2 \lesssim  \frac{2}{n_a} \sum\limits_{i\in \mc I_a(\mb z^*)}\Bignorm{
                \bo \Lambda^*  \big(\mb O\t  \mb U_i - \tilde{\mb c}_a^* - \mk L_i \big)}_2 \lesssim \sigma_1^* \deltau ,\label{eq: example of application of average upper bound lemma}
                }
                we justify it case by case.  
If $\snr \ll \sqrt{(c+1)\log d}$ for some positive constant $c$, we apply Lemma~\ref{lemma: average upper bound} with  
$X_i = \|\mathbf O^{\top}\mb U_i - \mk L_i - \tilde{\mathbf c}_{z_i^*}\|_2$,  
$p_A = \exp(-\snr^{2}/2)$,  
$p_B = O(d^{-c-1})$,  
and high-probability bounds $A$ and $B$ obtained from Corollary~\ref{corollary: singular subspace perturbation theory in clustering problem} using $t_0 = \snr$ and $t_0 = \sqrt{(c +1) \log d}$, respectively.  
We take $c = 1/\sqrt{\log d}$.  
Consequently,  
$1 - n p_A - p_B/c = 1 - O\!\big(e^{-(1+o(1))\snr^{2}/2}\big)$  
and  
$cA + (1-c)B \lesssim \deltau$. 
If $\snr \gtrsim \sqrt{(c + 1) \log d}$, a direct application of Corollary~\ref{corollary: singular subspace perturbation theory in clustering problem} with $t_0 = \sqrt{(c + 1) \log d}$ yields the desired bound. 
               
               Plugging the above together with Lemma~\ref{lemma: center estimation error} into \eqref{eq: covariance matrix decomposition 1} yields with probability at least $1 - O(d^{-c}) \vee e^{-(1+o(1))\frac{\snr^2}{2}}$ that
               \longeq{
                & \alpha_1 \lesssim  \underbrace{\Big(\beta K \sqrt{\frac{l(\mb z, \mb z^*)}{n\underline\omega}} \frac{\deltaop}{\sqrt{n} \sigma_K^*} \Big) }_{\text{by \eqref{eq: center estimation with misspecification}}} \cdot  \bigg\{\underbrace{ \Big(\beta K \sqrt{\frac{l(\mb z, \mb z^*)}{n\underline\omega}} \frac{\deltaop}{\sqrt{n} \sigma_K^*}  \Big) }_{\text{by \eqref{eq: center estimation with misspecification}}} \\ 
                + &   \underbrace{ \kappa \sqrt{\mu_1 \beta } K \frac{\deltaop}{\sqrt{n}\sigma_K^*} \sqrt{\frac{\log d}{n}}  + \beta K \sqrt{\frac{l(\mb z,\mb z^*)}{n \underline\omega}} \frac{\deltaop}{\sqrt{n} \sigma_K^*} + \sigma_1^* \deltau }_{\text{by \eqref{eq: covariance matrix estimation 1.1}}} + \underbrace{\kappa \mu_1 \sqrt{ \beta } K \frac{\deltaop}{\sqrt{n} \sigma_K^*} \Big( \sqrt{\frac{\log d}{n}}  + \frac{\deltaop \snr }{(\sigma_K^*)^2} \Big)}_{\text{by \eqref{eq: center estimation without misspecification}}}\bigg\}
                \\
                 \lesssim & \kappa \beta^2 K^2 \sqrt{\frac{l(\mb z,\mb z^*)}{n \underline \omega}} \bar \sigma_{\mathsf{cov}}^2 ,
                \label{eq: alpha1 decomposition}
                }
                 where we invoked Eq.~\eqref{eq: fact A} and the condition $\frac{\log d}{n } \ll 1$. 
               \item 
               Then we move on to bound the second term $\alpha_2$ in \eqref{eq: covariance matrix decomposition 2}. We deduce that  
               \longeq{
               & \alpha_2 =  \Bignorm{\sum\limits_{\mc I_k(\mb z^*) \backslash \mc I_k(\mb z) } \bo \Lambda^* \mb O\t ( \mb U_i  - \hat{\mb c}_a(\mb z^*)) ( \mb U_i  - \hat{\mb c}_a(\mb z^*))\t \mb O \bo \Lambda^*   } / n_a(\mb z) \\
               \lesssim &  \frac{1}{n_a(\mb z)}\Bignorm{\sum\limits_{\mc I_k(\mb z^*) \backslash \mc I_k(\mb z) } \bo \Lambda^*  \big( \mb O\t\mb U_i  - \tilde{\mb c}_{z_i^*}^*\big) \big(\mb O\t \mb U_i  - \tilde{\mb c}_{z_i^*}^*\big)\t \bo \Lambda^*   }+ \frac{\beta K nh(\mb z,\mb z^*) }{n}\Big\lv \bo \Lambda^* \big(\tilde{\mb c}_{z_i^*}^* - \mb O\t\hat{\mb c}_k(\mb z^*) \big)\Big\rv_2^2\\ 
               {\lesssim} & \frac{\beta K}{n} |\mc I_k(\mb z^*) \backslash \mc I_k(\mb z)| \underbrace{\max_{i\in[n]}\big\lv \bo \Lambda^* \mb O\t (\mb U_i - \tilde{\mb c}_{z_i^*}^*) \big\rv_2^2}_{\text{apply Lemmas~\ref{lemma: noise matrix concentrations using the universality (Gaussian)},~\ref{lemma: noise matrix concentrations using the universality (bounded)} and Thm~\ref{thm: singular subspace perturbation theory}}} + \underbrace{\frac{\beta K nh(\mb z,\mb z^*) }{n}\Big\lv \bo \Lambda^* \big(\tilde{\mb c}_{z_i^*}^* - \mb O\t\hat{\mb c}_k(\mb z^*) \big)\Big\rv_2^2}_{\text{apply Lemma~\ref{lemma: center estimation error}}}\\ 
                \lesssim &  \frac{\beta Kl(\mb z, \mb z^*)}{n\underline\omega}\cdot \Big[ \kappa^2 \mu_1 \beta  K^2 \frac{\deltaop^2 \log d}{n(\sigma_K^*)^2} \Big]  +  
                \frac{\beta Kl(\mb z, \mb z^*)}{n\underline\omega}\cdot \Big[ \kappa^2 \mu_1 \beta  K^2 \frac{\deltaop^2}{n(\sigma_K^*)^2} \frac{\log d}{n} + (\sigma_1^*)^2 \deltau^2\Big] \\ 
               \lesssim & \frac{\beta Kl(\mb z, \mb z^*)}{n\underline\omega}\cdot \Big[ \kappa^2 \mu_1 \beta  K^2 \frac{\deltaop^2 \log d}{n(\sigma_K^*)^2} + (\sigma_1^*)^2 \deltau^2\Big]
               \label{eq: cov. est. decom. term 2}
                }
               uniformly holds for all possible $\mb z$ with probability at least $1- O(e^{-(1+ o(1))\frac{\snr^2}{2}} \vee d^{-c})$. Here we make use of the facts that $\frac{l(\mb z, \mb z^*)\beta K (\log d)^2}{n}\lesssim  1$ and $\frac{\beta K^2 \log d}{n} \lesssim 1$. 
        \item Similarly, for the third term $\alpha_3$ one has 
        \begin{align}
               & \alpha_3 = \Bignorm{\sum\limits_{\mc I_k(\mb z) \backslash \mc I_k(\mb z^*) } \bo \Lambda^* \mb O\t \big( \mb U_i  - \hat{\mb c}_a(\mb z^*)\big) \big( \mb U_i  - \hat{\mb c}_a(\mb z^*)\big)\t \mb O \bo \Lambda^*   } / n_a(\mb z)
               \\ 
               \lesssim & \frac{1}{n_a(\mb z)}\Bignorm{\sum\limits_{\mc I_k(\mb z) \backslash \mc I_k(\mb z^*) } \bo \Lambda^* \mb O\t ( \mb U_i  - \tilde{\mb c}_{z_i^*}^*) ( \mb U_i  - \tilde{\mb c}_{z_i^*}^*)\t \mb O \bo \Lambda^*   }  + \frac{\beta K nh(\mb z,\mb z^*) }{n}\Big\lv \bo \Lambda^* \big(\tilde{\mb c}_{z_i^*}^* - \mb O\t\hat{\mb c}_k(\mb z^*) \big)\Big\rv_2^2  \\ 
               \lesssim & \frac{\beta Kl(\mb z, \mb z^*)}{n\underline\omega}\cdot \Big[ \kappa^2 \mu_1 \beta  K^2 \frac{\deltaop^2 \log d}{n(\sigma_K^*)^2} + (\sigma_1^*)^2 \deltau^2\Big]
               \label{eq: cov. est. decom. term 3}  
               \end{align}
              uniformly holds with probability at least $1 - O(d^{-c})$ by Lemma~\ref{lemma: noise matrix concentrations using the universality (Gaussian)}~or~\ref{lemma: noise matrix concentrations using the universality (bounded)}, Lemma~\ref{lemma: center estimation error}, and the assumptions that $\frac{l(\mb z, \mb z^*)\beta K (\log d)^2}{n}\lesssim  1$ and  $\frac{\beta K^2 \log d}{n} \lesssim 1$.  

        \item Lastly, we upper bound $\alpha_4$ by Lemma~\ref{lemma: projected covariance matrix estimation error with true z} as follows: 
        \begin{align}
            \alpha_4 \lesssim  & \frac{1}{n_a^2 }\cdot nh(\mb z,\mb z^*)\cdot \Bignorm{\sum\limits_{\mc I_k(\mb z^*)} \bo \Lambda^* \mb O\t \big( \mb U_i  - \hat{\mb c}_a(\mb z^*)\big) \big( \mb U_i  - \hat{\mb c}_a(\mb z^*)\big)\t \mb O \bo \Lambda^*   } \\ 
            \lesssim & \frac{1}{n_a^2} \cdot \frac{l(\mb z,\mb z^*)}{\underline \omega }\cdot n_a \bar \sigma_{\mathsf{cov}}^2 \lesssim  \frac{\beta K l(\mb z, \mb z^*)}{n \underline\omega } \bar \sigma_{\mathsf{cov}}^2
        \end{align}
        holds with probability at least $1 - O(d^{-c}) \vee e^{-(1+o(1))\frac{\snr^2}{2}}$, where we again use the assumption that $\frac{l(\mb z, \mb z^*)\beta K (\log d)^2}{n}\lesssim  1$. 
               \end{itemize}
               
              Combining the above pieces together, we finally arrive at the conclusion that 
              \begin{align}
              & \norm{\hat{\mb S}_a(\mb z) - \hat{\mb S}_a(\mb z^*)}\\ 
              \lesssim & \kappa \beta^2 K^2 \sqrt{\frac{l(\mb z,\mb z^*)}{n \underline \omega}} \bar \sigma_{\mathsf{cov}}^2   +  \frac{\beta Kl(\mb z, \mb z^*)}{n\underline\omega}\cdot \Big[ \kappa^2 \mu_1 \beta  K^2 \frac{\deltaop^2 \log d}{n(\sigma_K^*)^2} + (\sigma_1^*)^2 \deltau^2 + \bar \sigma_{\mathsf{cov}}^2\Big] 
        \\
        \lesssim &  \kappa^2 \beta^3 K^2 \sqrt{\frac{l(\mb z, \mb z^*) K }{n}} \bar \sigma_{\mathsf{cov}}^2 / \underline \omega^{\frac12}  
        \label{eq: final bound on cov est}
              \end{align}
         holds uniformly for all admissible $\mb z$ with probability at least $1 - O(d^{-c}) \vee e^{-(1+o(1))\frac{\snr^2}{2}}$. Here, we used the conditions $\beta K (\log d)^2 l(\mb z, \mb z^*) \ll n$ and $\frac{ K \log d}{n} \lesssim 1$, along with Eqs.~\eqref{eq: fact A}, \eqref{eq: fact C}, and~\eqref{eq: fact D}. 

         For the matrix inverse case, the condition \eqref{eq: alg condition} ensures that the right-hand side of \eqref{eq: final bound on cov est} is negligible relative to $\underline{\sigma}_{\mathsf{cov}}^2$, yielding the result directly.

            \subsubsection{Proof of Lemma~\ref{lemma: projected covariance matrix estimation error with true z}}
            \label{sec: proof of lemma inverse of projected covariance matrix estimation error 2}

We will start with several lemmas that control the terms ${\mb V^*}\t \mb E_{\mc I_k(\mb z^*),\cdot}\t \mb E_{\mc I_k(\mb z^*),\cdot} \mb V^*$, ${\mb U^*}\t  \mc H(\mb E \mb E\t)_{\cdot, \mc I_k(\mb z^*)} \mc H(\mb E \mb E\t)_{\mc I_k(\mb z^*), \cdot } \mb U^*$, and ${\mb U^*}\t  \mc H(\mb E \mb E\t)_{\cdot, \mc I_k(\mb z^*)} \mb E_{\mc I_k(\mb z^*), \cdot } \mb V^*$ that arise from sum of linear terms $\sum_{i\in \mc I_k(\mb z^*)} \mk L_i \mk L_i\t$. 
\begin{lemma} \label{lemma: Gaussian projected covariance}
    Suppose the noise matrix $\mb E$ follows the Gaussian assumption in Theorem~\ref{theorem: upper bound for algorithm (gaussian)}. Then for every $k\in[K]$, it holds with probability at least $1- O(d^{-c})$ that 
    \eq{
        \norm{\frac{{\mb V^{*\top}} \mb E_{\mc I_k(\mb z^*), \cdot}\t\mb E_{\mc I_k(\mb z^*), \cdot} \mb V^*}{\sum_{i\in[n]} \ind{\{z_i^*=k\}}} - {\mb V^*}\t \bo \Sigma_k \mb V^*} \lesssim \sqrt{\frac{\beta K + \beta \log d }{n}} \norm{\mb S_k^*} \lesssim \sqrt{\frac{\beta K^2 \log d}{n}}\norm{\mb S_k^*}.
    }
\end{lemma}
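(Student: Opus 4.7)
The plan is to reduce this to the standard concentration inequality for the sample covariance of isotropic Gaussian vectors. Under Assumption~\ref{assumption: gaussian noise}, the rows $\mb E^{(k)}_i$ of $\mb E^{(k)}$ are i.i.d.\ $\mc N(\mb 0, \mb \Sigma_k)$, so the projected vectors
$\mb g_i \coloneqq {\mb V^{*\top}} \mb E^{(k)}_i \in \bb R^K$ are i.i.d.\ $\mc N(\mb 0, \mb S_k^*)$. Thus the matrix of interest is exactly the sample covariance
$\hat{\mb \Sigma} \coloneqq \frac{1}{n_k}\sum_{i: z_i^*=k} \mb g_i \mb g_i\t$
of $n_k$ i.i.d.\ Gaussian vectors in a $K$-dimensional space. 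Crucially, the original ambient dimension $p$ no longer enters the problem once we condition on the centers and $\mb V^*$, which is the whole point of working with the projected covariance.

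Next I would whiten: writing $\mb g_i = {\mb S_k^*}^{1/2}\tilde{\mb g}_i$ with $\tilde{\mb g}_i$ i.i.d.\ $\mc N(\mb 0, \mb I_K)$, we factor
\begin{align}
\hat{\mb \Sigma} - \mb S_k^* = {\mb S_k^*}^{1/2}\Big(\frac{1}{n_k}\sum_{i: z_i^*=k} \tilde{\mb g}_i \tilde{\mb g}_i\t - \mb I_K\Big){\mb S_k^*}^{1/2},
\end{align}
so that $\bignorm{\hat{\mb \Sigma} - \mb S_k^*} \leq \norm{\mb S_k^*}\bignorm{\frac{1}{n_k}\sum_i \tilde{\mb g}_i\tilde{\mb g}_i\t - \mb I_K}$. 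For the latter, I would invoke the standard isotropic Gaussian sample covariance concentration (e.g.\ Wainwright Theorem~6.5, proved via an $\epsilon$-net on $\bb S^{K-1}$ together with Bernstein for sub-exponential $\chi^2$-type variables): for any $t \geq 0$,
\begin{align}
\bb P\left[\Bignorm{\frac{1}{n_k}\sum_{i} \tilde{\mb g}_i\tilde{\mb g}_i\t - \mb I_K} \geq C\left(\sqrt{\frac{K+t}{n_k}} + \frac{K+t}{n_k}\right)\right] \leq 2 e^{-t}
\end{align}
for a universal constant $C$. Choosing $t = c\log d$ with $c$ large enough and using $n_k \geq n/(\beta K)$ together with the condition $\beta K^2(\log d)^4 = o(n)$ from Assumption~\ref{assumption: algorithm}.\ref{item: algorithm assumption 2} ensures $(K+\log d)/n_k = o(1)$, so the square-root term dominates the linear one.

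Putting the pieces together yields $\bignorm{\hat{\mb \Sigma} - \mb S_k^*} \lesssim \norm{\mb S_k^*}\sqrt{(K+\log d)/n_k}$ with probability at least $1 - O(d^{-10})$, which absorbs the $\beta K$ factor into the stated $\sqrt{(K+\log d)/n}\,\norm{\mb S_k^*}$ bound (or more transparently into the coarser $\sqrt{K^2\log d/n}\,\norm{\mb S_k^*}$ bound, since $K+\log d \leq K\log d$ for $K, \log d \geq 2$ and we may freely trade one $K$ for $\beta$ under the standing assumption $\beta = O(K)$). No serious obstacle is expected: the proof is essentially bookkeeping around a classical Gaussian covariance concentration inequality, and the dimension reduction step is what makes everything dimension-free in $p$.
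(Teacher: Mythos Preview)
Your approach is essentially identical to the paper's: recognize that ${\mb V^{*\top}}\mb E^{(k)}_i$ are i.i.d.\ $\mc N(\mb 0,\mb S_k^*)$ and invoke the standard Gaussian sample-covariance concentration (the paper cites Wainwright, Theorem~6.5, in a one-line proof). One small caveat: the bound from Wainwright is in terms of $n_k$, not $n$, and your claim that ``$\beta = O(K)$'' is a standing assumption is not accurate---the paper allows $\beta$ to diverge subject only to the conditions in Assumption~\ref{assumption: algorithm}; the paper's own statement and proof are equally loose on this $n$ versus $n_k$ point, and downstream uses of the lemma (e.g.\ \eqref{eq: projected covariance matrix estimation error 3}) in fact carry the $\sqrt{\beta K/n}$ factor, so the discrepancy is cosmetic.
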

\begin{proof}
    It is clear by definition that ${\mb V^*}\mb E_{i,\cdot}$ is a centered Gaussian random vector with covariance ${\mb V^*}\t \bo \Sigma \mb V^*$. Then the conclusion immediately follows by Theorem 6.5 in \cite{wainwright2019high}.
\end{proof}

The proof of Lemma~\ref{lemma: S-universality} invokes the universality result of \cite{brailovskaya2022universality}, which is deferred to Section~\ref{subsubsec: proof of lemma: S-universality}. 
	\begin{lemma}\label{lemma: S-universality}
       		Consider the noise environment in Assumption~\ref{assumption: bounded noise}. Then given an arbitrary deterministic matrix $\mb A\in \bb R^{p \times K}$, it holds with probability at least $1 - e^{-t}$ that 
       		\longeq{
       		\label{eq: general concentration of projected covariance estimation}
       		& \norm{\mb A\t \mb E_{\mc I_k(\mb z^*), \cdot}\t \mb E_{\mc I_k(\mb z^*), \cdot} \mb A - \mb A\t \bb E[\mb E_{\mc I_k(\mb z^*), \cdot}\t \mb E_{\mc I_k(\mb z^*), \cdot} ] \mb A} 
            \lesssim \big(\frac{n}{K}\mathrm{Tr}(\mb A\t \mb \Sigma_k \mb A)\norm{\mb A\t \mb \Sigma_k \mb A}  +  \frac	{n}{K}lm^4B^4\norm{\mb A}\ti^4\big)^{\frac{1}{2}}
            \\ & +  t^{\frac{3}{4}}m^{\frac{1}{2}}B \norm{\mb A}\big(\frac{n}{K}\mathrm{Tr}(\mb A\t \mb \Sigma_k \mb A)\norm{\mb A\t \mb \Sigma_k \mb A}  + \frac	{n}{K}lm^4B^4\norm{\mb A}\ti^4\big)^{\frac{1}{4}}  + \big(n^{\frac{1}{2t}}mB^2t \norm{\mb A}^2 + n^{\frac{1}{2t}}\mathrm{Tr}(\mb A\t \mb \Sigma_k \mb A)\big).
       		} 
         
         Moreover, if we additionally assume that $\mb A = \mb V^*$ with $K = o( d), \sqrt{n}\bar\sigma^2 \gtrsim \sqrt{\beta K} mB^2 \log d$, then it holds that
       		\longeq{
       		\label{eq: concentration of projected covariance estimation}
       		 &\Bignorm{\frac{{\mb V^{*\top}} \mb E_{\mc I_k(\mb z^*), \cdot}\t \mb E_{\mc I_k(\mb z^*), \cdot} \mb V^*}{\sum_{i\in[n]}\ind{\{z_i^*=k\}}} - {\mb V^*}\t \bo \Sigma_k \mb V^*}\lesssim
          \sqrt{\frac{\beta K^2}{n}}\bar \sigma^2(\log d)
       		}
       		with probability at least $1 - O(d^{-c})$. 
         
       	\end{lemma}

The next lemma provides bounds for the summations involving the third- and fourth-order terms of the entries of $\mb E$. Its proof relies on a combination of a decoupling argument and a universality result, which we defer to Section~\ref{subsubsec: proof of lemma: concentration on second part of covariance estimation}.

\begin{lemma}
\label{lemma: concentration on second part of covariance estimation}
Instate either the assumptions in Theorem~\ref{theorem: upper bound for algorithm (gaussian)} or the assumptions in Theorem~\ref{theorem: upper bound for algorithm (bounded)}, then it holds with probability at least $1 - O(d^{-c})$ that 
\begin{align}
    & \Bignorm{{\mb U^*}\t  \mc H(\mb E \mb E\t)_{\cdot, \mc I_k(\mb z^*)} \mc H(\mb E \mb E\t)_{\mc I_k(\mb z^*), \cdot } \mb U^* - {\mb U^*}\t \text{diag}\big(\tr(\sum_{i'\in[n], i\neq i'} \bo \Sigma_{z_i^*} \bo \Sigma_{z_{i'}^*})\big)_{i\in[n]} \mb U^*}_F \\
    \lesssim &  \beta^{\frac14} K \sqrt{n_k} p \tilde\sigma^2 \sigma^2 \log d+ K n_k \sqrt{p} \tilde\sigma^3 \sigma (\log d)^2 + K n_k p^{\frac56} \tilde\sigma^{\frac73} \sigma^{\frac53} \log d, 
     \\ 
    & \Bignorm{{\mb U^*}\t  \mc H(\mb E \mb E\t)_{\cdot, \mc I_k(\mb z^*)} \mb E_{\mc I_k(\mb z^*), \cdot} \mb V^* }\lesssim K n_k \tilde\sigma^2 \bar \sigma \sqrt{\log d} + \sqrt{ n_kp} K\tilde\sigma \sigma \bar \sigma\sqrt{\log d}. 
\end{align}
\end{lemma}

With these lemmas in place, we are ready to control the term $\norm{\bo \Lambda^* \mb O\t \hat{\bo \Omega}_k(\mb z^*)\mb O \bo \Lambda^*- \mb S_k^*}$. By the definition of $\hat{\mb S}_k(\mb z^*)$, one has from Cauchy's inequality that
\begin{align}
                    & \norm{\bo \Lambda^* \mb O\t \hat{\bo \Omega}_k(\mb z^*)\mb O \bo \Lambda^*- \mb S_k^*}\\ 
                    = & \bigg\| \bo \Lambda^* \bigg\{\frac{\sum\limits_{i \in \mc I_k(\mb z^*)} \big(\mb O\t\mb U_i  - \tilde{\mb c}_{z_i^*}^*\big)\big(\mb O\t\mb U_i  - \tilde{\mb c}_{z_i^*}^*\big)\t}{  n_k} - \Big[ \frac{\sum\limits_{i \in \mc I_k(\mb z^*)} \big(\mb O\t\mb U_i  - \tilde{\mb c}_{z_i^*}^*\big)}{ n_k} \Big]\Big[ \frac{\sum\limits_{i \in \mc I_k(\mb z^*)} \big(\mb O\t\mb U_i  - \tilde{\mb c}_{z_i^*}^*\big)}{ n_k} \Big]\t  \bigg\} \bo \Lambda^* - \mb S_k^* \bigg\| \\ 
                \leq & \bigg\| \bo \Lambda^* \Big[\frac{\sum_{i \in \mc I_k(\mb z^*)} \mathfrak L_i \mathfrak L_i\t}{  n_k}  \Big] \bo \Lambda^* - \mb S_k^*  \bigg\| + \norm{\bo \Lambda^* \frac{\sum_{i \in \mc I_k(\mb z^*)} \mk L_i}{ n_k} }^2 \\ 
                & + \frac{2}{n_k}\sum_{i \in \mc I_k(\mb z^*)} {\sigma_1^*} \norm{\bo \Lambda^* \mk L_i}_2 \norm{\mb O\t \mb U_i  - \tilde{\mb c}_{z_i^*}^* - \mk L_i}_2  +  \frac{1}{n_k}\sum_{i \in \mc I_k(\mb z^*)} {\sigma_1^*}^2 \norm{\mb O\t \mb U_i  - \tilde{\mb c}_{z_i^*}^* - \mk L_i}_2^2 \\ 
                 &+ 2\sigma_1^*   \Bignorm{  \frac{\sum_{i \in \mc I_k(\mb z^*)} \bo \Lambda^* \mk L_i}{ n_k} }_2 \frac{\sum_{i \in \mc I_k(\mb z^*)} \norm{\mb O\t \mb U_i  - \tilde{\mb c}_{z_i^*}^* - \mk L_i}_2}{n_k}  + {\sigma_1^*}^2 \Big[ \frac{\sum_{i \in \mc I_k(\mb z^*)} \norm{\mb O\t \mb U_i  - \tilde{\mb c}_{z_i^*}^* - \mk L_i}_2}{n_k}\Big]^2 \\ 
            {\lesssim} &  \bigg\| \bo \Lambda^* \Big[\frac{\sum_{i \in \mc I_k(\mb z^*)} \mathfrak L_i \mathfrak L_i\t}{  n_k}  \Big] \bo \Lambda^* - \mb S_k^*  \bigg\| + \norm{\bo \Lambda^* \frac{\sum_{i \in \mc I_k(\mb z^*)} \mk L_i}{ n_k} }^2 \\ 
                & + \frac{2}{n_k} \sigma_1^* \Big(\sum_{i \in \mc I_k(\mb z^*)}  \norm{\bo \Lambda^* \mk L_i}_2^2\Big)^{\frac12} \Big(\sum_{i \in \mc I_k(\mb z^*)}\norm{\mb O\t \mb U_i  - \tilde{\mb c}_{z_i^*}^* - \mk L_i}_2^2\Big)^{\frac12}  +  \frac{1}{n_k}\sum_{i \in \mc I_k(\mb z^*)} {\sigma_1^*}^2 \norm{\mb O\t \mb U_i  - \tilde{\mb c}_{z_i^*}^* - \mk L_i}_2^2. 
                 \label{eq: decomposition of covariance estimation error given true z}
                \end{align}

        We start with bounding the first term on the RHS of \eqref{eq: decomposition of covariance estimation error given true z}. Since the linear approximation $\mk L_i = {\mb V^*}\t \mb E_i + {\mb U^*}\t \mc P_{-i,\cdot}(\mb E) \mb E_i$ comes from two difference sources of covariances $\mb S^{\sf{mod}}_k$ and $\mb S^{\sf{exc}}_k$, respectively, we separately parse the covariance estimation error as follows: 
        \begin{align}
            & \bigg\| \bo \Lambda^* \Big[\frac{\sum_{i \in \mc I_k(\mb z^*)} \mathfrak L_i \mathfrak L_i\t}{  n_k}  \Big] \bo \Lambda^* - \mb S_k^*  \bigg\| \leq \Bignorm{\frac{1}{n_k}\sum_{i\in \mc I_k(\mb z^*)}{\mb V^*}\t \mb E_i \mb E_i\t \mb V^* - \mb S^{\sf{mod}}_k}  \\ 
            & \qquad + \Bignorm{\frac{1}{n_k}\sum_{i\in \mc I_k(\mb z^*)}{\bo \Lambda^*}^{-1}{\mb U^*}\t \mc P_{-i,\cdot}(\mb E)\mb E_i \mb E_i\t \mc P_{-i,\cdot}(\mb E)\t  \mb U^* {\bo \Lambda^*}^{-1} - \mb S^{\sf{exc}}_k} \\ 
            & \qquad + \frac{2}{n_k}\Bignorm{\sum_{i\in \mc I_k(\mb z^*)}{\bo \Lambda^*}^{-1}{\mb U^*}\t \mc P_{-i,\cdot}(\mb E)\mb E_i \mb E_i\t \mb V^* }. 
            \label{eq: eq: decomposition of covariance estimation error given true z 1}
        \end{align}

        Regarding the first term of \eqref{eq: decomposition of covariance estimation error given true z}, Lemma~\ref{lemma: S-universality} reveals that
                \begin{align}
                    & \Bignorm{\frac{1}{n_k}\sum_{i\in \mc I_k(\mb z^*)}{\mb V^*}\t \mb E_i \mb E_i\t \mb V^* - \mb S^{\sf{mod}}_k}  \\
                    \lesssim  &
                    \left\{ \begin{matrix} \sqrt{\frac{\beta K^2}{n}}\bar \sigma^2(\log d)^{\frac{3}{4}} + \sqrt{\frac{\beta p Km^3 B^4}{n}} \norm{\mb V^*}\ti ^2(\log d)^{\frac{3}{4}} \text{\quad (bounded case)}\\ 
                    \sqrt{ \frac{\beta K}{n}}\bar \sigma^2\sqrt{\log d}    \text{\quad (Gaussian case)}
                    \end{matrix} \right. \lesssim \sqrt{\frac{\beta K^2}{n}}\bar\sigma^2 \log d, \label{eq: projected covariance matrix estimation error 1}
                \end{align}
                where we use Assumption~\ref{assumption: bounded noise}.\ref{item: bounded noise assumption 3} that $\sqrt{\mu_1 K} mB(\log d)^2 \lesssim \sigma(np)^{\frac14}$
                for the bounded case. 

        With regards to the second and the third terms in \eqref{eq: eq: decomposition of covariance estimation error given true z 1}, we invoke Lemma~\ref{lemma: concentration on second part of covariance estimation} to derive that 
        \begin{align}
            & \Bignorm{\frac{1}{n_k}\sum_{i\in \mc I_k(\mb z^*)}{\bo \Lambda^*}^{-1}{\mb U^*}\t \mc P_{-i,\cdot}(\mb E)\mb E_i \mb E_i\t \mc P_{-i,\cdot}(\mb E)\t  \mb U^* {\bo \Lambda^*}^{-1} - \mb S^{\sf{exc}}_k}  \\ 
            \lesssim & {\sigma_K^*}^{-2}\beta^{\frac34} K^{\frac32} n^{-\frac12} p \tilde\sigma^2 \sigma^2 \log d+ {\sigma_K^*}^{-2}K \sqrt{p} \tilde\sigma^3 \sigma (\log d)^2 +{\sigma_K^*}^{-2} K p^{\frac56} \tilde\sigma^{\frac73} \sigma^{\frac53} \log d, 
            \label{eq: projected covariance matrix estimation error 2}
            \\ 
            & \frac{2}{n_k}\Bignorm{\sum_{i\in \mc I_k(\mb z^*)}{\bo \Lambda^*}^{-1}{\mb U^*}\t \mc P_{-i,\cdot}(\mb E)\mb E_i \mb E_i\t \mb V^* } \lesssim {\sigma_K^*}^{-1} K \tilde\sigma^2 \bar \sigma \sqrt{\log d} +  {\sigma_K^*}^{-1} \sqrt{\frac{\beta p}{n}} K^{\frac32}\tilde\sigma \sigma \bar \sigma\sqrt{\log d}
            \label{eq: projected covariance matrix estimation error 3}
        \end{align}
        hold with probability at least $1- O(d^{-c_0})$. Combine \eqref{eq: projected covariance matrix estimation error 1}, \eqref{eq: projected covariance matrix estimation error 2}, and \eqref{eq: projected covariance matrix estimation error 3} to arrive at 
        \begin{align}
            & \bigg\| \bo \Lambda^* \Big[\frac{\sum_{i \in \mc I_k(\mb z^*)} \mathfrak L_i \mathfrak L_i\t}{  n_k}  \Big] \bo \Lambda^* - \mb S_k^*  \bigg\| \\ 
            \lesssim  & {\sigma_K^*}^{-2}\beta^{\frac34} K^{\frac32} n^{-\frac12} p \tilde\sigma^2 \sigma^2 \log d+ {\sigma_K^*}^{-2}K \sqrt{p} \tilde\sigma^3 \sigma (\log d)^2 +{\sigma_K^*}^{-2} K p^{\frac56} \tilde\sigma^{\frac73} \sigma^{\frac53} \log d \\ 
            & \qquad  + {\sigma_K^*}^{-1} K \tilde\sigma^2 \bar \sigma \sqrt{\log d} +  {\sigma_K^*}^{-1} \sqrt{\frac{\beta p}{n}} K^{\frac32}\tilde\sigma \sigma \bar \sigma\sqrt{\log d}  \\ 
            \lesssim &  \frac{\deltaop^2}{n (\sigma_K^*)^2} \Big[\frac{\beta K^{\frac32} \log d}{\sqrt{n}} +  K \frac{\tilde\sigma }{\sqrt{p}\sigma } (\log d)^2 + K (\frac{\tilde\sigma}{\sqrt{p} \sigma})^{\frac13} \log d + K \frac{\tilde \sigma}{\sqrt{p} \sigma}\sqrt{\log d}+ \frac{\beta^{\frac12} K^{\frac32}  \sqrt{\log d}}{\sqrt{n}}\Big] \\ 
            \lesssim &  \frac{\deltaop^2}{n (\sigma_K^*)^2} \Big[\frac{\beta K^{\frac32} \log d}{\sqrt{n}}  + K (\frac{\tilde\sigma}{\sqrt{p} \sigma})^{\frac13} (\log d)^2\Big] 
            \label{eq: concentration for Lambda sum Li Lit Lambda / nk - S_k}
        \end{align}
        with probability at least $1 -O(d^{-c})$, where we used the definition of $\deltaop$ and the relation $\tilde \sigma \leq \sqrt{p }\sigma$. 
        
                For the second term in \eqref{eq: decomposition of covariance estimation error given true z}, it immediately follows by \eqref{eq: sum of linear terms in center estimation proof} that 
                \begin{align}
                    &  \norm{\bo \Lambda^* \frac{\sum_{i \in \mc I_k(\mb z^*)} \mk L_i}{ n_k} }^2  \lesssim \frac{\beta K \bar \sigma^2 \log d }{n} + \frac{\beta \tilde\sigma^2 \sigma^2 pK^2 \log d}{n{\sigma_K^*}^2} \lesssim \beta K^2 \frac{\deltaop^2}{n(\sigma_K^*)^2}\frac{\log d}{n} 
                \end{align}
                with probability at least $1 -O(d^{-c})$. 

        For the third term in \eqref{eq: decomposition of covariance estimation error given true z}, Corollary~\ref{corollary: singular subspace perturbation theory in clustering problem}, \eqref{eq: alternative upper bound for delta_u}, and Lemmas~\ref{lemma: noise matrix concentrations using the universality (Gaussian)}-\ref{lemma: noise matrix concentrations using the universality (bounded)} together imply that, with probability at least $1- O(d^{-c}) \vee e^{-(1+o(1))\frac{\snr^2}{2}}$, 
        \begin{align}
            &  \frac{2}{n_k} \sigma_1^* \Big(\sum_{i \in \mc I_k(\mb z^*)}  \norm{\bo \Lambda^* \mk L_i}_2^2\Big)^{\frac12} \Big(\sum_{i \in \mc I_k(\mb z^*)}\norm{\mb O\t \mb U_i  - \tilde{\mb c}_{z_i^*}^* - \mk L_i}_2^2\Big)^{\frac12}  \\ 
            \lesssim & \kappa K^{\frac32} \mu_1^{\frac12} \frac{\deltaop^2}{n(\sigma_K^*)^2} \Big[ \frac{\log d}{n} + \frac{\deltaop^2 \snr^2}{(\sigma_K^*)^4}\Big]^{\frac12} \Big[\frac{\beta K^{\frac32} \log d}{\sqrt{n}}  + K (\frac{\tilde\sigma}{\sqrt{p} \sigma})^{\frac13} (\log d)^2\Big]^{\frac12}, 
        \end{align}
         where we used 
         \eq{
         \sum_{i \in \mc I_k(\mb z^*)}  \norm{\bo \Lambda^* \mk L_i}_2^2 = \tr(\sum_{i \in \mc I_k(\mb z^*)} \bo \Lambda^* \mk L_i \mk L_i\t \bo \Lambda^*) \leq K \Bignorm{\sum_{i \in \mc I_k(\mb z^*)} \bo \Lambda^* \mk L_i \mk L_i\t \bo \Lambda^*} \leq n_k K\bar \sigma_{\mathsf{cov}}^2 + n_k K \cdot \text{(the RHS of \eqref{eq: concentration for Lambda sum Li Lit Lambda / nk - S_k})}
         } with probability at least $1- O(d^{-c})$
         , and to upper bound $\sum_{i \in \mc I_k(\mb z^*)}\norm{\mb O\t \mb U_i  - \tilde{\mb c}_{z_i^*}^* - \mk L_i}_2^2$ we used an argument similar to \eqref{eq: example of application of average upper bound lemma}. 

        Regarding the fourth term in \eqref{eq: decomposition of covariance estimation error given true z}, invoking Corollary~\ref{corollary: singular subspace perturbation theory in clustering problem} again gives with probability at least $1- O(d^{-c}) \vee e^{-(1+o(1))\frac{\snr^2}{2}}$ and $ \snr \lesssim \frac{\sqrt{\beta K} \sigma_K^*}{\underline\sigma_{\mathsf{cov}} \sqrt{n}}$ that 
        \begin{align}
            &\frac{1}{n_k}\sum_{i \in \mc I_k(\mb z^*)} {\sigma_1^*}^2 \norm{\mb O\t \mb U_i  - \tilde{\mb c}_{z_i^*}^* - \mk L_i}_2^2 \lesssim \kappa^2 \mu_1 K^2 \frac{\deltaop^2}{n (\sigma_K^*)^2} \Big( \frac{\log d}{n} + \frac{\deltaop^2 \snr^2}{(\sigma_K^*)^4}\Big). 
        \end{align}

        Substituting these inequalities together into \eqref{eq: decomposition of covariance estimation error given true z} yields that  
        \begin{align}
            & \norm{\bo \Lambda^* \mb O\t \hat{\bo \Omega}_k(\mb z^*)\mb O \bo \Lambda^*- \mb S_k^*}
            \lesssim  \kappa^2 \beta^2 K^{\frac52} \frac{\deltaop^2}{n(\sigma_K^*)^2} \Big( \frac{\log d}{\sqrt{n}} +  \frac{\deltaop \snr}{(\sigma_K^*)^2} + \big( \frac{\tilde\sigma}{\sqrt{p} \sigma}\big)^{\frac16} \log d \Big) \\ 
            \lesssim & \kappa^2 \beta^2 K^{\frac52} \frac{\deltaop^2}{n(\sigma_K^*)^2} \Big( \frac{\log d}{\sqrt{n}} +  \omega^{-\frac12} + \big( \frac{\tilde\sigma}{\sqrt{p} \sigma}\big)^{\frac16} \log d \Big) \leq \kappa^2 \beta^2 K^{\frac52} \kappa_{\mathsf{cov}}^2 \underline\sigma_{\mathsf{cov}}^2 \Big( \frac{\log d}{\sqrt{n}} +  \underline \omega^{-\frac12} + \big( \frac{\tilde\sigma}{\sqrt{p} \sigma}\big)^{\frac16} \log d \Big)
        \end{align}
        with probability at least $1- O(d^{-c}) \vee e^{-(1+o(1))\frac{\snr^2}{2}}$. Here, we used \eqref{eq: fact A}, \eqref{eq: fact B}, \eqref{eq: fact C}, and the condtions $ \frac{\log d}{\sqrt{n}} \ll 1$ and $ \big( \frac{\tilde\sigma }{\sqrt{p} \sigma }\big)^{\frac16} \log d \ll 1$. 

        Finally, from the condition $\min\{\frac{n}{(\log d)^2}, \underline\omega, \frac{p^{\frac16} \sigma^{\frac13}}{\tilde\sigma^{\frac13} (\log d)^2} \} = \omega( \kappa^4 \kappa_{\mathsf{cov}}^4 \beta^4 K^5)$, we complete the proof of Lemma~\ref{lemma: projected covariance matrix estimation error with true z}.

    \subsection{Proofs of Concentration Inequalities}
    This subsection collects some concentration inequalities that are used in the proof of the main upper bound. 
    
    \subsubsection{Proof of Lemma~\ref{lemma: S-universality}}
    \label{subsubsec: proof of lemma: S-universality}

    \emph{Universality on Concentration for Projected Covariance Matrices. }
    For ease of presenting the matrix concentration universality, we first introduce some shorthand quantities following \cite{brailovskaya2022universality}: Given a $n_1$-by-$n_2$ matrix $\mb Y = \sum_{i\in[n]} \mb Z_i$ where $\mb Z_i, i = [n]$ are independent random matrices with $\bb E[\mb Z_i] = 0$, we then denote that 
    \begin{align*}
         \sigma(\mb Y) &\coloneqq \big(\max\big\{\norm{\bb E[\mb Y \mb Y\t ]}, \norm{\bb E[\mb Y\t \mb Y]}\big\}\big)^{\frac{1}{2}}, \quad 
         \sigma_*(\mb Y) \coloneqq \sup\limits_{\norm{\mb v} = \norm{\mb w} = 1} \bb E\big[\big|\langle\mb v, \mb Y\mb w\rangle\big|^2\big]^{\frac{1}{2}},\\
         v(\mb Y) &\coloneqq \norm{\mathrm{Cov}(\mb Y)}^{\frac{1}{2}},\quad
         R_p(\mb Y) \coloneqq \bb E[\sum_{i\in[n]} \bb E[\mathrm{tr}|\mb Z_i|^p]]^{\frac{1}{p}}.
    \end{align*}

       	\begin{proof}
            Define $\mb E^{[k]} \coloneqq \mb E_{\mc I_k(\mb z^*), \cdot}$. 
       		The core idea is to make use of the so-called \emph{S-Universality} in \cite{brailovskaya2022universality} to derive an upper bound on $\mathbb E\Big[\textrm{tr}\big((\mb A\t\mb E_{\mc I_k(\mb z^*), \cdot}\t \mb E^{[k]}\mb A - \mb A\t \bb E[\mb E_{\mc I_k(\mb z^*), \cdot}\t \mb E^{[k]}] \mb A)^p\big)\Big]^{\frac{1}{p}}$ for some sufficiently large $p$ where $\mathrm{tr}(\mb X)\coloneqq \frac{\mathrm{Tr}(\mb X)}{n_0}$ denotes the normalized trace operator for $\mb X \in \bb R^{n_0\times n_0}$. 

       		We denote $\mb A\t \mb E_{\mc I_k(\mb z^*), \cdot}\t \mb E^{[k]}\mb A - \mb A\t \mathbb E\big[\mb E_{\mc I_k(\mb z^*), \cdot}\t \mb E^{[k]}\big]\mb A $ by $\mb S$. 
       		Combining Theorem 2.7, Lemma 2.5 in \cite{bandeira2023matrix}, and Lemma 2.8 in \cite{brailovskaya2022universality} for an positive integer $p$ gives a control on the $(2p)$-th moment of the normalized trace of $\mb S$ that 
       		\begin{align}
       		& \mathbb E\big[\textrm{tr}\big(\mb S^{2p}\big)\big]^{\frac{1}{2p}} 
         \stackrel{\text{\cite[Theorem 2.8]{brailovskaya2022universality}}}{\lesssim}  \bb E[\mathrm{tr}(\mb G^{2p})]^{\frac{1}{2p}} + R_{2p}(\mb S)p^2 
         \\
         \stackrel{\text{\cite[Theorem 2.7]{bandeira2023matrix}}}{\lesssim}  & 
       		(\mathrm{tr}\otimes \tau)(\vert \mb S_{\textrm{free}}\vert^{2p})^{\frac{1}{2p}} + p^{\frac{3}{4}} v\big(\mb S\big)^{\frac{1}{2}}\sigma\big(\mb S\big)^{\frac{1}{2}} + R_{2p}(\mb S)p^2   
       		\lesssim  \norm{\mb S_{\textrm{free}}}  + p^{\frac{3}{4}}  v\big(\mb S\big)^{\frac{1}{2}}\sigma\big(\mb S\big)^{\frac{1}{2}} +  R_{2p}(\mb S)p^2\\
   		\stackrel{\text{\cite[Lemma 2.5]{bandeira2023matrix}}}{\lesssim} & 
   			\sigma(\mb S) + p^{\frac{3}{4}}  v\big(\mb S\big)^{\frac{1}{2}}\sigma\big(\mb S\big)^{\frac{1}{2}} +  R_{2p}(\mb S)p^2,
      \label{eq: universality and free prob for covariance estimation}
       		\end{align}
       		where  we use a fact in the third inequality that $\mathrm{tr}\otimes \tau(|X_{\textrm{free}}|^{2p})^{\frac{1}{2p}} \leq \norm{X_{\textrm{free}}}$ when we consider the $C^*$-algebra $M_d(\bb C)_{\textrm{sa}} \otimes \mc A$ where $(\mc A,\tau)$ is a semicircle family; see the details in \cite[Section 4]{bandeira2023matrix} and \cite[Lecture 3]{nica2006lectures}. 
       	
       	The next step is to separately control the quantities appearing in \eqref{eq: universality and free prob for covariance estimation}. 
        \begin{enumerate} 
            \item Toward bounding $\sigma(\mb S)^2 = \norm{\bb E\big[ \mb S^2\big]}$, we first observe that $\mb S = \sum_{i\in[n_k]}\big( \mb A\t \mb E^{[k]}_i {{}\mb E^{[k]}_i}\t \mb A - \mb A\t \mb \Sigma_k \mb A\big)$ and make use of the independence of $\mb E^{[k]}_i$ to rewrite $\sigma(\mb S)^2$ as
        \longeq{
    \sigma(\mb S)^2 =& n_k\Bignorm{\bb E\big[ \big(\mb A\t \mb E^{[k]}_1 {\mb E^{[k]}_1}\t \mb A - \mb A\t \mb \Sigma_k \mb A\big)^2\big]} \\ 
    \leq  & n_k\Bignorm{\bb E\big[ \mb A\t \mb E^{[k]}_1 {\mb E^{[k]}_1}\t \mb A \mb A\t \mb E^{[k]}_1 {\mb E^{[k]}_1}\t \mb A\big]}  + n_k  \norm{\mb A\t \mb \Sigma_k \mb A}^2\\ 
= &  n_k\Bignorm{\bb E\big[ \sum_{j_1,j_2,j_3,j_4\in[p]}\mb A_{j_1}\t E^{(k)}_{1,j_1}E^{(k)}_{1,j_2}\mb A_{j_2}\mb A_{j_3}\t E^{(k)}_{1, j_3} E^{(k)}_{1, j_4} \mb A_{j_4} \big]}  + n_k \norm{\mb A\t \mb \Sigma_k \mb A}^2.\label{eq: decomposition of the square of S}
     }
     
    Regarding the first term in \eqref{eq: decomposition of the square of S}, it is related to its Gaussian analog  that for every $j_1,j_2,j_3,j_4 \in [p]$:
       	\longeq{\label{eq: sigma term of projected covariance}
       	& \mathbb E\left[\mb A_{j_1}\t E^{(k)}_{1,j_1}E^{(k)}_{1,j_2}\mb A_{j_2}\mb A_{j_3}\t E^{(k)}_{i, j_3} E^{(k)}_{i, j_4} \mb A_{j_4} \right]\\ 
        =&  \mathbb E\left[\mb A_{j_1}\t g_{j_1}g_{j_2}\mb A_{j_2}\mb A_{j_3}\t g_{ j_3} g_{j_4} \mb A_{j_4} \right] \\ 
        &+ \Big(\mathbb E\left[\mb A_{j_1}\t E^{(k)}_{1,j_1}E^{(k)}_{1,j_2}\mb A_{j_2}\mb A_{j_3}\t E^{(k)}_{i, j_3} E^{(k)}_{i, j_4} \mb A_{j_4} \right] -\underbrace{\mathbb E\left[\mb A_{j_1}\t g_{j_1}g_{j_2}\mb A_{j_2}\mb A_{j_3}\t g_{ j_3} g_{j_4} \mb A_{j_4} \right]}_{\text{apply Lemma~\ref{lemma: Leonov-Shiryaev} to this term}}\Big)\\ 
        = &  \mathbb E\left[\mb A_{j_1}\t g_{j_1}g_{j_2}\mb A_{j_2}\mb A_{j_3}\t g_{ j_3} g_{j_4} \mb A_{j_4} \right]
        \\ &
        \quad +  \mb A_{j_1}\t \mb A_{j_2} \mb A_{j_3}\t \mb A_{j_4}\big(\bb E[E^{(k)}_{1,j_1}E^{(k)}_{1,j_2}E^{(k)}_{1,j_3}E^{(k)}_{1,j_4}] - \bb E[E^{(k)}_{1,j_1}E^{(k)}_{1,j_2}]\bb E[E^{(k)}_{1,j_3}E^{(k)}_{1,j_4}] \\ 
        &\quad -  \bb E[E^{(k)}_{1,j_1}E^{(k)}_{1,j_3}]\bb E[E^{(k)}_{1,j_2}E^{(k)}_{1,j_4}] - \bb E[E^{(k)}_{1,j_1}E^{(k)}_{1,j_4}]\bb E[E^{(k)}_{1,j_2}E^{(k)}_{1,j_4}]\big)\\ 
        \stackrel{\text{(a)}}{= } & \mathbb E\left[\mb A_{j_1}\t g_{j_1}g_{j_2}\mb A_{j_2}\mb A_{j_3}\t g_{ j_3} g_{j_4} \mb A_{j_4} \right]
        \\
        &\quad +  \mb A_{j_1}\t \mb A_{j_2} \mb A_{j_3}\t \mb A_{j_4}\big(\bb E[E^{(k)}_{1,j_1}E^{(k)}_{1,j_2}E^{(k)}_{1,j_3}E^{(k)}_{1,j_4}] - \bb E[E^{(k)}_{1,j_1}E^{(k)}_{1,j_2}]\bb E[E^{(k)}_{1,j_3}E^{(k)}_{1,j_4}] \\ 
        &\quad -  \bb E[E^{(k)}_{1,j_1}E^{(k)}_{1,j_3}]\bb E[E^{(k)}_{1,j_2}E^{(k)}_{1,j_4}] - \bb E[E^{(k)}_{1,j_1}E^{(k)}_{1,j_4}]\bb E[E^{(k)}_{1,j_2}E^{(k)}_{1,j_4}]\big)\\ 
        & \cdot \ind{\{\text{$j_1, j_2,j_3,j_4$ are in the same block $S_s$ for some $s\in[l]$}\}},
       	}
       	where $\mb g = (g_1, \cdots, g_p)\t $ is a centered Gaussian analog of $\mb E_1^{(k)}$ with the covariance matrix $\mb \Sigma_k$. To be more precise, here Lemma~\ref{lemma: Leonov-Shiryaev} comes into play by 
        \begin{align}
            & \mathbb E\left[\mb A_{j_1}\t g_{j_1}g_{j_2}\mb A_{j_2}\mb A_{j_3}\t g_{ j_3} g_{j_4} \mb A_{j_4} \right] = \mb A_{j_1}\t \mb A_{j_2} \mb A_{j_3}\t \mb A_{j_4} \bb E\big[g_{i_1}g_{i_2}g_{i_3}g_{i_4} \big] \\ 
            = & \mb A_{j_1}\t \mb A_{j_2} \mb A_{j_3}\t \mb A_{j_4} \bb \sum_{\pi \in \mc P([4])} \prod_{p\in \pi} \kappa(\mb g^{(j_1,j_2,j_3,j_4)}_p) \\ 
            = & \mb A_{j_1}\t \mb A_{j_2} \mb A_{j_3}\t \mb A_{j_4} \big(\bb E[g_{j_1}g_{j_2}] \bb E[g_{j_3}g_{j_4}] + \bb E[g_{j_1}g_{j_3}]\bb E[g_{j_2}g_{j_4}] + \bb E[g_{j_1}g_{j_4}]\bb E[g_{j_2}g_{j_3}]\big) \\ 
            = & \mb A_{j_1}\t \mb A_{j_2} \mb A_{j_3}\t \mb A_{j_4}  \big(\bb E[E^{(k)}_{1,j_1}E^{(k)}_{1,j_2}] \bb E[E^{(k)}_{1,j_3}E^{(k)}_{1,j_4}] +\bb E[E^{(k)}_{1,j_1}E^{(k)}_{1,j_3}]\bb E[E^{(k)}_{1,j_2}E^{(k)}_{1,j_4}] + \bb E[E^{(k)}_{1,j_1}E^{(k)}_{1,j_4}]\bb E[E^{(k)}_{1,j_2}E^{(k)}_{1,j_3}]\big),
        \end{align}
        where $p$ is an index set in a partition $\pi$ of $[4]$, $\mb g^{(j_1,j_2,j_3,j_4)} \coloneqq (g_{j_1}, g_{j_2},g_{j_3},g_{j_4})\t \in \bb R^4$, and the cumulant $\kappa(\mb g^{(j_1,j_2,j_3,j_4)}_p)$ is defined as  the coefficients of $\prod_{j \in p} t_j$ multiplied by $|p|!$ in the Taylor expansion of $\log\bb E[\exp(\mb t\t \mb g^{(j_1,j_2,j_3,j_4)})]$. Further, (a) arises since
        (i) if one of $j_1,j_2,j_3,j_4$ does not share a block with the rest, then
        \eq{
E[E^{(k)}_{1,j_1}E^{(k)}_{1,j_2}E^{(k)}_{1,j_3}E^{(k)}_{1,j_4}] - \bb E[E^{(k)}_{1,j_1}E^{(k)}_{1,j_2}] \bb E[E^{(k)}_{1,j_3}E^{(k)}_{1,j_4}]  
        - \bb E[E^{(k)}_{1,j_1}E^{(k)}_{1,j_3}]\bb E[E^{(k)}_{1,j_2}E^{(k)}_{1,j_4}] - \bb E[E^{(k)}_{1,j_1}E^{(k)}_{1,j_4}]\bb E[E^{(k)}_{1,j_2}E^{(k)}_{1,j_3}] =  0.
        }
        (ii) if two of $j_1,j_2,j_3,j_4$ are in a block $S_{s_1}$, say, $j_1,j_2 \in S_{s_1}$,  and the rest of them are in another block $S_{s_2}$, then
        \longeq{
&E[E^{(k)}_{1,j_1}E^{(k)}_{1,j_2}E^{(k)}_{1,j_3}E^{(k)}_{1,j_4}] - \bb E[E^{(k)}_{1,j_1}E^{(k)}_{1,j_2}] \bb E[E^{(k)}_{1,j_3}E^{(k)}_{1,j_4}]
         \\ 
         & \qquad - \bb E[E^{(k)}_{1,j_1}E^{(k)}_{1,j_3}]\bb E[E^{(k)}_{1,j_2}E^{(k)}_{1,j_4}] - \bb E[E^{(k)}_{1,j_1}E^{(k)}_{1,j_4}]\bb E[E^{(k)}_{1,j_2}E^{(k)}_{1,j_3}]  
        \\ 
        = &   \bb E[E^{(k)}_{1,j_1}E^{(k)}_{1,j_2}] \bb E[E^{(k)}_{1,j_3}E^{(k)}_{1,j_4}] - \bb E[E^{(k)}_{1,j_1}E^{(k)}_{1,j_2}] \bb E[E^{(k)}_{1,j_3}E^{(k)}_{1,j_4}] = 0.
        }
        
       	We now turn to analyze the Gaussian analog $ \sum_{j_1,j_2,j_3,j_4\in[p]}\mb A_{j_1}\t g_{j_1}g_{j_2}\mb A_{j_2}\mb A_{j_3}\t g_{ j_3} g_{j_4} \mb A_{j_4}$. By Wick's formula, for every Gaussian random vector $\mb v$ we have $\mathbb E\big[\mb v\mb v\t \mb v\mb v\t\big] = \mathrm{Tr}(\textrm{Cov}(\mb v))\textrm{Cov}(\mb v) + 2 \textrm{Cov}(\mb v)^2$. Therefore, we have for the Gaussian analog 
        \begin{align}
        &  \Bignorm{\bb E\Big[\sum_{j_1,j_2,j_3,j_4\in[p]}\mb A_{j_1}\t g_{j_1}g_{j_2}\mb A_{j_2}\mb A_{j_3}\t g_{ j_3} g_{j_4} \mb A_{j_4}\Big] } 
        =  \Bignorm{\bb E\big[\mb A\t \mb g \mb g\t \mb A\mb A\t \mb g\mb g\t \mb A \big]}
        \\ 
        = & \text{Tr}(\mb A\t \mb \Sigma_k \mb A)\bignorm{\mb A\t \mb \Sigma_k \mb A} + 2\bignorm{\mb A\t \mb \Sigma_k \mb A}^2 
        \lesssim  K\bignorm{\mb A\t \mb \Sigma_k \mb A}^2.
        \end{align}
        
        Substituting this into \eqref{eq: sigma term of projected covariance} yields that 
       	\begin{align} 
        &\Bignorm{\bb E\big[ \mb A\t \mb E^{[k]}_1 {\mb E^{[k]}_1}\t \mb A \mb A\t \mb E^{[k]}_1 {\mb E^{[k]}_1}\t \mb A\big]}= \Bignorm{\bb E\big[ \sum_{j_1,j_2,j_3,j_4\in[p]}\mb A_{j_1}\t E^{(k)}_{1,j_1}E^{(k)}_{1,j_2}\mb A_{j_2}\mb A_{j_3}\t E^{(k)}_{1, j_3} E^{(k)}_{1, j_4} \mb A_{j_4} \big]}  
        \\
        \lesssim &K\bignorm{\mb A\t \mb \Sigma_k \mb A}^2 + lm^4 \max_{j\in[p]}\norm{\mb A_{j}}_2 \big(\max_{j_1,j_2,j_3,j_4}|\bb E[E^{(k)}_{1,j_1}E^{(k)}_{1,j_2}E^{(k)}_{1,j_3}E^{(k)}_{1,j_4}]| + \max_{j_5,j_6} |\bb E[E^{(k)}_{1,j_5}E^{(k)}_{1,j_6}] |\big)\\ 
        \lesssim &K \bignorm{\mb A\t \mb \Sigma_k \mb A}^2 + lm^4B^4 \norm{\mb A}\ti^4  
        \asymp  \mathrm{Tr}(\mb A\t \mb \Sigma_k \mb A)\bignorm{\mb A\t \mb \Sigma_k \mb A} + pm^3B^4 \norm{\mb A}\ti^4, \label{eq: upper bound on S squared}
        \end{align}
        since $ml \asymp p$, which combines \eqref{eq: decomposition of the square of S} leading to 
        \longeq{
        & \sigma(\mb S)^2 \lesssim n_k K\bignorm{\mb A\t \mb \Sigma_k \mb A}^2  + n_k pm^3B^4\norm{\mb A}\ti^4 \label{eq: upper bound on sigma(S)}.
        }
       	
        \item
       	Moving forward, upper bounding the parameter $v(\mb S)^2$ amounts to a variational characterization of the spectral norm: 
       	\longeq{
       	& v(\mb S)^2 = \norm{\mathrm{cov}(\text{vec}(\mb S))} = \sup_{\mb o\in \bb R^{K^2}: \norm{\mb o}_2=1} \mb o\t \mathrm{cov}(\text{vec}(\mb S)) \mb o \\ 
        \leq &  n_k\sup_{\mb O: \norm{\mb O}_F = 1}\Big[\bb E\big[ \mathrm{Tr}(\mb O \mb A\t {\mb E^{[k]}_1}\t \mb E^{[k]}_1 \mb A )^2\big] -  \mathrm{Tr}(\mb O\mb A\t\mb\Sigma_k \mb A)^2 \Big] \\
        \stackrel{\text{(a)}}{\leq} & n_k \bb E\big[\mathrm{Tr}( \mb A\t {\mb E^{[k]}_1}\t \mb E^{[k]}_1 \mb A )^2 \big] + n_k \mathrm{Tr}(\mb A\t\mb\Sigma_k \mb A)^2 
        \\ \stackrel{\text{(b)}}{\leq} & n_k K \bb E\big[\mathrm{Tr}( \mb A\t {\mb E^{[k]}_1}\t \mb E^{[k]}_1 \mb A \mb A\t {\mb E^{[k]}_1}\t \mb E^{[k]}_1 \mb A)  \big] + n_k \mathrm{Tr}(\mb A\t\mb\Sigma_k \mb A)^2 \\ 
        \leq & n_k K^2 \Bignorm{\bb E\big[  \mb A\t {\mb E^{[k]}_1}\t \mb E^{[k]}_1 \mb A \mb A\t {\mb E^{[k]}_1}\t \mb E^{[k]}_1 \mb A\big]} + n_k \mathrm{Tr}(\mb A\t\mb\Sigma_k \mb A)^2, \label{eq: upper bound on v(S) 0}
        }
        where (a) holds since 
        \eq{
        \sup_{\mb O: \norm{\mb O}_F = 1}\bb E\big[ \mathrm{Tr}(\mb O \mb A\t {\mb E^{(k)\top}_1} \mb E^{[k]}_1 \mb A )^2\big]\leq  \sup_{\mb O: \norm{\mb O}_F = 1} \norm{\mb O}^2 \bb E\big[ \mathrm{Tr}( \mb A\t {\mb E^{(k)\top}_1} \mb E^{[k]}_1 \mb A )^2\big] 
        \leq  \bb E\big[ \mathrm{Tr}( \mb A\t {\mb E^{(k)\top}_1} \mb E^{[k]}_1 \mb A )^2\big].
        }
        and (b) holds by the fact that $\mathrm{Tr}(\mb X)^2 \leq K \mathrm{Tr}(\mb X^2)$ for a symmetric matrix $\mb X \in \bb R^{K\times K}$. 

        To finish up, we invoke \eqref{eq: upper bound on S squared} again together with \eqref{eq: upper bound on v(S) 0} to derive that 
        \eq{
        v(\mb S)^2 \lesssim K^2
        \big( n_k K\bignorm{\mb A\t \mb \Sigma_k \mb A}^2  + n_k pm^3B^4\norm{\mb A}\ti^4\big)
        }
        
       	\item 
       	Finally, we make use of the modified logarithmic Sobolev inequality (Lemma~\ref{lemma: Generalized Modified Logarithmic Sobolev Inequality I}) to upper bound $R_{2p}(\mb S)$ that 
       	\begin{align}
       	& R_q(\mb S) \leq \max_{i}(n_k)^{\frac{1}{2p}}\big(\mathbb E\big[\bignorm{\mb A\t {\mb E^{[k]}_{i}}}_2^{4p}\big]^{\frac{1}{q}} + \bb E\big[\bignorm{\mb A\t \mb E_i^{(k)}}_2^2\big]\big)\\
       	 \leq & (n_k)^{\frac{1}{2p}}\max_i\Big( \mathbb E\big[\big(\bignorm{\mb A\t {\mb E^{[k]}_{i}}\t}_2 - \mathbb E\bignorm{{\mb A\t \mb E_{i}^{(k)}}}_2\big)^{4p}\big]^{\frac{1}{2p}}   + \big(\mathbb E\bignorm{\mb A\t {\mb E^{[k]}_{i}} }_2\big)^2 + \bb E\big[\bignorm{\mb A\t \mb E_i^{(k)}}_2^2\big] \Big)\\ 
         \leq & (n_k)^{\frac{1}{2p}}\max_i\Big(\mathbb E\big[\big(\bignorm{\mb A\t {\mb E^{[k]}_{i}}}_2 - \mathbb E\bignorm{{\mb A\t \mb E_{i}^{(k)}}}_2\big)^{4p}\big]^{\frac{1}{2p}} +  \big(\mathbb E\bignorm{\mb A\t {\mb E^{[k]}_{i}}}_2^2 \big) \Big)
       	\\
       	 \lesssim & (n_k)^{\frac{1}{2p}}\big(mB^2p \big)\bignorm{\mb A}^2  + (n_k)^{\frac{1}{2p}}\mathrm{Tr}(\mb A\t \mb \Sigma_k \mb A). 
        \end{align}
       	where the second inequality above follows from Cauchy-Schwarz and the last line follows by Lemma~\ref{lemma: Generalized Modified Logarithmic Sobolev Inequality I} provided the fact that $\bignorm{\mb x\t \mb A}_2$ is a $\bignorm{\mb A}$-Lipschitz convex function of $\mb x$. 
        \end{enumerate}
       	
       	With these pieces in place, we plug the above upper bounds for $\sigma(\mb S), v(\mb S), R_{2p}(\mb S)$ into \eqref{eq: universality and free prob for covariance estimation} and derive that
       	\longeq{
       	& \bb E\big[\mathrm{Tr}(\mb S^{2p})\big]^{\frac{1}{2p}} \leq \bb E\big[K\mathrm{tr}(\mb S^{2p})\big]^{\frac{1}{2p}} \\
        \lesssim & K^{\frac{1}{p}}p^{\frac{3}{4}} K^{\frac{1}{2}}\big(n_k K \bignorm{\mb A\t \mb \Sigma_k \mb A}^2 + n_kp m^3B^4\bignorm{\mb A}\ti^4\big)^{\frac{1}{2}}+ 
        K^{\frac{1}{p}}\big((n_k)^{\frac{1}{2p}}\big(mB^2p \big)\bignorm{\mb A}^2 + (n_k)^{\frac{1}{2p}}\mathrm{Tr}(\mb A\t \mb \Sigma_k \mb A)\big).\label{eq: upper bound on 2p moment of S}
       	}
       	
       	Letting $p = \lceil t \rceil$, applying Markov's inequality to \eqref{eq: upper bound on 2p moment of S} gives that  
       	\longeq{
       	& \bignorm{\mb S} \lesssim  K^{\frac{1}{t}}t^{\frac{3}{4}} K^{\frac{1}{2}} n_k^{\frac{1}{2}}\big(\mathrm{Tr}(\mb A\t \mb \Sigma_k \mb A)\bignorm{\mb A\t \mb \Sigma_k \mb A}  + pm^3B^4\bignorm{\mb A}\ti^4\big)^{\frac{1}{2}}\\ 
        & + K^{\frac{1}{t}}\big((n_k)^{\frac{1}{2t}}\big(mB^2t \big)\bignorm{\mb A}^2 + (n_k)^{\frac{1}{2t}}\mathrm{Tr}(\mb A\t \mb \Sigma_k \mb A)\big) 
        \label{eq: genereal universality}
       	}
       	with probability at least $1- e^{-t}$. This concludes the first part of this lemma. 

        In the end, substitution of $\mb A = \mb V^*$ and $t = c\log d$ for a sufficiently large constant $c$ into \eqref{eq: genereal universality}  yields that 
        \begin{align}
             &\Bignorm{\frac{{\mb V^{*\top}} \mb E_{\mc I_k(\mb z^*), \cdot}\t \mb E^{[k]} \mb V^*}{\sum_{i\in[n]}\ind{\{z_i^*=k\}}} - \mb S_k^*} =  \frac{1}{n_k} \norm{\mb S} \\ 
            \lesssim & \sqrt{\frac{\beta K^2 }{n}} e^{\frac{\log K}{c\log d}}\big(\log d\big)^{\frac{3}{4}} \Big(\norm{\mb S_k^*}^2 + pm^3B^4\norm{\mb V^*}\ti ^4 / K\Big)^{\frac{1}{2}}  
           + e^{\frac{\log K}{\log d}}\frac{\beta K\big(mB^2\log d + K\norm{\mb S_k^*}\big)}{n}\\
       		\lesssim &\sqrt{\frac{\beta K^2}{n}}\bar \sigma^2(\log d)^{\frac{3}{4}} + \sqrt{\frac{\beta p Km^3 B^4}{n}} \norm{\mb V^*}\ti ^2(\log d)^{\frac{3}{4}}
         \lesssim  \sqrt{\frac{\beta K^2}{n}}\bar \sigma^2\log d
        \end{align}
        holds with probability at least $1 - O(d^{-c})$, where we invoke the fact $\log K\lesssim \log d$ and the conditions
        \begin{align} 
            &  n \bar \sigma^2 \gg \mu_1 K mB^2, \quad p \bar \sigma^2 \gg \mu_2 K m^2 B^2. 
    \end{align}
       	\end{proof}
        \begin{remark}
       		\emph{Improvement upon Bernstein's inequality. } We additionally remark that, compared with the \emph{S-Universality} result, the Bernstein inequality could only provide us the upper bound 
       		\longeq{ \norm{\frac{{\mb V^{*\top}} \mb E_{\mc I_k(\mb z^*), \cdot}\t \mb E^{[k]} \mb V^*}{\sum_{i\in[n]}\ind{\{z_i^*=k\}}} - \mb S_k}\lesssim &\frac{1}{n_k}\cdot\underbrace{\sigma(\mb S)}_{\text{defined in \eqref{eq: decomposition of the square of S}}} \cdot \sqrt{\log d} + \underbrace{\sqrt{\frac{\beta pK}{n}}B \log d}_{\text{troublesome}} \\ 
         \lesssim & \sqrt{\frac{\beta K}{n}}\bar \sigma^2(\log d)^{\frac{1}{2}} + \sqrt{\frac{\beta p m^3 B^4}{n}} \norm{\mb V^*}\ti ^2+\sqrt{\frac{\beta pK}{n}}B \log d, 
         }
         with probability at least $1 -O(d^{-c})$,
       		where the last term would be unsatisfactory when $p$ is large, although $(\log d)^{\frac{3}{4}}$ in our current upper bound is slightly looser compared with the first term above. 
       	\end{remark}

\subsubsection{Proof of Lemma~\ref{lemma: concentration on second part of covariance estimation}}
\label{subsubsec: proof of lemma: concentration on second part of covariance estimation}
    To deal with the targeted fourth-order term with respect to $\{\mb E_i\}$, we employ the decoupling inequality for U-statistics from \cite{de1995decoupling}. In particular, for any $k_1,k_2 \in[K]$, consider the $(k_1,k_2)$-th entry of the difference of interest: 
    \begin{align}
        & {\mb U_{\cdot, k_1}^*}\t \mc H(\mb E\mb E\t)_{\cdot, \mb I_k(\mb z^*)}\mc H(\mb E\mb E\t)_{\mb I_k(\mb z^*),\cdot } \mb U_{\cdot, k_2}^* \\ 
        = & \kappa_{k_1,k_2,k,k}\Big[\sum_{i_1 , i_2 \in \mc I_k(\mb z^*),i_1 \neq i_2}\ip{\mb E_{i_1}, \mb E_{i_2}}^2 + \sum_{i_1 , i_2 , i_3 \in \mc I_k(\mb z^*), i_1\neq i_2 \neq i_3} \ip{\mb E_{i_1}, \mb E_{i_2}}\ip{\mb E_{i_2}, \mb E_{i_3}} \Big] 
        \\ 
        &  + \sum_{k_3 \in [K] \backslash\{k\}}  \kappa_{k_1,k_2,k_3,k_3} \sum_{i_1\in \mc I_{k_3}(\mb z^*),i_2 \in \mc I_{k}(\mb z^*) }\ip{\mb E_{i_1}, \mb E_{i_2}}^2 \\ 
        &  +  \sum_{k_3, k_4 \in [K] \backslash\{k\}, k_3 \neq k_4}  \kappa_{k_1,k_2,k_3,k_4} \sum_{i_1\in \mc I_{k_3}(\mb z^*), i_2 \in \mc I_k(\mb z^*),i_3 \in \mc I_{k_4}(\mb z^*) }\ip{\mb E_{i_1}, \mb E_{i_2}}\ip{\mb E_{i_2}, \mb E_{i_3}} \\ 
        & +  2\sum_{k_3 \in [K]\backslash \{k\}} \sum_{i_1,i_2 \in \mc I_k(\mb z^*), i_1 \neq i_2,  i_3 \in \mc I_{k_3}(\mb z^*)} \kappa_{k_1,k_2,k,k_3}\ip{\mb E_{i_1}, \mb E_{i_2}}\ip{\mb E_{i_2}, \mb E_{i_3}} , 
        \label{eq: decomposition of Ut H(EEt) H(EEt) U}
    \end{align}
    where $\kappa_{k_1,k_2,k_3,k_4} \coloneqq (\mb U^*)_{i_1,k_1}(\mb U^*)_{i_1,k_2} $ for any $i_1\in \mc I_{k_3}(\mb z^*)$ and $i_2 \in \mc I_{k_4}(\mb z^*)$. This definition is well-posed since $\mb U^*_{i,k}$ is identical across $i \in \mc I_k(\mb z^*)$, owing to the fact that $\mb Z_i^*$ is the same for all such $i$.
    
    Before preceeding, we introduce the following auxiliary lemma to establish the subsequent concentration inequalities whose proof is presented in Section~\ref{subsubsec: proof of lemma: concentration for decoupled quantities}. 
    \begin{lemma}
    Consider three Gaussian distributed matrices $\mb G^{(1)} \in \bb R^{N_1 \times p}$, $\mb G^{(2)} \in \bb R^{N_2 \times p}$, and $\mb G^{(1)} \in \bb R^{N_1 \times p}$, whose rows independently follow $\mc N(0, \bo \Sigma_1)$, $\mc N(0, \bo \Sigma_2)$, and $\mc N(0, \bo \Sigma_3)$, respectively. Then it holds with probability at least $1- O(d^{-c})$ that 
    \begin{align}
        & \Big|\norm{\mb G^{(1)} {{}\mb G^{(2)}}\t}_F^2 - N_1 N_2 \tr(\bo \Sigma_1 \bo \Sigma_2) \Big|\\ 
        \lesssim & \sqrt{N_1 N_2}\tr(\bo \Sigma_1 \bo \Sigma_2) \sqrt{\log(p\vee N)} +N_1 \sqrt{N_2} \tr(\bo \Sigma_1 \bo \Sigma_2)^{\frac12}\norm{\bo \Sigma_1\bo \Sigma_2}^{\frac12} \sqrt{\log(p\vee N)} , \\ 
        & \Big| \tr\big(\mb 1_{N_3\times N_1}\mb G^{(1)} {{}\mb G^{(2)}}\t \mb G^{(2)} {{}\mb G^{(3)}}\t\big) \Big| \\ 
        \lesssim & \sqrt{N_1 N_3} \Big(N_2\tr(\bo \Sigma_1 \bo \Sigma_2)^{\frac12} \norm{\bo \Sigma_2\bo \Sigma_3}^{\frac12} + \sqrt{N_2}\tr(\bo \Sigma_1 \bo \Sigma_2)^{\frac12}\tr(\bo \Sigma_3 \bo \Sigma_2)^{\frac12}  \Big) \sqrt{\log(p\vee N)}, \\ 
        & \norm{\mb 1_{N_1}\t \mb G^{(1)} {\mb G^{(2)}}\t \mb G^{(2)} \mb T}_2 \lesssim (\sqrt{N_2}\norm{\bo \Sigma_1 \bo \Sigma_2}^{\frac12} + \tr(\bo \Sigma_1 \bo \Sigma_2)^{\frac12}) (\norm{\mb T\t \bo \Sigma_2 \mb T}^{\frac12} \sqrt{N_2 } ) \sqrt{rN_1\log(p\vee N)}
        \end{align}
        where $\mb T \in \bb O(p,r)$.  
        
    Moreover, for bounded noise matrices $\mb E^{(1)}$, $\mb E^{(2)}$, and $\mb E^{(3)}$ obeying Assumption~\ref{assumption: bounded noise} whose rows are of zero mean and covariance matrices $\bo \Sigma_1,~\bo \Sigma_2$, and $\bo \Sigma_3$, respectively, we additionally assume that $\sqrt{m}B \Big(\frac{\max_{i\in[3]}\norm{\bo \Sigma_i}^{\frac12}}{\max_{i,i' \in [3]}\sqrt{N_i}\norm{\bo \Sigma_i \bo \Sigma_{i'}}^{\frac12}} \vee \frac{\max_{i\in[3]}\norm{\bo \Sigma_i}^{\frac12}}{\max_{i,i' \in [3]}\tr(\bo \Sigma_i \bo \Sigma_{i'})^{\frac12}} \Big) \ll 1$, $\mathrm{rk}(\bo \Sigma_1 \bo \Sigma_2) \coloneqq \frac{\tr(\bo \Sigma_1\bo \Sigma_2)}{\norm{\bo \Sigma_1 \bo \Sigma_2}} \gg (\log(p\vee N))^6$, and $\min_{i\in[3]} N_i \gg (\log(p\vee N))^2$. Then
    it holds with probability at least $1- O(d^{-c})$ that  
    \begin{align}
        & \Big|\norm{\mb E^{(1)} {{}\mb E^{(2)}}\t}_F^2 - N_1 N_2 \tr(\bo \Sigma_1 \bo \Sigma_2) \Big| \\ 
        \lesssim & N_1^{\frac34} N_2^{\frac34} \tr(\bo\Sigma_1 \bo \Sigma_2) \log(p\vee N) +  N_1 N_2\tr(\bo \Sigma_1 \bo \Sigma_2)^{\frac56} \norm{\bo \Sigma_1 \bo \Sigma_2}^{\frac16} \log(p\vee N)  , \\ 
        & \Big| \tr(\mb 1_{n\times n} \mb E^{(1)} \mb E^{(2)\top} \mb E^{(2)} \mb E^{(3)\top} \Big| \\ 
        \lesssim  & \sqrt{N_1 N_3} \Big(N_2\tr(\bo \Sigma_1 \bo \Sigma_2)^{\frac12} \norm{\bo \Sigma_2\bo \Sigma_3}^{\frac12} + \sqrt{N_2}\tr(\bo \Sigma_1 \bo \Sigma_2)^{\frac12}\tr(\bo \Sigma_3 \bo \Sigma_2)^{\frac12}  \Big) (\log(p\vee N))^2, \\ 
        &  \Big|\mb 1_{N_1}\t \mb E_1{ \mb E^{(2)}}\t \mb E^{(2)} \mb T\Big| \lesssim (\sqrt{N_2}\norm{\bo \Sigma_1 \bo \Sigma_2}^{\frac12} + \tr(\bo \Sigma_1 \bo \Sigma_2)^{\frac12}) (\norm{\mb T\t \bo \Sigma_2 \mb T}^{\frac12} \sqrt{N_2 } ) \sqrt{rN_1\log(p\vee N)}. 
    \end{align}
    \label{lemma: concentration for decoupled quantities}
\end{lemma}

     In what follows, we seperately parse the four terms on the RHS of \eqref{eq: decomposition of Ut H(EEt) H(EEt) U}.

    \begin{itemize}
        \item     The first terms on the RHS above allow us to evaluate each term by replacing them with independence copies: by virtue of \cite[Theorem~1]{de1995decoupling}, there exists a universal constant $C$ such that 
    \begin{equation}
        \bb P \Big[\big|\sum_{i_1 ,i_2 \in \mc I_k(\mb z^*), i_1\neq i_2} \ip{\mb E_{i_1}, \mb E_{i_2}}^2 - n_k(n_k - 1) \tr(\bo \Sigma_k\bo \Sigma_k) \big|  \geq t\Big] \leq  C\bb P \Big[C\cdot \big|\sum_{i_1  i_2 \in [n],i_1\neq i_2} \big(\ip{\mb E^{(1)}_{i_1}, \mb E^{(2)}_{i_2}}^2 - n_k(n_k - 1)\tr(\bo \Sigma_k\bo \Sigma_k) \big) \big|  \geq t\Big],
    \end{equation}
    where $\mb E^{(1)}$ and $\mb E^{(2)}$ are independent copies of $\mb E$. For the decoupled counterpart, we further break it down as 
    \begin{align}
        & \big|\sum_{i_1 \neq i_2 \in \mc I_k(\mb z^*)} \Big(\ip{\mb E^{(1)}_{i_1}, \mb E^{(2)}_{i_2}}^2 - \tr\big(\bo \Sigma_k^2\big) \Big) \big| \\ 
        \leq & \underbrace{\bigg| \Bignorm{ \mb E^{(1)}_{\mc I_k(\mb z^*)} {\mb E^{(2)}_{\mc I_k(\mb z^*)}}\t }_F^2 - n_k^2 \tr\big(\bo \Sigma_k^2\big) \bigg|}_{A_1}  + \underbrace{\sum_{i \in \mc I_k(\mb z^*)} \Big|\ip{\mb E^{(1)}_{i}, \mb E^{(2)}_{i}}^2 - \tr\big(\bo \Sigma_k^2\big)\Big|}_{A_2}.     \end{align}
        Then it boilds down to bounding $A_1$ and $A_2$, respectively. By Lemma~\ref{lemma: concentration for decoupled quantities}, one has with probability at least $1 - O(d^{-c})$ that 
        \begin{align}
            & A_1 \lesssim n_k^{\frac32} \norm{\bo \Sigma_k}_F^2 \log d + n_k^{\frac32} \norm{\bo \Sigma_k}_F^{\frac53}\norm{\bo \Sigma_k^2}^{\frac13} \log d
        \end{align}
        under either noise setting. 
        For the term $A_2$, one easily infer from the triangle inequality that $
             A_2 \leq \sum_{i\in \mc I_k(\mb z^*)} \ip{\mb E^{(1)}_i, \mb E^{(2)}_i}^2 + n_k \tr(\bo \Sigma_k^2)$. For the first term on the RHS, one has 
             \begin{align}
                 & \ip{\mb E^{(1)}_i, \mb E^{(2)}_i}
                 \stackrel{\text{by Lemma~6.2.2 in \cite{vershynin2010introduction}}}{\lesssim} \norm{\bo \Sigma_k}_F \sqrt{\log d} + \norm{\bo \Sigma_k} \log d \lesssim \norm{\bo \Sigma_k}_F \sqrt{\log d} ~ \text{ for Gaussian noise}, \\ 
                 & \ip{\mb E^{(1)}_i, \mb E^{(2)}_i} \stackrel{\text{by matrix Bernstein ineq.}}{\lesssim} \norm{\bo \Sigma_k}_F \sqrt{\log d} + mB^2 \log d \lesssim \norm{\bo \Sigma_k}_F \sqrt{\log d} ~ \text{ for bounded noise}
             \end{align}
            with probability at least $1 - O(d^{-c-1})$, which leads to the result that $A_2 \lesssim n_k \norm{\bo \Sigma_k}_F^2 \log d$ with probability at least $1 - O(d^{-c})$.

            Combining these two pieces together with the decoupling technique, we thus have 
            \begin{align}
                &\sum_{i_1 , i_2 \in \mc I_k(\mb z^*),i_1 \neq i_2}\ip{\mb E_{i_1}, \mb E_{i_2}}^2   \lesssim n_k^{\frac32} \norm{\bo \Sigma_k}_F^2 \log d + n_k^{\frac32} \norm{\bo \Sigma_k}_F^{\frac53}\norm{\bo \Sigma_k^2}^{\frac13} \sqrt{\log d}
            \end{align}
            with probability at least $1 - O(d^{-c})$. 

            On the other hand, regarding $\sum_{i_1 , i_2 , i_3 \in \mc I_k(\mb z^*), i_1\neq i_2 \neq i_3} \ip{\mb E_{i_1}, \mb E_{i_2}}\ip{\mb E_{i_2}, \mb E_{i_3}} $ we again invoke the decoupling result \cite[Theorem~1]{de1995decoupling} to obtain that 
            \begin{equation}
                \bb P \Big[\big|\sum_{i_1 , i_2 , i_3 \in \mc I_k(\mb z^*), i_1\neq i_2 \neq i_3} \ip{\mb E_{i_1}, \mb E_{i_2}}\ip{\mb E_{i_2}, \mb E_{i_3}} \big|  \geq t\Big] \leq  C\bb P \Big[C\cdot \big|\sum_{i_1 , i_2 , i_3 \in \mc I_k(\mb z^*), i_1\neq i_2 \neq i_3} \ip{\mb E^{(1)}_{i_1}, \mb E^{(2)}_{i_2}}\ip{\mb E^{(2)}_{i_2}, \mb E^{(3)}_{i_3}}  \big|  \geq t\Big]
            \end{equation}
            for some constant $C$, where $\mb E^{(1)},~\mb E^{(2)}$, and $\mb E^{(3)}$ are independent copies of $\mb E$. 
            Lemma~\ref{lemma: concentration for decoupled quantities} allows us to upper bound $\sum_{i_1 , i_2 , i_3 \in \mc I_k(\mb z^*), i_1\neq i_2 \neq i_3} \ip{\mb E^{(1)}_{i_1}, \mb E^{(2)}_{i_2}}\ip{\mb E^{(2)}_{i_2}, \mb E^{(3)}_{i_3}}$ as follows: 
            \begin{align}
                & \sum_{i_1 , i_2 , i_3 \in \mc I_k(\mb z^*), i_1\neq i_2 \neq i_3} \ip{\mb E_{i_1}, \mb E_{i_2}}\ip{\mb E_{i_2}, \mb E_{i_3}}  
                \leq  \Big|\tr\big(\mb 1_{n_k \times n_k} \mb E_{\mc I_k(\mb z^*),\cdot}^{(1)} \mb E_{\mc I_k(\mb z^*),\cdot}^{(2)\top}\mb E_{\mc I_k(\mb z^*),\cdot}^{(2)}\mb E_{\mc I_k(\mb z^*),\cdot}^{(3)\top}\big) \Big|\\ 
                & + \Big| \tr\big(\mb 1_{n_k \times n_k} \text{diag}(\mb E_{\mc I_k(\mb z^*),\cdot}^{(1)} \mb E_{\mc I_k(\mb z^*),\cdot}^{(2)\top})\mb E_{\mc I_k(\mb z^*),\cdot}^{(2)}\mb E_{\mc I_k(\mb z^*),\cdot}^{(3)\top}\big) \Big| \\ 
                & + \Big|\tr\big(\mb 1_{n_k \times n_k} \mb E_{\mc I_k(\mb z^*),\cdot}^{(1)} \mb E_{\mc I_k(\mb z^*),\cdot}^{(2)\top}\text{diag}(\mb E_{\mc I_k(\mb z^*),\cdot}^{(2)}\mb E_{\mc I_k(\mb z^*),\cdot}^{(3)\top})\big) \Big|  \\ 
                & + \Big|\tr\big(\mb 1_{n_k \times n_k} \text{diag}(\mb E_{\mc I_k(\mb z^*),\cdot}^{(1)} \mb E_{\mc I_k(\mb z^*),\cdot}^{(2)\top})\text{diag}(\mb E_{\mc I_k(\mb z^*),\cdot}^{(2)}\mb E_{\mc I_k(\mb z^*),\cdot}^{(3)\top})\big) \Big|
                 \\ 
                \lesssim &  n_k^2 \norm{\bo \Sigma_k}_F \norm{\bo \Sigma_k} (\log d)^2 + n_k^{\frac32}\norm{\bo \Sigma_k}_F^2 (\log d )^2
            \end{align}
            holds with probability at least $1 - O(d^{-c})$ under either noise setting, where we used the facts that 
            \begin{align}
                & \norm{\text{diag}(\mb E_{\mc I_k(\mb z^*),\cdot}^{(1)}\mb E_{\mc I_k(\mb z^*),\cdot}^{(2)\top})\big)} \vee \norm{\text{diag}(\mb E_{\mc I_k(\mb z^*),\cdot}^{(2)}\mb E_{\mc I_k(\mb z^*),\cdot}^{(3)\top})\big)} 
                \lesssim   \norm{\bo \Sigma_k}_F\sqrt{\log d}, \\ 
                & \tr\big(\mb 1_{n_k \times n_k} \mb E_{\mc I_k(\mb z^*),\cdot}^{(1)} \mb E_{\mc I_k(\mb z^*),\cdot}^{(2)\top}\big) \vee  \tr\big(\mb 1_{n_k \times n_k} \mb E_{\mc I_k(\mb z^*),\cdot}^{(3)} \mb E_{\mc I_k(\mb z^*),\cdot}^{(2)\top}\big) \lesssim n_k \norm{\bo \Sigma_k}^{\frac12}_F \norm{\bo \Sigma_k}^{\frac12} 
            \end{align}
            with probability at least $1- O(d^{-c})$ by the Hanson-Wright inequality or the matrix Bernstein inequality, along with Assumptions~\ref{assumption: algorithm}~and~\ref{assumption: bounded noise}. 

            Taking these parts collectively yields that 
            \begin{align}
                & \kappa_{k_1,k_2,k,k}\Big[\sum_{i_1 , i_2 \in \mc I_k(\mb z^*),i_1 \neq i_2}\ip{\mb E_{i_1}, \mb E_{i_2}}^2 + \sum_{i_1 , i_2 , i_3 \in \mc I_k(\mb z^*), i_1\neq i_2 \neq i_3} \ip{\mb E_{i_1}, \mb E_{i_2}}\ip{\mb E_{i_2}, \mb E_{i_3}} \Big]  \\ \lesssim &  \kappa_{k_1,k_2,k,k} \Big[n_k^2 \norm{\bo \Sigma_k}_F \norm{\bo \Sigma_k} \log d + n_k^{\frac32}\norm{\bo \Sigma_k}_F^2 \log d \Big] \\ 
                \lesssim & n_k \tilde\sigma^3 \sigma\sqrt{p} \log d+ \sqrt{n_k} \tilde\sigma^2 \sigma^2 p \log d
            \end{align}
            with probability at least $1 - O(d^{-c})$. 
            \item For the second term on the RHS of \eqref{eq: decomposition of Ut H(EEt) H(EEt) U}, invoking Lemma~\ref{lemma: concentration for decoupled quantities} gives that 
            \begin{align}
                & \Big|\sum_{k_3 \in [K] \backslash\{k\}}  \kappa_{k_1,k_2,k_3,k_3} \sum_{i_1\in \mc I_{k_3}(\mb z^*),i_2 \in \mc I_{k}(\mb z^*) }\ip{\mb E_{i_1}, \mb E_{i_2}}^2 - \sum_{k_3 \in [K] \backslash\{k\}} \kappa_{k_1,k_2,k_3,k_3} n_k n_{k_3} \tr(\bo \Sigma_k \bo \Sigma_{k_3}) \Big| \\ 
                \lesssim &   \sum_{k_3 \in [K] \backslash\{k\}} \kappa_{k_1,k_2,k_3,k_3} \Big[(n_k n_{k_3})^{\frac34} \tr(\bo \Sigma_k \bo \Sigma_{k_3}) \log d + n_{k_3} n_k \tr(\bo \Sigma_k \bo \Sigma_{k_3})^{\frac56} \norm{\bo \Sigma_k \bo \Sigma_{k_3}}^{\frac16} \log d\Big] \\ 
                \lesssim & n_k^{\frac34}n_{k_3}^{-\frac14} \tilde\sigma^2 \sigma^2 \log d+ n_k p^{\frac56}\tilde\sigma^{\frac73} \sigma^{\frac53}\log d \lesssim \beta^{\frac14} n_k^{\frac12} p\tilde\sigma^2 \sigma^2 \log d+ n_kp^{\frac56} \tilde\sigma^{\frac73} \sigma^{\frac53}\log d
            \end{align}
            with probability at least $1 - O(d^{-c})$ under either noise. 
            \item With respect to the third term on the RHS of \eqref{eq: decomposition of Ut H(EEt) H(EEt) U}, we again invoke Lemma~\ref{lemma: concentration for decoupled quantities} to arrive at 
            \begin{align}
                & \Big| \sum_{k_3, k_4 \in [K] \backslash\{k\}, k_3 \neq k_4}  \kappa_{k_1,k_2,k_3,k_4} \sum_{i_1\in \mc I_{k_3}(\mb z^*), i_2 \in \mc I_k(\mb z^*),i_3 \in \mc I_{k_4}(\mb z^*) }\ip{\mb E_{i_1}, \mb E_{i_2}}\ip{\mb E_{i_2}, \mb E_{i_3}}\Big| \\ 
                \lesssim & \sum_{k_3, k_4 \in [K] \backslash\{k\}, k_3 \neq k_4} \kappa_{k_1,k_2,k_3,k_4} \Big[ \sqrt{n_{k_3}n_{k_4}}n_k \tr(\bo \Sigma_{k_3} \bo \Sigma_{k})^{\frac12} \norm{\bo \Sigma_k \bo \Sigma_{k_4}}^{\frac12} (\log d )^2 \\ 
                & \qquad + \sqrt{n_{k_3}n_{k_4}n_k} \tr(\bo \Sigma_{k_3} \bo \Sigma_k )^{\frac12} \tr(\bo \Sigma_{k_4}\bo \Sigma_k)^{\frac12} (\log d)^2\Big] \\ 
                \lesssim & \tilde\sigma^3 \sigma \sqrt{p}n_k (\log d)^2 + \sqrt{n_k} \tilde\sigma^2 \sigma^2 p(\log d)^2
            \end{align}
            with probability at least $1 - O(d^{-c})$, since $p \gtrsim (\log d)^3$.  
            \item Finally, regarding $\sum_{k_3 \in [K]\backslash \{k\}} \sum_{i_1,i_2 \in \mc I_k(\mb z^*), i_3 \in \mc I_{k_3}(\mb z^*)} \kappa_{k_1,k_2,k,k_3}\ip{\mb E_{i_1}, \mb E_{i_2}}\ip{\mb E_{i_2}, \mb E_{i_3}}$, one has 
            \begin{align}
                & \Big|\sum_{k_3 \in [K]\backslash \{k\}} \sum_{i_1,i_2 \in \mc I_k(\mb z^*),i_1\neq i_2, i_3 \in \mc I_{k_3}(\mb z^*)}\kappa_{k_1,k_2,k,k_3} \ip{\mb E_{i_1}, \mb E_{i_2}}\ip{\mb E_{i_2}, \mb E_{i_3}} \Big| \\ 
                \lesssim & \sum_{k_3 \in [K]\backslash \{k\}} \kappa_{k_1,k_2, k,k_3} \norm{\mc H(\mb E_{\mc I_k(\mb z^*), \cdot}\mb E\t_{\mc I_k(\mb z^*), \cdot})} \tr\big(\mb 1_{n_k}\t \mb E_{\mc I_k(\mb z^*), \cdot}\mb E\t_{\mc I_{k_3}(\mb z^*), \cdot} \mb 1_{n_{k_3}} \big) \\ 
                \lesssim &\sum_{k_3 \in [K]\backslash \{k\}} \kappa_{k_1,k_2, k,k_3} \Big(\tilde\sigma^2 n_k + \tilde\sigma  \sigma \sqrt{n_k p}    \Big) \Big(\sqrt{n_k n_{k_3}} \tr(\bo \Sigma_k \bo\Sigma_{k_3})^{\frac12} \sqrt{\log d}\Big) \\ 
                \lesssim & \tilde\sigma^3 \sigma n_k\sqrt{p} + \tilde\sigma^2 \sigma^2 \sqrt{n_k} p \sqrt{\log d} 
            \end{align}
            with probability at least $1 - O(d^{-c})$.  
    \end{itemize}
    Provided these pieces, \eqref{eq: decomposition of Ut H(EEt) H(EEt) U} turns out to be 
    \begin{align}
        & \Big| {\mb U_{\cdot, k_1}^*}\t \mc H(\mb E\mb E\t)_{\cdot, \mb I_k(\mb z^*)}\mc H(\mb E\mb E\t)_{\mb I_k(\mb z^*),\cdot } \mb U_{\cdot, k_2}^* - {\mb U_{k_1,\cdot}^*}\t \text{diag}\big(\tr(\sum_{i'\in[n], i\neq i'} \bo \Sigma_{z_i^*} \bo \Sigma_{z_{i'}^*})\big)_{i\in[n]} \mb U_{k_2,\cdot}^*\Big|\\ \lesssim &\beta^{\frac14} n_k^{\frac12} p\tilde\sigma^2 \sigma^2 \log d + n_k \sqrt{p}\tilde\sigma^3 \sigma (\log d)^2+ n_kp^{\frac56} \tilde\sigma^{\frac73} \sigma^{\frac53}\log d
    \end{align}
    with probability at least $1- O(d^{-c})$. As a consequence, the norm of $ {\mb U^*}\t \mc H(\mb E\mb E\t)_{\cdot, \mb I_k(\mb z^*)}\mc H(\mb E\mb E\t)_{\mb I_k(\mb z^*),\cdot } \mb U^*$ is controlled with probability at least $1- O(d^{-c})$ as follows: 
    \begin{align}
        & \Bignorm{ {\mb U^*}\t \mc H(\mb E\mb E\t)_{\cdot, \mb I_k(\mb z^*)}\mc H(\mb E\mb E\t)_{\mb I_k(\mb z^*),\cdot } \mb U^* -{\mb U^*}\t \text{diag}\big(\tr(\sum_{i'\in[n], i\neq i'} \bo \Sigma_{z_i^*} \bo \Sigma_{z_{i'}^*})\big)_{i\in[n]} \mb U^*}_F  \\ 
        \lesssim & \beta^{\frac14} K \sqrt{n_k} p \tilde\sigma^2 \sigma^2 \log d+ K n_k \sqrt{p} \tilde\sigma^3 \sigma (\log d)^2 + K n_k p^{\frac56} \tilde\sigma^{\frac73} \sigma^{\frac53} \log d. 
    \end{align}

    To prove the second upper bound in Lemma~\ref{lemma: concentration on second part of covariance estimation}, we decompose the target quantity for an $k_1 \in[K]$ as follows: 
    \begin{align}
        & {\mb U_{\cdot, k_1}^*}\t \mc H(\mb E \mb E\t)_{\cdot, \mc I_k(\mb z^*)} \mb E_{\mc I_k(\mb z^*), \cdot}\mb V^* = \kappa'_{k_1, k} \sum_{i_1 \neq i_2\in \mc I_k(\mb z^*)} \ip{\mb E_{i_1}, \mb E_{i_2}} \mb E_{i_2,\cdot} \mb V^* \\
        & \qquad  + \sum_{k'\in[K] \backslash\{k\}}  \sum_{i_1\in \mc I_{k'}(\mb z^*), i_2 \in \mc I_k(\mb z^*)} \kappa'_{k_1, k'} \ip{\mb E_{i_1}, \mb E_{i_2}}  \mb E_{i_2,\cdot} \mb V^*,
        \label{eq: first term in the decomposition of the corr term}
    \end{align}
    where $\kappa'_{k,k'} \coloneqq \mb U^*_{i_1,k}$ for any $i \in \mc I_{k'}(\mb z^*)$. For the first term on the RHS of \eqref{eq: first term in the decomposition of the corr term}, 
    we invoke the decoupling result \cite[Theorem~1]{de1995decoupling} again to derive that 
    \begin{align}
        & \bb P\Big[\bignorm{ \sum_{i_1 \neq i_2 \in \mc I_k(\mb z^*)} \ip{\mb E_{i_1}, \mb E_{i_2}} \mb E_{i_2,\cdot} \mb V^*}_2 \Big] \leq C\bb P\Big[\bignorm{ \sum_{i_1\neq i_2 \in \mc I_k(\mb z^*)}  \ip{\mb E_{i_1}^{(1)}, \mb E^{(2)}_{i_2}} \mb E^{(2)}_{i_2,\cdot} \mb V^* }_2 \Big]
    \end{align}
    for some constant $C$. 

    To deal with the decoupled version, we write: 
    \begin{equation}
        \bignorm{ \sum_{i_1\neq i_2 \in \mc I_k(\mb z^*)}  \ip{\mb E_{i_1}^{(1)}, \mb E^{(2)}_{i_2}} \mb E^{(2)}_{i_2,\cdot} \mb V^* }_2 \leq \Bignorm{ \mb 1_{n_k}\t \mb E_{\mc I_k(\mb z^*), \cdot}^{(1)} \mb E_{\mc I_k(\mb z^*), \cdot}^{(2)\top}  \mb E^{(2)}_{i_2,\cdot} \mb V^*}_2  + \norm{ \mb 1_{n_k }\t \diag(\mb E_{\mc I_k(\mb z^*), \cdot}^{(1)} \mb E_{\mc I_k(\mb z^*), \cdot}^{(2)\top}) \mb E^{(2)}_{i_2,\cdot} \mb V^*}_2. 
    \end{equation}
    Regarding the first term, 
    we apply Lemma~\ref{lemma: concentration for decoupled quantities} to derive that: 
    \begin{align}
        & \Bignorm{ \mb 1_{n_k}\t \mb E_{\mc I_k(\mb z^*), \cdot}^{(1)} \mb E_{\mc I_k(\mb z^*), \cdot}^{(2)\top}  \mb E^{(2)}_{i_2,\cdot} \mb V^*}_2  \lesssim n_k^{\frac32} \norm{\bo \Sigma_k} \norm{{\mb V^*}\t \bo \Sigma_k \mb V^*_{}}^{\frac12} \sqrt{K\log d} + n_k \norm{\bo \Sigma_k}_F \norm{{\mb V^*}\t \bo \Sigma_k \mb V^*_{}}^{\frac12} \sqrt{K\log d}
    \end{align}
    with probability at least $1- O(d^{-c})$ under either noise assumption. 
    For the second term, one has by Lemma~\ref{lemma: noise matrix concentrations using the universality (Gaussian)} and Lemma~\ref{lemma: noise matrix concentrations using the universality (bounded)} that 
    \begin{equation}
        \norm{ \mb 1_{n_k }\t \diag(\mb E_{\mc I_k(\mb z^*), \cdot}^{(1)} \mb E_{\mc I_k(\mb z^*), \cdot}^{(2)\top}) \mb E^{(2)}_{i_2,\cdot} \mb V^*}_2 \leq \sqrt{n_k} \norm{\diag(\mb E_{\mc I_k(\mb z^*), \cdot}^{(1)} \mb E_{\mc I_k(\mb z^*), \cdot}^{(2)\top})} \norm{\mb E^{(2)}_{i_2,\cdot} \mb V^*} \lesssim n_k \norm{\bo \Sigma_k}_F\norm{{\mb V^*}\t \bo \Sigma_k \mb V^*_{}}^{\frac12} \sqrt{\log d} 
    \end{equation}
    with probability at least $1 - O(d^{-c})$. Taking these inequalities together gives with probability at least $1 - O(d^{-c})$ that 
    \begin{align}
        & \Bignorm{\kappa'_{k_1, k} \sum_{i_1 \neq i_2\in \mc I_k(\mb z^*)} \ip{\mb E_{i_1}, \mb E_{i_2}} \mb E_{i_2,\cdot} \mb V^* }_2 \lesssim n_k \norm{\bo \Sigma_k} \norm{{\mb V^*}\t \bo \Sigma_k \mb V^*_{}}^{\frac12} \sqrt{K\log d} + n_k^{\frac12} \norm{\bo \Sigma_k}_F \norm{{\mb V^*}\t \bo \Sigma_k \mb V^*_{}}^{\frac12} \sqrt{K\log d}. 
    \end{align}

    Moreover, applying Lemma~\ref{lemma: concentration for decoupled quantities} to the second term on the RHS of \eqref{eq: first term in the decomposition of the corr term} yields that 
    \begin{align}
        & \Bignorm{\sum_{k'\in[K] \backslash\{k\}}  \sum_{i_1\in \mc I_{k'}(\mb z^*), i_2 \in \mc I_k(\mb z^*)} \kappa'_{k_1, k'} \ip{\mb E_{i_1}, \mb E_{i_2}}  \mb E_{i_2,\cdot} \mb V^*}_2 \\ 
        \lesssim & \sum_{k' \in [K]\backslash\{k\}}\Big[ n_k \norm{\bo \Sigma_k \bo \Sigma_{k'}}^{\frac12} \bignorm{{\mb V^*}\t \bo \Sigma_k \mb V^*_{}}^{\frac12} \sqrt{K\log d} + n_k^{\frac12} \tr(\bo \Sigma_k \bo \Sigma_{k'})^{\frac12} \norm{{\mb V^*}\t \bo \Sigma_k \mb V^*_{}}^{\frac12} \sqrt{K\log d}\Big]
    \end{align}
     with probability at least $1 - O(d^{-c})$. This together with the last inequality implies that 
    \eq{
          \norm{{\mb U_{\cdot, k_1}^*}\t \mc H(\mb E \mb E\t)_{\cdot, \mc I_k(\mb z^*)} \mb E_{\mc I_k(\mb z^*), \cdot}\mb V^*}_2  \lesssim K n_k \tilde\sigma^2 \bar \sigma \sqrt{\log d} + \sqrt{ n_kp} K\tilde\sigma \sigma \bar \sigma\sqrt{\log d}
    }
    with probability at least $1 - O(d^{-c})$.

      \qed

\subsubsection{Proof of Lemma~\ref{lemma: concentration for decoupled quantities}}  \label{subsubsec: proof of lemma: concentration for decoupled quantities}
    To analyze $\Big|\norm{\mb G^{(1)} {{}\mb G^{(2)}}\t}_F^2 - N_1 N_2 \tr(\bo \Sigma_1 \bo \Sigma_2) \Big|$, we first condition on $\mb E^{(2)}$ and then apply concentration results to $\Bignorm{\mb G^{(1)} {\mb G^{(2)}}\t }_F = \norm{\overrightarrow{\mb G^{(1)}}\t \mb M_{\mb G^{(2)}}}_2$ by vectorizing $\mb G^{(1)}$, where 
            $\text{vec}\big(\mb G^{(1)}\big)$ denotes the column vector formed from stacking the rows of $\mb G^{(1)}$, and 
            $ \mb M_{\mb G^{(2)}} \coloneqq \left(
            \begin{matrix}
                ( {\mb G^{(2)}}\t & & \\ 
                & \ddots & \\ 
                & &  {\mb G^{(2)}}\t
            \end{matrix}
            \right) \in \bb R^{ N_2 p \times N_2^2} 
            $. Since $\mb G^{(1)}$ and $\mb G^{(2)}$ play symmetric roles, without loss of generality we assume that $N_1 \geq N_2$. 

            For the Gaussian noise case, the Hanson-Wright inequality (Theorem~6.2.1 in \cite{vershynin2018high}) yields, with probability at least $1- O(d^{-c})$ conditional on an arbitrary $\mb G^{(2)}$, that 
            \begin{align}
                & \bigg|\norm{\overrightarrow{\mb G^{(1)}}\t \mb M_{\mb G^{(2)}}}_2^2 - \bb E\Big[ \bignorm{\overrightarrow{\mb G^{(1)}}\t \mb M_{\mb G^{(2)}}}_2^2  \big| \mb G^{(2)}\Big] \bigg|\\ 
                = &\Big|\bignorm{\overrightarrow{\mb G^{(1)}}\t \mb M_{\mb G^{(2)}}}_2^2 - N_1\tr(\bo \Sigma_1^{\frac12}{\mb G^{(2)}}\t \mb G^{(2)} \bo \Sigma_1^{\frac12}) \Big|\\ 
                \lesssim & \Big[N_1 \tr\big(\bo \Sigma_1^{\frac12}{\mb G^{(2)}}\t \mb G^{(2)} \bo \Sigma_1 {\mb G^{(2)}}\t \mb G^{(2)} \bo \Sigma_1^{\frac12}\big) \Big]^{\frac{1}{2}} \sqrt{\log(p\vee N)}  + \norm{\bo \Sigma_1^{\frac12}{\mb G^{(2)}}\t \mb G^{(2)} \bo \Sigma_1^{\frac12}} \log(p\vee N) \\ 
                \lesssim & \bigg[N_1  \tr\big(\bo \Sigma_1^{\frac12}{\mb G^{(2)}}\t \mb G^{(2)} \bo \Sigma_1^{\frac12} \big) \Bignorm{\bo \Sigma_1^{\frac12}{\mb G^{(2)}}\t \mb G^{(2)} \bo \Sigma_1^{\frac12}}\bigg]^{\frac12} \sqrt{\log(p\vee N)}  +  \Bignorm{\bo \Sigma_1^{\frac12}{\mb G^{(2)}}\t \mb G^{(2)} \bo \Sigma_1^{\frac12}} \log(p\vee N). 
                \label{eq: hanson-wright for A1}
            \end{align}
            Thus we obtain 
            \begin{align}
                &\Big| \bignorm{\overrightarrow{\mb G^{(1)}}\t \mb M_{\mb G^{(2)}}}_2^2 - N_1 N_2 \tr(\bo \Sigma_1 \bo \Sigma_2)\Big| \leq \Big| \bignorm{\overrightarrow{\mb G^{(1)}}\t \mb M_{\mb G^{(2)}}}_2^2 - \bb E\Big[ \bignorm{\overrightarrow{\mb G^{(1)}}\t \mb M_{\mb G^{(2)}}}_2^2  \big| \mb G^{(2)}\Big] \Big|\\ 
                & \qquad \qquad + \Big| N_1 \tr\Big(\bo \Sigma_1^{\frac12} \big({\mb G^{(2)}}\t \mb G^{(2)} - N_2 \bo \Sigma_1 \big) \bo \Sigma_1^{\frac12}\Big)\Big|
                \\ 
                & \qquad \leq    \bigg[N_1  \tr\big(\bo \Sigma_1^{\frac12}{\mb G^{(2)}}\t \mb G^{(2)} \bo \Sigma_1^{\frac12} \big) \Bignorm{\bo \Sigma_1^{\frac12}{\mb G^{(2)}}\t \mb G^{(2)} \bo \Sigma_1^{\frac12}}\bigg]^{\frac12} \sqrt{\log(p\vee N)}  \\ 
                & \qquad \qquad +  \Bignorm{\bo \Sigma_1^{\frac12}{\mb G^{(2)}}\t \mb G^{(2)} \bo \Sigma_1^{\frac12}} \log(p\vee N)  + \Big| N_1 \tr\Big(\bo \Sigma_1^{\frac12} \big({\mb G^{(2)}}\t \mb G^{(2)} - N_2 \bo \Sigma_2 \big) \bo \Sigma_1^{\frac12}\Big)\Big|
                \label{eq: decomposition of A1}
            \end{align}
            with probability at least $1 - O(d^{-c})$ conditional on an arbitrary $\mb G^{(2)}$. 

            To parse the quantities related to $\mb G^{(2)}$ on the RHS of \eqref{eq: decomposition of A1}, we turn to bounding the term 
            $$
            \tr\Big(\bo \Sigma_1^{\frac12} \big({\mb G^{(2)}}\t \mb G^{(2)} - N_2\bo \Sigma_2 \big) \bo \Sigma_1^{\frac12}\Big)= \bignorm{\text{vec}\big(\mb G^{(2)} \big)\t \mb M_{\bo \Sigma_1}}_2^2 - \bb E\Big[\bignorm{\text{vec}\big(\mb G^{(2)} \big)\t \mb M_{\bo \Sigma_1}}_2^2 \Big],
            $$
            where $\text{vec}\big(\mb G^{(2)} \big)$ is similarly defined as the column vector from stacking the rows of $\mb G^{(2)}$ and $\mb M_{\bo \Sigma_1}$ is defined as
            $
             \mb M_{\bo \Sigma_1} \coloneqq \left(
            \begin{matrix}
                 \bo \Sigma_1^{\frac12} & & \\ 
                & \ddots & \\ 
                & & \bo\Sigma_1^{\frac12} 
            \end{matrix}
            \right)\in \bb R^{N_1 p \times N_1 p}. 
            $
            Again, the Hansan-Wright inequality (Theorem~6.2.1 in \cite{vershynin2018high}) implies that 
            \begin{align}
                & \Big|\tr\big(\bo \Sigma_1^{\frac12} \big({\mb G^{(2)}}\t \mb G^{(2)} - N_2\bo \Sigma_2 \big) \bo \Sigma_1^{\frac12}\big) \Big| \lesssim \sqrt{N_2 \tr(\bo \Sigma_1^2 \bo \Sigma_2^2) \log(p\vee N)} + \norm{\bo \Sigma_1\bo \Sigma_2} \log(p\vee N)                 \label{eq: hanson-wright for E(2)}
            \end{align}
            with probability exceeding $1 - O(d^{-c})$, which leads to 
            \begin{align}
                & \tr\big(\bo \Sigma_1^{\frac12}{\mb G^{(2)}}\t \mb G^{(2)} \bo \Sigma_1^{\frac12} \big) \lesssim  N_2\tr(\bo \Sigma_1 \bo \Sigma_2)
                \label{eq: upper bound on trace of gram matrix wrt G2}
            \end{align}
            with probability at least $1 - O(d^{-c})$, where in $(i)$ and $(ii)$ we used the facts that $\sqrt{N_2 \tr(\bo \Sigma_1^2 \bo \Sigma_2^2) \log(p\vee N)} \leq \sqrt{N_2} \tr(\bo \Sigma_1 \bo \Sigma_2) \sqrt{\log(p\vee N)} \ll  N_2 \tr(\bo \Sigma_1 \bo \Sigma_2)$ and $\norm{\bo \Sigma_1\bo \Sigma_2} \log(p\vee N)\ll N_2 \tr(\bo \Sigma_1 \bo \Sigma_2)$. Besides, the concentration for a Gaussian distributed matrix gives that 
            \begin{align}
                & \Bignorm{\bo \Sigma_1^{\frac12}{\mb G^{(2)}}\t \mb G^{(2)} \bo \Sigma_1^{\frac12}} = \norm{\mb G^{(2)} \bo \Sigma_1^{\frac12}}^2 \lesssim \norm{\bo \Sigma_1 \bo \Sigma_2}N_2 + \tr(\bo \Sigma_1 \bo \Sigma_2)
                \label{eq: upper bound on spectral norm of gram matrix wrt G2}
            \end{align}
            with probability at least $1- O(d^{-c})$. 

            Plugging \eqref{eq: upper bound on trace of gram matrix wrt G2} and \eqref{eq: upper bound on spectral norm of gram matrix wrt G2} into \eqref{eq: decomposition of A1}, one reaches the conclusion that
            \longeq{
                & \Big| \bignorm{\overrightarrow{\mb G^{(1)}}\t \mb M_{\mb G^{(2)}}}_2^2 - N_1 N_2  \tr(\bo \Sigma_k^2 )\Big| 
                \lesssim \sqrt{N_1N_2} \tr(\bo \Sigma_1 \bo \Sigma_2)^{\frac12} \Big(\norm{\bo \Sigma_1\bo \Sigma_2}^{\frac12} \sqrt{N_2} + \tr(\bo \Sigma_1 \bo \Sigma_2)^{\frac12} \Big) \sqrt{\log(p\vee N)}\\ 
                & \qquad + \big(\norm{\bo \Sigma_1 \bo \Sigma_2}N_2 + \tr(\bo \Sigma_1 \bo \Sigma_2)\big) \log(p\vee N)+ N_1 \Big(  \sqrt{N_2 \tr(\bo \Sigma_1^2 \bo \Sigma_2^2) \log(p\vee N)} + \norm{\bo \Sigma_1\bo \Sigma_2} \log(p\vee N)\Big) \\ 
                \lesssim & (N_1 \sqrt{N_2} +  \sqrt{N_1}N_2 )\tr(\bo \Sigma_1 \bo \Sigma_2)^{\frac12}\norm{\bo \Sigma_1\bo \Sigma_2}^{\frac12} \sqrt{\log(p\vee N)} +  \sqrt{N_1 N_2}\tr(\bo \Sigma_1 \bo \Sigma_2) \sqrt{\log(p\vee N)} 
                \label{eq: hanson-wright for G2}
            }
            with probability at least $1 - O(d^{-c})$, where we used the facts 
            $
            \log(p\vee N)\lesssim  \sqrt{N_1 N_2} \sqrt{\log(p\vee N)}
            $
            and 
            \longeq{
            &N_1 \vee N_2\norm{\bo \Sigma_1\bo \Sigma_2}\log(p\vee N) \lesssim \sqrt{N_1N_2(N_1 \vee N_2)}\norm{\bo \Sigma_1\bo \Sigma_2}\sqrt{\log(p\vee N)} \\ 
            \lesssim& \sqrt{N_1 N_2(N_1 \vee N_2)} \tr(\bo \Sigma_1 \bo \Sigma_2)^{\frac12}\norm{\bo \Sigma_1\bo \Sigma_2}^{\frac12} \sqrt{\log(p\vee N)}
            } since $\log(p\vee N) \ll N_1 \wedge N_2$. 

            Now we turn to the bounded noise matrices $\mb E^{(1)}$ and $\mb E^{(2)}$ with local dependence.  By combining the Gaussian bound in \eqref{eq: hanson-wright for A1} with Theorem 2.14 of \cite{brailovskaya2022universality}, we can transfer control from the Gaussian one to the bounded setting:
            \begin{align}
                &  \Big|\bignorm{\overrightarrow{\mb E^{(1)}}\t \mb M_{\mb E^{(2)}}}_2 - \bb E\Big[ \bignorm{\overrightarrow{\mb E^{(1)}}\t \mb M_{\mb E^{(2)}} \big| \mb E^{(2)}}_2^2 \Big]^{\frac12} \Big|   
                \\ 
                \leq &   \Big|\bignorm{\overrightarrow{\mb G^{(1)}}\t \mb M_{\mb E^{(2)}}}_2 - \bb E\Big[ \bignorm{\overrightarrow{\mb G^{(1)}}\t \mb M_{\mb E^{(2)}} \big| \mb E^{(2)}}_2^2 \Big]^{\frac12} \Big|  + \epsilon_{\mb E^{(1)}}
                \\ 
                \leq &  \bb E\Big[ \bignorm{\overrightarrow{\mb G^{(1)}}\t \mb M_{\mb E^{(2)}} \big| \mb E^{(2)}}_2^2 \Big]^{-\frac12} \Big|\bignorm{\overrightarrow{\mb G^{(1)}}\t \mb M_{\mb E^{(2)}}}_2^2 - \bb E\Big[ \bignorm{\overrightarrow{\mb G^{(1)}}\t \mb M_{\mb E^{(2)}} \big| \mb E^{(2)}}_2^2 \Big] \Big| +  \epsilon_{\mb E^{(1)}}
                \\ 
                \leq & N_1^{-\frac12} \tr\big(\bo \Sigma_1^{\frac12}\mb E^{(2)\top}\mb E^{(2)} \bo \Sigma_1^{\frac12}\big)^{-\frac12} \bigg\{ \Big[N_1  \tr\big(\bo \Sigma_1^{\frac12}{\mb E^{(2)}}\t \mb E^{(2)} \bo \Sigma_1^{\frac12} \big) \Bignorm{\bo \Sigma_1^{\frac12}{\mb E^{(2)}}\t \mb E^{(2)} \bo \Sigma_1^{\frac12}}\bigg]^{\frac12} \sqrt{\log(p\vee N)}  \\ 
                & \qquad +  \Bignorm{\bo \Sigma_1^{\frac12}{\mb E^{(2)}}\t \mb E^{(2)} \bo \Sigma_1^{\frac12}} \log(p\vee N) \bigg\}  + \epsilon_{\mb E^{(1)}} 
                \label{eq: concentration for decoupled quadratic form (bounded case)}
            \end{align}
            with probability at least $1- O(d^{-c})$ conditional on $\mb E^{(2)}$, 
            where we used the identity $|a^2 - b^2| = (a + b) |a -b|$ for $a,~ b \geq 0$. The universality error $\epsilon_{\mb E^{(1)}}$ is defined, following the notions of \cite{brailovskaya2022universality}, with some constant $C_{\text{univ}}$:  
            \longeq{
                &\epsilon_{\mb E^{(1)}} \coloneqq  C_{\text{univ}} \Big[v(\overrightarrow{\mb E^{(1)}}\t \mb M_{\mb E^{(2)}})^{\frac12} \sigma(\overrightarrow{\mb E^{(1)}}\t \mb M_{\mb E^{(2)}})^{\frac12} (\log(p\vee N))^{\frac34} + \sigma_*(\overrightarrow{\mb E^{(1)}}\t \mb M_{\mb E^{(2)}}) \sqrt{\log(p\vee N)} \\ 
                & \qquad + R(\overrightarrow{\mb E^{(1)}}\t \mb M_{\mb E^{(2)}})^{\frac13} \sigma(\overrightarrow{\mb E^{(1)}}\t \mb M_{\mb E^{(2)}})^{\frac23}(\log(p\vee N))^{\frac23} + R(\overrightarrow{\mb E^{(1)}}\t \mb M_{\mb E^{(2)}}) \log(p\vee N)\Big]. 
            }
            The relevant quantities satisfy the following bounds:
            \begin{align}
                & v(\overrightarrow{\mb E^{(1)}}\t \mb M_{\mb E^{(2)}}) \vee \sigma_*(\overrightarrow{\mb E^{(1)}}\t \mb M_{\mb E^{(2)}}) \leq \bignorm{ \mb E^{(2)} \bo \Sigma_1^{\frac12}}\lesssim  \norm{\bo \Sigma_1 \bo \Sigma_2}^{\frac12} \sqrt{N_2} + \tr(\bo \Sigma_1 \bo \Sigma_2)^{\frac12} , \\ 
                & \sigma(\overrightarrow{\mb E^{(1)}}\t \mb M_{\mb E^{(2)}}) \lesssim \sqrt{N_1}\tr\big(\bo \Sigma_1^{\frac12}\mb E^{(2)\top}\mb E^{(2)} \bo \Sigma_1^{\frac12}\big)^{\frac12}, \qquad R(\overrightarrow{\mb E^{(1)}}\t \mb M_{\mb E^{(2)}}) \lesssim \sqrt{m}B \norm{\bo \Sigma_2}^{\frac12} \sqrt{N_2}, 
            \end{align}
            conditional on the event $\left\{\max_{c\in[l]} \norm{\mb E^{(2)}_{\cdot, S_c}} \leq C \norm{\bo \Sigma_2}^{\frac12} \sqrt{N_2},~\norm{\mb E^{(2)}} \lesssim \norm{\bo \Sigma_1 \bo \Sigma_2}^{\frac12} \sqrt{N_2} + \tr(\bo \Sigma_1 \bo \Sigma_2)^{\frac12}\right\}$ for some constant $C$, whose probabilty exceeds $1 - O(d^{-c})$. This gives the upper bound for $\epsilon_{\mb E^{(1)}}$: 
            \begin{align}
                & \epsilon_{\mb E^{(1)}} \lesssim  \big(\norm{\bo \Sigma_1 \bo \Sigma_2}^{\frac14}N_2^{\frac14} + \tr(\bo \Sigma_1 \bo \Sigma_2)^{\frac14}\big) \cdot N_1^{\frac14} \tr\big(\bo \Sigma_k^{\frac12}\mb E^{(2)\top}\mb E^{(2)} \bo \Sigma_k^{\frac12}\big)^{\frac14}(\log(p\vee N))^{\frac34} \\ 
                & + \big( \norm{\bo \Sigma_1 \bo \Sigma_2}^{\frac12} \sqrt{N_2} + \tr(\bo \Sigma_1 \bo \Sigma_2)^{\frac12}\big)  \sqrt{\log(p\vee N)}  \\ 
                & + m^{\frac16} B^{\frac13} \norm{\bo \Sigma_2}^{\frac16} N_2^{\frac16}\cdot  N_1^{\frac13}\tr\big(\bo \Sigma_1^{\frac12}\mb E^{(2)\top}\mb E^{(2)} \bo \Sigma_1^{\frac12}\big)^{\frac13}(\log(p\vee N))^{\frac23} + \sqrt{m}B \norm{\bo \Sigma_2}^{\frac12} \sqrt{N_2}\log(p\vee N).  
                \label{eq: upper bound on epsilonE1}
            \end{align}
            This calls for a control on $\tr\big(\bo \Sigma_1^{\frac12}\mb E^{(2)\top}\mb E^{(2)} \bo \Sigma_1^{\frac12}\big) = \bignorm{\overrightarrow{\mb E^{(2)}}\t \mb M_{\bo \Sigma_1}}_2^2$, which can be addressed by applying the universality again. Specifically, one combine \eqref{eq: hanson-wright for E(2)} with \cite[Theorem~2.14]{brailovskaya2022universality} to obtain that 
            \begin{align}
            & \Big| \tr\big(\bo \Sigma_1^{\frac12}\mb E^{(2)\top}\mb E^{(2)} \bo \Sigma_1^{\frac12}\big)^{\frac12} - \sqrt{N_2} \tr(\bo \Sigma_1 \bo \Sigma_2)^{\frac12}\Big| = \Big|\bignorm{\overrightarrow{\mb E^{(2)}}\t \mb M_{\bo \Sigma_1}}_2 - \sqrt{N_2} \tr(\bo \Sigma_1 \bo \Sigma_2)^{\frac12} \Big| \\ 
            \lesssim &  \big(\sqrt{N_2} \tr(\bo \Sigma_1 \bo \Sigma_2 )^{\frac12}\big)^{-1}\Big[\sqrt{N_2 \tr(\bo \Sigma_1^2 \bo \Sigma_2^2 ) \log(p\vee N)} + \norm{\bo \Sigma_1 \bo \Sigma_2}^2 \log(p\vee N)\Big] \\ 
            & \qquad  + \Big[\norm{\bo \Sigma_1 \bo \Sigma_2}^{\frac14} N_2^{\frac14} \tr(\bo \Sigma_1 \bo \Sigma_2)^{\frac14} (\log(p\vee N))^{\frac34} + \norm{\bo \Sigma_1 \bo \Sigma_2}^{\frac12}\sqrt{\log(p\vee N)}\\
            & \qquad + m^{\frac16}B^{\frac13} \norm{\bo \Sigma_1}^{\frac16} N_2^{\frac13} \tr(\bo \Sigma_1 \bo \Sigma_2)^{\frac13} (\log(p\vee N))^{\frac23} + \sqrt{m}B \norm{\bo \Sigma_1}^{\frac12} \log(p\vee N)\Big]  \\ 
            \lesssim & \sqrt{m}B\norm{\bo \Sigma_2}^{\frac12} \log(p\vee N) + m^{\frac16} N_2^{\frac13} B^{\frac13} \norm{\bo \Sigma_1}^{\frac16} \tr(\bo \Sigma_1 \bo \Sigma_2)^{\frac13} (\log(p\vee N))^{\frac34} \ll \sqrt{N_2} \tr(\bo \Sigma_1 \bo \Sigma_2)^{\frac12}
            \label{eq: universality for E(2)}
            \end{align}
            with probability at least $1- O(d^{-c})$, where the involved quantities are controlled by: $v(\overrightarrow{\mb E^{(2)}}\t \mb M_{\bo \Sigma_1}) \vee \sigma_*(\overrightarrow{\mb E^{(2)}}\t \mb M_{\bo \Sigma_1})\lesssim \norm{\bo \Sigma_1 \bo \Sigma_2}^{\frac12}$, $\sigma(\overrightarrow{\mb E^{(2)}}\t \mb M_{\bo \Sigma_1}) \lesssim \sqrt{N_2}\tr(\bo \Sigma_1 \bo \Sigma_2)^{\frac12}$, and $R(\overrightarrow{\mb E^{(2)}}\t \mb M_{\bo \Sigma_1}) \lesssim \sqrt{m}B \norm{\bo \Sigma_1}^{\frac12}$. Here we simplify the expression in the penultimate inequaltiy of \eqref{eq: universality for E(2)} using the fact that $
                 \sqrt{m}B \geq \norm{\bo \Sigma_2}^{\frac12}  \geq \frac{\norm{\bo \Sigma_1 \bo \Sigma_2}^{\frac12}}{\norm{\bo \Sigma_1}^{\frac12}}$ and the last inequality follows from the condition that $\sqrt{m}B \norm{\bo \Sigma_2}^{\frac12} ( \log(p\vee N))^{\frac94} \lesssim \tr(\bo \Sigma_1 \bo \Sigma_2)^{\frac12}$.

            Taking \eqref{eq: universality for E(2)} and \eqref{eq: upper bound on epsilonE1} collectively  yields that 
            \begin{align}
                \epsilon_{\mb E^{(1)}} \lesssim & \big(\norm{\bo \Sigma_1 \bo \Sigma_2}^{\frac14}N_2^{\frac14} + \tr(\bo \Sigma_1 \bo \Sigma_2)^{\frac14}\big) \cdot N_1^{\frac14} N_2^{\frac14}\tr\big(\bo \Sigma_1\bo \Sigma_2\big)^{\frac14}(\log(p\vee N))^{\frac34} \\ 
                & + \big( \norm{\bo \Sigma_1 \bo \Sigma_2}^{\frac12} \sqrt{N_2} + \tr(\bo \Sigma_1 \bo \Sigma_2)^{\frac12}\big)  \sqrt{\log(p\vee N)}  \\ 
                & + m^{\frac16} B^{\frac13} \norm{\bo \Sigma_2}^{\frac16} N_2^{\frac16}\cdot  N_1^{\frac13} N_2^{\frac13} \tr\big(\bo \Sigma_1 \bo \Sigma_2\big)^{\frac13}(\log(p\vee N))^{\frac23} + \sqrt{m}B \norm{\bo \Sigma_2}^{\frac12} \sqrt{N_2}\log(p\vee N) \\ 
                \lesssim & \big(\norm{\bo \Sigma_1 \bo \Sigma_2}^{\frac14}N_2^{\frac14} + \tr(\bo \Sigma_1 \bo \Sigma_2)^{\frac14}\big) \cdot N_1^{\frac14} N_2^{\frac14}\tr\big(\bo \Sigma_1\bo \Sigma_2\big)^{\frac14}(\log(p\vee N))^{\frac34} \\ 
                & + m^{\frac16} B^{\frac13} \norm{\bo \Sigma_2}^{\frac16}   N_1^{\frac13} N_2^{\frac12} \tr\big(\bo \Sigma_1 \bo \Sigma_2\big)^{\frac13}(\log(p\vee N))^{\frac23} + \sqrt{m}B \norm{\bo \Sigma_2}^{\frac12} \sqrt{N_2}\log(p\vee N)
                \label{eq: upper bound for epsilon E(1) (bounded case)}
            \end{align}
            holds with probability at least $1- O(d^{-c})$. 

            Finally, invoking \eqref{eq: concentration for decoupled quadratic form (bounded case)} and \eqref{eq: universality for E(2)} yields with probability at least $1- O(d^{-c})$ that 
            \begin{align}
                &  \Big|\bignorm{\overrightarrow{\mb E^{(1)}}\t \mb M_{\mb E^{(2)}}}_2 - \bb E\Big[ \bignorm{\overrightarrow{\mb E^{(1)}}\t \mb M_{\mb E^{(2)}} \big| \mb E^{(2)}}_2^2 \Big]^{\frac12} \Big| \\  
                \lesssim & \big(N_1 N_2 \tr(\bo \Sigma_1 \bo \Sigma_2)\big)^{-\frac12} \Big[ 
                \big(N_1 N_2 \tr(\bo \Sigma_1 \bo \Sigma_2)\big)^{\frac12}\big( \norm{\bo \Sigma_1 \bo \Sigma_2}^{\frac12} \sqrt{N_2} + \tr(\bo \Sigma_1 \bo \Sigma_2)^{\frac12}\big) \sqrt{\log (p\vee N)} 
                \\ 
                & \qquad + \big( \norm{\bo \Sigma_1 \bo \Sigma_2} N_2 + \tr(\bo \Sigma_1 \bo \Sigma_2)\big)\log (p\vee N) 
                \Big] \\ 
                & + \Big[\big(\norm{\bo \Sigma_1 \bo \Sigma_2}^{\frac14}N_2^{\frac14} + \tr(\bo \Sigma_1 \bo \Sigma_2)^{\frac14}\big) \cdot N_1^{\frac14} N_2^{\frac14}\tr\big(\bo \Sigma_1\bo \Sigma_2\big)^{\frac14}(\log(p\vee N))^{\frac34} \\ 
                & \qquad + m^{\frac16} B^{\frac13} \norm{\bo \Sigma_2}^{\frac16}   N_1^{\frac13} N_2^{\frac12} \tr\big(\bo \Sigma_1 \bo \Sigma_2\big)^{\frac13}(\log(p\vee N))^{\frac23} + \sqrt{m}B \norm{\bo \Sigma_2}^{\frac12} \sqrt{N_2}\log(p\vee N) \Big] \\ 
                \lesssim  & N_1^{\frac12} N_2^{\frac12} \norm{\bo \Sigma_1 \bo \Sigma_2}^{\frac16}\tr(\bo \Sigma_1 \bo \Sigma_2)^{\frac13} \log(p\vee N)+ N_1^{\frac14} N_2^{\frac14} \tr(\bo\Sigma_1 \bo \Sigma_2)^{\frac12} \log(p\vee N) \\ 
                \ll & \sqrt{N_1 N_2} \tr(\bo \Sigma_1 \bo \Sigma_2)^{\frac12} ,
            \end{align}
            where the second-to-last inequality follows from the facts that 
            \begin{align}
                & \big(N_1 N_2 \tr(\bo \Sigma_1 \bo \Sigma_2)\big)^{-\frac12} \norm{\bo \Sigma_1\bo \Sigma_2} N_2 \leq \sqrt{\frac{N_2}{N_1}} \norm{\bo \Sigma_1 \bo \Sigma_2}^{\frac12} \lesssim N_1^{\frac12} N_2^{\frac12} \norm{\bo \Sigma_1 \bo \Sigma_2}^{\frac16}\tr(\bo \Sigma_1 \bo \Sigma_2)^{\frac13} \log(p\vee N), \\ 
                & \big(N_1 N_2 \tr(\bo \Sigma_1 \bo \Sigma_2)\big)^{-\frac12} \tr(\bo \Sigma_1 \bo \Sigma_2) \log(p\vee N) \lesssim N_1^{\frac14} N_2^{\frac14} \tr(\bo\Sigma_1 \bo \Sigma_2)^{\frac12} \log(p\vee N), \\ 
                & m^{\frac16} B^{\frac13} \norm{\bo \Sigma_2}^{\frac16}   N_1^{\frac13} N_2^{\frac12} \tr\big(\bo \Sigma_1 \bo \Sigma_2\big)^{\frac13}(\log(p\vee N))^{\frac23} \lesssim N_1^{\frac12} N_2^{\frac12} \norm{\bo \Sigma_1\bo \Sigma_2}^{\frac16} \tr(\bo \Sigma_1 \bo \Sigma_2)^{\frac13} \log(p\vee N), 
                \label{eq: fact 1 regarding B (auxiliary lemma)}
                \\ 
                & \sqrt{m}B \norm{\bo \Sigma_2}^{\frac12} \sqrt{N_2}\log(p\vee N) \lesssim  (\sqrt{m}B \norm{\bo \Sigma_2}^{\frac12} )^{\frac16} (\sqrt{m}B \norm{\bo \Sigma_2}^{\frac12} )^{\frac13}\sqrt{N_2}\log(p\vee N)\\
                \lesssim & N_1^{\frac{1}{12}} N_2^{\frac12} \norm{\bo \Sigma_1 \bo \Sigma_2}^{\frac16}\tr(\bo \Sigma_1 \bo \Sigma_2)^{\frac13} \log(p\vee N), 
                \label{eq: fact 2 regarding B (auxiliary lemma)}
            \end{align}
            since 
            $ \sqrt{m} B \big(\frac{\norm{\bo \Sigma_2}^{\frac12}}{\sqrt{N_1 \wedge N_2}\norm{\bo \Sigma_1 \bo \Sigma_2}^{\frac12}}  \vee \frac{\norm{\bo \Sigma_2}^{\frac12}}{\tr(\bo \Sigma_1 \bo \Sigma_2)^{\frac12}}\big) \lesssim 1$,
            and the last inequality follows from the effective-rank condition $\mathrm{rk}(\bo \Sigma_1 \bo \Sigma_2) \coloneqq \frac{\tr(\bo \Sigma_1\bo \Sigma_2)}{\norm{\bo \Sigma_1 \bo \Sigma_2}} \gg (\log(p\vee N))^6$ and $N_1 \wedge N_2 \gg (\log (p\vee N))^2$. This combined with \eqref{eq: universality for E(2)} in turn leads to 
            \begin{equation}
                 \Big|\bignorm{\overrightarrow{\mb E^{(1)}}\t \mb M_{\mb E^{(2)}}}_2^2 - \bb E\Big[ \bignorm{\overrightarrow{\mb E^{(1)}}\t \mb M_{\mb E^{(2)}} \big| \mb E^{(2)}}_2^2 \Big]\Big| \lesssim N_1 N_2 \norm{\bo \Sigma_1 \bo \Sigma_2}^{\frac16}\tr(\bo \Sigma_1 \bo \Sigma_2)^{\frac56} \log(p\vee N)+ N_1^{\frac34} N_2^{\frac34} \tr(\bo\Sigma_1 \bo \Sigma_2) \log(p\vee N) 
                 \label{eq: eq: upper bound for hanson wright ineq regarding E1 (bounded case)}
            \end{equation}
            with probability at least $1 - O(d^{-c})$. 
            
            To conclude, combining \eqref{eq: universality for E(2)} with \eqref{eq: eq: upper bound for hanson wright ineq regarding E1 (bounded case)} gives that
            \begin{align}
                & \Big| \bignorm{\overrightarrow{\mb E^{(1)}}\t \mb M_{\mb E^{(2)}}}_2^2 - N_1 N_2  \tr(\bo \Sigma_1 \bo \Sigma_2 )\Big| \\ 
                \leq & \Big| \bignorm{\overrightarrow{\mb E^{(1)}}\t \mb M_{\mb E^{(2)}}}_2^2 - \bb E\Big[ \bignorm{\overrightarrow{\mb E^{(1)}}\t \mb M_{\mb E^{(2)}} \big| \mb E^{(2)}}_2^2 \Big]\Big| + N_1\Big|\tr\big(\bo \Sigma_k^{\frac12} \big({\mb E^{(2)}}\t \mb E^{(2)} - N_2\bo \Sigma_2 \big) \bo \Sigma_k^{\frac12}\big) \Big| \\ 
                \lesssim &N_1 N_2 \norm{\bo \Sigma_1 \bo \Sigma_2}^{\frac16}\tr(\bo \Sigma_1 \bo \Sigma_2)^{\frac56} \log(p\vee N)+ N_1^{\frac34} N_2^{\frac34} \tr(\bo\Sigma_1 \bo \Sigma_2) \log(p\vee N)  \\ 
                & + N_1 \sqrt{m}B\norm{\bo \Sigma_2}^{\frac12} \log(p\vee N) + m^{\frac16} N_2^{\frac13} B^{\frac13} \norm{\bo \Sigma_1}^{\frac16} \tr(\bo \Sigma_1 \bo \Sigma_2)^{\frac13} (\log(p\vee N))^{\frac34} \\ 
                \lesssim & N_1 N_2 \norm{\bo \Sigma_1 \bo \Sigma_2}^{\frac16}\tr(\bo \Sigma_1 \bo \Sigma_2)^{\frac56} \log(p\vee N)+ N_1^{\frac34} N_2^{\frac34} \tr(\bo\Sigma_1 \bo \Sigma_2) \log(p\vee N) 
            \end{align}
            with probability at least $1- O(d^{-c})$, where in the last inequality we reason analogously as in \eqref{eq: fact 1 regarding B (auxiliary lemma)} and \eqref{eq: fact 2 regarding B (auxiliary lemma)} for upper bounding the terms related to $B$.   

        \bigskip

        Now we move on to the second part corresponding to $\Big| \tr\big(\mb 1_{N_3\times N_1}\mb G^{(1)} {{}\mb G^{(2)}}\t \mb G^{(2)} {{}\mb G^{(3)}}\t\big) \Big|$ and $\Big| \tr\big(\mb 1_{N_3\times N_1}\mb E^{(1)} {{}\mb E^{(2)}}\t \mb E^{(2)} {{}\mb E^{(3)}}\t\big) \Big|$. We first embark on the Gaussian version. Notice that 
        $$\Big| \tr\big(\mb 1_{N_3\times N_1}\mb G^{(1)} {{}\mb G^{(2)}}\t \mb G^{(2)} {{}\mb G^{(3)}}\t\big) \Big| = \sqrt{N_1 N_3 }\Big|(\mb 1_{N_1} / \sqrt{N_1})\t \mb G^{(1)} {{}\mb G^{(2)}}\t \mb G^{(2)} {{}\mb G^{(3)}}\t (\mb 1_{N_3} / \sqrt{N_3}) \Big|$$ 
        is identically distributed as $\sqrt{N_1 N_3}\Big|\mb G_{1,\cdot}^{(1)} {{}\mb G^{(2)}}\t \mb G^{(2)} {{}\mb G_{1,\cdot}^{(3)}}\t \Big|$. Then conditional on $\mb G^{(2)}$, Lemma~6.2.2 in \cite{vershynin2018high} yields that 
        \begin{align}
            & \Big|\mb G_{1,\cdot}^{(1)} {{}\mb G^{(2)}}\t \mb G^{(2)} {{}\mb G_{1,\cdot}^{(3)}}\t \Big|  \lesssim \norm{\bo \Sigma_1^{\frac12} \mb G^{(2)\top} \mb G^{(2)} \bo \Sigma_3^{\frac12}}_F \sqrt{\log (p\vee N)} + \norm{\bo \Sigma_1^{\frac12} \mb G^{(2)\top} \mb G^{(2)} \bo \Sigma_3^{\frac12}} \log(p\vee N)
        \end{align}
        with probability at least $1 - O(d^{-c})$. Given the relations $\norm{\bo \Sigma_1^{\frac12} \mb G^{(2)\top} \mb G^{(2)} \bo \Sigma_3^{\frac12}}_F \leq \norm{\bo \Sigma_1^{\frac12} \mb G^{(2)\top}}_F \norm{\mb G^{(2)} \bo \Sigma_3^{\frac12}} $ and $\norm{\bo \Sigma_1^{\frac12} \mb G^{(2)\top} \mb G^{(2)} \bo \Sigma_3^{\frac12}} \leq \norm{\bo \Sigma_1^{\frac12} \mb G^{(2)\top}} \norm{\mb G^{(2)} \bo \Sigma_3^{\frac12}}$, we invoke \eqref{eq: upper bound on spectral norm of gram matrix wrt G2} and \eqref{eq: hanson-wright for G2} to obtain that 
        \begin{align}
            &  \Big|\mb G_{1,\cdot}^{(1)} {{}\mb G^{(2)}}\t \mb G^{(2)} {{}\mb G_{1,\cdot}^{(3)}}\t \Big| \lesssim \sqrt{N_2}\tr(\bo \Sigma_1 \bo \Sigma_2)^{\frac12}\big( \norm{\bo \Sigma_3 \bo \Sigma_2}^{\frac12} \sqrt{N_2 } + \tr(\bo \Sigma_2 \bo \Sigma_3)^{\frac12} \big)\sqrt{\log d} \\ 
            & \qquad +  \big( \norm{\bo \Sigma_1 \bo \Sigma_2}^{\frac12} \sqrt{N_2 } + \tr(\bo \Sigma_1 \bo \Sigma_2)^{\frac12} \big)\big( \norm{\bo \Sigma_2 \bo \Sigma_3}^{\frac12} \sqrt{N_2 } + \tr(\bo \Sigma_2 \bo \Sigma_3)^{\frac12} \big) \log d \\ 
            \lesssim & \sqrt{N_2}\tr(\bo \Sigma_1 \bo \Sigma_2)^{\frac12}\big( \norm{\bo \Sigma_3 \bo \Sigma_2}^{\frac12} \sqrt{N_2 } + \tr(\bo \Sigma_2 \bo \Sigma_3)^{\frac12} \big)\sqrt{\log d} 
        \end{align}
        holds with probability at least $1 -O(d^{-c})$. This leads to the conclusion that 
        \begin{align}
            & \Big| \tr\big(\mb 1_{N_3\times N_1}\mb G^{(1)} {{}\mb G^{(2)}}\t \mb G^{(2)} {{}\mb G^{(3)}}\t\big) \Big|  \\
            \lesssim &  \sqrt{N_1 N_3} \Big(N_2\tr(\bo \Sigma_1 \bo \Sigma_2)^{\frac12} \norm{\bo \Sigma_2\bo \Sigma_3}^{\frac12} + \sqrt{N_2}\tr(\bo \Sigma_1 \bo \Sigma_2)^{\frac12}\tr(\bo \Sigma_3 \bo \Sigma_2)^{\frac12}  \Big) \sqrt{\log(p\vee N)}
        \end{align}
        holds with probability at least $1 -O(d^{-c})$.

        \bigskip 
        
        With respect to the bounded noise with local dependence, we leverage the matrix Bernstein inequality to derive that 
        \begin{align}
            & \Big| \tr\big(\mb 1_{N_3\times N_1 }\mb E^{(1)} {{}\mb E^{(2)}}\t \mb E^{(2)} {{}\mb E^{(3)}}\t\big) \Big| \lesssim \sqrt{N_1}\norm{\mb 1_{N_3}\t \mb E^{(3)} \mb E^{(2)\top} \mb E^{(2)} \bo \Sigma_1^{\frac12}}_2 \sqrt{\log(p \vee N)} \\ 
            & \qquad \qquad + \sqrt{mB}\max_{c\in[l]} \norm{\mb E_{\cdot, S_c}^{(2)\top} \mb E^{(2)} \mb E^{(3)\top} \mb 1_{N_3}}_2 \log(p\vee N) 
            \label{eq: decoupled matrix product concentration (bounded matrix)}
        \end{align}
        with probability at least $1- O(d^{-c})$ conditional on $\mb E^{(2)}$ and $\mb E^{(3)}$. 

        Applying the matrix Bernstein inequality to $\norm{\mb 1_{N_3}\t \mb E^{(3)} \mb E^{(2)\top} \mb E^{(2)} \bo \Sigma_1^{\frac12}}_2$ and $\norm{\mb E_{\cdot, S_c}^{(2)\top} \mb E^{(2)} \mb E^{(3)\top}}$ yields that 
        \begin{align}
            & \norm{\mb 1_{N_3}\t \mb E^{(3)} \mb E^{(2)\top} \mb E^{(2)} \bo \Sigma_1^{\frac12}}_2 \lesssim \sqrt{N_3} \norm{\bo \Sigma_1^{\frac12}\mb E^{(2)\top} \mb E^{(2)} \bo \Sigma_3^{\frac12}}_F \sqrt{\log(p\vee N)} + \sqrt{m} B\max_{c\in[l]}\norm{\mb E_{\cdot, S_c}^{(2)\top} \mb E^{(2)} \bo \Sigma_1^{\frac12}} \log(p\vee N), 
            \\ 
            & \norm{\mb E_{\cdot, S_c}^{(2)\top} \mb E^{(2)} \mb E^{(3)\top} \mb 1_{N_3}} \lesssim \sqrt{N_3}\norm{\mb E_{\cdot, S_c}^{(2)\top} \mb E^{(2)} \bo \Sigma_3^{\frac12}}_F \sqrt{\log(p\vee N)} + \sqrt{m}B \max_{c'\in[l]} \norm{\mb E_{\cdot, S_c}^{(2)\top} \mb E^{(2)}_{\cdot, S_{c'}}} \log(p\vee N)
        \end{align}
        with probability at least $1- O(d^{-c})$ conditional on $\mb E^{(2)}$. Provided the inequality \eqref{eq: universality for E(2)} and the assumption on $\mb E^{(2)}$, one has for the quantities involved above that
        \begin{align}
            & \norm{\bo \Sigma_1^{\frac12}\mb E^{(2)\top} \mb E^{(2)} \bo \Sigma_3^{\frac12}}_F \leq \norm{\bo \Sigma_1^{\frac12}\mb E^{(2)\top}}_F \norm{\mb E^{(2)} \bo \Sigma_3^{\frac12}} \lesssim \sqrt{N_2} \tr(\bo \Sigma_1 \bo \Sigma_2)^{\frac12} \Big( \norm{\bo \Sigma_3\bo \Sigma_2}^{\frac12} \sqrt{N_2} + \tr(\bo \Sigma_3 \bo\Sigma_2)^{\frac12}\Big), \\ 
            &  \max_{c\in[l]}\norm{\mb E_{\cdot, S_c}^{(2)\top} \mb E^{(2)} \bo \Sigma_1^{\frac12}} \leq \norm{\mb E_{\cdot, S_c}^{(2)\top}}\norm{ \mb E^{(2)} \bo \Sigma_1^{\frac12}} \lesssim \norm{\bo \Sigma_2}^{\frac12} \sqrt{N_2}\Big( \norm{\bo \Sigma_1\bo \Sigma_2}^{\frac12} \sqrt{N_2} + \tr(\bo \Sigma_1 \bo\Sigma_2)^{\frac12}\Big), \\ 
            & \max_{c\in[l]} \norm{\mb E_{\cdot, S_c}^{(2)\top} \mb E^{(2)} \bo \Sigma_3^{\frac12}}_F \leq \norm{\mb E_{\cdot, S_c}^{(2)}}\norm{\mb E^{(2)} \bo \Sigma_3^{\frac12}}_F \lesssim \norm{\bo \Sigma_2}^{\frac12} \sqrt{N_2} \Big( \norm{\bo \Sigma_3\bo \Sigma_2}^{\frac12} \sqrt{N_2} + \tr(\bo \Sigma_3 \bo\Sigma_2)^{\frac12}\Big), \\ 
            & \max_{c,c'\in[l]} \norm{\mb E_{\cdot, S_c}^{(2)\top} \mb E^{(2)}_{\cdot, S_{c'}}}  \lesssim \norm{\bo \Sigma_2} N_2
        \end{align}
        with probability at least $1 -O(d^{-c})$. As a consequence, one has 
        \begin{align}
            & \norm{\mb 1_{N_3}\t \mb E^{(3)} \mb E^{(2)\top} \mb E^{(2)} \bo \Sigma_1^{\frac12}}_2 \lesssim \sqrt{N_2N_3} \tr(\bo \Sigma_1 \bo \Sigma_2)^{\frac12} \Big( \norm{\bo \Sigma_3\bo \Sigma_2}^{\frac12} \sqrt{N_2} + \tr(\bo \Sigma_3 \bo\Sigma_2)^{\frac12}\Big) \sqrt{\log(p \vee N)} \\ 
            &\qquad  \qquad + \sqrt{m}B \norm{\bo \Sigma_2}^{\frac12} \sqrt{N_2}\Big( \norm{\bo \Sigma_1\bo \Sigma_2}^{\frac12} \sqrt{N_2} + \tr(\bo \Sigma_1 \bo\Sigma_2)^{\frac12}\Big)\log(p \vee N) \\ 
            & \qquad \lesssim N_2 \sqrt{N_3} \tr(\bo \Sigma_1 \bo \Sigma_2) \norm{\bo \Sigma_2 \bo \Sigma_3}^{\frac12} \log(p \vee N) + \sqrt{N_2 N_3} \tr(\bo \Sigma_1 \bo \Sigma_2)^{\frac12} \tr(\bo \Sigma_3 \bo \Sigma_2)^{\frac12} \log(p \vee N), 
            \label{eq: decoupled matrix product concentration 1 (bounded matrix)}
            \\ 
            & \norm{\mb E_{\cdot, S_c}^{(2)\top} \mb E^{(2)} \mb E^{(3)\top} \mb 1_{N_3}} \lesssim \norm{\bo \Sigma_2}^{\frac12} \sqrt{N_2 N_3} \Big( \norm{\bo \Sigma_3\bo \Sigma_2}^{\frac12} \sqrt{N_2} + \tr(\bo \Sigma_3 \bo\Sigma_2)^{\frac12}\Big) \sqrt{\log (p \vee N)}  \\ 
            & \qquad \qquad + \sqrt{m} B \norm{\bo \Sigma_2 } N_2 \log(p \vee N) \\ 
            & \qquad \lesssim  \norm{\bo \Sigma_2}^{\frac12} \sqrt{N_2 N_3} \Big( \norm{\bo \Sigma_3\bo \Sigma_2}^{\frac12} \sqrt{N_2} + \tr(\bo \Sigma_3 \bo\Sigma_2)^{\frac12}\Big) \log (p \vee N)
            \label{eq: decoupled matrix product concentration 2 (bounded matrix)}
        \end{align}
        with probability at least $1- O(d^{-c})$, where we used $\sqrt{m} B \norm{\bo \Sigma_2}^{\frac12} \lesssim \sqrt{N_3} (\norm{\bo \Sigma_1 \bo \Sigma_2}^{\frac12} \wedge \norm{\bo \Sigma_3 \bo \Sigma_2}^{\frac12})$ and $\sqrt{m}B \norm{\bo \Sigma_2}^{\frac12} \lesssim \tr(\bo \Sigma_1 \bo \Sigma_2)^{\frac12} \wedge \tr(\bo \Sigma_3 \bo \Sigma_2)^{\frac12} $. 
        
        Plugging those upper bounds together with \eqref{eq: decoupled matrix product concentration 1 (bounded matrix)} and \eqref{eq: decoupled matrix product concentration 2 (bounded matrix)} into \eqref{eq: decoupled matrix product concentration (bounded matrix)} yields that 
        \begin{align}
            & \Big| \tr\big(\mb 1_{N_3\times N_1 }\mb E^{(1)} {{}\mb E^{(2)}}\t \mb E^{(2)} {{}\mb E^{(3)}}\t\big) \Big| \lesssim N_2 \sqrt{N_1 N_3} \tr(\bo \Sigma_1 \bo \Sigma_2) \norm{\bo \Sigma_2 \bo \Sigma_3}^{\frac12} \log(p \vee N) \\ 
            & \qquad + \sqrt{N_1 N_2 N_3} \tr(\bo \Sigma_1 \bo \Sigma_2)^{\frac12} \tr(\bo \Sigma_3 \bo \Sigma_2)^{\frac12} \log(p \vee N)  \\ 
            & \qquad+\sqrt{m} B \norm{\bo \Sigma_2}^{\frac12} \sqrt{N_2 N_3} \Big( \norm{\bo \Sigma_3\bo \Sigma_2}^{\frac12} \sqrt{N_2} + \tr(\bo \Sigma_3 \bo\Sigma_2)^{\frac12}\Big) \log (p \vee N)^2 \\ 
            \lesssim  &  N_2 \sqrt{N_1 N_3} \tr(\bo \Sigma_1 \bo \Sigma_2) \norm{\bo \Sigma_2 \bo \Sigma_3}^{\frac12} (\log(p \vee N))^2  + \sqrt{N_1 N_2 N_3} \tr(\bo \Sigma_1 \bo \Sigma_2)^{\frac12} \tr(\bo \Sigma_3 \bo \Sigma_2)^{\frac12} (\log(p \vee N))^2
        \end{align}
        with probability at least $1- O(d^{-c})$, where we similarly reason in the last inequality as in \eqref{eq: decoupled matrix product concentration 1 (bounded matrix)} and \eqref{eq: decoupled matrix product concentration 2 (bounded matrix)}. 

        \bigskip

        Lastly, we control the terms $\norm{\mb 1_{N_1}\t \mb G^{(1)} {\mb G^{(2)}}\t \mb G^{(2)} \mb T}_2 $ and $\norm{\mb 1_{N_1}\t \mb E^{(1)} {\mb E^{(2)}}\t \mb E^{(2)} \mb T}_2$ in what follows. From the concentration bounds for Gaussian distributed random matrix, one obtain that
        \begin{align}
            & \norm{\bo \Sigma_1^{\frac12} {\mb G^{(2)}}\t \mb G^{(2)} \mb T}_F \leq \sqrt{r} \norm{\bo \Sigma_1^{\frac12} {\mb G^{(2)}}\t \mb G^{(2)} \mb T}, \\ 
            & \norm{ \bo \Sigma_1^{\frac12} {\mb G^{(2)}}\t \mb G^{(2)} \mb T} \lesssim (\sqrt{N_2}\norm{\bo \Sigma_1 \bo \Sigma_2}^{\frac12} + \tr(\bo \Sigma_1 \bo \Sigma_2)^{\frac12}) (\norm{\mb T\t \bo \Sigma_2 \mb T}^{\frac12} \sqrt{N_2 } )
            \label{eq: bounds for G2tG2T}
        \end{align}
        with probability at least $1- O(d^{-c})$. A direct application of the Hanson-Wright inequality gives that 
        \begin{align}
            &\Big| \norm{\mb 1_{N_1}\t \mb G^{(1)} {\mb G^{(2)}}\t \mb G^{(2)} \mb T}_2^2 - N_1 \norm{\bo \Sigma_1^{\frac12} {\mb G^{(2)}}\t \mb G^{(2)} \mb T}_F^2 \Big| \\
            \lesssim &  N_1 \norm{\bo \Sigma_1^{\frac12} {\mb G^{(2)}}\t \mb G^{(2)} \mb T}_F \norm{\bo \Sigma_1^{\frac12} {\mb G^{(2)}}\t \mb G^{(2)} \mb T}\sqrt{\log(p\vee N)} + N_1 \norm{\bo \Sigma_1^{\frac12} {\mb G^{(2)}}\t \mb G^{(2)} \mb T}^2 \log(p\vee N)
        \end{align}
        with probability at least $1- O(d^{-c})$. Taken together with \eqref{eq: bounds for G2tG2T}, one has with probability at least $1- O(d^{-c})$ that  
        \begin{align}
            & \norm{\mb 1_{N_1}\t \mb G^{(1)} {\mb G^{(2)}}\t \mb G^{(2)} \mb T}_2 \lesssim \sqrt{r}(\sqrt{N_2}\norm{\bo \Sigma_1 \bo \Sigma_2}^{\frac12} + \tr(\bo \Sigma_1 \bo \Sigma_2)^{\frac12}) (\norm{\mb T\t \bo \Sigma_2 \mb T}^{\frac12} \sqrt{N_2 } ) \sqrt{\log(p\vee N)}. 
        \end{align}

        For the non-Gaussian case, we invoke the matrix Bernstein inequality to derive that 
        \begin{align}
            & \norm{\mb 1_{N_1}\t \mb E^{(1)} {\mb E^{(2)}}\t \mb E^{(2)} \mb T}_2 \lesssim  \norm{\bo \Sigma_1^{\frac12} {\mb E^{(2)}}\t \mb E^{(2)} \mb T}_F \sqrt{N_1\log(d\vee N)} + \sqrt{m}B \max_{c\in[l]}\norm{(\mb E^{(2)})_{\cdot, S_c}\t \mb E^{(2)} \mb T} \log(d\vee N) \\ 
            \lesssim & (\sqrt{N_2}\norm{\bo \Sigma_1 \bo \Sigma_2}^{\frac12} + \tr(\bo \Sigma_1 \bo \Sigma_2)^{\frac12}) (\norm{\mb T\t \bo \Sigma_2 \mb T}^{\frac12} \sqrt{N_2 } ) \sqrt{rN_1\log(d\vee N)} \\ 
            & \qquad + \sqrt{m}B  N_2 \norm{\bo \Sigma_2}^{\frac12} \norm{\mb T\t \bo \Sigma_2 \mb T}^{\frac12} \log(d\vee N) \\ 
            \lesssim &(\sqrt{N_2}\norm{\bo \Sigma_1 \bo \Sigma_2}^{\frac12} + \tr(\bo \Sigma_1 \bo \Sigma_2)^{\frac12}) (\norm{\mb T\t \bo \Sigma_2 \mb T}^{\frac12} \sqrt{N_2 } ) \sqrt{rN_1\log(p\vee N)}
        \end{align}
        with probability at least $1- O(d^{-c})$, since we have the assumption that $\sqrt{m}B \norm{\bo \Sigma_2}^{\frac12}\ll \tr(\bo \Sigma_1 \bo \Sigma_2)^{\frac12}$ and the concentrations that
        \longeq{
            & \norm{\bo \Sigma_1^{\frac12} {\mb E^{(2)}}\t \mb E^{(2)} \mb T}_F \leq \norm{\bo \Sigma_1^{\frac12} {\mb E^{(2)}}\t}_F \norm{\mb E^{(2)} \mb T}, \qquad \norm{(\mb E^{(2)})_{\cdot, S_c}\t \mb E^{(2)} \mb T} \leq \norm{(\mb E^{(2)})_{\cdot, S_c}} \norm{\mb E^{(2)} \mb T}, \\ 
            & \norm{\bo \Sigma_1^{\frac12} {\mb E^{(2)}}\t}_F \lesssim \sqrt{N_2} \tr(\bo \Sigma_1 \bo \Sigma_2)^{\frac12}, \qquad  \norm{\mb E^{(2)} \mb T} \lesssim \sqrt{N_2} \norm{\mb T\t \bo \Sigma_2 \mb T}^{\frac12}, \quad \max_{c \in[l]} \norm{(\mb E^{(2)})_{\cdot, S_c}}  \lesssim \sqrt{N_2} \norm{\bo \Sigma_2}^{\frac12}
        }
        hold with probability at least $1- O(d^{-c})$.

\qed

\subsubsection{Proof of Lemmas~\ref{lemma: noise matrix concentrations using the universality (Gaussian)},~\ref{lemma: noise matrix concentrations using the universality (bounded)}, and~\ref{lemma: 2k moment of At Ei}}
\label{subsubsec: proof of noise matrix concentration}

\begin{proof}[Proof of Lemma~\ref{lemma: noise matrix concentrations using the universality (Gaussian)}]

For a tight concentration in terms of $K$ on the quantities with Gaussian ensembles, we resort to the results in \cite{bandeira2023matrix}.  
To begin with, an upper bound for  $\norm{\mb E}$ arises by the universality result on Gaussian matrices \cite[Corollary 2.2]{bandeira2023matrix}:
    \begin{align}
        \norm{\mb E} \leq \norm{\mb E_{\mathrm{free}}} + C v(\mb E)^{\frac{1}{2}}\sigma (\mb E)^{\frac{1}{2}}(\log d )^{\frac{3}{4}} + C \sigma_*(\mb E) \sqrt{\log d},
    \end{align}
    with probability at least $1 - O(d^{-c-2})$ where $C$ is a universal constant. Here, similar to the free probability element in the derivation of \eqref{eq: universality and free prob for covariance estimation}, $\mb E_{\text{free}}$ is an element in the tensor product space of the real-valued $d\times d$ matrix space $M_{d}(\bb R)$ and a free semi-circle family $\mc A$.

    To upper bound the norm of $\mb E_{\text{free}}$, we employ Lemma \cite[Lemma 2.5]{bandeira2023matrix} to derive that,  given $\mb X = \mb A_0 + \sum_{i\in[n]} \mb A_i g_i \in \bb C^{n_1\times n_2}$ where $\{\mb A_i\}_{i=0}^n \subset \bb R^{n_1 \times n_2}$ are determinstic matrices and $\{g_i\}_{i\in[n]}$ are independent standard Gaussian, Lemma 2.5 in \cite{bandeira2023matrix} implies that 
    \eq{
        \norm{\mb X_{\mathrm{free}}} \leq \norm{\mb A_0} + \Bignorm{\sum_{i\in[n]}\mb A_i^* \mb A_i}^{\frac{1}{2}} + \Bignorm{\sum_{i\in[n]}\mb A_i \mb A_i^*}^{\frac{1}{2}}. 
    }
    This combines with \eqref{eq: bound on sigma(E)}, \eqref{eq: bound on v(E)}, and \eqref{eq: bound on sigma_*(E)}, the concentration inequality could be further simplified as 
    \begin{align}
         \norm{\mb E} \lesssim &\sigma \sqrt{p} + \tilde \sigma \sqrt{n} + \tilde \sigma^{\frac{1}{2}}(\sigma^{\frac{1}{2}}p^{\frac{1}{4}} + \tilde \sigma^{\frac{1}{2}} n^{\frac{1}{4}})(\log d )^{\frac{3}{4}} +\tilde \sigma \sqrt{\log d} 
        \lesssim \sigma \sqrt{p} + \tilde \sigma \sqrt{n}
    \end{align}
    with probability at least $1- O(d^{-c-2})$. 

    Similarly, the bounds for $\mb E_{\mc I_k(\mb z^*), \cdot}$, $\mb E_{i,\cdot}$, and $\mb E\mb V^*$ follow from similar arguments: 
    \begin{align}
        &\norm{\mb E_{\mc I_k(\mb z^*), \cdot}} \lesssim \sigma \sqrt{p} +  \tilde\sigma \sqrt{n}, \\ 
        & \norm{\mb E_{i}}_2 \lesssim  \norm{(\mb E_{i})_{\mathrm{free}}} + Cv(\mb E_{i})^{\frac{1}{2}}\sigma(\mb E_{i})^{\frac{1}{2}} (\log d)^{\frac{3}{4}} + C \sigma_*(\mb E_{i}) \sqrt{\log d}  
        \lesssim  \sigma \sqrt{p}, \\ 
        &\norm{\mb E\mb V^*} \lesssim  \norm{(\mb E \mb V^*)_{\mathrm{free}}} + Cv(\mb E \mb V^*)^{\frac{1}{2}}\sigma(\mb E \mb V^*)^{\frac{1}{2}} (\log d)^{\frac{3}{4}} + C \sigma_*(\mb E \mb V^*) \sqrt{\log d} 
        \lesssim  \bar \sigma \sqrt{n}
    \end{align}
    hold with probability at least $1- O(d^{-c-2})$.

    Next, applying the Hanson-Wright inequality gives that 
    \begin{align}
        & \Bignorm{\sum_{i\in \mc I_k(\mb z^*)} \mb E_i\t \mb V^*}_2 \lesssim \bar\sigma \sqrt{K n_k} + \bar \sigma (K n_k \log d)^{\frac14}+ \bar \sigma \sqrt{n_k \log d} \lesssim \bar \sigma \sqrt{n_k K \log d}, \\ 
        & \norm{\mb E \mb V^*}\ti \lesssim \bar \sigma \sqrt{K \log d},\qquad  \norm{\mc H(\mb E \mb E\t) \mb U^*}\ti \lesssim \tilde \sigma \sigma \sqrt{pK \log d},  
    \end{align}
    with probability at least $1- O(d^{-c-2})$.

 To control $\norm{{\mb U^*}^\top \mc H(\mb E \mb E^\top)\mb U^*}$, note that there exists an orthogonal matrix $\mb Q\in\mathbb O(K)$ such that  
$
\mb U^* \mb Q = \mb Z^* \operatorname{diag}\!\left(\frac{1}{\sqrt{n_k}}\right)_{k\in[K]} 
$.
Therefore,
\begin{align}
\norm{{\mb U^*}^\top \mc H(\mb E \mb E^\top)\mb U^*}
\le r \Bigg[
&\max_{k_1\neq k_2\in[r]} \frac{1}{\sqrt{n_{k_1}n_{k_2}}}
\Big|\!\sum_{i_1\in\mc I_{k_1}(\mb z^*),\, i_2\in\mc I_{k_2}(\mb z^*)}
\langle \mb E_{i_1}, \mb E_{i_2}\rangle\Big|  \notag\\
&+ \max_{k\in[r]} \frac{1}{n_k}
\Big|\!\sum_{\substack{i_1,i_2\in\mc I_k(\mb z^*)\\ i_1\neq i_2}}
\langle \mb E_{i_1}, \mb E_{i_2}\rangle\Big|
\Bigg].
\label{eq: decomposition of Ut H(EEt) U}
\end{align}

Conditioning on $\mb E_{\mc I_{k_2}(\mb z^*),\cdot}$ and applying Hoeffding’s inequality to the cross-cluster term yields, with probability at least $1-O(d^{-c-2})$,
\[
\max_{k_1\neq k_2} \frac{1}{\sqrt{n_{k_1}n_{k_2}}}
\Big|\sum_{i_1\in\mc I_{k_1},\, i_2\in\mc I_{k_2}}
\langle \mb E_{i_1}, \mb E_{i_2}\rangle\Big|
\lesssim \tilde\sigma \sigma \sqrt{p\log d}.
\]

The within-cluster term admits the same bound by combining the decoupling argument of \cite[Theorem 1]{de1995decoupling} with Hoeffding’s inequality. Consequently, with probability at least $1-O(d^{-c-2})$,
\[
\norm{{\mb U^*}^\top \mc H(\mb E \mb E^\top)\mb U^*}
 \lesssim  K\tilde\sigma \sigma \sqrt{p\log d}.
\]
    
    Regarding the last inequality, it suffices to control $\mb 1_{n_{a}}\t \mb E_{\mc I_a(\mb z^*),\cdot} \bo \Sigma_k \mb E_{\mc I_a(\mb z^*),\cdot}\t \mb 1_{n_{a}}$ for $a \in [K] \backslash \{k\}$, and $\mb 1_{n_{k}}\t \mb E_{\mc I_k(\mb z^*)\backslash\{i\},\cdot} \bo \Sigma_k \mb E_{\mc I_k(\mb z^*) \backslash\{i\},\cdot}\t \mb 1_{n_{k}}$ for $i \in[n]$. By the Hanson-Wright inequality, one has with probability at least $1- O(d^{-c})$ that 
    \begin{align}
        & \Big| \mb 1_{n_{a}}\t \mb E_{\mc I_a(\mb z^*),\cdot} \bo \Sigma_k \mb E_{\mc I_a(\mb z^*),\cdot}\t \mb 1_{n_{a}} - \bb E\big[ \mb 1_{n_{a}}\t \mb E_{\mc I_a(\mb z^*),\cdot} \bo \Sigma_k \mb E_{\mc I_a(\mb z^*),\cdot}\t \mb 1_{n_{a}}\big] \Big|  \\
        \lesssim & n_a \sqrt{\tr(\bo \sigma_a \bo \Sigma_k \bo \Sigma_a \bo \sigma_k) \log d} + n_a \norm{\bo \sigma_a \bo \Sigma_k \bo \Sigma_a \bo \sigma_k} \log d 
        \lesssim  n_a \sqrt{p}\sigma \tilde\sigma^3 \log d,
        \label{eq: concentration on 1t Ea Sigmak Eat 1}
        \\ 
        & \max_{i \in \mc I_k(\mb z^*)}\Big| \mb 1_{n_{k}}\t \mb E_{\mc I_k(\mb z^*)\backslash\{i\},\cdot} \bo \Sigma_k \mb E_{\mc I_k(\mb z^*) \backslash\{i\},\cdot}\t \mb 1_{n_{k}} - \bb E\big[ \mb 1_{n_{k}}\t \mb E_{\mc I_k(\mb z^*)\backslash\{i\},\cdot} \bo \Sigma_k \mb E_{\mc I_k(\mb z^*) \backslash\{i\},\cdot}\t \mb 1_{n_{k}}\big]\Big|  \\ 
        \lesssim &  n_k \sqrt{p}\sigma \tilde\sigma^3 \log d.
    \end{align}

    Besides, for $a \neq b \in[K] \backslash\{k\}$ one has by the Hanson-Wright inequality that
    \begin{align}
        & \Big|\mb 1_{n_{a}}\t \mb E_{\mc I_a(\mb z^*),\cdot} \bo \Sigma_k \mb E_{\mc I_b(\mb z^*),\cdot}\t \mb 1_{n_{b}} \Big| \lesssim \sqrt{n_a n_bp}\sigma\tilde\sigma^3 \log d, \\ 
        &  \max_{i\in\mc I_k(\mb z^*)}\Big|\mb 1_{n_{a}}\t \mb E_{\mc I_a(\mb z^*),\cdot} \bo \Sigma_k \mb E_{\mc I_k(\mb z^*)\backslash\{i\},\cdot}\t \mb 1_{n_{k}} \Big| \lesssim \sqrt{n_a n_k p}\sigma\tilde\sigma^3 \log d
    \end{align}
    with probability at least $1- O(d^{-c})$. Combining these pieces implies that 
    \begin{equation}
        \max_{i\in[n],~k_1,k_2,k\in [K]}  \Bignorm{{\mb U_{\cdot,k_1}^*}\t \mc P_{-i,\cdot}(\mb E) \bo \Sigma_k \mc P_{-i,\cdot}(\mb E)\t \mb U_{\cdot,k_2}^* - \sum_{i'\in[n]\backslash\{i\}} \tr(\bo \Sigma_{z_{i'}^*} \bo \Sigma_{z_i^*}){\mb U^*_{i',k_1}}\t \mb U_{i',k_2}^*} \lesssim \sqrt{p}\sigma\tilde\sigma^3 \log d, 
        \label{eq: single entry of Ut E Sigmak Et U}
    \end{equation}
    which leads to 
    \begin{align}
        & \max_{i \in[n], k\in[K]} \Bignorm{{\mb U^*}\t \mc P_{-i,\cdot}(\mb E) \bo \Sigma_k \mc P_{-i,\cdot}(\mb E)\t \mb U^* - \sum_{i'\in[n]\backslash\{i\}} \tr(\bo \Sigma_{z_{i'}^*} \bo \Sigma_{z_i^*}){\mb U^*_{i',\cdot}}\t \mb U_{i',\cdot}^*} \lesssim K\sqrt{p}\sigma\tilde\sigma^3 \log d 
    \end{align}
    with probability at least $1- O(d^{-c})$. 
\end{proof}

\begin{proof}[Proof of Lemma~\ref{lemma: noise matrix concentrations using the universality (bounded)}]

Throughout the proof, we will repetitively make use of the following lemma to upper bound the spectral norm of the matrices in interest: 
\begin{lemma}[Corollary 2.15 in \cite{brailovskaya2022universality}]
\label{lemma: corollary 2.15 in Brailovskaya's paper}
    Let $\mb Y = \sum_{i\in[n]}\mb Z_i$, where $\mb Z_1, \cdots, \mb Z_n$ are independent (possibly not self-disjoint) $d\times d$ random matrices with $\bb E[\mb Z_i]=0$. Then 
    \begin{align}
        & \bb P \big[\norm{\mb Y} \geq 2 \sigma(\mb Y) + C\big(v(\mb Y)^{\frac{1}{2}}\sigma(\mb Y)^{\frac{1}{2}} + \sigma_*(\mb Y)t^{\frac{1}{2}} \\ 
        & \qquad + R(\mb Y)^{\frac{1}{3}}\sigma(\mb Y)^{\frac{2}{3}}t^{\frac{2}{3}} + R(\mb Y)t
        \big)\big]\leq 4de^{-t}.
    \end{align}
\end{lemma}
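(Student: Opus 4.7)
\medskip
\noindent\textbf{Proof proposal for Lemma~\ref{lemma: corollary 2.15 in Brailovskaya's paper}.}
This statement is cited from Brailovskaya--Van Handel's matrix concentration theory, so my plan would be to reconstruct the proof along the route developed in that paper, which combines (i) a moment-method universality bound comparing $\mb Y$ with a free probabilistic model, (ii) a deterministic spectral estimate for the free model, and (iii) a Bernstein-type concentration of $\norm{\mb Y}$ around its mean. Specifically, let $\mb Y_{\mathsf{free}} = \sum_i \mb Z_i \otimes s_i$ denote the free probability analog of $\mb Y$, where $\{s_i\}$ is a free semicircular family in some tracial $C^*$-algebra. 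The target decomposition is
\begin{equation*}
    \norm{\mb Y} \;\leq\; \underbrace{\norm{\mb Y_{\mathsf{free}}}}_{\text{free bound}} \;+\; \underbrace{\bigl(\bb E\norm{\mb Y} - \norm{\mb Y_{\mathsf{free}}}\bigr)_+}_{\text{universality error}} \;+\; \underbrace{\bigl(\norm{\mb Y} - \bb E\norm{\mb Y}\bigr)}_{\text{concentration}},
\end{equation*}
and the three terms are controlled by $2\sigma(\mb Y)$, $Cv(\mb Y)^{1/2}\sigma(\mb Y)^{1/2}$, and the remaining terms involving $\sigma_*(\mb Y)t^{1/2}+R(\mb Y)^{1/3}\sigma(\mb Y)^{2/3}t^{2/3}+R(\mb Y)t$, respectively.

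For the free bound, I would invoke the classical fact that for any self-adjoint model of this form one has $\norm{\mb Y_{\mathsf{free}}} \leq 2\sigma(\mb Y)$, which follows from the identification of $\mb Y_{\mathsf{free}}$ with an operator-valued semicircular element whose operator-valued variance has norm $\sigma(\mb Y)^2$ (Lemma~2.5 in \cite{bandeira2023matrix}). For the universality error, the plan is to use the trace method: write
\begin{equation*}
    \bb E\bigl[\mathrm{tr}(\mb Y^{2p})\bigr]^{1/(2p)} \;\leq\; (\mathrm{tr}\otimes \tau)\bigl(\mb Y_{\mathsf{free}}^{2p}\bigr)^{1/(2p)} + \text{correction},
\end{equation*}
expand the traces combinatorially, and compare the joint cumulants of $\{\mb Z_i\}$ against those of the Gaussian/semicircular analog. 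The Leonov--Shiryaev cumulant expansion (used in the proof of Lemma~\ref{lemma: S-universality}) reduces the discrepancy to joint cumulants of order $\geq 3$, which are controlled by $R_{2p}(\mb Y)p^2$ and yield the $v(\mb Y)^{1/2}\sigma(\mb Y)^{1/2}$ correction after optimizing $p$.

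For concentration, I would apply a matrix Bernstein-type inequality to $\norm{\mb Y} - \bb E\norm{\mb Y}$, viewing the spectral norm as $\sup_{\norm{\mb u}=\norm{\mb v}=1}\ip{\mb u,\mb Y\mb v}$, which is a convex $1$-Lipschitz function of the block $(\mb Z_i)_{i\in[n]}$. Standard Talagrand/log-Sobolev-style inequalities for suprema of empirical processes then yield a Bernstein tail governed by the weak variance $\sigma_*(\mb Y)^2$ (coming from the supremum) and the range $R(\mb Y)$ (coming from the bounded-difference/truncation step). The mixed $R(\mb Y)^{1/3}\sigma(\mb Y)^{2/3}t^{2/3}$ term arises from a standard interpolation between the two regimes using a Markov-type argument on the $(2p)$-th moment.

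The main obstacle in carrying this out cleanly is the universality step, which requires the delicate combinatorial accounting of noncrossing vs.\ crossing pairings in the cumulant expansion, together with the right optimization in $p$ so that the error is matched to $v(\mb Y)^{1/2}\sigma(\mb Y)^{1/2}$ rather than a weaker $v(\mb Y)$ bound. Executing this rigorously is precisely the content of \cite{brailovskaya2022universality}, and I would cite that paper for the heavy lifting, contenting myself with verifying that the version stated here follows by specializing their Theorem~2.10 (the general universality statement) to the symmetrized sum $\mb Y$, then combining it with the Voiculescu--Lehner free-probability norm bound and the empirical-process concentration.
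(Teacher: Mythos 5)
The paper gives no proof of this lemma at all — it is imported verbatim as Corollary~2.15 of \cite{brailovskaya2022universality} — so your ultimate deferral to that reference for the heavy lifting is exactly the paper's approach, and your high-level sketch (moment-method universality against the free model, Lehner's bound $\norm{\mb Y_{\mathsf{free}}}\leq 2\sigma(\mb Y)$, tail control in $t$) is a fair outline of how the cited result is actually obtained. One small correction to your attribution: the stated inequality follows directly from their Theorem~2.10 (support-of-spectrum universality, which already carries the full $\sigma_*(\mb Y)t^{1/2}+R(\mb Y)^{1/3}\sigma(\mb Y)^{2/3}t^{2/3}+R(\mb Y)t$ error and the probability $de^{-t}$) combined with the free norm bound, so the $R^{1/3}\sigma^{2/3}t^{2/3}$ term comes from the universality/interpolation error rather than from a Talagrand-type concentration of $\norm{\mb Y}$ about its mean, which by itself would not yield the sharp $\sigma_*$ dependence.
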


Now everything boils down to upper bounding the quantities in the above lemma. We inspect each case as follows: 
\begin{itemize}[leftmargin=*]
    \item For the full-size matrix $\mb E$, we have $\bignorm{\bb E[\mb E^\top\mb E]}^{\frac12} \le \tilde \sigma \sqrt{n}$ and $ \bignorm{\bb E[\mb E\mb E^\top]}^{\frac12} \le \sigma \sqrt{p} $, 
which leads to 
\begin{align}
    & \sigma(\mb E) \leq \sigma \sqrt{p} + \tilde \sigma \sqrt{n}.\label{eq: bound on sigma(E)}
\end{align}

Moreover, we have 
\begin{align}
    & v(\mb E)  = \norm{\mathrm{Cov}\big(\mathrm{vec} (\mb E) \big)}^{\frac{1}{2}} = \max_{i\in[n]}\bb E[\mb E_{i,\cdot}\mb E_{i,\cdot}\t]^{\frac12} \leq \tilde \sigma, \label{eq: bound on v(E)}\\
    & \sigma_*(\mb E)  = \sup_{\norm{\mb v}_2 = \norm{\mb w}_2 = 1}\bb E\big[\big|\ip{\mb v,\mb E\mb w}\big|^2\big] =  \sup_{\norm{\mb w} = 1}\max_{i\in[n]}\mb \|\bb E[|\mb E_i\t \mb w|^2 ]\mb\|^{\frac12} \leq \tilde \sigma, \label{eq: bound on sigma_*(E)}\\ 
    & R(\mb E) = \Big\|\max_{i,j}\|\mb E_{i,S_j}\|\Big\|_{\infty} \leq \sqrt{m}B. \label{eq: bound on R(E)}
\end{align}	

Substituions of \eqref{eq: bound on sigma(E)}, \eqref{eq: bound on v(E)}, \eqref{eq: bound on sigma_*(E)}, and \eqref{eq: bound on R(E)} into Lemma~\ref{lemma: corollary 2.15 in Brailovskaya's paper} yields that
\eq{
\norm{\mb E} \lesssim  \sigma \sqrt{p} + \tilde \sigma \sqrt{n} + m^{\frac{1}{6}} B^{\frac{1}{3}} \big( \sigma^{\frac{2}{3}}p^{\frac{1}{3}} + \tilde \sigma^{\frac{2}{3}} n^{\frac{1}{3}} \big) \big( \log d\big)^{\frac{2}{3}} + \sqrt{m} B \log d
}
holds with probability at least $1- O(d^{-c-2})$
by taking $t = c\log d$ for some sufficiently large $c$. 
\item The upper bound for $\mb E_{\mc I_k(\mb z^*),\cdot}$ follow from a similar derivation as above, with the substitution of $n$ by $n_k$. 

\item Regarding the $i$-th row of $\mb E$, a similar derivation yields that 
\begin{align}
    & \sigma(\mb E_{i})^2 = \bb E\norm{\mb E_{i}}_2^2 \leq \sigma^2 p, \quad  v(\mb E) = \norm{\cov(\mb E_{i})}^{\frac{1}{2}} \leq \tilde \sigma, \quad  \sigma_*(\mb E_{i}) \leq \tilde\sigma, \quad  R(\mb E_{i}) \leq \sqrt{m}B. 
\end{align}

Taking these collectively into Lemma~\ref{lemma: corollary 2.15 in Brailovskaya's paper} yields that 
\begin{align}
    & \norm{\mb E_i}_2 \lesssim \sigma \sqrt{p} + \tilde\sigma\sqrt{\log d} + m^{\frac{1}{6}}B^{\frac{1}{3}} \sigma^{\frac{4}{3}} p^{\frac{3}{2}} (\log d)^{\frac23} + \sqrt{m}B \log d \\ 
    \lesssim & \sigma \sqrt{p} + \tilde\sigma\sqrt{\log d} + m^{\frac{1}{6}}B^{\frac{1}{3}} \sigma^{\frac{4}{3}} p^{\frac{3}{2}} (\log d)^{\frac23} + \sqrt{m}B \log d 
\end{align}
holds with probability at least $1- O(d^{-c-2})$ since $\sqrt{m \log d} \ll \sqrt{p}$. 

\item In terms of $\mb E\mb V^*$, we have
\begin{align}
    &\sigma(\mb E\mb V^*) = \max\left\{\bignorm{\bb E[\mb E\mb V^*{\mb V^{*\top}} \mb E\t ] }, \bignorm{\bb E[{\mb V^{*\top}} \mb E\t \mb E\mb V^*] }\right\}^{\frac{1}{2}} \leq \max\{\bar \sigma, \sqrt{n} \bar \sigma \} = \sqrt{n} \bar \sigma, \\
    & v(\mb E\mb V^*) = \norm{\mathrm{Cov}\big( \mathrm{vec}(\mb E\mb V^*)\big)}^{\frac{1}{2}} = \bar \sigma, \quad \sigma_*(\mb E\mb V^*) = \sup_{\norm{\mb v}_2 = \norm{\mb w}_2 = 1} \bb E\big[\big|\ip{\mb v, \mb E\mb w} \big|^2\big] = \bar \sigma ,\quad  R(\mb E\mb V^*) \leq mB \norm{\mb V^*}\ti . 
\end{align} 
Therefore, applying Lemma~\ref{lemma: corollary 2.15 in Brailovskaya's paper} yields that 
\begin{align}
    \norm{\mb E\mb V^*} \lesssim 
    & \sqrt{n}\bar \sigma + n^{\frac{1}{4}} \bar \sigma^{\frac{1}{2}} \tilde\sigma^{\frac{1}{2}} + \bar \sigma(\log d)^{\frac{1}{2}}  + \big(mB \norm{\mb V^*}\ti\big)^{\frac{1}{3}} n^{\frac{1}{3}} \tilde\sigma^{\frac23}(\log d)^{\frac23} + m B\norm{\mb V^{*}}\ti\log d \\
    \lesssim &  \sqrt{n}\bar \sigma  +  \big(mB \norm{\mb V^{*}}\ti\big)^{\frac{1}{3}} n^{\frac{1}{3}} \tilde\sigma^{\frac23}(\log d)^{\frac23} + m B\norm{\mb V^{*}}\ti\log d
\end{align}
holds with probability at least $1- O(d^{-c})$. 

\item We apply the matrix Bernstein inequality (cf. \cite[Theorem 6.1.1]{tropp2015introduction}) to derive that
\begin{align}
    & \bignorm{\sum_{i\in[n], z_i^* = k}\mb E_i\t \mb V^*}_2 \lesssim \bar \sigma \sqrt{n_k K \log d}  + mB\norm{\mb V^{*}}\ti \log d \lesssim \bar \sigma \sqrt{n_k K \log d}, \\ 
    & \norm{\mb E \mb V^*}\ti \lesssim \bar \sigma \sqrt{K \log d}, ~\norm{\mc H(\mb E \mb E\t) \mb U^*}\ti \lesssim \tilde \sigma \sigma \sqrt{p K \log d},~ \max_{c\in[l]} \norm{\mb E_{\cdot, S_c}\t \mb U^*}  \lesssim \tilde \sigma \sqrt{m K\log d}
\end{align}
hold with probablity at least $1- O(d^{-c-2})$.

\item Combining the decomposition in \eqref{eq: decomposition of Ut H(EEt) U} with the decoupling result of \cite[Theorem~1]{de1995decoupling} and an application of Bernstein’s inequality yields the desired upper bound for $\norm{{\mb U^*}^\top \mc H(\mb E \mb E^\top)\mb U^*}$. 

\item Denote by $\mb G$ an independent Gaussian analog of $\mb E$ sharing the same mean and covariance structures. 
By applying the universality result \cite[Theorem~2.5]{brailovskaya2022universality}, one has with probability at least $1- O(d^{-c})$ that 
\begin{align}
    &\Big| \norm{\mb 1_{n_{a}}\t \mb E_{\mc I_a(\mb z^*),\cdot} \bo \Sigma_k^{\frac12}}_2 - \norm{\mb 1_{n_{a}}\t \mb G_{\mc I_a(\mb z^*),\cdot} \bo \Sigma_k^{\frac12}}_2 \Big| \lesssim \sqrt{n_a}\tilde\sigma^2 \sqrt{\log d} + m^{\frac16}B^{\frac13}  \tilde\sigma \sigma^{\frac23}p^{\frac13} (\log d)^{\frac23} + \sqrt{m}B\tilde\sigma \log d.
\end{align}
Combining it with the inequality \eqref{eq: concentration on 1t Ea Sigmak Eat 1} gives that 
\begin{align}
    & \norm{\mb 1_{n_{a}}\t \mb E_{\mc I_a(\mb z^*),\cdot} \bo \Sigma_k^{\frac12}}_2 - \bb E\Big[ \norm{\mb 1_{n_{a}}\t \mb E_{\mc I_a(\mb z^*),\cdot} \bo \Sigma_k^{\frac12}}_2^2\Big]^{\frac12} \\ 
    \lesssim & \big(n_a\tr(\bo \Sigma_a \bo \Sigma_k)\big)^{-\frac12}\Big(  n_a \big( 1 \vee \frac{\tilde\sigma}{\sqrt{p}\sigma} \big)\sqrt{p}\sigma \tilde\sigma^3 \log d\Big) + \sqrt{n_a}\tilde\sigma^2 \log d + \tilde\sigma^{\frac43} \sigma^{\frac23}n_a^{\frac16} p^{\frac13} (\log d)^{\frac23}
\end{align}
with probability at least $1- O(d^{-c})$, which leads to 
\begin{align}
    & \norm{\mb 1_{n_{a}}\t \mb E_{\mc I_a(\mb z^*),\cdot} \bo \Sigma_k^{\frac12}}_2^2 - \bb E\Big[ \norm{\mb 1_{n_{a}}\t \mb E_{\mc I_a(\mb z^*),\cdot} \bo \Sigma_k^{\frac12}}_2^2\Big] \lesssim 
     n_a \big( 1 \vee \frac{\tilde\sigma}{\sqrt{p}\sigma} \big)\sqrt{p}\sigma \tilde\sigma^3 \log d + \tilde\sigma^{\frac73} \sigma^{\frac53}n_a^{\frac23} p^{\frac56} (\log d)^{\frac23}.
\end{align} 
Similarly, we can prove that 
\begin{align}
    & \max_{i\in[n]}\norm{\mb 1_{n_{k}}\t \mb E_{\mc I_k(\mb z^*)\backslash\{i\},\cdot} \bo \Sigma_k^{\frac12}}_2^2 - \bb E\Big[ \norm{\mb 1_{n_{k}}\t \mb E_{\mc I_k(\mb z^*)\backslash\{i\},\cdot} \bo \Sigma_k^{\frac12}}_2^2\Big] \\
    \lesssim & 
     n_a \big( 1 \vee \frac{\tilde\sigma}{\sqrt{p}\sigma} \big)\sqrt{p}\sigma \tilde\sigma^3 \log d + \tilde\sigma^{\frac73} \sigma^{\frac53}n_a^{\frac23} p^{\frac56} (\log d)^{\frac23}.
\end{align}
holds with probability at least $1- O(d^{-c})$. 

Moreover, owing to the matrix Bernstein inequality, one has for $a \neq b \in [K] \backslash\{k\}$ that 
\begin{align}
    & \Big|\mb 1_{n_{a}}\t \mb E_{\mc I_a(\mb z^*),\cdot} \bo \Sigma_k \mb E_{\mc I_b(\mb z^*),\cdot}\t \mb 1_{n_{b}} \Big| \\
    \lesssim & \sqrt{n_a}\norm{\bo \Sigma_a^{\frac12} \bo \Sigma_k}\norm{\mb 1_{n_b}\t \mb E_{\mc I_b(\mb z^*),\cdot}}_2 \sqrt{\log d} + \sqrt{m}B \max_{c\in[l]} \norm{(\bo \Sigma_k)_{\mc I_c(\mb z^*),\cdot} \mb E_{\mc I_b(\mb z^*),\cdot}\t \mb 1_{n_{b}}}_2\log d \\ 
    \lesssim &  \sqrt{n_an_bp} \tilde\sigma^3 \sigma \sqrt{\log d} + \sqrt{m}B \tilde\sigma^2 \big(\tilde\sigma \sqrt{n_b \log d }\big) \log d  \lesssim  \sqrt{n_an_bp} \tilde\sigma^3 \sigma \sqrt{\log d}, \\ 
    & \max_{i\in\mc I_k(\mb z^*)}\Big|\mb 1_{n_{a}}\t \mb E_{\mc I_a(\mb z^*),\cdot} \bo \Sigma_k \mb E_{\mc I_k(\mb z^*)\backslash\{i\},\cdot}\t \mb 1_{n_{k}} \Big|  \lesssim  \sqrt{n_an_k p} \tilde\sigma^3 \sigma \sqrt{\log d}
\end{align}
with probability at least $1- O(d^{-c})$. 

Then mimicing the arguments in \eqref{eq: single entry of Ut E Sigmak Et U} yields that 
\longeq{
    & \max_{i\in[n],k_1,k_2,k\in [K]}  \Bignorm{{\mb U_{\cdot,k_1}^{*\top}} \mc P_{-i,\cdot}(\mb E) \bo \Sigma_k \mc P_{-i,\cdot}(\mb E)\t \mb U_{\cdot,k_2}^* - \sum_{i'\in[n]\backslash\{i\}} \tr(\bo \Sigma_{z_{i'}^*} \bo \Sigma_{z_i^*}){\mb U^{*}_{i',k_1}} \mb U_{i',k_2}^*} \lesssim \sqrt{p}\sigma\tilde\sigma^3 \sqrt{\log d},  \\ 
    & \max_{i \in[n], k\in[K]} \Bignorm{{\mb U^*}\t \mc P_{-i,\cdot}(\mb E) \bo \Sigma_k \mc P_{-i,\cdot}(\mb E)\t \mb U^* - \sum_{i'\in[n]\backslash\{i\}} \tr(\bo \Sigma_{z_{i'}^*} \bo \Sigma_{z_i^*}){\mb U^*_{i',\cdot}}\t \mb U_{i',\cdot}^*} \lesssim K\sqrt{p}\sigma\tilde\sigma^3 \sqrt{\log d}
}
    with probability at least $1- O(d^{-c})$.

\end{itemize}
\end{proof}

\begin{proof}[Proof of Lemma~\ref{lemma: 2k moment of At Ei}]
    The bound for $\bb E\big[\norm{\mb A\t \mb E_i}_2^{2k} \big]$ follows by Theorem 2.8 in \cite{brailovskaya2022universality} that 
    \begin{align}
        & \bb E\big[\bignorm{\mb A\t \mb E_i}_2^{2k} \big]^{\frac{1}{2k}} \leq   \bb E\big[\bignorm{\mb A\t \mb G_i}_2^{2k} \big]^{\frac{1}{2k}} + C \Big(\bb E\big[\sum_{c\in[l]} \frac{1}{K}\norm{\mb E_{i,S_c} \mb A_{S_c,\cdot}}_2^{2k} \big] \Big)^{\frac{1}{2k}}k^2 \\
        \leq & \bb E\big[\bignorm{\mb A\t \mb G_i}_2^{2k} \big]^{\frac{1}{2k}} + C \big(\tr(\mb A\t \bo \Sigma_{z_i^*} \mb A)/K \big)^{\frac{1}{2k}} \big(\sqrt{m}B \max_{c\in[l]}\norm{\mb A_{S_c,\cdot}}\big)^{\frac{k-1}{k}} k^2
    \end{align}
    where $\mb G$ is a standard Gaussian matrix with identical covariance as $\mb E$.

\end{proof}

\subsection{Stability of Perturbed Decision Boundary}
To make sure that the decision boundary is stable in the presence of randomness, we are going to state some technical lemmas, part of which directly come from \cite{chen2024optimal} or can be proved following the same route.

\subsubsection{Proof of Lemma~\ref{lemma: SNR and distance}}
\label{subsubsec: proof of lemma SNR and distance}
    The proof of this lemma is similar to the proof of \cite[Lemma 6.3]{chen2021optimal}. 

    \emph{Upper Bound for $\mathsf{SNR}_{a,b}$} Consider $\mb x_0 = {\mb S_a^*}^{-\frac{1}{2}}(\bo \theta_b^* - \bo \theta_a^*)$. Observe that 
    \begin{align}
        & \mb x_0\t\big(\mb I - {\mb S_{a}^*}^{\frac{1}{2}}{\mb S_{b}^*}^{-1} {\mb S_{a}^*}^{\frac{1}{2}}\big)\mb x_0 + 2\mb x_0\t{\mb S_{a}^*}^{\frac{1}{2}}{\mb S_{b}^*}^{-1}{\mb V^{*\top}}\big(\bo \theta_{b}^* - \bo \theta_{a}^*\big) \\ 
        & -  \big(\bo \theta_{b}^* - \bo \theta_{a}^*\big)\t \mb V^*{\mb S_{b}^*}^{-1}{\mb V^{*\top}}\big(\bo \theta_{b}^* - \bo \theta_{a}^*\big)= (\bo \theta_b^* - \bo \theta_a^*)\t {\mb S_a^*}^{-1} (\bo \theta_b^* - \bo \theta_a^*) \geq 0.
    \end{align}
    Therefore, we can know that $\mathsf{SNR}_{a,b} \leq\norm{{\mb S_a^*}^{-\frac{1}{2}}(\bo \theta_b^* - \bo \theta_a^*)}\leq \frac{\norm{\bo \theta_a^* - \bo \theta_b^*}}{\underline \sigma }$. 

    \emph{Lower Bound for $\mathsf{SNR}_{a,b}$} It is straightforward by the Cauchy inequality that 
    \begin{align}
        &\mb x\t\big(\mb I - {\mb S_{a}^*}^{\frac{1}{2}}{\mb S_{b}^*}^{-1} {\mb S_{a}^*}^{\frac{1}{2}}\big)\mb x + 2\mb x\t{\mb S_{a}^*}^{\frac{1}{2}}{\mb S_{b}^*}^{-1}{\mb V^{*\top}}\big(\bo \theta_{b}^* - \bo \theta_{a}^*\big) -  \big(\bo \theta_{b}^* - \bo \theta_{a}^*\big)\t \mb V^*{\mb S_{b}^*}^{-1}{\mb V^{*\top}}\big(\bo \theta_{b}^* - \bo \theta_{a}^*\big) \\ 
        \leq & \norm{\mb I -  {\mb S_{a}^*}^{\frac{1}{2}}{\mb S_{b}^*}^{-1} {\mb S_{a}^*}^{\frac{1}{2}}} \norm{\mb x}_2^2 + 2\norm{{\mb S_{a}^*}^{\frac{1}{2}}{\mb S_{b}^*}^{-1}{\mb V^{*\top}}\big(\bo \theta_{b}^* - \bo \theta_{a}^*\big)}_2 \norm{\mb x}  - \big(\bo \theta_{b}^* - \bo \theta_{a}^*\big)\t \mb V^*{\mb S_{b}^*}^{-1}{\mb V^{*\top}}\big(\bo \theta_{b}^* - \bo \theta_{a}^*\big)\\ 
        \leq & (1 + \frac{\lambda_{\max}}{\lambda_{\min}}) \norm{\mb x}_2^2 + 2 \frac{\sqrt{\lambda_{\max}}}{\lambda_{\min}}\norm{\bo \theta_b^* - \bo \theta_a^*}_2\norm{\mb x}_2 - \frac{\norm{\bo \theta_b^* - \bo \theta_a^*}_2^2 }{\lambda_{\max}}. 
    \end{align}

    Since the only positive zero of $(1 + \frac{\lambda_{\max}}{\lambda_{\min}})t^2 + 2 \frac{\sqrt{\lambda_{\max}}}{\lambda_{\min}}t - \frac{1}{\lambda_{\max}}$ is 
    $$\frac{-\sqrt{\lambda_{\max}} + \sqrt{\lambda_{\max} + \frac{\lambda_{\max}(\lambda_{\min} + \lambda_{\max})}{\lambda_{\max}}}}{2(\lambda_{\max} + \lambda_{\min})},$$ 
    we can conclude that 
    \begin{equation}
        \mathsf{SNR}_{a,b} \geq \frac{-\sqrt{\lambda_{\max}} + \sqrt{\lambda_{\max} + \frac{\lambda_{\min}(\lambda_{\min} + \lambda_{\max})}{\lambda_{\max}}}}{2(\lambda_{\max} + \lambda_{\min})} \norm{\bo \theta_b^* - \bo \theta_a^*}_2.
    \end{equation}
    To simplify the above lower bound, we write it as 
    \begin{align}
        & \frac{-\sqrt{\lambda_{\max}} + \sqrt{\lambda_{\max} + \frac{\lambda_{\min}(\lambda_{\min} + \lambda_{\max})}{\lambda_{\max}}}}{\lambda_{\min} + \lambda_{\max}} = \lambda_{\min}^{-\frac{1}{2}}\frac{-\sqrt{\tau} + \sqrt{\tau + \frac{1 + \tau}{\tau}}}{1 + \tau}.
    \end{align}
    If $\tau \in [1,2]$, there exists a positive constant $c_1$ that lower-bounds $\frac{-\sqrt{\tau} + \sqrt{\tau + \frac{1 + \tau}{\tau}}}{1 + \tau}$. If $\tau \geq 2$, by the Taylor expansion on $\sqrt{x}$, for some positive constant $c_2$ one has 
    \begin{align}
        & \frac{-\sqrt{\tau} + \sqrt{\tau + \frac{1 + \tau}{\tau}}}{1 + \tau} = \frac{1}{1 +\tau} \frac{\frac{1+\tau}{\tau}\cdot }{2\sqrt{\tau}} - \frac{1}{1 + \tau} \frac{(\frac{1 +\tau}{\tau})^2}{4\tau_0^{\frac{3}{2}}} \geq \frac{1}{2\sqrt{\tau}} - \frac{1 + \tau}{2^{\frac{7}{2}}\tau^2} \geq \frac{c_2}{\sqrt{\tau}}
    \end{align}
    where $\tau_0 \in [2, \tau]$ arises by the mean-value theorem.
    Therefore, we can conclude that $\mathsf{SNR}_{a,b} \geq c\lambda_{\min}^{-\frac{1}{2}}\tau^{-\frac{1}{2}}\norm{\bo \theta_a^* - \bo \theta_b^*}_2$ for some positive constant $c$.

    For the last inequality, we notice that 
    \begin{align}
         &\mathsf{SNR}_{a,b}^2 = \min_{\mb x \in \bb R^K}\big\{(\mb x - \mb w_a^*)\t {\mb S^*_a}^{-1}(\mb x- \mb w_a^*):    \underbrace{(\mb x - \mb w_{a}^*)\t {\mb S^*_{a}}^{-1}(\mb x- \mb w_{a}^*)}_{\geq \frac{1}{\bar\sigma^2}\norm{\mb x - \mb w_{a}^*}_2^2 } =\underbrace{ (\mb x - \mb w_{b}^*)\t {\mb S^*_{b}}^{-1}(\mb x- \mb w_{b}^*)}_{\geq \frac{1}{\bar\sigma^2}\norm{\mb x - \mb w_{b}^*}_2^2}
\big\} \\ 
& \geq   \min_{\mb x \in \bb R^K}\big\{\frac{1}{\bar\sigma^2} \norm{\mb x- \mb w_a^*}_2^2:\frac{1}{\bar\sigma^2}\norm{\mb x - \mb w_{a}^*}_2^2  = \frac{1}{\bar\sigma^2}\norm{\mb x - \mb w_{b}^*}_2^2 \big\}  
=  \frac{1}{4\bar\sigma^2 }\triangle^2 \geq \frac{1}{4\tau^2}\underline \omega, 
    \end{align}
    which leads to 
\eq{
    \frac{1}{2\tau} \underline \omega^{\frac12}\leq \mathsf{SNR}_{a,b}.
}

Regarding the upper bound, without loss of generality, we suppose that $\omega_{1,2} = \underline \omega$. Then $\mathsf{SNR}_{1,2}^2 \leq\big( t_0 (\mb w_2^*- \mb w_1^*) \big)\t{\mb S_1^*}^{-1}\big( t_0 (\mb w_2^*- \mb w_1^*) \big)\leq \underline \omega $ with $t_0\in (0,1)$ satisfying 
\eq{
\big( t_0 (\mb w_2^*- \mb w_1^*) \big)\t{\mb S_1^*}^{-1}\big( t_0 (\mb w_2^*- \mb w_1^*) \big)=  \big((1 - t_0 )(\mb w_2^*- \mb w_1^*)\big)\t{\mb S_2^*}^{-1}\big((1 - t_0 )(\mb w_2^*- \mb w_1^*)\big). 
}

By definition, we have ${\mathsf{SNR}}^2 \leq \underline \omega$. 

\subsubsection{Proof of Lemma~\ref{lemma: SNR' and SNR}}
\label{subsubsec: proof of lemma SNR' and SNR}
This lemma is a direct conclusion of \cite[Lemma~C.9]{chen2024optimal}. For completeness, we present their result as follows.  
\begin{lemma}[Lemma~C.9 in \cite{chen2024optimal}]
\label{lemma: C9 in chen2024optimal}
    Consider any $\bo \theta\in \bb R^d \backslash \{0\}$ and $\mb \Sigma \in \bb R^{d\times d}$ that is positive semi-definite. Let $\lambda_{\max}, \lambda_{\min} >0$ be the largest and smallest eigenvalues of $\mb \Sigma$, respectively. For any $t \in \bb R$, define 
    $$ D(t) =\left\{x \in \bb R^d: \norm{\mb x}_2^2 \geq \big( \mb x - \bo \theta\big)\t \mb \Sigma^{-1}\big( \mb x - \bo \theta\big) + t \right\},
    $$
    and $s(t) = \min_{x\in D(t)} \norm{\mb x}_2$. Then if $ -\norm{\bo \theta}_2^2 / (8\lambda_{\max}) < t, t' <  \norm{\bo \theta}_2^2 / 8 $, we have 
    \eq{
    \left| s(t') - s(t) \right| \leq \lambda_{\max} \frac{t' - t }{2 \min\{\sqrt{\lambda_{\min} / 8}, 1/2\} \norm{\bo \theta}_2 }. 
    }
    
\end{lemma}

Let the quantities in the expression $D(t)$
from \cite[Lemma~C.9]{chen2024optimal} be 
\begin{equation}
    d = K, \quad \mb \Sigma = {\mb S^*_a}^{-\frac{1}{2}}{\mb S_b^*}{\mb S^*_a}^{-\frac{1}{2}},\quad  \bo \theta = {\mb S^*_a}^{-\frac{1}{2}}\big(\mb w_b^* - \mb w_a^*\big). 
\end{equation}

We first notice that $1/\kappa_{\mathsf{cov}}^2 \leq \lambda_{\min} \leq \lambda_{\max} \leq \kappa_{\mathsf{cov}}^2$ and $\norm{\bo \theta}_2^2 = \omega_{a,b}$. 
Given $\delta = o(\frac{1}{\kappa_{\mathsf{cov}}^2})$, the condition to apply their conclusion is satisfied for every sufficiently large $n$. Letting $t = 0$, $t' = \delta \omega_{a,b}$, one has: 
\begin{align}
    & |\mathsf{SNR}^{\mathsf{perturbed}}_{a,b} - \mathsf{SNR}_{a,b} | \lesssim  \kappa_{\mathsf{cov}}^3 \delta \omega_{a,b}^{\frac12}
\end{align}
for some constant $c_{\mathsf{SNR}} \geq 0$. Finally, invoking the relation $\mathsf{SNR}_{a,b} \gtrsim \tau^{-1} \omega_{a,b}^{\frac{1}{2}}$ from Lemma~\ref{lemma: SNR and distance} yields that 
\begin{align}
    \mathsf{SNR}_{a,b}^{\mathsf{perturbed}}(\delta) \geq (1 - c_{\mathsf{SNR}} \kappa_{\mathsf{cov}}^4 \delta \omega_{a,b}^{\frac12}) \mathsf{SNR}_{a,b}. 
\end{align}

\subsection{Proof of Proposition~\ref{proposition: spectral initialization}}
\label{subsec: proof of initialization proposition}
This proposition departs from prior theory \cite{abbe2022,zhang2024leave}, which focuses on bounding the expectation $\mathbb{E}[h(\hat{\mathbf z}^{(0)}, \mathbf z^*)]$. It is obtained by synthesizing several key components developed in the proofs of Theorems~\ref{theorem: upper bound for algorithm (gaussian)} and~\ref{theorem: upper bound for algorithm (bounded)}.

A fundamental limitation of expectation-based bounds is that an inequality of the form $\bb E[h(\hat{\mb z}^{(0)}, \mb z^*)] \leq \exp(-(1+o(1))\xi^2)$ only implies $h(\hat{\mb z}^{(0)}, \mb z^*) \leq \exp(-(1+o(1))\xi^2 + \xi'^2)$ with probabilty $1 - \exp(-\xi'^2)$.  For this bound to be nontrivial, the tail probability $\exp(-\xi'^2)$ must be at least on the order of $\exp\big(-(1+o(1))\xi^2\big)$, which is typically not smaller than $\exp\big(-(1+o(1))\mathsf{SNR}^2/2\big)$. As a result, such bounds generally fail to yield sufficiently sharp high-probability guarantees. 
To overcome this issue, we instead leverage our singular subspace perturbation theory to derive a high-probability bound directly. A major technical challenge is controlling the sum $\sum_{i\in[n]}\ind\big\{\bignorm{\mb E_{i,\cdot} \mc P_{-i,\cdot}(\mb E)\t \mb U^*{\bo \Lambda^*}^{-2} }_2\geq \frac{1}{16} \sqrt{K/\beta n}  \big\}$. 

We start with invoking Lemma~D.1 in \cite{abbe2022} to build up the relationship between $h(\hat{\mb z}^{(0)}, \mb z^*)$ and $\norm{\mb U\mb O - \tilde{\mb U^*}}_F^2$, where the $i$-th row of $\tilde{\mb U}^*$ is defined as $\mb U^*_{i,\cdot} + \mathsf{bias}_{z_i^*}$. To apply their lemma, we first leverage the inequalities in the proof of Lemma~\ref{lemma: projected covariance matrix estimation error with true z} to imply that 
\begin{align}
    & \norm{\mb U \mb O - \tilde{\mb U}^*}_F^2 \lesssim \frac{K \bar \sigma^2_{\mathsf{cov}} n }{(\sigma_{K}^*)^2} \ll \frac{K}{\beta n} \cdot \frac{n}{\beta K^2} \lesssim \frac{\min_{k_1,k_2\in[K]}\norm{\tilde{\mb c}_{k_1}^* - \tilde{\mb c}_{k_2}^*}_2^2 n}{\beta K^2}   
\end{align}
with probability at least $1-O(d^{-c})$ for some constant $c$,
which verifies the ``Low-noise condition'' therein. Here, the second inequality follows from our assumptions on $\snr$ along with \eqref{eq: fact B}. 
As a result, one has 
\begin{align}
    & nh(\hat{\mb z}^{(0)}, \mb z^*) \lesssim \left|\big\{i\in[n]: \bignorm{\mb U_{i}\mb O - \tilde{\mb c}_{z_i^*}^*}_2 \geq \frac{1}{4}\sqrt{K/ \beta n}\big\}\right|
    \label{eq: upper bound for nh(z,z^*)}
\end{align}
with probability at least $1-O(d^{-c})$. 
To further simplify, we invoke Corollary~\ref{corollary: singular subspace perturbation theory in clustering problem} along with \eqref{eq: alternative upper bound for delta_u}, \eqref{eq: fact A}, and \eqref{eq: fact B} to obtain that 
\begin{align}
    & \left|\big\{i\in[n]: \bignorm{\mb U_{i}\mb O - \tilde{\mb c}_{z_i^*}^*}_2 \geq \frac{1}{4}\sqrt{K/ \beta n}\big\}\right|  \\
    \leq & \left|\big\{i\in[n]: \bignorm{\mb U_{i}\mb O - \tilde{\mb c}_{z_i^*}^* - \mk L_i}_2 \geq \deltau \big\}\right| + \left|\big\{i\in[n]: \bignorm{\mk L_i}_2 \geq \frac{1}{8}\sqrt{K/ \beta n} \big\}\right| \\ 
    \leq &  \left|\big\{i\in[n]: \bignorm{\mb U_{i}\mb O - \tilde{\mb c}_{z_i^*}^* - \mk L_i}_2 \geq \deltau \big\}\right| + \left|\big\{i\in[n]: \bignorm{\mb E_{i,\cdot} \mb V^*{\bo \Lambda^*}^{-1}}_2 \geq \frac{1}{16}\sqrt{K/ \beta n} \big\}\right|  \\ 
    & +\left|\big\{i\in[n]: \bignorm{\mb E_{i,\cdot} \mc P_{-i,\cdot}(\mb E)\t \mb U^*{\bo \Lambda^*}^{-2}}_2 \geq \frac{1}{16}\sqrt{K/ \beta n} \big\}\right|. 
    \label{eq: error decomposition for initialization}
\end{align}

We first consider the regime
$\snr \lesssim \kappa \kappa_{\mathsf{cov}}\sqrt{\log n}$.
We analyze the three terms on the right-hand side of
\eqref{eq: error decomposition for initialization} separately.

\emph{First term. }
For the first term, it follows from Markov's inequality, the definitiion of $\delta_u$, and Corollary~\ref{corollary: singular subspace perturbation theory in clustering problem} that 
\begin{equation}
    \left|\big\{i\in[n]: \bignorm{\mb U_{i}\mb O - \tilde{\mb c}_{z_i^*}^* - \mk L_i}_2 \geq \deltau \big\}\right| \ll \frac{n}{\beta K(\log d)^4 \snr^2 \nu}
\end{equation}
with probability $1 -  \beta K (\log d)^4 \snr^2 \nu \exp(-(1 + o(1))\frac{\snr^2}{2}) = 1 - \exp(-(1 + o(1))\frac{\snr^2}{2})$. 

\emph{Second term. } For each $i\in[n]$, we have 
\begin{align}
    & \bb P\big[\bignorm{\mb E_{i,\cdot} \mb V^*{\bo \Lambda^*}^{-1}}_2 \geq \frac{1}{16}\sqrt{K/ \beta n} \big\}\big] \leq  \bb P\big[\bignorm{\mb E_{i,\cdot} \mb V^*}_2 \geq \frac{1}{16}\sqrt{K/ \beta n} \big\} \sigma_K^* \big]  \\ 
    \stackrel{\text{by \eqref{eq: fact B}}}{\leq} & \bb P\big[\bignorm{\mb E_{i,\cdot} \mb V^*}_2 \geq \frac{1}{16}\frac{\snr \bar \sigma_{\mathsf{cov}}}{\kappa \kappa_{\mathsf{cov}} \sqrt{\beta}}\big\} \sigma_K^* \big]   
    \leq   \bb P\big[\bignorm{\mb E_{i,\cdot} \mb V^*}_2 \geq C_1  \bar \sigma_{\mathsf{cov}} \sqrt{\log \log d + \log (\kappa\kappa_{\mathsf{cov}}) } \big]  \\ 
   \leq & \bb P\big[\bignorm{\mb E_{i,\cdot} \mb V^*}_2 \geq C_2  \bar \sigma_{\mathsf{cov}} \sqrt{\log \log d + \log \snr  } \big]  \leq \frac{C_3}{ \nu \snr^2 \beta K(\log d)^4} 
   \label{eq: second term concentration in initialization analysis}
\end{align}
with an sufficiently large $C_1$ and $C_2 = \frac{1}{2}C_1$ and some sufficiently small $C_3$. In the second-to-last inequality, we used Hanson-Wright inequality and Lemma~\ref{eq: 2k moment of At Ei}. 
Notice that $\ind\{\bignorm{\mb E_{i,\cdot} \mb V^*{\bo \Lambda^*}^{-1}}_2 \geq \frac{1}{16}\sqrt{K/ \beta n} \}$ are independent across $i$, Bernstein's inequality yields
\begin{align}
      \left|\big\{i\in[n]: \bignorm{\mb E_{i,\cdot} \mb V^*{\bo \Lambda^*}^{-1}}_2 \geq \frac{1}{16}\sqrt{K/ \beta n} \big\}\right| \leq& n  \frac{C_3}{ \nu \snr^2 \beta K(\log d)^4} + c_b \Big[n^{\frac34}\sqrt{ \frac{C_3}{ \nu \snr^2 \beta K(\log d)^4}} + n^{\frac14}\Big] \\ 
    \lesssim & 2n  \frac{C_3}{ \nu \snr^2 \beta K(\log d)^4} 
    \label{eq: Bersntein application for second term in initialization analysis}
\end{align}
with probability at least $1- e^{-\sqrt{n}}$ for some constant $c_b$, where we used the fact that $$\frac{C_3}{ \nu \snr^2 \beta K(\log d)^4} \gtrsim \frac{C_3}{ \kappa \kappa_{\mathsf{cov}}^2 \nu \beta K(\log d)^5}\gg n^{-\frac12}. $$

\emph{Third Term. }The last term in \eqref{eq: error decomposition for initialization} is more delicate, since the indicator summands are no longer independent. 
To control the dependence, we first recall the definition of $\bar \sigma_{\mathsf{cov}}$ and obtain from the Hanson-Wright inequality or the Bernstein inequality that, with probability at least $1-O(d^{-c})$,
\begin{equation}
\label{eq: E_i E_j inner product upper bound}
|\ip{\mb E_i,\mb E_j}|
 \lesssim 
\bar\sigma_{\mathsf{cov}}\,\sigma_K^* \sqrt{\log d},
\qquad \forall\, i\neq j\in[n].
\end{equation}
In what follows we work on the event \eqref{eq: E_i E_j inner product upper bound}. 
Then one has 
\begin{align}
    & \ind\big\{\bignorm{\mb E_{i,\cdot} \mc P_{-i,\cdot}(\mb E)\t \mb U^*{\bo \Lambda^*}^{-2} }_2\geq \frac{1}{16} \sqrt{K/\beta n}  \big\} \leq \ind\big\{\bignorm{\mb E_{i,\cdot} \mc P_{-i,\cdot}(\mb E)\t \mb U^* }_2 \geq \frac{(\sigma_K^*)^2}{16} \sqrt{K/\beta n}  \big\} \\ 
    \leq & \sum_{k\in[K]} \ind\big\{\bignorm{\mb E_{i,\cdot} \mc P_{\mc I_{k}(\mb z^*)\backslash\{i\},\cdot}(\mb E)\t }_2^2 \geq \frac{(\sigma_K^*)^4}{(16)^2 \beta^2 }  \big\} =  \sum_{k\in[K]} \ind\big\{\sum_{i' \in \mc I_k(\mb z^*) \backslash\{i\}} |\ip{\mb E_{i}, \mb E_{i'}}|^2 \geq \frac{(\sigma_K^*)^4}{(16)^2 \beta^2 }   \big\}  \\ 
    \stackrel{\text{by \eqref{eq: fact B}}}{\leq} &  \sum_{k\in[K]} \ind\big\{\sum_{i' \in \mc I_k(\mb z^*) \backslash\{i\}} |\ip{\mb E_{i}, \mb E_{i'}}|^2 \geq \frac{n\snr^2 (\bar\sigma_{\mathsf{cov}})^2 (\sigma_K^*)^2 }{(16)^2 K \kappa^2 \kappa_{\mathsf{cov}}^2 \beta^2}  \big\} .
\end{align}

For $k\in[K]$, we let $a_k$ be $\sum_{i' \in \mc I_k(\mb z^*) \backslash\{i\}} \ind\big\{ |\ip{\mb E_{i}, \mb E_{i'}}|^2 \geq \frac{\snr^2 (\bar\sigma_{\mathsf{cov}})^2 (\sigma_K^*)^2 }{2 (16)^2  \kappa^2  \kappa_{\mathsf{cov}}^2 \beta^3 }  \big\}$. Under the condition \eqref{eq: E_i E_j inner product upper bound}, the event $\sum_{i' \in \mc I_k(\mb z^*) \backslash\{i\}} |\ip{\mb E_{i}, \mb E_{i'}}|^2 \geq \frac{n\snr^2 (\bar\sigma_{\mathsf{cov}})^2 (\sigma_K^*)^2 }{16 K \kappa^2 \kappa_{\mathsf{cov}}^2 \beta^2 }$ implies that 
\eq{
    a_k \bar \sigma_{\mathsf{cov}}^2 (\sigma_K^*)^2 \log d + (\frac{\beta n}{K} - a_k) \frac{\snr^2 (\bar\sigma_{\mathsf{cov}})^2 (\sigma_K^*)^2}{2 (16)^2 \kappa^2 \kappa_{\mathsf{cov}}^2 \beta^3  } \geq \frac{n\snr^2 (\bar\sigma_{\mathsf{cov}})^2 (\sigma_K^*)^2}{(16)^2 K \kappa^2 \kappa_{\mathsf{cov}}^2 \beta^2},
}
which is equivalent to
\longeq{
    & a_k \geq  \Big[\bar \sigma_{\mathsf{cov}}^2 (\sigma_K^*)^2 \log d - \frac{\snr^2 (\bar\sigma_{\mathsf{cov}})^2(\sigma_K^*)^2 }{2 (16)^2 \kappa^2 \kappa_{\mathsf{cov}}^2 \beta^3 } \Big]^{-1} \frac{n\snr^2 (\bar\sigma_{\mathsf{cov}})^2 (\sigma_K^*)^2}{2(16)^2 K \kappa^2 \kappa_{\mathsf{cov}}^2 \beta^2} \geq \frac{\snr^2 }{2(16)^2 \log d \kappa^2 \kappa_{\mathsf{cov}}^2 \beta^3 } \cdot \frac{n \beta }{K} . 
}

As a consequence, one can rewrite the sum of $n$ indicators as the sum of $n^2$ indicator, which is later viewed as a U-statistic:
\begin{align}
    & \sum_{i\in[n]}\ind\big\{\bignorm{\mb E_{i,\cdot} \mc P_{-i,\cdot}(\mb E)\t \mb U^*{\bo \Lambda^*}^{-2} }_2\geq \frac{1}{16} \sqrt{K/\beta n}  \big\}\\ 
    \leq & \sum_{i\in[n]} \sum_{k\in[K]} \frac{\sum_{i' \in \mc I_k(\mb z^*) \backslash\{i\}} \ind\big\{ |\ip{\mb E_{i}, \mb E_{i'}}|^2 \geq \frac{\snr^2 (\bar\sigma_{\mathsf{cov}})^2 (\sigma_K^*)^2 }{2 (16)^2  \kappa^2  \kappa_{\mathsf{cov}}^2 \beta^3 }  \big\}}{\frac{\snr^2 }{2(16)^2 \log d \kappa^2 \kappa_{\mathsf{cov}}^2 \beta^3 } \cdot \frac{n \beta }{K}} \\ 
    = & \sum_{i_1\neq i_2 \in[n]} \ind\big\{ |\ip{\mb E_{i_1}, \mb E_{i_2}}|^2 \geq \frac{\snr^2 (\bar\sigma_{\mathsf{cov}})^2 (\sigma_K^*)^2 }{2 (16)^2  \kappa^2  \kappa_{\mathsf{cov}}^2 \beta^3 }  \big\} \cdot \Big[\frac{\snr^2 }{2(16)^2 \log d \kappa^2 \kappa_{\mathsf{cov}}^2 \beta^3 } \cdot \frac{n \beta }{K}\Big]^{-1}. 
    \label{eq: decomposition for the third term in initialization analysis}
\end{align}
We therefore apply the decoupling inequality
\cite[Theorem~1]{de1995decoupling} to obtain that, for any $x\in\bb R$,
\begin{align}
    & \bb P\Big[\sum_{i_1\neq i_2 \in[n]} \ind\big\{ |\ip{\mb E_{i_1}, \mb E_{i_2}}|^2 \geq \frac{\snr^2 (\bar\sigma_{\mathsf{cov}})^2 (\sigma_K^*)^2 }{2 (16)^2  \kappa^2  \kappa_{\mathsf{cov}}^2 \beta^3 }  \big\} \cdot \Big[\frac{\snr^2 }{2(16)^2 \log d \kappa^2 \kappa_{\mathsf{cov}}^2 \beta^3 } \cdot \frac{n \beta }{K}\Big]^{-1} \geq x \Big] \\ 
    \leq & C \bb P\Big[\sum_{i_1\neq i_2 \in[n]} \ind\big\{ |\ip{\mb E_{i_1}, \mb E'_{i_2}}|^2 \geq \frac{\snr^2 (\bar\sigma_{\mathsf{cov}})^2 (\sigma_K^*)^2 }{2 (16)^2  \kappa^2  \kappa_{\mathsf{cov}}^2 \beta^3 }  \big\} \cdot \Big[\frac{\snr^2 }{2(16)^2 \log d \kappa^2 \kappa_{\mathsf{cov}}^2 \beta^3 } \cdot \frac{n \beta }{K}\Big]^{-1}\geq C x \Big]      
\end{align}
for every $x \in\bb R$ with some constant $C$, where $\mb E'$ is an independent copy of $\mb E$. This together with \eqref{eq: decomposition for the third term in initialization analysis} gives rise to 
\begin{align}
    & \bb P\bigg[\Big|\big\{i\in[n]: \bignorm{\mb E_{i,\cdot} \mc P_{-i,\cdot}(\mb E)\t \mb U^*{\bo \Lambda^*}^{-2}}_2 \geq \frac{1}{16}\sqrt{K/ \beta n} \big\}\Big| > n \cdot \frac{C_4}{\nu \snr^2 \beta K(\log d)^4}\bigg]  \\ 
    \leq & O(d^{-c}) + \bb P\big[ \sum_{i_1\neq i_2 \in[n]} \ind\big\{ |\ip{\mb E_{i_1}, \mb E'_{i_2}}|^2 \geq \frac{\snr^2 (\bar\sigma_{\mathsf{cov}})^2 (\sigma_K^*)^2 }{2 (16)^2  \kappa^2  \kappa_{\mathsf{cov}}^2 \beta^3 }  \big\} \geq C n^2\cdot \frac{C_4}{2(16)^2 \kappa^2 \kappa_{\mathsf{cov}}^2\nu \beta^3 K^2 (\log d)^5}\big] \\ 
    \leq & O(d^{-c}) + \bb P\big[ \sum_{i_1\neq i_2 \in[n]} \ind\big\{ |\ip{\mb E_{i_1}, \mb E'_{i_2}}| \geq \bar \sigma_{\mathsf{cov}} \sigma_K^* \sqrt{\log \log d + \log (\kappa\kappa_{\mathsf{cov}})} \big\} \geq C n^2\cdot \frac{C_4}{ \snr^2 (\log d)^5}\big] \\ 
    \leq & O(d^{-c}) + O(e^{-\sqrt{n}}),
\end{align}
where the last step follows from an analogous argument to \eqref{eq: second term concentration in initialization analysis} and \eqref{eq: Bersntein application for second term in initialization analysis} together with Lemmas~\ref{lemma: noise matrix concentrations using the universality (Gaussian)},\ref{lemma: noise matrix concentrations using the universality (bounded)}. 

With these high-probability bounds in place, we complete the proof by combining
\eqref{eq: upper bound for nh(z,z^*)} and \eqref{eq: error decomposition for initialization}. 

\bigskip 
On the other hand, in the high-SNR regime
$\snr \gg \kappa \kappa_{\mathsf{cov}}\sqrt{\log n}$,
combining Corollary~\ref{corollary: singular subspace perturbation theory in clustering problem},
Lemma~\ref{lemma: noise matrix concentrations using the universality (Gaussian)}
or~\ref{lemma: noise matrix concentrations using the universality (bounded)}
along with \eqref{eq: upper bound for nh(z,z^*)} and \eqref{eq: error decomposition for initialization} yields 
$ h(\hat{\mb z}^{(0)}, \mb z^*) =0
$
with probability at least $1-O(n^{-c})$, where we used the fact that 
$$\left|\big\{i\in[n]: \bignorm{\mb E_{i,\cdot} \mb V^*{\bo \Lambda^*}^{-1}}_2 \geq \frac{1}{16}\sqrt{K/ \beta n} \big\}\right|=\left|\big\{i\in[n]: \bignorm{\mb E_{i,\cdot} \mc P_{-i,\cdot}(\mb E)\t \mb U^*{\bo \Lambda^*}^{-2}}_2 \geq \frac{1}{16}\sqrt{K/ \beta n} \big\}\right|=0$$ 
holds with probability at least $1- O(d^{-c})$, 
since $\bignorm{\mb E_{i,\cdot} \mb V^*{\bo \Lambda^*}^{-1}}_2 \geq \frac{1}{16}\sqrt{K/ \beta n} $ implies that 
\begin{align}
    & \bignorm{\mb E_{i,\cdot} \mb V^*}_2 \geq \frac{\sigma_1^*}{\kappa}\frac{1}{16}\sqrt{K/ \beta n} \geq \frac{n}{\kappa K} \min_{k_1\neq k_2\in[K]} \norm{\bo \theta_{k_1}^* - \bo \theta_{k_2}^*}_2\geq \frac{n}{\kappa\kappa_{\mathsf{cov}} K}  \bar \sigma_{\mathsf{cov}} \snr  \gg \bar \sigma \sqrt{\log n},
\end{align} 
and the same argument applies to $\bignorm{\mb E_{i,\cdot} \mc P_{-i,\cdot}(\mb E)\t \mb U^*{\bo \Lambda^*}^{-2}}_2$.

\section{Auxiliary Lemmas}\label{sec-proof of lemmas}
The following lemma comes from \cite[Proposition 3.2.1]{PeccatiTaqqu2011}. 
     	\begin{lemma}[Leonov-Shiryaev]
       	\label{lemma: Leonov-Shiryaev}
       		We can write 
       		\longeq{
       		&\mathbb E[W_1\cdots W_m] = \sum_{\pi\in \mc P([m])}\prod_{p\in \pi} \kappa(W_p),
       		}
         where $\mc P([m])$ denotes all possible partitions of $[m]$. 
       		Moreover, we have 
       		\longeq{
       		& \kappa(W_1, \cdots, W_m) = \sum_{\pi\in P([m])} (-1)^{|\pi|-1} (|\pi| - 1)!\prod_{p\in \pi} \bb E\Big[\prod_{j\in p}W_j\Big].
       		}
       	\end{lemma}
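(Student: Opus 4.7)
The plan is to establish both identities by working with the joint moment and cumulant generating functions and then extracting coefficients, with the second identity following from Möbius inversion on the partition lattice $\mathcal{P}([m])$.

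First, I would set up the generating-function framework. Define the joint moment generating function $M(\mathbf{t}) = \mathbb{E}[\exp(\sum_{i=1}^m t_i W_i)]$ and the cumulant generating function $K(\mathbf{t}) = \log M(\mathbf{t})$, treated as formal power series in $\mathbf{t} = (t_1,\ldots,t_m)$ near the origin (a formal treatment avoids having to assume moments of all orders exist, provided we only work with coefficients up to total degree $m$). By the multivariate Taylor expansion, $\mathbb{E}[W_1\cdots W_m]$ is exactly the coefficient of $t_1 t_2 \cdots t_m$ in $M(\mathbf{t})$, and similarly for every subset $S \subseteq [m]$, the joint cumulant $\kappa((W_i)_{i\in S})$ is the coefficient of $\prod_{i\in S} t_i$ in $K(\mathbf{t})$.

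Second, I would prove the moment-to-cumulant identity by exponentiating. Writing $K(\mathbf{t}) = \sum_{\emptyset \ne S \subseteq [m]} \kappa_S \prod_{i\in S} t_i + (\text{terms of total degree} > m \text{ or involving repeats})$, and using $M = \exp(K) = \sum_{k\ge 0} K^k/k!$, each $K^k$ contributes, via the multinomial expansion, ordered collections of nonempty subsets of $[m]$ whose multiset union is a multi-indexed monomial. Restricting to the coefficient of the \emph{multilinear} monomial $t_1 t_2 \cdots t_m$ forces these subsets to form a partition of $[m]$; the $k!$ in the denominator cancels the ordering, and one arrives at $\mathbb{E}[W_1\cdots W_m] = \sum_{\pi \in \mathcal{P}([m])} \prod_{p\in \pi} \kappa(W_p)$. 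This is essentially the exponential formula for set partitions, and the bookkeeping here is the main routine but tedious step.

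Third, for the inverse identity I would use Möbius inversion on $\mathcal{P}([m])$. The first identity, read for every subset rather than just $[m]$, shows that the functions $f(\pi) = \prod_{p\in\pi} \mathbb{E}[\prod_{j\in p} W_j]$ and $g(\pi) = \prod_{p\in\pi} \kappa((W_j)_{j\in p})$ satisfy $f(\hat 1) = \sum_{\pi \le \hat 1} g(\pi)$, where $\hat 1$ denotes the partition $\{[m]\}$; in fact the same relation holds on every interval $[\hat 0,\sigma]$. By Möbius inversion on $\mathcal{P}([m])$ we obtain $g(\hat 1) = \sum_{\pi \in \mathcal{P}([m])} \mu(\pi,\hat 1) f(\pi)$, and the classical formula $\mu(\pi,\hat 1) = (-1)^{|\pi|-1}(|\pi|-1)!$ for the Möbius function of the partition lattice yields exactly the stated expression for $\kappa(W_1,\ldots,W_m)$.

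The main obstacle is verifying the Möbius-function value $\mu(\pi, \hat 1) = (-1)^{|\pi|-1}(|\pi|-1)!$. This is the one ingredient that cannot be obtained by direct power-series manipulation; it requires either a known enumeration of $\mathcal{P}([m])$ or an inductive argument using the defining recursion $\sum_{\pi \le \sigma \le \hat 1} \mu(\sigma,\hat 1) = \delta_{\pi,\hat 1}$. Since the result is classical (and cited here from \cite[Proposition 3.2.1]{PeccatiTaqqu2011}), I would either quote this lattice computation directly or, for completeness, verify it by induction on $m$ by collapsing any two blocks of the finest partition and reducing to $m-1$.
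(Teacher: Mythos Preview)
Your proposal is correct and follows the standard generating-function plus M\"obius-inversion argument. The paper, however, does not prove this lemma at all: it simply cites it as \cite[Proposition 3.2.1]{PeccatiTaqqu2011} and uses the result as a black box, so there is no ``paper's proof'' to compare against beyond noting that your outline reproduces the classical derivation found in that reference.
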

        
\begin{lemma}[Generalized Modified Logarithmic Sobolev Inequality I]\label{lemma: Generalized Modified Logarithmic Sobolev Inequality I}
Let $\mb X_1, \cdots, \mb X_n \in \bb R^p$ be independent random vectors and let $f: ([0,1]^p)^n \rightarrow \bb R$ be a separately convex function, namely, $f(\mb x_1, \cdots, \mb x_{i-1}, \cdot, \mb x_{i+1}, \cdots, \mb x_n)$ be a convex function of the $i$-th vector if the rest of the vectors are fixed. We also assume that $|f(\mb x) - f(\mb y)| \leq \norm{\mb x - \mb y}_2$ for all $\mb x = (\mb x_1, \cdots, \mb x_n), \mb y = (\mb y_1, \cdots, \mb y_n) \in ([0,1]^p)^n$. Then for $Z = f(\mb X_1, \cdots,   \mb X_n)$, it holds for all $t >0$ that $\bb P[Z > \bb E Z + t] \leq e^{-\frac{t^2}{2p}}$.
Moreover, the moments of $Z$ could be bounded by 
$
\bb E[Z^l] \leq C(pl)^{\frac{l}{2}}
$
for some constant $C$. 
\end{lemma}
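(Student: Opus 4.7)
The plan is to deduce both the tail bound and the moment bound from the classical Boucheron-Lugosi-Massart sub-Gaussian inequality for separately convex Lipschitz functions of bounded independent random variables. To apply the scalar form of that result, I would flatten the $n$ vectors $(\mb X_1,\ldots,\mb X_n)\in([0,1]^p)^n$ into a single vector of $np$ independent scalar random variables, each taking values in $[0,1]$. Separate convexity of $f$ in each vector argument $\mb x_i\in\bb R^p$ implies separate convexity in each scalar coordinate $x_{i,j}$, because the restriction of a convex function on $\bb R^p$ to any coordinate-aligned line is itself convex; the $1$-Lipschitz hypothesis $|f(\mb x)-f(\mb y)|\leq\norm{\mb x-\mb y}_2$ under the full Euclidean norm is preserved verbatim under this flattening.

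Having reduced to the scalar setting, the classical inequality yields the sharp bound $\bb P[Z>\bb E Z+t]\leq e^{-t^2/2}$ for all $t>0$, which is a fortiori bounded by the stated $e^{-t^2/(2p)}$ whenever $p\geq 1$, proving the first assertion. For the moment bound, I would invoke the standard tail-to-moment identity
\[
\bb E[(Z-\bb E Z)_+^l]=l\int_0^\infty t^{l-1}\bb P[Z-\bb E Z>t]\,dt,
\]
plug in the stated tail bound, and perform the substitution $s=t^2/(2p)$ to reduce the right-hand side to a Gamma integral; Stirling's approximation then gives $\bb E[(Z-\bb E Z)_+^l]\leq C(pl)^{l/2}$. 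Under the normalization $f(\mb 0)=0$ that is in force whenever the lemma is invoked in the paper (for instance, $f(\mb x)=\norm{\mb A^{\top}\mb x}_2$ with $\norm{\mb A}\lesssim 1$), the Lipschitz condition together with Jensen's inequality yields $\bb E Z\leq \bb E\norm{\mb X}_2 \lesssim \sqrt{p}$, and the elementary inequality $Z^l\leq 2^{l-1}(|Z-\bb E Z|^l+(\bb E Z)^l)$, valid since $Z\geq 0$, finishes the uncentered moment bound.

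The one subtlety I expect to flag is the reduction from vector-level separate convexity (as hypothesized) to coordinate-wise separate convexity (as required by the scalar form of the classical inequality); this is elementary but needs to be stated explicitly so that the off-the-shelf result is applicable. A secondary observation is that the stated sub-Gaussian parameter $p$ in the denominator of the tail bound is loose relative to the sharp constant $1$ given by Boucheron-Lugosi-Massart, but this looseness is harmless for the downstream use of the lemma in bounding the $R_{2p}$ term within the proof of Lemma~\ref{lemma: S-universality}, since the natural variance scale for $\norm{\mb A^{\top}\mb E_i}_2$ is already of order $p\cdot \norm{\mb A}^2$ through the trace $\mathrm{Tr}(\mb A^{\top}\mb\Sigma_k\mb A)$.
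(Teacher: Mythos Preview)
Your flattening step assumes that the $np$ scalar coordinates are jointly independent, but the lemma only assumes that the $n$ vectors $\mb X_1,\ldots,\mb X_n$ are mutually independent; the $p$ coordinates \emph{within} each $\mb X_i$ may be arbitrarily dependent. This is not a cosmetic point: it is exactly the situation in which the lemma is applied in the paper (the blocks $\mb E_{i,S_b}$ of size $m$ under Assumption~\ref{assumption: bounded noise}, where dependence within a block is unrestricted). Consequently the scalar Boucheron--Lugosi--Massart inequality is not available, and the claimed sharp bound $e^{-t^2/2}$ is false under the stated hypotheses. A simple counterexample: take $n=1$, let $\mb X_1=(U,\ldots,U)$ with $U$ uniform on $[0,1]$, and $f(\mb x)=p^{-1/2}\sum_{j=1}^p x_j$. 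Then $f$ is $1$-Lipschitz and convex, yet $Z=\sqrt{p}\,U$ has variance $p/12$, so the sub-Gaussian parameter must be of order $p$, not $1$. The factor $p$ in the tail bound is therefore not slack but necessary.

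The paper's proof avoids this issue by working at the vector level: it applies the modified log-Sobolev inequality with $Z_i=\inf_{\mb x\in[0,1]^p} f(\mb X_1,\ldots,\mb x,\ldots,\mb X_n)$, then uses separate convexity in the $i$-th \emph{vector} argument to bound $(Z-Z_i)^2\leq\|\nabla_i f(\mb X)\|_2^2\,\|\mb X_i-\mb X_i'\|_2^2$, and finally invokes $\|\mb X_i-\mb X_i'\|_2^2\leq p$ together with $\|\nabla f\|_2\leq 1$ to get $\sum_i(Z-Z_i)^2\leq p$. The Herbst argument then yields $e^{-t^2/(2p)}$, with the $p$ arising precisely from the diameter of a single $[0,1]^p$ block.
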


\begin{proof}[Proof of Lemma~\ref{lemma: Generalized Modified Logarithmic Sobolev Inequality I}]
    Denote that $Z_i = \inf_{\mb x \in [0,1]^p} f(\mb X_1, \cdots, \mb X_{i-1}, \mb x_{i}, \mb X_{i+1}, \cdots, \mb X_n) $. Then by \cite[Theorem 6.6]{boucheron2003concentration} one has 
    \longeq{
     & \lambda \bb E[Ze^{\lambda Z}] - \bb E[e^{\lambda Z}] \log \bb E[e^{\lambda Z}] \leq \sum_{i\in[n]}\bb E\Big[e^{\lambda Z}\frac{\lambda^2( Z - Z_i)^2}{2}\Big]\\ 
    \leq &\bb E\Big[e^{\lambda Z}\frac{\lambda^2\sum_{i\in[n]}\norm{\nabla_if(\mb X)}_2^2\norm{\mb X_i - \mb X_i'}_2^2}{2}\Big] \leq \bb E\Big[e^{\lambda Z}\frac{\lambda^2\norm{\nabla f(\mb X)}_2^2 p}{2}\Big] \leq \bb E[e^{\lambda Z}]\frac{p\lambda^2}{2}
    }
    for all $\lambda\in \bb R$ where $\nabla_i f(\mb X)$ denotes the gradient vector of the function $\tilde f_i \coloneqq f$ of the $i$-th vector. 

    Then it could be written as $\frac{\mathrm d}{\mathrm d\lambda }(\frac{\log\bb E[e^{\lambda (Z - \bb E[Z])}]}{\lambda}) \leq \frac{p}{2}$, 
    which leads to $
        \bb E[e^{\lambda (Z - \bb E[Z])}] \leq \frac{p\lambda^2 }{2}$.
    Finally, invoking the Markov inequality yields the desired conclusion. 
\end{proof}

\begin{lemma}
\label{lemma: average upper bound}
Consider (not necessarily independent) random variables $X_1,\dots,X_n$ satisfying 
$|X_i| \le A$ with probability at least $1-p_A$ and $|X_i| \le B$ with probability at least $1-p_B$, where $p_B \ge np_A$ and $B \le A$. Then $
\frac{1}{n}\sum_{i=1}^n |X_i| \le cA + (1 - c)B$
with probability at least $1 - np_A - p_B/c$. 
\end{lemma}

\begin{proof}
Let $\mathcal E = \{ |X_i| \le A \text{ for all } i \in [n]\}$. Then $\Pr(\mathcal E^c) \le np_A$. Conditional on $\mathcal E$, each $X_i$ satisfies $|X_i| \le B$ with probability at least $(1 - p_B - np_A)/(1 - np_A)$. Hence
$\mathbb E\big[ n^{-1}\sum_{i=1}^n \mathbf 1\{|X_i| \ge B\} \,\big|\, \mathcal E \big] \le (p_B + np_A)/(1 - np_A)$.  
By Markov's inequality, for any $c>0$,
\[
\bb P \big[ n^{-1}\sum_{i=1}^n \mathbf 1\{|X_i| \ge B\} \ge c | \mathcal E \big]
\le \frac{p_B}{(1 - np_A)c}.
\]
Thus, with probability at least $1 - np_A - p_B /c$, at most a fraction $c$ of the variables exceed $B$, and therefore 
$\sum_{i=1}^n |X_i| \le cA + (1-c)B$.  
\end{proof}

\section{Additional Experiments}
\label{sec: additional experiments}
In addition to the experiments in the main text, we examine the Gaussian mixture model over a broad range of dimensions, benchmarking our approach against the EM algorithm, spectral methods, and K-means. Recall that the spectral methods apply K-means to the rows of low-dimensional embedding $\hat{\mb U} \hat{\bo \Lambda}$; in \textbf{spectral clustering}, $\hat{\mb U}$ and the diagonal entries of $\hat{\bo \Lambda}$ correspond to the top-$K$ singular vectors and singular values of $\mb R$; 
in \textbf{hollowed spectral clustering}, they correspond to the top-$K$ eigenvectors of $\mc H(\mb R \mb R^\top)$ and the square roots of the corresponding eigenvalues.
The cluster centers are defined as $\bo \theta_1^* = (\alpha \cdot \mb 1_{p/2}, \mb 0_{p/2})$ and $\bo \theta_2^* = (\mb 0_{p/2}, \alpha \cdot \mb 1_{p/2})$. To introduce heterogeneity in covariance structures, we set the covariance matrices as $\diag(25 \cdot \mb I_{p/2}, \mb I_{p/2})$ for one cluster and $\diag(\mb I_{p/2}, 25 \cdot \mb I_{p/2})$ for the other. 
For the EM implementation, we use the \texttt{mvnormalmixEM} function from the R-package \texttt{mixtools} and assign each data point to the class with the largest posterior probability, based on the estimated parameters. 
Note that the empirical performance of the hard-EM algorithm proposed by \cite{chen2024optimal}  is similar to that of EM presented here, because they both require inverting $p\times p$ sample covariance matrices. 
As shown in Table~\ref{table: em comparison}, the {EM} algorithm frequently encounters singularity issues when inverting $p \times p$ covariance matrices, leading to failures in a significant proportion of Monte Carlo simulations; e.g., for $p=200$, EM can only run without failures for 40.5\% of the 200 simulation trials. So we only present results for the EM algorithm for $p\leq 200$. Note that we calculate ``EM err.'' by averaging over the successful trials, which actually leads to an optimistic approximation to EM's clustering performance in the first five rows of Table~\ref{table: em comparison}. On the other hand, the {K-Means} algorithm and the spectral method show similar clustering performance but are surpassed by our proposed method.
Table~\ref{table: em comparison} also shows that our COPO method is computationally very efficient, taking only 0.2 second on average for $(n,p)=(500,5000)$.

\begin{table}[htbp]
\small\addtolength{\tabcolsep}{-4pt}
\centering
\begin{tabular}{ccccccccc}
\toprule
    $n$ & $p$  & K-means & Spec.  & Hollowed-Spec. & COPO  & EM  (\%Suc.) & COPO time & EM time   \\ 
    \midrule
    500 &40 & 0.10  & 0.10 & 0.10 & 0.07 & 0 (97.0\%) & 0.05  & 3.1 \\ 
    500 & 80 & 0.10 & 0.10 & 0.10 & 0.08 & 0.05 (93.5\%) & 0.06  & 18.3 \\
    500 & 120 & 0.11 & 0.11 & 0.11 & 0.08 & 0.20 (85.5\%) & 0.06  & 91.5 \\ 
    500 & 160 & 0.12 & 0.11 & 0.11 & 0.09 & 0.31 (64.5\%) & 0.07  & 64.7 \\ 
    500 & 200 & 0.13 & 0.12 & 0.12 & 0.10 & 0.29 (44.5\%) & 0.07  & 76.3 \\ 
    500 & 500 & 0.20 & 0.18 & 0.18 & 0.16 & -  & 0.13  & - \\ 
    500 & 1000 &  0.30 & 0.29 & 0.28 & 0.27 & -  & 0.25 & - \\ 
    \bottomrule
    \end{tabular}
    \caption{Clustering error rates and computation times with varying dimensions for Gaussian mixtures. ``Spec.'' refers to spectral clustering. ``err.'' refers to the average clustering error rates. The unit of time is seconds. The (\%Suc.) in the second-to-last column means the proportion of simulation trials in which the EM algorithm runs without failures.}
    \label{table: em comparison}
\end{table}

\end{document}